\newcommand{\operator}[1]{\mathcal{L}\left( #1 \right)}
\numberwithin{equation}{section}
\renewcommand{\nomname}{Symbolverzeichnis}
\renewcommand{\nomgroup}[1]{\ifthenelse{\equal{#1}{S}}{\item[\textbf{Pseudodifferentialsymbolklassen}]}{\ifthenelse{\equal{#1}{F}}{\item[\textbf{Normierte Räume}]}{
\ifthenelse{\equal{#1}{N}}{\item[\textbf{Normen und Halbnormen}]}{
\ifthenelse{\equal{#1}{M}}{\item[\textbf{Sonstige Mengen}]}{
\ifthenelse{\equal{#1}{T}}{\item[\textbf{Fourier-Transformation}]}{
\ifthenelse{\equal{#1}{O}}{\item[\textbf{Operatorklassen}]}{}
}}}
}
}
}
\newcommand*{\blankpage}{\newpage
\pagestyle{empty}
\vspace*{\fill}
\begin{center}
{\sl Diese Seite wurde bewusst freigelassen.}
\end{center}
\vspace{\fill}
\newpage
\pagestyle{fancy}
}
\newcommand{\dslash}{d \hspace{-0.8ex}\rule[1.2ex]{0.8ex}{.1ex}}
\theoremstyle{plain}
\newtheorem{thm}   {Theorem}[section]
\newtheorem{lemma} [thm]{Lemma}
\newtheorem{cor}   [thm]{Folgerung}
\newtheorem{notation}[thm]{Notation}
\theoremstyle{definition}
\newtheorem{definition}   [thm]{Definition}
\newtheorem{konv}[thm]{Konvention}
\newtheorem{satz}[thm]{Satz}
\newtheorem{remark}  [thm]{Bemerkung}
\newtheorem{example} [thm]{Beispiel}
\newcommand{\norm}[2]{\left\|#1\right\|_{#2}}
\newcommand{\linearop}[2]{\mathcal{L}\left(#1;#2\right)}
\newcommand{\opnorm}[1]{\left\|#1\right\|_{\linearop{\R^n}{\R^n}}}
\newcommand{\menge}[1]{\left\{#1\right\}}
\newcommand{\folge}[2]{\left\{#1\right\}_{#2}}
\newcommand{\R}{\mathbb{R}}
\newcommand{\C}{\mathbb{C}}
\newcommand{\N}{\mathbb{N}}
\newcommand{\Z}{\mathbb{Z}}
\newcommand{\hold}{\cdot}
\newcommand{\StripSum}[1]{\mathcal{G}\left(#1\right)}
\newcommand{\Fourier}[3]{\mathcal{F}\left[#1 \mapsto #3\right]\left(#2\right)}
\newcommand{\Fouriers}[1]{\mathcal{F}\left[#1 \right]}
\newcommand{\Fourieri}{\mathcal{F}}
\newcommand{\FourierB}[3]{\mathcal{F}^{-1}\left[#1 \mapsto #3\right]\left(#2\right)}
\newcommand{\Stetig}[2]{\mathscr{C}^{#1}\left(#2\right)}
\newcommand{\Stetigi}[1]{\mathscr{C}^{#1}\left(\R^n\right)}
\newcommand{\Schwarz}[1]{\mathscr{S}\hspace{-0.4ex}\left(#1\right)}
\newcommand{\SchwarzDual}[1]{\mathscr{S}'\hspace{-0.4ex}\left(#1\right)}
\newcommand{\Schwarzi}{\Schwarz{\R^n}}
\newcommand{\SchwarziDual}{\mathscr{S'}(\R^n)}
\newcommand{\SchwarzNorm}[2]{\abs{#1}_{\mathcal{S}, #2}}
\newcommand{\SchwarzzNorm}[2]{\abs{#1}'_{\mathcal{S}, #2}}
\newcommand{\supp}{\mathrm{ supp }\,}
\newcommand{\gauss}[1]{\lfloor #1 \rfloor}
\newcommand{\piket}[2]{\left<#1\right>^{#2}}
\newcommand{\piketi}[1]{\left<#1\right>}
\newcommand{\abs}[1]{\left|#1\right|}
\newcommand{\hnorm}[2]{\left[#1\right]_{#2}}
\newcommand{\lilwood}[1]{\varphi_{#1}}
\newcommand{\lilwoodt}[2]{\tilde{\varphi}_{#1}\left(#2\right)}
\newcommand{\interpolC}[2]{\left(#1\right)_{\left[#2\right]}}
\newcommand{\OP}[1]{\mathrm{ OP}#1}
\newcommand{\OPi}{\mathrm{ OP}}
\newcommand{\OPSym}[1]{\mathrm{ OP}\left(#1\right)}
\newcommand{\OPMfg}[3]{#3 C^\tau S^{#1}_{#2}}
\newcommand{\OPMfgInf}{M C^\tau S^{-\infty}}
\newcommand{\OPMfgN}[2]{#2 C^{#1}_{\restOP} S^0_{1,0}}
\newcommand{\vol}{\mathrm{ vol }\,}
\newcommand{\dist}[2]{\mathrm{ dist}\left(#1, #2\right)}
\newcommand{\Beschr}[2]{C^{#1}\hspace{-0.4ex}\left(#2\right)}
\newcommand{\Beschri}{C^{\infty}\hspace{-0.4ex}\left(\R^n\right)}
\newcommand{\BeschrZ}{C^0\hspace{-0.4ex}\left(\R^n\right)}
\newcommand{\Oszillatory}[2]{\mathscr{A}_{#1}^{#2}\left( \R^n \times \R^n \right)}
\newcommand{\Oszillatorye}[3]{\mathscr{A}_{#1}^{#2}\left( #3 \right)}
\newcommand{\Oszillatoryi}{\mathscr{A}\left( \R^n \times \R^n \right)}
\newcommand{\OszillatoryNorm}[4]{\abs{#1}_{\mathscr{A}_{#2}^{#3}, #4}}
\newcommand{\osint}{\mathrm{O}_{\mathrm{s}} - \iint}
\newcommand{\os}[3]{\osint e^{-i#2\cdot#1} #3 d#2 \dslash#1} \newcommand{\subsubset}{\subset\subset}
\newcommand{\Landau}[1]{\mathcal{O}\left(#1\right)}
\newcommand{\Landaui}{\mathcal{O}}
\newcommand{\PSDO}{ $\Psi\textrm{DO}$ }
\newcommand{\PSDOS}{ $\Psi\textrm{DOs}$ }
\newcommand{\derive}[2]{\mathcal{D} #1\left(#2\right)}
\newcommand{\derivei}[1]{\mathcal{D} #1}
\newcommand{\deriveii}{\mathcal{D}}
\newcommand{\restOP}{\mathcal{R}}
\newcommand{\restOPM}{\mathcal{R}(M)}
\newcommand{\distrodual}[1]{\mathscr{D}'\left(#1\right)}
\newcommand{\Streifen}{\mathbb{S}}
\newcommand{\Real}{\operatorname{Re}}
\newcommand{\Imag}{\operatorname{Im}}
\newcommand{\Image}{\operatorname{Bild}}
\newcommand{\id}{\operatorname{id}}
\newcommand{\characteristic}[1]{\mathds{1}_{#1}}
\newcommand{\einheitsmatrix}{\mathds{1}}
\newcommand{\simh}{\scalebox{2}[1]{$\sim$}}
\newcommand{\simv}{\rotatebox{90}{\scalebox{2}[1]{$\sim$}}}
\newcommand{\dualskp}[4]{\left< #1, #2 \right>_{#3,#4}}
\newcommand{\skp}[3]{\left( #1, #2 \right)_{#3}}
\newcommand{\quotientenraum}[2]{
\raisebox{1ex}{\ensuremath{#1}}
\ensuremath{\mkern-3mu}\bigg/\ensuremath{\mkern-3mu}
\raisebox{-1ex}{\ensuremath{#2}}}
\newcommand{\abgabedatum}{1. Dezember 2011}
\author{Dominik Köppl}
\begin{document}

\begin{titlepage}
\begin{center}
  \bigskip
  \Large{\textsc{Diplomarbeit}}
  \\
  \vspace{1cm}
  \huge{\textsf{Pseudodifferentialoperatoren mit nichtglatten Koeffizienten auf Mannigfaltigkeiten}}
  \\
  \vspace{3.5 cm}
\textsf{
    \begin{large}
    \begin{tabular}{ll}
      Diplomand: & Dominik K\"{o}ppl \\
      Fachbereich: & Mathematik \\
      Betreuer: & Prof. Dr. rer.nat. Helmut Abels \\
      Abgabedatum: & \abgabedatum
    \end{tabular}
  \end{large}
  }
\end{center}
\end{titlepage}
\clearpage{}
\blankpage
\pagenumbering{roman}
\pagestyle{plain}
\section*{Vorwort}
\addcontentsline{toc}{section}{Vorwort}
Pseudodifferentialoperatoren (\PSDOS) gehören zu den wichtigsten Hilfsmitteln der Analysis von linearen partiellen Differentialgleichungen.
Diese Gleichungen dienen als Basis mathematischer Modelle, um verschiedene Phänomene in der modernen Physik, Biologie, Wirtschaft und anderen
wissenschaftlichen Gebieten zu beschreiben.
Die frühen Errungenschaften in den 60'igern durch Hörmander brachten ein komplett neues Verständnis wichtiger Probleme der Analysis und mathematischen Physik zum Vorschein.
Das Kalkül mit Pseudodifferentialoperatoren kann heutzutage als klassisch betrachtet werden.
Zahlreiche Monographien haben sich dieser Thematik gewidmet, viele prägen den Begriff eines \PSDOS im ``glatten'' Fall - also mit glatten Symbolen.
In den letzten Jahren hat sich ein neues bemerkenswertes Interesse für Pseudodifferentialoperatoren mit weniger regulierenden Eigenschaften entwickelt.
Tatsächlich treten in allen oben erwähnten Bereichen viele Probleme auf, die mit \PSDOS im ``nicht-glatten'' Fall zusammenhängen.
Die Studie von Pseudodifferentialoperatoren mit nicht-glatten Symbolen wurde von Marschall und Taylor in den letzten Jahrzehnten ausgearbeitet.
Das zugehörige Problem mit \PSDOS auf Mannigfaltigkeiten wurde aber noch nicht detailliert durchdacht.
Deswegen widmet sich diese Arbeit genau dieser Thematik.
Die hier erarbeiteten Resultate formen den Kern der Theorie der Pseudodifferentialoperatoren mit nicht-glatten-Symbolen, die in dieser Arbeit eingeführt werden.
Anstatt einen kompletten Überblick über alle bekannten Resultate über \PSDOS mit nicht-glatten Symbolen auszuschöpfen,
werden einige Leitideen aus einem weiten Themengebiet aufgegriffen.
Einer der Hauptschwierigkeiten in der Ausarbeitung war es,
dass in vielen Bereichen der Mathematik verschiedene Notationen für ein und das selbe Objekt gebräuchlich sind, wie die Notation einer Symbolklasse.
Andererseits war es eine schwere Aufgabe, die Arbeit mit einer konsistenten Notation durchzustrukturieren,
sodass wichtige Objekte in allen Teilen der Arbeit mit den gleichen Symbol identifiziert werden.
Ein zweites Problem bestand darin, dass durch den enormen Fortschritt in der Forschung im Bereich der partiellen Differentialgleichungen
die historische Entwicklung nur schwer nachvollziehbar ist, und ich deswegen hoffe, in den Literaturangaben alle Mitwirkende zu nennen, die
einen (indirekten) Beitrag für Entstehung dieser Arbeit geliefert haben.
Eine weitere Schwierigkeit ergibt sich aus der Abhängigkeit der Definition der Pseudodifferentialoperatoren von den oszillatorischen Integralen,
eine Verallgemeinerung der Lebesgue-Integrale für nicht-absolut konvergente Integrale.
Die Theorie der oszillatorischen Integrale stellt sich nämlich als sehr technisch heraus.
Zudem kann die Verwendung der Transformationsformel und partiellen Integration für Integranden, die nicht in $L^1(\R^n)$ liegen,
für Verwirrung stiften.
Besonderes Augenmerk wurde auf die vollständige Beweisführung und die detaillierte Referenzierung ähnlicher Resultate in der Literatur gelegt.
Die Arbeit entstand im Zusammenhang mit der Vorlesung ``Pseudodifferentialoperatoren'' von Prof. Dr. Helmut Abels im Wintersemester 2009/10 an der Universität Regensburg.

Die 2011 veröffentlichte Arbeit wurde von Prof. Dr. Helmut Abels und Prof. Dr. Bernd Ammann begutachtet und kommentiert.
Die kleinen, jedoch zahlreichen Verbesserungsvorschläge floßen 2012 in das Manuskript ein, sodass diese Monographie die zweite Auflage der Arbeit bildet.
 
\section*{Danksagung}

An erster Stelle möchte ich mich bei Herrn Prof. Dr. Helmut Abels bedanken, der mir die Erstellung dieser Arbeit ermöglichte.
Sein persönliches Engagement, sowie zahlreiche Anregungen und Tipps, sowie die ständige Diskussionsbereitschaft waren mir eine wertvolle Hilfe.

Danken möchte ich auch meinen Onkel Günter Markowski für die orthographische Korrekturarbeit und meinen Eltern für die kulinarische Versorgung während des Schreibens dieser Arbeit.

Daneben auch herzlichstes Dank an Herrn Prof. Dr. Bernd Ammann für seine detaillierte Begutachtung.
 
\clearpage
\pagestyle{mystyle}
\tableofcontents
\blankpage
\pagenumbering{arabic}
\pagestyle{mystyle}
\section{Einleitung}
\markboth{Einleitung}{Einleitung}

Es ist eine bekannte Tatsache, dass sich eine lineare partielle Differentialgleichung $P(D) u = f$ mit konstanten Koeffizienten in $\R^n$
durch eine Fourier-Transformation als Gleichung $P(\xi) \hat{u}(\xi) = \hat{f}(\xi)$ darstellen lässt,
sodass wir die Gleichung auf die Division durch das Polynom $P(\xi)$ zurückführen können.
Die Technik der Pseudodifferentialoperatoren verallgemeinert dieses praktische formale Beispiel für partielle Differentialgleichungen mit variablen Koeffizienten.
Beim Studium von nicht-linearen partiellen Differentialgleichungen stellt sich jedoch heraus, dass die Symbole meist nicht glatt sein werden, 
da die Koeffizienten von der (im Allgemeinen nicht-glatten) Lösung der Gleichung abhängen.
In Hinblick darauf wollen wir in Abschnitt \ref{sec:psdo} die Errungenschaften von \cite{Kumanogo} und \cite{Hormander} für die Pseudodifferentialklasse $S^m_{\rho,\delta}(\R^n\times\R^n)$ auf
Operatoren mit Hölder-stetigen Koeffizienten übertragen.
Beim Übergang von klassischen Symbolen zu Symbolen mit nicht-glatten Koeffizienten stellen sich aber sofort grundlegende Fragen:
Sind die Operatoren der Symbole auf den gewöhnlichen Funktionenräumen stetig?
Wie verhalten sich Pseudodifferentialoperatoren, wenn sie verkettet werden?
Können Operatoren dieser Art auf Mannigfaltigkeiten definiert werden?
Die ersten beiden Fragestellungen wollen wir in den Unterabschnitten \ref{sec:psdointro} bzw. \ref{sec:composition} genauer erläutern.
Voraussetzung für letzteres bildet der Unterabschnitt \ref{sec:simplify}, 
in dem wir die Komposition eines Operators mit glatten Symbol und eines Operators mit nicht-glatten Symbol betrachten,
und die Untersuchungen von \cite{AbelsBoundary} für Pseudodifferentialoperatoren auf Besov-Räumen.
Dafür werden wir das Mittel der Symbolglättung benötigen, welches uns ein nicht-glattes Symbol
in ein glattes Symbol gleicher Ordnung und ein nicht-glattes Symbol geringerer Ordnung aufspaltet.
Dieser Thematik ist der Unterabschnitt \ref{sec:smoothing} gewidmet, für den die wesentlichen Gesichtspunkte aus \cite{Nonlinear} übernommen wurden.
Zu allererst wollen wir aber in Abschnitt \ref{sec:funktionsklassen} die notwendigen Funktionsklassen einführen.
Wir erklären die Fourier-Transformation auf dem $\R^n$ mit einem besonderen Augenmerk auf den Schwartz-Raum $\Schwarzi$,
der sich unter einer Fourier-Transformation isometrisch nach Plancherels Theorem verhält.
Die notwendige Theorie zur Definition der Pseudodifferentialoperatoren findet sich in den technischen Beweisen der oszillatorische Integrale,
die wir in Unterabschnitt \ref{sec:oszi} ausgiebig diskutieren.
\PSDOS werden naturgemäß unter mikrolokalisierbaren Räumen wie den Bessel-Potential-Raum betrachtet.
In Unterabschnitt \ref{sec:hoelder} bzw. \ref{sec:bessel} führen wir deswegen die Klassen $C^\tau_*(\R^n)$ bzw. $H^s_q(\R^n)$ ein,
wobei wir die Bessel-Potentialräume in Zusammenhang mit der komplexen Interpolationstheorie (\cite{Lunardi},\cite{Bergh}) einführen werden.
Einer der wesentliche Ziele wird es sein, die Invarianz unter Kartenwechsel von Pseudodifferentialoperatoren auf Mannigfaltigkeiten zu definieren.
Grundlage dafür bildet die Kerndarstellung von \cite{Stein}, die wir für Hölder-stetige Symbole in der sog. $(x,\xi,y)$-Form im Unterabschnitt \ref{sec:kern} verallgemeinern.
Für glatte Mannigfaltigkeiten werden wir die von \cite{Kumanogo} erarbeitete Theorie für glatte Symbole in den Unterabschnitten \ref{sec:smoothkoord} und \ref{sec:smoothmanifold} übernehmen.
Die Situation stellt sich komplizierter dar, wenn wir den Sachverhalt auf nicht-glatten Mannigfaltigkeiten in Abschnitt \ref{sec:nonsmooth} studieren.
Um Aussagen treffen zu können, werden wir uns auf spezielle Symbole mit nicht-positiver Ordnung in $(x,y,\xi)$-Form zurückziehen.
Hier liefert uns \cite{Tools} essentielle Eigenschaften, die wir in Unterabschnitt \ref{sec:x,y,xi} analysieren werden.
Der Abschluß der Arbeit bildet eine Aussage über die Transformationseigenschaft von Operatoren mit Symbolen dieser Klasse.

\section{Funktionenräume und Fourier-Transformationen}\label{sec:funktionsklassen}
\subsection{Wiederholung analytischer Grundbegriffe}
\nomenclature[m]{$\N$}{Menge der natürlichen Zahlen ohne der Null}
\nomenclature[m]{$\N_0$}{Menge der natürlichen Zahlen einschließlich der Null}

Zu Beginn wollen wir bekannte
Resultate der grundlegenden Analysis im $\R^n$ zusammengefasst wiederholen.
Soweit nicht anders erwähnt, werden wir die kanonische Basis des $\R^n$ mit der Menge $\menge{e_j}_{j=1,\ldots,n}$ darstellen.
Wir werden durchgehend mit der fest gewählten Ganzzahl $n \in \N$ die Dimension einer offenen Teilmenge $\Omega \subseteq \R^n$ oder einer Mannigfaltigkeit bezeichnen.
Wir schreiben oft für eine Funktion $f$ auch $f(x)$ falls eine Verwirrung mit dem Wert $f(x)$ bei $x$ ausgeschlossen werden kann.

\begin{definition}
\begin{enumerate}[(a)]
\item
Sei $\Omega \subseteq \R^n$ offen, $f : U \rightarrow \C$.
Wir sagen, dass $f$ $k$-mal differenzierbar in $U$ ist (für $k \in \N_0$), falls
die partiellen Ableitungen $\partial^\alpha f$ existieren und stetig auf $U$ sind für jedes $\abs{\alpha} \le k$.
Wir verwenden hierfür die Notation $f \in \Stetig{k}{U}$.
Insbesondere ist $f \in \Stetig{0}{U}$ falls $f$ stetig ist.
Falls $f \in \Stetig{\infty}{U}$, nennen wir $f$ glatt.
\item
Eine Funktion $f : U \subseteq \R^n \rightarrow V \subseteq \R^n$ gehört der Klasse $\Stetig{k}{\R^n}^n$ an,
falls $f_j := \pi_j \circ f \in \Stetig{k}{\R^n}$ für jedes $j = 1,\ldots,n$, wobei $\pi_j$ die kanonische Projekion auf die $j$.te Koordinate des $\R^n$ bezeichnet.
\item
Das Bild des Nabla-Operators $\nabla : \Stetig{1}{\R^n} \rightarrow \Stetig{0}{\R^n}^n$ ist der Vektor der transponierten Jacobimatrix einer Funktion $f \in \Stetig{1}{\R^n}$.
\end{enumerate}
\end{definition}
\nomenclature[m]{$\Stetig{\infty}{U}$}{Glatte Funktionen in $U$}
\nomenclature[m]{$\Stetig{k}{U}$}{$k$-mal stetig differenzierbare Funktionen in $U$}
\nomenclature{$\nabla$}{Nabla-Operator}
\nomenclature[f]{$C^0(\R^n)$}{Raum der beschränkten stetigen Funktionen}

\begin{definition}
Sei $\Omega \subseteq \R^n$ offen.
Der Raum $\Beschr{m}{\Omega}$ bezeichne die Menge aller Funktionen aus dem Raum $\Stetig{m}{\Omega}$,
deren Ableitungen der Ordnung kleiner gleich $m \in \N_0$ beschränkt sind in $\Omega$, wir schreiben also
$$ \Beschr{m}{\Omega} := \menge{ f \in C^m(\Omega) : \norm{ \partial^\alpha f }{\infty} \le C_\alpha, \forall \alpha\in\N^n_0,\abs{\alpha}\le m \textrm{ mit } C_\alpha > 0}.$$
Setze $ \Beschr{\infty}{\Omega} := \bigcap_{m=0}^\infty \Beschr{m}{\Omega}$.

Für einen Banachraum $Y$ sagen wir, dass eine Funktion $f : \R^n \rightarrow Y$ in $\Beschr{0}{\R^n,Y}$ enthalten ist,
falls $f$ stetig ist und beschränkt durch $\sup_{x \in \R^n} \norm{ f(x) }{Y} < \infty$.
\end{definition}
\nomenclature[f]{$C^\infty(U)$}{beschränkte glatte Funktionen in $U$, deren (mehrfachen) Ableitungen wiederum beschränkt sind}
\nomenclature[f]{$C^\infty_0(U)$}{glatte Funktionen in $U$ mit kompakten Träger}

\begin{lemma}[Mittelwertsatz im Mehrdimensionalen]\label{mittelwertsatz}
Sei $f \in \Stetig{1}{\R^n}^n$ und $x,y\in\R^n$.
Ist $p \mapsto \derive{f}{p} \in \linearop{\R^n}{\R^n} $ für alle $p \in \menge{ \lambda x + (1-\lambda) y : \lambda \in [0,1] }$, 
so ist die lineare Abbildung 
$$ \Xi_{f,p}(x,y) := \int_0^1 \derive{f}{x + t(y-x)} dt$$ wohldefiniert und es gilt
$$ f(y) - f(x) = \Xi_{f,p}(x,y) (y-x).$$
\begin{proof} Setzen wir $g_i(t) := f_i(x+t(y-x))$, so erhalten wir sofort mit dem Hauptsatz der Integralrechnung
\begin{align*}
f_i(y) - f_i(x) 
&= g_i(1) - g_i(0) = \int_0^1 d_t g_i(t) dt = \int_0^1 \sum_{j=1}^n \partial_j f_i(x+t(y-x)) (y-x)_j dt  \\
&= \int_0^1 \derive{f_i}{x+t(y-x)} dt (y-x)
.\end{align*}
\end{proof}
\end{lemma}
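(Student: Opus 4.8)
The plan is to reduce the vector-valued identity to its $n$ scalar components and then apply the one-dimensional fundamental theorem of calculus along the segment joining $x$ and $y$. Reading the matrix $\Xi_{f,p}(x,y)$ row by row, its $i$-th row is $\int_0^1 \derive{f_i}{x+t(y-x)}\,dt$, so it suffices to establish, for each $i \in \menge{1,\ldots,n}$, the identity
$$ f_i(y) - f_i(x) = \int_0^1 \derive{f_i}{x+t(y-x)}\,dt\,(y-x), $$
together with the fact that each of these row integrals exists, which yields the well-definedness of $\Xi_{f,p}(x,y)$ as an element of $\linearop{\R^n}{\R^n}$.

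First I would fix $i$ and introduce the auxiliary function $g_i(t) := f_i(x+t(y-x))$ on the interval $[0,1]$. Since the affine map $t \mapsto x+t(y-x)$ sends $[0,1]$ into the segment $\menge{\lambda x + (1-\lambda)y : \lambda \in [0,1]}$ on which $f$, hence $f_i$, is of class $\Stetig{1}{\R^n}$ with $p \mapsto \derive{f}{p}$ taking values in $\linearop{\R^n}{\R^n}$, the composition $g_i$ is continuously differentiable on $[0,1]$, and the chain rule gives $g_i'(t) = \sum_{j=1}^n \partial_j f_i(x+t(y-x))\,(y-x)_j = \derive{f_i}{x+t(y-x)}\,(y-x)$. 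In particular $t \mapsto \derive{f_i}{x+t(y-x)}$ is continuous on the compact interval $[0,1]$, so its Riemann integral exists entrywise; this is precisely what makes $\Xi_{f,p}(x,y)$ well-defined.

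Second, I would apply the Hauptsatz der Integralrechnung to $g_i$ on $[0,1]$, namely $g_i(1) - g_i(0) = \int_0^1 g_i'(t)\,dt$. Substituting $g_i(0) = f_i(x)$, $g_i(1) = f_i(y)$ and the chain-rule formula for $g_i'$, then factoring the constant vector $(y-x)$ out of the integral by linearity, yields the per-component identity displayed above. Assembling the $n$ scalar equations into one vector equation gives $f(y) - f(x) = \Xi_{f,p}(x,y)(y-x)$, as claimed.

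I do not expect a genuine obstacle here; the only points that deserve a careful word are bookkeeping ones: interpreting ``the integral of $\derive{f}{\cdot}$'' entrywise as the integral of an $\R^n\times\R^n$-valued continuous function, and checking that the $t$-independent vector $y-x$ may be pulled out of the integral of the linear maps so that the result is exactly $\Xi_{f,p}(x,y)$ evaluated at $y-x$. The hypothesis that $p \mapsto \derive{f}{p}$ is defined and continuous along the segment is what supplies the integrability; dropping it would already break the well-definedness claim, so it is carried through unchanged.
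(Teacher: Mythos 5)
Your proof is correct and follows essentially the same route as the paper: the auxiliary function $g_i(t) := f_i(x+t(y-x))$, the chain rule, and the Hauptsatz der Integralrechnung applied componentwise, then assembled into the matrix identity. The additional remarks on well-definedness (entrywise Riemann integrability from continuity on $[0,1]$ and pulling the constant vector $y-x$ out of the integral) are sound and merely make explicit what the paper leaves implicit.
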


\nomenclature{$\derive{f}{p}$}{Jacobimatrix der Funktion $f$ im Punkt $p$}
\nomenclature[f]{$\linearop{U}{V}$}{Raum der linearen beschränkten Abbildungen $U \rightarrow V$ bzgl. der Vektorräume $U$ und $V$}
\nomenclature{$\Xi_{f,p}$}{Operator definiert in Lemma \ref{mittelwertsatz}}

\begin{thm}[Taylorreihe mit Restglied] \label{remind:taylorreihe}
Sei $f \in \Stetig{\infty}{\R^n}$. Dann gilt für jede Ganzzahl $N\in\N$, dass
$$ f(\xi+\eta) = \sum_{\abs{\alpha} < N} \frac{\eta^\alpha}{\alpha!} f^{(\alpha)}(\xi) + N \sum_{\abs{\gamma} = N} \frac{\eta^\gamma}{\gamma!} \int_0^1 (1-\theta)^{N-1} f^{(\gamma)}(\xi+\theta\eta)d\theta .$$
\begin{proof} Siehe \cite{Koenigsberger2} \end{proof}
\end{thm}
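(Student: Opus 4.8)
The plan is to reduce the $n$-dimensional identity to the classical one-dimensional Taylor formula with integral remainder by restricting $f$ to the segment from $\xi$ to $\xi+\eta$. Fix $\xi,\eta\in\R^n$ and set $g\colon\R\to\C$, $g(\theta):=f(\xi+\theta\eta)$; since $f\in\Stetig{\infty}{\R^n}$ and $\theta\mapsto\xi+\theta\eta$ is affine, $g\in\Stetig{\infty}{\R}$. The asserted formula will drop out of a Taylor expansion of $g$ about $0$, evaluated at $\theta=1$, once the derivatives $g^{(k)}$ are re-expressed through the partial derivatives of $f$.

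First I would record the scalar statement: for $g\in\Stetig{\infty}{\R}$ and $N\in\N$,
\[
g(1)=\sum_{k=0}^{N-1}\frac{g^{(k)}(0)}{k!}+\frac{1}{(N-1)!}\int_0^1(1-\theta)^{N-1}g^{(N)}(\theta)\,d\theta.
\]
This is proved by induction on $N$: the case $N=1$ is the Hauptsatz der Integralrechnung (as already used in Lemma \ref{mittelwertsatz}), and the step $N\rightsquigarrow N+1$ is a single integration by parts in the remainder, integrating the factor $(1-\theta)^{N-1}$, which splits off the term $g^{(N)}(0)/N!$ and leaves the order-$(N+1)$ remainder. One may equally well quote this scalar version from \cite{Koenigsberger2}.

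Second, the chain rule gives $g'(\theta)=\sum_{j=1}^n\eta_j(\partial_jf)(\xi+\theta\eta)$, hence $g^{(k)}(\theta)=\big((\eta\cdot\nabla)^kf\big)(\xi+\theta\eta)$ for all $k$, the $\partial_j$ commuting because $f$ is smooth. Expanding the $k$-th power of $\eta\cdot\nabla=\sum_j\eta_j\partial_j$ by the multinomial theorem yields
\[
g^{(k)}(\theta)=\sum_{\abs{\alpha}=k}\frac{k!}{\alpha!}\,\eta^\alpha f^{(\alpha)}(\xi+\theta\eta),
\]
which I would verify by induction on $k$: applying $\eta\cdot\nabla$ and re-indexing $\beta=\alpha+e_j$, the coefficient of $\eta^\beta f^{(\beta)}$ becomes $\sum_{j:\beta_j\ge1}k!/(\beta-e_j)!=(k!/\beta!)\sum_j\beta_j=(k+1)!/\beta!$, closing the induction. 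Substituting both computations into the scalar formula finishes the proof: for $0\le k<N$ one has $g^{(k)}(0)/k!=\sum_{\abs{\alpha}=k}(\eta^\alpha/\alpha!)f^{(\alpha)}(\xi)$, so summing over $k$ reassembles $\sum_{\abs{\alpha}<N}(\eta^\alpha/\alpha!)f^{(\alpha)}(\xi)$; for the remainder, $g^{(N)}(\theta)/(N-1)!=N\sum_{\abs{\gamma}=N}(\eta^\gamma/\gamma!)f^{(\gamma)}(\xi+\theta\eta)$ since $N!/(N-1)!=N$, and inserting this under the integral gives exactly the stated term, while $g(1)=f(\xi+\eta)$.

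\textbf{Main obstacle.} The only genuinely delicate point is the multi-index bookkeeping in the chain-rule step — showing that the $k$-fold directional derivative carries precisely the multinomial weights $k!/\alpha!$ — while everything else is mechanical. Citing the one-dimensional result outright removes the inductive integration by parts, but because the theorem is stated over $\R^n$ this combinatorial identity must be supplied in any case.
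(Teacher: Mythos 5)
Your proof is correct. The paper itself gives no argument here — it only cites \cite{Koenigsberger2} — and what you have written is precisely the standard argument found there: restrict $f$ to the segment via $g(\theta)=f(\xi+\theta\eta)$, apply the one-dimensional Taylor formula with integral remainder (proved by iterated integration by parts starting from the Hauptsatz), and convert $g^{(k)}(\theta)=\bigl((\eta\cdot\nabla)^k f\bigr)(\xi+\theta\eta)$ into multi-index form with the multinomial weights $k!/\alpha!$, whose inductive verification via the re-indexing $\beta=\alpha+e_j$ and $\sum_j \beta_j = k+1$ you carry out correctly; the factor $N=N!/(N-1)!$ then accounts exactly for the prefactor in the remainder term, so your proposal faithfully supplies the proof the paper delegates to the reference.
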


\begin{notation}
Seien $f,g : U \subseteq \R^n \rightarrow \R_+$.
Wir sagen $f(x) \le \Landau{g(x)}$, falls es eine Konstante $C > 0$ unabhängig von $x \in U$ gibt, sodass
$f(x) \le C g(x)$ für jedes $x \in U$.
\end{notation}

\begin{definition}\label{def:multiindex}
Ein Multiindex $\nu \in \N^n_0$ ist ein $n$-dimensionaler Vektor der Gestalt $\nu = (\nu_1,\ldots,\nu_n)$.
Insbesondere ist jeder Vektor der kanonische Basis $\menge{e_j}_{j=1,\ldots,n}$ des $\R^n$ ein Multiindex.
Für einen Multiindex $\nu \in \N^n_0$ definieren wir die Länge $\abs{\nu} := \sum_{j=1}^n \nu_i$ und
die Fakulät $\nu! := \prod_{j=1}^n \nu_j$.
Multiindizes bestimmen die Exponenten von Polynomen durch $x^\nu := \prod_{j=1}^n x_j^{\nu_j}$ für $x  = (x_1,\ldots,x_n) \in \R^n$.
Für $\nu = 0$ setzen wir $0^0 := 1$.
Außerdem können wir für $\abs{\nu} > 0$ multivariante Ableitungen mit 
$\partial_x^\nu := \partial_{x_1}^{\nu_1} \ldots \partial_{x_n}^{\nu_n} := \frac{\partial^{\abs{\nu}}}{\partial x_1^{\nu_1} \cdot\cdot\cdot \partial x_n^{\nu_n}}$
darstellen, 
unter $\partial_x^0$ verstehen wir die Identitätsabbildung.
Für einen weiteren Multiindex $\mu = (\mu_1,\ldots,\mu_n) \in \N^n_0$ schreiben wir $\mu \le \nu$ falls
$\mu_j \le \nu_j$ für jedes $j=1,\ldots,n$. In diesem Fall setzen wir
$$ {\nu \choose \mu } := \prod_{j=1}^n {\nu_j \choose \mu_j} = \frac{\nu!}{\mu!(\nu-\mu)!}
.$$
Schließlich wollen wir eine lineare Ordnung auf $\N^n_0$ einführen. Wir sagen $\mu \prec \nu$ falls eine der folgenden Aussagen zutrifft:
\begin{enumerate}[(i)]
	\item $\abs{\mu} < \abs{\nu}$,
	\item $\abs{\mu} = \abs{\nu}$ und $\mu_1 < \nu_1$, oder
	\item $\abs{\mu} = \abs{\nu}, \mu_1 = \nu_1, \ldots, \mu_k = \nu_k$ und $\mu_{k+1} < \nu_{k+1}$ für ein $1 \le k < n$.
\end{enumerate}
\end{definition}
\nomenclature[m]{$\N^n_0$}{Menge der Multiindizes - siehe Definition \ref{def:multiindex}}

\begin{definition}\label{def:multiindexspec}
Definiere für ein $N \in \N_0$ die Menge
$ \N^n_{0,N} := \menge{\mu \in \N^n_0 : 1 \le \abs{\mu} \le N}$.
Insbesondere ist $\N^n_{0,0} = \emptyset$
\end{definition}
\nomenclature[m]{$\N^n_{0,N}$}{Teilmenge von $\N^n_0$ - siehe Definition \ref{def:multiindexspec}}

\begin{lemma}[Formel von Fa\`{a} di Bruno] \label{faaDiBruno}
Für zwei offene Teilmengen $U \subset \R^n$ und V $\subset \R^m$ seien zwei $\Stetigi{k}$-Funktionen $f : U \rightarrow V, g : V \rightarrow \R$ gegeben ($k\in\N$), also 
$ \begin{xy} \xymatrix{ U \ar[r]^f  & V \ar[r]^g & \R } \end{xy} $.
Wir wählen die Standardkoordinaten $(x_1,\ldots,x_n)$ in $U$ und $(y_1,\ldots,y_m)$ in $V$, und stellen unter diesen Koordinaten $f = (f_1,\ldots,f_m)$ durch die natürliche Projektion $\pi_{y_j} : V \rightarrow \R$ mit $f_j = \pi_{y_j} \circ f$ dar. 
Für einen fest gewählten Punkt $x=(x_1,\ldots,x_n) \in U, y=(y_1,\ldots,y_m) := f(x)$ und einen Multiindex $\alpha \in \N^n_0$ mit $1 \le \abs{\alpha} \le k$ hält die Formel
$$
\partial^\alpha_x (g \circ f) (x) = \alpha! \sum_{\sigma \in \N^m_{0,\abs{\alpha}}} \partial_y^\sigma g(y) 
\sum_{\substack{(\nu^1,\ldots,\nu^{\abs{\alpha}}; \mu^1,\ldots,\mu^{\abs{\alpha}}) \\ \in \Sigma(\alpha,\sigma)}}
\prod_{j=1}^{\abs{\alpha}}
\frac{1}{\nu^j! (\mu^j!)^{\abs{\nu^j}}} \left( \partial_x^{\mu^j} f(x) \right)^{\nu^j}
,$$
wobei
\begin{align*}
\Sigma(\alpha,\sigma) &:= \Bigg\{ (\nu^1,\ldots,\nu^{\abs{\alpha}}; \mu^1,\ldots,\mu^{\abs{\alpha}}) : \nu^j \in \N^m_0, \mu^j \in \N^n_0 \textrm{ für jedes } j = 1,\ldots,\abs{\alpha} \textrm{ und } \\
& \textrm{ es ein } 1 \le s \le n \textrm{ gibt, } \textrm{ sodass } \nu^j = 0 \textrm{ und } \mu^j = 0 \textrm{ für } 1 \le j \le \abs{\alpha} - s \\
& \textrm{ sowie } \abs{\nu^j} > 0 \textrm{ für } \abs{\alpha}-s+1 \le j \le n \\
& \textrm{ und } 0 \prec \mu^{\abs{\alpha}-s+1} \prec \ldots \prec \mu^{\abs{\alpha}} \textrm{ sowie } 
  \sum_{j=1}^{\abs{\alpha}} \nu^j = \sigma ;\quad \sum_{j=1}^{\abs{\alpha}} \abs{\nu^j} \mu^j = \alpha \Bigg\}
,\end{align*}
und $\partial_x^{\mu} f(x) = \menge{ \partial_x^{\mu} f_1(x), \ldots, \partial_x^{\mu} f_m(x) }$ für $\mu \in \N^n_0$.
\begin{proof} Siehe \cite[Theorem 2.1]{faadibruno}
\end{proof}
\end{lemma}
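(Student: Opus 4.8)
The plan is to reduce Faà di Bruno's formula to a bookkeeping statement about coefficients of Taylor polynomials, so that the only chain rule actually invoked is the elementary first-order one. Fix $x \in U$, set $y := f(x)$, and let $N := \abs{\alpha}$; since $f$ and $g$ are $\Stetigi{k}$-Funktionen and $N \le k$, the number $\partial^\alpha_x(g \circ f)(x)$ equals $\alpha!$ times the coefficient of the monomial $t^\alpha$ in the degree-$k$ Taylor polynomial of the scalar map $t \mapsto g(f(x+t))$ at $t = 0$ (for $t \in \R^n$ small enough that $x+t \in U$). So it suffices to compute that coefficient. For this I would write $h(t) := f(x+t) - f(x)$, so that componentwise $h_l(t) = \sum_{1 \le \abs{\mu} \le k} \frac{t^\mu}{\mu!}\,\partial^\mu_x f_l(x)$ modulo terms vanishing to order $> k$ at $t=0$, and expand $g(y+h) = \sum_{\abs{\sigma} \le k} \frac{1}{\sigma!}\,h^\sigma\,\partial^\sigma_y g(y)$ to the same order, which is legitimate because each $h_l$ has order $\ge 1$ in $t$.

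The core step is then to expand $h^\sigma = \prod_{l=1}^m h_l^{\sigma_l}$ by applying the multinomial theorem to each power $h_l^{\sigma_l}$, producing a sum indexed by data that records, for each component $l$ and each multi-index $\mu$ with $\abs{\mu} \ge 1$, how many factors $\frac{t^\mu}{\mu!}\partial^\mu_x f_l(x)$ were chosen. Collecting the coefficient of a fixed $t^\alpha$ and re-packaging this data — listing the multi-indices $\mu$ that actually occur in strictly increasing $\prec$-order as $\mu^1 \prec \cdots \prec \mu^s$, padding the list to length $\abs{\alpha}$ with zero entries (this is where the parameter $s$ and the precise shape of $\Sigma(\alpha,\sigma)$ enter), and letting $\nu^j \in \N^m_0$ be the vector of multiplicities of $\mu^j$ over the components $l = 1,\ldots,m$ — one reads off exactly the constraints $\sum_{j}\nu^j = \sigma$ and $\sum_j \abs{\nu^j}\mu^j = \alpha$. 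Tracking how the multinomial coefficients combine with the factors $1/\mu!$ from the Taylor expansion, with $1/\sigma!$, and with the $\alpha!$ from the coefficient-extraction normalization yields precisely the weight $\alpha!\prod_{j=1}^{\abs{\alpha}}\frac{1}{\nu^j!(\mu^j!)^{\abs{\nu^j}}}$, and summing over $\sigma \in \N^m_{0,\abs{\alpha}}$ and over $\Sigma(\alpha,\sigma)$ reproduces the asserted identity.

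The main obstacle is the combinatorics of this re-packaging: one must verify that imposing the linear order $\prec$ on the $\mu^j$ turns the correspondence between unordered selections of Taylor monomials and tuples in $\Sigma(\alpha,\sigma)$ into a bijection, so that no term is over- or under-counted, and that the factorial weights match on the nose. An alternative, more hands-on route that avoids the generating-function manipulation is induction on $N = \abs{\alpha}$: the base case $N = 1$ is the ordinary chain rule $\partial_{x_i}(g\circ f)(x) = \sum_j \partial_{y_j}g(y)\,\partial_{x_i}f_j(x)$, and in the inductive step one applies $\partial_{x_i}$ to the formula for $\partial^\alpha_x(g\circ f)$ via Leibniz, so that the extra derivative either falls on the outer factor $\partial^\sigma_y g(y)$ — creating, through the chain rule, a new block with $\mu = e_i$ and $\nu = e_j$ that must be merged into the $\prec$-ordered list — or on one of the factors $(\partial^{\mu^j}_x f(x))^{\nu^j}$, where it raises $\mu^j$ by $e_i$ on one of the $\abs{\nu^j}$ copies (possibly forcing a re-sort). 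Showing that these contributions, once reindexed into canonical form, sum to the formula for $\partial^{\alpha + e_i}_x(g\circ f)$ with the correct coefficients is again exactly where the work concentrates; in either approach I would follow the combinatorial setup of \cite{faadibruno} for the bijection and the coefficient count.
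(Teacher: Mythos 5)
The paper does not actually prove this lemma at all: its ``proof'' is a one-line citation of \cite[Theorem 2.1]{faadibruno}, so there is no argument of the paper's to compare yours against. Your sketch is essentially the standard proof of that cited result: extract the coefficient of $t^\alpha$ from the degree-$k$ Taylor polynomial of $t \mapsto g(f(x+t))$, expand $g$ around $y=f(x)$, substitute the Taylor polynomial of $h(t)=f(x+t)-f(x)$ (legitimate because $h(0)=0$, so orders add), apply the multinomial theorem to $h^\sigma$, and re-index the resulting selections of monomials by the $\prec$-ordered, zero-padded tuples that constitute $\Sigma(\alpha,\sigma)$. That is the right route, and your alternative induction on $\abs{\alpha}$ via Leibniz and the first-order chain rule is also a viable (if messier) path.

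Two caveats. First, the decisive step — that the passage from unordered multinomial data to tuples in $\Sigma(\alpha,\sigma)$ is a bijection and that the weights $\alpha!\prod_j \frac{1}{\nu^j!\,(\mu^j!)^{\abs{\nu^j}}}$ come out exactly (the $\prec$-ordering together with the padding by $\abs{\alpha}-s$ zero blocks is what prevents over-counting) — is only asserted and then deferred to \cite{faadibruno}; since this is precisely where the content of the formula lies, a self-contained proof would have to carry it out, although deferring it is no worse than what the paper itself does. Second, because $f$ and $g$ are only $\Stetigi{k}$ and not analytic, you should say explicitly why composing Taylor expansions with Peano remainders is permitted: one needs $g\circ f \in \Stetigi{k}$ near $x$ (so its Taylor polynomial exists and is unique) and the estimate that inserting $h(t)=P(t)+o(\abs{t}^k)$ with $P(0)=0$ into $g(y+u)=Q(u)+o(\abs{u}^k)$ yields $Q(P(t))+o(\abs{t}^k)$; this is routine but is the point where the $C^k$ hypothesis is genuinely used.
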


\begin{lemma}[Regel von Leibniz]
Für zwei Funktionen $f,g \in \Stetig{k}{U}$ mit $U \subset \R^n$ offen, $k\in\N$ und einem Multiindex $\nu\in\N^n_0$ ist für ein beliebiges $x\in U$
$$ \partial_x^\nu (fg)(x) = \sum_{0\le\mu\le\nu} {\nu \choose \mu} D_x^\mu f(x) D_x^{\nu-\mu} g(x) .$$
\begin{proof} Siehe \cite[Lemma 2.6]{faadibruno}
\end{proof}
\end{lemma}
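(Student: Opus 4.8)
The plan is to prove the formula by induction on the length $\abs{\nu}$, reducing each step to the one-variable product rule $\partial_{x_k}(uv) = (\partial_{x_k}u)\,v + u\,(\partial_{x_k}v)$ for $u,v \in \Stetig{1}{U}$. The base case $\abs{\nu} = 0$ is the trivial identity $fg = fg$, since $\partial_x^0$ is the identity map and ${0 \choose 0} = 1$. One also records here that $f,g \in \Stetig{k}{U}$ forces $fg \in \Stetig{k}{U}$ by iterating the product rule, so every derivative written below exists and is continuous; in particular mixed partials commute, $\partial_{x_k}D_x^\mu = D_x^{\mu + e_k}$.

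For the inductive step, assume the formula for all multi-indices of length $N$ and let $\abs{\nu} = N + 1$. Choose $k$ with $\nu_k \ge 1$ and write $\nu = \nu' + e_k$, so $\abs{\nu'} = N$. Applying $\partial_{x_k}$ to the induction hypothesis for $\nu'$ and using the one-variable product rule in each summand gives
$$ \partial_x^\nu(fg) = \partial_{x_k}\partial_x^{\nu'}(fg) = \sum_{0 \le \mu \le \nu'} {\nu' \choose \mu}\left( D_x^{\mu + e_k} f\; D_x^{\nu' - \mu} g + D_x^{\mu} f\; D_x^{\nu' - \mu + e_k} g \right). $$
In the first group substitute $\tilde\mu := \mu + e_k$, so $\tilde\mu$ runs over $e_k \le \tilde\mu \le \nu$; the second group already has $\mu$ running over $0 \le \mu \le \nu'$, i.e. $0 \le \mu \le \nu$ with $\mu_k \le \nu_k - 1$. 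Collecting, for each fixed $\mu$ with $0 \le \mu \le \nu$, the coefficient of $D_x^\mu f\; D_x^{\nu - \mu} g$ becomes ${\nu' \choose \mu - e_k} + {\nu' \choose \mu}$, with the convention that a binomial coefficient vanishes when its lower multi-index has a negative or oversized component — which is exactly what happens at the two boundary values $\mu_k = 0$ (only the second term survives) and $\mu_k = \nu_k$ (only the first survives). Since $\nu'_j = \nu_j$ for $j \ne k$ and $\nu_k = \nu'_k + 1$, the factorization ${\nu \choose \mu} = \prod_{j=1}^n {\nu_j \choose \mu_j}$ together with the scalar Pascal identity ${\nu'_k \choose \mu_k - 1} + {\nu'_k \choose \mu_k} = {\nu_k \choose \mu_k}$ gives ${\nu' \choose \mu - e_k} + {\nu' \choose \mu} = {\nu \choose \mu}$, completing the induction.

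The one delicate point is this final bookkeeping: one must verify that, after reindexing, the two merged sums range over exactly $0 \le \mu \le \nu$ and that the boundary indices $\mu_k \in \{0, \nu_k\}$ pick up a contribution from only one of the two sums — precisely the content of the vanishing convention for out-of-range binomial coefficients. An equivalent but slightly different route avoids the induction on $\abs{\nu}$ altogether: write $\partial_x^\nu = \partial_{x_1}^{\nu_1}\cdots\partial_{x_n}^{\nu_n}$ (legitimate by commutativity of the partials), apply the classical one-dimensional Leibniz rule successively in each variable, and then recombine the resulting $n$-fold sum via ${\nu \choose \mu} = \prod_{j=1}^n {\nu_j \choose \mu_j}$; this trades the Pascal-identity step for a product-of-sums rearrangement.
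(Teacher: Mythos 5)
Your proof is correct and carefully done. The paper simply cites \cite[Lemma 2.6]{faadibruno} for this lemma, so there is no ``paper's proof'' to compare against in detail; you supply a genuine, elementary, self-contained argument. The induction on $\abs{\nu}$ via the one-variable product rule and Pascal's identity is the standard route, and you handle the one subtle spot — the reindexing $\tilde\mu = \mu + e_k$ and the boundary cases $\mu_k = 0$ and $\mu_k = \nu_k$, where exactly one of ${\nu' \choose \mu - e_k}$ and ${\nu' \choose \mu}$ vanishes — explicitly and correctly. The remark that $fg \in \Stetig{k}{U}$ (so all the derivatives exist, are continuous, and commute) is a good hygiene step that is often omitted. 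The alternative you sketch, factoring $\partial_x^\nu$ as $\partial_{x_1}^{\nu_1}\cdots\partial_{x_n}^{\nu_n}$ and applying the one-dimensional Leibniz rule in each variable, is equally valid and trades the Pascal step for a Cauchy-product rearrangement; either is acceptable. One small remark on the statement itself rather than your proof: as printed, the lemma mixes $\partial_x^\nu$ on the left with $D_x^\mu$ on the right, yet the notation $D_{x_j} = \tfrac{1}{i}\partial_{x_j}$ is only introduced in the Notation immediately following the lemma; taken literally this would introduce a spurious factor $i^{-\abs{\nu}}$. Your proof implicitly (and correctly) reads $D_x$ as $\partial_x$ here, which is surely the intended reading, but it is worth flagging the notational slip so a reader of your argument is not confused by the mismatch.
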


\begin{notation}
Im Folgenden sei $D_{x_j} := \frac{1}{i} \partial_{x_j}$ für $x = (x_1,\ldots,x_n) \in \R^n$ und 
$D_x = (D_{x_1},\ldots,D_{x_n})$. 
Für einen Multiindex $\alpha \in \N^n_0$ haben wir 
$D_x^\alpha := D_{x_1}^{\alpha_1} \ldots D_{x_n}^{\alpha_n}$.
\end{notation}

\subsubsection{Maßtheorie}

\begin{definition}[$L^p$-Räume]
Sei $\Omega \subseteq \R^n$.
Unter dem $L^p(\Omega)$-Raum für $1 \le p < \infty$ verstehen wir den Raum aller Lebesgue-messbaren Funktionen
$f: \Omega \rightarrow \C$, versehen mit der Norm
$$ \norm{f}{L^p(\Omega)} := \left( \int_{\Omega} \abs{f}^{p} dx \right)^{\frac{1}{p}} .$$
Für zwei Funktionen $f,g \in L^p(\Omega)$ sagen wir $f = g$ fast überall, falls es eine Lebesgue-Nullmenge $N \subset \Omega$ gibt, sodass
$f(x) = g(x)$ für alle $x \in \Omega \setminus N$.
\end{definition}

\begin{thm}[Lebesgue's Satz der dominanten Konvergenz]\label{thm:masstheorieDOM}
Sei $ \menge{f_j}_{j \in \N}$ eine Folge Lebesgue-messbarer Funktionen $f_j : \R^n \rightarrow \C$.
Falls es eine messbare Funktion $f : \R^n \rightarrow \C$ und ein $g \in L^1(\R^n), g : \R^n \rightarrow \R$ gibt mit
\begin{itemize}
\item $\abs{f_j} \le g$ fast überall für jedes $j \in \N$ und
\item $f_j \rightarrow f$ punktweise fast überall für $j \rightarrow \infty$,
\end{itemize}
dann sind $f_j, f \in L^1(\R^n)$ und es konvergiert $f_j \rightarrow f$ in $L^1(\R^n)$, d.h.
$$ \lim_{j \rightarrow \infty} \int_{\R^n} f_j(x) dx = \int_{\R^n} f(x) dx .$$
\begin{proof} Siehe \cite[A 1.21]{Alt}
\end{proof}
\end{thm}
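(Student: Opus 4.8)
The plan is to deduce the statement from Fatou's Lemma, which we take for granted here as one of the ``bekannte Resultate der grundlegenden Analysis'' (it itself rests on the Monotone Convergence Theorem); this reduces the whole assertion to one application of Fatou to a cleverly chosen nonnegative sequence.

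First I would dispatch the integrability claims. Each $f_j$ is measurable and satisfies $\abs{f_j} \le g$ almost everywhere with $g \in L^1(\R^n)$, so monotonicity of the integral gives $\int_{\R^n} \abs{f_j}\, dx \le \int_{\R^n} g\, dx < \infty$ and hence $f_j \in L^1(\R^n)$. Since $f$ is the pointwise almost-everywhere limit of measurable functions it is itself measurable, and letting $j \to \infty$ in $\abs{f_j} \le g$ yields $\abs{f} \le g$ almost everywhere, so $f \in L^1(\R^n)$ as well. In particular $\abs{f_j - f} \le \abs{f_j} + \abs{f} \le 2g$ almost everywhere, so the functions $h_j := 2g - \abs{f_j - f}$ are measurable and nonnegative almost everywhere.

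The core step is to apply Fatou's Lemma to $\{h_j\}_{j\in\N}$. Because $f_j \to f$ pointwise almost everywhere, $\abs{f_j - f} \to 0$ almost everywhere, so $\liminf_{j} h_j = 2g$ almost everywhere. Fatou's Lemma, combined with linearity of the integral and $\liminf(-a_j) = -\limsup a_j$, then gives
\begin{align*}
\int_{\R^n} 2g\, dx &\le \liminf_{j\to\infty} \int_{\R^n} h_j\, dx \\
&= \int_{\R^n} 2g\, dx - \limsup_{j\to\infty} \int_{\R^n} \abs{f_j - f}\, dx .
\end{align*}
Since $\int_{\R^n} 2g\, dx$ is finite it may be cancelled from both sides, leaving $\limsup_{j\to\infty} \int_{\R^n} \abs{f_j - f}\, dx \le 0$; as these integrals are nonnegative, this forces $\int_{\R^n} \abs{f_j - f}\, dx \to 0$. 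The triangle inequality for the integral then yields $\left| \int_{\R^n} f_j\, dx - \int_{\R^n} f\, dx \right| \le \int_{\R^n} \abs{f_j - f}\, dx \to 0$, which is exactly the claimed identity of limits.

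The only genuinely delicate point is the cancellation of $\int_{\R^n} 2g\, dx$ in the display: it is legitimate precisely because $g \in L^1(\R^n)$, so the $L^1$-hypothesis on the dominating function enters in an essential way (without it Fatou's inequality would be vacuous). Running the argument through $2g - \abs{f_j - f}$ rather than treating real and imaginary, positive and negative parts separately is what lets the complex-valued case be handled in a single stroke; everything else is routine once Fatou's Lemma is granted.
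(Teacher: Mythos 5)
Your argument is correct: the reduction of the integrability claims to the domination $\abs{f_j}\le g$, the passage to $\abs{f}\le g$ in the limit, and the single application of Fatou's Lemma to $h_j := 2g-\abs{f_j-f}$ (nonnegative almost everywhere, with $\liminf_j h_j = 2g$ a.e.) followed by cancellation of the finite quantity $\int 2g\,dx$ is the classical proof, and it even establishes the stronger $L^1$-convergence $\int\abs{f_j-f}\,dx\to 0$ that the statement asserts, from which the convergence of the integrals follows by the triangle inequality. Note, however, that the paper does not prove this theorem at all: it is listed among the standard measure-theoretic prerequisites and the ``proof'' is merely the reference \cite[A 1.21]{Alt}, so there is no argument in the text to compare yours against. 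Your route is the standard textbook one (essentially what that reference contains), with the only points worth being explicit about being that $h_j\ge 0$ only almost everywhere (which Fatou tolerates after discarding a null set) and that the cancellation step genuinely uses $g\in L^1(\R^n)$, as you correctly emphasize; treating the complex-valued case through $\abs{f_j-f}$ rather than through real and imaginary parts is a legitimate and economical choice.
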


\begin{thm}[Fubini]\label{thm:massthreorieFubini}
Sei $f \in L^1(\R^n \times \R^n)$. 
Dann ist $x \mapsto f(x,y) \in L^1(\R^n)$ fast überall für alle $y \in \R^n$ 
und $F : y \mapsto \int_{\R^n} f(x,y) dy$ existiert fast überall.
Es ist $F \in L^1(\R^n)$ mit
$$ \int_{\R^n} F(y) dy = \int_{\R^n\times\R^n} f(x,y) d(x,y) .$$
Vertauschen wir die Variablen $x$ und $y$, so erhalten wir die analoge Aussage
$$ \int_{\R^n} \left( \int_{\R^n} f(x,y) dx \right) dy = \int_{\R^n\times\R^n} f(x,y) d(x,y) = \int_{\R^n} \left( \int_{\R^n} f(x,y) dy \right) dx .$$
\begin{proof} Siehe \cite[A 4.10]{Alt}
\end{proof}
\end{thm}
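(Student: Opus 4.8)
Der Plan ist, die Aussage zunächst für nicht-negative messbare Funktionen zu beweisen (Satz von Tonelli) und daraus den Fall $f \in L^1(\R^n\times\R^n)$ durch Zerlegung in vier nicht-negative Anteile abzuleiten. Dabei nutzen wir durchgehend, dass $\R^n$ mit dem Lebesgue-Maß $\sigma$-endlich ist (etwa $\R^n = \bigcup_{k\in\N} [-k,k]^n$), und dass das Lebesgue-Maß auf $\R^n\times\R^n \cong \R^{2n}$ die Vervollständigung des Produktmaßes ist.

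Zuerst würde ich die Messbarkeit der Schnitte klären: Für eine messbare Menge $E \subseteq \R^n\times\R^n$ ist der Schnitt $E_y := \menge{ x \in \R^n : (x,y) \in E }$ für fast alle $y \in \R^n$ Lebesgue-messbar, die Funktion $y \mapsto \vol(E_y)$ ist messbar, und es gilt $\int_{\R^n} \vol(E_y)\, dy = \vol(E)$. Dies zeigt man in Stufen: für Quader $E = A \times B$ ist alles unmittelbar klar; auf der Algebra der endlichen disjunkten Quadervereinigungen folgt es durch Additivität; mit dem Satz über monotone Klassen (bzw.\ einem Dynkin-System-Argument) überträgt es sich — unter Ausnutzung der $\sigma$-Endlichkeit — auf die erzeugte $\sigma$-Algebra; schließlich behandelt man allgemeine Lebesgue-messbare Mengen über die Vervollständigung, wobei Nullmengen und ihre Schnitte gesondert zu diskutieren sind. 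Damit ist die behauptete Identität für Indikatorfunktionen $f = \characteristic{E}$ etabliert.

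Durch Linearität erhält man die Formel für einfache Funktionen, und mit dem Satz von der monotonen Konvergenz (Beppo Levi) für beliebige nicht-negative messbare $f$ die Tonelli-Identität
$$ \int_{\R^n} \left( \int_{\R^n} f(x,y)\, dx \right) dy = \int_{\R^n\times\R^n} f(x,y)\, d(x,y) ,$$
wobei beide Seiten in $[0,\infty]$ übereinstimmen. Für $f \in L^1(\R^n\times\R^n)$ wende ich dies zuerst auf $\abs{f}$ an: wegen $\int_{\R^n} ( \int_{\R^n} \abs{f(x,y)}\, dx )\, dy = \norm{f}{L^1(\R^n\times\R^n)} < \infty$ ist $x \mapsto f(x,y)$ für fast alle $y$ in $L^1(\R^n)$, und das innere Integral ist fast überall endlich. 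Anschließend zerlege ich $f = (\Real f)^+ - (\Real f)^- + i(\Imag f)^+ - i(\Imag f)^-$, wende Tonelli auf jeden der vier nicht-negativen, integrierbaren Anteile an und subtrahiere — die eben gezeigte Endlichkeit rechtfertigt die Subtraktion und die punktweise Definiertheit. So folgt, dass $F(y) := \int_{\R^n} f(x,y)\, dx$ fast überall definiert ist, zu $L^1(\R^n)$ gehört und $\int_{\R^n} F(y)\, dy = \int_{\R^n\times\R^n} f\, d(x,y)$ erfüllt; Vertauschen der Rollen von $x$ und $y$ liefert die zweite iterierte Identität.

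Den Hauptaufwand erwarte ich im ersten Schritt — der Messbarkeit der Schnitte und der Schnitt-Maß-Funktion —, da man hier den Satz über monotone Klassen braucht und die Vervollständigung sorgfältig behandeln muss (der Schnitt $E_y$ einer Lebesgue-Nullmenge $E \subseteq \R^{2n}$ ist nur für fast alle $y$, und dann als Teilmenge einer Nullmenge, messbar). Ein alternativer, weniger maßtheorie-lastiger Weg approximiert $f$ in der $L^1$-Norm durch stetige Funktionen $g_k$ mit kompaktem Träger, für die die Identität elementar ist, wählt eine Teilfolge mit $\sum_k \norm{g_{k+1} - g_k}{L^1(\R^n\times\R^n)} < \infty$ und schließt mit einer Teleskopsumme als integrierbarer Dominante sowie dem Satz von der dominanten Konvergenz (Theorem \ref{thm:masstheorieDOM}) auf $F(y) = \int_{\R^n} f(x,y)\, dx$ fast überall; auch hierbei geht die elementare Version von Fubini für stetige Funktionen mit kompaktem Träger an entscheidender Stelle ein.
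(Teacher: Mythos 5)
Die Arbeit beweist diesen Satz nicht selbst, sondern verweist lediglich auf \cite[A 4.10]{Alt}; insofern liefern Sie hier gerade das, was die Arbeit an die Literatur delegiert. Ihre Skizze ist in den wesentlichen Punkten korrekt und entspricht dem Standardweg, den auch die zitierte Quelle geht: Schnittmengen-Lemma über Quader, Algebra, monotone Klassen (unter Ausnutzung der $\sigma$-Endlichkeit), dann die Vervollständigung gesondert; daraus Tonelli über einfache Funktionen und Beppo Levi; schließlich der $L^1$-Fall durch Anwendung auf $\abs{f}$ und Zerlegung in die vier nichtnegativen Anteile von Real- und Imaginärteil. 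Die beiden heiklen Stellen, an denen solche Skizzen typischerweise lückenhaft sind, benennen Sie explizit und richtig: dass Schnitte Lebesgue-messbarer Mengen (im Gegensatz zu Mengen der Produkt-$\sigma$-Algebra) nur für fast alle $y$ messbar sind, weil das Lebesgue-Maß auf $\R^{2n}$ die Vervollständigung des Produktmaßes ist, und dass die Subtraktion der vier Anteile nur dort punktweise erlaubt ist, wo die inneren Integrale endlich sind, was durch die vorgeschaltete Anwendung auf $\abs{f}$ fast überall gesichert wird. Der alternative Weg über Approximation durch stetige Funktionen mit kompaktem Träger und eine Teilfolge mit summierbaren $L^1$-Differenzen ist ebenfalls tragfähig, verlangt aber an der Stelle, an der Sie aus der $L^1$-Konvergenz der $g_k$ auf die fast überall gültige Identität $F(y)=\int f(x,y)\,dx$ schließen, noch die explizite Feststellung, dass die Teleskop-Dominante für fast alle $y$ auch die Konvergenz der Schnittfunktionen $x \mapsto g_k(x,y)$ in $L^1(\R^n)$ und fast überall liefert; das ist Routine, sollte aber ausgeführt werden. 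Inhaltliche Lücken sehe ich keine; der Mehrwert Ihrer Version gegenüber der Arbeit ist schlicht, dass sie den Beweis ausführt statt ihn zu zitieren.
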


\begin{thm}[Transformationsformel]\label{thm:masstheorieTransformation}
Seien $X, Y \subseteq \R^n$ offen, und eine Abbildung $h : Y \rightarrow X$ gegeben.
Falls $h$ fast überall ein $\Stetigi{1}$-Diffeomorphmus ist,
d.h. es gibt Lebesgue-Nullmengen $N_X \subset X, N_Y \subset Y$, sodass $h : Y \setminus N_Y \rightarrow X \setminus N_X$ ein Diffeomorphismus ist,
dann gilt 
$$ \int_X f(x) dx = \int_Y f(h(y)) \abs{ \det \derive{h}{y} } dy \textrm{ für alle } f \in L^1(\R^n) ,$$
wobei $f$ genau dann Lebesgue-integrierbar ist, wenn $f \circ h \abs{ \det \derivei{h} }$ Lebesgue-integrierbar ist.
\begin{proof} Siehe \cite[4.10]{Elstrodt}
\end{proof}
\end{thm}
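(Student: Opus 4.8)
The strategy is to reduce the statement to the classical substitution formula for genuine $\Stetigi{1}$-diffeomorphisms between open sets (\cite[4.10]{Elstrodt}); the only additional work is to clear the exceptional null sets $N_X$ and $N_Y$ out of the way.

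First I would reduce to nonnegative integrands. It suffices to prove, for every measurable function $g : \R^n \rightarrow [0,\infty]$, the identity
$$ \int_X g(x)\,dx = \int_Y g(h(y))\,\abs{\det\derive{h}{y}}\,dy $$
as an equation in $[0,\infty]$: applying it to the four nonnegative functions $(\Real f)_+, (\Real f)_-, (\Imag f)_+, (\Imag f)_-$ (all in $L^1(\R^n)$) and using linearity of the integral on both sides yields the claimed equation for $f$; applying it to $g := \abs{f}$ shows, since $\abs{(f\circ h)\,\abs{\det\derivei{h}}} = (\abs{f}\circ h)\,\abs{\det\derivei{h}}$, that $\int_X\abs{f}\,dx$ and $\int_Y(\abs{f}\circ h)\,\abs{\det\derivei{h}}\,dy$ are simultaneously finite or infinite, which is exactly the asserted equivalence of integrability.

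For the identity with $g \ge 0$ I would use that $N_X$ and $N_Y$ are Lebesgue null sets: one has $\int_X g\,dx = \int_{X\setminus N_X} g\,dx$ as well as $\int_Y(g\circ h)\,\abs{\det\derivei{h}}\,dy = \int_{Y\setminus N_Y}(g\circ h)\,\abs{\det\derivei{h}}\,dy$, and in particular the — possibly uncontrolled — values of $h$ and $\derivei{h}$ on the null set $N_Y$ are irrelevant to the right-hand side. Thus it remains to show
$$ \int_{X\setminus N_X} g(x)\,dx = \int_{Y\setminus N_Y} g(h(y))\,\abs{\det\derive{h}{y}}\,dy, $$
and on these sets $h$ is by hypothesis a $\Stetigi{1}$-diffeomorphism with everywhere non-vanishing Jacobian, so the classical substitution formula applies.

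The main obstacle, as I see it, is the technical point that $X\setminus N_X$ and $Y\setminus N_Y$ need not be open, whereas the textbook form of the substitution formula is stated for open sets. A convenient way out is to restrict attention to the case relevant in the later applications, namely that $N_X$ and $N_Y$ are closed null sets (e.g. contained in a lower-dimensional subset), so that $Y\setminus N_Y$ and $X\setminus N_X$ are open and the classical theorem applies directly. Otherwise one invokes a more general version of the substitution formula for injective maps that are differentiable at every point of a measurable set; its proof in turn rests on the fact that such a map has the Luzin property (N), i.e. carries Lebesgue null sets to Lebesgue null sets, and is reduced to the familiar open-set case by exhaustion.
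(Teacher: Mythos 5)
The paper gives no proof of its own here: it simply cites Elstrodt, Satz 4.10, which is already stated in Elstrodt at a level of generality that covers the present formulation (an injective, a.e.\ differentiable map with an exceptional Lebesgue null set). Your proposal therefore necessarily takes a different route: you reconstruct the statement from the more elementary open-set substitution theorem by stripping out the null sets. That reduction is sound. The splitting $f = (\Real f)_+ - (\Real f)_- + i(\Imag f)_+ - i(\Imag f)_-$ and the observation that applying the nonnegative identity to $\abs{f}$ yields the integrability equivalence are both correct, as is the remark that the values of $h$ and $\derivei{h}$ on the null set $N_Y$ are irrelevant to the right-hand integral. You also correctly put your finger on the one genuine technical point, namely that $Y\setminus N_Y$ and $X\setminus N_X$ need not be open, and your two escapes (assume the null sets closed, or invoke the Luzin property (N) and exhaust by open sets) are both standard and adequate. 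One small remark: since the hypothesis asserts that $h\colon Y\setminus N_Y \rightarrow X\setminus N_X$ is a $\Stetigi{1}$-diffeomorphism, one can reasonably read this as already requiring $Y\setminus N_Y$ to be open (otherwise the notion of $\Stetigi{1}$-diffeomorphism is not defined), which would make your open-set worry disappear without any extra machinery. In short, your argument buys self-containedness at the cost of either a mild extra hypothesis or an appeal to property (N), whereas the paper buys brevity by citing a textbook theorem that already packages that work.
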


\begin{satz}\label{satz:masstheorieStetig}
Sei $y_0 \in \R^n$ ein Punkt und $f : \R^n \times \R^n \rightarrow \C$ eine Funktion mit den folgenden Eigenschaften:
\begin{itemize}
\item Für jedes $x \in \R^n$ ist $y \mapsto f(x,y)$ stetig in $y_0$.
\item Für jedes $y \in \R^n$ ist $x \mapsto f(x,y)$ über $\R^n$ integrierbar.
\item Es gibt eine integrierbare Funktion $F \in L^1(\R^n)$, sodass $\abs{f(x,y)} \le F(x)$ für alle $y \in \R^n$ und für fast alle $x \in \R^n$.
\end{itemize}
Dann ist die durch $g(y) := \int_{\R^n} f(x,y) dx$ definierte Funktion $g : \R^n \rightarrow \C$ im Punkt $y_0$ stetig.
\begin{proof} Siehe \cite[\S 11 Satz 1]{Forster3}
\end{proof}
\end{satz}

\begin{satz}\label{satz:masstheorieAbl}
Sei $f : \R^n \times \R \rightarrow \C$ eine Funktion mit den folgenden Eigenschaften:
\begin{itemize}
\item Für jedes $x \in \R^n$ ist die Funktion $t \mapsto f(x,t)$ differenzierbar auf $\R$.
\item Für jedes $t \in \R$ ist die Funktion $x \mapsto f(x,t)$ über $\R^n$ integrierbar.
\item Es gibt eine integrierbare reelle Funktion $F \in L^1(\R^n)$ mit $\abs{ \partial_t f(x,t) } \le F(x)$ für alle $(x,t) \in \R^n \times \R$.
\end{itemize}
Dann ist die durch $g(t) := \int_{\R^n} f(x,t) dx$ definierte Funktion $g : \R \rightarrow \C$ differenzierbar.
Für jedes $t \in \R$ ist die Funktion $x \mapsto \partial_t f(x,t)$ über $\R^n$ integrierbar und es gilt, dass
$ g'(t) = \int_{\R^n} \partial_t f(x,t) dx $.
\begin{proof} Siehe \cite[\S 11 Satz 2]{Forster3}
\end{proof}
\end{satz}

\begin{notation}
Wir werden den Integrationsbereich des Lebesgue-Integrals weglassen, falls wir auf dem gesamten $\R^n$ integrieren, 
d.h. wir schreiben $\int$ für $\int_{\R^n}$.
\end{notation}

\begin{definition}[Japanische Klammer]\label{definition:japanese}
Für $\xi \in \R^n$ setzen wir $\piketi{\xi} := \left( 1 + \abs{\xi}^2 \right)^{\frac{1}{2}}$.
Die Klammerung $\piketi{\hold}$ wird auch japanische Klammer genannt.
Offensichtlich ist $\piketi{\xi} - \abs{\xi} > 0$ für alle $\xi \in \R^n$.
Wir verwenden die japanische Klammer, 
da sie im Gegensatz zur Norm für negative Exponenten bei Null keine Singularität aufweist, und wir somit ein Abschätzungskriterium für ``gutartige'' Funktionen erhalten.
\end{definition}
\nomenclature{$\piketi{\hold}$}{Japanische Klammer - Definition \ref{definition:japanese}}

\begin{lemma}\label{lemma:japanese}
Für $\epsilon > 0$ gibt es eine Konstante $C > 0$, sodass $C^{-1} \piketi{x} \le \abs{x} \le C \piketi{x}$ für alle $x \in \R^n$ mit $\abs{x} > \epsilon$.
\begin{proof}
Setze $C := \abs{\epsilon}^{-1} \piketi{\epsilon} > 1$. 
Offensichtlich ist $\R_+ \rightarrow \R, r \mapsto r^{-1} \piketi{r}$ monoton fallend, also $\abs{x}^{-1} \piketi{x} \le C$ für alle $x \in \R^n$ mit $\abs{x} > \epsilon$.
Damit folgt $C^{-1} \piketi{x} \le \frac{\abs{x}}{\piketi{x}} \piketi{x} = \abs{x}$ und nach Definition \ref{definition:japanese} ist
$\abs{x} \le \abs{x} \frac{\piketi{x}}{\abs{x}} C = C \piketi{x}$.
\end{proof}
\end{lemma}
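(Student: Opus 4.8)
The plan is to reduce everything to a one-variable monotonicity argument. First I would observe that the right-hand inequality is essentially free: from Definition \ref{definition:japanese} we have $\abs{x}^2 \le 1 + \abs{x}^2 = \piketi{x}^2$, hence $\abs{x} \le \piketi{x}$ for every $x \in \R^n$, so any constant $C \ge 1$ already yields $\abs{x} \le C\piketi{x}$. The whole content of the lemma therefore lies in bounding $\piketi{x}$ from above by a multiple of $\abs{x}$, which can only work away from the origin.

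For the left-hand inequality I would introduce the auxiliary function $h : \R_+ \to \R$, $h(r) := r^{-1}\piketi{r} = \sqrt{1 + r^{-2}}$, the last identity following by squaring. From this closed form it is immediate that $h$ is strictly monotonically decreasing on $\R_+$. Consequently, for any $x \in \R^n$ with $\abs{x} > \epsilon$ we get $\abs{x}^{-1}\piketi{x} = h(\abs{x}) \le h(\epsilon) = \epsilon^{-1}\piketi{\epsilon}$, that is $\piketi{x} \le C\abs{x}$ with the choice $C := \epsilon^{-1}\piketi{\epsilon}$.

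Finally I would check that this single $C$ serves both estimates simultaneously: since $\piketi{\epsilon} > \abs{\epsilon} = \epsilon$ we have $C = \epsilon^{-1}\piketi{\epsilon} > 1$, so combining with the trivial bound from the first step gives $C^{-1}\piketi{x} \le \abs{x} \le \piketi{x} \le C\piketi{x}$ for all $x$ with $\abs{x} > \epsilon$, which is exactly the asserted chain of inequalities. There is no genuine obstacle here; the only points deserving a line of care are verifying the monotonicity of $h$ (equivalently that $r \mapsto r^{-2}$ is decreasing on $\R_+$) and noting that $C \ge 1$ so that it is an admissible constant on both sides at once.
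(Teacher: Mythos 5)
Your argument is correct and follows essentially the same route as the paper: the same constant $C = \epsilon^{-1}\piketi{\epsilon}$ and the same monotonicity of $r \mapsto r^{-1}\piketi{r}$ on $\R_+$. You merely make the monotonicity explicit via $\sqrt{1+r^{-2}}$ and use the trivial bound $\abs{x}\le\piketi{x}$ for the right-hand inequality, which is a slightly cleaner finish than the paper's but not a different proof.
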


\begin{lemma}[Ungleichung von Peetre] \label{ungl:peetre}
Für $s\in\R$ und $\xi,\eta\in\R^n$ gilt
$$\piket{\xi}{s} \le 2^{\abs{s}} \piket{\xi - \eta}{\abs{s}} \piket{\eta}{s} .$$
\begin{proof} Siehe \cite{Treves}
\end{proof}
\end{lemma}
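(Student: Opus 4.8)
The plan is to reduce the assertion to the case $s \ge 0$ and there to argue with the square of the japanische Klammer, so that only elementary inequalities in $\R^n$ are needed.

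\emph{Squared inequality.} First I would show that
\[ \piketi{\xi}^2 \le 2\, \piketi{\xi-\eta}^2\, \piketi{\eta}^2 \]
holds for all $\xi,\eta\in\R^n$. Writing $\xi = (\xi-\eta)+\eta$, the triangle inequality together with $(a+b)^2 \le 2a^2 + 2b^2$ gives $\abs{\xi}^2 \le 2\abs{\xi-\eta}^2 + 2\abs{\eta}^2$, whence
\[ 1+\abs{\xi}^2 \le 1 + 2\abs{\xi-\eta}^2 + 2\abs{\eta}^2 \le 2\bigl(1+\abs{\xi-\eta}^2\bigr)\bigl(1+\abs{\eta}^2\bigr), \]
where the last estimate is nothing but $2(1+a)(1+b) = 2+2a+2b+2ab \ge 1+2a+2b$ for $a,b\ge 0$. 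By Definition \ref{definition:japanese} this is the claimed squared inequality.

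\emph{Nonnegative exponents.} For $s\ge 0$ I would raise the squared inequality to the power $s/2$, using monotonicity of $t\mapsto t^{s/2}$ on $\R_+$, to obtain
\[ \piket{\xi}{s} \le 2^{s/2}\, \piket{\xi-\eta}{s}\, \piket{\eta}{s} \le 2^{\abs{s}}\, \piket{\xi-\eta}{\abs{s}}\, \piket{\eta}{s}, \]
since $s=\abs{s}$ and $2^{s/2}\le 2^{s}$ for $s\ge 0$. This already proves the lemma for $s\ge 0$.

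\emph{Negative exponents.} For $s<0$ set $t:=-s>0$. Applying the previous step with exponent $t$ but with the roles of $\xi$ and $\eta$ interchanged, and using $\piketi{\eta-\xi}=\piketi{\xi-\eta}$, gives $\piket{\eta}{t}\le 2^{t}\,\piket{\xi-\eta}{t}\,\piket{\xi}{t}$. Dividing both sides by $\piket{\eta}{t}\piket{\xi}{t}>0$ and substituting $t=-s$ yields exactly $\piket{\xi}{s}\le 2^{\abs{s}}\,\piket{\xi-\eta}{\abs{s}}\,\piket{\eta}{s}$. There is no genuine analytic obstacle here; the only point demanding a little care is the exponent bookkeeping — keeping track of $2^{s/2}$ against the stated (weaker) constant $2^{\abs{s}}$, and carrying out the symmetrization correctly in the negative-exponent case.
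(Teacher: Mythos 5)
Your proof is correct and complete. Note that the paper itself does not prove the lemma at all — it only refers to \cite{Treves} — so there is no in-paper argument to compare against; your elementary derivation is the standard one. The squared inequality $\piketi{\xi}^2 \le 2\,\piketi{\xi-\eta}^2\,\piketi{\eta}^2$ follows exactly as you say from $(a+b)^2\le 2a^2+2b^2$ and $2(1+a)(1+b)\ge 1+2a+2b$; raising it to the power $s/2$ for $s\ge 0$ is legitimate by monotonicity and in fact gives the sharper constant $2^{s/2}\le 2^{\abs{s}}$, so the bookkeeping works out. The reduction of the case $s<0$ by applying the nonnegative case with $\xi$ and $\eta$ interchanged and dividing by $\piket{\eta}{t}\piket{\xi}{t}>0$ is also valid, since the japanische Klammer never vanishes. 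In short: a correct, self-contained proof of a statement the paper only cites.
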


\begin{lemma}\label{lemma:masstheoriePiket}
Für $s \in \R_+$ mit $s > n$ ist $\piket{x}{-s} \in L^1(\R^n)$.
\begin{proof} Siehe \cite[Lemma 1.3]{Raymond}
\end{proof}
\end{lemma}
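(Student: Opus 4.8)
The plan is to reduce the $n$-dimensional integral to a one-dimensional one and to isolate the behaviour at infinity, since that is the only place where the hypothesis $s > n$ is needed. First I would split $\R^n$ into the closed unit ball $B := \menge{x \in \R^n : \abs{x} \le 1}$ and its complement. On $B$ one has $\piketi{x} \ge 1$, hence $\piket{x}{-s} \le 1$, and as $B$ has finite Lebesgue measure the contribution $\int_B \piket{x}{-s}\,dx$ is finite. So it remains to control $\int_{\R^n \setminus B} \piket{x}{-s}\,dx$.

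For the exterior part I would pass to polar coordinates via the transformation formula (Theorem \ref{thm:masstheorieTransformation}): writing $x = r\omega$ with $r = \abs{x} > 1$ and $\omega \in S^{n-1}$, the integral becomes $\abs{S^{n-1}} \int_1^\infty (1+r^2)^{-s/2} r^{n-1}\, dr$. On $\{r > 1\}$ one estimates $(1+r^2)^{-s/2} \le (r^2)^{-s/2} = r^{-s}$ (using $s > 0$), so the integrand is dominated by $r^{n-1-s}$; and $\int_1^\infty r^{n-1-s}\,dr$ converges precisely because $n-1-s < -1$, i.e. $s > n$. Combining the two parts gives $\piket{x}{-s} \in L^1(\R^n)$.

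Alternatively, to avoid invoking polar coordinates one can use a dyadic decomposition $\R^n \setminus B = \bigcup_{k \ge 0} A_k$ with $A_k := \menge{x : 2^k < \abs{x} \le 2^{k+1}}$: on $A_k$ one has $\piketi{x} > 2^k$, hence $\piket{x}{-s} \le 2^{-ks}$, while $\vol A_k \le \vol B_{2^{k+1}} = \Landau{2^{kn}}$, so $\int_{A_k} \piket{x}{-s}\,dx = \Landau{2^{k(n-s)}}$ and summing over $k \ge 0$ yields a convergent geometric series because $s > n$. The crude bound $\piket{x}{-s} \le \abs{x}^{-s}$ for $x \ne 0$ (immediate from $\piketi{x}^2 = 1 + \abs{x}^2 \ge \abs{x}^2$), or Lemma \ref{lemma:japanese} with $\epsilon = 1$, makes the comparison with $\abs{x}^{-s}$ on the exterior region rigorous. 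There is no serious obstacle here; the only point requiring attention is to place the hypothesis $s > n$ exactly where the radial integral (resp. the geometric series) would otherwise diverge.
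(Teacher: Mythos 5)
Your proposal is correct. Note that the paper itself does not prove this lemma at all: it simply cites \cite[Lemma 1.3]{Raymond}, so you are supplying a self-contained argument where the text defers to the literature. Both of your routes are sound: the splitting into the unit ball and its complement is exactly where the hypothesis $s>n$ must enter, the bound $\piket{x}{-s}\le \abs{x}^{-s}$ (or Lemma \ref{lemma:japanese}) justifies the comparison on the exterior, and the radial integral $\int_1^\infty r^{n-1-s}\,dr$ (resp.\ the geometric series $\sum_k 2^{k(n-s)}$) converges precisely because $s>n$. The polar-coordinates version is essentially the standard textbook proof that the cited reference contains; the dyadic-annuli version is a nice alternative that avoids the transformation formula and fits well with the Littlewood--Paley spirit of the paper. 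Either one would serve as a complete replacement for the citation.
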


\begin{lemma}\label{lemma:piketTech} 
Sei $m \in \R$ gegeben. 
Dann gibt es zu jedem Multiindex $\alpha \in \N^n_0$ eine Konstante $C_{m,\alpha} > 0$, sodass
$$ \abs{\partial_\xi^\alpha \piket{\xi}{m} } \le  C_{m,\alpha} \piket{\xi}{m-\abs{\alpha}} .$$
\begin{proof}
Betrachte $p : \R^{n+1} \setminus \menge{0} \rightarrow \R, (\lambda,\xi) \mapsto \left(\lambda^2 + \abs{\xi}^2\right)^{\frac{m}{2}}$.
$p$ ist eine homogene Funktion vom Grad $m$, da für jedes $r \in \R \setminus \menge{0}$
$$
p(r\lambda, r\xi) 
= \left( r^2\lambda^2 + r^2\abs{\xi}^2\right)^{\frac{m}{2}} 
= r^m p(\lambda,\xi) \textrm{ für alle } \lambda \in \R, \xi \in \R^n \textrm{ mit } \lambda,\xi \neq 0
.
$$
Sei nun $\alpha \in \N^n_0$. 
Ohne Einschränkungen sei $\partial_\xi^\alpha p(\lambda,\xi) \not\equiv 0$.
Da $p$ homogen zum Grad $m$ ist, ist $\partial_\xi^\alpha p(\lambda,\xi)$ homogen zum Grad $m-\abs{\alpha}$.
Für $\abs{\xi} \ge 1$ haben wir
$$
\abs{ \partial_\xi^\alpha p(\lambda,\xi)}
\le \sup_{\substack{\eta \in \R^n \\ \abs{\eta} = 1}} \abs{ \partial_\xi^\alpha p(\abs{\xi}\lambda, \abs{\xi}\eta) } 
= \abs{\xi}^{m-\abs{\alpha}} \sup_{\abs{\eta} = 1} \abs{ \partial_\xi^\alpha p(\lambda, \eta) }
\le C_{\lambda} \piket{\xi}{m-\abs{\alpha}} 
$$
mit $C_{\lambda} := \sup_{\abs{\eta} = 1} \abs{ \partial_\xi^\alpha p(\lambda, \eta) }$ unabhängig von $\xi \in \R^n$.
Für $\abs{\xi} < 1$ folgt sofort $ \abs{ \partial_\xi^\alpha p(\lambda,\xi)} \le C_{\lambda}$.
Schließlich erhalten wir die Behauptung, wenn wir $\lambda = 1$ setzen.
\end{proof}
\end{lemma}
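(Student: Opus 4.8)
The plan is to proceed by induction on $\abs{\alpha}$, establishing along the way an explicit structural description of the derivatives $\partial_\xi^\alpha \piket{\xi}{m}$. For $\alpha = 0$ there is nothing to show ($C_{m,0} = 1$). For the inductive step I would first prove the auxiliary claim that $\partial_\xi^\alpha \piket{\xi}{m}$ is a finite linear combination, with coefficients depending only on $m$ and $\alpha$, of functions of the form $\xi^\beta \piket{\xi}{m - 2\ell}$ with $\beta \in \N^n_0$, $\ell \in \N_0$ and $2\ell - \abs{\beta} = \abs{\alpha}$. This follows by a second induction: from $\partial_{\xi_j}\piket{\xi}{m-2\ell} = (m-2\ell)\,\xi_j \piket{\xi}{m-2\ell-2}$ (immediate from Definition \ref{definition:japanese}) one obtains
$$\partial_{\xi_j}\left(\xi^\beta \piket{\xi}{m-2\ell}\right) = \beta_j\,\xi^{\beta-e_j}\piket{\xi}{m-2\ell} + (m-2\ell)\,\xi^{\beta+e_j}\piket{\xi}{m-2\ell-2},$$
where the first summand is understood to vanish when $\beta_j = 0$; both summands again have the required shape, with the quantity $2\ell - \abs{\beta}$ increased by exactly $1$, matching the increase of $\abs{\alpha}$.

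Granting this description, the estimate is then straightforward: since $2\ell - \abs{\beta} = \abs{\alpha}$ we have $m - 2\ell = (m-\abs{\alpha}) - \abs{\beta}$, so using $\abs{\xi} \le \piketi{\xi}$ (clear from Definition \ref{definition:japanese}),
$$\abs{\xi^\beta \piket{\xi}{m-2\ell}} \le \abs{\xi}^{\abs{\beta}}\piket{\xi}{(m-\abs{\alpha})-\abs{\beta}} \le \piket{\xi}{\abs{\beta}}\piket{\xi}{(m-\abs{\alpha})-\abs{\beta}} = \piket{\xi}{m-\abs{\alpha}}.$$
Here it is essential that we work with the japanische Klammer rather than $\abs{\xi}$ itself, since the exponent $(m-\abs{\alpha})-\abs{\beta}$ may be negative while $\piketi{\hold}$ has no singularity at the origin. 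Summing the finitely many terms against their coefficients produces a constant $C_{m,\alpha}$ depending only on $m$ and $\alpha$, as claimed.

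The only delicate point is the bookkeeping in the auxiliary claim — tracking the invariant $2\ell - \abs{\beta} = \abs{\alpha}$ through the repeated differentiation — but this is routine rather than a genuine obstacle. An alternative, more geometric route would exploit that $(\lambda,\xi) \mapsto \left(\lambda^2 + \abs{\xi}^2\right)^{m/2}$ is positively homogeneous of degree $m$ on $\R^{n+1}\setminus\menge{0}$, hence its $\xi$-derivatives are homogeneous of degree $m - \abs{\alpha}$; setting $\lambda = 1$ and splitting into $\abs{\xi} \ge 1$ (rescale onto the unit sphere and invoke compactness) and $\abs{\xi} < 1$ (bound directly by continuity) yields the same bound. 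I would present the inductive argument as the main proof, since it is entirely elementary, stays within $\R^n$, and delivers the explicit form of the derivatives as a by-product.
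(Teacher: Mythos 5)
Your proof is correct. The inductive bookkeeping is sound: starting from $\partial_{\xi_j}\piket{\xi}{s} = s\,\xi_j\piket{\xi}{s-2}$, each differentiation of a term $\xi^\beta\piket{\xi}{m-2\ell}$ produces terms in which the quantity $2\ell-\abs{\beta}$ increases by exactly one (either $\abs{\beta}$ drops by one at fixed $\ell$, or $\ell$ increases by one while $\abs{\beta}$ grows by one), so the invariant $2\ell-\abs{\beta}=\abs{\alpha}$ is preserved, and the final estimate $\abs{\xi^\beta\piket{\xi}{m-2\ell}}\le\piket{\xi}{m-\abs{\alpha}}$ follows from $\abs{\xi}\le\piketi{\xi}$. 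This is, however, a genuinely different route from the paper's: there the symbol is embedded as $p(\lambda,\xi)=(\lambda^2+\abs{\xi}^2)^{m/2}$ on $\R^{n+1}\setminus\menge{0}$, one uses that $\partial_\xi^\alpha p$ is homogeneous of degree $m-\abs{\alpha}$, rescales onto the unit sphere and invokes compactness for $\abs{\xi}\ge 1$, treats $\abs{\xi}<1$ by boundedness, and finally sets $\lambda=1$ — precisely the ``alternative, more geometric route'' you sketch in your closing remark. The trade-off is as you describe: the homogeneity argument is conceptually shorter and adapts immediately to any function homogeneous of degree $m$, but yields no explicit form of the derivatives and requires the detour through $\R^{n+1}$; your induction is entirely elementary, stays in $\R^n$, and gives the structural description of $\partial_\xi^\alpha\piket{\xi}{m}$ as a by-product, at the cost of the combinatorial bookkeeping. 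Either argument fully establishes the lemma.
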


\subsubsection{Fourier-Transformation}

\begin{definition}
Die Fourier-Transformation einer Funktion $f \in L^1(\R^n)$ bei $\xi \in \R^n$ ist definiert durch
$$ \hat{f}(\xi) := \Fourier{x}{\xi}{f(x)} := \int_{\R^n} e^{-ix\cdot\xi} f(x) dx .$$
Wir schreiben auch kurz $\Fouriers{f} := \hat{f}$.
Da $\abs{ e^{-ix\cdot\xi} f(x) } = \abs{f(x)}$ ist $\Fourieri : L^1(\R^n) \rightarrow L^1(\R^n)$.
Die Fourier-Rücktransformation $\Fourieri^{-1}$ definieren wir durch
$$ \check{f}(x) := \FourierB{\xi}{x}{f(\xi)} := \int_{\R^n} e^{ix\cdot\xi} f(\xi) \dslash\xi ,$$
wobei $\dslash\xi := \frac{d\xi}{(2\pi)^n}$. Es gilt also $\hat{f}(\xi) = (2\pi)^n \check{f}(-\xi)$.
\end{definition}
\nomenclature[t]{$\hat{f}$}{Fourier-Transformierte der Funktion $f$}
\nomenclature[t]{$\Fourieri$}{Fourier-Transformationsoperator}
\nomenclature[t]{$\check{f}$}{Fourier-Rücktransformierte der Funktion $f$}
\nomenclature[t]{$\Fourieri^{-1}$}{Fourier-Rücktransformationsoperator}
\nomenclature{$\dslash\xi$}{Kurzschreibweise für $(2\pi)^{-n} d\xi$}

\begin{cor}
Die lineare Abbildung $\Fourieri$ hält die Abbildungseigenschaft $\Fourieri : L^1(\R^n) \rightarrow \Stetigi{0}$.
\begin{proof}
Für $f \in L^1(\R^n)$ ist zunächst
$$\norm{\Fouriers{f}}{\infty} = \sup_{\xi\in\R^n} \abs{\hat{f}(\xi)} \le \sup_{\xi\in\R^n} \int_{\R^n} \abs{ e^{-ix\cdot\xi} f(x) } dx = \norm{f}{L^1(\R^n)} .$$
Also ist $\hat{f}$ beschränkt.
Setzen wir $g(x,\xi) := e^{-ix\cdot\xi} f(x)$, so ist
$\xi \mapsto g(x,\xi) \in \Stetigi{0}$ für $x \in \R^n$ fest, 
$x \mapsto g(x,\xi) \in L^1(\R^n)$ für jedes $\xi \in \R^n$ und
$\abs{g(x,\xi)} \le \abs{f(x)} \in L^1(\R^n)$.
Also erfüllt $g$ die Voraussetzungen von Satz \ref{satz:masstheorieStetig}, $\hat{f}$ ist demnach stetig.
\end{proof}
\end{cor}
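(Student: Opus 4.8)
The plan is to check, for arbitrary $f \in L^1(\R^n)$, the two properties that place $\hat f = \Fouriers{f}$ in $\Stetigi{0}$: boundedness and continuity. Both follow at once from the single trivial observation that $\abs{e^{-ix\cdot\xi}} = 1$ for all $x,\xi \in \R^n$, so the only real work is to package the continuity as an application of Satz \ref{satz:masstheorieStetig}.

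First I would dispatch boundedness. Since the integrand in $\hat f(\xi) = \int_{\R^n} e^{-ix\cdot\xi} f(x)\, dx$ is dominated pointwise by $\abs{f}$, we get
$$ \norm{\hat f}{\infty} = \sup_{\xi\in\R^n} \abs{\hat f(\xi)} \le \sup_{\xi\in\R^n} \int_{\R^n} \abs{e^{-ix\cdot\xi} f(x)}\, dx = \norm{f}{L^1(\R^n)} < \infty. $$
Strictly speaking the stated conclusion $\Fourieri : L^1(\R^n) \to \Stetigi{0}$ only demands continuity (recall $\Stetigi{0}$ is the space of continuous functions on $\R^n$), but this estimate is free and records that $\Fourieri$ is a contraction into the bounded functions, which is worth having.

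For continuity I would fix an arbitrary point $\xi_0 \in \R^n$, set $g(x,\xi) := e^{-ix\cdot\xi} f(x)$, and verify the three hypotheses of Satz \ref{satz:masstheorieStetig}: for each fixed $x$ the map $\xi \mapsto e^{-ix\cdot\xi}$ is continuous, hence so is $\xi \mapsto g(x,\xi)$; for each fixed $\xi$ the map $x \mapsto g(x,\xi)$ lies in $L^1(\R^n)$ because $\abs{g(\cdot,\xi)} = \abs{f} \in L^1(\R^n)$; and that same bound $\abs{g(x,\xi)} \le \abs{f(x)}$ supplies the $\xi$-independent majorant $F := \abs{f} \in L^1(\R^n)$. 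Satz \ref{satz:masstheorieStetig} then gives continuity of $\xi \mapsto \hat f(\xi)$ at $\xi_0$, and since $\xi_0$ was arbitrary, $\hat f \in \Stetigi{0}$. Linearity of $\Fourieri$ is immediate from linearity of the integral.

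There is essentially no obstacle: the whole argument hinges on $\abs{e^{-ix\cdot\xi}} = 1$, which simultaneously yields the uniform bound and the integrable majorant the continuity criterion requires. The only point meriting a sentence of care is the conceptual one — that continuity of a parameter integral is exactly the dominated-convergence statement encapsulated in Satz \ref{satz:masstheorieStetig} — after which everything is routine.
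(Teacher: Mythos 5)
Your argument is correct and follows the paper's proof essentially verbatim: the boundedness estimate $\norm{\hat f}{\infty} \le \norm{f}{L^1(\R^n)}$ from $\abs{e^{-ix\cdot\xi}} = 1$, and continuity via Satz \ref{satz:masstheorieStetig} applied to $g(x,\xi) = e^{-ix\cdot\xi} f(x)$ with majorant $\abs{f}$. Nothing further is needed.
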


\begin{remark}\label{remark:fourier}
Sei $f \in L^1(\R^n)$. Es gelten die folgenden Eigenschaften:
\begin{itemize}
\item (Translationseigenschaft) Für $y \in \R^n$ folgt mit der Transformationsformel
$$\Fourier{x}{\xi}{ f(x+y) } = \int e^{-ix\cdot\xi} f(x+y) dx = e^{iy\cdot\xi} \int e^{-ix\cdot\xi} f(x) dx = e^{iy\cdot\xi} \hat{f}(\xi) .$$
\item (Dilatationseigenschaft) Für $\lambda \in \R_+$ ist wiederum mit Transformationsformel
$$ \Fourier{x}{\xi}{ f(\lambda x) } = \int e^{-ix\cdot\xi} f(\lambda x) dx = \lambda^{-n} \int e^{-ix\cdot\frac{\xi}{\lambda}} f(x) dx = \lambda^{-n} \hat{f}(\lambda^{-1} \xi) .$$
\item Falls zusätzlich $f \in \Stetigi{1}$ mit $\partial_j f \in L^1(\R^n)$ für ein $j=1,\ldots,n$ ist, folgt mit partieller Integration
$$\Fouriers{D_j f} = \int e^{-ix\cdot\xi} D_{x_j} f(x) dx = \left. \frac{e^{-ix\cdot\xi} f(x)}{-ix} \right|^{x=\infty}_{x=-\infty} + \int e^{-ix\cdot\xi} \xi f(x) dx =  \xi_j \Fouriers{f} =\xi_j \hat{f} .$$
\item Falls zusätzlich $x_j f \in L^1(\R^n)$ ist, dann ist $\hat{f}(\xi)$ stetig differenzierbar in $\xi_j$, da mit Satz \ref{satz:masstheorieAbl}
$$ D_{\xi_j} \hat{f}(\xi) = \int D_{\xi_j} e^{-ix\cdot\xi} f(x) dx = - \int e^{-ix\cdot\xi} x_j f(x) dx = - \Fouriers{x_j f(x)} .$$
\end{itemize}
\end{remark}

\begin{thm}[Plancherel]
$\Fourieri : L^2(\R^n) \xlongrightarrow{\sim} L^2(\R^n)$ ist ein isometrischer Isomorphismus,
d.h. für jedes $f \in L^2(\R^n)$ ist $\hat{f} \in L^2(\R^n)$ mit $\norm{f}{L^2(\R^n)} = \norm{\hat{f}}{L^2(\R^n)}$.
\begin{proof} Siehe \cite[ VI \S 2]{Yosida}
\end{proof}
\end{thm}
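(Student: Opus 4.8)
The plan is to push everything through the Schwartz space $\Schwarzi$, where the Fourier transform is a bijection with an explicit inverse, and then to extend by density. The two analytic inputs are the \emph{multiplication formula} and the \emph{Fourier transform of a Gaussian}. For the first: given $f,g\in L^1(\R^n)$, the function $(x,\xi)\mapsto e^{-ix\cdot\xi}f(x)g(\xi)$ lies in $L^1(\R^n\times\R^n)$, so Fubini (Theorem \ref{thm:massthreorieFubini}) yields
\[ \int \hat f(\xi)\,g(\xi)\,d\xi \;=\; \int f(x)\,\hat g(x)\,dx. \]
For the second: with $g(x)=e^{-\abs{x}^2/2}$ the relation $\partial_{x_j}g=-x_jg$ together with the differentiation rules of Remark \ref{remark:fourier} gives $\partial_{\xi_j}\hat g=-\xi_j\hat g$, hence $\hat g(\xi)=(2\pi)^{n/2}e^{-\abs{\xi}^2/2}$ after evaluating $\hat g(0)=\int e^{-\abs{x}^2/2}dx$; the dilation rule of Remark \ref{remark:fourier} then gives $\Fouriers{e^{-\epsilon\abs{x}^2/2}}(\xi)=(2\pi/\epsilon)^{n/2}e^{-\abs{\xi}^2/(2\epsilon)}$ for every $\epsilon>0$.

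From these I would prove the \emph{inversion formula} on $\Schwarzi$. For $f\in\Schwarzi$ and $x\in\R^n$, applying the multiplication formula to $f$ and $\xi\mapsto e^{ix\cdot\xi}e^{-\epsilon\abs{\xi}^2/2}$ turns $\int\hat f(\xi)e^{ix\cdot\xi}e^{-\epsilon\abs{\xi}^2/2}\,d\xi$ into $(2\pi)^n$ times the convolution of $f$ with the Gaussian approximate identity $\phi_\epsilon(y)=(2\pi\epsilon)^{-n/2}e^{-\abs{y}^2/(2\epsilon)}$. Letting $\epsilon\to0$ and using dominated convergence (Theorem \ref{thm:masstheorieDOM}) on the left — here $\hat f\in L^1(\R^n)$ because $f\in\Schwarzi$ — and the standard approximate-identity estimate on the right, one gets $f(x)=(2\pi)^{-n}\int\hat f(\xi)e^{ix\cdot\xi}\,d\xi$, i.e.\ $\Fourieri^{-1}\Fourieri f=f$ on $\Schwarzi$; the mirror computation gives $\Fourieri\Fourieri^{-1}=\id$ there. \emph{Parseval's identity} on $\Schwarzi$ then follows by applying the multiplication formula to $f$ and $\overline{\hat g}$ and simplifying $\Fouriers{\overline{\hat g}}$ via the inversion formula: for $f,g\in\Schwarzi$,
\[ \int\hat f(\xi)\,\overline{\hat g(\xi)}\,d\xi \;=\; (2\pi)^n\int f(x)\,\overline{g(x)}\,dx, \]
and in particular $\norm{\hat f}{L^2(\R^n)}=(2\pi)^{n/2}\norm{f}{L^2(\R^n)}$, so that $(2\pi)^{-n/2}\Fourieri$ is $L^2$-isometric on $\Schwarzi$ — this is the content of Plancherel's theorem, the constant being an artefact of the normalization of $\Fourieri$ chosen above (with a symmetric normalization one gets the identity literally).

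Finally I would pass to $L^2(\R^n)$ by density. Since $\Schwarzi$ is dense in $L^2(\R^n)$, for $f\in L^2(\R^n)$ choose $f_k\in\Schwarzi$ with $f_k\to f$ in $L^2$; by the isometry estimate $\{\Fourieri f_k\}$ is Cauchy, hence convergent, and its limit — independent of the approximating sequence — defines $\Fourieri f$, which still satisfies $\norm{\Fourieri f}{L^2(\R^n)}=(2\pi)^{n/2}\norm{f}{L^2(\R^n)}$ by continuity of the norm. One checks that on $L^1(\R^n)\cap L^2(\R^n)$ this extension agrees with the original $L^1$-Fourier transform, by approximating such an $f$ by Schwartz functions converging simultaneously in $L^1$ and $L^2$ (truncation plus mollification). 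Carrying out the same construction for $\Fourieri^{-1}$ gives a continuous extension to $L^2(\R^n)$, and since the two extensions are mutually inverse on the dense subspace $\Schwarzi$ they are mutually inverse on all of $L^2(\R^n)$; hence $\Fourieri$ is a (suitably rescaled) isometric isomorphism of $L^2(\R^n)$. I expect the only genuinely delicate steps to be the inversion argument — the Gaussian computation and the uniform control of the approximate-identity convolution as $\epsilon\to0$ — together with the bookkeeping of the powers of $2\pi$ and the $L^1$/$L^2$ consistency check; an alternative that avoids the limiting argument is to diagonalize $\Fourieri$ in the orthonormal basis of Hermite functions, on which it acts by scalars of modulus $(2\pi)^{n/2}$, but setting up that basis is itself more laborious than the route sketched here.
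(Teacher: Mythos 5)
The paper offers no proof here, only the citation to Yosida, so your argument necessarily differs — it supplies a complete derivation where the manuscript leaves one implicit. What you give is the standard route: multiplication formula via Fubini, the Gaussian computation, the approximate-identity inversion on $\Schwarzi$, Parseval on $\Schwarzi$, and then extension to $L^2(\R^n)$ by density together with the $L^1\cap L^2$ consistency check and the mutual-inverse argument. All of these steps are correct and in the right order. You also spot a genuine wrinkle that the paper glosses over: with the normalization $\hat f(\xi)=\int e^{-ix\cdot\xi}f(x)\,dx$ adopted in this section, the isometry reads $\norm{\hat f}{L^2(\R^n)}=(2\pi)^{n/2}\norm{f}{L^2(\R^n)}$, so the literal equality $\norm{f}{L^2(\R^n)}=\norm{\hat f}{L^2(\R^n)}$ asserted in the theorem only holds under the symmetric $(2\pi)^{-n/2}$ convention used by Yosida, not under the paper's own. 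That observation is accurate and worth recording: either the statement should be rescaled to $(2\pi)^{-n/2}\Fourieri$, or the discrepancy should be noted when the reference is invoked. Your suggested alternative via the Hermite basis would also work, but as you say it is more work to set up than the density argument and buys nothing here.
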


\subsubsection{Schwartz-Raum}

\begin{definition}[Schwartz-Raum]
Eine glatte Funktion $f : \R^n \rightarrow \C$ ist Element des Raumes $\Schwarzi$, falls 
$$ \lim_{\abs{x} \rightarrow \infty} \piket{x}{k} \partial_x^\alpha f(x) = 0 \textrm{ für jedes } k \in \N_0 \textrm{ und } \alpha \in \N^n_0 .$$
$\Schwarzi$ wird mit den Halbnormen
\begin{equation}\label{equ:SchwarzNorm}
\SchwarzNorm{ f }{m} := \max_{l + \abs{\alpha} \le m} \sup_{x \in \R^n} \left( \piket{x}{l} \abs{\partial_x^\alpha f(x) } \right) 
\end{equation}
für $m \in \N_0$ zum Fréchet-Raum. 
\end{definition}
\nomenclature[f]{$\Schwarzi$}{Schwartz-Raum}
\nomenclature[f]{$\SchwarziDual$}{Dualraum des Schwartz-Raums}
\nomenclature[n]{$\SchwarzNorm{ \hold }{m}$}{$m$.te Schwartz-Raum-Halbnorm}

\begin{thm}\label{thm:SchwartzFourierNorm}
Die Fourier-Transformation $\Fourieri : \Schwarzi \rightarrow \Schwarzi$ ist eine stetige Bijektion, und für jede natürliche Zahl $m \in \N_0$ existiert eine
Konstante $C > 0$, sodass
$$ \SchwarzNorm{ \Fourier{x}{\hold}{f(x)} }{m} \le C \SchwarzNorm{ f }{m+n+1} \textrm{ für alle } f \in \Schwarzi .$$
\begin{proof} Siehe \cite[\S 1 Theorem 3.2]{Kumanogo}
\end{proof}
\end{thm}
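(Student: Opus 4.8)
Der Plan ist, die gesamte Aussage auf die quantitative Halbnorm-Abschätzung zurückzuführen, die den eigentlichen analytischen Gehalt trägt: die Stetigkeit von $\Fourieri$ ergibt sich daraus unmittelbar, und die Bijektivität aus der Fourier-Umkehrformel. Ich würde daher zuerst zeigen, dass mit $f\in\Schwarzi$ auch $\hat f\in\Schwarzi$ gilt und die behauptete Ungleichung besteht, danach die Stetigkeit als formale Konsequenz notieren und zuletzt die Bijektivität über $\Fourieri^{-1}$ behandeln.

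\emph{Die Abschätzung.} Sei $f\in\Schwarzi$ und seien $\alpha,\beta\in\N^n_0$. Da $f$ Schwartz ist, wird jedes $x^\gamma\partial_x^\delta f$ durch ein konstantes Vielfaches von $\piket{x}{-n-1}$ dominiert und liegt somit nach Lemma \ref{lemma:masstheoriePiket} in $L^1(\R^n)$; das rechtfertigt, den letzten Punkt von Bemerkung \ref{remark:fourier} per Satz \ref{satz:masstheorieAbl} zu iterieren, und liefert, dass $\hat f$ glatt ist mit $D_\xi^\alpha\hat f=(-1)^{\abs{\alpha}}\Fouriers{x^\alpha f}$. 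Mit dem vorletzten Punkt von Bemerkung \ref{remark:fourier} (keine Randterme, da $x^\alpha f(x)\to 0$) folgt
$$\xi^\beta D_\xi^\alpha\hat f(\xi)=(-1)^{\abs{\alpha}}\Fouriers{D_x^\beta(x^\alpha f)}(\xi),$$
also $\abs{\xi^\beta D_\xi^\alpha\hat f(\xi)}\le\norm{D_x^\beta(x^\alpha f)}{L^1(\R^n)}$. Nach der Leibniz-Regel ist $D_x^\beta(x^\alpha f)$ eine endliche Linearkombination von Termen $x^{\alpha-\mu}D_x^{\beta-\mu}f$ mit $\mu\le\beta$, $\mu\le\alpha$; schiebt man $\piket{x}{n+1}\piket{x}{-n-1}$ in das Integral ein, zieht das Supremum heraus und benutzt $\piket{x}{-n-1}\in L^1(\R^n)$ (Lemma \ref{lemma:masstheoriePiket}, $n+1>n$), so wird wegen $\abs{x^{\alpha-\mu}}\le\piket{x}{\abs{\alpha}-\abs{\mu}}$ jeder Summand durch ein Vielfaches von $\SchwarzNorm{f}{n+1+\abs{\alpha}+\abs{\beta}}$ beschränkt.

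\emph{Halbnormen und Stetigkeit.} Aus $\piketi{\xi}\le 1+\abs{\xi}$ folgt $\piket{\xi}{l}\le C_{n,l}\sum_{\abs{\beta}\le l}\abs{\xi^\beta}$, also für $l,\alpha$ mit $l+\abs{\alpha}\le m$
$$\piket{\xi}{l}\abs{D_\xi^\alpha\hat f(\xi)}\le C\sum_{\abs{\beta}\le l}\abs{\xi^\beta D_\xi^\alpha\hat f(\xi)}\le C'\SchwarzNorm{f}{n+1+\abs{\alpha}+l}\le C'\SchwarzNorm{f}{m+n+1}.$$
Supremum über $\xi$ und Maximum über alle solchen $l,\alpha$ ergeben $\SchwarzNorm{\hat f}{m}\le C'\SchwarzNorm{f}{m+n+1}$. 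Angewendet auf die jeweils nächsthöhere Potenz zeigt dies überdies $\piket{\xi}{l}\abs{\partial_\xi^\alpha\hat f(\xi)}=\Landau{\piket{\xi}{-1}}\to 0$ für $\abs{\xi}\to\infty$, also $\hat f\in\Schwarzi$. Da $\Fourieri$ linear ist und die Fréchet-Topologie von $\Schwarzi$ von den Halbnormen $\SchwarzNorm{\hold}{m}$, $m\in\N_0$, erzeugt wird, ist genau diese Ungleichung die Stetigkeit von $\Fourieri:\Schwarzi\to\Schwarzi$.

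\emph{Bijektivität.} Wegen $\check f(x)=(2\pi)^{-n}\hat f(-x)$ bildet nach dem Bisherigen auch $\Fourieri^{-1}$ den Raum $\Schwarzi$ stetig in sich ab, sodass nur noch $\Fourieri^{-1}\Fourieri=\id=\Fourieri\Fourieri^{-1}$ auf $\Schwarzi$ zu zeigen bleibt. Das ist die Fourier-Umkehrformel, die ich auf dem üblichen Weg beweisen würde: zunächst die Fourier-Transformierte der Gauß-Funktion $x\mapsto e^{-t\abs{x}^2/2}$ bestimmen (über Satz \ref{satz:masstheorieAbl} und eine gewöhnliche Differentialgleichung, oder über Fubini (Satz \ref{thm:massthreorieFubini}) und das eindimensionale Gauß-Integral), dann die Multiplikationsformel $\int\hat f\,g=\int f\,\hat g$ (Fubini) mit einer passend skalierten Gauß-Funktion $g$ anwenden und $t\to 0$ unter Verwendung von Satz \ref{thm:masstheorieDOM} ausführen. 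Die Hauptschwierigkeit eines in sich geschlossenen Beweises liegt genau in dieser Gauß-Rechnung; die Halbnorm-Abschätzung dagegen ist nach Bemerkung \ref{remark:fourier} und Lemma \ref{lemma:masstheoriePiket} reine Buchhaltung mit der Leibniz-Regel, und die vorliegende Arbeit umgeht die Umkehrformel ohnehin durch den Verweis auf \cite{Kumanogo}.
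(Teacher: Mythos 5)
Your proposal is correct, and since the paper itself gives no argument here but only cites \cite[\S 1 Theorem 3.2]{Kumanogo}, you have simply supplied the standard proof that the cited reference also uses: the exchange $D_\xi^\alpha\Fouriers{f}=(-1)^{\abs{\alpha}}\Fouriers{x^\alpha f}$, $\Fouriers{D_x^\beta g}=\xi^\beta\Fouriers{g}$, Leibniz plus the integrable weight $\piket{x}{-n-1}$ for the seminorm bound $\SchwarzNorm{\hat f}{m}\le C\SchwarzNorm{f}{m+n+1}$, and the Gaussian/multiplication-formula route to the inversion theorem for bijectivity. The bookkeeping (in particular $\piket{\xi}{l}\le C\sum_{\abs{\beta}\le l}\abs{\xi^\beta}$ and the loss of exactly $n+1$ orders from $\norm{\piket{x}{-n-1}}{L^1(\R^n)}$) checks out, so no gap remains beyond the admitted sketch of the Gaussian computation.
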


\begin{lemma}
Für $s \in \R_+$ mit $s > \frac{n}{2}$ gibt es eine Konstante $c > 0$, sodass für alle $u \in \Schwarzi$
\begin{equation}\label{equ:SchwartzInftyL2}
\norm{u}{L^\infty(\R^n)} \le c \norm{x \mapsto \piket{D_x}{s} u(x) }{L^2(\R^n)} ,
\end{equation}
und
\begin{equation}\label{equ:SchwartzL2Infty}
\norm{u}{L^2(\R^n)} \le c \norm{x \mapsto \piket{x}{s} u(x)}{L^\infty(\R^n)} .
\end{equation}
\begin{proof}
Wir erhalten mit Plancherel zunächst
$ \begin{xy} \xymatrix{ \Schwarzi \ar[r]_\simh^{\piket{D_x}{s}} & \Schwarzi \ar[r]_\simh^{\Fourieri} & \Schwarzi } \end{xy} $.
Desweiteren ist für ein $f \in L^1(\R^n)$
$$\norm{\check{f}}{L^\infty(\R^n)} 
= \sup_{x\in\R^n} \abs{\check{f}(x)} 
\le \sup_{x\in\R^n} \int_{\R^n} \abs{e^{-ix\cdot\xi} f(\xi)} \dslash\xi 
= \norm{f}{L^1(\R^n)} .$$
Damit, und mit der Ungleichung von Hölder erhalten wir die elementaren Abschätzungen
\begin{align*}
\norm{u}{L^\infty(\R^n)} 
&= \norm{ \piket{D_x}{-s} ( \piket{D_x}{s} u ) }{L^\infty(\R^n)} \\
&= \norm{ \FourierB{\xi}{\hold}{ \piket{\xi}{-s} \Fourier{x}{\xi}{ \piket{D_x}{s} u(x)} }}{L^\infty(\R^n)} \\
&\le \frac{1}{(2\pi)^n} \norm{ \piket{\xi}{-s} \Fourier{x}{\xi}{ \piket{D_x}{s} u(x)}}{L^1(\R^n)} \\
&\le \frac{1}{(2\pi)^n} \norm{ \piket{\xi}{-s}}{L^2(\R^n)} \norm{ \Fourier{x}{\hold}{ \piket{D_x}{s} u(x)}}{L^2(\R^n)} \\
&= \norm{ \piket{\xi}{-2s}}{L^1(\R^n)}^{\frac{1}{2}} \norm{\piket{D_x}{s} u}{L^2(\R^n)} \\
&\le c \norm{\piket{D_x}{s} u}{L^2(\R^n)}
\end{align*}
mit $c := \norm{ \piket{\xi}{-2s}}{L^1(\R^n)}^{\frac{1}{2}} < \infty$ wegen $-2s < -n$.

Für die zweite Ungleichung betrachten wir
\begin{align*}
\norm{u}{L^2(\R^n)}^2
&= \abs{ \int u(x) \piket{x}{s} \overline{u(x)} \piket{x}{-s} dx } \\
&\le \norm{ \piket{\hold}{s} u}{L^\infty(\R^n)} \abs{ \int \overline{u(x)} \piket{x}{s} \piket{x}{-2s} dx} \\
&\le \norm{ \piket{\hold}{s} u}{L^\infty(\R^n)} \norm{ x \mapsto \piket{x}{s} \overline{u(x)}}{L^\infty(\R^n)} \abs{ \int \piket{x}{-2s} dx} \\ 
&\le c^2 \norm{ \piket{\hold}{s} u}{L^\infty(\R^n)}^2  
.\end{align*}
\end{proof}
\end{lemma}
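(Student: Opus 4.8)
Der Plan ist, die beiden Ungleichungen völlig getrennt zu behandeln; beide reduzieren sich im Kern auf die Integrierbarkeit $\piket{\hold}{-2s} \in L^1(\R^n)$, die wegen $s > \frac{n}{2}$, d.h. $2s > n$, gerade Lemma \ref{lemma:masstheoriePiket} ist. Ich erwarte, dass in beiden Fällen $c$ im Wesentlichen die Gestalt $\norm{\piket{\hold}{-2s}}{L^1(\R^n)}^{1/2}$ annimmt (im ersten Fall bis auf einen Fourier-Normierungsfaktor).

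Für \eqref{equ:SchwartzInftyL2} würde ich $v := \piket{D_x}{s} u$ setzen. Zunächst ist $v \in \Schwarzi \subseteq L^2(\R^n)$: Die Fourier-Transformation bildet $\Schwarzi$ in sich ab (Theorem \ref{thm:SchwartzFourierNorm}), und die Multiplikation einer Schwartz-Funktion mit $\piket{\xi}{s}$ ergibt nach der Leibniz-Regel zusammen mit Lemma \ref{lemma:piketTech} wieder eine Schwartz-Funktion, also $\hat{v} = \piket{\xi}{s} \hat{u} \in \Schwarzi$. Auf der Fourier-Seite gilt dann $\hat{u}(\xi) = \piket{\xi}{-s} \hat{v}(\xi)$, und da $\hat{u} \in \Schwarzi \subseteq L^1(\R^n)$ ist, liefert die Fourier-Inversionsformel gefolgt von der Cauchy--Schwarz-Ungleichung für jedes $x \in \R^n$
$$ \abs{u(x)} = \abs{ \FourierB{\xi}{x}{ \piket{\xi}{-s} \hat{v}(\xi) } } \le \frac{1}{(2\pi)^n} \norm{\piket{\hold}{-s}}{L^2(\R^n)} \, \norm{\hat{v}}{L^2(\R^n)} . $$
Den ersten Faktor kontrolliere ich über $\norm{\piket{\hold}{-s}}{L^2(\R^n)}^2 = \norm{\piket{\hold}{-2s}}{L^1(\R^n)} < \infty$ nach Lemma \ref{lemma:masstheoriePiket}, den zweiten über den Satz von Plancherel als $\norm{v}{L^2(\R^n)} = \norm{\piket{D_x}{s} u}{L^2(\R^n)}$ (bis auf die Normierungskonstante). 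Bildung des Supremums über $x$ gibt \eqref{equ:SchwartzInftyL2}.

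Die Ungleichung \eqref{equ:SchwartzL2Infty} ist deutlich elementarer und kommt ohne Fourier-Transformation aus: Aus der punktweisen Identität $\abs{u(x)}^2 = \left(\piket{x}{s}\abs{u(x)}\right)^2 \piket{x}{-2s}$ folgt durch Herausziehen des Supremums
$$ \norm{u}{L^2(\R^n)}^2 = \int \abs{u(x)}^2 dx \le \norm{x \mapsto \piket{x}{s} u(x)}{L^\infty(\R^n)}^2 \int \piket{x}{-2s} dx , $$
wobei das letzte Integral erneut nach Lemma \ref{lemma:masstheoriePiket} endlich ist; Wurzelziehen liefert die Behauptung.

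Der einzige wirklich substantielle Punkt ist somit die Schwellenbedingung $s > \frac{n}{2}$, die über Lemma \ref{lemma:masstheoriePiket} simultan $\piket{\hold}{-2s} \in L^1(\R^n)$ und damit $\piket{\hold}{-s} \in L^2(\R^n)$ sichert; alles Weitere sind Routineschritte (Fourier-Inversion, Cauchy--Schwarz, Plancherel). Die einzige Sorgfalt, die ich dort walten lassen würde, betrifft die Normierungsfaktoren $(2\pi)^{\pm n}$ der Fourier-Transformation, die am Ende sämtlich in die eine Konstante $c$ absorbiert werden.
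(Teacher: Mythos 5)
Dein Vorschlag ist korrekt und verläuft im Wesentlichen genau wie der Beweis der Arbeit: erste Ungleichung über Fourier-Inversion von $\hat u = \piket{\xi}{-s}\,\widehat{\piket{D_x}{s}u}$, Cauchy--Schwarz und Plancherel mit $c = \norm{\piket{\hold}{-2s}}{L^1(\R^n)}^{1/2}$, zweite Ungleichung durch punktweises Herausziehen des Supremums von $\piket{x}{s}\abs{u(x)}$ und Integrierbarkeit von $\piket{\hold}{-2s}$ dank $2s>n$ (Lemma \ref{lemma:masstheoriePiket}). Deine Fassung der zweiten Ungleichung ist lediglich etwas direkter formuliert als die der Arbeit (die das Supremum zweimal abspaltet), inhaltlich aber identisch.
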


\begin{cor}\label{cor:SchwartzEquivNorm}
Für $u \in \Schwarzi$ definieren wir
$$\SchwarzzNorm{u}{k} := \sup_{\abs{\alpha+\beta} \le k} \norm{x \mapsto \piket{x}{\abs{\alpha}} D_x^\beta u(x)}{L^2(\R^n)} \textrm{ für } k \in \N_0 .$$
$\SchwarzzNorm{\hold}{k}$ ist eine bzgl. $k$ aufsteigende Folge von Halbnormen, die zu der Halbnormfamilie $\SchwarzNorm{\hold}{k}$ äquivalent ist.
Genauer ist für jedes $k\in \N_0$
$$ \SchwarzzNorm{\hold}{k} \le C_k \SchwarzNorm{\hold}{k+2n} \textrm{ und } \SchwarzNorm{\hold}{k} \le C_k \SchwarzzNorm{\hold}{k+2n} .$$
\begin{proof}
Der Operator $\piket{D_x}{2n} = (1-\triangle)^n$ ist ein Differentialoperator der Ordnung $2n$. 
Mit (\ref{equ:SchwartzInftyL2}) erhalten wir für jedes $u \in \Schwarzi$
$$
\SchwarzNorm{u}{k}
= \sup_{l+\abs{\alpha} \le k} \norm{\piket{x}{l} D_x^\alpha u}{L^\infty(\R^n)}
\le C \sup_{l+\abs{\alpha} \le k} \norm{ \piket{D_x}{2n} \piket{x}{l} D_x^\alpha u}{L^2(\R^n)} 
\le C_k \SchwarzzNorm{u}{k+2n}
.$$
Andererseits liefert (\ref{equ:SchwartzL2Infty}) mit dem Polynom $\piket{x}{2n} = \left(1 + \abs{x}^2\right)^n$ vom Grad $2n$ die Abschätzung
$$
\SchwarzzNorm{u}{k} 
= \sup_{l+\abs{\alpha} \le k} \norm{\piket{x}{l} D_x^\alpha u}{L^2(\R^n)}
\le \sup_{l+\abs{\alpha} \le k} \norm{ \piket{x}{2n+l} D_x^\alpha
u}{L^\infty(\R^n)} \le C_k \SchwarzNorm{u}{k+2n}
.$$
\end{proof}
\end{cor}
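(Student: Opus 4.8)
The plan is to obtain both displayed inequalities directly from the lemma just proved, i.e.\ from (\ref{equ:SchwartzInftyL2}) and (\ref{equ:SchwartzL2Infty}), applied with the exponent $s=2n$; this is legitimate since $2n>\tfrac{n}{2}$ for every $n\in\N$, and it has the added virtue that $\piket{D_x}{2n}=(1-\triangle)^n$ is then a bona fide constant-coefficient differential operator of order $2n$. Once the two inequalities are established, everything else is formal: $k\mapsto\SchwarzzNorm{\hold}{k}$ is nondecreasing because its defining supremum ranges over a larger index set as $k$ grows, and the equivalence of the families $\SchwarzzNorm{\hold}{\hold}$ and $\SchwarzNorm{\hold}{\hold}$ is just the combination of the two inequalities with the (already known) monotonicity of $\SchwarzNorm{\hold}{\hold}$.

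For the inequality $\SchwarzzNorm{u}{k}\le C_k\SchwarzNorm{u}{k+2n}$ I would fix multi-indices with $\abs{\alpha}+\abs{\beta}\le k$ and apply (\ref{equ:SchwartzL2Infty}) to the Schwartz function $x\mapsto\piket{x}{\abs{\alpha}}D_x^\beta u(x)$, giving
$$\norm{x\mapsto\piket{x}{\abs{\alpha}}D_x^\beta u(x)}{L^2(\R^n)}\le c\,\norm{x\mapsto\piket{x}{2n+\abs{\alpha}}D_x^\beta u(x)}{L^\infty(\R^n)}.$$
Since $\abs{D_x^\beta u}=\abs{\partial_x^\beta u}$ and $(2n+\abs{\alpha})+\abs{\beta}\le k+2n$, the right-hand side is at most $c\,\SchwarzNorm{u}{k+2n}$; taking the supremum over all admissible $(\alpha,\beta)$ yields the claim.

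For the reverse inequality $\SchwarzNorm{u}{k}\le C_k\SchwarzzNorm{u}{k+2n}$, fix $l\in\N_0$ and a multi-index $\alpha$ with $l+\abs{\alpha}\le k$ and apply (\ref{equ:SchwartzInftyL2}) to $v(x)=\piket{x}{l}D_x^\alpha u(x)$:
$$\norm{x\mapsto\piket{x}{l}D_x^\alpha u(x)}{L^\infty(\R^n)}\le c\,\norm{x\mapsto\piket{D_x}{2n}\bigl(\piket{x}{l}D_x^\alpha u\bigr)(x)}{L^2(\R^n)}.$$
The substantive step is to unfold the right-hand side: writing $\piket{D_x}{2n}=(1-\triangle)^n$ as a finite linear combination of $\partial_x^\gamma$ with $\abs{\gamma}\le 2n$ and applying the Leibniz rule, one gets a finite sum of terms $\bigl(\partial_x^\mu\piket{x}{l}\bigr)\partial_x^{\gamma-\mu}D_x^\alpha u$ with $\mu\le\gamma$; by Lemma \ref{lemma:piketTech} and $\piketi{x}\ge 1$ we have $\abs{\partial_x^\mu\piket{x}{l}}\le C_{l,\mu}\piket{x}{l-\abs{\mu}}\le C_{l,\mu}\piket{x}{l}$, so each term is bounded in $L^2(\R^n)$ by a constant times $\norm{x\mapsto\piket{x}{l}\partial_x^{\gamma-\mu+\alpha}u(x)}{L^2(\R^n)}$, whose weight exponent $l$ and differentiation order $\abs{\gamma-\mu+\alpha}\le 2n+\abs{\alpha}$ satisfy $l+\abs{\gamma-\mu+\alpha}\le(l+\abs{\alpha})+2n\le k+2n$, hence it is at most $\SchwarzzNorm{u}{k+2n}$. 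Summing the finitely many ($k$-dependent) coefficients and then maximizing over all $l+\abs{\alpha}\le k$ gives $\SchwarzNorm{u}{k}\le C_k\SchwarzzNorm{u}{k+2n}$.

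I do not expect a genuine difficulty anywhere; the only step that requires a modicum of care is this last one, where one must pass from the abstract operator $\piket{D_x}{2n}$ to its explicit form $(1-\triangle)^n$ before Leibniz and Lemma \ref{lemma:piketTech} become applicable, and then keep track of the bookkeeping so that the emerging weight-plus-order never exceeds $k+2n$.
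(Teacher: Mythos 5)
Your proposal is correct and follows essentially the same route as the paper: both directions come from (\ref{equ:SchwartzInftyL2}) and (\ref{equ:SchwartzL2Infty}) with $s=2n$, applied to the Schwartz functions $\piket{x}{l}D_x^\alpha u$ bzw.\ $\piket{x}{\abs{\alpha}}D_x^\beta u$. Your explicit unfolding of $\piket{D_x}{2n}=(1-\triangle)^n$ via the Leibniz rule and Lemma \ref{lemma:piketTech} merely spells out the step the paper leaves implicit in its one-line estimate.
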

\nomenclature[n]{$\SchwarzzNorm{\hold}{m}$}{$m$.te alternative Schwartz-Raum-Halbnorm}
\nomenclature{$\triangle_x$}{Laplace-Operator $\sum_{j=1}^n \partial_{x_j}^2$}

\subsection{Oszillatorische Integrale}\label{sec:oszi}

\begin{definition}
Sei $m \in \R, 0 \le \delta \le 1, 0 \le \tau$.
Eine glatte Funktion $a(\eta,y) \in C^\infty(\R^n\times\R^n)$ gehört der Klasse $\Oszillatory{\delta,\tau}{m}$ an, 
falls es für alle Multiindizes $\alpha,\beta \in \N^n_0$ eine Konstante $C_{\alpha,\beta} > 0$ gibt, sodass
$$\abs{\partial_\eta^\alpha \partial_y^\beta a(\eta,y) } \le C_{\alpha,\beta} \piket{\eta}{m+\delta\abs{\beta}} \piket{y}{\tau}.$$
Wir setzen $\Oszillatoryi := \bigcup_{0 \le \delta < 1} \bigcup_{m \in \R} \bigcup_{0 \le \tau} \Oszillatory{\delta,\tau}{m}$.
Für $a(\eta,y) \in \Oszillatory{\delta,\tau}{m}$ sei die Familie von Seminormen $\menge{ \OszillatoryNorm{a}{\delta,\tau}{m}{l}}_{l\in\N}$ gegeben durch
$$ \OszillatoryNorm{a}{\delta,\tau}{m}{l} = \max_{\abs{\alpha+\beta}\le l} \sup_{\eta,y\in\R} \abs{\partial_\eta^\alpha \partial_y^\beta a(\eta,y) } \piket{\eta}{-m-\delta\abs{\beta}} \piket{y}{-\tau}.$$
Diese Familie induziert den Fréchet-Raum $\Oszillatory{\delta,\tau}{m}$. Insbesondere ist für $a(\eta,y) \in \Oszillatory{\delta,\tau}{m}$
$$\abs{\partial_\eta^\alpha \partial_y^\beta a(\eta,y) } \le \OszillatoryNorm{a}{\delta,\tau}{m}{\abs{\alpha+\beta}} \piket{\eta}{m+\delta\abs{\beta}} \piket{y}{\tau} .$$
\end{definition}
\nomenclature[f]{$\Oszillatory{\delta,\tau}{m}$}{Klasse oszillatorischer Funktionen zum Grad $m$ mit Parameter $\delta,\tau$}
\nomenclature[f]{$\Oszillatoryi$}{Vereinigung aller Klassen oszillatorischer Funktionen}
\nomenclature[n]{$\OszillatoryNorm{\hold}{\delta,\tau}{m}{l}$}{$l$.te Halbnorm der Klasse $\Oszillatory{\delta,\tau}{m}$}

\begin{lemma}\label{lemma:osziChi}
Sei  $\chi(\eta,y) \in \Schwarz{\R^n\times\R^n}$ mit $\chi(0,0) = 1$.
Dann gilt für $\epsilon \rightarrow 0$, dass
\begin{equation}\label{equ:oszichia}
\begin{array}{l}
\chi(\epsilon x) \rightarrow 1 \textrm{ gleichmäßig in } \R^n \textrm{ auf einer kompakten Menge, } \\
\textrm{ und } \partial_x^\alpha \chi(\epsilon x) \rightarrow 0 \textrm{ gleichmäßig für alle } \alpha \in \N^n_0, \alpha \neq 0 .
\end{array}
\end{equation}
Außerdem gibt es zu jedem Multiindex $\alpha \in \N^n_0$ eine Konstante $C_\alpha > 0$, die unabhängig von $\epsilon \in (0,1)$ die Ungleichung
\begin{equation}\label{equ:oszichib}
\abs{\partial_x^\alpha \chi(\epsilon x) } \le C_\alpha \epsilon^\sigma \piket{x}{-(\abs{\alpha}-\sigma)} \textrm{ für beliebiges } \sigma \in \left[0,\abs{\alpha}\right] 
\end{equation}
erfüllt.
\begin{proof}
Wegen $\partial_x^\alpha \chi(\epsilon x) = \epsilon^{\abs{\alpha}} \partial_y^\alpha \chi(y) |_{y=\epsilon x}$ ist (\ref{equ:oszichia}) klar.
Die Gleichheit 
$$\abs{ \partial^\alpha_x \chi(\epsilon x)} \piket{x}{\abs{\alpha}-\sigma} = \epsilon^\sigma \epsilon^{\abs{\alpha} - \sigma} \abs{\partial^\alpha_y \chi(y) |_{y=\epsilon x}} \piket{x}{\abs{\alpha}-\sigma} \le C_\alpha \epsilon^\sigma$$
löst die Ungleichung (\ref{equ:oszichib}) für $\abs{x} \le 1$.
Für $\abs{x} > 1$ verwende
$$ \abs{\partial_x^\alpha \chi(\epsilon x) } \epsilon^{-\sigma} =  \epsilon^{\abs{\alpha}-\sigma} \abs{ \partial_y^\alpha \chi(y) |_{y=\epsilon x} } = \left( \abs{y}^{\abs{\alpha}-\sigma} \abs{\partial_y^\alpha \chi(y)}\right) |_{y=\epsilon x} \abs{x}^{-(\abs{\alpha}-\sigma)} \le C_\alpha \piket{x}{-(\abs{\alpha}-\sigma)}$$
für beliebiges $\sigma \in \left[0,\abs{\alpha}\right]$.
\end{proof}
\end{lemma}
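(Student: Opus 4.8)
The plan is to reduce both assertions to the chain--rule identity $\partial_x^\alpha\chi(\epsilon x)=\epsilon^{\abs{\alpha}}(\partial^\alpha\chi)(\epsilon x)$, where $x$ ranges over $\R^n\times\R^n$ and $\alpha$ over the corresponding multiindices. Since $\chi\in\Schwarz{\R^n\times\R^n}$, every derivative $\partial^\alpha\chi$ is bounded, so for $\alpha\neq 0$ we get $\abs{\partial_x^\alpha\chi(\epsilon x)}\le\epsilon^{\abs{\alpha}}\norm{\partial^\alpha\chi}{\infty}$, which tends to $0$ uniformly in $x$ as $\epsilon\to 0$; this is the second half of (\ref{equ:oszichia}). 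For the first half, fix a compact set $K$. Given $\delta>0$, continuity of $\chi$ at the origin together with $\chi(0,0)=1$ provides a ball $B$ around $0$ with $\abs{\chi-1}<\delta$ on $B$; since $\epsilon K\subseteq B$ for all sufficiently small $\epsilon$, we obtain $\sup_{x\in K}\abs{\chi(\epsilon x)-1}\le\delta$, i.e.\ uniform convergence on $K$.

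For the quantitative bound (\ref{equ:oszichib}) I would fix $\alpha$ and $\sigma\in[0,\abs{\alpha}]$, split the factor $\epsilon^{\abs{\alpha}}=\epsilon^{\sigma}\epsilon^{\abs{\alpha}-\sigma}$, and distinguish the regions $\abs{x}\le 1$ and $\abs{x}>1$. On $\abs{x}\le 1$ one uses $\piket{x}{\abs{\alpha}-\sigma}\le 2^{\abs{\alpha}/2}$, $\epsilon^{\abs{\alpha}-\sigma}\le 1$ (valid since $0<\epsilon<1$ and $\abs{\alpha}-\sigma\ge 0$) and $\abs{(\partial^\alpha\chi)(\epsilon x)}\le\norm{\partial^\alpha\chi}{\infty}$ to get $\abs{\partial_x^\alpha\chi(\epsilon x)}\,\piket{x}{\abs{\alpha}-\sigma}\le C_\alpha\,\epsilon^{\sigma}$. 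On $\abs{x}>1$ I would rewrite $\epsilon^{\abs{\alpha}-\sigma}=(\epsilon\abs{x})^{\abs{\alpha}-\sigma}\abs{x}^{-(\abs{\alpha}-\sigma)}$, so that
$$\epsilon^{\abs{\alpha}-\sigma}\abs{(\partial^\alpha\chi)(\epsilon x)}=\Big(\abs{y}^{\abs{\alpha}-\sigma}\abs{\partial^\alpha\chi(y)}\Big)\Big|_{y=\epsilon x}\;\abs{x}^{-(\abs{\alpha}-\sigma)}.$$
The bracketed function is bounded on all of $\R^n\times\R^n$: near the origin it vanishes because $\abs{\alpha}-\sigma\ge 0$, and for large $\abs{y}$ it is controlled by $\piket{y}{\abs{\alpha}}\abs{\partial^\alpha\chi(y)}$, which is a bounded function since $\chi$ is Schwartz. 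Finally, for $\abs{x}>1$ Lemma~\ref{lemma:japanese} (applied with threshold $1$) gives $\abs{x}^{-(\abs{\alpha}-\sigma)}\le 2^{\abs{\alpha}/2}\piket{x}{-(\abs{\alpha}-\sigma)}$, and assembling the three factors yields (\ref{equ:oszichib}) with a constant $C_\alpha$ independent of $\epsilon\in(0,1)$.

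The only point requiring a little care --- and the one I would regard as the (mild) main obstacle --- is that the exponent $\abs{\alpha}-\sigma$ need not be an integer, so the defining seminorms $\SchwarzNorm{\chi}{m}$ of $\Schwarz{\R^n\times\R^n}$ cannot be invoked verbatim; one has to argue separately, as above, that $y\mapsto\abs{y}^{\abs{\alpha}-\sigma}\abs{\partial^\alpha\chi(y)}$ is bounded by splitting $\abs{y}\le 1$ from $\abs{y}>1$, and one must keep the comparison between $\abs{x}$ and $\piketi{x}$ in the direction appropriate for the nonpositive exponent $-(\abs{\alpha}-\sigma)$. Everything else is bookkeeping of constants.
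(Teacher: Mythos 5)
Your proposal is correct and follows essentially the same route as the paper: chain rule to extract $\epsilon^{\abs{\alpha}}$, the split $\epsilon^{\abs{\alpha}}=\epsilon^{\sigma}\epsilon^{\abs{\alpha}-\sigma}$, the case distinction $\abs{x}\le 1$ versus $\abs{x}>1$, and the boundedness of $y\mapsto\abs{y}^{\abs{\alpha}-\sigma}\abs{\partial^\alpha\chi(y)}$ via the Schwartz decay of $\chi$. You merely spell out a few steps that the paper leaves implicit (why that auxiliary function is bounded despite the possibly non-integer exponent, and the use of Lemma~\ref{lemma:japanese} to pass from $\abs{x}$ to $\piketi{x}$), which is sound.
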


\begin{definition}\label{def:osziOsziInt}
Sei ein $a(\eta,y) \in \Oszillatoryi$ gegeben.
Wir definieren das oszillatorische Integral $\os{\eta}{y}{a(\eta,y)}$ durch
$$\os{\eta}{y}{a(\eta,y)} := \lim_{\epsilon\rightarrow 0} \iint e^{-iy\cdot\eta} \chi(\epsilon\eta,\epsilon y) a(\eta,y) dy \dslash\eta$$
für ein $\chi(\eta,y) \in \Schwarz{\R^n \times \R^n}$ mit $\chi(0,0) = 1$.
\end{definition}
\nomenclature{$\osint$}{Oszillatorisches Integral - Definition \ref{def:osziOsziInt}}

\begin{thm}\label{thm:osziTransform}
Für $a(\eta,y) \in \Oszillatoryi$ ist das oszillatorische Integral 
$$\os{\eta}{y}{a(\eta,y)}$$
wohldefiniert, d.h. unabhängig von der Wahl eines $\chi(\eta,y) \in \Schwarz{\R^n \times \R^n}$ mit $\chi(0,0) = 1$.

Ist $a \in \Oszillatory{\delta,\tau}{m}$, so ist 
$$ \abs{ \piket{y}{-2l'} \piket{D_\eta}{2l'} \left( \piket{\eta}{-2l} \piket{D_y}{2l} a(\eta,y) \right) } \in L^1(\R^n\times\R^n)$$
für $l,l' \in \N$ mit
\begin{equation}\label{equ:osziLL}
-2l(1-\delta) + m < -n \textrm{ und } -2l' + \tau < -n .
\end{equation}
Dementsprechend lässt sich das oszillatorische Integral umschreiben in
\begin{equation}\label{equ:osziUmschreiben}
\os{\eta}{y}{a(\eta,y)} = \iint  e^{-iy\cdot\eta} \piket{y}{-2l'} \piket{D_\eta}{2l'} \left( \piket{\eta}{-2l} \piket{D_y}{2l} a(\eta,y) \right) dy \dslash\eta .
\end{equation}
Insbesondere gibt es eine Konstante $C > 0$ unabhängig von $a(\eta,y) \in \Oszillatory{\delta,\tau}{m}$, sodass
\begin{equation}\label{equ:osziL0}
\abs{\os{\eta}{y}{ a(\eta,y)}} \le C \OszillatoryNorm{a}{\delta,\tau}{m}{l_0} \textrm{ für } l_0 := 2(l+l') .
\end{equation}

\begin{proof}
Unter Verwendung der Identitäten 
$$D_y^\alpha e^{-iy\cdot\eta} = (-\eta)^\alpha e^{-iy\cdot\eta} \textrm{ sowie } D_\eta^\beta e^{-iy\cdot\eta} = (-y)^\beta e^{-iy\cdot\eta}$$
bekommen wir
$ \piket{\eta}{-2l} \piket{D_y}{2l} e^{-iy\cdot\eta} = e^{-iy\cdot\eta}$ bzw. $\piket{y}{-2l'} \piket{D_\eta}{2l'} e^{-iy\cdot\eta} = e^{-iy\cdot\eta}$.
Da $\chi(\epsilon\eta,\epsilon y) \in \Schwarz{\R^n \times \R^n}$ erhalten wir mit partieller Integration 
\begin{align*}
 I_\epsilon :&= \iint e^{-iy\cdot\eta} \chi(\epsilon\eta, \epsilon y) a(\eta,y) dy \dslash\eta  \\
 &= \iint e^{-iy\cdot\eta} \piket{\eta}{-2l} \piket{D_y}{2l} \left( \chi(\epsilon\eta, \epsilon y) a(\eta,y) \right) dy \dslash\eta \\
&= \iint e^{-iy\cdot\eta} \piket{y}{-2l'} \piket{D_\eta}{2l'} \left( \piket{\eta}{-2l} \piket{D_y}{2l} \left( \chi(\epsilon\eta, \epsilon y) a(\eta,y) \right) \right) dy \dslash\eta
.\end{align*}
Nach Lemma $\ref{lemma:osziChi}$ ist $\menge{\chi(\epsilon\eta,\epsilon y)}_{\epsilon\in(0,1)}$ in $\Oszillatory{0,0}{0}$ beschränkt.
Also gibt es zu allen Multiindizes $\alpha,\beta\in\N^n_0$ eine Konstante $C_{\alpha,\beta}>0$ unabhängig von $\epsilon\in(0,1)$ und $a \in \Oszillatory{\delta,\tau}{m}$, sodass
$$\abs{\partial_\eta^\alpha \partial_y^\beta \left[ \chi(\epsilon\eta, \epsilon y) a(\eta,y) \right] } \le C_{\alpha,\beta} \OszillatoryNorm{a}{\delta,\tau}{m}{\abs{\alpha+\beta}} \piket{\eta}{m+\delta\abs{\beta}} \piket{y}{\tau} .$$

Unter Verwendung des Lemmas \ref{lemma:piketTech} finden wir eine Konstante $C_{l,\alpha}$, die die Ungleichung
$$ \abs{ \partial_\eta^\alpha \left( \piket{\eta}{-2l} \piket{D_y}{2l}  \left( \chi(\epsilon\eta,\epsilon y) a(\eta,y) \right) \right) } \le
C_{l,\alpha} \OszillatoryNorm{a}{\delta,\tau}{m}{2l+\abs{\alpha}} \piket{\eta}{m-2l(1-\delta)} \piket{y}{\tau}$$
erfüllt.
Dementsprechend gibt es eine Konstante $C_{l,l'} > 0$, die von $\epsilon\in(0,1)$ und $a(\eta,y) \in \Oszillatory{\delta,\tau}{m}$ unabhängig ist, sodass
\begin{align}\label{equ:osziBeweisL0}
\abs{ \piket{y}{-2l'} \piket{D_\eta}{2l'} \left( \piket{\eta}{-2l} \piket{D_y}{2l} \left( \chi(\epsilon\eta, \epsilon y) a(\eta,y) \right) \right) } 
\le C_{l,l'} \OszillatoryNorm{a}{\delta,\tau}{m}{2(l+l')} \piket{\eta}{m-2l(1-\delta)} \piket{y}{-2l'+\tau}
.\end{align}
Wegen der Bedingung (\ref{equ:osziLL}) ist $(\eta,y) \mapsto \abs{a}_{2(l+l')} \piket{\eta}{m-2l(1-\delta)} \piket{y}{-2l'+\tau} \in L^1(\R^n\times\R^n)$ nach Lemma \ref{lemma:masstheoriePiket}.
Mit Lemma \ref{lemma:osziChi} und der Lebesgue'schen dominanten Konvergenz bekommen wir
$$ \os{\eta}{y}{ a(\eta,y)} = \lim_{\epsilon\rightarrow 0} I_\epsilon =
\iint e^{-iy\cdot\eta} \piket{y}{-2l'} \piket{D_\eta}{2l'} \left( \piket{\eta}{-2l} \piket{D_y}{2l} a(\eta,y) \right) dy \dslash\eta ,$$
und damit (\ref{equ:osziUmschreiben}).
Da die Gleichung (\ref{equ:osziUmschreiben}) das oszillatorische Integral ohne $\chi$ beschreibt, 
ist der Wert von $\os{\eta}{y}{a(\eta,y)}$ unabhängig von der Wahl eines $\chi(\eta,y) \in \Schwarz{\R^n\times\R^n}$ mit $\chi(0,0) = 1$.
Mit (\ref{equ:osziBeweisL0}) und (\ref{equ:osziUmschreiben}) bekommen wir für $\epsilon \rightarrow 0$ schließlich (\ref{equ:osziL0}).
\end{proof}
\end{thm}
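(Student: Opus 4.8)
The plan is to show that the defining limit exists by converting each regularized integral $I_\epsilon := \iint e^{-iy\cdot\eta}\chi(\epsilon\eta,\epsilon y)a(\eta,y)\,dy\,\dslash\eta$ into an absolutely convergent one via integration by parts, exhibiting a single dominating function in $L^1(\R^n\times\R^n)$ that works for all $\epsilon\in(0,1)$ at once, and then letting $\epsilon\to0$ through dominated convergence (Theorem \ref{thm:masstheorieDOM}); for a general $a\in\Oszillatoryi$ one first picks $l,l'$ large enough that (\ref{equ:osziLL}) holds, which is possible precisely because $\delta<1$. Since the integrand that survives in the limit no longer contains $\chi$, the well-definedness, the representation (\ref{equ:osziUmschreiben}), and the seminorm bound (\ref{equ:osziL0}) all follow together at the end.

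For the integration by parts I would use $D_y^\alpha e^{-iy\cdot\eta} = (-\eta)^\alpha e^{-iy\cdot\eta}$ and $D_\eta^\beta e^{-iy\cdot\eta} = (-y)^\beta e^{-iy\cdot\eta}$, which give $\piket{\eta}{-2l}\piket{D_y}{2l}e^{-iy\cdot\eta} = e^{-iy\cdot\eta}$ and $\piket{y}{-2l'}\piket{D_\eta}{2l'}e^{-iy\cdot\eta} = e^{-iy\cdot\eta}$; here $\piket{D_y}{2l} = (1-\triangle_y)^l$ is a constant-coefficient differential operator of order $2l$ with only even-order derivatives, so no signs appear when integrating by parts. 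Because $\chi(\epsilon\eta,\epsilon y)\in\Schwarz{\R^n\times\R^n}$ and $a\in\Oszillatoryi$ has all derivatives of at most polynomial growth, the product $\chi(\epsilon\eta,\epsilon y)a(\eta,y)$ is Schwartz, so for each fixed $\epsilon>0$ there are no boundary contributions and Fubini applies; inserting the two identities and integrating by parts first in $y$ and then in $\eta$ rewrites $I_\epsilon$ as
$$I_\epsilon = \iint e^{-iy\cdot\eta}\,\piket{y}{-2l'}\piket{D_\eta}{2l'}\Bigl(\piket{\eta}{-2l}\piket{D_y}{2l}\bigl(\chi(\epsilon\eta,\epsilon y)a(\eta,y)\bigr)\Bigr)\,dy\,\dslash\eta.$$

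The decisive step is a bound on this integrand uniform in $\epsilon\in(0,1)$. By Lemma \ref{lemma:osziChi} the family $\{\chi(\epsilon\eta,\epsilon y)\}_{\epsilon\in(0,1)}$ is bounded in $\Oszillatory{0,0}{0}$, so the Leibniz rule yields $\abs{\partial_\eta^\alpha\partial_y^\beta(\chi(\epsilon\eta,\epsilon y)a(\eta,y))}\le C_{\alpha,\beta}\,\OszillatoryNorm{a}{\delta,\tau}{m}{\abs{\alpha+\beta}}\,\piket{\eta}{m+\delta\abs{\beta}}\piket{y}{\tau}$ with $C_{\alpha,\beta}$ independent of $\epsilon$ and of $a$. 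I would then account for the two operators and the two weights in succession: the order-$2l$ operator $\piket{D_y}{2l}$ introduces at most $2l$ $y$-derivatives and so costs a factor $\piket{\eta}{2l\delta}$; multiplying by $\piket{\eta}{-2l}$, whose $\eta$-derivatives are controlled by Lemma \ref{lemma:piketTech}, leaves the weight $\piket{\eta}{m-2l(1-\delta)}\piket{y}{\tau}$ both for the result and for all of its $\eta$-derivatives; the order-$2l'$ operator $\piket{D_\eta}{2l'}$ then only calls on these $\eta$-derivative estimates and preserves that weight; and finally $\piket{y}{-2l'}$ turns $\piket{y}{\tau}$ into $\piket{y}{\tau-2l'}$. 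Altogether the integrand of $I_\epsilon$, once the unimodular exponential is dropped, is bounded by $C_{l,l'}\,\OszillatoryNorm{a}{\delta,\tau}{m}{2(l+l')}\,\piket{\eta}{m-2l(1-\delta)}\piket{y}{\tau-2l'}$, uniformly in $\epsilon$ and with $\chi$ replaceable by $1$. Under the hypotheses (\ref{equ:osziLL}), Lemma \ref{lemma:masstheoriePiket} together with Fubini (Theorem \ref{thm:massthreorieFubini}) shows this dominating function lies in $L^1(\R^n\times\R^n)$, which in particular proves the asserted $L^1$-membership.

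It then remains to pass to the limit. By Lemma \ref{lemma:osziChi}, $\chi(\epsilon\eta,\epsilon y)\to1$ and every derivative of order $\ge1$ tends to $0$ as $\epsilon\to0$, so all Leibniz terms carrying a derivative of $\chi$ vanish pointwise and the integrand of $I_\epsilon$ converges pointwise to $e^{-iy\cdot\eta}\piket{y}{-2l'}\piket{D_\eta}{2l'}(\piket{\eta}{-2l}\piket{D_y}{2l}a(\eta,y))$. Dominated convergence (Theorem \ref{thm:masstheorieDOM}) with the dominating function of the previous step shows that $\os{\eta}{y}{a(\eta,y)} = \lim_{\epsilon\to0}I_\epsilon$ equals the right-hand side of (\ref{equ:osziUmschreiben}); since that expression no longer involves $\chi$, the value is independent of the choice of $\chi$, i.e. the oscillatory integral is well-defined. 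Taking absolute values in (\ref{equ:osziUmschreiben}), using $\abs{e^{-iy\cdot\eta}}=1$ and the same bound with $\chi\equiv1$, and integrating $\piket{\eta}{m-2l(1-\delta)}\piket{y}{\tau-2l'}$ (a finite constant by Lemma \ref{lemma:masstheoriePiket}) gives (\ref{equ:osziL0}) with $l_0=2(l+l')$. I expect the main obstacle to be precisely the bookkeeping in that uniform bound: checking that the $\delta$-loss produced by the $2l$ $y$-derivatives is offset by the weight $\piket{\eta}{-2l}$, so that enlarging $l$ and $l'$ really does push both weight exponents below the integrability threshold $-n$, while every constant remains independent of $\epsilon$ and of $a$.
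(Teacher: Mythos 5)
Ihr Vorschlag ist korrekt und folgt im Wesentlichen demselben Weg wie der Beweis der Arbeit: Regularisierung mit $\chi$, partielle Integration mittels der beiden Identitäten, gleichmäßige Schranke über Lemma \ref{lemma:osziChi}, Leibniz-Regel und Lemma \ref{lemma:piketTech}, integrierbare Majorante $\piket{\eta}{m-2l(1-\delta)}\piket{y}{\tau-2l'}$ dank (\ref{equ:osziLL}) und Lemma \ref{lemma:masstheoriePiket}, und schließlich dominierte Konvergenz, woraus Wohldefiniertheit, (\ref{equ:osziUmschreiben}) und (\ref{equ:osziL0}) zugleich folgen. Auch die Buchhaltung (der $\delta$-Verlust der $y$-Ableitungen wird durch das Gewicht $\piket{\eta}{-2l}$ kompensiert) entspricht genau der Argumentation der Arbeit.
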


\begin{cor}\label{cor:osziLebesgue}
Für $a(\eta,y)\in L^1(\R^n\times\R^n)$ ist
$$\osint e^{-iy\cdot\eta} a(\eta,y) dy \dslash\eta = \iint e^{-iy\cdot\eta} a(\eta,y) dy \dslash\eta .$$
\end{cor}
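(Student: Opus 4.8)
The plan is to notice that, although the oscillatory integral was introduced in Definition \ref{def:osziOsziInt} for smooth amplitudes $a \in \Oszillatoryi$, its defining $\epsilon$-limit
$$\lim_{\epsilon\to0}\iint e^{-iy\cdot\eta}\chi(\epsilon\eta,\epsilon y)\,a(\eta,y)\,dy\,\dslash\eta$$
already makes unambiguous sense for any $a \in L^1(\R^n\times\R^n)$, and that for such $a$ it is simply the ordinary Lebesgue integral. The whole argument is one application of dominated convergence, so the only real task is to set it up carefully.

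First I would fix an admissible cutoff $\chi(\eta,y) \in \Schwarz{\R^n\times\R^n}$ with $\chi(0,0) = 1$ and put $g_\epsilon(\eta,y) := e^{-iy\cdot\eta}\chi(\epsilon\eta,\epsilon y)\,a(\eta,y)$ for $\epsilon \in (0,1)$. Since $\chi$ is bounded, $\abs{g_\epsilon(\eta,y)} \le \norm{\chi}{L^\infty(\R^n\times\R^n)}\abs{a(\eta,y)}$ pointwise, uniformly in $\epsilon$; because $a \in L^1(\R^n\times\R^n)$, this is an $\epsilon$-independent integrable majorant, and in particular each $g_\epsilon$ is itself in $L^1(\R^n\times\R^n)$, so by Fubini's theorem (Theorem \ref{thm:massthreorieFubini}) the iterated integral $\iint g_\epsilon\,dy\,\dslash\eta$ agrees with the integral over the product space. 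Next, continuity of $\chi$ at the origin together with $\chi(0,0)=1$ gives $\chi(\epsilon\eta,\epsilon y)\to 1$ as $\epsilon\to0$ for every fixed $(\eta,y)$, hence $g_\epsilon(\eta,y) \to e^{-iy\cdot\eta}a(\eta,y)$ pointwise. Then Lebesgue's dominated convergence theorem (Theorem \ref{thm:masstheorieDOM}), applied with the majorant above, lets me pass the limit under the integral sign, producing exactly $\iint e^{-iy\cdot\eta}a(\eta,y)\,dy\,\dslash\eta$. Finally, since this value does not depend on $\chi$, the oscillatory integral is well defined on $L^1(\R^n\times\R^n)$ and equals the Lebesgue integral, which is the claim.

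I do not expect a genuine obstacle: the point that needs a moment's thought is conceptual rather than computational, namely observing that the defining limit of Definition \ref{def:osziOsziInt} continues to make sense — and is $\chi$-independent — for a merely $L^1$ amplitude, so the statement is not vacuous; once that is granted, dominated convergence does everything. As a sanity check one may also note consistency with Theorem \ref{thm:osziTransform}: when $a$ happens to lie in $\Oszillatoryi \cap L^1(\R^n\times\R^n)$, the by-parts representation (\ref{equ:osziUmschreiben}) is itself an honest Lebesgue integral equal to the same number.
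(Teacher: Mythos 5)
Your argument is correct and is exactly the natural justification for this corollary, which the paper leaves without an explicit proof: bound $\abs{e^{-iy\cdot\eta}\chi(\epsilon\eta,\epsilon y)a(\eta,y)}$ by $\norm{\chi}{L^\infty}\abs{a(\eta,y)}$, note pointwise convergence $\chi(\epsilon\eta,\epsilon y)\to 1$, and apply dominated convergence together with Fubini. Your observation that the $\epsilon$-limit extends the definition of $\osint$ to merely $L^1$ amplitudes, and is $\chi$-independent there, is a worthwhile conceptual clarification; the paper in practice only invokes this corollary for amplitudes that lie in $\Oszillatoryi\cap L^1$, but your more literal reading is both correct and covers that case.
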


\begin{lemma}\label{tech:osziAblAbsch}
Für $I := [0,1] \subset \R$ gibt es eine Konstante $M>0$, sodass für alle $f \in C^2(I)$ gilt
$$ \max_{t\in I} \abs{f'(t)}^2 \le M  \max_{t\in I} \abs{f(t)} \left( \max_{t\in I} \abs{f(t)} + \max_{t\in I} \abs{f''(t)} \right) .$$
\begin{proof}
Sei ohne Einschränkungen $f : I \rightarrow \R, f \neq 0$.
Wähle $t \in [0,\frac{1}{2}]$.
Für ein $\epsilon \in (0,\frac{1}{4}]$ existiert nach dem Mittelwertsatz ein $t_1 \in (t+\epsilon,t+2\epsilon)$, sodass
$$ \frac{1}{\epsilon} \left( f(t+2\epsilon) - f(t+\epsilon) \right) = f'(t_1) .$$
Andererseits erhalten wir nach dem Hauptsatz der Integralrechnung
$$ f'(t_1) - f'(t) = \int_{t}^{t_1} f''(\tau) d\tau .$$
Mit $0 < t_1 - t < 2\epsilon$ erhalten wir
$$ \abs{f'(t)} \le \abs{\int_{t}^{t_1} f''(\tau) d\tau} + \abs{\frac{1}{\epsilon} \left( f(t+2\epsilon) - f(t+\epsilon) \right)} \le 2\epsilon \max_{t\in I}\abs{f''(t)} + \frac{2}{\epsilon} \max_{t\in I}\abs{f(t)} .$$
Für $t\in[\frac{1}{2},1]$ erhält man analog für $\epsilon \in [0,\frac{1}{4}]$ ein $t_1 \in (t-\epsilon,t-2\epsilon)$, sodass
$$ \frac{1}{\epsilon} \left( f(t-2\epsilon) - f(t-\epsilon) \right) = f'(t_1) ,$$
und
$$f'(t_1) - f'(t) = \int_t^{t_1} f''(\tau) d\tau .$$
Wiederum folgt mit $0 < t - t_1 < 2\epsilon$
$$\abs{f'(t)} \le 2\epsilon \max_{t\in I}\abs{f''(t)} + \frac{2}{\epsilon} \max_{t\in I}\abs{f(t)}.$$
Setzen wir 
$$\epsilon := \frac{1}{4} \sqrt{ \frac{ \max_{t\in I}\abs{f(t)} }{ \max_{t\in I}\abs{f(t)} + \max_{t\in I}\abs{f''(t)} } } ,$$
so erhalten wir für beide Fälle aus der obigen Ungleichung
$$ \abs{f'(t)} \le 9  \sqrt{ \max_{t\in I} \abs{f(t)} \left( \max_{t\in I} \abs{f(t)} + \max_{t\in I} \abs{f''(t)} \right) } .$$
\end{proof}
\end{lemma}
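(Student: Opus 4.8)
The statement is a Landau--Kolmogorov type interpolation inequality on the bounded interval $I=[0,1]$. Write $M_0:=\max_{t\in I}\abs{f(t)}$, $M_1:=\max_{t\in I}\abs{f'(t)}$ and $M_2:=\max_{t\in I}\abs{f''(t)}$ (all three maxima are attained, since $f,f',f''$ are continuous on the compact set $I$); the assertion is $M_1^2\le M\,M_0(M_0+M_2)$ with $M$ independent of $f$. We may assume $f$ real-valued --- otherwise split into real and imaginary parts, which only costs a factor in $M$ --- and $f\not\equiv 0$, so that $M_0>0$. The plan is to derive a pointwise bound of the shape $\abs{f'(t)}\le 2\varepsilon^{-1}M_0+2\varepsilon M_2$ valid for every admissible scale $\varepsilon$, and then to choose $\varepsilon$ so as to balance the two terms. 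The first term will come from the mean value theorem (there is a point near $t$ where $f'$ is controlled purely by $M_0$), the second from the fundamental theorem of calculus (travelling back from that point to $t$ costs only $M_2$ times the distance travelled).

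Concretely, I would first split $I=[0,\tfrac12]\cup[\tfrac12,1]$, so that from any $t\in I$ there is room of length at least $\tfrac12$ to move in one of the two directions. Fix $t\in[0,\tfrac12]$ and $\varepsilon\in(0,\tfrac14]$; then $[t+\varepsilon,t+2\varepsilon]\subset I$. Applying the mean value theorem to $f$ on this subinterval produces a point $t_1\in(t+\varepsilon,t+2\varepsilon)$ with $f'(t_1)=\varepsilon^{-1}\bigl(f(t+2\varepsilon)-f(t+\varepsilon)\bigr)$, hence $\abs{f'(t_1)}\le 2\varepsilon^{-1}M_0$. The fundamental theorem of calculus then gives $f'(t)=f'(t_1)-\int_t^{t_1}f''(\tau)\,d\tau$, and since $\abs{t-t_1}<2\varepsilon$ we conclude $\abs{f'(t)}\le 2\varepsilon^{-1}M_0+2\varepsilon M_2$. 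For $t\in[\tfrac12,1]$ the same computation with $t-\varepsilon,t-2\varepsilon$ replacing $t+\varepsilon,t+2\varepsilon$ yields the identical bound. Thus $\abs{f'(t)}\le 2\varepsilon^{-1}M_0+2\varepsilon M_2$ for all $t\in I$ and all $\varepsilon\in(0,\tfrac14]$.

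It remains to optimize in $\varepsilon$, and this is the only place where a little care is needed: the unconstrained minimizer $\varepsilon=\sqrt{M_0/M_2}$ of $\varepsilon\mapsto 2\varepsilon^{-1}M_0+2\varepsilon M_2$ may fall outside $(0,\tfrac14]$. The clean remedy is to take $\varepsilon:=\tfrac14\sqrt{M_0/(M_0+M_2)}$, which always lies in $(0,\tfrac14]$ since $0<M_0\le M_0+M_2$. Substituting gives $2\varepsilon^{-1}M_0=8\sqrt{M_0(M_0+M_2)}$, while $2\varepsilon M_2=\tfrac12\sqrt{M_0}\,\dfrac{M_2}{\sqrt{M_0+M_2}}\le\tfrac12\sqrt{M_0(M_0+M_2)}$ using $M_2\le M_0+M_2$. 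Hence $\abs{f'(t)}\le \tfrac{17}{2}\sqrt{M_0(M_0+M_2)}\le 9\sqrt{M_0(M_0+M_2)}$ for every $t\in I$; taking the supremum over $t$ and squaring yields $M_1^2\le 81\,M_0(M_0+M_2)$, so $M:=81$ works. I do not expect a genuine obstacle here: the argument is just the mean value theorem together with the fundamental theorem of calculus, and the only subtlety --- keeping the scale parameter admissible --- is defused by the normalization $\varepsilon=\tfrac14\sqrt{M_0/(M_0+M_2)}$.
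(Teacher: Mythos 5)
Your proposal is correct and follows essentially the same route as the paper: split $I$ at $\tfrac12$, use the mean value theorem on $[t+\epsilon,t+2\epsilon]$ (resp.\ the mirrored interval) plus the fundamental theorem of calculus to get $\abs{f'(t)}\le 2\epsilon^{-1}\max\abs{f}+2\epsilon\max\abs{f''}$, and then insert $\epsilon=\tfrac14\sqrt{\max\abs{f}/(\max\abs{f}+\max\abs{f''})}$. Your treatment is in fact slightly more careful than the paper's in spelling out the reduction to real-valued $f$ and the explicit constant $M=81$.
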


\begin{definition}
Wir bezeichnen eine Untermenge $B$ von $\Oszillatoryi$ als beschränkte Untermenge von $\Oszillatoryi$,
falls es $m \in \R, 0 \le \delta \le 1, 0 \le \tau$ gibt, sodass
$$ B \subset \Oszillatory{\delta,\tau}{m} \textrm{ und } \sup_{a(\eta,y)\in B} \OszillatoryNorm{a}{\delta,\tau}{m}{l} < \infty \textrm{ für jedes } l \in \N .$$
\end{definition}

\begin{thm}\label{thm:osziKonvergenz}
Sei $\folge{a_j(\eta,y)}{j\in\N}$ eine beschränkte Untermenge und Folge in $\Oszillatoryi$.
Nehme an, es gebe einen Grenzwert $a(\eta,y) \in \Oszillatoryi$, sodass
$$ a_j(\eta,y) \rightarrow a(\eta,y) \textrm{ für } j \rightarrow \infty$$
gleichmäßig in $\R^n\times\R^n$ auf jeder kompakten Teilmenge konvergiert.
Dann haben wir
$$ \lim_{j\rightarrow\infty} \os{\eta}{y}{a_j(\eta,y)} = \os{\eta}{y}{a(\eta,y)} .$$
\begin{proof}
Bezeichne mit $\menge{e_k}_{k=1,\ldots,n}$ die kanonische Basis des $\R^n$.
Definiere für $\eta,y\in\R^n$ fest gewählt die Folge $\folge{f_j(t)}{j\in\N}$ durch
$$f_j(t) := a_j(\eta + te_k,y) - a(\eta+te_k,y) .$$
Dann ist
$$\partial_t f_j(t) = \partial_{x_k} a_j(x,y) |_{x=\eta+te_k} - \partial_{x_k} a(x,y) |_{x=\eta+te_k} .$$
Da $\max_{t\in[0,1]} \abs{f_j(t)} \rightarrow 0$ und die Folge
$\folge{ \max_{t\in[0,1]} \abs{f_j(t)} + \max_{t\in[0,1]} f_j''(t)}{j\in\N}$ beschränkt ist,
folgt nach Lemma \ref{tech:osziAblAbsch}, dass
$$ f_j'(0) = \partial_{\eta_k} a_j(\eta,y) - \partial_{\eta_k} a(\eta,y) \rightarrow 0 \textrm{ für } j \rightarrow \infty$$
gleichmäßig auf jeder kompakten Teilmenge von $\R^n\times\R^n$ konvergiert.
Wir können also induktiv mit dem gleichen Vorgehen die Funktion $t \mapsto \partial_\eta^\alpha \partial_y^\beta a_j(\eta + te_k,y) - \partial_\eta^\alpha \partial_y^\beta a(\eta+te_k,y)$ für $\alpha,\beta\in\N^n_0$ betrachten,
mit der wiederum aus Lemma \ref{tech:osziAblAbsch} folgt, dass 
$$\partial_{\eta_k} \partial_\eta^\alpha \partial_y^\beta a_j(\eta,y) - \partial_{\eta_k} \partial_\eta^\alpha \partial_y^\beta   a(\eta,y) \rightarrow 0 \textrm{ für } j \rightarrow \infty \textrm{ für jedes } k = 1,\ldots,n \textrm{ und }$$
$$\partial_{y_k} \partial_\eta^\alpha \partial_y^\beta a_j(\eta,y) - \partial_{y_k} \partial_\eta^\alpha \partial_y^\beta   a(\eta,y) \rightarrow 0 \textrm{ für } j \rightarrow \infty \textrm{ für jedes } k = 1,\ldots,n .$$
Da  $\menge{a_j(\eta,y)}_{j\in\N}$ eine beschränkte Untermenge ist, gibt es ein $m \in \R, 0 \le \delta \le 1, 0 \le \tau$, 
sodass $\menge{a_j(\eta,y)}_{j\in\N} \subset \Oszillatory{\delta,\tau}{m}$ und wegen der gleichmäßigen Konvergenz auch $a(\eta,y) \in \Oszillatory{\delta,\tau}{m}$.
Also folgt die Behauptung mit der dominanten Konvergenz von Lebesgue, wenn wir Theorem \ref{thm:osziTransform} anwenden.
\end{proof}
\end{thm}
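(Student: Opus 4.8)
The plan is to reduce the claim to Lebesgue's dominated convergence theorem (Theorem~\ref{thm:masstheorieDOM}) via the $\chi$-free representation of the oscillatory integral obtained in Theorem~\ref{thm:osziTransform}. Since $\folge{a_j(\eta,y)}{j\in\N}$ is a bounded subset of $\Oszillatoryi$, there are $m\in\R$, $0\le\delta<1$ and $0\le\tau$ with $\menge{a_j(\eta,y)}_{j\in\N}\subset\Oszillatory{\delta,\tau}{m}$ and $\sup_{j\in\N}\OszillatoryNorm{a_j}{\delta,\tau}{m}{l}<\infty$ for every $l\in\N$. Fix once and for all $l,l'\in\N$ satisfying $-2l(1-\delta)+m<-n$ and $-2l'+\tau<-n$, i.e.\ condition~(\ref{equ:osziLL}). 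By~(\ref{equ:osziUmschreiben}) we then have
$$ \os{\eta}{y}{a_j(\eta,y)} = \iint e^{-iy\cdot\eta}\,\piket{y}{-2l'}\piket{D_\eta}{2l'}\Bigl(\piket{\eta}{-2l}\piket{D_y}{2l}a_j(\eta,y)\Bigr)\,dy\,\dslash\eta, $$
and similarly for $a$. By~(\ref{equ:osziBeweisL0}), passed to the limit $\epsilon\to0$ with the help of Lemma~\ref{lemma:osziChi}, the integrand for each $a_j$ is bounded pointwise by $C_{l,l'}\OszillatoryNorm{a_j}{\delta,\tau}{m}{2(l+l')}\piket{\eta}{m-2l(1-\delta)}\piket{y}{-2l'+\tau}$, hence by the single function $C_{l,l'}\bigl(\sup_{k\in\N}\OszillatoryNorm{a_k}{\delta,\tau}{m}{2(l+l')}\bigr)\piket{\eta}{m-2l(1-\delta)}\piket{y}{-2l'+\tau}$, which lies in $L^1(\R^n\times\R^n)$ by~(\ref{equ:osziLL}) and Lemma~\ref{lemma:masstheoriePiket}. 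Hence it remains only to verify that the integrands converge pointwise, and for that it suffices to show that
$$ \partial_\eta^\alpha\partial_y^\beta a_j(\eta,y)\longrightarrow\partial_\eta^\alpha\partial_y^\beta a(\eta,y)\qquad(j\to\infty) $$
locally uniformly in $(\eta,y)$, for every pair of multiindices $\alpha,\beta\in\N^n_0$; indeed $\piket{D_\eta}{2l'}$ and $\piket{D_y}{2l}$ are constant-coefficient differential operators of orders $2l'$ and $2l$ respectively, so only finitely many derivatives of $a_j$ enter the integrand.

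The core of the proof is therefore the promotion of the hypothesised locally uniform $C^0$-convergence $a_j\to a$ to locally uniform $C^\infty$-convergence, and this is precisely what Lemma~\ref{tech:osziAblAbsch} delivers. Fix a basis vector $e_k$ and a point $(\eta,y)$; applying Lemma~\ref{tech:osziAblAbsch} to $f(t):=a_j(\eta+te_k,y)-a(\eta+te_k,y)$ on a unit interval having the relevant parameter value as an endpoint, we note that $\max_t\abs{f(t)}\to0$ by assumption, while $\max_t\bigl(\abs{f(t)}+\abs{f''(t)}\bigr)$ stays bounded by a constant depending only on the chosen compact set and on $\sup_j\OszillatoryNorm{a_j}{\delta,\tau}{m}{2}$; hence $\max_t\abs{f'(t)}\to0$, and in particular $\partial_{\eta_k}a_j(\eta,y)-\partial_{\eta_k}a(\eta,y)\to0$ — uniformly for $(\eta,y)$ in that compact set, since all the constants above were uniform there. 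The same reasoning in the $y_k$-directions yields locally uniform convergence of $\partial_{y_k}a_j$. Iterating this step — at the $(\abs{\alpha}+\abs{\beta})$-th stage applied to $t\mapsto\partial_\eta^\alpha\partial_y^\beta a_j(\cdot+te_k,\cdot)-\partial_\eta^\alpha\partial_y^\beta a(\cdot+te_k,\cdot)$, whose $C^0$-norm already tends to $0$ by the previous stage and whose second derivatives are controlled by $\sup_j\OszillatoryNorm{a_j}{\delta,\tau}{m}{\abs{\alpha+\beta}+2}$ — we obtain locally uniform convergence of every mixed derivative $\partial_\eta^\alpha\partial_y^\beta a_j\to\partial_\eta^\alpha\partial_y^\beta a$. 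As a by-product the pointwise bounds defining $\Oszillatory{\delta,\tau}{m}$ pass to the limit, confirming $a(\eta,y)\in\Oszillatory{\delta,\tau}{m}$ with $\OszillatoryNorm{a}{\delta,\tau}{m}{l}\le\sup_j\OszillatoryNorm{a_j}{\delta,\tau}{m}{l}$, so that $\os{\eta}{y}{a(\eta,y)}$ admits the representation above as well.

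Putting the pieces together: the integrands in the displayed formulas for $\os{\eta}{y}{a_j(\eta,y)}$ converge pointwise to the one for $\os{\eta}{y}{a(\eta,y)}$ by the locally uniform derivative convergence just established, and they are all dominated by the fixed $L^1$-majorant obtained above; Theorem~\ref{thm:masstheorieDOM} then gives $\lim_{j\to\infty}\os{\eta}{y}{a_j(\eta,y)}=\os{\eta}{y}{a(\eta,y)}$, as required. The step I expect to be the main obstacle is this $C^0\to C^\infty$ bootstrap: one must apply the one-dimensional interpolation inequality of Lemma~\ref{tech:osziAblAbsch} on intervals that cover the given compact set (or rescale it appropriately) so that the resulting convergence of derivatives is genuinely locally uniform, and organise the induction so that it closes simultaneously over mixed derivatives in both groups of variables $\eta$ and $y$.
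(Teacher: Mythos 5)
Your proposal is correct and follows essentially the same route as the paper's proof: the $C^0\to C^\infty$ bootstrap of locally uniform convergence via the interpolation inequality of Lemma \ref{tech:osziAblAbsch} (using the uniform seminorm bounds of the bounded set to control the second derivatives), combined with the $\chi$-free representation (\ref{equ:osziUmschreiben}) from Theorem \ref{thm:osziTransform} and Lebesgue's dominated convergence with the $j$-independent majorant coming from (\ref{equ:osziBeweisL0}) and Lemma \ref{lemma:masstheoriePiket}. If anything, you spell out more carefully than the paper why only finitely many derivatives enter, why the majorant can be chosen uniformly in $j$, and why the iteration over mixed derivatives closes uniformly on compact sets.
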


\begin{remark}
Das Theorem kann als Analogon zum Satz von Lebesgue verstanden werden.
Statt der $L^2$-Beschränktheit haben wir die Beschränktheit von $\menge{a_j}_{j\in\N} \subset \Oszillatoryi$ vorausgesetzt.
\end{remark}

\begin{thm}\label{thm:osziPartielle}
Für $a \in \Oszillatoryi$ und jeden Multiindex $\alpha \in \N^n_0$ gilt
\begin{align*}
\os{\eta}{y}{y^\alpha a(\eta,y)} 
&= \osint (-D_\eta)^\alpha e^{-iy\cdot\eta} a(\eta,y) dy \dslash\eta \\
&=  \os{\eta}{y}{D_\eta^\alpha a(\eta,y)}
,\end{align*}
und
\begin{align*}
\os{\eta}{y}{\eta^\alpha a(\eta,y)} 
&= \osint (-D_y)^\alpha e^{-iy\cdot\eta} a(\eta,y) dy \dslash\eta \\
&=  \os{\eta}{y}{D_y^\alpha a(\eta,y)}
.\end{align*}
\begin{proof}
Unter Ausnutzung der Identität
$$D_\eta^\alpha e^{-iy\cdot\eta} = (-y)^\alpha e^{-iy\cdot\eta}$$
ist für $\chi(\eta,y) \in \Schwarz{\R^n\times\R^n}$ mit $\chi(0,0) = 1$
\begin{align*}
\os{\eta}{y}{y^\alpha a(\eta,y)} 
&= \lim_{\epsilon \rightarrow 0} \iint (-D_\eta)^\alpha e^{-iy\cdot\eta} \chi(\epsilon\eta, \epsilon y) a(\eta,y) dy \dslash\eta \\
&= \lim_{\epsilon \rightarrow 0} \iint e^{-iy\cdot\eta} D_\eta^\alpha \left[ \chi(\epsilon\eta, \epsilon y) a(\eta,y) \right] dy \dslash\eta  \\
&= \lim_{\epsilon \rightarrow 0} \os{\eta}{y}{ D_\eta^\alpha \left[ \chi(\epsilon\eta, \epsilon y) a(\eta,y) \right] }
.\end{align*}
Finden wir $m\in\R, \delta \in (0,1), \tau > 0$, sodass $a \in \Oszillatory{\delta,\tau}{m}$, dann ist
$$ \menge{  D_\eta^\alpha \left[ \chi(\epsilon\eta, \epsilon y) a(\eta,y) \right] }_{\epsilon \in (0,1)} \subset \Oszillatory{\delta,\tau}{m}$$
beschränkt. Für jede kompakte Teilmenge $K \subset \R^n\times\R^n$ konvergiert
$$\menge{ (\eta,y) \rightarrow D_\eta^\alpha \left[ \chi(\epsilon\eta, \epsilon y) a(\eta,y) \right] } \rightarrow D_\eta^\alpha a(\eta,y) \textrm{ für } \epsilon \rightarrow 0 \textrm{ auf } (\eta,y) \in K \textrm{ gleichmäßig } .$$
Mit Theorem \ref{thm:osziKonvergenz} folgt schließlich
$$\lim_{\epsilon \rightarrow 0} \os{\eta}{y}{ D_\eta^\alpha \left[ \chi(\epsilon\eta, \epsilon y) a(\eta,y) \right] } = \os{\eta}{y}{D_\eta^\alpha a(\eta,y)} .$$
Die zweite Gleichung folgt analog mit $D_y^\alpha e^{-iy\cdot\eta} = (-\eta)^\alpha e^{-iy\cdot\eta}$
\begin{align*}
\os{\eta}{y}{\eta^\alpha a(\eta,y)} 
&= \lim_{\epsilon \rightarrow 0} \iint (-D_y)^\alpha e^{-iy\cdot\eta} \chi(\epsilon\eta, \epsilon y) a(\eta,y) dy \dslash\eta  \\
&= \lim_{\epsilon \rightarrow 0} \iint e^{-iy\cdot\eta} D_y^\alpha \left[ \chi(\epsilon\eta, \epsilon y) a(\eta,y) \right] dy \dslash\eta  \\
&= \lim_{\epsilon \rightarrow 0} \os{\eta}{y}{ D_y^\alpha \left[ \chi(\epsilon\eta, \epsilon y) a(\eta,y) \right] }
.\end{align*}
Mit gleicher Argumentation folgt also wieder mit Theorem \ref{thm:osziKonvergenz}, dass
$$\lim_{\epsilon \rightarrow 0} \os{\eta}{y}{ D_y^\alpha \left[ \chi(\epsilon\eta, \epsilon y) a(\eta,y) \right] } = \os{\eta}{y}{D_y^\alpha a(\eta,y)} .$$
\end{proof}
\end{thm}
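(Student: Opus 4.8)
Der Plan ist, beide Gleichungspaare durch partielle Integration unter dem oszillatorischen Integral zu gewinnen und den Grenzübergang $\epsilon\to 0$ mit Theorem \ref{thm:osziKonvergenz} abzusichern. Zuerst würde ich für einen festen Abschneider $\chi(\eta,y)\in\Schwarz{\R^n\times\R^n}$ mit $\chi(0,0)=1$ die Definition des oszillatorischen Integrals ausschreiben und die Identität $y^\alpha e^{-iy\cdot\eta} = (-D_\eta)^\alpha e^{-iy\cdot\eta}$ einsetzen:
$$ \os{\eta}{y}{y^\alpha a(\eta,y)} = \lim_{\epsilon\to 0} \iint (-D_\eta)^\alpha e^{-iy\cdot\eta}\,\chi(\epsilon\eta,\epsilon y)\,a(\eta,y)\,dy\,\dslash\eta ; $$
das ist zugleich der mittlere Ausdruck der Behauptung. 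Da $\chi(\epsilon\hold,\epsilon\hold)$ eine dilatierte Schwartz-Funktion ist und $a$ samt sämtlicher Ableitungen nur polynomial wächst, liegt $\chi(\epsilon\eta,\epsilon y)\,a(\eta,y)$ in $\Schwarzi$; das Doppelintegral ist also absolut konvergent, ich darf in $\eta$ partiell integrieren (die Randterme verschwinden) und erhalte
$$ \os{\eta}{y}{y^\alpha a(\eta,y)} = \lim_{\epsilon\to 0} \iint e^{-iy\cdot\eta}\, D_\eta^\alpha\!\left[\chi(\epsilon\eta,\epsilon y)\,a(\eta,y)\right] dy\,\dslash\eta . $$
Der Integrand ist wieder aus $\Schwarzi$, sodass ich mit Folgerung \ref{cor:osziLebesgue} das innere Integral als oszillatorisches Integral lese, also $\os{\eta}{y}{y^\alpha a(\eta,y)} = \lim_{\epsilon\to 0}\os{\eta}{y}{D_\eta^\alpha[\chi(\epsilon\eta,\epsilon y)a(\eta,y)]}$.

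Für den Grenzübergang prüfe ich die beiden Voraussetzungen von Theorem \ref{thm:osziKonvergenz}. Beschränktheit: Wähle $m\in\R$, $\delta\in[0,1)$ und $\tau\ge 0$ mit $a\in\Oszillatory{\delta,\tau}{m}$. Nach Lemma \ref{lemma:osziChi} ist $\menge{\chi(\epsilon\eta,\epsilon y)}_{\epsilon\in(0,1)}$ — und wegen (\ref{equ:oszichib}) mit $\sigma=0$ auch jede ihrer Ableitungen — in $\Oszillatory{0,0}{0}$ beschränkt. Mit der Regel von Leibniz und der Beobachtung, dass $\eta$-Differentiation die Ordnung in $\Oszillatory{\delta,\tau}{m}$ unverändert lässt (nur $y$-Differentiation hebt sie um ein Vielfaches von $\delta$ an), folgt, dass $\menge{D_\eta^\alpha[\chi(\epsilon\eta,\epsilon y)a(\eta,y)]}_{\epsilon\in(0,1)}$ in $\Oszillatory{\delta,\tau}{m}$ liegt und dort beschränkt ist. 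Konvergenz: Nach (\ref{equ:oszichia}) gilt $\chi(\epsilon\hold)\to 1$ gleichmäßig auf Kompakta und $\partial^\beta\chi(\epsilon\hold)\to 0$ gleichmäßig für $\beta\neq 0$; in der Leibniz-Entwicklung überlebt also nur der Summand zu $\beta=0$, d.h. $D_\eta^\alpha[\chi(\epsilon\eta,\epsilon y)a(\eta,y)]\to D_\eta^\alpha a(\eta,y)$ gleichmäßig auf jeder kompakten Teilmenge von $\R^n\times\R^n$. Theorem \ref{thm:osziKonvergenz} liefert dann $\lim_{\epsilon\to 0}\os{\eta}{y}{D_\eta^\alpha[\chi(\epsilon\eta,\epsilon y)a(\eta,y)]} = \os{\eta}{y}{D_\eta^\alpha a(\eta,y)}$ und damit das erste Gleichungspaar.

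Das zweite Gleichungspaar behandle ich wortgleich mit $\eta^\alpha e^{-iy\cdot\eta} = (-D_y)^\alpha e^{-iy\cdot\eta}$ und partieller Integration in $y$ statt in $\eta$. Der einzige Unterschied ist, dass $y$-Ableitungen von $a$ die $\eta$-Ordnung um ein Vielfaches von $\delta$ anheben; das bleibt jedoch innerhalb von $\Oszillatoryi$, sodass Theorem \ref{thm:osziKonvergenz} unverändert greift. Die eigentliche Schwierigkeit liegt nicht in der partiellen Integration — die ist dank Folgerung \ref{cor:osziLebesgue} und der Schwartz-Eigenschaft des abgeschnittenen Integranden unproblematisch —, sondern in der sorgfältigen Verifikation, dass die abgeschnittenen und abgeleiteten Symbole gleichmäßig in $\epsilon\in(0,1)$ in \emph{einer festen} Klasse $\Oszillatory{\delta,\tau}{m}$ beschränkt bleiben; dies beruht ganz auf den Dilatationsabschätzungen aus Lemma \ref{lemma:osziChi}.
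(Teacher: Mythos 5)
Dein Beweis ist korrekt und folgt im Wesentlichen demselben Weg wie der in der Arbeit: Ausnutzen von $D_\eta^\alpha e^{-iy\cdot\eta}=(-y)^\alpha e^{-iy\cdot\eta}$ bzw. $D_y^\alpha e^{-iy\cdot\eta}=(-\eta)^\alpha e^{-iy\cdot\eta}$, partielle Integration im regularisierten Integral und Grenzübergang via Theorem \ref{thm:osziKonvergenz} mit der aus Lemma \ref{lemma:osziChi} folgenden Beschränktheit und lokal gleichmäßigen Konvergenz der Familie $\menge{D_\eta^\alpha[\chi(\epsilon\eta,\epsilon y)a(\eta,y)]}_{\epsilon\in(0,1)}$. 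Deine zusätzlichen Begründungen (Schwartz-Eigenschaft des abgeschnittenen Integranden, Folgerung \ref{cor:osziLebesgue}, Ordnungsverschiebung um $\delta\abs{\alpha}$ bei $y$-Ableitungen) präzisieren lediglich Schritte, die die Arbeit stillschweigend übergeht.
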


\begin{comment}

\begin{thm}
Das oszillatorische Integral ist bezüglich Translationen der Form
$$ \osint e^{-(y+y_0)\cdot(\eta+\eta_0)} a(\eta+\eta_0, y+y_0) dy\dslash\eta = 
\osint e^{-iy\cdot\eta} a(\eta,y) dy\dslash\eta$$
für $y_0,\eta_0 \in \R^n, a \in \Oszillatoryi$ invariant.
Für $l, l' \in \N_0$ gilt die Identität
$$ \os{\eta}{y}{\piket{y}{-2l'} \piket{D_\eta}{2l'} \left\{ \piket{\eta}{2l} a(\eta,y) \right\} } = \os{\eta}{y}{a(\eta,y)} .$$
\begin{proof}
Setze $a_0(\eta,y) := e^{-i(y\cdot\eta_0 + y_0\cdot\eta + y_0\cdot\eta_0)} a(\eta+\eta_0, y+y_0)$, dann ist
$e^{-(y+y_0)\cdot(\eta+\eta_0)} a(\eta+\eta_0, y+y_0) = e^{-iy\cdot\eta} a_0(y,\eta)$
Lemma \ref{ungl:peetre} liefert die Abschätzungen $ \piketi{\eta+\eta_0} \le 2 \piketi{\eta_0} \piketi{\eta}, \piketi{y+y_0} \le 2 \piketi{y_0}\piketi{y}$.
\end{proof}
\end{thm}
\end{comment}

\begin{thm}\label{thm:osziFubini}
Sei $a \in \Oszillatory{\delta,\tau}{m}$.
\begin{enumerate}[(a)]
\item \label{item:osziFubiniA} Falls es ein $0 \le f \in L^1(\R^n)$ gibt mit $\abs{a(\eta,y)} \le \piket{\eta}{m} f(y)$ für alle $(\eta,y) \in \R^{2n}$, 
dann gilt für alle $\chi \in \Schwarzi$ mit $\chi(0) = 1$, dass
\begin{equation}\label{equ:osziFubini1}
\os{\eta}{y}{a(\eta,y)} = \lim_{\epsilon \rightarrow 0} \iint e^{-iy\cdot\eta} \chi(\epsilon\eta) a(\eta,y) dy \dslash\eta .
\end{equation}
\item \label{item:osziFubiniB} Falls es ein $0 \le g \in L^1(\R^n)$ gibt mit $\abs{a(\eta,y)} \le \piket{y}{\tau} g(\eta)$ für alle $(\eta,y) \in \R^{2n}$, 
dann gilt für alle $\chi \in \Schwarzi$ mit $\chi(0) = 1$, dass
\begin{equation}\label{equ:osziFubini2}
\os{\eta}{y}{a(\eta,y)} = \lim_{\epsilon \rightarrow 0} \iint e^{-iy\cdot\eta} \chi(\epsilon y) a(\eta,y) dy \dslash\eta .
\end{equation}
\item \label{item:osziFubiniC} Falls zusätzlich zu den Bedingungen von (\ref{item:osziFubiniA}) gilt, dass
$\eta \mapsto \int e^{-iy\cdot\eta} a(\eta,y) dy$ integrabel ist, 
dann können wir schreiben
$$ \os{\eta}{y}{a(\eta,y)} = \int \left( \int e^{-iy\cdot\eta} a(\eta,y) dy \right) \dslash\eta .$$
\item \label{item:osziFubiniD} Falls zusätzlich zu den Bedingungen von (\ref{item:osziFubiniB}) gilt, dass
$y \mapsto \int e^{-iy\cdot\eta} a(\eta,y) \dslash\eta$ integrabel ist, dann können wir schreiben
$$ \os{\eta}{y}{a(\eta,y)} =  \int \left( \int e^{-iy\cdot\eta} a(\eta,y) \dslash\eta \right) dy .$$
\end{enumerate}
\begin{proof}
\begin{enumerate}[(a)]
\item Wir sehen sofort, dass $y \mapsto a(\eta,y) \in L^1(\R^n)$.
Folgerung \ref{cor:osziLebesgue} liefert für $\chi(\epsilon\eta) a(\eta,y) \in L^1(\R^n\times\R^n)$, dass
$$\lim_{\epsilon \rightarrow 0} \iint e^{-iy\cdot\eta} \chi(\epsilon\eta) a(\eta,y) dy \dslash\eta = \lim_{\epsilon \rightarrow 0} \os{\eta}{y}{\chi(\epsilon\eta)a(\eta,y)} .$$
Da $ \menge{ \chi(\epsilon\eta) a(\eta,y) }_{\epsilon \in (0,1)}$ eine beschränkte Teilmenge von $\Oszillatory{\delta,\tau}{m}$ ist, folgt mit Theorem \ref{thm:osziKonvergenz}, 
dass 
$$\lim_{\epsilon \rightarrow 0} \os{\eta}{y}{\chi(\epsilon\eta)a(\eta,y)} = \os{\eta}{y}{a(\eta,y)} .$$
\item Folgt analog wegen $\eta \mapsto a(\eta,y) \in L^1(\R^n)$.
\item Wir haben nach (\ref{equ:osziFubini1})
$$ \os{\eta}{y}{a(\eta,y)} = \lim_{\epsilon \rightarrow 0} \int \chi(\epsilon\eta) \int e^{-iy\cdot\eta} a(\eta,y) dy \dslash\eta .$$
Nach Voraussetzung erhalten wir eine integrable Dominante, sodass wir mit dem Satz von Lebesgue erhalten
$$ \int \chi(\epsilon\eta) \int e^{-iy\cdot\eta} a(\eta,y) dy \dslash\eta \rightarrow \iint e^{-iy\cdot\eta} a(\eta,y) dy \dslash\eta \textrm{ für } \epsilon \rightarrow 0 .$$
\item Verwende (\ref{equ:osziFubini2}) unter Ausnutzung des Satzes von Fubini, sodass
$$ \os{\eta}{y}{a(\eta,y)} = \lim_{\epsilon \rightarrow 0} \int \chi(\epsilon y) \int  e^{-iy\cdot\eta} a(\eta,y) \dslash\eta dy .$$
Analog wie in (\ref{item:osziFubiniC}) folgt mit dem Satz von Lebesgue die Behauptung.
\end{enumerate}
\end{proof}
\end{thm}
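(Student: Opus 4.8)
The plan is to bootstrap everything from three ingredients already in place: Corollary \ref{cor:osziLebesgue} (the oscillatory integral of an $L^1$-symbol is an ordinary Lebesgue integral), the dominated-convergence principle for oscillatory integrals, Theorem \ref{thm:osziKonvergenz}, and the classical Fubini and dominated convergence theorems.

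For part (\ref{item:osziFubiniA}) I would first note that the hypothesis $\abs{a(\eta,y)} \le \piket{\eta}{m} f(y)$ makes $y\mapsto a(\eta,y) \in L^1(\R^n)$ for every fixed $\eta$, and — crucially — that for each fixed $\epsilon\in(0,1)$ the function $(\eta,y)\mapsto \chi(\epsilon\eta)a(\eta,y)$ lies in $L^1(\R^n\times\R^n)$, because $\chi(\epsilon\hold)$ is Schwartz and hence $\chi(\epsilon\eta)\piket{\eta}{m} \in L^1(\R^n)$ while $f\in L^1(\R^n)$. Corollary \ref{cor:osziLebesgue} then rewrites the double integral in (\ref{equ:osziFubini1}) as $\os{\eta}{y}{\chi(\epsilon\eta)a(\eta,y)}$. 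Next I would verify, by Leibniz in $\eta$ (derivatives of $\chi(\epsilon\eta)$ come with harmless factors $\epsilon^{\abs{\gamma}}\le 1$ times bounded Schwartz seminorms of $\chi$), that $\menge{\chi(\epsilon\eta)a(\eta,y)}_{\epsilon\in(0,1)}$ is a bounded subset of $\Oszillatory{\delta,\tau}{m}$, and that $\chi(\epsilon\eta)a(\eta,y)\to a(\eta,y)$ uniformly on every compact subset of $\R^n\times\R^n$ as $\epsilon\to0$ (since $\abs{a}$ is bounded on compacta and $\chi(\epsilon\eta)\to1$ uniformly there). Applying Theorem \ref{thm:osziKonvergenz} along any sequence $\epsilon_j\to0$ gives $\os{\eta}{y}{\chi(\epsilon\eta)a(\eta,y)}\to\os{\eta}{y}{a(\eta,y)}$, which is (\ref{equ:osziFubini1}). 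Part (\ref{item:osziFubiniB}) follows by the mirror-image argument with the roles of $\eta$ and $y$ interchanged: $\abs{a(\eta,y)}\le\piket{y}{\tau}g(\eta)$ makes $\eta\mapsto a(\eta,y)\in L^1(\R^n)$ and $\chi(\epsilon y)a(\eta,y)\in L^1(\R^n\times\R^n)$, and the same boundedness/uniform-convergence pair feeds Theorem \ref{thm:osziKonvergenz}.

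For part (\ref{item:osziFubiniC}) I would begin from the representation (\ref{equ:osziFubini1}) granted by (\ref{item:osziFubiniA}). Since $e^{-iy\cdot\eta}\chi(\epsilon\eta)a(\eta,y)\in L^1(\R^n\times\R^n)$ for fixed $\epsilon$, Fubini (Theorem \ref{thm:massthreorieFubini}) turns the double integral into $\int\chi(\epsilon\eta)\bigl(\int e^{-iy\cdot\eta}a(\eta,y)\,dy\bigr)\dslash\eta$. The inner integral is, by assumption, an $L^1(\R^n)$-function of $\eta$; as $\abs{\chi(\epsilon\eta)}\le\norm{\chi}{\infty}$ and $\chi(\epsilon\eta)\to\chi(0)=1$ pointwise, Lebesgue's theorem (Theorem \ref{thm:masstheorieDOM}) passes $\lim_{\epsilon\to0}$ under the integral sign, producing the claimed iterated integral. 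Part (\ref{item:osziFubiniD}) is the same computation starting from (\ref{equ:osziFubini2}): Fubini pulls $\chi(\epsilon y)$ out of the $\eta$-integral, and dominated convergence against the assumed $L^1(\R^n)$-majorant $y\mapsto\int e^{-iy\cdot\eta}a(\eta,y)\dslash\eta$ finishes it.

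I do not expect a real obstacle; the statement is essentially bookkeeping on top of the oscillatory calculus of Theorems \ref{thm:osziTransform}--\ref{thm:osziPartielle}. The one place that warrants care is the uniform-in-$\epsilon$ control of $\menge{\chi(\epsilon\eta)a(\eta,y)}_{\epsilon}$ (resp.\ $\menge{\chi(\epsilon y)a(\eta,y)}_{\epsilon}$) in the Fréchet space $\Oszillatory{\delta,\tau}{m}$ — this is the one-block analogue of Lemma \ref{lemma:osziChi} and the only spot where the cutoff's scaling is genuinely used — together with tracking which of the two $L^1$-bounds one needs in order to invoke Corollary \ref{cor:osziLebesgue} and Fubini at each step.
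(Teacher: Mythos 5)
Your proof is correct and follows essentially the same route as the paper's: for (a) and (b) you pass from the regularized double integral to the oscillatory integral via Corollary \ref{cor:osziLebesgue}, then invoke the boundedness-plus-uniform-convergence criterion of Theorem \ref{thm:osziKonvergenz}; for (c) and (d) you unfold via Fubini and conclude by dominated convergence. The only cosmetic difference is that you spell out the $L^1$- and Fréchet-boundedness verifications that the paper leaves implicit.
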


\begin{cor}
Für $a \in \Beschri$ ist $\mathcal{F}^{-1} \circ \mathcal{F} \left( a(x) \right) = a(x)$ für alle $x \in \R^n$.
D.h. wir haben formal ``$\mathcal{F}^{-1} \circ \mathcal{F} \mid_{\Beschri} = \id_{\Beschri}$''.
\begin{proof}
Für ein $a \in \Beschri$ ist $(y,\eta) \rightarrow e^{ix\cdot\eta} a(y) \in \Oszillatory{0,0}{0}$ für beliebiges $x\in\R^n$.
Demnach können wir die Fourier-Transformation als oszillatorisches Integral
$$ \FourierB{\eta}{x}{ \Fourier{y}{\eta}{a(y)}} = \os{y}{\eta}{e^{ix\cdot\eta} a(y)} $$
auffassen.
Sei $\chi \in \Schwarzi$ mit $\chi(0) = 1$ und setze $\tilde{\chi}(\eta,y) := \chi(\eta) \chi(y)$.
Mit Theorem \ref{thm:osziFubini}.\ref{item:osziFubiniC} und Theorem \ref{thm:osziFubini}.\ref{item:osziFubiniD} folgt
\begin{align*}
\FourierB{\eta}{x}{ \Fourier{y}{\eta}{a(y)}} 
&= \lim_{\epsilon \rightarrow 0} \iint e^{i(x-y)\eta} \tilde{\chi}(\epsilon\eta, \epsilon y) a(y) \dslash\eta dy \\
&= \lim_{\epsilon \rightarrow 0} \int a(y) \chi(\epsilon y) \left( \int e^{i(x-y)\cdot\eta} \chi(\epsilon\eta) \dslash\eta \right) dy
.\end{align*}
Wenn wir zuerst mit $\eta \mapsto \frac{\eta}{\epsilon}$ und danach mit $y \mapsto \frac{x-y}{\epsilon} =: y'$ transformieren, erhalten wir
\begin{align*}
\int a(y) \chi(\epsilon y) \left( \int e^{i(x-y)\cdot\eta} \chi(\epsilon\eta) \dslash\eta \right) dy
=& \epsilon^{-n} \int a(y) \chi(\epsilon y) \left( \int e^{i\frac{(x-y)}{\epsilon}\cdot\eta} \chi(\eta) \dslash\eta \right) dy \\
=& \epsilon^{-n} \int \chi(\epsilon y) a(y) \FourierB{\eta}{\frac{x-y}{\epsilon}}{\chi(\eta)} dy \\
=& \int \chi(\epsilon(x-\epsilon y')) a(x-\epsilon y') \FourierB{\eta}{y'}{\chi(\eta)} dy'
.\end{align*}
Dominante Konvergenz liefert schließlich für $\epsilon \rightarrow 0$
\begin{align*}
\int \chi(\epsilon(x-\epsilon y')) a(x-\epsilon y') \FourierB{\eta}{y'}{\chi(\eta)} dy' 
&\rightarrow a(x) \int \FourierB{\eta}{y'}{\chi(\eta)} dy' \\
&= a(x) \chi(0) = a(x) 
.\end{align*}
\end{proof}
\end{cor}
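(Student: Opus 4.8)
The plan is to read the (merely formal) double transform $\FourierB{\eta}{x}{\Fourier{y}{\eta}{a(y)}}$, for each fixed $x\in\R^n$, as the oscillatory integral $\os{\eta}{y}{e^{ix\cdot\eta}a(y)}$, and then to evaluate that integral by exploiting the freedom to choose a convenient cut-off. First I would check that $(\eta,y)\mapsto e^{ix\cdot\eta}a(y)$ belongs to $\Oszillatory{0,0}{0}\subset\Oszillatoryi$ whenever $a\in\Beschri$: we have $\abs{\partial_\eta^\alpha\partial_y^\beta\!\left(e^{ix\cdot\eta}a(y)\right)} = \abs{x^\alpha}\,\abs{\partial_y^\beta a(y)}\le\abs{x}^{\abs{\alpha}}\norm{\partial^\beta a}{\infty} =: C_{\alpha,\beta}$, which is the required bound $C_{\alpha,\beta}\piketi{\eta}^{0}\piketi{y}^{0}$. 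By Theorem \ref{thm:osziTransform} the oscillatory integral is then well defined and, crucially, independent of the cut-off; so I may evaluate it with the product cut-off $\tilde\chi(\eta,y):=\chi(\eta)\chi(y)$, where $\chi\in\Schwarzi$ and $\chi(0)=1$ (note $\tilde\chi\in\Schwarz{\R^n\times\R^n}$ with $\tilde\chi(0,0)=1$).

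Inserting $\tilde\chi$ into Definition \ref{def:osziOsziInt} gives $\os{\eta}{y}{e^{ix\cdot\eta}a(y)} = \lim_{\epsilon\to0}\iint e^{i(x-y)\cdot\eta}\chi(\epsilon\eta)\chi(\epsilon y)a(y)\,dy\,\dslash\eta$. For each fixed $\epsilon>0$ this integrand is in $L^1(\R^n\times\R^n)$, since $a$ is bounded and $\chi(\epsilon\,\cdot\,)\in L^1(\R^n)$, so Fubini's theorem (Theorem \ref{thm:massthreorieFubini}) lets me iterate, the inner integral being $\int e^{i(x-y)\cdot\eta}\chi(\epsilon\eta)\,\dslash\eta = \FourierB{\eta}{x-y}{\chi(\epsilon\eta)} = \epsilon^{-n}\check\chi\!\left((x-y)/\epsilon\right)$ by the dilation property (Remark \ref{remark:fourier}). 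Hence the regularized integral is $I_\epsilon = \epsilon^{-n}\int\chi(\epsilon y)\,a(y)\,\check\chi\!\left((x-y)/\epsilon\right)dy$, and the substitution $y=x-\epsilon y'$ (Theorem \ref{thm:masstheorieTransformation}) turns it into $I_\epsilon = \int\chi\!\left(\epsilon(x-\epsilon y')\right)a(x-\epsilon y')\,\check\chi(y')\,dy'$.

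It remains to let $\epsilon\to0$: the integrand converges pointwise in $y'$ to $\chi(0)\,a(x)\,\check\chi(y') = a(x)\check\chi(y')$ by the continuity of $\chi$ and of $a$, and it is dominated by $\norm{\chi}{\infty}\norm{a}{\infty}\,\abs{\check\chi(\,\cdot\,)}\in L^1(\R^n)$ because $\check\chi\in\Schwarzi$; so dominated convergence (Theorem \ref{thm:masstheorieDOM}) yields $\FourierB{\eta}{x}{\Fourier{y}{\eta}{a(y)}} = a(x)\int\check\chi(y')\,dy'$. Since $\int\check\chi(y')\,dy' = \Fourier{y'}{0}{\check\chi(y')} = \chi(0) = 1$ by Fourier inversion on $\Schwarzi$ (cf. Theorem \ref{thm:SchwartzFourierNorm}), we conclude $\FourierB{\eta}{x}{\Fourier{y}{\eta}{a(y)}} = a(x)$, as claimed.

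The step I expect to be the real obstacle is precisely this first interchange of integrals: because $e^{ix\cdot\eta}a(y)$ decays in neither $\eta$ nor $y$, the Fubini-type result Theorem \ref{thm:osziFubini} does not apply to it directly — its hypotheses demand an $L^1$-bound in one of the two variables. The remedy is the one sketched above: put in the product cut-off $\chi(\epsilon\eta)\chi(\epsilon y)$ by hand so that ordinary Fubini applies at the regularized level, and only afterwards carry out the rescaling $y\mapsto(x-y)/\epsilon$, which is the maneuver that recasts the approximate-identity convolution in a shape to which dominated convergence applies, before finally sending $\epsilon\to0$.
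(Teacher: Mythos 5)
Your proof is correct and follows essentially the same route as the paper: read the iterated transform as the oscillatory integral $\os{\eta}{y}{e^{ix\cdot\eta}a(y)}$ (having checked $(\eta,y)\mapsto e^{ix\cdot\eta}a(y)\in\Oszillatory{0,0}{0}$), insert the product cut-off $\chi(\epsilon\eta)\chi(\epsilon y)$, iterate by Fubini at the regularized level, compute the inner integral via the dilation property, rescale $y=x-\epsilon y'$, and finish with dominated convergence and $\int\check\chi\,dy'=\chi(0)=1$. Your observation that Theorem~\ref{thm:osziFubini}(\ref{item:osziFubiniC})/(\ref{item:osziFubiniD}) does not apply verbatim to $e^{ix\cdot\eta}a(y)$ (which lacks the $L^1$-in-one-variable bound those statements require) is a fair point, and your substitute justification — ordinary Fubini applied to the $\epsilon$-regularized, genuinely $L^1$ integrand — is exactly the honest reading of the step the paper is performing.
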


\begin{cor}
Für $a \in \Beschri$ gilt die Identität
\begin{equation}\label{equ:inversionFormula} a(x) = \osint e^{-iy\cdot\eta} a(x+y) dy\dslash\eta \textrm{ für alle } x \in \R^n .
\end{equation}
\begin{proof}
Wir haben für ein $\chi \in \Schwarz{\R^n\times\R^n}$ mit $\chi(0) = 1$
\begin{align*}
a(x) &= \mathcal{F}^{-1} \circ \mathcal{F} a(x)
= \iint e^{i(x-y)\cdot\eta} \chi(\epsilon\eta,\epsilon y) a(y) dy \dslash\eta \\
&= \iint e^{-iy\cdot\eta} \chi(\epsilon\eta, \epsilon (x+y)) a(x+y) dy \dslash\eta  
= \osint  e^{-iy\cdot\eta} a(x+y) dy\dslash\eta 
.\end{align*}
\end{proof}
\end{cor}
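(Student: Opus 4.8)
Der Plan ist, die Behauptung unmittelbar aus der vorangehenden Folgerung, also aus der Fourier-Umkehrformel $\Fourieri^{-1}\circ\Fourieri\mid_{\Beschri} = \id_{\Beschri}$, abzuleiten. Sei dazu $x\in\R^n$ fest. Zuerst würde ich benutzen, dass $a(x) = \Fourieri^{-1}\circ\Fourieri\,a(x)$ gilt und sich die rechte Seite als oszillatorisches Integral schreiben lässt: Die Amplitude $(\eta,y)\mapsto e^{ix\cdot\eta}a(y)$ liegt wegen $a\in\Beschri$ in $\Oszillatory{0,0}{0}$, sodass Definition~\ref{def:osziOsziInt} — ausgewertet mit dem Produktabschneider $\chi(\epsilon\eta)\chi(\epsilon y)$ zu einem $\chi\in\Schwarzi$ mit $\chi(0)=1$ — die Darstellung
$$ a(x) = \lim_{\epsilon\rightarrow 0}\iint e^{i(x-y)\cdot\eta}\,\chi(\epsilon\eta)\chi(\epsilon y)\,a(y)\,dy\,\dslash\eta $$
liefert; dies wurde im Beweis der vorangehenden Folgerung bereits hergeleitet.

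Danach würde ich im inneren $y$-Integral die Substitution $y\mapsto x+y$ durchführen; für festes $\epsilon>0$ ist der Integrand in $L^1(\R^n\times\R^n)$, sodass die Transformationsformel greift. Der Phasenfaktor $e^{i(x-y)\cdot\eta}$ geht dabei in $e^{-iy\cdot\eta}$ über, die Amplitude in $a(x+y)$ und der Abschneider in $\chi(\epsilon\eta)\chi(\epsilon(x+y))$. Da mit $a\in\Beschri$ auch $(\eta,y)\mapsto a(x+y)$ in $\Oszillatory{0,0}{0}$ liegt, ist das Zielintegral $\osint e^{-iy\cdot\eta}a(x+y)\,dy\,\dslash\eta$ nach Theorem~\ref{thm:osziTransform} wohldefiniert, und es bleibt nur zu zeigen, dass der obige Grenzwert gleich diesem Wert ist.

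Diesen letzten Schritt halte ich für die Hauptschwierigkeit, denn der Abschneider $\chi(\epsilon\eta)\chi(\epsilon(x+y))$ besitzt nicht exakt die in Definition~\ref{def:osziOsziInt} geforderte Gestalt $\psi(\epsilon\eta,\epsilon y)$ mit festem zulässigen $\psi\in\Schwarz{\R^n\times\R^n}$, weil die Verschiebung um $\epsilon x$ von $\epsilon$ abhängt. Am saubersten vermeidet man die Substitution ganz: Die translatierte Funktion $\tilde a := a(x+\hold)$ liegt wieder in $\Beschri$, und wegen $(\eta,y)\mapsto e^{iz\cdot\eta}\tilde a(y)\in\Oszillatory{0,0}{0}$ liefert die im Beweis der vorangehenden Folgerung gezeigte Identität $\Fourieri^{-1}\circ\Fourieri\,\tilde a(z) = \osint e^{-iy\cdot\eta}e^{iz\cdot\eta}\tilde a(y)\,dy\,\dslash\eta$ bei $z=0$ gerade
$$ a(x) = \tilde a(0) = \Fourieri^{-1}\circ\Fourieri\,\tilde a(0) = \osint e^{-iy\cdot\eta}\tilde a(y)\,dy\,\dslash\eta = \osint e^{-iy\cdot\eta}a(x+y)\,dy\,\dslash\eta, $$
wobei ins mittlere Gleichheitszeichen erneut $\Fourieri^{-1}\circ\Fourieri\mid_{\Beschri} = \id_{\Beschri}$ eingeht. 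Alternativ bleibt man bei der Substitution und spielt den Beweis von Theorem~\ref{thm:osziTransform} nach: Nach partieller Integration mit $\piket{D_y}{2l}$ und $\piket{D_\eta}{2l'}$ für hinreichend große $l,l'$ ist der Integrand absolut integrierbar; jeder Summand, in dem mindestens eine Ableitung auf $\chi(\epsilon(x+y))$ oder $\chi(\epsilon\eta)$ fällt, trägt einen Faktor $\epsilon$ und verschwindet im Limes, während $\chi(\epsilon\eta)\chi(\epsilon(x+y))\to 1$ punktweise und in $\epsilon\in(0,1)$ gleichmäßig beschränkt bleibt, sodass dominierte Konvergenz zusammen mit~(\ref{equ:osziUmschreiben}) den Wert $\osint e^{-iy\cdot\eta}a(x+y)\,dy\,\dslash\eta$ ergibt.
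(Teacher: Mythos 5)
Ihr Vorschlag ist korrekt und verläuft im Kern wie der Beweis der Arbeit: Beide führen die Behauptung auf die vorangehende Folgerung $\Fourieri^{-1}\circ\Fourieri\mid_{\Beschri}=\id_{\Beschri}$ (in ihrer oszillatorischen Deutung) zurück, wobei die Arbeit einfach $y\mapsto x+y$ im regularisierten Integral substituiert und stillschweigend akzeptiert, dass der Abschneider dadurch zu $\chi(\epsilon\eta,\epsilon(x+y))$ verschoben wird. Ihr Translationstrick (Anwendung der vorangehenden Folgerung auf $\tilde a=a(x+\hold)\in\Beschri$ im Punkt $0$) bzw. Ihre alternative Rechtfertigung über beschränkte Familien in $\Oszillatory{0,0}{0}$ und dominierte Konvergenz schließt genau die Feinheit, die die Arbeit überspielt, sodass Ihre Fassung denselben Weg nur sorgfältiger ausführt.
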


\subsection{Littlewood-Paley-Partitionen}

\begin{definition}[Littlewood-Paley Partition]\label{def:littlewoodpaley}
Eine Familie von glatten Funktionen 
$$\menge{\lilwood{j}}_{j \in \N_0}, \lilwood{j} : \R^n \rightarrow \R$$
heißt \emph{Littlewood-Paley-Partition}, wenn

\begin{equation}
\lilwood{0}(\xi) = 
\begin{cases}
1 & \textrm{ für alle } \abs{\xi} \le 1,\\
0 & \textrm{ für alle } \abs{\xi} \ge 2,
\end{cases}
\end{equation}
und
\begin{equation}\label{equ:littlewoodRekursion}
\lilwood{j}(\xi) = \lilwood{0}(2^{-j} \xi) - \lilwood{0}(2^{-j+1} \xi)\; \textrm{ für jedes } j \in \N.
\end{equation}
Offensichtlich ist für jedes $j\in\N_0$ der Träger von $\lilwood{j}$ in dem dyadischen Ring 
\begin{equation}
D_j := 
\begin{cases}
B_2(0) & \textrm{ für } j = 0, \\
\menge{ \xi \in \R^n : 2^{j-1} \le \abs{\xi} \le 2^{j+1} } & \textrm{ falls } j \in \N\\
\end{cases}
\end{equation}
enthalten.
\end{definition}
\nomenclature{$\menge{\lilwood{j}}_{j \in \N_0}$}{Vertreter einer Littlewood-Paley-Partition}
\nomenclature[m]{$D_j$}{$j$.ter Dyadischer Ring}

\begin{figure}[ht]
	\caption{Beispiel einer Littlewood-Paley Partition}
	\centering
	\setlength{\unitlength}{0.240900pt}
\ifx\plotpoint\undefined\newsavebox{\plotpoint}\fi
\sbox{\plotpoint}{\rule[-0.200pt]{0.400pt}{0.400pt}}\begin{picture}(1500,900)(0,0)
\sbox{\plotpoint}{\rule[-0.200pt]{0.400pt}{0.400pt}}\put(110,147){\makebox(0,0)[r]{ 0}}
\put(130.0,147.0){\rule[-0.200pt]{4.818pt}{0.400pt}}
\put(110,276){\makebox(0,0)[r]{ 0.2}}
\put(130.0,276.0){\rule[-0.200pt]{4.818pt}{0.400pt}}
\put(110,406){\makebox(0,0)[r]{ 0.4}}
\put(130.0,406.0){\rule[-0.200pt]{4.818pt}{0.400pt}}
\put(110,535){\makebox(0,0)[r]{ 0.6}}
\put(130.0,535.0){\rule[-0.200pt]{4.818pt}{0.400pt}}
\put(110,665){\makebox(0,0)[r]{ 0.8}}
\put(130.0,665.0){\rule[-0.200pt]{4.818pt}{0.400pt}}
\put(110,794){\makebox(0,0)[r]{ 1}}
\put(130.0,794.0){\rule[-0.200pt]{4.818pt}{0.400pt}}
\put(130,41){\makebox(0,0){-10}}
\put(130.0,82.0){\rule[-0.200pt]{0.400pt}{4.818pt}}
\put(457,41){\makebox(0,0){-5}}
\put(457.0,82.0){\rule[-0.200pt]{0.400pt}{4.818pt}}
\put(785,41){\makebox(0,0){ 0}}
\put(785.0,82.0){\rule[-0.200pt]{0.400pt}{4.818pt}}
\put(1112,41){\makebox(0,0){ 5}}
\put(1112.0,82.0){\rule[-0.200pt]{0.400pt}{4.818pt}}
\put(1439,41){\makebox(0,0){ 10}}
\put(1439.0,82.0){\rule[-0.200pt]{0.400pt}{4.818pt}}
\put(130.0,82.0){\rule[-0.200pt]{0.400pt}{187.179pt}}
\put(130.0,82.0){\rule[-0.200pt]{315.338pt}{0.400pt}}
\put(1279,819){\makebox(0,0)[r]{$\varphi_0$}}
\put(1299.0,819.0){\rule[-0.200pt]{24.090pt}{0.400pt}}
\put(130,147){\usebox{\plotpoint}}
\multiput(659.58,147.00)(0.493,2.677){23}{\rule{0.119pt}{2.192pt}}
\multiput(658.17,147.00)(13.000,63.450){2}{\rule{0.400pt}{1.096pt}}
\multiput(672.58,215.00)(0.493,9.338){23}{\rule{0.119pt}{7.362pt}}
\multiput(671.17,215.00)(13.000,220.721){2}{\rule{0.400pt}{3.681pt}}
\multiput(685.58,451.00)(0.494,9.017){25}{\rule{0.119pt}{7.129pt}}
\multiput(684.17,451.00)(14.000,231.204){2}{\rule{0.400pt}{3.564pt}}
\multiput(699.58,697.00)(0.493,3.827){23}{\rule{0.119pt}{3.085pt}}
\multiput(698.17,697.00)(13.000,90.598){2}{\rule{0.400pt}{1.542pt}}
\put(130.0,147.0){\rule[-0.200pt]{127.436pt}{0.400pt}}
\multiput(857.58,781.20)(0.493,-3.827){23}{\rule{0.119pt}{3.085pt}}
\multiput(856.17,787.60)(13.000,-90.598){2}{\rule{0.400pt}{1.542pt}}
\multiput(870.58,667.41)(0.494,-9.017){25}{\rule{0.119pt}{7.129pt}}
\multiput(869.17,682.20)(14.000,-231.204){2}{\rule{0.400pt}{3.564pt}}
\multiput(884.58,420.44)(0.493,-9.338){23}{\rule{0.119pt}{7.362pt}}
\multiput(883.17,435.72)(13.000,-220.721){2}{\rule{0.400pt}{3.681pt}}
\multiput(897.58,205.90)(0.493,-2.677){23}{\rule{0.119pt}{2.192pt}}
\multiput(896.17,210.45)(13.000,-63.450){2}{\rule{0.400pt}{1.096pt}}
\put(712.0,794.0){\rule[-0.200pt]{34.930pt}{0.400pt}}
\put(910.0,147.0){\rule[-0.200pt]{127.436pt}{0.400pt}}
\put(1279,778){\makebox(0,0)[r]{$\varphi_1$}}
\multiput(1299,778)(20.756,0.000){5}{\usebox{\plotpoint}}
\put(1399,778){\usebox{\plotpoint}}
\put(130,147){\usebox{\plotpoint}}
\put(130.00,147.00){\usebox{\plotpoint}}
\put(150.76,147.00){\usebox{\plotpoint}}
\put(171.51,147.00){\usebox{\plotpoint}}
\put(192.27,147.00){\usebox{\plotpoint}}
\put(213.02,147.00){\usebox{\plotpoint}}
\put(233.78,147.00){\usebox{\plotpoint}}
\put(254.53,147.00){\usebox{\plotpoint}}
\put(275.29,147.00){\usebox{\plotpoint}}
\put(296.04,147.00){\usebox{\plotpoint}}
\put(316.80,147.00){\usebox{\plotpoint}}
\put(337.55,147.00){\usebox{\plotpoint}}
\put(358.31,147.00){\usebox{\plotpoint}}
\put(379.07,147.00){\usebox{\plotpoint}}
\put(399.82,147.00){\usebox{\plotpoint}}
\put(420.58,147.00){\usebox{\plotpoint}}
\put(441.33,147.00){\usebox{\plotpoint}}
\put(462.09,147.00){\usebox{\plotpoint}}
\put(482.84,147.00){\usebox{\plotpoint}}
\put(503.60,147.00){\usebox{\plotpoint}}
\put(524.35,147.00){\usebox{\plotpoint}}
\multiput(540,148)(8.490,18.940){2}{\usebox{\plotpoint}}
\multiput(553,177)(3.033,20.533){4}{\usebox{\plotpoint}}
\multiput(566,265)(2.386,20.618){6}{\usebox{\plotpoint}}
\multiput(580,386)(2.065,20.652){6}{\usebox{\plotpoint}}
\multiput(593,516)(2.130,20.646){6}{\usebox{\plotpoint}}
\multiput(606,642)(2.650,20.586){5}{\usebox{\plotpoint}}
\multiput(619,743)(5.533,20.004){3}{\usebox{\plotpoint}}
\put(650.23,794.00){\usebox{\plotpoint}}
\multiput(659,794)(3.897,-20.386){3}{\usebox{\plotpoint}}
\multiput(672,726)(1.142,-20.724){12}{\usebox{\plotpoint}}
\multiput(685,490)(1.179,-20.722){12}{\usebox{\plotpoint}}
\multiput(699,244)(2.757,-20.572){4}{\usebox{\plotpoint}}
\put(717.56,147.00){\usebox{\plotpoint}}
\put(738.32,147.00){\usebox{\plotpoint}}
\put(759.07,147.00){\usebox{\plotpoint}}
\put(779.83,147.00){\usebox{\plotpoint}}
\put(800.58,147.00){\usebox{\plotpoint}}
\put(821.34,147.00){\usebox{\plotpoint}}
\put(842.10,147.00){\usebox{\plotpoint}}
\multiput(857,147)(2.757,20.572){5}{\usebox{\plotpoint}}
\multiput(870,244)(1.179,20.722){12}{\usebox{\plotpoint}}
\multiput(884,490)(1.142,20.724){11}{\usebox{\plotpoint}}
\multiput(897,726)(3.897,20.386){4}{\usebox{\plotpoint}}
\put(929.90,792.03){\usebox{\plotpoint}}
\multiput(937,790)(5.533,-20.004){2}{\usebox{\plotpoint}}
\multiput(950,743)(2.650,-20.586){5}{\usebox{\plotpoint}}
\multiput(963,642)(2.130,-20.646){6}{\usebox{\plotpoint}}
\multiput(976,516)(2.065,-20.652){7}{\usebox{\plotpoint}}
\multiput(989,386)(2.386,-20.618){5}{\usebox{\plotpoint}}
\multiput(1003,265)(3.033,-20.533){5}{\usebox{\plotpoint}}
\put(1023.10,161.15){\usebox{\plotpoint}}
\put(1035.32,147.51){\usebox{\plotpoint}}
\put(1056.06,147.00){\usebox{\plotpoint}}
\put(1076.81,147.00){\usebox{\plotpoint}}
\put(1097.57,147.00){\usebox{\plotpoint}}
\put(1118.32,147.00){\usebox{\plotpoint}}
\put(1139.08,147.00){\usebox{\plotpoint}}
\put(1159.83,147.00){\usebox{\plotpoint}}
\put(1180.59,147.00){\usebox{\plotpoint}}
\put(1201.35,147.00){\usebox{\plotpoint}}
\put(1222.10,147.00){\usebox{\plotpoint}}
\put(1242.86,147.00){\usebox{\plotpoint}}
\put(1263.61,147.00){\usebox{\plotpoint}}
\put(1284.37,147.00){\usebox{\plotpoint}}
\put(1305.12,147.00){\usebox{\plotpoint}}
\put(1325.88,147.00){\usebox{\plotpoint}}
\put(1346.63,147.00){\usebox{\plotpoint}}
\put(1367.39,147.00){\usebox{\plotpoint}}
\put(1388.14,147.00){\usebox{\plotpoint}}
\put(1408.90,147.00){\usebox{\plotpoint}}
\put(1429.66,147.00){\usebox{\plotpoint}}
\put(1439,147){\usebox{\plotpoint}}
\sbox{\plotpoint}{\rule[-0.400pt]{0.800pt}{0.800pt}}\sbox{\plotpoint}{\rule[-0.200pt]{0.400pt}{0.400pt}}\put(1279,737){\makebox(0,0)[r]{$\varphi_2$}}
\sbox{\plotpoint}{\rule[-0.400pt]{0.800pt}{0.800pt}}\put(1299.0,737.0){\rule[-0.400pt]{24.090pt}{0.800pt}}
\put(130,147){\usebox{\plotpoint}}
\put(289,146.84){\rule{3.132pt}{0.800pt}}
\multiput(289.00,145.34)(6.500,3.000){2}{\rule{1.566pt}{0.800pt}}
\multiput(303.41,150.00)(0.509,0.574){19}{\rule{0.123pt}{1.123pt}}
\multiput(300.34,150.00)(13.000,12.669){2}{\rule{0.800pt}{0.562pt}}
\multiput(316.41,165.00)(0.509,1.153){19}{\rule{0.123pt}{1.985pt}}
\multiput(313.34,165.00)(13.000,24.881){2}{\rule{0.800pt}{0.992pt}}
\multiput(329.41,194.00)(0.509,1.675){21}{\rule{0.123pt}{2.771pt}}
\multiput(326.34,194.00)(14.000,39.248){2}{\rule{0.800pt}{1.386pt}}
\multiput(343.41,239.00)(0.509,2.188){19}{\rule{0.123pt}{3.523pt}}
\multiput(340.34,239.00)(13.000,46.688){2}{\rule{0.800pt}{1.762pt}}
\multiput(356.41,293.00)(0.509,2.477){19}{\rule{0.123pt}{3.954pt}}
\multiput(353.34,293.00)(13.000,52.794){2}{\rule{0.800pt}{1.977pt}}
\multiput(369.41,354.00)(0.509,2.601){19}{\rule{0.123pt}{4.138pt}}
\multiput(366.34,354.00)(13.000,55.410){2}{\rule{0.800pt}{2.069pt}}
\multiput(382.41,418.00)(0.509,2.684){19}{\rule{0.123pt}{4.262pt}}
\multiput(379.34,418.00)(13.000,57.155){2}{\rule{0.800pt}{2.131pt}}
\multiput(395.41,484.00)(0.509,2.438){21}{\rule{0.123pt}{3.914pt}}
\multiput(392.34,484.00)(14.000,56.876){2}{\rule{0.800pt}{1.957pt}}
\multiput(409.41,549.00)(0.509,2.560){19}{\rule{0.123pt}{4.077pt}}
\multiput(406.34,549.00)(13.000,54.538){2}{\rule{0.800pt}{2.038pt}}
\multiput(422.41,612.00)(0.509,2.394){19}{\rule{0.123pt}{3.831pt}}
\multiput(419.34,612.00)(13.000,51.049){2}{\rule{0.800pt}{1.915pt}}
\multiput(435.41,671.00)(0.509,2.063){19}{\rule{0.123pt}{3.338pt}}
\multiput(432.34,671.00)(13.000,44.071){2}{\rule{0.800pt}{1.669pt}}
\multiput(448.41,722.00)(0.509,1.446){21}{\rule{0.123pt}{2.429pt}}
\multiput(445.34,722.00)(14.000,33.959){2}{\rule{0.800pt}{1.214pt}}
\multiput(462.41,761.00)(0.509,0.905){19}{\rule{0.123pt}{1.615pt}}
\multiput(459.34,761.00)(13.000,19.647){2}{\rule{0.800pt}{0.808pt}}
\multiput(474.00,785.40)(0.737,0.516){11}{\rule{1.356pt}{0.124pt}}
\multiput(474.00,782.34)(10.186,9.000){2}{\rule{0.678pt}{0.800pt}}
\put(487,791.84){\rule{3.132pt}{0.800pt}}
\multiput(487.00,791.34)(6.500,1.000){2}{\rule{1.566pt}{0.800pt}}
\put(130.0,147.0){\rule[-0.400pt]{38.303pt}{0.800pt}}
\put(527,791.84){\rule{3.132pt}{0.800pt}}
\multiput(527.00,792.34)(6.500,-1.000){2}{\rule{1.566pt}{0.800pt}}
\multiput(541.41,784.76)(0.509,-1.153){19}{\rule{0.123pt}{1.985pt}}
\multiput(538.34,788.88)(13.000,-24.881){2}{\rule{0.800pt}{0.992pt}}
\multiput(554.41,740.69)(0.509,-3.594){19}{\rule{0.123pt}{5.615pt}}
\multiput(551.34,752.34)(13.000,-76.345){2}{\rule{0.800pt}{2.808pt}}
\multiput(567.41,646.47)(0.509,-4.573){21}{\rule{0.123pt}{7.114pt}}
\multiput(564.34,661.23)(14.000,-106.234){2}{\rule{0.800pt}{3.557pt}}
\multiput(581.41,520.96)(0.509,-5.331){19}{\rule{0.123pt}{8.200pt}}
\multiput(578.34,537.98)(13.000,-112.980){2}{\rule{0.800pt}{4.100pt}}
\multiput(594.41,391.98)(0.509,-5.166){19}{\rule{0.123pt}{7.954pt}}
\multiput(591.34,408.49)(13.000,-109.491){2}{\rule{0.800pt}{3.977pt}}
\multiput(607.41,272.37)(0.509,-4.132){19}{\rule{0.123pt}{6.415pt}}
\multiput(604.34,285.68)(13.000,-87.685){2}{\rule{0.800pt}{3.208pt}}
\multiput(620.41,185.16)(0.509,-1.898){19}{\rule{0.123pt}{3.092pt}}
\multiput(617.34,191.58)(13.000,-40.582){2}{\rule{0.800pt}{1.546pt}}
\put(632,147.34){\rule{3.000pt}{0.800pt}}
\multiput(632.00,149.34)(7.773,-4.000){2}{\rule{1.500pt}{0.800pt}}
\put(500.0,794.0){\rule[-0.400pt]{6.504pt}{0.800pt}}
\put(923,147.34){\rule{3.000pt}{0.800pt}}
\multiput(923.00,145.34)(7.773,4.000){2}{\rule{1.500pt}{0.800pt}}
\multiput(938.41,151.00)(0.509,1.898){19}{\rule{0.123pt}{3.092pt}}
\multiput(935.34,151.00)(13.000,40.582){2}{\rule{0.800pt}{1.546pt}}
\multiput(951.41,198.00)(0.509,4.132){19}{\rule{0.123pt}{6.415pt}}
\multiput(948.34,198.00)(13.000,87.685){2}{\rule{0.800pt}{3.208pt}}
\multiput(964.41,299.00)(0.509,5.166){19}{\rule{0.123pt}{7.954pt}}
\multiput(961.34,299.00)(13.000,109.491){2}{\rule{0.800pt}{3.977pt}}
\multiput(977.41,425.00)(0.509,5.331){19}{\rule{0.123pt}{8.200pt}}
\multiput(974.34,425.00)(13.000,112.980){2}{\rule{0.800pt}{4.100pt}}
\multiput(990.41,555.00)(0.509,4.573){21}{\rule{0.123pt}{7.114pt}}
\multiput(987.34,555.00)(14.000,106.234){2}{\rule{0.800pt}{3.557pt}}
\multiput(1004.41,676.00)(0.509,3.594){19}{\rule{0.123pt}{5.615pt}}
\multiput(1001.34,676.00)(13.000,76.345){2}{\rule{0.800pt}{2.808pt}}
\multiput(1017.41,764.00)(0.509,1.153){19}{\rule{0.123pt}{1.985pt}}
\multiput(1014.34,764.00)(13.000,24.881){2}{\rule{0.800pt}{0.992pt}}
\put(1029,791.84){\rule{3.132pt}{0.800pt}}
\multiput(1029.00,791.34)(6.500,1.000){2}{\rule{1.566pt}{0.800pt}}
\put(646.0,147.0){\rule[-0.400pt]{66.729pt}{0.800pt}}
\put(1069,791.84){\rule{3.132pt}{0.800pt}}
\multiput(1069.00,792.34)(6.500,-1.000){2}{\rule{1.566pt}{0.800pt}}
\multiput(1082.00,791.08)(0.737,-0.516){11}{\rule{1.356pt}{0.124pt}}
\multiput(1082.00,791.34)(10.186,-9.000){2}{\rule{0.678pt}{0.800pt}}
\multiput(1096.41,777.29)(0.509,-0.905){19}{\rule{0.123pt}{1.615pt}}
\multiput(1093.34,780.65)(13.000,-19.647){2}{\rule{0.800pt}{0.808pt}}
\multiput(1109.41,750.92)(0.509,-1.446){21}{\rule{0.123pt}{2.429pt}}
\multiput(1106.34,755.96)(14.000,-33.959){2}{\rule{0.800pt}{1.214pt}}
\multiput(1123.41,708.14)(0.509,-2.063){19}{\rule{0.123pt}{3.338pt}}
\multiput(1120.34,715.07)(13.000,-44.071){2}{\rule{0.800pt}{1.669pt}}
\multiput(1136.41,655.10)(0.509,-2.394){19}{\rule{0.123pt}{3.831pt}}
\multiput(1133.34,663.05)(13.000,-51.049){2}{\rule{0.800pt}{1.915pt}}
\multiput(1149.41,595.08)(0.509,-2.560){19}{\rule{0.123pt}{4.077pt}}
\multiput(1146.34,603.54)(13.000,-54.538){2}{\rule{0.800pt}{2.038pt}}
\multiput(1162.41,532.75)(0.509,-2.438){21}{\rule{0.123pt}{3.914pt}}
\multiput(1159.34,540.88)(14.000,-56.876){2}{\rule{0.800pt}{1.957pt}}
\multiput(1176.41,466.31)(0.509,-2.684){19}{\rule{0.123pt}{4.262pt}}
\multiput(1173.34,475.15)(13.000,-57.155){2}{\rule{0.800pt}{2.131pt}}
\multiput(1189.41,400.82)(0.509,-2.601){19}{\rule{0.123pt}{4.138pt}}
\multiput(1186.34,409.41)(13.000,-55.410){2}{\rule{0.800pt}{2.069pt}}
\multiput(1202.41,337.59)(0.509,-2.477){19}{\rule{0.123pt}{3.954pt}}
\multiput(1199.34,345.79)(13.000,-52.794){2}{\rule{0.800pt}{1.977pt}}
\multiput(1215.41,278.38)(0.509,-2.188){19}{\rule{0.123pt}{3.523pt}}
\multiput(1212.34,285.69)(13.000,-46.688){2}{\rule{0.800pt}{1.762pt}}
\multiput(1228.41,227.50)(0.509,-1.675){21}{\rule{0.123pt}{2.771pt}}
\multiput(1225.34,233.25)(14.000,-39.248){2}{\rule{0.800pt}{1.386pt}}
\multiput(1242.41,185.76)(0.509,-1.153){19}{\rule{0.123pt}{1.985pt}}
\multiput(1239.34,189.88)(13.000,-24.881){2}{\rule{0.800pt}{0.992pt}}
\multiput(1255.41,160.34)(0.509,-0.574){19}{\rule{0.123pt}{1.123pt}}
\multiput(1252.34,162.67)(13.000,-12.669){2}{\rule{0.800pt}{0.562pt}}
\put(1267,146.84){\rule{3.132pt}{0.800pt}}
\multiput(1267.00,148.34)(6.500,-3.000){2}{\rule{1.566pt}{0.800pt}}
\put(1042.0,794.0){\rule[-0.400pt]{6.504pt}{0.800pt}}
\put(1280.0,147.0){\rule[-0.400pt]{38.303pt}{0.800pt}}
\sbox{\plotpoint}{\rule[-0.500pt]{1.000pt}{1.000pt}}\sbox{\plotpoint}{\rule[-0.200pt]{0.400pt}{0.400pt}}\put(1279,696){\makebox(0,0)[r]{$\varphi_3$}}
\sbox{\plotpoint}{\rule[-0.500pt]{1.000pt}{1.000pt}}\multiput(1299,696)(20.756,0.000){5}{\usebox{\plotpoint}}
\put(1399,696){\usebox{\plotpoint}}
\put(130,752){\usebox{\plotpoint}}
\put(130.00,752.00){\usebox{\plotpoint}}
\put(143.10,768.09){\usebox{\plotpoint}}
\put(158.87,781.43){\usebox{\plotpoint}}
\put(177.75,789.98){\usebox{\plotpoint}}
\put(197.98,794.00){\usebox{\plotpoint}}
\put(218.74,794.00){\usebox{\plotpoint}}
\put(239.49,794.00){\usebox{\plotpoint}}
\put(260.25,794.00){\usebox{\plotpoint}}
\put(281.00,794.00){\usebox{\plotpoint}}
\put(301.43,791.13){\usebox{\plotpoint}}
\multiput(315,776)(8.490,-18.940){2}{\usebox{\plotpoint}}
\multiput(328,747)(6.166,-19.819){2}{\usebox{\plotpoint}}
\multiput(342,702)(4.858,-20.179){3}{\usebox{\plotpoint}}
\multiput(355,648)(4.326,-20.300){3}{\usebox{\plotpoint}}
\multiput(368,587)(4.132,-20.340){3}{\usebox{\plotpoint}}
\multiput(381,523)(4.011,-20.364){3}{\usebox{\plotpoint}}
\multiput(394,457)(4.370,-20.290){4}{\usebox{\plotpoint}}
\multiput(408,392)(4.195,-20.327){3}{\usebox{\plotpoint}}
\multiput(421,329)(4.466,-20.269){3}{\usebox{\plotpoint}}
\multiput(434,270)(5.127,-20.112){2}{\usebox{\plotpoint}}
\multiput(447,219)(7.013,-19.535){2}{\usebox{\plotpoint}}
\put(465.08,172.78){\usebox{\plotpoint}}
\put(476.16,155.50){\usebox{\plotpoint}}
\put(494.55,147.42){\usebox{\plotpoint}}
\put(515.29,147.00){\usebox{\plotpoint}}
\put(536.04,147.00){\usebox{\plotpoint}}
\put(556.80,147.00){\usebox{\plotpoint}}
\put(577.56,147.00){\usebox{\plotpoint}}
\put(598.31,147.00){\usebox{\plotpoint}}
\put(619.07,147.00){\usebox{\plotpoint}}
\put(639.82,147.00){\usebox{\plotpoint}}
\put(660.58,147.00){\usebox{\plotpoint}}
\put(681.33,147.00){\usebox{\plotpoint}}
\put(702.09,147.00){\usebox{\plotpoint}}
\put(722.84,147.00){\usebox{\plotpoint}}
\put(743.60,147.00){\usebox{\plotpoint}}
\put(764.35,147.00){\usebox{\plotpoint}}
\put(785.11,147.00){\usebox{\plotpoint}}
\put(805.87,147.00){\usebox{\plotpoint}}
\put(826.62,147.00){\usebox{\plotpoint}}
\put(847.38,147.00){\usebox{\plotpoint}}
\put(868.13,147.00){\usebox{\plotpoint}}
\put(888.89,147.00){\usebox{\plotpoint}}
\put(909.64,147.00){\usebox{\plotpoint}}
\put(930.40,147.00){\usebox{\plotpoint}}
\put(951.15,147.00){\usebox{\plotpoint}}
\put(971.91,147.00){\usebox{\plotpoint}}
\put(992.67,147.00){\usebox{\plotpoint}}
\put(1013.42,147.00){\usebox{\plotpoint}}
\put(1034.18,147.00){\usebox{\plotpoint}}
\put(1054.93,147.00){\usebox{\plotpoint}}
\put(1075.67,147.51){\usebox{\plotpoint}}
\put(1093.84,156.20){\usebox{\plotpoint}}
\put(1104.52,173.84){\usebox{\plotpoint}}
\multiput(1108,180)(7.013,19.535){2}{\usebox{\plotpoint}}
\multiput(1122,219)(5.127,20.112){2}{\usebox{\plotpoint}}
\multiput(1135,270)(4.466,20.269){3}{\usebox{\plotpoint}}
\multiput(1148,329)(4.195,20.327){3}{\usebox{\plotpoint}}
\multiput(1161,392)(4.370,20.290){4}{\usebox{\plotpoint}}
\multiput(1175,457)(4.011,20.364){3}{\usebox{\plotpoint}}
\multiput(1188,523)(4.132,20.340){3}{\usebox{\plotpoint}}
\multiput(1201,587)(4.326,20.300){3}{\usebox{\plotpoint}}
\multiput(1214,648)(4.858,20.179){3}{\usebox{\plotpoint}}
\multiput(1227,702)(6.166,19.819){2}{\usebox{\plotpoint}}
\put(1245.88,757.88){\usebox{\plotpoint}}
\put(1254.59,776.68){\usebox{\plotpoint}}
\put(1268.76,791.41){\usebox{\plotpoint}}
\put(1289.22,794.00){\usebox{\plotpoint}}
\put(1309.97,794.00){\usebox{\plotpoint}}
\put(1330.73,794.00){\usebox{\plotpoint}}
\put(1351.48,794.00){\usebox{\plotpoint}}
\put(1372.24,794.00){\usebox{\plotpoint}}
\put(1392.39,789.54){\usebox{\plotpoint}}
\put(1411.23,780.89){\usebox{\plotpoint}}
\put(1426.68,767.16){\usebox{\plotpoint}}
\put(1439,752){\usebox{\plotpoint}}
\sbox{\plotpoint}{\rule[-0.600pt]{1.200pt}{1.200pt}}\sbox{\plotpoint}{\rule[-0.200pt]{0.400pt}{0.400pt}}\put(1279,655){\makebox(0,0)[r]{$\varphi_4$}}
\sbox{\plotpoint}{\rule[-0.600pt]{1.200pt}{1.200pt}}\put(1299.0,655.0){\rule[-0.600pt]{24.090pt}{1.200pt}}
\put(130,189){\usebox{\plotpoint}}
\multiput(132.24,181.62)(0.501,-0.575){16}{\rule{0.121pt}{1.777pt}}
\multiput(127.51,185.31)(13.000,-12.312){2}{\rule{1.200pt}{0.888pt}}
\multiput(143.00,170.26)(0.489,-0.501){14}{\rule{1.600pt}{0.121pt}}
\multiput(143.00,170.51)(9.679,-12.000){2}{\rule{0.800pt}{1.200pt}}
\multiput(156.00,158.26)(0.931,-0.505){4}{\rule{2.700pt}{0.122pt}}
\multiput(156.00,158.51)(8.396,-7.000){2}{\rule{1.350pt}{1.200pt}}
\put(170,149.01){\rule{3.132pt}{1.200pt}}
\multiput(170.00,151.51)(6.500,-5.000){2}{\rule{1.566pt}{1.200pt}}
\put(183,145.51){\rule{3.132pt}{1.200pt}}
\multiput(183.00,146.51)(6.500,-2.000){2}{\rule{1.566pt}{1.200pt}}
\put(1373,145.51){\rule{3.132pt}{1.200pt}}
\multiput(1373.00,144.51)(6.500,2.000){2}{\rule{1.566pt}{1.200pt}}
\put(1386,149.01){\rule{3.132pt}{1.200pt}}
\multiput(1386.00,146.51)(6.500,5.000){2}{\rule{1.566pt}{1.200pt}}
\multiput(1399.00,156.24)(0.931,0.505){4}{\rule{2.700pt}{0.122pt}}
\multiput(1399.00,151.51)(8.396,7.000){2}{\rule{1.350pt}{1.200pt}}
\multiput(1413.00,163.24)(0.489,0.501){14}{\rule{1.600pt}{0.121pt}}
\multiput(1413.00,158.51)(9.679,12.000){2}{\rule{0.800pt}{1.200pt}}
\multiput(1428.24,173.00)(0.501,0.575){16}{\rule{0.121pt}{1.777pt}}
\multiput(1423.51,173.00)(13.000,12.312){2}{\rule{1.200pt}{0.888pt}}
\put(196.0,147.0){\rule[-0.600pt]{283.539pt}{1.200pt}}
\sbox{\plotpoint}{\rule[-0.200pt]{0.400pt}{0.400pt}}\put(130.0,82.0){\rule[-0.200pt]{0.400pt}{187.179pt}}
\put(130.0,82.0){\rule[-0.200pt]{315.338pt}{0.400pt}}
\end{picture}
 
\end{figure}
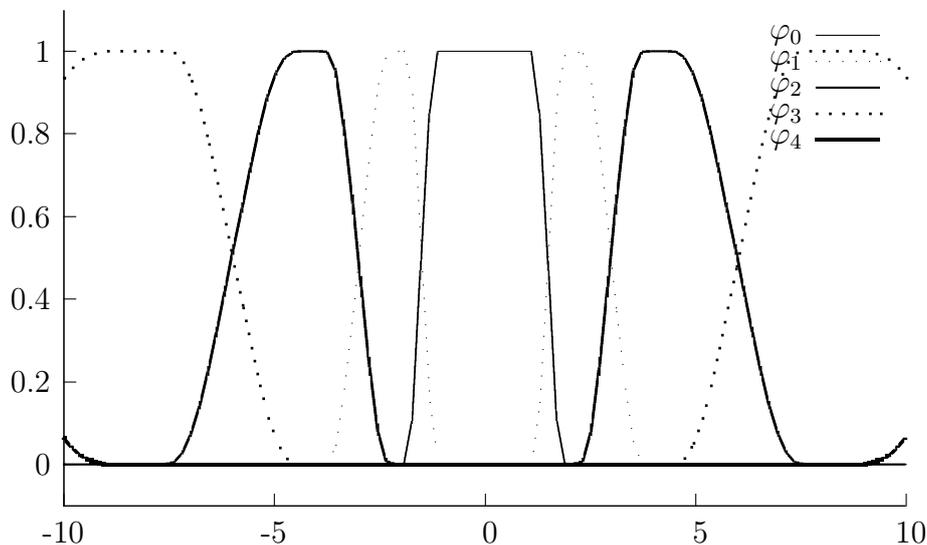

\begin{example} Wir wollen in diesem Beispiel eine Littlewood-Paley Partiton konstruieren.
Definiere analog zu \cite[Lemma 1.10]{Warner} die Funktion
$$f : \R \rightarrow \R, 
f(t) := 
\begin{cases}
e^{-\frac{1}{t}} & \textrm{ für } t > 0 ,\\
0 & \textrm{ für } t \le 0 .
\end{cases}
$$ 
$f$ ist glatt, da nach Formel $\ref{faaDiBruno}$
$$ \frac{d^k}{dt^k} e^{-\frac{1}{t}} = 
\sum_{(m_1,\ldots,m_k)\in M} \frac{k!}{\prod_{i=1}^k m_i! (i!)^{m_i}} e^{-\frac{1}{t}} 
\prod_{j=1}^k \left( - \frac{d^j}{dt^j} \frac{1}{t} \right)^{m_j}
,$$
wobei $M := \menge{ (m_1,\ldots,m_k) \in \N^k_0 : \sum_{i=1}^k i m_i = k }$.
Wir folgern $ - \frac{d^j}{dt^j} \frac{1}{t} = \left( \frac{-1}{t} \right)^{j+1}$.
Da 
$$\lim_{t\rightarrow 0} e^{-\frac{1}{t}} t^{(j+1)N} = 0 \textrm{ für alle } N \in \N ,$$
ist auch
$\lim_{t\rightarrow 0} \frac{d^j}{dt^j} e^{-\frac{1}{t}} = 0$ für alle $j \in \N$, und damit
ist $f$ glatt.
Wir definieren weiterhin 
$$ g : \R \rightarrow \R, g(t) := \frac{f(t)}{f(t)+f(1-t)} = 
\begin{cases}
0 & t \le 0, \\
\frac{e^{-\frac{1}{t}}}{e^{-\frac{1}{t}}+e^{-\frac{1}{1-t}}} & 0 < t < 1 , \\
1 & t \ge 1 .
\end{cases}
$$
Offensichtlich ist $g(t)$ glatt.
Definiere schließlich 
$$h(t) := g(2+t) g(2-t) 
\begin{cases}
 = 0 & t \ge 2 , \\
 = 1 & -1 \le t \le 1 , \\
 = 0 & t \le -2 , \\
 \in (0,1) & \textrm{sonst.}
\end{cases}
$$
$h(\abs{t})$ ist wiederum wegen der Achsensymmetrie $h(t) = h(-t)$ glatt.
Setze nun $\varphi_0(\xi) := h(\abs{\xi})$ und erhalte mit der Rekursionsformel (\ref{equ:littlewoodRekursion}) eine Littlewood-Paley-Partition.
\end{example}

\begin{konv}
Falls nicht anders erwähnt, wollen wir mit $\lilwood{j}$ stets das $j$.te Glied einer gewählten Littlewood-Paley-Partition, und mit $D_j$ den Dyadischen Ring bezeichnen.
\end{konv}

\begin{definition}
Jedes $\lilwood{j}$ definiert wegen $\lilwood{j} \in C^\infty_0(\R^n)$ einen Operator 
\begin{align*}
\lilwood{j}(D) :
& L^2(\R^n) \rightarrow L^2(\R^n) , \\
& \Schwarzi \rightarrow \Schwarzi ,
\end{align*}
mit
$$ \lilwood{j}(D_x) f(x) := \int_{\R^n} e^{ix\cdot\xi} \lilwood{j}(\xi) \hat{f}(\xi) \dslash\xi = (\check{\lilwood{j}} * f)(x) .$$
\end{definition}

\begin{cor}\label{cor:introLilwoodOP}
Zu $\xi \in \R^n$ gibt es ein $j \in \N_0$, sodass $\xi \in \supp \lilwood{j}$ und 
$\lilwood{j-1}(\xi) + \lilwood{j}(\xi) + \lilwood{j+1}(\xi) = 1$ mit $\lilwood{-1} = 0$.
Insbesondere gilt für $f \in L^2(\R^n)$ die Identität
$$ \left( \lilwood{j-1}(D_x) + \lilwood{j}(D_x) + \lilwood{j+1}(D_x) \right) \lilwood{j}(D_x) f(x) = \lilwood{j}(D_x) f(x) .$$
\begin{proof}
Klar, da für jedes $j \in \N$ per Definition \ref{def:littlewoodpaley} gilt, dass 
$\supp \lilwood{j} \cap \bigcup_{k \in \N} \supp \lilwood{k} 
= \supp \lilwood{j-1} \cup \supp \lilwood{j} \cup \supp \lilwood{j+1}$.
Desweiteren ist also
\begin{align*}
\lilwood{j}(D_x) f(x)
&= \int_{\R^n} e^{i x\cdot\xi}\lilwood{j}(\xi) \hat{f}(\xi) \dslash\xi \\
&= \int_{\R^n} e^{i x\cdot\xi} \left( \lilwood{j-1}(\xi) + \lilwood{j}(\xi) + \lilwood{j+1}(\xi) \right) \lilwood{j}(\xi) \hat{f}(\xi) \dslash\xi \\
&= \left( \lilwood{j-1}(D_x) + \lilwood{j}(D_x) + \lilwood{j+1}(D_x) \right) \lilwood{j}(D_x) f(x)
.\end{align*}
\end{proof}
\end{cor}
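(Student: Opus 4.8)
The plan is to establish the pointwise identity
$$\bigl(\lilwood{j-1}(\xi) + \lilwood{j}(\xi) + \lilwood{j+1}(\xi)\bigr)\,\lilwood{j}(\xi) = \lilwood{j}(\xi) \qquad \textrm{for all } \xi\in\R^n,\ j\in\N_0$$
(with the convention $\lilwood{-1} := 0$) and then to transport it to the operator level through the Fourier transform. The input for the pointwise identity is the telescoping property of a Littlewood--Paley partition: iterating the recursion (\ref{equ:littlewoodRekursion}) gives $\sum_{k=0}^{N}\lilwood{k}(\xi) = \lilwood{0}(2^{-N}\xi)$, and for a fixed $\xi$ this right-hand side equals $1$ as soon as $2^{-N}\abs{\xi}\le 1$; hence $\sum_{k=0}^{\infty}\lilwood{k}(\xi) = 1$, the sum being finite for every fixed $\xi$.

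First I would prove the first assertion. Given $\xi$, the relation $\sum_{k\ge 0}\lilwood{k}(\xi) = 1$ forces $\lilwood{j}(\xi)\ne 0$ for some $j\in\N_0$, and in particular $\xi\in\supp\lilwood{j}$. The decisive localization is that $\{\,\xi : \lilwood{k}(\xi)\ne 0\,\}$ is contained in the \emph{open} dyadic annulus $\{\,2^{k-1} < \abs{\xi} < 2^{k+1}\,\}$ for $k\ge 1$ (and in $\{\,\abs{\xi}<2\,\}$ for $k=0$); this follows straight from (\ref{equ:littlewoodRekursion}), since for $\abs{\xi}\le 2^{k-1}$ both $\lilwood{0}(2^{-k}\xi)$ and $\lilwood{0}(2^{-k+1}\xi)$ equal $1$, while for $\abs{\xi}\ge 2^{k+1}$ both equal $0$. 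As these open annuli are pairwise disjoint whenever the indices differ by at least $2$, every summand $\lilwood{k}(\xi)$ with $\abs{k-j}\ge 2$ vanishes, so collapsing $\sum_{k\ge 0}\lilwood{k}(\xi)=1$ leaves exactly $\lilwood{j-1}(\xi)+\lilwood{j}(\xi)+\lilwood{j+1}(\xi)=1$. This also yields the displayed pointwise identity: where $\lilwood{j}(\xi)=0$ it is trivial, and where $\lilwood{j}(\xi)\ne 0$ the bracket equals $1$ by the above.

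For the operator identity I would use that, since $\lilwood{j}\in C^{\infty}_{0}(\R^n)$ and $\hat{f}\in L^{2}(\R^n)$ by Plancherel, the product $\lilwood{j}\hat{f}$ lies in $L^{1}(\R^n)\cap L^{2}(\R^n)$; thus $\lilwood{j}(D_x)f(x) = \FourierB{\xi}{x}{\lilwood{j}(\xi)\hat{f}(\xi)}$ has Fourier transform $\lilwood{j}\hat{f}$, and applying $\lilwood{j-1}(D_x)+\lilwood{j}(D_x)+\lilwood{j+1}(D_x)$ amounts to multiplying the Fourier transform by $\lilwood{j-1}+\lilwood{j}+\lilwood{j+1}$. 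Inserting the pointwise identity and transforming back gives the claim, as in the chain
$$\lilwood{j}(D_x)f(x) = \int e^{ix\cdot\xi}\lilwood{j}(\xi)\hat{f}(\xi)\dslash\xi = \int e^{ix\cdot\xi}\bigl(\lilwood{j-1}(\xi)+\lilwood{j}(\xi)+\lilwood{j+1}(\xi)\bigr)\lilwood{j}(\xi)\hat{f}(\xi)\dslash\xi .$$
The one thing that needs care — and which I would flag as the only (modest) obstacle — is exactly this localization: one should argue with the \emph{open} annulus $\{\lilwood{k}\ne 0\}\subseteq\{2^{k-1}<\abs{\xi}<2^{k+1}\}$ rather than with the closed dyadic rings $D_k$, since $D_k$ and $D_{k\pm 2}$ still meet along a sphere; on that sphere $\lilwood{k}$ and $\lilwood{k\pm 2}$ both vanish, so the reduction of the telescoping sum to three terms remains valid.
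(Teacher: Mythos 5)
Your proof is correct and follows essentially the same route as the paper: collapse the Littlewood--Paley partition of unity at a fixed $\xi$ to the three neighbouring terms and then transfer the pointwise identity $\left(\lilwood{j-1}+\lilwood{j}+\lilwood{j+1}\right)\lilwood{j}=\lilwood{j}$ to the operator level via the Fourier multiplier representation of $\lilwood{j}(D_x)$. You are in fact somewhat more careful than the paper's terse argument, since you derive $\sum_{k}\lilwood{k}(\xi)=1$ by telescoping and localize using the open sets $\menge{\lilwood{k}\neq 0}$ rather than the closed rings $D_k$, which correctly disposes of the boundary spheres where the paper's support statement is imprecise.
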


\begin{lemma}
Die Littlewood-Paley Partition ist eine Partition der Eins mit
\begin{equation}\label{lilwood:abl}
\sum^\infty_{j=0} \partial^\alpha_\xi \lilwood{j}(\xi) = 0\; \textrm{ für jeden Multiindex } \alpha \neq 0 
.\end{equation}
\begin{proof}
Für jedes $\xi \in \R^n$ setze $N := \max \menge{ N \in \N_0 : \abs{\xi} \le 2^N}$. Dann ist
$$ \sum_{j=0}^\infty \lilwood{j}(\xi) = \sum_{j=0}^N \lilwood{j}(\xi) = \lilwood{0}{2^{-N} \xi} = 1 .$$ 
Folgerung \ref{cor:introLilwoodOP} liefert ein $j \in \N$, sodass
$$ \partial^\alpha_\xi  \sum^\infty_{j=0} \lilwood{j}(\xi) = \partial^\alpha_\xi \left( \lilwood{j-1}(\xi) + \lilwood{j}(\xi) + \lilwood{j+1}(\xi) \right) = 0 .$$
\end{proof}
\end{lemma}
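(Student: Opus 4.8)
The plan is to prove the two assertions separately, using throughout that for each fixed $\xi\in\R^n$ only finitely many of the numbers $\lilwood{j}(\xi)$ are nonzero, so that every series occurring below is in fact a finite sum.

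For the partition-of-unity claim I would fix $\xi$ and note first that, since $\supp\lilwood{j}\subseteq D_j$ by Definition \ref{def:littlewoodpaley}, one has $\lilwood{j}(\xi)=0$ as soon as $2^{j-1}>\abs{\xi}$; hence the sum over $j$ is finite. Next I would telescope using the recursion (\ref{equ:littlewoodRekursion}): for every $M\in\N$,
$$\sum_{j=0}^{M}\lilwood{j}(\xi)=\lilwood{0}(\xi)+\sum_{j=1}^{M}\bigl(\lilwood{0}(2^{-j}\xi)-\lilwood{0}(2^{-j+1}\xi)\bigr)=\lilwood{0}(2^{-M}\xi).$$
Choosing $M$ large enough that $2^{-M}\abs{\xi}\le 1$ (and that all later terms already vanish), the defining property of $\lilwood{0}$ gives $\lilwood{0}(2^{-M}\xi)=1$, whence $\sum_{j=0}^{\infty}\lilwood{j}(\xi)=1$ for every $\xi\in\R^n$.

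For the derivative identity I would exploit that the family $\menge{\supp\lilwood{j}}_{j\in\N_0}$ is locally finite: around any $\xi_0\in\R^n$ a sufficiently small ball $B$ meets only finitely many of the dyadic rings $D_j$ — near the origin because $2^{j-1}\to\infty$, away from it because $\abs{\xi}$ stays bounded above and below on $B$. Therefore on $B$ the function $\sum_j\lilwood{j}$ is a finite sum of smooth functions and may be differentiated term by term; as it equals the constant $1$ there, this gives $\sum_j\partial_\xi^\alpha\lilwood{j}=\partial_\xi^\alpha 1=0$ on $B$ for every $\alpha\neq 0$, and evaluating at $\xi_0$ yields the claim. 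Equivalently, as in the statement's own argument, one invokes Folgerung \ref{cor:introLilwoodOP} to see that near a given $\xi$ the only terms not identically zero are $\lilwood{j-1},\lilwood{j},\lilwood{j+1}$ for a suitable $j$ (with $\lilwood{-1}:=0$), whose sum is identically $1$ on a neighbourhood of $\xi$, and then differentiates.

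The only step demanding genuine care is this interchange of $\partial_\xi^\alpha$ with the infinite summation; but it is justified purely by the local finiteness of the supports $D_j$, and everything else is the elementary telescoping computation together with the support and normalisation properties of $\lilwood{0}$, so I anticipate no serious obstacle.
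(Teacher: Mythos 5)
Your proposal is correct and follows essentially the same route as the paper: the telescoping identity $\sum_{j=0}^{M}\lilwood{j}(\xi)=\lilwood{0}(2^{-M}\xi)=1$ for large $M$, and then termwise differentiation justified by the fact that near any point only finitely many $\lilwood{j}$ (indeed the three consecutive ones from Folgerung \ref{cor:introLilwoodOP}) are nonvanishing. Your explicit appeal to local finiteness of the supports to justify interchanging $\partial_\xi^\alpha$ with the sum is exactly the point the paper's brief proof leaves implicit, so there is nothing to add.
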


\begin{lemma}\label{lemma:introPartialLilwoodAbsch}
Für alle $\alpha \in \N^n_0$ gibt es ein $c_\alpha > 0$, sodass für jedes $j \in \N_0$ und alle $\xi \in \R^n$
\begin{equation}
\abs{ \partial^\alpha_\xi \lilwood{j}(\xi) } \le c_\alpha \min \left( 2^{-j \abs{\alpha}}, \piket{\xi}{-\abs{\alpha}} \right)
.\end{equation}
\begin{proof}
Wir können $\lilwood{j}$ in Abhängigkeit von $\lilwood{0}$ ausdrücken und erhalten
\begin{align*}
\partial_\xi^\alpha \lilwood{j}(\xi) &= \partial_\xi^\alpha \lilwood{0}(2^{-j}\xi) - \partial_\xi^\alpha \lilwood{0}(2^{-(j-1)} \xi) \\
&= \lilwood{0}^{(\alpha)}(2^{-j}\xi) 2^{-j\abs{\alpha}} - \lilwood{0}^{(\alpha)}(2^{-(j-1)}\xi) 2^{-(j-1)\abs{\alpha}} \\
&= 2^{-j\abs{\alpha}} \left( \lilwood{0}^{(\alpha)}(2^{-j}\xi) - 2^{\abs{\alpha}} \lilwood{0}^{(\alpha)}(2^{-(j-1)} \xi) \right) 
.\end{align*}
Also ist
$
\abs{ \partial_\xi^\alpha \lilwood{j}(\xi) } \le
2^{-j\abs{\alpha}} \left( \abs{\lilwood{0}^{(\alpha)}(2^{-j}\xi)} + 2^{\abs{\alpha}} \abs{\lilwood{0}^{(\alpha)}(2^{-(j-1)} \xi)} \right)
$
Falls $\xi \notin D_j$ ist wegen $\supp \lilwood{0}^{(\alpha)} \subset \supp \lilwood{0} \subset B_2(0)$ bereits $\partial_\xi^\alpha \lilwood{j}(\xi) = 0$.
Für den Fall, dass $\xi \in D_j$ ist, ergibt sich
$$
\abs{ \partial_\xi^\alpha \lilwood{j}(\xi) } \le
\abs{ \sup_{\xi \in D_j} \lilwood{0}^{(\alpha)}(\xi) } (1 + 2^{\abs{\alpha}}) 2^{-j\abs{\alpha}} \le
c_\alpha 2^{-j\abs{\alpha}}
,$$
da $D_j \subset\subset \R^n$ und $\lilwood{0}^{(\alpha)} \in C^\infty_0(\R^n)$.

Insbesondere gilt auf dem Dyadischen Ring $D_j = \menge{ \xi \in \R^n : 2^{j-1} \le \abs{\xi} \le 2^{j+1} }$, dass
$$2^{-j\abs{\alpha}} = \left( 2^{j-1} \right)^{-\abs{\alpha}} 2^{-\abs{\alpha}} \le c'_\alpha \piket{\xi}{-\abs{\alpha}} .$$
Insgesamt ist also
$$\abs{ \partial_\xi^\alpha \lilwood{j}(\xi) } \le C_\alpha \min \left\{ 2^{-j\abs{\alpha}}, \piket{\xi}{-\abs{\alpha}} \right\} .$$
\end{proof}
\end{lemma}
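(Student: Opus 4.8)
The plan is to reduce everything to estimates on the single fixed bump $\lilwood0$, using the dyadic scaling built into the recursion (\ref{equ:littlewoodRekursion}). For $j \ge 1$ the chain rule gives
$$\partial_\xi^\alpha \lilwood j(\xi) = 2^{-j\abs\alpha}(\partial^\alpha\lilwood0)(2^{-j}\xi) - 2^{-(j-1)\abs\alpha}(\partial^\alpha\lilwood0)(2^{-j+1}\xi),$$
and since $\lilwood0 \in C^\infty_0(\R^n)$ with $\supp\lilwood0 \subset B_2(0)$, the function $\partial^\alpha\lilwood0$ is bounded, say by $M_\alpha := \norm{\partial^\alpha\lilwood0}{\infty}$. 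Hence $\abs{\partial_\xi^\alpha\lilwood j(\xi)} \le M_\alpha(1+2^{\abs\alpha})2^{-j\abs\alpha}$ for $j \ge 1$, and the case $j=0$ is even more immediate because $2^{-0\cdot\abs\alpha}=1$. This yields the first half of the minimum.

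For the second half I would use the support condition $\supp\partial_\xi^\alpha\lilwood j \subset \supp\lilwood j \subset D_j$ (the derivatives of a function vanish wherever the function vanishes on an open set), so that the left-hand side is zero unless $\xi \in D_j$. When $j \ge 1$ and $\xi \in D_j$ one has $\abs\xi \ge 2^{j-1} \ge 1$, so $2^{-j\abs\alpha} = 2^{-\abs\alpha}(2^{j-1})^{-\abs\alpha} \le 2^{-\abs\alpha}\abs\xi^{-\abs\alpha}$, and by Lemma \ref{lemma:japanese} (equivalently, $\piketi\xi^2 = 1+\abs\xi^2 \le 2\abs\xi^2$ for $\abs\xi\ge1$) one has $\abs\xi^{-\abs\alpha} \le C\piket\xi{-\abs\alpha}$; combined with the previous paragraph this turns the bound $2^{-j\abs\alpha}$ into a constant multiple of $\piket\xi{-\abs\alpha}$. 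For $j=0$ one uses instead that $\supp\lilwood0 \subset B_2(0)$, where $\piket\xi{-\abs\alpha} \ge \piket{2}{-\abs\alpha} = 5^{-\abs\alpha/2} > 0$, so $\abs{\partial^\alpha\lilwood0(\xi)} \le M_\alpha \le M_\alpha 5^{\abs\alpha/2}\piket\xi{-\abs\alpha}$.

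Taking $c_\alpha$ to be the maximum of the two constants obtained above then gives both $\abs{\partial_\xi^\alpha\lilwood j(\xi)} \le c_\alpha 2^{-j\abs\alpha}$ and $\abs{\partial_\xi^\alpha\lilwood j(\xi)} \le c_\alpha\piket\xi{-\abs\alpha}$ for every $j \in \N_0$ and every $\xi \in \R^n$, which is exactly the asserted bound by the minimum. I do not expect a genuine obstacle here: the only points that need a little care are keeping the $j=0$ term consistent with the general scaling formula and the elementary passage from $\abs\xi$ to the Japanese bracket on the dyadic annulus $D_j$.
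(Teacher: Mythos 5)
Your proof is correct and takes essentially the same route as the paper: express $\lilwood{j}$ via the dyadic scaling of $\lilwood{0}$, use the chain rule and the uniform bound on $\lilwood{0}^{(\alpha)}$, observe that everything is supported in $D_j$, and on that annulus convert $2^{-j\abs{\alpha}}$ into a multiple of $\piket{\xi}{-\abs{\alpha}}$. If anything, your handling of the $j=0$ case is slightly more explicit than the paper's, which only deals with it implicitly.
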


\begin{lemma}\label{lemma:lilwoodSchwarzKonv}
Für $f \in \Schwarzi$ ist $\sum^N_{j=0} \lilwood{j}(D_x)f \longrightarrow f$ für $N \rightarrow \infty$ in $\Schwarzi$ konvergent.
\begin{proof}
Mit $ \lilwood{j}(D_x) f(x) = \FourierB{\xi}{x}{ \lilwood{j}(\xi) \hat{f}(x) }$ und der in Folgerung \ref{cor:SchwartzEquivNorm} definierten Halbnorm ist
$$
\SchwarzNorm{ \FourierB{\xi}{\hold}{\sum_{j=0}^N \lilwood{j}(\xi)\hat{f}(\xi) - \hat{f}(\xi) } }{k} \le
c \SchwarzzNorm{ \sum_{j=0}^N \lilwood{j}\hat{f} - \hat{f} }{k+3n+1}
. $$ 
Für $\alpha \in \N^n_0, l \in \N_0$ haben wir
\begin{align*}
&\norm{x \mapsto \piket{x}{l} D_x^\alpha \left( \sum_{j=0}^N \lilwood{j} \hat{f} - \hat{f} \right) }{L^2(\R^n)} \\
&= \norm{x \mapsto \piket{x}{l} \left( \sum_{j=0}^N \lilwood{j}(x)D_x^\alpha \hat{f}(x) - D_x^\alpha \hat{f}(x) + \sum_{\mu \le \alpha, \mu \ne 0} {\alpha \choose \mu} \sum_{j=0}^{N} D_x^\mu \lilwood{j}(x) D_x^{\alpha - \mu} \hat{f}(x) \right) }{L^2(\R^n)} \\
&\le \norm{ \piket{x}{l} \left( \sum_{j=0}^N \lilwood{j}(x) D_x^\alpha \hat{f}(x) - D_x^\alpha \hat{f}(x) \right)}{L^2(\R^n)} \\
&\quad + \norm{x \mapsto \piket{x}{l} \sum_{\mu \le \alpha, \mu \ne 0} {\alpha \choose \mu} \sum_{j=0}^N D_x^\mu \lilwood{j}(x) D^{\alpha - \mu} \hat{f}(x) }{L^2(\R^n)}
.\end{align*}
Wir folgern mit dem Satz von Lebesgue die Konvergenz von
$$ \norm{x \mapsto \piket{x}{l} \left( \sum_{j=0}^N \lilwood{j}(x) D_x^\alpha \hat{f}(x) - D_x^\alpha \hat{f}(x) \right)}{L^2(\R^n)} \longrightarrow 0 \textrm{ für } N \rightarrow \infty $$ 
mit Majorante $ 2 \piket{x}{l} D^\alpha_x \hat{f}(x) $
und von
$$ \norm{x \mapsto \piket{x}{l} \sum_{j=0}^N D_x^\mu \lilwood{j}(x) D_x^{\alpha - \mu} \hat{f}(x) }{L^2(\R^n)} \longrightarrow 0 \textrm{ für } N \rightarrow \infty ,$$
da nach Lemma \ref{lemma:introPartialLilwoodAbsch} 
$D_x^{\alpha-\mu_1} \left[ f(x) x^\alpha \right] \sum_{j=0}^\infty 2^{-j\abs{\mu}}$
eine Majorante ist.
Schließlich bilden wir das Supremum über alle $\alpha,l$ mit $\abs{\alpha} + l \le k+3n+1$ und erhalten
$$\SchwarzzNorm{ \sum_{j=0}^N \lilwood{j}\hat{f} - \hat{f} }{k+3n+1} \rightarrow 0 \textrm{ für } N \rightarrow \infty .$$
\end{proof}
\end{lemma}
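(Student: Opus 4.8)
The plan is to pass to the Fourier side and exploit the telescoping structure of the decomposition. Since $\lilwood{j}(D_x)f = \FourierB{\xi}{x}{\lilwood{j}(\xi)\hat f(\xi)}$, summing the recursion (\ref{equ:littlewoodRekursion}) telescopes to $\sum_{j=0}^N \lilwood{j}(\xi) = \lilwood{0}(2^{-N}\xi)$, hence
$$\sum_{j=0}^N \lilwood{j}(D_x)f = \FourierB{\xi}{x}{\lilwood{0}(2^{-N}\xi)\hat f(\xi)}.$$
Because $\Fourieri^{-1} : \Schwarzi \to \Schwarzi$ is continuous (Theorem \ref{thm:SchwartzFourierNorm}), it suffices to show that, writing $g := \hat f \in \Schwarzi$, one has $\lilwood{0}(2^{-N}\hold)\,g \to g$ in $\Schwarzi$, i.e. $\SchwarzNorm{\lilwood{0}(2^{-N}\hold)g - g}{m} \to 0$ for every $m \in \N_0$.

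For the estimate I would apply the Leibniz rule to $\partial_x^\alpha\bigl(\lilwood{0}(2^{-N}x)g(x)\bigr) = \sum_{\mu\le\alpha}\binom{\alpha}{\mu}2^{-N\abs{\mu}}(\partial^\mu\lilwood{0})(2^{-N}x)\,\partial^{\alpha-\mu}g(x)$ and treat $\mu=0$ and $\mu\neq 0$ separately. For $\mu\neq 0$ the prefactor $2^{-N\abs{\mu}}$ together with $\norm{\partial^\mu\lilwood{0}}{\infty}<\infty$ and $\piket{x}{l}\abs{\partial^{\alpha-\mu}g(x)}\le\SchwarzNorm{g}{l+\abs{\alpha}}$ gives a bound of order $\Landau{2^{-N}}\SchwarzNorm{g}{l+\abs{\alpha}}\to 0$. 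For $\mu=0$ the relevant quantity is $\piket{x}{l}\abs{\bigl(\lilwood{0}(2^{-N}x)-1\bigr)\partial^\alpha g(x)}$; since $\lilwood{0}(2^{-N}x)=1$ on $\abs{x}\le 2^N$ by Definition \ref{def:littlewoodpaley} and $\abs{\lilwood{0}(2^{-N}x)-1}\le\norm{\lilwood{0}}{\infty}+1$ everywhere, this is at most $(\norm{\lilwood{0}}{\infty}+1)\sup_{\abs{x}>2^N}\piket{x}{l}\abs{\partial^\alpha g(x)}$, which tends to $0$ as $N\to\infty$ precisely because $g\in\Schwarzi$ (its weighted derivatives vanish at infinity). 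Taking the maximum over $l+\abs{\alpha}\le m$ yields the claim, and then $\Fourieri^{-1}$-continuity closes the argument.

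The one place where genuine care is needed is this $\mu=0$ term: it is essential that $g$ lies in $\Schwarzi$ rather than merely in $\Beschri$, so that $\sup_{\abs{x}>R}\piket{x}{l}\abs{\partial^\alpha g(x)}\to 0$ as $R\to\infty$ — mere polynomial boundedness of the weighted derivatives would not be enough. An alternative route, staying inside the Hilbert-space framework, is to use the equivalent seminorms $\SchwarzzNorm{\hold}{k}$ of Folgerung \ref{cor:SchwartzEquivNorm}: expand $\partial^\alpha\bigl(\sum_{j\le N}\lilwood{j}\hat f-\hat f\bigr)$ by Leibniz, bound the $\mu\neq 0$ contributions by the summable majorant coming from $\abs{\partial^\mu\lilwood{j}(x)}\le c_\mu 2^{-j\abs{\mu}}$ (Lemma \ref{lemma:introPartialLilwoodAbsch}), and handle the $\mu=0$ part by Lebesgue's dominated convergence (Theorem \ref{thm:masstheorieDOM}) with dominating function $2\piket{x}{l}\abs{\partial^\alpha\hat f(x)}$; this avoids invoking $\Fourieri^{-1}$-continuity but is computationally heavier. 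Either way the proof is routine once the telescoping identity and the Schwartz decay are used.
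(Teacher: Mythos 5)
Your argument is correct, and its main route differs from the paper's. The paper proves convergence in the $L^2$-based equivalent seminorms $\SchwarzzNorm{\hold}{k}$ of Folgerung \ref{cor:SchwartzEquivNorm}: it expands $D_x^\alpha\bigl(\sum_{j\le N}\lilwood{j}\hat f-\hat f\bigr)$ by Leibniz, controls the terms with derivatives on $\lilwood{j}$ via Lemma \ref{lemma:introPartialLilwoodAbsch} (the factor $2^{-j\abs{\mu}}$ makes the series summable), and handles the remaining term by dominated convergence with majorant $2\piket{x}{l}\abs{D_x^\alpha\hat f}$ — essentially your ``alternative route''. You instead telescope the recursion (\ref{equ:littlewoodRekursion}) to $\sum_{j=0}^N\lilwood{j}(\xi)=\lilwood{0}(2^{-N}\xi)$, reduce via continuity of the inverse Fourier transform to showing $\lilwood{0}(2^{-N}\hold)\hat f\to\hat f$ in the sup-norm seminorms $\SchwarzNorm{\hold}{m}$, and then estimate directly: the $\mu\neq 0$ Leibniz terms carry an explicit factor $2^{-N}$, and the $\mu=0$ term vanishes on $\abs{x}\le 2^N$ and is killed off $\abs{x}>2^N$ by the Schwartz decay of $\hat f$ — correctly identified as the one place where $\hat f\in\Schwarzi$ (not just $\Beschri$) is essential. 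Your version is lighter: it avoids the $L^2$ seminorms and dominated convergence altogether, and even yields a quantitative rate $\Landau{2^{-N}}+\sup_{\abs{x}>2^N}\piket{x}{l}\abs{\partial^\alpha\hat f(x)}$; the paper's version stays inside machinery it has already set up and reused elsewhere. One small point to make explicit: Theorem \ref{thm:SchwartzFourierNorm} is stated for $\Fourieri$, so you should note that continuity of $\Fourieri^{-1}$ on $\Schwarzi$ follows from the relation $\hat f(\xi)=(2\pi)^n\check f(-\xi)$ (or by the same seminorm estimate); with that remark added, your proof is complete.
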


\begin{lemma}
Sei $f : \R^n \rightarrow \C, m \in \R$. Dann gilt folgende Äquivalenz:
\begin{equation}
\exists C_m > 0 : \abs{f(\xi)} \le C_m \piket{\xi}{m} \; \forall \xi \in \R^n 
\Leftrightarrow
\exists \tilde{C}_m > 0 : \sup_{\xi \in \R^n} \abs{\lilwood{j}(\xi) f(\xi) } \le \tilde{C}_m 2^{jm} \; \forall j \in \N_0 
.\end{equation}
\begin{proof}
Beginnen wir mit einer Konstante $C_m > 0$, für die $\abs{f(\xi)} \le C_m \piket{\xi}{m}$ für jedes $\xi \in \R^n$ ist.
Um die rechte Seite zu beweisen, wählen wir ein $j \in \N_0$ und haben
$$
\abs{ \lilwood{j}(\xi) f(\xi) } \le C_m \characteristic{D_j}(\xi) \piket{\xi}{m} ,$$
wobei auf dem dyadischen Ring die charakteristische Funktion gegeben ist durch
$$\characteristic{D_j}(\xi) := 
\begin{cases}
1 & \textrm{ für } \xi \in D_j, \\
0 & \textrm{ sonst. }
\end{cases}
$$
Für $\xi \in D_j$ folgt mit $\abs{\xi} \le 2^{j+1}$
$$
\piket{\xi}{m} = \left( 1 + \abs{\xi}^2 \right)^{\frac{m}{2}} 
\le 
\begin{cases}
\left( 1 + 2^{j+1} \right)^m & m \ge 0, \\
\left( 1 + 2^{j-1} \right)^m & m < 0, \\
\end{cases}
\le 
\begin{cases}
2^{(j+2)m} & m \ge 0, \\
2^{(j-2)m} & m < 0. \\
\end{cases}
$$
Für die Rückrichtung wollen wir ausgehend von der rechten Seite die linke beweisen.
Dazu haben wir für ein $\xi \in D_j$
$$
\abs{f(\xi)} = 
\abs{ \sum_{k=0}^{\infty} \lilwood{k}(\xi) f(\xi) } \le
\sum_{k=0}^{j+1} \abs{ \lilwood{k}(\xi) f(\xi)} \le
\sum_{k=0}^{j+1} \tilde{C}_m 2^{km} =
\sum_{k=0}^{j+1} \tilde{C}_m (2^m)^k
.$$
Mit der Darstellungsformel für die Partialsumme der geometrischen Reihe folgt:
$$
\abs{f(\xi)} 
= \tilde{C}_m \frac{1 - (2^m)^{j+2}}{1 - 2^m} 
\le \tilde{C}_m 2^{(j+2)m - m+1}
= \tilde{C}_m 2^{(j+1)m+1} 
\le \tilde{C}_m  2^{2m+1} \piket{\xi}{m}
,$$
wobei $2^{j-1} \le \abs{\xi}$ für $\xi \in D_j$ und wir Lemma \ref{lemma:japanese} benutzt haben.
\end{proof}
\end{lemma}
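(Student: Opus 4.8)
The plan is to prove the two implications separately, each time exploiting two structural features of a Littlewood--Paley partition: first, each $\lilwood{j}$ is supported in the dyadic ring $D_j$, on which $\piketi{\xi}$ and $2^{j}$ are comparable — for $j\ge 1$ this is Lemma~\ref{lemma:japanese} applied with $\epsilon=\tfrac12$, which is legitimate since $\abs{\xi}\ge 2^{j-1}\ge 1>\tfrac12$ there, and for $j=0$ one simply has $D_0=B_2(0)$; second, the $\lilwood{k}$ form a partition of unity with only boundedly many of them nonzero at any point, so by Folgerung~\ref{cor:introLilwoodOP} every $\xi$ lies in some $\supp\lilwood{j}$ with $\lilwood{j-1}(\xi)+\lilwood{j}(\xi)+\lilwood{j+1}(\xi)=1$ (setting $\lilwood{-1}\equiv 0$). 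Throughout one has to watch the sign of $m$, since $\piketi{\xi}^{m}$ is increasing in $\abs{\xi}$ for $m\ge 0$ and decreasing for $m<0$.

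For the direction ``$\Rightarrow$'': assume $\abs{f(\xi)}\le C_m\piket{\xi}{m}$ for all $\xi$. Fix $j\in\N_0$. Since $0\le\lilwood{j}\le 1$ and $\supp\lilwood{j}\subset D_j$, it suffices to bound $\piket{\xi}{m}$ on $D_j$. For $j=0$ the ring $D_0=B_2(0)$ is bounded, so $\piket{\xi}{m}$ there is at most a constant depending only on $m$, which is $\le(\textrm{const})\cdot 2^{0\cdot m}$. For $j\ge 1$ one has $2^{j-1}\le\abs{\xi}\le 2^{j+1}$ on $D_j$, whence $\piketi{\xi}\le 2^{j+2}$ and $\piketi{\xi}\ge 2^{j-1}$, so $\piket{\xi}{m}\le 2^{(j+2)m}$ if $m\ge 0$ and $\piket{\xi}{m}\le 2^{(j-1)m}$ if $m<0$; in either case this equals $2^{jm}$ times a constant depending only on $m$. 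Absorbing these constants into a new $\tilde C_m$ gives $\sup_\xi\abs{\lilwood{j}(\xi)f(\xi)}\le\tilde C_m 2^{jm}$ for every $j$.

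For the direction ``$\Leftarrow$'': assume $\sup_\xi\abs{\lilwood{k}(\xi)f(\xi)}\le\tilde C_m 2^{km}$ for all $k$. Fix $\xi\in\R^n$ and choose $j$ with $\xi\in\supp\lilwood{j}$; Folgerung~\ref{cor:introLilwoodOP} gives $f(\xi)=(\lilwood{j-1}(\xi)+\lilwood{j}(\xi)+\lilwood{j+1}(\xi))f(\xi)$, hence $\abs{f(\xi)}\le\tilde C_m(2^{(j-1)m}+2^{jm}+2^{(j+1)m})$. If $j=0$ this is a constant multiple of $\tilde C_m$, and on $\supp\lilwood{0}\subset B_2(0)$ the quantity $\piket{\xi}{m}$ is bounded below by a positive constant, giving $\abs{f(\xi)}\le C\piket{\xi}{m}$. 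If $j\ge 1$, then $2^{j-1}\le\abs{\xi}\le 2^{j+1}$ and Lemma~\ref{lemma:japanese} yields $C^{-1}\piketi{\xi}\le\abs{\xi}\le C\piketi{\xi}$; for $m\ge 0$ we bound the sum by $3\cdot 2^{(j+1)m}\le 3(4\abs{\xi})^{m}\le 3(4C)^{m}\piket{\xi}{m}$, and for $m<0$ by $3\cdot 2^{(j-1)m}=3(2^{j-1})^{m}\le 3(\abs{\xi}/4)^{m}\le 3(4C)^{-m}\piket{\xi}{m}$. Taking the supremum over $\xi$ and absorbing constants into $C_m$ finishes the proof. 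The statement is essentially bookkeeping; the one point that needs care — and the main reason to organize the argument around the \emph{finite overlap} of the $\lilwood{k}$ rather than summing the whole series $\sum_{k\ge 0}\lilwood{k}(\xi)f(\xi)$ — is that the partial sum $\sum_{k=0}^{j+1}\tilde C_m 2^{km}$ is governed by its \emph{largest} term only when $m\ge 0$, while for $m<0$ it is governed by its \emph{first} term, so only keeping the $O(1)$ pieces near level $j$ makes the estimate uniform in $\operatorname{sgn}m$.
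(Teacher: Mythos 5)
Your proof is correct, and the forward direction is essentially the paper's argument (restrict to $D_j$, use $\abs{\xi}\le 2^{j+1}$, and compare $\piket{\xi}{m}$ with $2^{jm}$ according to the sign of $m$). The converse is where you genuinely diverge: the paper bounds $\abs{f(\xi)}$ for $\xi\in D_j$ by the full partial sum $\sum_{k=0}^{j+1}\tilde C_m 2^{km}$ and evaluates it with the geometric-series formula, whereas you invoke Folgerung \ref{cor:introLilwoodOP} to keep only the three overlapping terms $\lilwood{j-1},\lilwood{j},\lilwood{j+1}$ and then split the cases $m\ge 0$ and $m<0$. Your route buys uniformity in the sign of $m$: the paper's chain $\tilde C_m\,\frac{1-(2^m)^{j+2}}{1-2^m}\le \tilde C_m 2^{(j+1)m+1}$ tacitly presupposes $2^m>1$ (it is undefined at $m=0$, and for $m<0$ the partial sum is bounded below by its $k=0$ term while $2^{(j+1)m}\to 0$, so the displayed inequality fails for large $j$; even for $0<m<1$ the constant manipulation needs $m\ge 1$), so as written it only covers part of the range of $m$, while your three-term bound — dominant term $2^{(j+1)m}$ for $m\ge 0$, $2^{(j-1)m}$ for $m<0$ — closes that gap and is, if anything, more elementary. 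The only extra cost is the separate (trivial) treatment of $j=0$, which you handle explicitly and the paper glosses over; your use of Lemma \ref{lemma:japanese} on $D_j$ for $j\ge 1$ is legitimate since $\abs{\xi}\ge 2^{j-1}\ge 1$ there.
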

 
\subsection{Der Hölder-Raum}\label{sec:hoelder}

\begin{definition}[Hölder-Raum]
Für $t \in \N_0, \theta \in [0,1]$ wird der Hölder-Raum $C^{t,\theta}(\R^n)$ durch die Norm
\begin{align}
\norm{u}{C^{t,\theta}(\R^n)} := \norm{u}{C^t(\R^n)} + \hnorm{u}{C^{t,\theta}} \\
\textrm{mit}
\hnorm{u}{C^{t,\theta}(\R^n)} := 
\begin{cases}
\max_{|\alpha| = t} \hnorm{\partial^\alpha u}{\theta} & \theta \ne 0, \\
0 & \theta = 0,
\end{cases}\\
\textrm{wobei}
\hnorm{f}{\theta} := 
\sup_{\substack{x,y \in \R^n\\x \neq y}} \frac{\left| f(x) - f(y)\right|}{\left| x-y \right|^{\theta}}
\end{align}
induziert. Insbesondere ist $C^{t,0}(\R^n) = C^t(\R^n)$ für jedes $t \in \N_0$.
Wir sagen für ein $f \in C^{t,\theta}(\R^n)$, dass $f$ Hölder-stetig zum Grad $t,\theta$ ist.
\end{definition}
\nomenclature[f]{$C^{t,\theta}(\R^n)$}{Hölder-Raum der Ordnung $t,\theta$}
\nomenclature[f]{$C^{t}(\R^n)$}{Hölder-Raum der Ordnung $t\in\R_+$}
\nomenclature[m]{$\R_+$}{Das Intervall $(0,\infty) \subset \R$}
\nomenclature[n]{$\norm{\hold}{C^{t,\theta}(\R^n)}$ }{Norm des Hölder-Raums der Ordnung $t,\theta$}
\nomenclature[n]{$\norm{\hold}{C^{t}(\R^n)}$ }{Norm des Hölder-Raums der Ordnung $t$}
\nomenclature[n]{$\hnorm{\hold}{C^{t,\theta}}$}{Halbnorm des Hölder-Raums der Ordnung $t,\theta$}
\nomenclature[n]{$\hnorm{\hold}{\theta}$}{Halbnorm des Hölder-Raums der Ordnung $\theta\in(0,1)$}

\begin{remark}
Die Abbildung $\norm{\hold}{C^{t,\theta}(\R^n)} : C^{t,\theta}(\R^n) \longrightarrow \R_{\ge 0}$ ist eine Norm, 
da $\hnorm{\hold}{C^{t,\theta}(\R^n)}$ offensichtlich eine Halbnorm definiert, und wir bereits wissen, dass $\norm{u}{C^t(\R^n)}$ eine Norm ist.
\end{remark}

\begin{satz}
Der Hölder-Raum $C^{t,\theta}(\R^n)$ mit $t \in \N_0$ und $\theta \in [0,1]$ ist vollständig.
\begin{proof}
Sei $\folge{u_j}{j\in\N}$ eine Cauchy-Folge in $C^{t,\theta}(\R^n)$. 
Dann ist $\folge{u_j}{j\in\N}$ Cauchy-Folge in $C^t(\R^n)$ und wegen der Vollständigkeit von $C^t(\R^n)$ 
gibt es ein $u \in C^t(\R^n)$, sodass $u_j \longrightarrow u$ für $j \rightarrow \infty$ in $C^t(\R^n)$.
Da $\folge{u_j}{j\in\N}$ eine Cauchy-Folge ist, gibt es für alle $\epsilon > 0$ ein $N_\epsilon \in \N$, sodass 
$\hnorm{u_j - u_k}{C^{t,\theta}(\R^n)} < \epsilon$
für alle $j, k \ge N_\epsilon$.
Setze $f_j := D^\alpha u_j , f := D^\alpha u$ für ein beliebiges $\alpha \in \N^n_0$ mit $\abs{\alpha} \le k$.
Wir haben damit für alle $x, y \in \R^n$ mit $x \neq y$ die Abschätzung
$$
\abs{ \left( f_j(x) - f_k(x) \right) - \left( f_j(y) - f_k(y) \right) } < \epsilon \abs{x-y}^\theta
.$$
Nun konvergiert $f_j \longrightarrow f$ für $j \rightarrow \infty$ gleichmäßig, sodass wir den Limes $k \rightarrow \infty$ bilden können, der
$$
\abs{ \left( f_j(x) - f(x) \right) - \left( f_j(y) - f(y) \right) } \le \epsilon \abs{x-y}^\theta
$$
ergibt. Da dies für alle $x, y \in \R^n$ mit $x \neq y$ gilt, hält auch die Ungleichung, wenn wir das Supremum über $x,y$ bilden.
Also ist $\hnorm{f_j - f}{\theta} < \epsilon$ für alle $j \ge N_\epsilon$.
Da $\alpha$ beliebig war, konvergiert $\hnorm{u - u_j}{C^{t,\theta}(\R^n)} \longrightarrow 0$.

Insgesamt konvergiert $u_j \longrightarrow u$ für $j \rightarrow \infty$ in $C^{t,\theta}(\R^n)$.
Mit der Dreiecksungleichung
$$
\frac{\abs{ f(x) - f(y) }}{\abs{x-y}^\alpha} \le 
\underbrace{\frac{\abs{ f(x) - f_j(x) }}{\abs{x-y}^\alpha}}_{\rightarrow 0 \textrm{ für } j \rightarrow \infty} +
\frac{\abs{ f_j(x) - f_j(y) }}{\abs{x-y}^\alpha} +
\underbrace{\frac{\abs{ f_j(y) - f(y) }}{\abs{x-y}^\alpha}}_{\rightarrow 0 \textrm{ für } j \rightarrow \infty}
$$
folgt schließlich, dass $u \in C^{t,\theta}(\R^n)$ ist.
\end{proof}
\end{satz}

\begin{lemma}
Für $\alpha \in \N^n_0, k \in \N_0, \theta \in [0,1]$ ist $\norm{\partial^\alpha u}{C^{k,\theta}(\R^n)} \le \norm{u}{C^{k+\abs{\alpha},\theta}(\R^n)}$ für $u \in C^{k+\abs{\alpha},\theta}(\R^n)$.
Insbesondere ist deswegen für $k,K \in \N_0$ mit $k < K$ bereits $C^{K,\theta}(\R^n) \hookrightarrow C^{k,\theta}(\R^n)$ für alle $\theta \in [0,1]$ natürlich eingebettet.
\begin{proof}
Klar, da
\begin{align*}
\norm{\partial^\alpha u}{C^{k,\theta}(\R^n)} 
&= \norm{\partial^\alpha u}{C^k(\R^n)} + \max_{\abs{\beta} = k} \hnorm{\partial^\beta \partial^\alpha u}{\theta} 
\le \norm{u}{C^{k+\abs{\alpha}}(\R^n)} + \max_{\abs{\beta} = k+\abs{\alpha}} \hnorm{\partial^\beta u}{\theta} \\
&= \norm{u}{C^{k+\abs{\alpha},\theta}(\R^n)}
.
\end{align*}
\end{proof}
\end{lemma}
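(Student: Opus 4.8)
The plan is to unwind the definition of the Hölder norm $\norm{\hold}{C^{k,\theta}(\R^n)} = \norm{\hold}{C^k(\R^n)} + \hnorm{\hold}{C^{k,\theta}(\R^n)}$ and to reduce everything to two elementary multi-index inclusions, treating the two summands of $\norm{\partial^\alpha u}{C^{k,\theta}(\R^n)}$ separately. For the $C^k$-part I would write $\partial^\gamma(\partial^\alpha u) = \partial^{\gamma+\alpha}u$ and observe that $\menge{\gamma+\alpha : \abs{\gamma}\le k}$ is a subcollection of $\menge{\delta\in\N^n_0 : \abs{\delta}\le k+\abs{\alpha}}$; hence the maximum of $\norm{\partial^\delta u}{\infty}$ over the first set is dominated by that over the second, i.e. $\norm{\partial^\alpha u}{C^k(\R^n)}\le\norm{u}{C^{k+\abs{\alpha}}(\R^n)}$.

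For the seminorm part I would distinguish the two cases in the definition of $\hnorm{\hold}{C^{k,\theta}}$. If $\theta\neq 0$, then $\hnorm{\partial^\alpha u}{C^{k,\theta}(\R^n)} = \max_{\abs{\beta}=k}\hnorm{\partial^{\beta+\alpha}u}{\theta}$, and since $\menge{\beta+\alpha : \abs{\beta}=k}\subseteq\menge{\gamma\in\N^n_0 : \abs{\gamma}=k+\abs{\alpha}}$ this is at most $\max_{\abs{\gamma}=k+\abs{\alpha}}\hnorm{\partial^\gamma u}{\theta} = \hnorm{u}{C^{k+\abs{\alpha},\theta}(\R^n)}$. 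If $\theta = 0$, both Hölder seminorms vanish and there is nothing to add. Combining this with the first step, $\norm{\partial^\alpha u}{C^{k,\theta}(\R^n)}\le\norm{u}{C^{k+\abs{\alpha}}(\R^n)}+\hnorm{u}{C^{k+\abs{\alpha},\theta}(\R^n)} = \norm{u}{C^{k+\abs{\alpha},\theta}(\R^n)}$, which is the first assertion.

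For the embedding with $k<K$ I would again bound the $C^k$-part by $\norm{u}{C^K(\R^n)}\le\norm{u}{C^{K,\theta}(\R^n)}$ exactly as above (fewer derivatives), and for the seminorm part exploit that $k<K$ forces every order-$k$ derivative $\partial^\beta u$ with $\abs{\beta}=k$ to lie in $\Beschr{1}{\R^n}$ with $\norm{\partial^\beta u}{C^1(\R^n)}\le\norm{u}{C^{k+1}(\R^n)}\le\norm{u}{C^K(\R^n)}$. A bounded $C^1$-function $g$ is $\theta$-Hölder for every $\theta\in[0,1]$: splitting $\sup_{x\neq y}\abs{g(x)-g(y)}\,\abs{x-y}^{-\theta}$ into the range $\abs{x-y}\le 1$, where the mean value theorem (Lemma \ref{mittelwertsatz}) gives $\abs{g(x)-g(y)}\le\norm{\nabla g}{\infty}\abs{x-y}\le\norm{\nabla g}{\infty}\abs{x-y}^\theta$, and the range $\abs{x-y}>1$, where $\abs{g(x)-g(y)}\le 2\norm{g}{\infty}\le 2\norm{g}{\infty}\abs{x-y}^\theta$, yields $\hnorm{g}{\theta}\le 2\norm{g}{C^1(\R^n)}$. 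Therefore $\norm{u}{C^{k,\theta}(\R^n)}\le 3\norm{u}{C^{K,\theta}(\R^n)}$, so the (injective) inclusion map is continuous, i.e. a natural embedding.

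None of this is a genuine obstacle; the lemma is essentially bookkeeping. The only points that require a little care are the clean separation of the $\theta=0$ and $\theta\neq 0$ cases in the definition of $\hnorm{\hold}{C^{k,\theta}}$, and, for the embedding, the \emph{observation} that stepping from order $K$ down to order $k<K$ exchanges one derivative for Hölder continuity of an arbitrary exponent $\theta\le 1$ via the mean value theorem — a fact the first inequality alone does not supply.
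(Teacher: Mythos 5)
Your proof of the displayed inequality is exactly the paper's argument: write $\partial^\beta\partial^\alpha u=\partial^{\beta+\alpha}u$, note that $\menge{\beta+\alpha : \abs{\beta}\le k}$ (resp. $\abs{\beta}=k$) sits inside the multi-indices of order $\le k+\abs{\alpha}$ (resp. $=k+\abs{\alpha}$), and sum the two parts of the norm; the $\theta=0$ case is trivial. Where you genuinely diverge is the embedding $C^{K,\theta}(\R^n)\hookrightarrow C^{k,\theta}(\R^n)$ for $k<K$: the paper dismisses this with ``Insbesondere ist deswegen'' and proves nothing beyond the first inequality, but that inequality only controls $\norm{\partial^\alpha u}{C^{k,\theta}(\R^n)}$ and does not bound the Hölder seminorms $\hnorm{\partial^\beta u}{\theta}$ of the \emph{intermediate} derivatives $\abs{\beta}=k<K$ by $\norm{u}{C^{K,\theta}(\R^n)}$. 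Your mean-value-theorem step — splitting into $\abs{x-y}\le 1$ and $\abs{x-y}>1$ to get $\hnorm{g}{\theta}\le 2\norm{g}{C^1(\R^n)}$ for every $\theta\in[0,1]$, hence $\norm{u}{C^{k,\theta}(\R^n)}\le 3\norm{u}{C^{K,\theta}(\R^n)}$ — is precisely the missing ingredient (it is, in spirit, the content of the paper's Lemma \ref{interpol:theta<0*1}), and you correctly flag that the first inequality alone does not supply it. So your argument is correct and in fact more complete than the paper's on the second assertion; the only cost is the harmless constant $3$ in place of the paper's implicit claim of a norm bound with constant $1$.
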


\begin{lemma}
Für $0 \le \theta < \Theta < 1$ ist $C^{k,\Theta}(\R^n) \subset C^{k,\theta}(\R^n)$ für alle $k \in \N_0$
\begin{proof}
Für $\theta = 0$ ist die Behauptung trivialerweise erfüllt. 
Für $\theta \neq 0$ betrachte ein $u \in C^{k,\theta}(\R^n)$. 
Setze $f := \partial^\alpha u \in C^{0,\theta}(\R^n)$ für ein $\alpha \in \N^n_0$ mit $\abs{\alpha} = k$.
Seien $x,y \in \R^n, x \neq y$ gegeben.
Falls $\abs{x-y} < 1$, ist
$$
\frac{\abs{f(x)-f(y)}}{\abs{x-y}^\theta} = 
\frac{\abs{f(x)-f(y)}}{\abs{x-y}^\Theta} \abs{x-y}^{\Theta-\theta} \le
\frac{\abs{f(x)-f(y)}}{\abs{x-y}^\Theta}
.$$
Falls $\abs{x-y} \ge 1$ ist bereits $\frac{\abs{f(x)-f(y)}}{\abs{x-y}^\vartheta} \le \abs{f(x)-f(y)}$ für alle $\vartheta \in (0,1)$.
Geht man also zum Supremum bzgl. aller $x,y \in \R^n$ über, so ist bereits $\hnorm{f}{t,\theta} \le \hnorm{f}{t,\Theta}$ für beliebiges $u \in C^{k,\Theta}(\R^n)$ für beliebiges $\alpha \in \N^n_0$.
\end{proof}
\end{lemma}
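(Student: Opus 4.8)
The plan is to reduce the asserted embedding to an elementary pointwise estimate for the top-order derivatives. First I would treat the degenerate case $\theta = 0$ separately: then $C^{k,0}(\R^n) = C^k(\R^n)$, and since by definition $\norm{u}{C^{k,\Theta}(\R^n)} = \norm{u}{C^k(\R^n)} + \hnorm{u}{C^{k,\Theta}(\R^n)} \ge \norm{u}{C^k(\R^n)}$, every $u \in C^{k,\Theta}(\R^n)$ already belongs to $C^k(\R^n)$, so there is nothing to prove.

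For $\theta \neq 0$ I would take $u \in C^{k,\Theta}(\R^n)$ and aim to show $\hnorm{u}{C^{k,\theta}(\R^n)} < \infty$ --- the $C^k$-part of the norm is finite by hypothesis, so only the seminorm part is at issue. By definition this reduces to bounding $\hnorm{\partial^\alpha u}{\theta}$ for each multiindex $\alpha$ with $\abs{\alpha} = k$. Fixing such an $\alpha$ and writing $f := \partial^\alpha u$, the hypothesis $u \in C^{k,\Theta}(\R^n)$ furnishes both $\hnorm{f}{\Theta} < \infty$ and $\norm{f}{\infty} < \infty$.

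The core step is a dichotomy on the distance $\abs{x-y}$ for points $x \neq y$. If $\abs{x-y} < 1$, then $\abs{x-y}^{\Theta-\theta} \le 1$ because $\Theta - \theta > 0$, hence
\[
\frac{\abs{f(x)-f(y)}}{\abs{x-y}^{\theta}} = \frac{\abs{f(x)-f(y)}}{\abs{x-y}^{\Theta}}\,\abs{x-y}^{\Theta-\theta} \le \hnorm{f}{\Theta}.
\]
If $\abs{x-y} \ge 1$, then $\abs{x-y}^{\theta} \ge 1$, so $\abs{f(x)-f(y)}/\abs{x-y}^{\theta} \le \abs{f(x)-f(y)} \le 2\norm{f}{\infty}$. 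Passing to the supremum over all $x \neq y$ yields $\hnorm{f}{\theta} \le \max\bigl\{\hnorm{f}{\Theta},\, 2\norm{f}{\infty}\bigr\} < \infty$; since $\alpha$ was arbitrary, $\hnorm{u}{C^{k,\theta}(\R^n)} < \infty$, which completes the argument.

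There is essentially no hard step; the only point requiring care is that the naive scaling bound $\abs{x-y}^{\Theta-\theta} \le 1$ is available only in the small-distance regime, so for $\abs{x-y} \ge 1$ one must instead fall back on the boundedness of $f$ that is built into the $C^0$-component of the Hölder norm. An alternative that sidesteps the case split is the interpolation inequality $\abs{f(x)-f(y)} \le \bigl(2\norm{f}{\infty}\bigr)^{1-\theta/\Theta}\bigl(\hnorm{f}{\Theta}\,\abs{x-y}^{\Theta}\bigr)^{\theta/\Theta}$, obtained by multiplying the two available bounds in the proportions $1-\theta/\Theta$ and $\theta/\Theta$; dividing by $\abs{x-y}^{\theta}$ then gives the estimate directly, but the two-case version is shorter.
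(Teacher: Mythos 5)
Your proof is correct and follows essentially the same dichotomy on $\abs{x-y}$ as the paper's argument. One point worth noting: the paper's final line asserts $\hnorm{f}{\theta} \le \hnorm{f}{\Theta}$, which does not follow from the two cases (and is in fact false in general, since for $\abs{x-y} \ge 1$ the $\theta$-ratio dominates the $\Theta$-ratio); your version correctly falls back on the sup-norm in the large-distance regime and records the honest bound $\hnorm{f}{\theta} \le \max\{\hnorm{f}{\Theta},\,2\norm{f}{\infty}\}$, which is all the embedding needs. In short, same strategy, but you patched an overclaim in the source.
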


\subsection{Interpolation der Hölder-Räume}

Ziel dieses Abschnittes wird es sein, den Hölder-Raum $C^{k,\vartheta}(\R^n)$ zu interpolieren, also ein Abschätzungskriterium in der Form
des folgenden Theorems zu erhalten, das wir in diesem Abschnitt beweisen werden:
\begin{thm}\label{interpol:cor}
Für $t,k \in \N, \theta, \vartheta \in [0,1)$ mit $t+\theta \le k+\vartheta$ finden wir eine Konstante $c_{t+\theta,k+\vartheta} > 0$, die die Ungleichung
$$ \norm{f}{C^{t,\theta}(\R^n)} \le c_{t+\theta,k+\vartheta} \norm{f}{C^0(\R^n)}^{1-\frac{t+\theta}{k+\vartheta}} \norm{f}{C^{k,\vartheta}(\R^n)}^{\frac{t+\theta}{k+\vartheta}} \textrm{ für alle } f \in C^{k+\vartheta}(\R^n)$$
erfüllt.
\end{thm}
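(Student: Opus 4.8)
\emph{Beweisplan.} Der Plan ist, die Ungleichung aus zwei elementaren Abschätzungen aufzubauen, die beide nach demselben Muster entstehen, nämlich durch Wahl einer Abschneidelänge und anschließende Optimierung, und diese dann zu einer Log-Konvexität der Ordnungsskala zusammenzusetzen, aus der die behauptete Ungleichung unmittelbar folgt. Der erste Baustein ist eine Ungleichung vom Landau-Kolmogorov-Typ: wendet man Lemma \ref{tech:osziAblAbsch} auf die Einschränkung $s\mapsto g(x+se_j)$ einer Funktion $g\in C^2(\R^n)$ auf eine Gerade und auf ein auf die Länge $r$ umskaliertes Intervall an, so folgt nach Division durch $r^2$ und Grenzübergang $r\rightarrow\infty$ die Abschätzung $\norm{\nabla g}{C^0(\R^n)}^2\le M\,\norm{g}{C^0(\R^n)}\,\max_{\abs{\beta}=2}\norm{\partial^\beta g}{C^0(\R^n)}$, also nach Absorption der niedriger-gradigen Terme mittels $\norm{g}{C^0}\le\norm{g}{C^2}$ die Ungleichung $\norm{g}{C^1}\le C\,\norm{g}{C^0}^{1/2}\norm{g}{C^2}^{1/2}$. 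Ganz analog, aber nur mit dem Mittelwertsatz und der Hölder-Bedingung, liefert $\abs{g'(x)}\le\frac{2}{h}\norm{g}{C^0}+h^\vartheta\hnorm{g'}{\vartheta}$ nach Optimierung in $h>0$ die gebrochene Variante $\norm{g'}{C^0}\le C\,\norm{g}{C^0}^{\vartheta/(1+\vartheta)}\hnorm{g'}{\vartheta}^{1/(1+\vartheta)}$ für $g\in C^{1,\vartheta}(\R^n)$.

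Der zweite Baustein betrifft die gebrochenen Ordnungen selbst. Für $\abs{\alpha}=t$ und $0\le\theta<\Theta\le 1$ spalte ich den Differenzenquotienten $\abs{\partial^\alpha f(x)-\partial^\alpha f(y)}\abs{x-y}^{-\theta}$ bei $\abs{x-y}=\rho$: für $\abs{x-y}\ge\rho$ ist er $\le 2\rho^{-\theta}\norm{\partial^\alpha f}{C^0}$, für $\abs{x-y}<\rho$ ist er $\le\rho^{\Theta-\theta}\hnorm{\partial^\alpha f}{\Theta}$ (im Lipschitz-Fall $\Theta=1$ benutzt man $\rho^{1-\theta}\norm{\nabla\partial^\alpha f}{C^0}$). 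Optimierung über $\rho$ gibt $\hnorm{\partial^\alpha f}{\theta}\le C\,\norm{\partial^\alpha f}{C^0}^{1-\theta/\Theta}\hnorm{\partial^\alpha f}{\Theta}^{\theta/\Theta}$, und nach Maximierung über $\abs{\alpha}=t$ sowie Einbeziehen des kleineren Terms $\norm{f}{C^t}$ erhält man die beiden Drei-Punkte-Ungleichungen $\norm{f}{C^{t,\theta}}\le C\,\norm{f}{C^t}^{1-\theta}\norm{f}{C^{t+1}}^{\theta}$ (falls $f\in C^{t+1}$) und, für $\theta\le\vartheta$, $\norm{f}{C^{k,\theta}}\le C\,\norm{f}{C^k}^{1-\theta/\vartheta}\norm{f}{C^{k,\vartheta}}^{\theta/\vartheta}$.

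Zum Zusammensetzen betrachte ich die Leiter der Ordnungen $0<1<2<\ldots<k<k+\vartheta$. Angewandt auf $g=\partial^\beta f$ mit $\abs{\beta}=i-1$ bzw. $\abs{\beta}=k-1$ liefern die beiden Bausteine für jedes innere Niveau eine Drei-Punkte-Ungleichung mit seinen Nachbarn, nämlich $\norm{f}{C^i}^2\le C\,\norm{f}{C^{i-1}}\norm{f}{C^{i+1}}$ für $1\le i\le k-1$ aus der ganzzahligen und $\norm{f}{C^k}^{1+\vartheta}\le C\,\norm{f}{C^{k-1}}^{\vartheta}\norm{f}{C^{k,\vartheta}}$ aus der gebrochenen Landau-Kolmogorov-Ungleichung. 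Da aus diesen konsekutiven Ungleichungen durch Komposition (jeweils unter Aufsammeln einer weiteren Potenz der Konstanten) die Endpunktinterpolation folgt, gilt $\norm{f}{C^j(\R^n)}\le C_j\,\norm{f}{C^0(\R^n)}^{1-j/(k+\vartheta)}\norm{f}{C^{k,\vartheta}(\R^n)}^{j/(k+\vartheta)}$ für alle ganzen $0\le j\le k$. Setzt man $\lambda:=\frac{t+\theta}{k+\vartheta}$, so liegt im Fall $t\le k-1$ die Ordnung $t+\theta$ zwischen den Niveaus $t$ und $t+1$, und die erste Drei-Punkte-Ungleichung des zweiten Bausteins zusammen mit den Fällen $j=t$ und $j=t+1$ ergibt
$$\norm{f}{C^{t,\theta}}\le C\,\norm{f}{C^t}^{1-\theta}\norm{f}{C^{t+1}}^{\theta}\le C'\,\norm{f}{C^0}^{1-\lambda}\norm{f}{C^{k,\vartheta}}^{\lambda},$$
denn $(1-\theta)\frac{t}{k+\vartheta}+\theta\frac{t+1}{k+\vartheta}=\lambda$; im Randfall $t=k$ (wo $t+\theta\le k+\vartheta$ bereits $\theta\le\vartheta$ erzwingt) liefert die zweite Drei-Punkte-Ungleichung zusammen mit dem Fall $j=k$ wieder genau den Exponenten $\lambda$. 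Damit ist die Behauptung für alle zulässigen $t,\theta,k,\vartheta$ gezeigt.

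Die Hauptschwierigkeit erwarte ich nicht in einem einzelnen Argument, sondern in der sauberen Buchführung: zum einen muss im Umskalierungsschritt für Lemma \ref{tech:osziAblAbsch} präzise begründet werden, dass der Störterm $r^{-2}\norm{g}{C^0}$ im Grenzwert verschwindet und der Punkt des (nahezu) maximalen $\abs{\nabla g}$ im wachsenden Intervall verbleibt; zum anderen ist beim Verketten der Exponenten durchgehend zu kontrollieren, dass alle verwendeten Zwischenordnungen wirklich $\le k+\vartheta$ sind, damit sie innerhalb der Skala von $C^0(\R^n)$ bis $C^{k,\vartheta}(\R^n)$ interpoliert werden dürfen --- genau das erzwingt die Fallunterscheidung $t\le k-1$ gegen $t=k$ und, im zweiten Fall, $\theta\le\vartheta$.
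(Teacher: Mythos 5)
Ihr Beweis ist korrekt, und sein Aufbau entspricht im Wesentlichen der Strategie der Arbeit: die Kette konsekutiver Drei-Punkte-Ungleichungen entlang der ganzzahligen Stufen, der gebrochene Landau-Schritt von $C^{k-1}$ über $C^k$ nach $C^{k,\vartheta}$, die Interpolation der Hölder-Halbnorm zwischen benachbarten Exponenten und die abschließende Fallunterscheidung $t\le k-1$ gegen $t=k$ (wo $t+\theta\le k+\vartheta$ gerade $\theta\le\vartheta$ erzwingt) finden sich in den Lemmata \ref{interpol:theta<0*1} bis \ref{interpol:t,theta<0*t,vartheta} und im dortigen Schlussbeweis wieder. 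Zwei Bausteine gewinnen Sie allerdings auf genuin anderem Weg: die ganzzahlige Ungleichung $\norm{f}{C^{i}(\R^n)}^2\le C\,\norm{f}{C^{i-1}(\R^n)}\,\norm{f}{C^{i+1}(\R^n)}$ erhalten Sie durch Reskalierung von Lemma \ref{tech:osziAblAbsch} auf Intervalle der Länge $r$ und den Grenzübergang $r\to\infty$, während die Arbeit die entsprechende Folgerung \ref{interpol:t<t-1*t+1} aus den gebrochenen Ungleichungen \ref{interpol:t<t-1*t,theta} und \ref{interpol:t,theta<t*t+1} zusammensetzt und dabei die Gleichmäßigkeit in $\theta$ diskutieren muss --- Ihre Herleitung ist hier direkter und vermeidet diesen Umweg; und die Halbnorm-Interpolation führen Sie durch Aufspalten des Differenzenquotienten bei $\abs{x-y}=\rho$ und Optimieren über $\rho$, während die Arbeit den Quotienten als Produkt von Potenzen schreibt (Lemma \ref{interpol:theta<0*1}, Lemma \ref{interpol:t,theta<0*t,vartheta}); beides ist gleichwertig. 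Der einzige Punkt, den Sie stärker komprimieren als die Arbeit, ist der Übergang von den konsekutiven Ungleichungen zur Endpunktinterpolation $\norm{f}{C^{j}(\R^n)}\le C_j\,\norm{f}{C^{0}(\R^n)}^{1-j/(k+\vartheta)}\norm{f}{C^{k,\vartheta}(\R^n)}^{j/(k+\vartheta)}$ für ganzzahlige $j$; das ist genau die Induktionskette der Lemmata \ref{interpol:t<0*t+1}, \ref{interpol:t<0*k} und \ref{interpol:k<0*k,theta}, also eine Routine-Induktion und keine Lücke. Beachten Sie lediglich den degenerierten Fall $t=k$, $\vartheta=0$ (also zwangsläufig $\theta=0$), in dem Ihre zweite Drei-Punkte-Ungleichung formal durch $\vartheta$ teilen würde; dort ist die Behauptung trivial, wie auch die Fallunterscheidung im Schlussbeweis der Arbeit zeigt.
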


Um das Theorem zu beweisen, werden wir zunächst einige Spezialfälle beweisen, um damit auf den allgemeinen Fall zu schließen.

\begin{lemma}\label{interpol:theta<0*1}
Für alle $\theta \in (0,1)$ gibt es ein $c_\theta > 0$, sodass für alle $f \in C^0(\R^n) \cap C^1(\R^n)$ die Abschätzung
$$
\norm{f}{C^{0,\theta}(\R^n)} \le c_\theta \norm{f}{C^0(\R^n)}^{1-\theta} \norm{f}{C^1(\R^n)}^\theta
$$
gilt.
\begin{proof}
Wir haben für $f \in C^0(\R^n) \cap C^1(\R^n), \theta \in (0,1)$ und $x,y \in \R^n$ mit $x \neq y$
$$ \frac{ \abs{ f(x) - f(y) }}{\abs{x-y}^\theta} = \left( \frac{ \abs{f(x) - f(y)}}{\abs{x-y}} \right)^\theta \abs{ f(x) - f(y) }^{1-\theta} .$$
Mit den elementaren Abschätzungen $ \frac{ \abs{f(x) - f(y)}}{\abs{x-y}} \le \norm{ \nabla f }{\infty} \le \norm{f}{C^1(\R^n)}$ 
und $\abs{f(x) - f(y) } \le 2 \norm{f}{C^0(\R^n)}$ folgt sofort die Behauptung.
\end{proof}
\end{lemma}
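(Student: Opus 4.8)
The plan is to estimate the two summands of $\norm{f}{C^{0,\theta}(\R^n)} = \norm{f}{C^0(\R^n)} + \hnorm{f}{\theta}$ separately (for $t=0$ the Hölder seminorm $\hnorm{\cdot}{C^{0,\theta}(\R^n)}$ is exactly $\hnorm{\cdot}{\theta}$) and then add the results. The first summand is harmless: a $C^1$-function is in particular bounded by its $C^1$-norm, so $\norm{f}{C^0(\R^n)} \le \norm{f}{C^1(\R^n)}$, and hence $\norm{f}{C^0(\R^n)} = \norm{f}{C^0(\R^n)}^{1-\theta}\,\norm{f}{C^0(\R^n)}^{\theta} \le \norm{f}{C^0(\R^n)}^{1-\theta}\,\norm{f}{C^1(\R^n)}^{\theta}$, which already has the desired shape with constant $1$.

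The core of the argument is the Hölder seminorm. I would fix $x,y \in \R^n$ with $x \neq y$ and split the numerator multiplicatively into a power $\theta$ and a power $1-\theta$, so that
$$\frac{\abs{f(x)-f(y)}}{\abs{x-y}^{\theta}} = \left(\frac{\abs{f(x)-f(y)}}{\abs{x-y}}\right)^{\theta}\abs{f(x)-f(y)}^{1-\theta}.$$
The difference quotient is controlled by the gradient via the fundamental theorem of calculus along the segment $t \mapsto y+t(x-y)$ (the scalar case of the mean value estimate behind Lemma \ref{mittelwertsatz}): $f(x)-f(y) = \int_0^1 \nabla f(y+t(x-y))\cdot(x-y)\,dt$, whence $\abs{f(x)-f(y)} \le \norm{\nabla f}{\infty}\,\abs{x-y}$ and thus, up to the harmless dimensional factor implicit in $\norm{\nabla f}{\infty} \le \norm{f}{C^1(\R^n)}$, the bound $\frac{\abs{f(x)-f(y)}}{\abs{x-y}} \le \norm{f}{C^1(\R^n)}$. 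For the remaining factor I would use the crude estimate $\abs{f(x)-f(y)} \le 2\norm{f}{C^0(\R^n)}$. Substituting both bounds gives
$$\frac{\abs{f(x)-f(y)}}{\abs{x-y}^{\theta}} \le \norm{f}{C^1(\R^n)}^{\theta}\,\bigl(2\norm{f}{C^0(\R^n)}\bigr)^{1-\theta} = 2^{1-\theta}\,\norm{f}{C^0(\R^n)}^{1-\theta}\,\norm{f}{C^1(\R^n)}^{\theta},$$
and taking the supremum over all $x \neq y$ yields the same bound for $\hnorm{f}{\theta}$.

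Adding the two estimates produces $\norm{f}{C^{0,\theta}(\R^n)} \le (1+2^{1-\theta})\,\norm{f}{C^0(\R^n)}^{1-\theta}\,\norm{f}{C^1(\R^n)}^{\theta}$, so any $c_\theta \ge 1+2^{1-\theta}$ (for instance $c_\theta := 3$) does the job. I do not expect a genuine obstacle here; the only points requiring a moment's care are to remember the $\norm{f}{C^0(\R^n)}$-term hidden inside $\norm{\cdot}{C^{0,\theta}(\R^n)}$, and to check that the exponents of $\norm{f}{C^0(\R^n)}$ and $\norm{f}{C^1(\R^n)}$ on the right really come out as $1-\theta$ and $\theta$ from the split $\abs{f(x)-f(y)} = \abs{f(x)-f(y)}^{\theta}\abs{f(x)-f(y)}^{1-\theta}$.
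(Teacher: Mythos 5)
Your proposal is correct and takes essentially the same approach as the paper: the identical multiplicative split of $\frac{\abs{f(x)-f(y)}}{\abs{x-y}^\theta}$ into a difference quotient to the power $\theta$ and $\abs{f(x)-f(y)}^{1-\theta}$, bounded respectively by $\norm{\nabla f}{\infty}\le\norm{f}{C^1(\R^n)}$ and $2\norm{f}{C^0(\R^n)}$. You are a bit more careful than the paper in that you explicitly track the $\norm{f}{C^0(\R^n)}$ summand hidden inside the full $C^{0,\theta}$-norm and in recording the resulting constant, but this is a matter of bookkeeping rather than a different argument.
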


\begin{cor}\label{interpol:t,theta<t*t+1}
Zu jedem $\theta \in (0,1)$, $t \in \N_0$ gibt es eine Konstante $c_\theta > 0$, die für alle $f \in C^{t,\theta}(\R^n)$ die Ungleichung
$$\norm{f}{C^{t,\theta}(\R^n)} \le c_{t\theta} \norm{f}{C^t(\R^n)}^{1-\theta} \norm{f}{C^{t+1}(\R^n)}^{\theta}$$
erfüllt.
\begin{proof}
Nach Lemma \ref{interpol:theta<0*1} ist für alle $\alpha \in \N^n_0$ mit $\abs{\alpha} \le t$
\begin{align*}
\norm{ \partial^\alpha f}{C^0(\R^n)} + \hnorm{\partial^\alpha f}{\theta} 
&= \norm{ \partial^\alpha f}{C^\theta(\R^n)} 
\le c_\theta \norm{ \partial^\alpha f}{C^0(\R^n)}^{1-\theta} \norm{\partial^\alpha f}{C^1(\R^n)}^{\theta} \\
&\le c_{\alpha\theta} \norm{f}{C^t(\R^n)}^{1-\theta} \norm{f}{C^{t+1}(\R^n)}^\theta
.
\end{align*}
Da dies für alle $\alpha$ gilt, hält
$ \norm{f}{C^{t,\theta}(\R^n)} \le c_{t\theta} \norm{f}{C^t(\R^n)}^{1-\theta} \norm{f}{C^{t+1}(\R^n)}^\theta $.
\end{proof}
\end{cor}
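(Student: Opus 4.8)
The plan is to bootstrap the base case $t=0$ --- which is exactly Lemma~\ref{interpol:theta<0*1} --- by applying it separately to every partial derivative $\partial^\alpha f$ of order $|\alpha|\le t$, and then to reassemble the full $C^{t,\theta}$-norm out of the pieces so obtained.

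First I would dispose of the trivial case: if $f\notin C^{t+1}(\R^n)$ then $\norm{f}{C^{t+1}(\R^n)}=+\infty$ and the asserted inequality holds for free, so I may assume $f\in C^{t+1}(\R^n)$. Then for each multiindex $\alpha$ with $|\alpha|\le t$ we have $\partial^\alpha f\in C^0(\R^n)\cap C^1(\R^n)$, which is precisely the hypothesis under which Lemma~\ref{interpol:theta<0*1} applies. It yields a constant $c_\theta>0$, independent of $\alpha$ and of $f$, with
$$ \norm{\partial^\alpha f}{C^{0,\theta}(\R^n)} \le c_\theta \norm{\partial^\alpha f}{C^0(\R^n)}^{1-\theta} \norm{\partial^\alpha f}{C^1(\R^n)}^{\theta}. $$
Now the two elementary monotonicity estimates $\norm{\partial^\alpha f}{C^0(\R^n)}\le\norm{f}{C^t(\R^n)}$ (valid since $|\alpha|\le t$) and $\norm{\partial^\alpha f}{C^1(\R^n)}\le\norm{f}{C^{t+1}(\R^n)}$ (valid since $|\alpha|+1\le t+1$) turn the right-hand side into $c_\theta\norm{f}{C^t(\R^n)}^{1-\theta}\norm{f}{C^{t+1}(\R^n)}^{\theta}$, a bound uniform in $\alpha$. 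In particular $\hnorm{\partial^\alpha f}{\theta}\le\norm{\partial^\alpha f}{C^{0,\theta}(\R^n)}$ is controlled by this same quantity for every $\alpha$ with $|\alpha|=t$.

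It remains to combine this with the $C^t$-part of the norm. By definition $\norm{f}{C^{t,\theta}(\R^n)}=\norm{f}{C^t(\R^n)}+\max_{|\alpha|=t}\hnorm{\partial^\alpha f}{\theta}$; the maximum contributes at most $c_\theta\norm{f}{C^t(\R^n)}^{1-\theta}\norm{f}{C^{t+1}(\R^n)}^{\theta}$ by the previous step, while the term $\norm{f}{C^t(\R^n)}$ is handled by the trivial inclusion $C^{t+1}(\R^n)\hookrightarrow C^t(\R^n)$, which gives $\norm{f}{C^t(\R^n)}=\norm{f}{C^t(\R^n)}^{1-\theta}\norm{f}{C^t(\R^n)}^{\theta}\le\norm{f}{C^t(\R^n)}^{1-\theta}\norm{f}{C^{t+1}(\R^n)}^{\theta}$. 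Adding the two contributions and setting $c_{t\theta}:=1+c_\theta$ gives the claim. The proof is pure bookkeeping; the only points worth a second of attention are the reduction to $f\in C^{t+1}(\R^n)$, so that Lemma~\ref{interpol:theta<0*1} is legitimately applicable to each $\partial^\alpha f$, and the observation that the interpolation constant of that lemma does not depend on $\alpha$, so that passing to the maximum over $|\alpha|=t$ costs nothing.
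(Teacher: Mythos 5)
Your proof is correct and follows essentially the same route as the paper: apply Lemma~\ref{interpol:theta<0*1} to each $\partial^\alpha f$ with $\abs{\alpha}\le t$, pass from $\norm{\partial^\alpha f}{C^0}$ and $\norm{\partial^\alpha f}{C^1}$ to $\norm{f}{C^t}$ and $\norm{f}{C^{t+1}}$ by monotonicity, and take the maximum over $\alpha$. You are a bit more explicit than the paper about the bookkeeping — the trivial reduction to $f\in C^{t+1}(\R^n)$, the uniformity of $c_\theta$ in $\alpha$, and in particular the handling of the $\norm{f}{C^t}$ part via the bare inequality $\norm{f}{C^t}\le\norm{f}{C^t}^{1-\theta}\norm{f}{C^{t+1}}^\theta$, whereas the paper absorbs this into the single line ``Da dies für alle $\alpha$ gilt'' — but there is no new idea or alternative argument.
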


\begin{lemma}\label{interpol:t<t-1*t,theta}
Zu $t \in \N, \theta \in (0,1)$ gibt es eine Konstante $c_{t\theta} > 0$, sodass für alle $f \in C^{t,\theta}(\R^n)$ die Abschätzung
$$\norm{f}{C^t(\R^n)} \le c_{t\theta} \norm{f}{C^{t-1}(\R^n)}^{1-\frac{1}{\theta+1}} \norm{f}{C^{t,\theta}(\R^n)}^{\frac{1}{\theta+1}}$$
erfüllt ist.
\begin{proof}
Setze $\tau := \theta+1$ und $g := \partial^\alpha f \in C^{1,\theta}(\R^n)$ für $\alpha \in \N^n_0$ mit $\abs{\alpha} = t-1$.
Bezeichne die kanonische Basis des $\R^n$ mit $\menge{e_j}_{j\in\N}$.
Setze $r := \left( \frac{ \norm{g}{C^0(\R^n)} }{ \norm{g}{C^{1,\theta}(\R^n)}} \right)^{\frac{1}{\tau}}$.
Sei dazu $\xi := x + \mu y$ mit $\mu \in [0,1], y := r e_j$ für ein $j \in \menge{1,\ldots,n}$.
Der Mittelwertsatz im Mehrdimensionalen liefert nun, dass für ein $\mu \in [0,1]$ die Gleichung
$$g(x+y) = g(x) + \nabla g(\xi)\cdot y = g(x) + \nabla g(x) \cdot y + \left(\nabla g(\xi) - \nabla g(x) \right) \cdot y$$
erfüllt ist.
Damit ist
$$\abs{\nabla g(x)\cdot y} \le \abs{g(x+y)} + \abs{g(x)} + \abs{\nabla g(x) - \nabla g(\xi)} \abs{y} .$$
Da $\nabla g(x) - \nabla g(\xi) \in C^{0,\theta}(\R^n)$ ist
$$\abs{\partial_j g(x)} r 
= \abs{\nabla g(x)\cdot y} 
\le 2 \norm{g}{C^0(\R^n)} + \norm{g}{C^{1,\theta}(\R^n)} \abs{y}^\theta \abs{y} 
= 2 \norm{g}{C^0(\R^n)} + \norm{g}{C^{1,\theta}(\R^n)} r^{\theta+1} .$$
Also
$$\abs{\partial_j g(x)} 
\le 2 \frac{\norm{g}{C^0(\R^n)}}{r} + \norm{g}{C^{1,\theta}(\R^n)} r^\theta 
= 2 \norm{g}{C^0(\R^n)}^{1 - \frac{1}{\tau}} \norm{g}{C^{1,\theta}(\R^n)}^{\frac{1}{\tau}} + \norm{g}{C^0(\R^n)}^{1 - \frac{1}{\tau}} \norm{g}{C^{1,\theta}(\R^n)}^{\frac{1}{\tau}}
.$$
Insgesamt finden wir für alle $\abs{\alpha} \le t-1$ ein $c>0$, sodass
$$
\norm{\partial^\alpha \partial_j f}{C^0(\R^n)} \le c \norm{\partial^\alpha f}{C^0(\R^n)}^{1-\frac{1}{\tau}} \norm{\partial^\alpha f}{C^{1,\theta}(\R^n)}^{\frac{1}{\tau}}
,$$
und mit dem Übergang zum Supremum über alle $\abs{\alpha} = t-1$ erhalten wir
$$
\norm{\partial_j f}{C^{t-1}(\R^n)} \le c_{t\theta} \norm{f}{C^{t-1}(\R^n)}^{1-\frac{1}{\tau}} \norm{f}{C^{t,\theta}(\R^n)}^{\frac{1}{\tau}} 
$$
für alle $j \in \menge{1,\ldots,n}$, also hält auch
$
\norm{f}{C^{t}(\R^n)} \le c_{t\theta} \norm{f}{C^{t-1}(\R^n)}^{1-\frac{1}{\tau}} \norm{f}{C^{t,\theta}(\R^n)}^{\frac{1}{\tau}}
.$
\end{proof}
\end{lemma}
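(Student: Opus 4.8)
The plan is to reduce everything to the first-order, scalar situation and then optimise a free scaling parameter, exactly as in the classical Landau--Kolmogorov interpolation inequalities. First I would fix a multiindex $\alpha \in \N^n_0$ with $\abs{\alpha} = t-1$ and set $g := \partial^\alpha f$. Then $g \in C^{1,\theta}(\R^n)$ with $\norm{g}{C^0(\R^n)} \le \norm{f}{C^{t-1}(\R^n)}$ and $\norm{g}{C^{1,\theta}(\R^n)} \le \norm{f}{C^{t,\theta}(\R^n)}$, so it suffices to bound $\norm{\nabla g}{C^0(\R^n)}$ by $c\,\norm{g}{C^0(\R^n)}^{1-1/\tau}\norm{g}{C^{1,\theta}(\R^n)}^{1/\tau}$ with $\tau := \theta+1$; taking the supremum over all $j \in \{1,\ldots,n\}$ and over all $\abs{\alpha}=t-1$ at the very end recovers $\norm{f}{C^t(\R^n)}$ on the left-hand side and the two stated norms of $f$ on the right.

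The reduction itself is a pointwise argument. Fix $x \in \R^n$, a coordinate direction $e_j$, and a length $r>0$ to be chosen later, and put $y := r e_j$. The multivariate mean value theorem, Lemma \ref{mittelwertsatz}, gives $g(x+y) - g(x) = \Xi_{g}(x,x+y)\,y$ with $\Xi_g(x,x+y) = \int_0^1 \nabla g(x+ty)^{\top}\,dt$, hence
$$
\nabla g(x)\cdot y = \bigl(g(x+y) - g(x)\bigr) - \int_0^1\bigl(\nabla g(x+ty) - \nabla g(x)\bigr)\,dt\cdot y .
$$
Using $\abs{g(x+y)-g(x)} \le 2\norm{g}{C^0(\R^n)}$ together with $\nabla g \in C^{0,\theta}(\R^n)$, which yields $\abs{\nabla g(x+ty)-\nabla g(x)} \le \norm{g}{C^{1,\theta}(\R^n)}(t\abs{y})^{\theta} \le \norm{g}{C^{1,\theta}(\R^n)} r^{\theta}$ for $t\in[0,1]$, we get
$$
r\,\abs{\partial_j g(x)} \le 2\norm{g}{C^0(\R^n)} + \norm{g}{C^{1,\theta}(\R^n)}\, r^{\tau},
$$
and dividing by $r$ gives $\abs{\partial_j g(x)} \le 2\norm{g}{C^0(\R^n)}/r + \norm{g}{C^{1,\theta}(\R^n)}\,r^{\theta}$, a sum of a decreasing and an increasing power of $r$.

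The heart of the matter is the choice $r := \bigl(\norm{g}{C^0(\R^n)}/\norm{g}{C^{1,\theta}(\R^n)}\bigr)^{1/\tau}$, which balances the two summands. Indeed, since $1-\tfrac1\tau = \tfrac\theta\tau$, both terms then equal $\norm{g}{C^0(\R^n)}^{1-1/\tau}\norm{g}{C^{1,\theta}(\R^n)}^{1/\tau}$, so $\abs{\partial_j g(x)} \le 3\,\norm{g}{C^0(\R^n)}^{1-1/\tau}\norm{g}{C^{1,\theta}(\R^n)}^{1/\tau}$ uniformly in $x$ and $j$. Substituting the two bounds $\norm{g}{C^0(\R^n)}\le\norm{f}{C^{t-1}(\R^n)}$, $\norm{g}{C^{1,\theta}(\R^n)}\le\norm{f}{C^{t,\theta}(\R^n)}$ and passing to the suprema described above yields the claim with $c_{t\theta}$ equal to $3$ times the constant from Lemma \ref{mittelwertsatz}.

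I expect the only genuinely delicate points to be bookkeeping rather than a real obstacle: one must dispatch the degenerate cases $\norm{g}{C^{1,\theta}(\R^n)}=0$ (then $g$ is constant, so $\partial_j g \equiv 0$ and the inequality is trivial) and $\norm{g}{C^0(\R^n)}=0$ (then $g\equiv0$) separately, since the definition of $r$ breaks down there; and one has to keep track of the exponent identity $\theta/\tau = 1-1/\tau$ that makes the balancing work, for which the hypothesis $\theta\in(0,1)$ — ensuring $C^{1,\theta}$ carries a genuine Hölder seminorm and $\tau=\theta+1\in(1,2)$ — is used.
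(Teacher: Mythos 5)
Your argument is the same one the paper uses: reduce to $g=\partial^\alpha f$ with $\abs{\alpha}=t-1$, apply the mean value theorem Lemma~\ref{mittelwertsatz} to split $\nabla g(x)\cdot y$ into a difference-of-values term and a Hölder-seminorm term, and then balance the two by choosing $r = (\norm{g}{C^0}/\norm{g}{C^{1,\theta}})^{1/\tau}$. Your version is slightly more careful — you keep the integral form of $\Xi_g$ rather than evaluating at an intermediate point (correct for complex-valued $g$) and you explicitly dispatch the degenerate case $g\equiv 0$ where $r$ is ill-defined — but these are the same ideas, not a different route.
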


\begin{cor}\label{interpol:t<t-1*t+1}
Für $t \in \N$ finden wir eine Konstante $c_t > 0$, die die Abschätzung 
$$\norm{f}{C^t(\R^n)} \le c_t \norm{f}{C^{t-1}(\R^n)}^{\frac{1}{2}} \norm{f}{C^{t+1}(\R^n)}^{\frac{1}{2}} \textrm{ für alle } f \in C^{t+1}(\R^n)$$
erfüllt.
\begin{proof}
Mit Lemma \ref{interpol:t<t-1*t,theta} und Folgerung \ref{interpol:t,theta<t*t+1} erhalten wir für $\theta \in (0,1)$
$$
\norm{f}{C^t(\R^n)} \le 
c_{t\theta} \norm{f}{C^{t-1}(\R^n)}^{1-\frac{1}{1+\theta}} \norm{f}{C^{t,\theta}(\R^n)}^{\frac{1}{1+\theta}} \le
c_{t\theta}' \norm{f}{C^{t-1}(\R^n)}^{1-\frac{1}{1+\theta}} \norm{f}{C^t(\R^n)}^{\frac{1-\theta}{1+\theta}} \norm{f}{C^{t+1}(\R^n)}^{\frac{\theta}{1+\theta}}
,$$
sodass
$\norm{f}{C^t(\R^n)} \le c_{t\theta}' \norm{f}{C^{t-1}(\R^n)}^{\frac{1}{2}} \norm{f}{C^{t+1}(\R^n)}^{\frac{1}{2}}$
gleichmäßig in $\theta \in (0,1)$.
\end{proof}
\end{cor}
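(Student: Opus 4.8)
The plan is to bootstrap the two one-sided interpolation estimates already proved — Lemma \ref{interpol:t<t-1*t,theta}, which trades one derivative against a Hölder seminorm, and Folgerung \ref{interpol:t,theta<t*t+1}, which trades a Hölder seminorm against an extra derivative — and to observe that the Hölder exponent $\theta$ cancels out, leaving the symmetric bound with exponents $\tfrac12$. No new analytic input is needed; the statement is a purely formal consequence of the two preceding lemmas.

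First I would fix an arbitrary $\theta \in (0,1)$; for concreteness one may take $\theta = \tfrac12$, but the argument is the same for any choice. Lemma \ref{interpol:t<t-1*t,theta} furnishes a constant $c_{t\theta} > 0$, independent of $f$, with
$$\norm{f}{C^t(\R^n)} \le c_{t\theta}\, \norm{f}{C^{t-1}(\R^n)}^{1-\frac{1}{1+\theta}}\, \norm{f}{C^{t,\theta}(\R^n)}^{\frac{1}{1+\theta}},$$
and Folgerung \ref{interpol:t,theta<t*t+1} bounds the middle factor, up to a further $f$-independent constant, by $\norm{f}{C^t(\R^n)}^{1-\theta}\norm{f}{C^{t+1}(\R^n)}^{\theta}$. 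Substituting and collecting exponents yields a constant $c_{t\theta}'>0$ (again independent of $f$) with
$$\norm{f}{C^t(\R^n)} \le c_{t\theta}'\, \norm{f}{C^{t-1}(\R^n)}^{\frac{\theta}{1+\theta}}\, \norm{f}{C^t(\R^n)}^{\frac{1-\theta}{1+\theta}}\, \norm{f}{C^{t+1}(\R^n)}^{\frac{\theta}{1+\theta}}.$$

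Next I would absorb the $\norm{f}{C^t(\R^n)}$-factor on the right into the left-hand side. If $f=0$ the asserted estimate is trivial, so assume $f\neq 0$, hence $\norm{f}{C^t(\R^n)}>0$, and divide by $\norm{f}{C^t(\R^n)}^{\frac{1-\theta}{1+\theta}}$. The leftover exponent on the left is $1-\frac{1-\theta}{1+\theta}=\frac{2\theta}{1+\theta}$, so raising both sides to the power $\frac{1+\theta}{2\theta}$ turns the exponents on $\norm{f}{C^{t-1}(\R^n)}$ and $\norm{f}{C^{t+1}(\R^n)}$ into $\frac{\theta}{1+\theta}\cdot\frac{1+\theta}{2\theta}=\frac12$ each. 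Setting $c_t := (c_{t\theta}')^{\frac{1+\theta}{2\theta}}$ then gives the claimed inequality, uniformly in $f \in C^{t+1}(\R^n)$.

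There is essentially no obstacle here: the only things to watch are (i) tracking the exponents carefully enough to see the cancellation that produces the symmetric $\tfrac12,\tfrac12$ split, and (ii) isolating the degenerate case $f=0$ before dividing. It is worth remarking explicitly that, since $c_{t\theta}'$ does not depend on $f$, neither does $c_t$; the choice of $\theta$ affects only the numerical value of the constant, not its existence.
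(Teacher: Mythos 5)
Your proof is correct and follows exactly the paper's route: combine Lemma \ref{interpol:t<t-1*t,theta} with Folgerung \ref{interpol:t,theta<t*t+1} and absorb the $\norm{f}{C^t(\R^n)}$-factor, which the exponent bookkeeping turns into the symmetric $\tfrac12,\tfrac12$ bound. Your explicit treatment of the absorption step (dividing after excluding $f=0$ and raising to the power $\tfrac{1+\theta}{2\theta}$) is in fact slightly more careful than the paper's terse version, which leaves that step implicit.
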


\begin{lemma}\label{interpol:t<0*t+1}
Zu jedem $t \in \N$ gibt es eine Konstante $c_t > 0$, 
sodass für jedes $f \in C^t(\R^n)$ die Ungleichung
$
\norm{f}{C^t(\R^n)} \le c_t \norm{f}{C^0(\R^n)}^{\frac{1}{t+1}} \norm{f}{C^{t+1}(\R^n)}^{\frac{t}{t+1}}
$
gilt.
\begin{proof}
Die Behauptung lässt sich durch eine Induktion über $t \in \N$ beweisen.
Dafür wurde in Folgerung \ref{interpol:t<t-1*t+1} bereits der Fall für $t=1$ gezeigt.
Wir nehmen desweiteren an, dass die Behauptung $\norm{f}{C^t(\R^n)} \le c_t \norm{f}{C^0(\R^n)}^{\frac{1}{t+1}} \norm{f}{C^{t+1}(\R^n)}^{\frac{t}{t+1}}$ für ein $t \in \N$ bereits gilt.
Wir werden nun die Behauptung für $t \mapsto t+1$ mit Folgerung \ref{interpol:t<t-1*t+1} und der Induktionsannahme zeigen:
$$ \norm{ f }{C^{t+1}(\R^n)} 
\le c_t \norm{f}{C^t(\R^n)}^{\frac{1}{2}} \norm{f}{C^{t+2}(\R^n)}^{\frac{1}{2}} 
\le c_t \norm{f}{C^0(\R^n)}^{\frac{1}{2(t+1)}} \norm{f}{C^{t+1}(\R^n)}^{\frac{1}{2(t+1)}} \norm{f}{C^{t+2}(\R^n)}^{\frac{1}{2}} 
.$$
Somit ist
$ \norm{f}{C^{t+1}(\R^n)} \le c_t \norm{f}{C^0(\R^n)}^{\frac{1}{t+2}} \norm{f}{C^{t+2}(\R^n)}^{\frac{t+1}{t+2}} $.
\end{proof}
\end{lemma}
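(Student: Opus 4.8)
The plan is to prove the estimate by induction on $t \in \N$, with Folgerung \ref{interpol:t<t-1*t+1} as the engine and an absorption argument closing the induction step. Throughout one may assume $f \in C^{t+1}(\R^n)$, since otherwise the right-hand side is infinite and the inequality is trivially true.

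For the base case $t = 1$ the claim reads $\norm{f}{C^1(\R^n)} \le c_1 \norm{f}{C^0(\R^n)}^{1/2} \norm{f}{C^2(\R^n)}^{1/2}$, which is exactly Folgerung \ref{interpol:t<t-1*t+1} specialised to the index $t = 1$; nothing further is needed there.

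For the induction step, assume the estimate holds for some $t \in \N$, i.e. $\norm{f}{C^t(\R^n)} \le c_t \norm{f}{C^0(\R^n)}^{1/(t+1)} \norm{f}{C^{t+1}(\R^n)}^{t/(t+1)}$ for all $f \in C^t(\R^n)$. I first apply Folgerung \ref{interpol:t<t-1*t+1} at the index $t+1$, getting $\norm{f}{C^{t+1}(\R^n)} \le c \norm{f}{C^t(\R^n)}^{1/2} \norm{f}{C^{t+2}(\R^n)}^{1/2}$ for $f \in C^{t+2}(\R^n)$ — and I may restrict to $f \in C^{t+2}(\R^n)$ because for $f \notin C^{t+2}(\R^n)$ the target inequality is again trivially true. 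Substituting the induction hypothesis for $\norm{f}{C^t(\R^n)}$ yields
$$\norm{f}{C^{t+1}(\R^n)} \le c' \norm{f}{C^0(\R^n)}^{1/(2(t+1))}\, \norm{f}{C^{t+1}(\R^n)}^{t/(2(t+1))}\, \norm{f}{C^{t+2}(\R^n)}^{1/2}.$$

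The key step, and the only place where care is needed, is to absorb the factor $\norm{f}{C^{t+1}(\R^n)}^{t/(2(t+1))}$ on the right into the left. Since $f \in C^{t+2}(\R^n) \subset C^{t+1}(\R^n)$, the norm $\norm{f}{C^{t+1}(\R^n)}$ is finite; if it vanishes then $f = 0$ and the claim is trivial, so I may assume it is positive and divide. Using $1 - \tfrac{t}{2(t+1)} = \tfrac{t+2}{2(t+1)}$ this gives $\norm{f}{C^{t+1}(\R^n)}^{(t+2)/(2(t+1))} \le c' \norm{f}{C^0(\R^n)}^{1/(2(t+1))} \norm{f}{C^{t+2}(\R^n)}^{1/2}$, and raising both sides to the power $\tfrac{2(t+1)}{t+2}$, together with the elementary exponent bookkeeping $\tfrac{1}{2(t+1)}\cdot\tfrac{2(t+1)}{t+2} = \tfrac{1}{t+2}$ and $\tfrac{1}{2}\cdot\tfrac{2(t+1)}{t+2} = \tfrac{t+1}{t+2}$, produces $\norm{f}{C^{t+1}(\R^n)} \le c_{t+1} \norm{f}{C^0(\R^n)}^{1/(t+2)} \norm{f}{C^{t+2}(\R^n)}^{(t+1)/(t+2)}$, which is the claim with $t$ replaced by $t+1$. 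This closes the induction. I do not expect a genuine obstacle; the only subtlety is making the absorption rigorous, which is handled by the finiteness of $\norm{f}{C^{t+1}(\R^n)}$ on the nontrivial range $f \in C^{t+2}(\R^n)$.
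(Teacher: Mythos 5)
Your proof follows the same route as the paper's: base case from Folgerung \ref{interpol:t<t-1*t+1}, then in the inductive step apply that corollary at index $t+1$, substitute the induction hypothesis, and absorb the surviving $\norm{f}{C^{t+1}(\R^n)}$-factor into the left side. You are a bit more careful than the printed proof in spelling out the absorption (finiteness of $\norm{f}{C^{t+1}(\R^n)}$, the $f=0$ case) and in getting the intermediate exponent on $\norm{f}{C^{t+1}(\R^n)}$ right — the paper writes $\frac{1}{2(t+1)}$ where it should be $\frac{t}{2(t+1)}$, as you have — but the argument is the same.
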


\begin{lemma}\label{interpol:t<0*k}
Für $k, t \in \N$ mit $t \le k$ ist
$ \norm{ f }{C^t(\R^n)} \le c_{t,k} \norm{f}{C^0(\R^n)}^{1-\frac{t}{k}} \norm{f}{C^k(\R^n)}^{\frac{t}{k}}$ für alle $f \in C^k(\R^n)$.
\begin{proof}
Wir werden die Behauptung mit einer Induktion über $t \le k \in \N$ beweisen.
Für $t = k$ folgt die Behauptung trivialerweise.
Wir nehmen an, dass die Behauptung für ein $t$ bereits gelte. Für $t \mapsto t-1$ folgt dann mit Lemma \ref{interpol:t<0*t+1}
\begin{align*}
\norm{f}{C^{t-1}(\R^n)} 
&\le c_{t-1,k} \norm{f}{C^0(\R^n)}^{\frac{1}{t}} \norm{f}{C^t(\R^n)}^{\frac{t-1}{t}} 
\le c_{t,k} \norm{f}{C^0(\R^n)}^{\frac{1}{t} + (1-\frac{t}{k})(\frac{t-1}{t})} \norm{f}{C^k(\R^n)}^{\frac{t}{k}(\frac{t-1}{t})} \\
&\le c_{t,k} \norm{f}{C^0(\R^n)}^{1 - \frac{t-1}{k}} \norm{f}{C^k(\R^n)}^{\frac{t-1}{k}}
.\end{align*}
\end{proof}
\end{lemma}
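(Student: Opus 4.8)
The plan is to run a \emph{downward} induction on $t$, keeping $k$ fixed, starting from $t=k$ and descending to $t=0$. The case $t=k$ is immediate with $c_{k,k}=1$, and the case $t=0$ is trivial (the right-hand side is then just $\norm{f}{C^0(\R^n)}$); so the whole content is the passage from an index $t$ with $1 \le t \le k$ to the index $t-1$.

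For the inductive step I would first invoke Lemma \ref{interpol:t<0*t+1} with the parameter $t-1 \in \N$ in place of its $t$, which yields a constant $c_{t-1}>0$ such that
$$\norm{f}{C^{t-1}(\R^n)} \le c_{t-1}\,\norm{f}{C^0(\R^n)}^{\frac{1}{t}}\,\norm{f}{C^t(\R^n)}^{\frac{t-1}{t}}$$
for all $f \in C^t(\R^n)$; since $f \in C^k(\R^n)$ and $t \le k$, the required regularity is present. Into the factor $\norm{f}{C^t(\R^n)}^{(t-1)/t}$ I would then substitute the induction hypothesis $\norm{f}{C^t(\R^n)} \le c_{t,k}\,\norm{f}{C^0(\R^n)}^{1-t/k}\,\norm{f}{C^k(\R^n)}^{t/k}$, which produces a bound of the shape $c_{t-1}c_{t,k}^{(t-1)/t}\,\norm{f}{C^0(\R^n)}^{A}\,\norm{f}{C^k(\R^n)}^{B}$.

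What remains is the exponent bookkeeping: $A = \tfrac1t + \bigl(1-\tfrac tk\bigr)\tfrac{t-1}{t} = \tfrac tt - \tfrac{t-1}{k} = 1 - \tfrac{t-1}{k}$ and $B = \tfrac tk\cdot\tfrac{t-1}{t} = \tfrac{t-1}{k}$, so indeed $A+B=1$ and the desired inequality for the index $t-1$ holds with $c_{t-1,k} := c_{t-1}\,c_{t,k}^{(t-1)/t} < \infty$. This closes the induction and hence proves the statement for every $t$ with $0 \le t \le k$.

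I do not expect a serious obstacle: the argument is a routine descending induction built on Lemma \ref{interpol:t<0*t+1}. The only points that require a little care are to make sure the induction is organized in the direction of \emph{decreasing} $t$ (so that the elementary two-step estimate can be applied to the lower index), and to track the exponents so that they always sum to one; everything else is immediate.
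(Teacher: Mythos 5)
Your proposal is correct and is essentially the same argument as the paper's: a descending induction in $t$ with base case $t=k$, where the step from $t$ to $t-1$ applies Lemma \ref{interpol:t<0*t+1} (with parameter $t-1$) and then inserts the induction hypothesis into the factor $\norm{f}{C^t(\R^n)}^{(t-1)/t}$, with the same exponent bookkeeping $1-\frac{t-1}{k}$ and $\frac{t-1}{k}$. No gaps.
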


\begin{cor}\label{interpol:t,theta<0*k}
Für $k, t \in \N$ mit $t < k$ und $\theta \in (0,1)$ finden wir eine Konstante $c_{t,k,\theta} > 0$, sodass
$$ \norm{f}{C^{t,\theta}(\R^n)} \le c_{t,k,\theta} \norm{f}{C^0(\R^n)}^{1-\frac{t+\theta}{k}} \norm{f}{C^k(\R^n)}^{\frac{t+\theta}{k}} \textrm{ für alle } f \in C^k(\R^n) .$$
\begin{proof}
Mit Folgerung \ref{interpol:t,theta<t*t+1} und Lemma \ref{interpol:t<0*k} erhalten wir
\begin{align*}
\norm{f}{C^{t,\theta}(\R^n)} 
&\le c_{t,\theta} \norm{f}{C^t(\R^n)}^{1-\theta} \norm{f}{C^{t+1}(\R^n)}^{\theta}
\le c_{t,k,\theta} \norm{f}{C^0(\R^n)}^{(1-\theta)(1-\frac{t}{k})} \norm{f}{C^k(\R^n)}^{(1-\theta)\frac{t}{k}} \norm{ f }{C^0(\R^n)}^{\theta(1-\frac{t+1}{k})} \norm{f}{C^k(\R^n)}^{\theta(\frac{t+1}{k})} \\
&\le c_{t,k,\theta} \norm{f}{C^0(\R^n)}^{1-\frac{t+\theta}{k}} \norm{f}{C^k(\R^n)}^{\frac{t+\theta}{k}}
.\end{align*}
\end{proof}
\end{cor}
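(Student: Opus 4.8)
The plan is to derive the estimate purely by composing two interpolation inequalities already established. Folgerung~\ref{interpol:t,theta<t*t+1} interpolates the Hölder norm between two consecutive integer-order norms,
$$\norm{f}{C^{t,\theta}(\R^n)} \le c_{t\theta}\,\norm{f}{C^t(\R^n)}^{1-\theta}\,\norm{f}{C^{t+1}(\R^n)}^{\theta},$$
and Lemma~\ref{interpol:t<0*k} interpolates an integer-order norm between $C^0$ and $C^k$ whenever the intermediate order does not exceed $k$. For $f \in C^k(\R^n)$ all norms appearing on the right below are finite, so the inequalities may be chained.

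First I would note that $t$ and $k$ are integers with $t < k$, hence $t \le k$ and $t+1 \le k$. Applying Lemma~\ref{interpol:t<0*k} separately to the two factors on the right of the displayed inequality yields
$$\norm{f}{C^t(\R^n)} \le c_{t,k}\,\norm{f}{C^0(\R^n)}^{1-\frac{t}{k}}\,\norm{f}{C^k(\R^n)}^{\frac{t}{k}}, \qquad \norm{f}{C^{t+1}(\R^n)} \le c_{t+1,k}\,\norm{f}{C^0(\R^n)}^{1-\frac{t+1}{k}}\,\norm{f}{C^k(\R^n)}^{\frac{t+1}{k}},$$
the second being trivial in the boundary case $t+1=k$. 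Substituting both into Folgerung~\ref{interpol:t,theta<t*t+1} and collecting powers, the exponent of $\norm{f}{C^0(\R^n)}$ equals $(1-\theta)\bigl(1-\tfrac{t}{k}\bigr) + \theta\bigl(1-\tfrac{t+1}{k}\bigr) = 1 - \tfrac{t+\theta}{k}$ and the exponent of $\norm{f}{C^k(\R^n)}$ equals $(1-\theta)\tfrac{t}{k} + \theta\tfrac{t+1}{k} = \tfrac{t+\theta}{k}$; these sum to $1$, so the inequality is a genuine interpolation estimate. Absorbing $c_{t\theta}$, $c_{t,k}^{1-\theta}$ and $c_{t+1,k}^{\theta}$ into a single constant $c_{t,k,\theta}$ completes the argument.

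I do not expect a genuine obstacle: the proof is a formal concatenation of two previously proven inequalities. The only points that need a moment's care are the integrality remark securing $t+1 \le k$ (so Lemma~\ref{interpol:t<0*k} may be invoked at order $t+1$) and the routine bookkeeping verifying that the two geometric means compose exactly into the one asserted.
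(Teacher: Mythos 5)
Your proof is correct and follows essentially the same route as the paper: chain Folgerung~\ref{interpol:t,theta<t*t+1} with two applications of Lemma~\ref{interpol:t<0*k} (to $C^t$ and $C^{t+1}$) and collect the exponents. Your explicit remark that $t+1 \le k$ is what makes the second application of the lemma legitimate is a worthwhile clarification the paper leaves implicit.
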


\begin{lemma}\label{interpol:k<0*k,theta}
Für $k \in \N, \vartheta \in (0,1)$ finden wir eine Konstante $c_{k,\vartheta} > 0$, sodass
$$ \norm{f}{C^k(\R^n)} \le c_{k,\vartheta} \norm{f}{C^0(\R^n)}^{1-\frac{k}{k+\vartheta}} \norm{f}{C^{k,\vartheta}(\R^n)}^{\frac{k}{k+\vartheta}} \textrm{ für alle } f \in C^{k,\vartheta}(\R^n) .$$
\begin{proof}
Für $k = 1$ folgt die Behauptung aus Lemma \ref{interpol:t<t-1*t,theta}.
Der Induktionsschritt $k \mapsto k+1$ folgt aus Lemma \ref{interpol:t<t-1*t,theta} und Lemma \ref{interpol:t<0*t+1} mit
\begin{align*}
\norm{f}{C^{k+1}(\R^n)} 
&\le c_{k+1,\vartheta} \norm{f}{C^k(\R^n)}^{1-\frac{1}{\vartheta+1}} \norm{f}{C^{k+1,\vartheta}(\R^n)}^{\frac{1}{\vartheta+1}} \\
&\le c_{k+1,\vartheta} \norm{f}{C^0(\R^n)}^{(1-\frac{1}{\vartheta-1})(1-\frac{k}{k+1})} \norm{f}{C^{k+1}(\R^n)}^{(1-\frac{1}{\vartheta+1})\frac{k}{k+1}} \norm{f}{C^{k+1,\vartheta}(\R^n)}^{\frac{1}{\vartheta+1}}
.\end{align*}
Also ist
$ \norm{f}{C^{k+1}(\R^n)} \le c_{k\vartheta} \norm{f}{C^0(\R^n)}^{1-\frac{k+1}{\vartheta+k+1}} \norm{f}{C^{k+1,\vartheta}(\R^n)}^{\frac{k+1}{\vartheta+k+1}} $.
\end{proof}
\end{lemma}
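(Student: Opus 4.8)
The plan is to run an induction on $k$, reducing both the base case and the inductive step to the one-step interpolation inequalities already established in Lemma \ref{interpol:t<t-1*t,theta} and Lemma \ref{interpol:t<0*t+1}. We may assume $f \neq 0$, since for $f = 0$ both sides of the claimed estimate vanish; moreover $f \in C^{k,\vartheta}(\R^n)$ embeds into $C^k(\R^n)$, so $\norm{f}{C^k(\R^n)} < \infty$, which is what makes the absorption step below legitimate.

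\textbf{Base case $k = 1$.} Apply Lemma \ref{interpol:t<t-1*t,theta} with $t = 1$ and Hölder exponent $\vartheta$. Since $C^{0,\vartheta}(\R^n)$-data is not involved ($t-1 = 0$), this gives directly
$$\norm{f}{C^1(\R^n)} \le c_{1,\vartheta}\, \norm{f}{C^0(\R^n)}^{1-\frac{1}{\vartheta+1}}\, \norm{f}{C^{1,\vartheta}(\R^n)}^{\frac{1}{\vartheta+1}},$$
and because $1 - \tfrac{1}{\vartheta+1} = \tfrac{\vartheta}{1+\vartheta} = 1 - \tfrac{1}{1+\vartheta}$ and $\tfrac{1}{\vartheta+1} = \tfrac{1}{1+\vartheta}$, this is exactly the asserted inequality for $k = 1$.

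\textbf{Inductive step $k \mapsto k+1$.} Let $f \in C^{k+1,\vartheta}(\R^n)$. First apply Lemma \ref{interpol:t<t-1*t,theta} with $t = k+1$ and Hölder exponent $\vartheta$:
$$\norm{f}{C^{k+1}(\R^n)} \le c\, \norm{f}{C^k(\R^n)}^{\,a}\, \norm{f}{C^{k+1,\vartheta}(\R^n)}^{\frac{1}{\vartheta+1}}, \qquad a := 1 - \tfrac{1}{\vartheta+1} = \tfrac{\vartheta}{\vartheta+1} \in (0,1).$$
Next estimate the first factor with Lemma \ref{interpol:t<0*t+1} applied to $t = k$, i.e.\ $\norm{f}{C^k(\R^n)} \le c_k\, \norm{f}{C^0(\R^n)}^{\frac{1}{k+1}}\, \norm{f}{C^{k+1}(\R^n)}^{\frac{k}{k+1}}$, obtaining
$$\norm{f}{C^{k+1}(\R^n)} \le c\, \norm{f}{C^0(\R^n)}^{\frac{a}{k+1}}\, \norm{f}{C^{k+1}(\R^n)}^{\frac{ak}{k+1}}\, \norm{f}{C^{k+1,\vartheta}(\R^n)}^{\frac{1}{\vartheta+1}}.$$
Since $\tfrac{ak}{k+1} < 1$ and $\norm{f}{C^{k+1}(\R^n)} < \infty$ (as $f \in C^{k+1,\vartheta}(\R^n) \hookrightarrow C^{k+1}(\R^n)$), divide by $\norm{f}{C^{k+1}(\R^n)}^{\frac{ak}{k+1}}$ and raise to the power $\bigl(1 - \tfrac{ak}{k+1}\bigr)^{-1}$. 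A short computation gives $1 - \tfrac{ak}{k+1} = \tfrac{k+1-ak}{k+1} = \tfrac{k+1+\vartheta}{(k+1)(\vartheta+1)}$, hence $\bigl(1 - \tfrac{ak}{k+1}\bigr)^{-1} = \tfrac{(k+1)(\vartheta+1)}{k+1+\vartheta}$; multiplying the two exponents by this factor yields $\tfrac{a}{k+1}\cdot\tfrac{(k+1)(\vartheta+1)}{k+1+\vartheta} = \tfrac{\vartheta}{k+1+\vartheta} = 1 - \tfrac{k+1}{k+1+\vartheta}$ on $\norm{f}{C^0(\R^n)}$ and $\tfrac{1}{\vartheta+1}\cdot\tfrac{(k+1)(\vartheta+1)}{k+1+\vartheta} = \tfrac{k+1}{k+1+\vartheta}$ on $\norm{f}{C^{k+1,\vartheta}(\R^n)}$, which is precisely the claim for $k+1$.

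\textbf{Main obstacle.} There is no deep difficulty: the only care needed is the bookkeeping of the exponents and the justification of the absorption step, and the latter is automatic here because the auxiliary inequalities invoked already hold for all admissible $f$ and the finiteness of $\norm{f}{C^{k+1}(\R^n)}$ is guaranteed by the hypothesis. (The induction hypothesis is in fact not strictly needed in the step: one could instead argue directly by combining Lemma \ref{interpol:t<t-1*t,theta} with $t = k$ and Lemma \ref{interpol:t<0*k} with $t = k-1$, and the same absorption; organising the argument as an induction just keeps every application of the auxiliary lemmas in its simplest $t \mapsto t \pm 1$ shape.)
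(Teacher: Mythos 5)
Your proof is correct and follows essentially the same route as the paper's: the base case is Lemma \ref{interpol:t<t-1*t,theta} with $t=1$, and the inductive step combines Lemma \ref{interpol:t<t-1*t,theta} (at $t=k+1$) with Lemma \ref{interpol:t<0*t+1} (at $t=k$) and absorbs the resulting $\norm{f}{C^{k+1}(\R^n)}$ factor. You make the absorption and its exponent bookkeeping explicit and correctly justify the division by noting $\norm{f}{C^{k+1}(\R^n)}<\infty$, which the paper leaves implicit; your remark that the inductive hypothesis is in fact never invoked in the step is also accurate and applies equally to the paper's own proof.
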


\begin{cor}\label{interpol:t,theta<0*k,vartheta}
Für $k, t \in \N$ und $\theta, \vartheta \in (0,1)$ mit $t< k$ gibt es eine Konstante $c_{t+\theta,k+\vartheta} > 0$ mit
$$ \norm{f}{C^{t,\theta}(\R^n)} \le c_{t+\theta,k+\vartheta} \norm{f}{C^0(\R^n)}^{1-\frac{t+\theta}{k+\vartheta}} \norm{f}{C^{k,\vartheta}(\R^n)}^{\frac{t+\theta}{k+\vartheta}} .$$
\begin{proof}
Mit Lemma \ref{interpol:t,theta<0*k} und Lemma \ref{interpol:k<0*k,theta} folgt
\begin{align*}
\norm{f}{C^{t,\theta}(\R^n)} 
&\le c_{t+\theta,k} \norm{f}{C^0(\R^n)}^{1-\frac{t+\theta}{k}} \norm{f}{C^k(\R^n)}^{\frac{t+\theta}{k}} 
\le c_{t+\theta,k+\vartheta} \norm{f}{C^0(\R^n)}^{1-\frac{t+\theta}{k} + \frac{t+\theta}{k}(1-\frac{k}{k+\vartheta})} \norm{f}{C^{k,\vartheta}(\R^n)}^{\frac{k}{k+\vartheta} \frac{t+\theta}{k}} \\
&= c_{t+\theta,k+\vartheta} \norm{f}{C^0(\R^n)}^{1-\frac{t+\theta}{k+\vartheta}} \norm{f}{C^{k,\vartheta}(\R^n)}^{\frac{t+\theta}{k+\vartheta}}
.\end{align*}
\end{proof}
\end{cor}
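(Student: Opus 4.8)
The plan is to chain together the two interpolation estimates already in hand, namely Lemma \ref{interpol:t,theta<0*k} (which interpolates $C^{t,\theta}$ between $C^0$ and the integer-order space $C^k$) and Lemma \ref{interpol:k<0*k,theta} (which interpolates $C^k$ between $C^0$ and $C^{k,\vartheta}$). First I would invoke Lemma \ref{interpol:t,theta<0*k} — applicable since $t<k$ and $\theta\in(0,1)$ — to obtain
$$\norm{f}{C^{t,\theta}(\R^n)} \le c_{t+\theta,k}\norm{f}{C^0(\R^n)}^{1-\frac{t+\theta}{k}}\norm{f}{C^k(\R^n)}^{\frac{t+\theta}{k}}.$$

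Next I would substitute the bound of Lemma \ref{interpol:k<0*k,theta} for the factor $\norm{f}{C^k(\R^n)}$, raising that estimate to the power $\tfrac{t+\theta}{k}$, which gives
$$\norm{f}{C^{t,\theta}(\R^n)} \le c_{t+\theta,k+\vartheta}\norm{f}{C^0(\R^n)}^{1-\frac{t+\theta}{k}+\frac{t+\theta}{k}\left(1-\frac{k}{k+\vartheta}\right)}\norm{f}{C^{k,\vartheta}(\R^n)}^{\frac{t+\theta}{k}\cdot\frac{k}{k+\vartheta}}.$$
The remaining work is pure exponent bookkeeping: the exponent on $\norm{f}{C^{k,\vartheta}(\R^n)}$ simplifies to $\tfrac{t+\theta}{k+\vartheta}$, while on $\norm{f}{C^0(\R^n)}$ the two middle terms $-\tfrac{t+\theta}{k}$ and $+\tfrac{t+\theta}{k}$ cancel and what is left is $1-\tfrac{t+\theta}{k}\cdot\tfrac{k}{k+\vartheta}=1-\tfrac{t+\theta}{k+\vartheta}$. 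Absorbing the two constants into a single $c_{t+\theta,k+\vartheta}$ concludes.

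There is essentially no genuine obstacle here — the statement is a one-line composition of two results proved just above, so the "hard part" is merely confirming hypotheses match. The points to verify are that $f\in C^{k,\vartheta}(\R^n)$ guarantees membership in every space appearing (in particular in $C^k(\R^n)$ and $C^0(\R^n)$, via the embeddings established earlier), so both lemmas apply to the same $f$; and that all occurring exponents lie in $[0,1]$, which follows from $0\le t+\theta<k<k+\vartheta$. This corollary, together with Folgerung \ref{interpol:t,theta<t*t+1} and the earlier special cases, then supplies the remaining ingredient needed to assemble the full statement of Theorem \ref{interpol:cor}.
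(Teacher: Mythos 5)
Your proposal is correct and follows exactly the paper's own proof: chain Lemma \ref{interpol:t,theta<0*k} with Lemma \ref{interpol:k<0*k,theta} applied to the $C^k$ factor, then simplify exponents. The bookkeeping is accurate and the hypotheses for both lemmas are satisfied as you note.
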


\begin{cor}\label{interpol:t<0*k,vartheta}
Für $k,t \in \N, \vartheta \in (0,1)$ finden wir eine Konstante $c_{t,k+\vartheta} > 0$, sodass
$$ \norm{f}{C^t(\R^n)} \le c_{t,k+\vartheta} \norm{f}{C^0(\R^n)}^{1-\frac{t}{k+\vartheta}} \norm{f}{C^{k,\vartheta}(\R^n)}^{\frac{t}{k+\vartheta}} \textrm{ für alle } f \in C^{k,\vartheta}(\R^n).$$
\begin{proof}
Folgt direkt mit Lemma \ref{interpol:t<0*k} und Lemma \ref{interpol:k<0*k,theta}
\begin{align*}
\norm{f}{C^t(\R^n)} 
&\le c_{t,k} \norm{f}{C^0(\R^n)}^{1-\frac{t}{k}} \norm{f}{C^k(\R^n)}^{\frac{t}{k}} 
\le c_{t,k+\vartheta} \norm{f}{C^0(\R^n)}^{1-\frac{t}{k} + (1-\frac{k}{k+\vartheta})\frac{t}{k}} \norm{f}{C^{k,\vartheta}(\R^n)}^{\frac{k}{k+\vartheta}\frac{t}{k}} \\
&=   c_{t,k+\vartheta} \norm{f}{C^0(\R^n)}^{1-\frac{t}{k+\vartheta}} \norm{f}{C^{k,\vartheta}(\R^n)}^{\frac{t}{k+\vartheta}}
.\end{align*}
\end{proof}
\end{cor}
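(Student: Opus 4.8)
Der Plan ist, die Abschätzung durch Verkettung der beiden bereits etablierten Interpolationsungleichungen zu gewinnen, ohne neue Analysis betreiben zu müssen. Zuerst würde ich Lemma~\ref{interpol:t<0*k} heranziehen --- das ist wegen $t \le k$ anwendbar ---, um
$$ \norm{f}{C^t(\R^n)} \le c_{t,k}\, \norm{f}{C^0(\R^n)}^{1-\frac{t}{k}} \norm{f}{C^k(\R^n)}^{\frac{t}{k}} $$
zu erhalten, also $\norm{f}{C^t(\R^n)}$ zwischen $\norm{f}{C^0(\R^n)}$ und $\norm{f}{C^k(\R^n)}$ zu interpolieren.

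Im zweiten Schritt würde ich den Faktor $\norm{f}{C^k(\R^n)}$ mit Lemma~\ref{interpol:k<0*k,theta} ersetzen, d.h.
$$ \norm{f}{C^k(\R^n)} \le c_{k,\vartheta}\, \norm{f}{C^0(\R^n)}^{1-\frac{k}{k+\vartheta}} \norm{f}{C^{k,\vartheta}(\R^n)}^{\frac{k}{k+\vartheta}} $$
in die vorige Ungleichung einsetzen und die beiden auftretenden Potenzen von $\norm{f}{C^0(\R^n)}$ zusammenfassen.

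Der verbleibende Schritt ist rein rechnerisch: Der Exponent von $\norm{f}{C^0(\R^n)}$ wird zu $1-\frac{t}{k} + \frac{t}{k}\bigl(1-\frac{k}{k+\vartheta}\bigr) = 1-\frac{t}{k+\vartheta}$, jener von $\norm{f}{C^{k,\vartheta}(\R^n)}$ zu $\frac{t}{k}\cdot\frac{k}{k+\vartheta} = \frac{t}{k+\vartheta}$, und $c_{t,k}\, c_{k,\vartheta}^{t/k}$ ist die gesuchte Konstante $c_{t,k+\vartheta}$ (die, wie beide Lemmata, von $n$ abhängt). Ein echtes Hindernis erwarte ich nicht --- die Behauptung ist eine unmittelbare Folgerung; lediglich die in Lemma~\ref{interpol:t<0*k} benötigte Voraussetzung $t \le k$ sollte festgehalten werden, wobei im Grenzfall $t = k$ die erste Ungleichung ohnehin trivial ist und die Kette dennoch durchläuft.
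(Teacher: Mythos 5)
Dein Vorschlag ist korrekt und entspricht genau dem Beweis der Arbeit: dieselbe Verkettung von Lemma \ref{interpol:t<0*k} mit Lemma \ref{interpol:k<0*k,theta} samt identischer Exponentenrechnung. Der Hinweis auf die implizit benötigte Voraussetzung $t \le k$ ist eine sinnvolle, aber unwesentliche Ergänzung.
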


\begin{lemma}\label{interpol:t,theta<0*t,vartheta}
Für $t \in \N, \theta, \vartheta \in (0,1)$ mit $\theta < \vartheta$ gibt es eine Konstante $c_{t+\theta,k+\vartheta} > 0$, sodass
$$ \norm{f}{C^{t,\theta}(\R^n)} \le c_{t+\theta,k+\vartheta} \norm{f}{C^0(\R^n)}^{1-\frac{t+\theta}{t+\vartheta}} \norm{f}{C^{t,\vartheta}(\R^n)}^{\frac{t+\theta}{t+\vartheta}} \textrm{ für alle } f \in C^{t,\vartheta}(\R^n) .$$
\begin{proof}
Zunächst ist für $\alpha \in \N^n_0$ mit $\abs{\alpha} \le t$ und $x,y\in \R^n$
\begin{align*}
\frac{\abs{\partial_x^\alpha f(x) - \partial_y^\alpha f(y) }}{\abs{x-y}^{\theta}} &=
\left( \frac{ \abs{\partial_x^\alpha f(x) - \partial_y^\alpha f(y) }^{\frac{\vartheta}{\theta}-1} \abs{\partial_x^\alpha f(x) - \partial_y^\alpha f(y) } }{\abs{x-y}^\vartheta} \right)^{\frac{\theta}{\vartheta}} \\
&= \left( \frac{ \abs{\partial_x^\alpha f(x) - \partial_y^\alpha f(y) } }{\abs{x-y}^\vartheta} \right)^{\frac{\theta}{\vartheta}} \abs{\partial^\alpha f(x) - \partial^\alpha f(y) }^{1-\frac{\theta}{\vartheta}} \\
&\le 2 \hnorm{\partial^\alpha f}{\vartheta}^{\frac{\theta}{\vartheta}} \norm{f}{C^t(\R^n)}^{1-\frac{\theta}{\vartheta}}
\end{align*}
gleichmäßig für alle $x,y\in\R^n$. Also ist
$$ \norm{f}{C^{t,\theta}(\R^n)} \le 2 \hnorm{f}{C^{t,\vartheta}(\R^n)}^{\frac{\theta}{\vartheta}} \norm{f}{C^t(\R^n)}^{1-\frac{\theta}{\vartheta}} + \norm{f}{C^t(\R^n)}
\le 2 \norm{f}{C^{t,\vartheta}(\R^n)}^{\frac{\theta}{\vartheta}} \norm{f}{C^t(\R^n)}^{1-\frac{\theta}{\vartheta}} .$$
Mit Lemma \ref{interpol:k<0*k,theta} folgt
$$ \norm{f}{C^{t,\theta}(\R^n)} \le c_{t+\theta,k+\vartheta} \norm{f}{C^0(\R^n)}^{(1-\frac{k}{k+\vartheta})(1-\frac{\theta}{\vartheta})} \norm{f}{C^{t,\vartheta}(\R^n)}^{\frac{\theta}{\vartheta} + (1-\frac{\theta}{\vartheta})(\frac{k}{k+\vartheta})} =
c_{t+\theta,k+\vartheta} \norm{f}{C^0(\R^n)}^{1-\frac{k+\theta}{k+\vartheta}} \norm{f}{C^{t,\vartheta}(\R^n)}^{\frac{t+\theta}{t+\vartheta}}
.$$
\end{proof}
\end{lemma}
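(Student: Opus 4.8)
Der Plan ist, die Abschätzung in zwei Schritten zu gewinnen: zuerst jede Ableitung $\partial^\alpha f$ mit $\abs{\alpha}\le t$ in der Hölder-Halbnorm zwischen ihrer Supremumsnorm und ihrer $\vartheta$-Halbnorm zu interpolieren, und anschließend den dabei auftretenden $C^t$-Faktor mithilfe des bereits bewiesenen Lemmas \ref{interpol:k<0*k,theta} weiter zu zerlegen. Für den ersten Schritt setze ich $g:=\partial^\alpha f$ und benutze, dass $\theta<\vartheta$ gerade $\lambda:=\theta/\vartheta\in(0,1)$ liefert, sodass $\abs{x-y}^\theta=\left(\abs{x-y}^\vartheta\right)^{\lambda}$ und daher für $x\ne y$
$$\frac{\abs{g(x)-g(y)}}{\abs{x-y}^\theta}=\left(\frac{\abs{g(x)-g(y)}}{\abs{x-y}^\vartheta}\right)^{\lambda}\abs{g(x)-g(y)}^{1-\lambda}\le \hnorm{g}{\vartheta}^{\lambda}\left(2\norm{g}{C^0(\R^n)}\right)^{1-\lambda}.$$
Bildet man das Supremum über $x\ne y$ sowie das Maximum über $\abs{\alpha}=t$ und nutzt $\norm{\partial^\alpha f}{C^0(\R^n)}\le\norm{f}{C^t(\R^n)}$, so folgt $\hnorm{f}{C^{t,\theta}(\R^n)}\le 2\,\hnorm{f}{C^{t,\vartheta}(\R^n)}^{\lambda}\norm{f}{C^t(\R^n)}^{1-\lambda}$.

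Im zweiten Schritt addiere ich $\norm{f}{C^t(\R^n)}$ hinzu. Wegen $\norm{f}{C^t(\R^n)}\le\norm{f}{C^{t,\vartheta}(\R^n)}$ und $\hnorm{f}{C^{t,\vartheta}(\R^n)}\le\norm{f}{C^{t,\vartheta}(\R^n)}$ werden beide Summanden von $\norm{f}{C^{t,\vartheta}(\R^n)}^{\lambda}\norm{f}{C^t(\R^n)}^{1-\lambda}$ dominiert; beim reinen $C^t$-Term schreibt man dazu $\norm{f}{C^t(\R^n)}=\norm{f}{C^t(\R^n)}^{\lambda}\norm{f}{C^t(\R^n)}^{1-\lambda}$ und vergrößert den ersten Faktor. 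Damit erhält man eine Konstante $C>0$ mit $\norm{f}{C^{t,\theta}(\R^n)}\le C\,\norm{f}{C^{t,\vartheta}(\R^n)}^{\lambda}\norm{f}{C^t(\R^n)}^{1-\lambda}$.

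Im dritten Schritt setze ich Lemma \ref{interpol:k<0*k,theta} mit $k=t$, also $\norm{f}{C^t(\R^n)}\le c\,\norm{f}{C^0(\R^n)}^{1-\frac{t}{t+\vartheta}}\norm{f}{C^{t,\vartheta}(\R^n)}^{\frac{t}{t+\vartheta}}$, in den Faktor $\norm{f}{C^t(\R^n)}^{1-\lambda}$ ein und sammle die Exponenten. Der Exponent von $\norm{f}{C^0(\R^n)}$ wird zu $\left(1-\frac{t}{t+\vartheta}\right)(1-\lambda)=\frac{\vartheta}{t+\vartheta}\cdot\frac{\vartheta-\theta}{\vartheta}=\frac{\vartheta-\theta}{t+\vartheta}=1-\frac{t+\theta}{t+\vartheta}$, und der Exponent von $\norm{f}{C^{t,\vartheta}(\R^n)}$ wird zu $\lambda+\frac{t}{t+\vartheta}(1-\lambda)=\frac{\theta(t+\vartheta)+t(\vartheta-\theta)}{\vartheta(t+\vartheta)}=\frac{t+\theta}{t+\vartheta}$, also genau die behauptete Ungleichung, mit einer Konstante gleich dem Produkt der eingesammelten Konstanten.

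Der Beweis ist durchweg elementar; eine echte Hürde sehe ich nicht. Einzig zu beachten ist, dass die Voraussetzung $\theta<\vartheta$ genau $\lambda\in(0,1)$ sichert, womit die Exponentenaufspaltung im ersten Schritt zulässig ist und alle auftretenden Exponenten in $[0,1]$ bleiben, und dass die beteiligten Normen der Ordnung $\norm{\hold}{C^0(\R^n)}\le\norm{\hold}{C^t(\R^n)}\le\norm{\hold}{C^{t,\vartheta}(\R^n)}$ genügen, wodurch sich die Zweitermabschätzung vor Anwendung von Lemma \ref{interpol:k<0*k,theta} zu einem einzigen Produkt zweier Normen zusammenfassen lässt.
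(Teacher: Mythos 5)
Ihr Beweis ist korrekt und verläuft im Wesentlichen genauso wie der Beweis der Arbeit: Zerlegung des Exponenten $\abs{x-y}^\theta = \left(\abs{x-y}^\vartheta\right)^{\theta/\vartheta}$ zur Interpolation der $\theta$-Halbnorm jeder Ableitung $\partial^\alpha f$ zwischen $\hnorm{\partial^\alpha f}{\vartheta}$ und der Supremumsnorm, anschließend Anwendung von Lemma \ref{interpol:k<0*k,theta} mit $k=t$ und identischer Exponentenrechnung. Die Absorption des Summanden $\norm{f}{C^t(\R^n)}$ in das Produkt begründen Sie sogar expliziter als die Arbeit, die denselben Schritt nur knapp notiert.
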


\begin{proof}[Beweis von Theorem \ref{interpol:cor}]
Falls $t = k$, erhalten wir für $\theta > 0$ mit Lemma \ref{interpol:t,theta<0*t,vartheta} die Behauptung, für $\theta = 0$ mit Lemma \ref{interpol:k<0*k,theta}.
Für $t < k$ erhalten wir die Behauptung mit

\begin{tabular}{llll}
Lemma \ref{interpol:t<0*k}                    & falls &$\theta = 0    $& und $ \vartheta = 0    $,\\ 
Folgerung \ref{interpol:t<0*k,vartheta}       & falls &$\theta = 0    $& und $ \vartheta \neq 0 $,\\ 
Folgerung \ref{interpol:t,theta<0*k}          & falls &$\theta \neq 0 $& und $ \vartheta = 0    $,\\ 
Folgerung \ref{interpol:t,theta<0*k,vartheta} & falls &$\theta \neq 0 $& und $ \vartheta \neq 0 $.
\end{tabular}
\end{proof}

\begin{konv}
Im restlichen Teil dieser Arbeit werden wir für $\tau > 0$ den Hölder-Raum $C^{\gauss{\tau},\tau-\gauss{\tau}}(\R^n)$ mit $C^\tau(\R^n)$ bezeichnen.
Die assoziierte Norm ist hierbei durch
$
\norm{u}{C^\tau(\R^n)} := \norm{u}{C^{\gauss{\tau},\tau-\gauss{\tau}}(\R^n)}
$
gegeben.
Die Lipschitzräume $C^{t,1}(\R^n)$ mit $t \in \N_0$ werden durch diese Notation außer Acht gelassen.
\end{konv}

\subsection{Der Bessel-Potential-Raum}\label{sec:bessel}
Eine Verallgemeinerung der Sobolev-Räume $W^s_q(\R^n)$ für $s \in \N_0, 1 < q < \infty$ stellt
der Bessel-Potential-Raum $H^s_q(\R^n)$ für $s \in \R$ mit $W^s_q(\R^n) = H^s_q(\R^n)$ für alle $s \in \N_0$ dar.
Wir werden diesen Raum später verwenden, um Abbildungseigenschaften von Pseudodifferentialoperatoren studieren zu können.
Dazu wollen wir in diesem Abschnitt die Bessel-Potential-Räume einführen, die wir mit Hilfe der komplexen Interpolationstheorie 
als ``Zwischenräume'' der Sobolev-Räume charakterisieren werden. 

Wir beginnen mit einer Einführung der komplexen Interpolation analog zu \cite{Lunardi}:
\nomenclature[f]{$W^s_q(\R^n)$}{Sobolev-Raum der Ordnung $s\in\N_0$ mit $1 \le q \le \infty$}

\begin{definition}[Interpolationspaar]
Seien $X, Y$ reelle (bzw. komplexe) Banachräume.
Falls $X$ und $Y$ sich stetig in einem topologischen Vektorraum $V$ einbetten lassen, 
bezeichnet man das Tupel $(X,Y)$ als reelles (bzw. komplexes) Interpolationspaar.
Die stetigen Einbettungen werden mit $i_X : X \rightarrow V$ und $i_Y : Y \rightarrow V$ bezeichnet.
\end{definition}

\begin{lemma}
Sei $(X,Y)$ ein Interpolationspaar.
\begin{enumerate}[(a)]
\item
Der lineare Untervektorraum $X \cap Y \subset V$ wird durch die Norm
$$
\norm{f}{X \cap Y} := \max \menge{ \norm{f}{X}, \norm{f}{Y} }$$
zum Banachraum.
\item
Auch $X+Y := \menge{a+b : a \in X, b \in Y} \subset V$ ist ein linearer Untervektorraum,
der mit der Norm
$$
\norm{f}{X+Y} := \inf_{\substack{f = a + b\\ a \in X, b \in Y}} \left( \norm{a}{X} + \norm{b}{Y} \right)
$$
zum Banachraum wird.
Da $\norm{\hold}{X} \le \norm{\hold}{X+Y}$ und $\norm{\hold}{Y} \le \norm{\hold}{X+Y}$, werden $X$ und $Y$ stetig in $X+Y$ eingebettet.
\end{enumerate}
\begin{proof}
\begin{enumerate}[(a)]
\item Offensichtlich definiert $\norm{\hold}{X \cap Y}$ eine Norm auf $X \cap Y$.
Sei $\folge{u_n}{n \in \N}$ Cauchy-Folge in $X \cap Y$.
Da $X$ und $Y$ Banachräume sind, folgt, dass es ein $x \in X$ und ein $y \in Y$ gibt, 
sodass die Cauchy-Folge $\folge{u_n}{n \in \N}$ in $X$ gegen $x$ und in $Y$ gegen $y$ konvergiert.
Nun sind die Einbettungen  $i_X : X \rightarrow V$ und $i_Y : Y \rightarrow V$ stetig, sodass $i_X |_{X \cap Y} = i_Y |_{X \cap Y}$ und damit
$$i_Y(y) = \lim_{n \rightarrow \infty} i_Y(u_n) = \lim_{n \rightarrow \infty} i_X(u_n) = i_X(x)$$
gilt, d.h. $x = y$ ist.
Insgesamt ist also
$$ \norm{u_n - u}{X \cap Y} = \left( \norm{u_n - x}{X} + \norm{u_n - y}{Y} \right) \rightarrow 0 \textrm{ für } n \rightarrow \infty .$$
\item 
Um die Normeigenschaften von $\norm{\hold}{X+Y}$ zu überprüfen, genügt es, dass für alle $u \in X+Y$ mit $\norm{u}{X+Y} = 0$ folgt, dass $u = 0$ ist (der Rest ist trivial).
Für solch ein $u$ finden wir nach Definition zwei Folgen $\folge{x_n}{n\in\N}, \folge{y_n}{n\in\N}$ mit $x_n \in X, y_n \in Y, x_n + y_n = u$ und
$\norm{x_n}{X} + \norm{y_n}{Y} < \frac{1}{n}$.
Also konvergiert $x_n \rightarrow 0$ in $X$ und $y_n \rightarrow 0$ in $Y$. Da aber $X,Y \hookrightarrow V$ konvergieren beide Folgen auch in $V$ gegen $0$.
Also konvergiert auch $x_n + y_n \rightarrow 0$ in $V$ und damit muss bereits $u = 0$ sein.
Für die Vollständigkeit betrachten wir eine Cauchy-Folge
$\folge{u_n}{n \in \N}$ Cauchy-Folge in $X+Y$, $u_n =: x_n + y_n$ mit $x_n \in X, y_n \in Y$ für alle $n \in \N$.
Somit ist $\folge{x_n}{n \in \N}$ Cauchy-Folge in $X$ und $\folge{y_n}{n \in \N}$ Cauchy-Folge in $Y$,
also gibt es $x \in X, y \in Y$, sodass $x_n \rightarrow x$ in $X$ und $y_n \rightarrow y$ in $Y$.
Damit konvergiert bereits $u_n \rightarrow x + y$ in $X+Y$:
\begin{align*}
\norm{u_n - (x+y)}{X+Y} 
&= \norm{(x_n-x)+(y_n-y)}{X+Y} \\
&\le \norm{x_n - x}{X+Y} + \norm{y_n - y}{X+Y} \rightarrow 0 \textrm{ für } n \rightarrow \infty
.\end{align*}
\end{enumerate}
\end{proof}
\end{lemma}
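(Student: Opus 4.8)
The plan is to treat the two spaces separately, verifying the norm axioms and then completeness, with the continuity of the embeddings $i_X$ and $i_Y$ into $V$ supplying the one genuinely non-formal ingredient in each case. For part (a), that $\norm{\hold}{X\cap Y}$ is a norm is immediate: homogeneity and the triangle inequality pass termwise through the maximum, and if $\norm{f}{X\cap Y}=0$ then $\norm{f}{X}=\norm{f}{Y}=0$, so $f$ is the zero element of $V$. For completeness, take a Cauchy sequence $\folge{u_n}{n\in\N}$ in $X\cap Y$. Since $\norm{\hold}{X}\le\norm{\hold}{X\cap Y}$ and $\norm{\hold}{Y}\le\norm{\hold}{X\cap Y}$, it is Cauchy in both $X$ and $Y$, hence $u_n\rightarrow x$ in $X$ and $u_n\rightarrow y$ in $Y$ for some $x\in X$, $y\in Y$. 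The point is to identify $x$ and $y$ as one element of $V$: applying the continuous maps $i_X,i_Y$ and using that they agree on $X\cap Y$ gives $i_X x=\lim i_X u_n=\lim i_Y u_n=i_Y y$, so $x=y=:u\in X\cap Y$, and then $\norm{u_n-u}{X\cap Y}=\max\menge{\norm{u_n-x}{X},\norm{u_n-y}{Y}}\rightarrow 0$.

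For part (b) I would first check that $X+Y$ is a linear subspace (closure under sums and scalar multiples is obvious) and that $\norm{\hold}{X+Y}$ is a norm. Homogeneity is clear, the triangle inequality follows by adding near-optimal decompositions of two elements, and the only delicate axiom is definiteness: if $\norm{u}{X+Y}=0$, choose decompositions $u=x_n+y_n$ with $\norm{x_n}{X}+\norm{y_n}{Y}<\tfrac1n$; then $x_n\rightarrow 0$ in $X$ and $y_n\rightarrow 0$ in $Y$, so by continuity of $i_X,i_Y$ both images tend to $0$ in $V$, whence $u=i_Xx_n+i_Yy_n\rightarrow 0$ in $V$, i.e. $u=0$. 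The continuous inclusions $X\hookrightarrow X+Y$ and $Y\hookrightarrow X+Y$ come from the decompositions $f=f+0$ and $f=0+f$, which give $\norm{f}{X+Y}\le\norm{f}{X}$ resp. $\norm{f}{X+Y}\le\norm{f}{Y}$.

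The step I expect to be the main obstacle is completeness of $X+Y$, because one cannot just fix arbitrary decompositions $u_n=x_n+y_n$ of a Cauchy sequence and expect $\folge{x_n}{n}$, $\folge{y_n}{n}$ to be Cauchy. Instead I would pass to a rapidly converging subsequence $\folge{u_{n_k}}{k}$ with $\norm{u_{n_{k+1}}-u_{n_k}}{X+Y}<2^{-k}$, pick decompositions $u_{n_{k+1}}-u_{n_k}=a_k+b_k$ with $\norm{a_k}{X}+\norm{b_k}{Y}<2^{-k}$, and use that $X$ and $Y$ are Banach to sum the absolutely convergent series: $\sum_k a_k\rightarrow a$ in $X$, $\sum_k b_k\rightarrow b$ in $Y$. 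Writing $u_{n_1}=x_1+y_1$, the telescoping identity shows $u_{n_K}\rightarrow(x_1+a)+(y_1+b)=:u\in X+Y$ in $\norm{\hold}{X+Y}$; since a Cauchy sequence with a convergent subsequence converges to the same limit, $u_n\rightarrow u$. The remaining verifications are routine bookkeeping.
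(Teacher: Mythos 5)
Your part (a) matches the paper's argument essentially verbatim: Cauchy in the max-norm implies Cauchy in $X$ and $Y$ separately, the continuous embeddings $i_X, i_Y$ identify the two limits in $V$, and convergence in $\norm{\hold}{X\cap Y}$ follows.

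Your part (b) is where you diverge from the paper, and the divergence is to your credit. The paper's completeness argument takes an arbitrary decomposition $u_n = x_n + y_n$ of a Cauchy sequence in $X+Y$ and then asserts ``Somit ist $\folge{x_n}{n}$ Cauchy-Folge in $X$ und $\folge{y_n}{n}$ Cauchy-Folge in $Y$.'' This inference does not hold: the infimum in $\norm{\hold}{X+Y}$ is over all decompositions, so there is no bound linking $\norm{x_n - x_m}{X}$ to $\norm{u_n - u_m}{X+Y}$ for a fixed arbitrary choice of $x_n$. (Concretely, replacing $x_n$ by $x_n + n v$ and $y_n$ by $y_n - nv$ for any $0 \neq v \in X\cap Y$ leaves $u_n$ unchanged but destroys the Cauchy property of the components.) Your repair — pass to a subsequence with $\norm{u_{n_{k+1}} - u_{n_k}}{X+Y} < 2^{-k}$, choose decompositions of the \emph{differences} with summable norms, and use absolute convergence in the Banach spaces $X$ and $Y$ to build a limit in $X+Y$, then invoke that a Cauchy sequence with a convergent subsequence converges — is exactly the standard correct route (it is the same argument that shows an absolutely convergent series in a Banach space converges, applied to the quotient-style norm). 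Your observation that $X+Y$ is isometrically isomorphic in spirit to $(X\oplus_1 Y)/\menge{(v,-v): v\in X\cap Y}$ is what underlies the fix, though you don't need to say it explicitly. The one cosmetic remark: for definiteness of $\norm{\hold}{X+Y}$ you write $u = i_X x_n + i_Y y_n \rightarrow 0$ in $V$, which needs the (mild, but worth stating) fact that addition in the topological vector space $V$ is continuous; the paper makes the same implicit use.
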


\begin{definition}\label{interpol:komplPaar}
Sei $(X,Y)$ ein komplexes Interpolationspaar.
Sei 
$$\Streifen := \menge{ z \in \C : \Real z \in [0,1] }$$
der Streifen in der komplexen Ebene.
$\StripSum{X,Y}$ sei der Raum aller Funktionen $g : \Streifen \rightarrow X+Y$, für die 
\begin{enumerate}[(a)]
\item $g$ holomorph im Inneren des Streifen, stetig auf dessen Rand, sowie
\item $\R \rightarrow X, t \mapsto g(it) \in C(\R;X)$ und $ \R \rightarrow Y, t \mapsto g(1+it) \in C(\R;Y)$ ist.
\end{enumerate}
Setze $\norm{g}{\StripSum{X,Y}} := \max \menge{ \sup_{t\in\R} \norm{g(it)}{X}, \sup_{t\in\R} \norm{g(1+it)}{Y} } < \infty$.
Für jedes $\theta\in(0,1)$ setze
$$ \interpolC{X,Y}{\theta} := \menge{ g(\theta) : g \in \StripSum{X,Y} } \textrm{ und }
\norm{a}{\interpolC{X,Y}{\theta}} := \inf_{g\in \StripSum{X,Y}, g(\theta) = a} \norm{g}{\StripSum{X,Y}}.$$
\end{definition}
\nomenclature[m]{$\Streifen$}{Streifen in der komplexen Ebene - Definition \ref{interpol:komplPaar}}
\nomenclature[f]{$\interpolC{X,Y}{\theta}$}{Raum erzeugt durch die komplexe Interpolation des Paars $(X,Y)$ bzgl. $\theta$}
\nomenclature[n]{$\norm{\hold}{\interpolC{X,Y}{\theta}}$}{Norm des komplex interpolierten Raums $\interpolC{X,Y}{\theta}$}

\begin{thm}\label{thm:interpolComplex}
Seien $(X_1,Y_1),(X_2,Y_2)$ komplexe Interpolationspaare.
Für jedes $z \in \Streifen$ sei für einen Operator $T_z \in \linearop{X_1 \cap Y_1}{ X_2 + Y_2}$ die
Zuordnung $\Streifen \rightarrow X_2 + Y_2, z \mapsto T_z x$ holomorph in $\Streifen$ und stetig und beschränkt in $\Streifen$ für jedes $x \in X_1 \cap Y_1$.
Setze zudem voraus, dass $t \mapsto T_{it} x \in C(\R; \linearop{X_1}{X_2}), t \mapsto T_{1+it}x \in C(\R; \linearop{Y_1}{Y_2})$ und
$ \norm{T_{it}}{\linearop{X_1}{X_2}}$ bzw. $\norm{T_{1+it}}{\linearop{Y_1}{Y_2}}$ durch eine Konstante $M_0 > 0$ bzw. $M_1 > 0$ unabhängig von $t\in\R$ beschränkt sind, also
$$ M_0 := \sup_{t\in\R} \norm{T_{it}}{\linearop{X_1}{X_2}} < \infty,\; M_1 := \sup_{t\in\R} \norm{T_{1+it}}{\linearop{Y_1}{Y_2}} < \infty .$$
Für jedes $\theta\in(0,1)$ gilt nun, dass
$$\norm{T_\theta x}{\interpolC{X_2,Y_2}{\theta}} \le M_0^{1-\theta} M_1^\theta \norm{x}{\interpolC{X_1,Y_1}{\theta}} ,$$
sodass wir eine Erweiterung von $T_\theta$ auf $\linearop{\interpolC{X_1,Y_1}{\theta}}{\interpolC{X_2,Y_2}{\theta}}$ finden können, 
die wir mit $\tilde{T}_\theta$ bezeichnen, sodass $\tilde{T}_\theta |_{X_1 \cap Y_1} = T_\theta$.
Diese Erweiterung genügt nun der Abschätzung
$$ \norm{\tilde{T}_\theta}{\linearop{\interpolC{X_1,Y_1}{\theta}}{\interpolC{X_2,Y_2}{\theta})}} \le M_0^{1-\theta} M_1^\theta .$$
\begin{proof} Siehe \cite[Theorem 2.1.7]{Lunardi}
\end{proof}
\end{thm}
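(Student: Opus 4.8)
The plan is to run the classical analytic-interpolation argument of Stein and Calder\'on, whose engine is the Hadamard three-lines lemma applied to a renormalised analytic family of operators; the mixed bound $M_0^{1-\theta}M_1^{\theta}$ is produced entirely by the renormalisation. Since $M_0,M_1>0$ by hypothesis, I first introduce the scalar holomorphic weight $N_z:=M_0^{z-1}M_1^{-z}$ on $\Streifen$. A direct computation gives $\abs{N_{it}}=M_0^{-1}$ on $\Real z=0$, $\abs{N_{1+it}}=M_1^{-1}$ on $\Real z=1$, and $N_\theta=M_0^{\theta-1}M_1^{-\theta}>0$; thus working with $z\mapsto N_zT_z$ instead of $z\mapsto T_z$ reduces the problem to the normalised case $M_0=M_1=1$.

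Next I reduce to inputs lying in a dense subspace. Recall from the complex interpolation theory (see \cite{Lunardi}, also \cite{Bergh}) that $X_1\cap Y_1$ is dense in $\interpolC{X_1,Y_1}{\theta}$, and that for $x\in X_1\cap Y_1$ and $\varepsilon>0$ one may choose an \emph{elementary} competitor $f\in\StripSum{X_1,Y_1}$, i.e.\ a finite sum $f(z)=\sum_k e^{\delta(z^2-\theta^2)}\psi_k(z)\,a_k$ with $\delta>0$, $\psi_k$ scalar bounded holomorphic and $a_k\in X_1\cap Y_1$, satisfying $f(\theta)=x$ and $\norm{f}{\StripSum{X_1,Y_1}}\le(1+\varepsilon)\norm{x}{\interpolC{X_1,Y_1}{\theta}}$. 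The decisive feature is that such an $f$ takes values in $X_1\cap Y_1$, so $T_zf(z)$ is defined for every $z\in\Streifen$ (recall that $T_z$ is only prescribed on $X_1\cap Y_1$).

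Now set $g(z):=N_zT_zf(z)=N_z\sum_k e^{\delta(z^2-\theta^2)}\psi_k(z)\,T_za_k$ and check that $g\in\StripSum{X_2,Y_2}$. Holomorphy on the interior of $\Streifen$ with values in $X_2+Y_2$ holds because each $z\mapsto T_za_k$ is holomorphic by hypothesis and $g$ is a finite sum of products of such maps with scalar holomorphic functions; continuity on $\Streifen$ and the properties $t\mapsto g(it)\in C(\R;X_2)$ and $t\mapsto g(1+it)\in C(\R;Y_2)$ follow from the assumed continuity of $t\mapsto T_{it}\in\linearop{X_1}{X_2}$ and $t\mapsto T_{1+it}\in\linearop{Y_1}{Y_2}$ together with the corresponding properties of $f$. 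On the left edge $\norm{g(it)}{X_2}=\abs{N_{it}}\norm{T_{it}f(it)}{X_2}\le M_0^{-1}M_0\norm{f(it)}{X_1}\le\norm{f}{\StripSum{X_1,Y_1}}$, and symmetrically $\norm{g(1+it)}{Y_2}\le\norm{f}{\StripSum{X_1,Y_1}}$, whence $\norm{g}{\StripSum{X_2,Y_2}}\le\norm{f}{\StripSum{X_1,Y_1}}$. By Definition~\ref{interpol:komplPaar} we obtain $g(\theta)\in\interpolC{X_2,Y_2}{\theta}$ with $\norm{g(\theta)}{\interpolC{X_2,Y_2}{\theta}}\le\norm{f}{\StripSum{X_1,Y_1}}$; since $g(\theta)=N_\theta T_\theta x$ with $N_\theta=M_0^{\theta-1}M_1^{-\theta}$, this yields $\norm{T_\theta x}{\interpolC{X_2,Y_2}{\theta}}\le M_0^{1-\theta}M_1^{\theta}(1+\varepsilon)\norm{x}{\interpolC{X_1,Y_1}{\theta}}$. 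Letting $\varepsilon\to0$ gives the desired bound on $X_1\cap Y_1$, and by density of $X_1\cap Y_1$ in $\interpolC{X_1,Y_1}{\theta}$ together with completeness of $\interpolC{X_2,Y_2}{\theta}$ the operator $T_\theta$ extends uniquely to $\tilde T_\theta\in\linearop{\interpolC{X_1,Y_1}{\theta}}{\interpolC{X_2,Y_2}{\theta}}$ with $\tilde T_\theta|_{X_1\cap Y_1}=T_\theta$ and operator norm at most $M_0^{1-\theta}M_1^{\theta}$.

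I expect the main obstacle to be precisely the verification $g\in\StripSum{X_2,Y_2}$: because elements of $\StripSum{\cdot}$ are $(X+Y)$-valued whereas $T_z$ acts only on $X\cap Y$, one must pass through the density of the elementary functions and then transfer the holomorphy and boundary-line continuity of an \emph{operator}-valued family to the composed vector-valued function $g$. These are standard but slightly delicate points of vector-valued complex analysis, carried out in \cite{Lunardi}; once they are in hand, the remaining steps are bookkeeping with the three-lines lemma and the weight $N_z$.
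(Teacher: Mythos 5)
The paper gives no argument of its own here (it simply cites Lunardi, Theorem 2.1.7), and your strategy is the standard one for exactly that theorem: renormalise with the scalar weight $N_z=M_0^{z-1}M_1^{-z}$, compose $T_z$ with a competitor for $x$, read the bound $M_0^{1-\theta}M_1^{\theta}$ off the definition of the $\interpolC{X_2,Y_2}{\theta}$-norm, and extend by density. The boundary estimates, the verification that $g(z)=N_zT_zf(z)$ lies in $\StripSum{X_2,Y_2}$ for an $X_1\cap Y_1$-valued $f$, and the final density extension are all correct.

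The genuine gap sits in the step you present as routine: the existence, for $x\in X_1\cap Y_1$ and $\varepsilon>0$, of an \emph{elementary} ($X_1\cap Y_1$-valued) $f\in\StripSum{X_1,Y_1}$ with $f(\theta)=x$ \emph{exactly} and $\norm{f}{\StripSum{X_1,Y_1}}\le(1+\varepsilon)\norm{x}{\interpolC{X_1,Y_1}{\theta}}$. Density of the elementary functions in $\StripSum{X_1,Y_1}$ only produces an elementary $h$ with $\norm{x-h(\theta)}{\interpolC{X_1,Y_1}{\theta}}$ small, not with the prescribed value at $\theta$, and the two obvious repairs fail: correcting by $e^{\delta(z^2-\theta^2)}\bigl(x-h(\theta)\bigr)$ restores the value but costs a $\StripSum{X_1,Y_1}$-norm of the order of $\norm{x-h(\theta)}{X_1\cap Y_1}$, which is not controlled by the interpolation norm of the defect; alternatively, taking elementary $h_j$ with $h_j(\theta)\to x$ in $\interpolC{X_1,Y_1}{\theta}$ and passing to the limit in $T_\theta h_j(\theta)$ is not permitted, because $T_\theta$ is by hypothesis continuous only from $X_1\cap Y_1$ into $X_2+Y_2$ — the continuity with respect to the interpolation norm is exactly what is being proved, so the limit cannot be identified with $T_\theta x$. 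In other words, the refined competitor lemma (prescribed value at $\theta$, values in $X_1\cap Y_1$, nearly minimal norm) is the real crux of the abstract Stein theorem, not a bookkeeping consequence of Calder\'on's density lemma; it must either be quoted in precisely this refined form with an exact reference or be proved. You identify the verification $g\in\StripSum{X_2,Y_2}$ as the main obstacle, but that part is harmless; once the refined lemma is in hand, the remainder of your argument goes through as written.
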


\begin{definition}
Der \emph{Bessel-Potential-Raum} $H^s_q(\R^n)$ mit $q \in (1,\infty), s \in \R$
ist definiert durch
\begin{equation}
H^s_q(\R^n) := \menge{ u \in \Schwarzi' : \piket{D_x}{s} \in L^q(\R^n)}
\end{equation}
mit der Norm
\begin{equation}
\norm{u}{H^s_q(\R^n)} := \norm{\piket{D_x}{s} u}{L^q(\R^n)} .
\end{equation}
\end{definition}
\nomenclature[f]{$H^s_q(\R^n)$}{Bessel-Potential-Raum der Ordnung $s\in\R$ mit $1 < q < \infty$}
\nomenclature[n]{$\norm{\hold}{H^s_q(\R^n)}$}{Norm des Bessel-Potential-Raums $H^s_q(\R^n)$}

\begin{thm}\label{thm:interpolBessel}
Für $\theta \in (0,1), 1 < q < \infty, m \in \N$ ist $ \interpolC{L^q(\R^n), W^m_q(\R^n) }{\theta} = H^{m\theta}_q(\R^n)$.
\begin{proof} Siehe \cite[\textsection 2.4.2 Definition (d)]{Triebel} oder \cite[Theorem 6.4.5]{Bergh}
\end{proof}
\end{thm}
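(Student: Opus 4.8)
Der Plan ist, die Gleichheit mittels Theorem~\ref{thm:interpolComplex} zu zeigen, wobei man die Bessel-Potential-Operatoren $\piketi{D_x}^{\pm m\theta}$ als Werte der analytischen Familie $z\mapsto\piketi{D_x}^{\pm mz}$ auffasst und diese durch einen Gaußschen Faktor $e^{z^2-\theta^2}$ so regularisiert, dass die Operatornormen auf beiden Randgeraden des Streifens $\Streifen$ endlich bleiben. Vorab halte ich fest: (i) $\piketi{D_x}^t : H^s_q(\R^n)\to H^{s-t}_q(\R^n)$ ist für alle $s,t\in\R$ ein isometrischer Isomorphismus --- unmittelbar aus der Definition ---, speziell ist $\piketi{D_x}^{\pm m}$ ein isometrischer Isomorphismus zwischen $L^q(\R^n)=H^0_q(\R^n)$ und $H^{\pm m}_q(\R^n)$; (ii) die klassische Identität $W^m_q(\R^n)=H^m_q(\R^n)$ für $m\in\N_0$, $1<q<\infty$, mit äquivalenten Normen (etwa \cite[\textsection 2.5.6]{Triebel}); (iii) der Multiplikatorsatz von Mikhlin, angewandt auf $m_t(\xi):=\piketi{\xi}^{it}=e^{\frac{it}{2}\log(1+\abs{\xi}^2)}$: aus $\abs{\partial_\xi^\alpha m_t(\xi)}\le C_\alpha(1+\abs{t})^{\abs{\alpha}}\piketi{\xi}^{-\abs{\alpha}}$ für $\alpha$ bis zu einer nur von $n$ abhängigen Ordnung folgt $\norm{\piketi{D_x}^{it}}{\linearop{L^q}{L^q}}\le C(1+\abs{t})^N$, und da $\piketi{D_x}^{it}$ mit $\partial^\alpha$ vertauscht, gilt dieselbe Schranke auf $W^m_q(\R^n)$; überdies ist $t\mapsto\piketi{D_x}^{it}$ eine stark stetige Gruppe auf $L^q(\R^n)$ (auf $\Schwarzi$ klar, allgemein per Dichtheit und obiger Schranke); (iv) $\interpolC{L^q(\R^n),L^q(\R^n)}{\theta}=L^q(\R^n)$ isometrisch (konstante Funktionen für $\subseteq$, der Drei-Geraden-Satz angewandt auf $\ell\circ g$ mit $\ell$ in der Einheitskugel des Dualraums für $\supseteq$).

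Für die Inklusion $\interpolC{L^q(\R^n),W^m_q(\R^n)}{\theta}\subseteq H^{m\theta}_q(\R^n)$ wende ich Theorem~\ref{thm:interpolComplex} an mit $(X_1,Y_1)=(L^q,W^m_q)$, $(X_2,Y_2)=(L^q,L^q)$ und $T_z:=e^{z^2-\theta^2}\piketi{D_x}^{mz}$. Für $x\in\Schwarzi$ ist $z\mapsto T_zx$ holomorph, beschränkt und bis zum Rand stetig auf $\Streifen$ mit Werten in $L^q(\R^n)$, denn $\abs{e^{z^2-\theta^2}}\le e$ auf $\Streifen$ mit Gaußschem Abfall in $\Imag z$, und $\piketi{D_x}^{mz}x$ liegt für $0\le\Real z\le 1$ in $\Schwarzi$ mit gleichmäßig beschränkten Halbnormen; per Dichtheit und den Schranken aus (iii) übertragen sich diese Eigenschaften auf alle $x\in X_1\cap Y_1=W^m_q$. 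Auf $\Real z=0$ ist $\norm{T_{it}}{\linearop{L^q}{L^q}}\le e^{-t^2-\theta^2}C(1+\abs{t})^N=:M_0<\infty$, und auf $\Real z=1$ ist wegen $\piketi{D_x}^{m+imt}=\piketi{D_x}^{imt}\piketi{D_x}^m$ und weil $\piketi{D_x}^m:W^m_q\to L^q$ isometrisch ist $\norm{T_{1+it}}{\linearop{W^m_q}{L^q}}\le e^{1-t^2-\theta^2}C(1+\abs{t})^N=:M_1<\infty$; die Stetigkeitsvoraussetzungen von Theorem~\ref{thm:interpolComplex} ergeben sich aus der starken Stetigkeit der Gruppe. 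Das Theorem liefert $\norm{T_\theta u}{\interpolC{L^q,L^q}{\theta}}\le M_0^{1-\theta}M_1^\theta\norm{u}{\interpolC{L^q,W^m_q}{\theta}}$; wegen $T_\theta=\piketi{D_x}^{m\theta}$ und (iv) ist das genau $\norm{u}{H^{m\theta}_q}\le M_0^{1-\theta}M_1^\theta\norm{u}{\interpolC{L^q,W^m_q}{\theta}}$.

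Die Umkehrung $H^{m\theta}_q(\R^n)\subseteq\interpolC{L^q(\R^n),W^m_q(\R^n)}{\theta}$ verläuft symmetrisch mit $(X_1,Y_1)=(L^q,L^q)$, $(X_2,Y_2)=(L^q,W^m_q)$ und $S_z:=e^{z^2-\theta^2}\piketi{D_x}^{-mz}$: hier ist $S_{it}:L^q\to L^q$ mit $\norm{S_{it}}{\linearop{L^q}{L^q}}\le e^{-t^2-\theta^2}C(1+\abs{t})^N$, und $S_{1+it}=e^{(1+it)^2-\theta^2}\piketi{D_x}^{-m}\piketi{D_x}^{-imt}:L^q\to L^q\to W^m_q$ mit $\norm{S_{1+it}}{\linearop{L^q}{W^m_q}}\le e^{1-t^2-\theta^2}C(1+\abs{t})^N$, beides endliche Suprema über $t\in\R$. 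Theorem~\ref{thm:interpolComplex} und (iv) liefern $\norm{\piketi{D_x}^{-m\theta}v}{\interpolC{L^q,W^m_q}{\theta}}\le c\norm{v}{L^q}$ für alle $v\in L^q(\R^n)$. Ist $u\in H^{m\theta}_q(\R^n)$, so setze $v:=\piketi{D_x}^{m\theta}u\in L^q(\R^n)$; dann ist $u=\piketi{D_x}^{-m\theta}v$ und $\norm{u}{\interpolC{L^q,W^m_q}{\theta}}\le c\norm{v}{L^q}=c\norm{u}{H^{m\theta}_q}$. Zusammen mit dem vorigen Schritt folgt $\interpolC{L^q(\R^n),W^m_q(\R^n)}{\theta}=H^{m\theta}_q(\R^n)$ mit äquivalenten Normen.

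Die Hauptschwierigkeit liegt nicht in der interpolationstheoretischen Buchhaltung um Theorem~\ref{thm:interpolComplex}, sondern in den beiden analytischen Zutaten, die der Text nicht entwickelt: dem Multiplikatorsatz von Mikhlin (nötig für die polynomiale $t$-Schranke an $\norm{\piketi{D_x}^{it}}{\linearop{L^q}{L^q}}$, die überhaupt erst den Gaußschen Regularisierungsfaktor erzwingt) und dem Nachweis, dass $z\mapsto\piketi{D_x}^{mz}x$ als banachraumwertige Abbildung holomorph ist (schwache Holomorphie zusammen mit lokaler Beschränktheit aus den Multiplikatorabschätzungen); die Identität $W^m_q=H^m_q$ aus (ii) ist klassisch, sollte aber zitiert werden. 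Ersetzt man $e^{z^2-\theta^2}$ durch $e^{\delta(z-\theta)^2}$ und lässt $\delta\to 0$, so erhält man sogar die scharfe Abschätzung mit Konstante $M_0^{1-\theta}M_1^\theta$; für die Mengengleichheit mit äquivalenten Normen genügt jedoch die obige Wahl.
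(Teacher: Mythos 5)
Die Arbeit selbst beweist dieses Theorem nicht, sondern verweist nur auf \cite[\textsection 2.4.2]{Triebel} bzw. \cite[Theorem 6.4.5]{Bergh}; dein Vorschlag rekonstruiert im Wesentlichen genau das Argument, das hinter diesen Zitaten steht: Stein-Interpolation (hier in Gestalt von Theorem \ref{thm:interpolComplex}) für die analytische Familie $z\mapsto e^{z^2-\theta^2}\piket{D_x}{\pm mz}$, gestützt auf die Mikhlin-Schranke $\norm{\piket{D_x}{it}}{\linearop{L^q}{L^q}}\le C(1+\abs{t})^N$, die Identität $W^m_q=H^m_q$ (die du auch über das im Text direkt anschließende Theorem \ref{thm:interpolBesselSobolev} hättest zitieren können) und $\interpolC{L^q,L^q}{\theta}=L^q$. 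Die Struktur ist korrekt und beide Inklusionen sind sauber getrennt; der Gaußsche Dämpfungsfaktor ist genau der richtige Kniff, um die polynomiale $t$-Abhängigkeit der Multiplikatornormen aufzufangen. Nur Kleinigkeiten solltest du glätten: $M_0$ und $M_1$ müssen als Suprema über $t\in\R$ definiert werden (deine Formeln hängen noch von $t$ ab); $\piket{D_x}{m}:W^m_q\to L^q$ ist nicht isometrisch, sondern nur ein Isomorphismus mit äquivalenten Normen (isometrisch ist es bezüglich der $H^m_q$-Norm); und bei der ersten Inklusion liefert Theorem \ref{thm:interpolComplex} die Abschätzung zunächst für $u\in X_1\cap Y_1=W^m_q$, sodass du den Übergang auf ganz $\interpolC{L^q,W^m_q}{\theta}$ explizit über die im Theorem bereitgestellte Fortsetzung $\tilde T_\theta$ (bzw. die Dichtheit von $X\cap Y$ im Interpolationsraum) und die Identifikation $\tilde T_\theta u=\piket{D_x}{m\theta}u$ in $\SchwarziDual$ begründen solltest — das sind aber Routineschritte, keine Lücken in der Idee.
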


\begin{thm}\label{thm:interpolBesselSobolev}
Für $s < t \in \R$ ist $H^t_q(\R^n) \subset H^s_q(\R^n)$ für alle $1 \le q \le \infty$.
Außerdem ist für ganzzahlige $m \in \N$ und $1 < q < \infty$ der Sobolev-Raum $W_q^m(\R^n)$ gleich dem Bessel-Potential-Raum $H_q^m(\R^n)$.
Die Normen beider Räume sind also äquivalent.
Schließlich ist der Schwartz-Raum $\Schwarzi$ für $1 \le q < \infty$ dicht in $H^s_q(\R^n)$ für alle $s \in \R$ enthalten.
\begin{proof} Siehe \cite[Theorem 6.2.3]{Bergh}
\end{proof}
\end{thm}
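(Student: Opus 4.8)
The statement bundles three logically independent facts, and I would establish them in turn. \textbf{The embedding $H^t_q(\R^n)\subset H^s_q(\R^n)$.} The plan is to factor $\piket{D_x}{s}=\piket{D_x}{-(t-s)}\circ\piket{D_x}{t}$ and to realise the negative power as convolution with an honest $L^1$-kernel, which keeps the argument uniform over the whole range $1\le q\le\infty$. Put $\sigma:=t-s>0$ and let $G_\sigma:=\FourierB{\xi}{\hold}{\piket{\xi}{-\sigma}}$ be the Bessel potential kernel. From the subordination identity $\piket{\xi}{-\sigma}=\Gamma(\sigma/2)^{-1}\int_0^\infty t^{\sigma/2-1}e^{-t}e^{-t\abs{\xi}^2}\,dt$ and the elementary Fourier transform of the Gaussian one reads off that $G_\sigma$ is a nonnegative function, and Tonelli's theorem applied to the resulting double integral gives $G_\sigma\in L^1(\R^n)$ with $\norm{G_\sigma}{L^1(\R^n)}=\int G_\sigma=1$. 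For $u\in H^t_q(\R^n)$ we have $\piket{D_x}{t}u\in L^q(\R^n)$ and $\piket{D_x}{s}u=G_\sigma*(\piket{D_x}{t}u)$, so Young's convolution inequality yields $\norm{u}{H^s_q(\R^n)}\le\norm{G_\sigma}{L^1(\R^n)}\norm{u}{H^t_q(\R^n)}=\norm{u}{H^t_q(\R^n)}$ for all $1\le q\le\infty$.

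\textbf{The identity $W^m_q(\R^n)=H^m_q(\R^n)$ with equivalent norms.} Here I would run the standard Fourier-multiplier argument, and this is exactly where the hypothesis $1<q<\infty$ enters, since it rests on the Mikhlin--Hörmander multiplier theorem. For the bound $\norm{D_x^\alpha u}{L^q(\R^n)}\lesssim\norm{u}{H^m_q(\R^n)}$ with $\abs{\alpha}\le m$ I would write $D_x^\alpha u=m_\alpha(D_x)\piket{D_x}{m}u$ with $m_\alpha(\xi):=\xi^\alpha\piket{\xi}{-m}$; the Leibniz rule together with Lemma \ref{lemma:piketTech} gives $\abs{\partial_\xi^\gamma m_\alpha(\xi)}\lesssim\piket{\xi}{\abs{\alpha}-m-\abs{\gamma}}\le\piket{\xi}{-\abs{\gamma}}$, which is the Mikhlin--Hörmander condition. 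For the reverse estimate I would set $P(\xi):=\sum_{\abs{\beta}\le m}\xi^{2\beta}$, observe $1\le P(\xi)$ together with $P(\xi)\simeq\piket{\xi}{2m}$ and $\abs{\partial_\xi^\gamma (P(\xi)^{-1})}\lesssim\piket{\xi}{-2m-\abs{\gamma}}$ (induction via the quotient rule), and use the algebraic identity $\piket{\xi}{m}=\sum_{\abs{\beta}\le m}\xi^\beta\cdot(\piket{\xi}{m}\xi^\beta P(\xi)^{-1})$ to rewrite $\piket{D_x}{m}u=\sum_{\abs{\beta}\le m}\tilde{m}_\beta(D_x)D_x^\beta u$; each $\tilde{m}_\beta(\xi)=\piket{\xi}{m}\xi^\beta P(\xi)^{-1}$ is again a Mikhlin--Hörmander multiplier by the same kind of estimate (using Lemma \ref{lemma:piketTech} once more on $\piket{\xi}{m}$). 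Summing over $\beta$ and invoking the multiplier theorem gives $\norm{u}{H^m_q(\R^n)}\lesssim\norm{u}{W^m_q(\R^n)}$.

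\textbf{Density of $\Schwarzi$ in $H^s_q(\R^n)$.} The cleanest route exploits the defining structure: by construction $\piket{D_x}{s}:H^s_q(\R^n)\to L^q(\R^n)$ is an isometric isomorphism with inverse $\piket{D_x}{-s}$, and it restricts to a bijection of $\Schwarzi$ onto itself, because multiplication by $\piket{\hold}{s}$ and by $\piket{\hold}{-s}$ preserves $\Schwarzi$ (all their derivatives grow at most polynomially, by Lemma \ref{lemma:piketTech}) while $\Fourieri$ preserves $\Schwarzi$ by Theorem \ref{thm:SchwartzFourierNorm}. In particular $\Schwarzi\subset H^s_q(\R^n)$, and via the isometry the density of $\Schwarzi$ in $H^s_q(\R^n)$ is equivalent to the density of $\piket{D_x}{s}(\Schwarzi)=\Schwarzi$ in $L^q(\R^n)$, which for $1\le q<\infty$ is the classical density of $C^\infty_0(\R^n)\subset\Schwarzi$ in $L^q(\R^n)$; it fails for $q=\infty$, which explains the stated range.

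\textbf{Main obstacle.} None of the above computations is the real difficulty --- the multiplier estimates and the smoothness bounds on $P^{-1}$ are bookkeeping once Lemma \ref{lemma:piketTech} is in hand. The genuine external ingredient is the Mikhlin--Hörmander Fourier multiplier theorem, which the text does not develop and which would have to be imported (for instance from \cite{Bergh} or \cite{Triebel}); it is also precisely what restricts the second assertion to $1<q<\infty$, whereas the first (via the $L^1$-kernel $G_\sigma$) and the third hold for the full ranges $1\le q\le\infty$ and $1\le q<\infty$ respectively.
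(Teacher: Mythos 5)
Your proposal is correct, and it essentially reconstructs the proof that the paper delegates to the literature: the paper offers no argument of its own but simply cites \cite[Theorem 6.2.3]{Bergh}, where the same three ingredients appear — the positivity and $L^1$-bound of the Bessel kernel $G_\sigma$ via the subordination formula together with Young's inequality for the embedding, a Fourier-multiplier argument (Mikhlin--Hörmander, Theorem 6.1.6 in that reference) for $W^m_q(\R^n)=H^m_q(\R^n)$, and the lifting isometry $\piket{D_x}{s}$ combined with density of $\Schwarzi$ in $L^q(\R^n)$ for the last assertion. Your identification of the multiplier theorem as the only genuinely external input, and as the reason for the restriction $1<q<\infty$ in the second claim, is exactly right; the only cosmetic caveat is that the paper defines $H^s_q(\R^n)$ only for $q\in(1,\infty)$, so for the endpoint cases $q\in\menge{1,\infty}$ of the embedding one has to read the statement with the convolution description you give.
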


\begin{thm}\label{thm:roomBesselStetig}
Für $1 < q < \infty$, $s \in \R$ mit $\frac{n}{s} < q$ bettet $H^s_q(\R^n)$ in den Raum $C^0(\R^n)$ ein.
\begin{proof}Siehe \cite[2.8.1 Remark 2]{Triebel}
\end{proof}
\end{thm}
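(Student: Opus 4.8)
Der Plan ist, die Aussage auf eine Abschätzung des Bessel-Kerns zurückzuführen. Da $\Schwarzi$ nach Theorem \ref{thm:interpolBesselSobolev} dicht in $H^s_q(\R^n)$ liegt und $C^0(\R^n)$ mit der Supremumsnorm vollständig ist, genügt es, eine Konstante $C > 0$ mit
$$ \norm{u}{C^0(\R^n)} \le C \norm{u}{H^s_q(\R^n)} \textrm{ für alle } u \in \Schwarzi $$
zu finden. Approximiert man nämlich $u \in H^s_q(\R^n)$ in der $H^s_q$-Norm durch $u_k \in \Schwarzi$, so ist $\folge{u_k}{k\in\N}$ wegen dieser Abschätzung Cauchy in $C^0(\R^n)$, konvergiert also gleichmäßig gegen ein $g \in C^0(\R^n)$; da zugleich $u_k \to u$ in $\SchwarziDual$ gilt, folgt $u = g$, und $H^s_q(\R^n) \hookrightarrow C^0(\R^n)$ mit derselben Konstanten.

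Für $u \in \Schwarzi$ setze ich $f := \piket{D_x}{s} u \in \Schwarzi$, sodass $u = \piket{D_x}{-s} f = G_s * f$ mit dem Bessel-Kern $G_s := \FourierB{\xi}{\hold}{\piket{\xi}{-s}}$. Mit dem konjugierten Exponenten $q' := \tfrac{q}{q-1}$ liefert die punktweise Hölder-Ungleichung
$$ \norm{u}{C^0(\R^n)} = \sup_{x\in\R^n} \abs{\int G_s(x-y) f(y)\, dy} \le \norm{G_s}{L^{q'}(\R^n)} \norm{f}{L^q(\R^n)} = \norm{G_s}{L^{q'}(\R^n)} \norm{u}{H^s_q(\R^n)} , $$
womit alles auf $G_s \in L^{q'}(\R^n)$ reduziert ist. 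Genau hier geht die Voraussetzung $\tfrac{n}{s} < q$ ein: sie ist äquivalent zu $s > \tfrac{n}{q}$, also zu $(n-s)\, q' < n$.

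Zur Kontrolle von $G_s$ würde ich die Subordinationsformel $\piket{\xi}{-s} = \Gamma(\tfrac{s}{2})^{-1} \int_0^\infty t^{\frac{s}{2}-1} e^{-t} e^{-t\abs{\xi}^2}\, dt$ zusammen mit der bekannten Gauß-Transformierten $\FourierB{\xi}{x}{e^{-t\abs{\xi}^2}} = (4\pi t)^{-\frac{n}{2}} e^{-\abs{x}^2/(4t)}$ verwenden; mit Fubini (für $x \neq 0$ absolut konvergent) ergibt sich
$$ G_s(x) = \frac{(4\pi)^{-\frac{n}{2}}}{\Gamma(\tfrac{s}{2})} \int_0^\infty t^{\frac{s-n}{2}-1} e^{-t} e^{-\abs{x}^2/(4t)}\, dt . $$
Eine elementare Zerlegung dieses eindimensionalen Integrals — bei $t \approx 1$ für $\abs{x} \le 1$, bei $t \approx \abs{x}$ für $\abs{x} \ge 1$, jeweils unter Verwendung der Substitution $u = \abs{x}^2/(4t)$ bzw. der Ungleichung $\tfrac{t}{2} + \tfrac{\abs{x}^2}{8t} \ge \tfrac{\abs{x}}{2}$ — zeigt dann: $G_s$ ist auf $\R^n \setminus \menge{0}$ stetig, fällt für $\abs{x} \ge 1$ exponentiell ab und erfüllt für $0 < \abs{x} \le 1$ die Abschätzung $\abs{G_s(x)} \le C \abs{x}^{s-n}$ im Fall $s < n$ (bzw. $\abs{G_s(x)} \le C(1+\abs{\log\abs{x}})$ für $s=n$, bzw. ist $G_s$ lokal beschränkt für $s > n$). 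Insbesondere ist $G_s \in L^1(\R^n)$ (womit die obige Faltungsdarstellung gerechtfertigt ist), und $x \mapsto \abs{x}^{s-n}$ ist genau dann über $\menge{x \in \R^n : \abs{x} \le 1}$ in $q'$-ter Potenz integrierbar, wenn $(n-s)\, q' < n$ gilt; nach Voraussetzung folgt somit $G_s \in L^{q'}(\R^n)$ und damit die Behauptung.

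\textbf{Hauptschwierigkeit.} Der technische Kern ist der Nachweis der scharfen lokalen Singularität $G_s(x) \sim \abs{x}^{s-n}$ (für $s < n$); erst sie liefert den scharfen Bereich $s > n/q$. Die Subordinationsformel reduziert dies zwar auf eine reine Integralabschätzung, doch muss diese sorgfältig in die Bereiche $\abs{x} \le 1$ bzw. $\abs{x} \ge 1$ und $t \le \abs{x}^2$ bzw. $t \ge \abs{x}^2$ zerlegt werden, um zugleich die lokale Singularität und den Abfall im Unendlichen zu kontrollieren. Für $q = 2$ ist die Aussage übrigens schon in (\ref{equ:SchwartzInftyL2}) enthalten und dient als Konsistenzkontrolle.
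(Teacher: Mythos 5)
Dein Ansatz ist korrekt, unterscheidet sich aber im Weg deutlich von der Arbeit: Dort wird die Einbettung schlicht aus \cite[2.8.1 Remark 2]{Triebel} zitiert, also aus der allgemeinen Einbettungstheorie der Funktionenräume (typischerweise über Littlewood--Paley-Zerlegung bzw. Besov-Raum-Einbettungen $H^s_q \hookrightarrow B^0_{\infty,1} \hookrightarrow C^0$), während Du einen elementaren, in sich geschlossenen Beweis über den Bessel-Kern gibst: Reduktion per Dichtheit auf $u \in \Schwarzi$, Darstellung $u = G_s * \piket{D_x}{s}u$, Hölder-Ungleichung und der Nachweis $G_s \in L^{q'}(\R^n)$ mittels Subordinationsformel, lokaler Singularität $\abs{x}^{s-n}$ und exponentiellem Abfall. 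Das kauft Dir Unabhängigkeit von der Besov-Maschinerie und macht sichtbar, wo genau die Bedingung $s > \tfrac{n}{q}$ (äquivalent $(n-s)q' < n$) eingeht; der Preis ist die technische Kernabschätzung, die Triebel mitliefert. Zwei kleine Punkte zur Sauberkeit: Erstens ist die Voraussetzung $\tfrac{n}{s} < q$ nur für $s > 0$ sinnvoll zu lesen (für $s \le 0$ ist die Aussage falsch bzw. die Bedingung leer); Du benutzt stillschweigend $s > \tfrac{n}{q} > 0$, was dem intendierten Sinn entspricht. Zweitens ist Deine Fubini-Bemerkung so, wie sie dasteht, nicht ganz stichhaltig: Das Doppelintegral über $(t,\xi)$ ist für $s \le n$ nicht absolut konvergent (die $\xi$-Integration liefert $t^{(s-n)/2-1}e^{-t}$, divergent bei $t=0$). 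Der übliche Ausweg ist, die von Dir angegebene Funktion $G_s(x)$ direkt durch das $t$-Integral zu definieren, ihre $L^1$-Zugehörigkeit mit Deinen Abschätzungen nachzuweisen und dann ihre Fourier-Transformierte per Fubini in der Richtung $\int_{\R^n}\int_0^\infty$ zu berechnen (dort ist der Integrand nach $x$-Integration absolut integrierbar, da $s>0$), was $\widehat{G_s} = \piket{\xi}{-s}$ und per Eindeutigkeit in $\SchwarziDual$ die Identifikation liefert; der Rest Deines Arguments bleibt unverändert.
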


\begin{lemma}
Für $1 < q, q' < \infty$ mit $\frac{1}{q} + \frac{1}{q'} = 1$ gibt es einen isometrischen Isomorphismus $J : H^s_q(\R^n)' \xlongrightarrow{\sim} H^{-s}_{q'}(\R^n)$ für alle $s \in \R$.
Für $f \in H^{-s}_{q'}(\R^n) \subset \SchwarziDual, \varphi \in \Schwarzi \subset H^s_p(\R^n)$ ist das Dualitätsprodukt gegeben durch
$$ \dualskp{  J^{-1} f }{ \varphi }{H^s_q(\R^n)'}{H^s_q(\R^n)} := \dualskp{ f }{ \varphi }{\SchwarziDual}{\Schwarzi} = \int_{\R^n} f(x) \varphi(x) dx .$$
\begin{proof}
Der Pullback des ordnungserniedrigenden Operators $\piket{D_x}{-s} : L^q(\R^n) \xlongrightarrow{\sim} H^s_q(\R^n)$ ist der adjungierte Operator
$\left(\piket{D_x}{-s}\right)^* : H^s_q(\R^n)' \xlongrightarrow{\sim} L^q(\R^n)'$. $\piket{D_x}{-s}$ ist aber nach Definition eine Isometrie, also ist $\left(\piket{D_x}{-s}\right)^*$ auch eine.
Der isometrische Isomorphismus $T : L^{q'}(\R^n) \xlongrightarrow{\sim} L^q(\R^n)'$ gegeben durch den Riesz'schen Darstellungssatz definiert uns
schließlich einen isometrischen Isomorphismus $J :  H^s_q(\R^n)' \xlongrightarrow{\sim} H^{-s}_{q'}(\R^n)$, der das Diagramm
$$
\begin{xy}
\xymatrix{
	H^s_q(\R^n)' \ar@{-->}[d]^{J} \ar[rr]_{\left(\piket{D_x}{-s}\right)^*} && L^q(\R^n)' \\
	H^{-s}_{q'}(\R^n)  \ar[rr]_{\piket{D_x}{-s}} && L^{q'}(\R^n) \ar[u]_{T} 
}
\end{xy}
$$
kommutieren lässt.
Siehe dazu auch \cite[Corollary 6.2.8]{Bergh}.
\end{proof}
\end{lemma}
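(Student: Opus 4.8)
Der Plan ist, $J$ als Hintereinanderausführung dreier isometrischer Isomorphismen zu konstruieren, genau wie es das angegebene kommutative Diagramm nahelegt. Zuerst würde ich festhalten, dass $\piket{D_x}{-s} : L^q(\R^n) \longrightarrow H^s_q(\R^n)$ schon per Definition der Norm $\norm{u}{H^s_q(\R^n)} = \norm{\piket{D_x}{s} u}{L^q(\R^n)}$ ein isometrischer Isomorphismus mit Inversem $\piket{D_x}{s}$ ist; analog ist $\piket{D_x}{s} : L^{q'}(\R^n) \longrightarrow H^{-s}_{q'}(\R^n)$ ein isometrischer Isomorphismus mit Inversem $\piket{D_x}{-s} : H^{-s}_{q'}(\R^n) \longrightarrow L^{q'}(\R^n)$.

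Als Nächstes würde ich zur transponierten Abbildung $\left(\piket{D_x}{-s}\right)^* : H^s_q(\R^n)' \longrightarrow L^q(\R^n)'$ übergehen; die Transponierte einer surjektiven Isometrie ist wieder eine surjektive Isometrie, also selbst ein isometrischer Isomorphismus. Der Riesz'sche Darstellungssatz liefert für $1<q<\infty$ einen isometrischen Isomorphismus $T : L^{q'}(\R^n) \longrightarrow L^q(\R^n)'$ mit $\dualskp{Th}{g}{L^q(\R^n)'}{L^q(\R^n)} = \int_{\R^n} h(x)g(x)\,dx$. Dann setze ich
$$ J := \piket{D_x}{s} \circ T^{-1} \circ \left(\piket{D_x}{-s}\right)^* : H^s_q(\R^n)' \longrightarrow H^{-s}_{q'}(\R^n) , $$
womit $J$ als Verkettung dreier isometrischer Isomorphismen selbst ein isometrischer Isomorphismus ist und das Diagramm nach Konstruktion kommutiert.

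Zuletzt wäre die Gestalt des Dualitätsprodukts zu prüfen. Zu $\varphi \in \Schwarzi$ setze ich $g := \piket{D_x}{s}\varphi$; nach Lemma \ref{lemma:piketTech} und Theorem \ref{thm:SchwartzFourierNorm} ist $g \in \Schwarzi \subset L^q(\R^n)$ und $\varphi = \piket{D_x}{-s}g$. Für $f \in H^{-s}_{q'}(\R^n)$ ist $J^{-1}f = \left(\left(\piket{D_x}{-s}\right)^*\right)^{-1} T \piket{D_x}{-s} f$, also nach Definition der Transponierten und von $T$
$$ \dualskp{J^{-1}f}{\varphi}{H^s_q(\R^n)'}{H^s_q(\R^n)} = \dualskp{T\piket{D_x}{-s}f}{g}{L^q(\R^n)'}{L^q(\R^n)} = \int_{\R^n} \left(\piket{D_x}{-s}f\right)(x)\left(\piket{D_x}{s}\varphi\right)(x)\,dx . $$
Da $\piket{D_x}{\pm s}$ Fourier-Multiplikatoren mit den geraden Symbolen $\piket{\xi}{\pm s}$ sind, sind sie bezüglich der Bilinearform $(f,g)\mapsto\int f g\,dx$ symmetrisch (nachzurechnen mit Plancherel über $\int f g\,dx = (2\pi)^{-n}\int \hat{f}(\xi)\hat{g}(-\xi)\,d\xi$ und per Dichtheit von $\Schwarzi$ in $H^{-s}_{q'}(\R^n)$ aus Theorem \ref{thm:interpolBesselSobolev} auf $f \in H^{-s}_{q'}(\R^n)$ erweitert), weshalb der letzte Ausdruck gleich $\int_{\R^n} f(x)(\piket{D_x}{-s}\piket{D_x}{s}\varphi)(x)\,dx = \int_{\R^n} f(x)\varphi(x)\,dx = \dualskp{f}{\varphi}{\SchwarziDual}{\Schwarzi}$ ist.

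Die Hauptschwierigkeit wird weniger die Diagrammjagd selbst sein als die saubere Buchführung darüber, welche Paarung (die bilineare $\int fg\,dx$ oder die sesquilineare) an welcher Stelle gemeint ist, sowie die Rechtfertigung der Multiplikator-Symmetrie für Distributionen $f \in H^{-s}_{q'}(\R^n)$ anstatt nur für Schwartz-Funktionen; Letzteres würde ich durch Approximation mittels der Dichtheit von $\Schwarzi$ in $H^s_q(\R^n)$ bzw. $H^{-s}_{q'}(\R^n)$ zusammen mit der Stetigkeit aller beteiligten Operatoren erledigen.
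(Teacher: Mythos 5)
Dein Vorschlag ist korrekt und verfolgt denselben Weg wie die Arbeit: $J$ wird über das kommutative Diagramm aus dem adjungierten ordnungserniedrigenden Operator $\left(\piket{D_x}{-s}\right)^*$ und dem Riesz-Isomorphismus $T$ zusammengesetzt. Du gehst lediglich insofern weiter, als du die im Lemma behauptete Formel für das Dualitätsprodukt (mittels Symmetrie des geraden Fourier-Multiplikators und Dichtheit von $\Schwarzi$) explizit nachrechnest, was die Arbeit nur durch den Verweis auf \cite[Corollary 6.2.8]{Bergh} abdeckt.
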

\nomenclature{$\dualskp{\hold}{\hold}{X'}{X}$}{Dualitätsprodukt bzgl. des Vektorraums $X$ mit seinem Dualraum $X'$}
\nomenclature{$\skp{\hold}{\hold}{X}$}{Skalarprodukt eines Hilbertraums $X$}

\subsection{Der Hölder-Zygmund-Raum}\label{sec:hoelderzygmund}
\begin{definition}[Hölder-Zygmund-Raum]
Für $\tau > 0$ ist der {\em Hölder-Zygmund-Raum} $C^\tau_*(\R^n)$ definiert durch
\begin{equation}
C^\tau_*(\R^n) = \menge{ u \in C^0 (\R^n) : \exists C > 0 \textrm{ sodass } \norm{ \lilwood{j}(D_x) u }{\infty} \le c 2^{-j\tau} \textrm{ für jedes } j \in \N_0 }
\end{equation}
mit der Norm
\begin{equation}\label{norm:C*}
\norm{u}{C^\tau_*(\R^n)} := \sup_{j \in \N_0} 2^{j\tau} \norm{\lilwood{j}(D_x)u}{\infty} .
\end{equation}
\end{definition}
\nomenclature[f]{$C^t_*(\R^n)$}{Hölder-Zygmund-Raum der Ordnung $t$}
\nomenclature[n]{$\norm{\hold}{C^t_*(\R^n)}$}{Norm des Hölder-Zygmund-Raums der Ordnung $t$}

\begin{remark}
Für $\tau \notin \N$ ist $C^\tau_*(\R^n) = C^\tau(\R^n)$ und für jedes $\tau \in \N$ ist $C^\tau(\R^n) \subset C^\tau_*(\R^n)$.
\begin{proof}
Siehe \cite[(A.1.6)]{Nonlinear} bzw. \cite[\S 2.5.7]{TriebelFunction}
\end{proof}
\end{remark}

\section{Pseudodifferentialoperatoren}\label{sec:psdo}
\subsection{Die Hörmanderklasse $C^\tau S^m_{\rho,\delta}(\R^n,\R^n)$} \label{sec:psdointro}

\begin{definition}
Seien $0 \le \delta \le \rho \le 1$.
Wir bezeichnen eine Funktion $p(x,\xi) : \R^n \times \R^n \rightarrow \C$, 
die glatt bezüglich $x$ und Hölder-stetig zum Grad $\tau \in \R^+$ ist, 
d.h. $p(x,\hold) \in \Stetigi{\infty}, p(\hold,\xi) \in C^\tau(\R^n)$,
als Symbol der Klasse $C^\tau S^m_{\rho,\delta}(\R^n, \R^n)$ (mit $m \in \R$), falls
es zu jedem Multiindex $\alpha \in \N^n_0$ eine Konstante $C_{\alpha} > 0$ gibt, sodass
$$ \abs{ D_\xi^\alpha p(x,\xi) } \le C_\alpha \piket{\xi}{m-\rho\abs{\alpha}} ,$$
und für jedes $0 \le t \le \tau$
\begin{equation}\label{equ:introPSDOtNorm}
\norm{D_\xi^\alpha p(\hold,\xi) }{C^t(\R^n)} \le C_\alpha \piket{\xi}{m-\rho\abs{\alpha}+\delta t}.
\end{equation}
Für $p \in C^\tau S^m_{\rho,\delta}(\R^n, \R^n)$ definiere die Familie von Halbnormen
$$
\abs{p}_{l,\tau}^{(m)} := \max_{\alpha\le l} \sup_{x,\xi\in\R^n} \left( \abs{\partial_\xi^\alpha p(x,\xi)} + \norm{\partial_\xi^\alpha p(\hold,\xi)}{C^\tau(\R^n)} \piket{\xi}{-\delta\tau} \right)  \piket{\xi}{-m+\rho\abs{\alpha}}
.$$
Die Halbnormen induzieren auf $C^\tau S^m_{\rho,\delta}(\R^n, \R^n)$ einen Fréchet-Raum.
Insbesondere ist 
$$\abs{ D_\xi^\alpha p(x,\xi) } + \norm{D_\xi^\alpha p(\hold,\xi) }{C^t(\R^n)} \le \abs{p}_{\abs{\alpha+\beta},\tau}^{(m)} \piket{\xi}{m-\rho\abs{\alpha}+\delta t} .$$

\end{definition}
\nomenclature[s]{$C^\tau S^m_{\rho,\delta}(\R^n,\R^n)$}{Klasse der Symbole in $x$-Form mit Hölder-Stetigkeitsgrad $\tau$}
\nomenclature[n]{$\abs{\hold}_{l,\tau}^{(m)}$}{Halbnorm der Klasse $C^\tau S^m_{\rho,\delta}(\R^n,\R^n)$}

\begin{remark}
Anstatt für jedes $0 \le t \le \tau$ die Abschätzung (\ref{equ:introPSDOtNorm}) zu überprüfen, reicht es, dass die Ungleichungen
\begin{align*}
\norm{D_\xi^\alpha p(\hold,\xi) }{C^0(\R^n)} &\le C_\alpha \piket{\xi}{m-\rho\abs{\alpha}} \textrm{ und } \\
\norm{D_\xi^\alpha p(\hold,\xi) }{C^\tau(\R^n)} &\le C_\alpha \piket{\xi}{m-\rho\abs{\alpha}+\delta\tau}
\end{align*}
erfüllt sind.
Denn für ein $0 \le t \le \tau$ und $\alpha \in \N^n_0$ setze $\theta := \frac{t}{\tau}$ und verwende Theorem \ref{interpol:cor}
\begin{align*}
\norm{ D_\xi^\alpha p(\hold,\xi) }{C^t(\R^n)} 
&\le c_{t\tau\theta} \norm{ D_\xi^\alpha p(\hold,\xi) }{C^0(\R^n)}^{1-\theta} \norm{ D_\xi^\alpha p(\hold,\xi) }{C^\tau(\R^n)}^\theta \\
&\le c_{t\tau\theta} c_\alpha \piket{\xi}{m-\rho\abs{\alpha}+\delta\tau\theta} = c_{t\tau\theta} c_\alpha \piket{\xi}{m-\rho\abs{\alpha}+t\delta}
.\end{align*}
\end{remark}

\begin{cor}\label{cor:IntroAbleitung}
Für $p \in C^\tau S^m_{\rho,\delta}(\R^n,\R^n)$ ist 
$D_x^\beta D_\xi^\alpha p(x,\xi) \in C^{\tau-\abs{\beta}} S^{m-\rho\abs{\alpha}+\delta\abs{\beta}}_{\rho,\delta}(\R^n,\R^n)$ für Multiindizes $\alpha,\beta\in\N^n_0$ mit $\abs{\beta} < \tau$.
\begin{proof}
Zunächst haben wir
$$\abs{ D_x^\beta D_\xi^\alpha p(x,\xi) } \le \norm{ D_\xi^\alpha p(\hold,\xi)}{C^{\abs{\beta}}(\R^n)} \le 
C_{\alpha\beta} \piket{\xi}{m-\rho\abs{\alpha}+\delta\abs{\beta}}
.$$

Für $\abs{\beta} \le t \le \tau$ ist
\begin{align*}
\norm{ D_x^\beta D_\xi^\alpha p(\hold,\xi) }{C^{t-\abs{\beta}}(\R^n)} 
&\le \norm{ D_x^\beta D_\xi^\alpha p(\hold,\xi) }{C^{\gauss{t}-\abs{\beta}}(\R^n)} + \max_{\abs{\gamma} = \gauss{t}} \hnorm{ D_x^\gamma D_\xi^\alpha p(\hold,\xi) }{t-\gauss{t}} \\
&\le C_\beta \norm{ D_\xi^\alpha p(\hold,\xi) }{C^t(\R^n)} 
\le C_{\alpha\beta} \piket{\xi}{m-\rho\abs{\alpha}+\delta t}
.\end{align*}
Also ist für $0 \le t \le \tau-\abs{\beta}$
$$
\norm{ D_x^\beta D_\xi^\alpha p(\hold,\xi) }{C^{t}(\R^n)} 
\le C_{\alpha\beta} \piket{\xi}{m+\delta\abs{\beta}-\rho\abs{\alpha}+\delta t}
.$$
\end{proof}
\end{cor}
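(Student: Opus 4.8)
The plan is to verify directly that $q(x,\xi) := D_x^\beta D_\xi^\alpha p(x,\xi)$ satisfies the two families of estimates defining the class $C^{\tau'} S^{m'}_{\rho,\delta}(\R^n,\R^n)$, where I abbreviate $m' := m - \rho\abs{\alpha} + \delta\abs{\beta}$ and $\tau' := \tau - \abs{\beta}$; note that $\tau' > 0$ precisely because the hypothesis is $\abs{\beta} < \tau$, so $C^{\tau'}(\R^n)$ is meaningful. The qualitative regularity is inherited: since $p(\hold,\xi) \in C^\tau(\R^n)$ and $\abs{\beta} \le \gauss{\tau}$, the $x$-derivative $D_x^\beta p(\hold,\xi)$ exists and lies in $C^{\tau'}(\R^n)$, while smoothness of $q(x,\hold)$ in $\xi$ follows from that of $p(x,\hold)$ by the usual interchange of $x$- and $\xi$-derivatives. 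The only structural observation I need is that for every multi-index $\gamma \in \N^n_0$ one has $D_\xi^\gamma q(x,\xi) = D_x^\beta D_\xi^{\alpha+\gamma} p(x,\xi)$, so both estimates reduce to bounding $x$-derivatives of order $\abs{\beta}$ of the symbol $D_\xi^{\alpha+\gamma} p$.

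For the pointwise bound I would use $\abs{D_x^\beta f(x)} \le \norm{f}{C^{\abs{\beta}}(\R^n)}$ applied to $f = D_\xi^{\alpha+\gamma} p(\hold,\xi)$, and then invoke the defining estimate (\ref{equ:introPSDOtNorm}) of $C^\tau S^m_{\rho,\delta}(\R^n,\R^n)$ at the integer value $t = \abs{\beta}$ (admissible since $\abs{\beta} \le \tau$); this yields $\abs{D_\xi^\gamma q(x,\xi)} \le C \piket{\xi}{m - \rho\abs{\alpha+\gamma} + \delta\abs{\beta}} = C \piket{\xi}{m' - \rho\abs{\gamma}}$, which is exactly the first required estimate.

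For the Hölder scale, fix $0 \le t \le \tau'$. The key is the embedding proved earlier in the excerpt, $\norm{\partial^\beta u}{C^{k,\theta}(\R^n)} \le \norm{u}{C^{k+\abs{\beta},\theta}(\R^n)}$: applying it with $k = \gauss{t}$ and $\theta = t - \gauss{t}$, and using that $\gauss{t+\abs{\beta}} = \gauss{t} + \abs{\beta}$ (so that the fractional part of $t + \abs{\beta}$ is again $\theta$), one gets $\norm{D_x^\beta u}{C^t(\R^n)} \le \norm{u}{C^{t+\abs{\beta}}(\R^n)}$. With $u = D_\xi^{\alpha+\gamma} p(\hold,\xi)$ and then (\ref{equ:introPSDOtNorm}) at the exponent $t + \abs{\beta} \in [0,\tau]$ I obtain
$$\norm{D_\xi^\gamma q(\hold,\xi)}{C^t(\R^n)} \le \norm{D_\xi^{\alpha+\gamma} p(\hold,\xi)}{C^{t+\abs{\beta}}(\R^n)} \le C \piket{\xi}{m - \rho\abs{\alpha+\gamma} + \delta(t+\abs{\beta})} = C \piket{\xi}{m' - \rho\abs{\gamma} + \delta t},$$
which is the second required estimate; reading off the half-norm bounds then gives $q \in C^{\tau'} S^{m'}_{\rho,\delta}(\R^n,\R^n)$. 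One could equally verify only $t = 0$ and $t = \tau'$ and fill in intermediate $t$ via the remark following the definition (which rests on Theorem \ref{interpol:cor}), but the direct computation already covers the whole range.

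I expect the only genuinely fiddly part to be the index bookkeeping: ensuring the shifted Hölder order $t + \abs{\beta}$ never leaves $[0,\tau]$ — which forces $t \le \tau - \abs{\beta}$ and is exactly why the class of the derivative drops to order $\tau' = \tau - \abs{\beta}$ — together with matching the convention $C^s(\R^n) = C^{\gauss{s},\, s - \gauss{s}}(\R^n)$ with the additive behaviour of the Gauss bracket under an integer shift when applying the $\partial^\beta$-embedding. Every other step is a one-line substitution into the two defining inequalities of the symbol class.
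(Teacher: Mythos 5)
Your proposal is correct and follows essentially the same route as the paper: bound $D_x^\beta D_\xi^{\alpha+\gamma} p$ pointwise via the $C^{\abs{\beta}}$-norm, then control the $C^t$-norm via the shift $t \mapsto t+\abs{\beta}$, invoking (\ref{equ:introPSDOtNorm}) at the shifted Hölder order. You are slightly more explicit than the paper in tracking the extra multi-index $\gamma$ on the $\xi$-derivatives of $q$ (the paper subsumes it into the arbitrariness of $\alpha$), but this is a presentational difference, not a different argument.
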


\begin{cor}
Für zwei Symbole $p(x,\xi) \in C^\tau S^m_{\rho,\delta}(\R^n,\R^n), q(x,\xi) \in C^\tau S^{m'}_{\rho',\delta'}(\R^n,\R^n)$ ist
$p(x,\xi) q(x,\xi) \in C^{\tilde{\tau}} S^{m+m'}_{\tilde{\rho},\tilde{\delta}}(\R^n,\R^n)$ mit
$\tilde{\rho} := \min(\rho,\rho'), \; \tilde{\delta} := \max(\delta,\delta'), \; \tilde{\tau} := \min(\tau,\tau')$.
\end{cor}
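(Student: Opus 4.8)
The plan is to reduce the assertion to the Leibniz rule in the covariable $\xi$ together with the multiplicativity of the Hölder norm in the space variable $x$. Write $r:=pq$ (and read the hypothesis on $q$ as $q\in C^{\tau'}S^{m'}_{\rho',\delta'}(\R^n,\R^n)$, consistently with $\tilde\tau=\min(\tau,\tau')$). That $r(x,\hold)\in\Stetigi{\infty}$ is immediate since a product of smooth functions is smooth, and $r(\hold,\xi)\in C^{\tilde\tau}(\R^n)$ follows because $p(\hold,\xi),q(\hold,\xi)\in C^\tau(\R^n)\cap C^{\tau'}(\R^n)\subset C^{\tilde\tau}(\R^n)$ by the embeddings of the Hölder spaces and $C^{\tilde\tau}(\R^n)$ is closed under multiplication. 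It then remains to verify the symbol estimates, and by the Remark following the definition of $C^\tau S^m_{\rho,\delta}(\R^n,\R^n)$ it suffices to control $\norm{D_\xi^\alpha r(\hold,\xi)}{C^0(\R^n)}$ and $\norm{D_\xi^\alpha r(\hold,\xi)}{C^{\tilde\tau}(\R^n)}$ for every $\alpha\in\N^n_0$.

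First I would apply the Leibniz rule in $\xi$ to get
\[
D_\xi^\alpha r(x,\xi)=\sum_{0\le\mu\le\alpha}\binom{\alpha}{\mu}D_\xi^\mu p(x,\xi)\,D_\xi^{\alpha-\mu}q(x,\xi).
\]
For the $C^0$-bound, estimate each summand by $C\piket{\xi}{m-\rho\abs{\mu}}\piket{\xi}{m'-\rho'\abs{\alpha-\mu}}$; since $\rho\abs{\mu}+\rho'\abs{\alpha-\mu}\ge\tilde\rho(\abs{\mu}+\abs{\alpha-\mu})=\tilde\rho\abs{\alpha}$, this is $\le C_\alpha\piket{\xi}{m+m'-\tilde\rho\abs{\alpha}}$. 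For the $C^{\tilde\tau}$-bound I would first record the elementary multiplicativity inequality
\[
\norm{fg}{C^{\tilde\tau}(\R^n)}\le C\bigl(\norm{f}{C^{\tilde\tau}(\R^n)}\norm{g}{C^0(\R^n)}+\norm{f}{C^0(\R^n)}\norm{g}{C^{\tilde\tau}(\R^n)}\bigr),
\]
which comes from the Leibniz rule for the integer derivatives $\partial_x^\gamma(fg)$ with $\abs{\gamma}\le\gauss{\tilde\tau}$ and, for the fractional seminorm, from $\abs{f(x)g(x)-f(y)g(y)}\le\abs{f(x)}\abs{g(x)-g(y)}+\abs{g(y)}\abs{f(x)-f(y)}$. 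Applying it to each summand $D_\xi^\mu p(\hold,\xi)\,D_\xi^{\alpha-\mu}q(\hold,\xi)$ and invoking (\ref{equ:introPSDOtNorm}) at $t=\tilde\tau$ for $p$ (admissible since $\tilde\tau\le\tau$) and for $q$ (admissible since $\tilde\tau\le\tau'$), each summand is bounded by
\[
C\bigl(\piket{\xi}{m-\rho\abs{\mu}+\delta\tilde\tau}\piket{\xi}{m'-\rho'\abs{\alpha-\mu}}+\piket{\xi}{m-\rho\abs{\mu}}\piket{\xi}{m'-\rho'\abs{\alpha-\mu}+\delta'\tilde\tau}\bigr).
\]
Using once more $\rho\abs{\mu}+\rho'\abs{\alpha-\mu}\ge\tilde\rho\abs{\alpha}$ together with $\delta\le\tilde\delta$ and $\delta'\le\tilde\delta$, both terms are $\le C\piket{\xi}{m+m'-\tilde\rho\abs{\alpha}+\tilde\delta\tilde\tau}$; summing over $\mu$ gives $\norm{D_\xi^\alpha r(\hold,\xi)}{C^{\tilde\tau}(\R^n)}\le C_\alpha\piket{\xi}{m+m'-\tilde\rho\abs{\alpha}+\tilde\delta\tilde\tau}$. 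Together with the $C^0$-estimate and the Remark, this yields $r\in C^{\tilde\tau}S^{m+m'}_{\tilde\rho,\tilde\delta}(\R^n,\R^n)$.

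The only genuinely new ingredient is the Hölder multiplicativity inequality above, and that is the step I expect to require the most care, in particular the treatment of the $\gauss{\tilde\tau}$-th order derivatives via Leibniz when $\tilde\tau\notin\N$; everything else is bookkeeping with the exponents. The one subtlety to keep in mind there is that $\tilde\rho=\min(\rho,\rho')$ enters the target exponent with a minus sign, so one uses the lower bound $\rho\abs{\mu}+\rho'\abs{\alpha-\mu}\ge\tilde\rho\abs{\alpha}$, whereas $\tilde\delta=\max(\delta,\delta')$ enters with a plus sign, so one uses the upper bounds $\delta,\delta'\le\tilde\delta$.
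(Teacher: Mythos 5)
The paper states this Folgerung without proof, so there is nothing to compare against line by line; your outline is the natural argument and its structure is sound: Leibniz in $\xi$, the two endpoint bounds $t=0$ and $t=\tilde\tau$, and the Remark after the definition (i.e.\ Theorem \ref{interpol:cor}) to fill in the intermediate $t$, with the correct bookkeeping $\rho\abs{\mu}+\rho'\abs{\alpha-\mu}\ge\tilde\rho\abs{\alpha}$ and $\delta,\delta'\le\tilde\delta$. The one place where your sketch is thinner than what is actually needed is the product inequality $\norm{fg}{C^{\tilde\tau}(\R^n)}\le C\left(\norm{f}{C^{\tilde\tau}(\R^n)}\norm{g}{C^0(\R^n)}+\norm{f}{C^0(\R^n)}\norm{g}{C^{\tilde\tau}(\R^n)}\right)$ for $\tilde\tau>1$: Leibniz in $x$ produces cross terms $\partial_x^\beta f\,\partial_x^{\gamma-\beta}g$ with $0<\abs{\beta}<\abs{\gamma}\le\gauss{\tilde\tau}$, and their intermediate norms are not dominated by the claimed right-hand side by the elementary splitting you quote; one needs the interpolation inequalities of Theorem \ref{interpol:cor} combined with Young's inequality (this is the classical Moser/tame estimate — true, but a lemma in its own right, which you only gesture at).

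You can in fact bypass that lemma entirely in this situation, because symbols come with the whole scale of estimates (\ref{equ:introPSDOtNorm}) for $0\le t\le\tau$ (resp.\ $\tau'$): estimate each cross term directly by $\norm{D_\xi^\mu p(\hold,\xi)}{C^{\abs{\beta}}(\R^n)}\,\norm{D_\xi^{\alpha-\mu}q(\hold,\xi)}{C^{\abs{\gamma-\beta},\theta}(\R^n)}$ and its counterpart with the roles of $p$ and $q$ exchanged, where $\theta:=\tilde\tau-\gauss{\tilde\tau}$, and note $\delta\abs{\beta}+\delta'\left(\abs{\gamma-\beta}+\theta\right)\le\tilde\delta\left(\abs{\gamma}+\theta\right)\le\tilde\delta\tilde\tau$; this yields the $C^{\tilde\tau}$ endpoint with no tame estimate at all. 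A final cosmetic remark: with $\tilde\rho=\min(\rho,\rho')$ and $\tilde\delta=\max(\delta,\delta')$ it can happen that $\tilde\delta>\tilde\rho$, so the target class need not satisfy the standing assumption $0\le\delta\le\rho$ of the definition; that is an imprecision of the statement as printed, not of your estimates.
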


\begin{lemma}\label{lemma:IntroSymS->S}
Für $u \in \Schwarzi$ und $p \in C^\tau S^m_{\rho,\delta}(\R^n,\R^n)$ ist 
$ \xi \mapsto p(x,\xi) u(\xi) \in \Schwarzi$ für jedes $x \in \R^n$.
\begin{proof}
Seien $l \in \N_0, \alpha \in \N^n_0$ und $\xi \in \R^n$ fest.
Wir erhalten mit der Regel von Leibniz
\begin{align*}
\abs{ \piket{\xi}{l} \partial_\xi^\alpha \left( p(x,\xi) u(\xi) \right) }
&= \abs{ \sum_{\alpha_1+\alpha_2 = \alpha} {\alpha \choose \alpha_1} \piket{\xi}{l} \partial_\xi^{\alpha_1} p(x,\xi) \partial_\xi^{\alpha_2} u(\xi) } \\
&\le \sum_{\alpha_1+\alpha_2 = \alpha}  {\alpha \choose \alpha_1} \piket{\xi}{l} \abs{ \partial_\xi^{\alpha_1} p(x,\xi) } \abs{ \partial_\xi^{\alpha_2} u(\xi) } \\
&\le \sum_{\alpha_1+\alpha_2 = \alpha}  {\alpha \choose \alpha_1} C_{\alpha_1} \piket{\xi}{m-\rho\abs{\alpha_1}+l} \abs{ \partial_\xi^{\alpha_2} u(\xi) } \\
&\le \sum_{\alpha_1+\alpha_2 = \alpha}  {\alpha \choose \alpha_1} C_{\alpha_1} \SchwarzNorm{u}{\max\menge{0, m-\rho\abs{\alpha_1}+\abs{\alpha_2}+l}} \\
&\le C_{\alpha\beta} \SchwarzNorm{u}{\max\menge{0, m+\abs{\alpha}+l}}
.\end{align*}
Mit dem Übergang zum Supremum über alle $\xi \in \R^n$ erhalten wir die Beschränktheit von $\xi \mapsto p(x,\xi) u(\xi)$ in jeder Schwartz-Halbnorm.
\end{proof}
\end{lemma}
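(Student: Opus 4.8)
The plan is to fix $x \in \R^n$ once and for all and to verify directly that $v_x : \xi \mapsto p(x,\xi) u(\xi)$ lies in $\Schwarzi$, by bounding each Schwartz seminorm $\SchwarzNorm{v_x}{\hold}$ in terms of a single (larger) Schwartz seminorm of $u$ together with finitely many of the symbol constants $C_\alpha$ of $p$. The only two facts about the factors that enter are: on the symbol side, that $p \in C^\tau S^m_{\rho,\delta}(\R^n,\R^n)$ gives for every $\alpha \in \N^n_0$ a constant $C_\alpha > 0$ with $\abs{\partial_\xi^\alpha p(x,\xi)} = \abs{D_\xi^\alpha p(x,\xi)} \le C_\alpha \piket{\xi}{m-\rho\abs{\alpha}}$ (so in particular $\xi \mapsto \partial_\xi^\alpha p(x,\xi)$ grows at most polynomially); and on the Schwartz side, that $u \in \Schwarzi$ gives $\piket{\xi}{N}\abs{\partial_\xi^\beta u(\xi)} \le \SchwarzNorm{u}{N+\abs{\beta}}$ for all $N \in \N_0$, $\beta \in \N^n_0$. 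Note that the Hölder-type bounds (\ref{equ:introPSDOtNorm}) play no role here; only the plain $\xi$-derivative estimate is used. Since both $p(x,\hold) \in \Stetigi{\infty}$ and $u$ are smooth, $v_x$ is smooth and the Leibniz rule applies.

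Next I would fix $l \in \N_0$ and $\alpha \in \N^n_0$ and expand $\partial_\xi^\alpha\!\left(p(x,\xi)u(\xi)\right) = \sum_{\alpha_1+\alpha_2=\alpha}{\alpha \choose \alpha_1}\partial_\xi^{\alpha_1}p(x,\xi)\,\partial_\xi^{\alpha_2}u(\xi)$. Multiplying by $\piket{\xi}{l}$ and inserting the two bounds above, a typical summand is dominated by ${\alpha \choose \alpha_1}C_{\alpha_1}\piket{\xi}{m-\rho\abs{\alpha_1}+l}\abs{\partial_\xi^{\alpha_2}u(\xi)}$. The one point that needs a moment's care is that the exponent $m-\rho\abs{\alpha_1}+l$ can be negative; choosing a fixed integer $N := \max\{0,\gauss{m}+1+\abs{\alpha}+l\} \in \N_0$ we have $\piket{\xi}{m-\rho\abs{\alpha_1}+l} \le \piket{\xi}{N}$ for all $\xi$, so the summand is at most ${\alpha \choose \alpha_1}C_{\alpha_1}\piket{\xi}{N}\abs{\partial_\xi^{\alpha_2}u(\xi)} \le {\alpha \choose \alpha_1}C_{\alpha_1}\SchwarzNorm{u}{N+\abs{\alpha_2}}$, uniformly in $\xi$. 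Summing the finitely many terms and passing to the supremum over $\xi \in \R^n$ yields $\sup_{\xi}\piket{\xi}{l}\abs{\partial_\xi^\alpha v_x(\xi)} \le C_{\alpha,l}\,\SchwarzNorm{u}{N+\abs{\alpha}} < \infty$, so $v_x$ is bounded in every Schwartz seminorm.

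Finally, to upgrade this to genuine membership $v_x \in \Schwarzi$ (i.e.\ the vanishing condition $\lim_{\abs{\xi}\to\infty}\piket{\xi}{l}\partial_\xi^\alpha v_x(\xi)=0$), I would apply the estimate just obtained with $l$ replaced by $l+1$: then $\piket{\xi}{l}\abs{\partial_\xi^\alpha v_x(\xi)} \le \piket{\xi}{-1}\sup_{\eta}\piket{\eta}{l+1}\abs{\partial_\eta^\alpha v_x(\eta)} \longrightarrow 0$ as $\abs{\xi}\to\infty$, and since $l,\alpha$ were arbitrary this is exactly the defining property. I do not anticipate any real obstacle; the argument is precisely the one indicated in the statement, the sole bookkeeping subtlety being the possibly negative exponent $m-\rho\abs{\alpha_1}+l$, which the $\max\{0,\hold\}$ truncation disposes of.
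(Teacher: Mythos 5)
Your proof is correct and follows exactly the same route as the paper's own argument: Leibniz rule on $\partial_\xi^\alpha(p(x,\xi)u(\xi))$, the plain $\xi$-derivative symbol estimate (the Hölder estimates are irrelevant), and domination of each summand by a Schwartz seminorm of $u$. In fact your version is slightly tidier in two small ways: you replace the paper's non-integer seminorm index $\max\{0,\,m-\rho\abs{\alpha_1}+\abs{\alpha_2}+l\}$ by the honest integer $N := \max\{0,\gauss{m}+1+\abs{\alpha}+l\}$, and you explicitly upgrade boundedness of all seminorms to the vanishing condition in the paper's definition of $\Schwarzi$ by bumping $l$ to $l+1$ — a step the paper leaves implicit.
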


\begin{thm}\label{thm:introSchwarzBeschr}
Das Symbol $p(x,\xi) \in C^\tau S^m_{\rho,\delta}(\R^n,\R^n)$, $\tau \in \R_+$, $m \in \R$, $0 \le \delta \le \rho \le 1$ definiert mit
$$P u(x) := \int e^{ix\cdot\xi} p(x,\xi) \hat{u}(\xi) \dslash\xi \textrm{ für alle } u \in \Schwarzi$$
einen beschränkten und stetigen Operator $P : \Schwarzi \rightarrow C^\tau(\R^n)$.
Wir schreiben $p(X,D_x) := \OPSym{p(x,\xi)} := P$, und sagen
dass der Operator $p(X,D_x)$ der Klasse $\OP{C^\tau S^m_{\rho,\delta}}(\R^n, \R^n)$ angehört.
\begin{proof}
Für $u \in \Schwarzi$ ist mit (\ref{equ:SchwarzNorm}) und Theorem \ref{thm:SchwartzFourierNorm} 
$$\piket{\xi}{m} \abs{\hat{u}(\xi)} \le \piket{\xi}{-(n+1)} \SchwarzNorm{\hat{u}}{ m_+ + n + 1 } \le \piket{\xi}{-(n+1)} \SchwarzNorm{u}{m_+ + 2 (n+1)} ,$$
wobei $m_+ := \max\menge{ 0, m }$.
Demnach haben wir
\begin{align*}
\abs{ p(X,D_x) u(x) } 
&\le \int \abs{ p(x,\xi) } \abs{\hat{u}(\xi)} \dslash\xi \\
&\le \abs{p}_{0,0}^{(m)} \int \piket{\xi}{-(n+1)} \dslash\xi \SchwarzNorm{u}{m_+ + 2 (n+1)}
\le C \abs{p}_{0,0}^{(m)} \SchwarzNorm{u}{m_+ + 2 (n+1)} 
.\end{align*}
Insbesondere ist für $\tau < 1$
\begin{align*}
& \hnorm{ p(X,D_x) u }{\tau} \\
&\le 
\sup_{\substack{ x,y \in \R^n \\ x \neq y}} 
\frac{\abs{ \int e^{ix\cdot\xi} \piket{\xi}{-(n+1)} \left( p(x,\xi) - p(y,\xi) \right) \hat{u}(\xi) \dslash\xi}
+ \abs{ \int \left( e^{ix\cdot\xi} - e^{iy\cdot\xi}\right) p(y,\xi) \hat{u}(\xi) \dslash\xi}}
{ \abs{x-y}^\tau} \\
&\le C \SchwarzNorm{u}{m_+ + 2 (n+1)} \sup_{\substack{ x,y \in \R^n \\ x \neq y}} \frac{\abs{ \int e^{ix\cdot\xi} \piket{\xi}{-(n+1)} \left( p(x,\xi) - p(y,\xi) \right) \dslash\xi}}{ \abs{x-y}^\tau} + C \abs{p}_{0,0}^{(m)} \SchwarzNorm{u}{m_+ + 2 (n+1)} \\
&\le C \SchwarzNorm{u}{m_+ + 2 (n+1)} \sup_{\substack{ x,y \in \R^n \\ x \neq y}} \int \piket{\xi}{-(n+1)} \frac{ \abs{ p(x,\xi) - p(y,\xi) } }{ \abs{x-y}^\tau } \dslash\xi + C \abs{p}_{0,0}^{(m)} \SchwarzNorm{u}{m_+ + 2 (n+1)} \\
&\le C \norm{\xi \mapsto \piket{\xi}{-m} \hnorm{x \mapsto p(x,\xi)}{\tau}}{\infty} \SchwarzNorm{u}{m_+ + 2 (n+1)} + C \abs{p}_{0,0}^{(m)} \SchwarzNorm{u}{m_+ + 2 (n+1)} 
.\end{align*}
Für $\tau \ge 1$ ist mit der Produktregel
\begin{align*}
\partial_{x_j} p(X,D_x) u &= \int e^{ix\cdot\xi} \left( i \xi_j p(x,\xi) + \partial_{x_j} p(x,\xi) \right) \hat{u}(\xi) \dslash\xi \\
&= \int e^{ix\cdot\xi} p(x,\xi) \Fourier{x}{\xi}{\partial_{x_j} u(x)} \dslash\xi + \int e^{ix\cdot\xi} \partial_{x_j} p(x,\xi) \hat{u}(\xi) \dslash\xi \\
&= p(X,D_x)(\partial_{x_j} u)(x) + \left(\partial_{x_j} p\right)(X,D_x)u(x)
.\end{align*}
Nach Folgerung \ref{cor:IntroAbleitung} ist 
$\partial_{\xi_j} p(x,\xi) \in C^\tau S^{m-\rho}_{\rho,\delta}(\R^n,\R^n)$ und
$\partial_{x_j} p(x,\xi) \in C^{\tau-1} S^{m+\delta}_{\rho,\delta}(\R^n,\R^n)$ (falls $\tau \ge 1$).
Betrachten wir nun induktiv alle Ableitungen bzgl. $x$ zum Grad $\gauss{\tau}$ und alle Ableitungen bzgl. $\xi$, so erhalten wir die Behauptung.
\end{proof}
\end{thm}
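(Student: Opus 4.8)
The plan is to establish, for every integer $m\in\N_0$, a seminorm estimate of the shape
$\norm{Pu}{C^\tau(\R^n)} \le C\,\abs{p}_{N,\tau}^{(m)}\,\SchwarzNorm{u}{N'}$
with $N,N'$ depending only on $n,m,\tau,\rho,\delta$, where $P:=\OPSym{p(x,\xi)}$. Since $P$ is linear, $\Schwarzi$ is a Fréchet space and $C^\tau(\R^n)$ is a Banach space, such an estimate yields both boundedness and continuity of $P$ simultaneously, so the whole theorem reduces to producing this estimate.

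First I would check that $P$ is well defined pointwise. By Lemma \ref{lemma:IntroSymS->S} the integrand $\xi\mapsto p(x,\xi)\hat u(\xi)$ lies in $\Schwarzi$, hence is absolutely integrable, so $Pu(x)$ makes sense for each $x$. To bound it uniformly I would write $\hat u(\xi)=\piket{\xi}{-(n+1)}\bigl(\piket{\xi}{n+1}\hat u(\xi)\bigr)$, estimate $\piket{\xi}{m}\abs{\hat u(\xi)}\le\piket{\xi}{-(n+1)}\SchwarzNorm{u}{m_+ +2(n+1)}$ via Theorem \ref{thm:SchwartzFourierNorm} (with $m_+:=\max\{0,m\}$), combine with the symbol bound $\abs{p(x,\xi)}\le\abs{p}_{0,0}^{(m)}\piket{\xi}{m}$ and with $\piket{\hold}{-(n+1)}\in L^1(\R^n)$ from Lemma \ref{lemma:masstheoriePiket}; this gives the $C^0$-part $\abs{Pu(x)}\le C\,\abs{p}_{0,0}^{(m)}\SchwarzNorm{u}{m_+ +2(n+1)}$ together with continuity of $Pu$ (by Satz \ref{satz:masstheorieStetig}, the integrand being continuous in $x$ with an integrable dominant).

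The core case is $0<\tau<1$. To estimate $\hnorm{Pu}{\tau}$ I would split $Pu(x)-Pu(y)$ into the piece carrying the difference $p(x,\xi)-p(y,\xi)$ (oscillatory factor $e^{ix\cdot\xi}$ frozen) and the piece carrying $e^{ix\cdot\xi}-e^{iy\cdot\xi}$ (with $p(y,\xi)$ frozen). For the first piece I use $\abs{p(x,\xi)-p(y,\xi)}\le\abs{x-y}^\tau\hnorm{p(\hold,\xi)}{\tau}\le\abs{x-y}^\tau\,\abs{p}_{0,\tau}^{(m)}\piket{\xi}{m+\delta\tau}$, the extra factor $\piket{\xi}{\delta\tau}$ being harmless since it is polynomial and is absorbed by extracting more decay from $\hat u$ through Theorem \ref{thm:SchwartzFourierNorm}. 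For the second piece I use the elementary interpolation $\abs{e^{ix\cdot\xi}-e^{iy\cdot\xi}}\le\min\{2,\abs{x-y}\abs{\xi}\}\le 2^{1-\tau}\abs{x-y}^\tau\abs{\xi}^\tau$, which again produces only a polynomial factor $\abs{\xi}^\tau$. Dividing by $\abs{x-y}^\tau$, taking the supremum over $x\neq y$, and using $L^1$-integrability of the resulting power of $\piketi{\xi}$ then gives $\hnorm{Pu}{\tau}\le C\,\abs{p}_{0,\tau}^{(m)}\SchwarzNorm{u}{N'}$ for a suitable $N'$.

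Finally the case $\tau\ge 1$ is handled by induction on $\gauss{\tau}$. Using the product rule together with $\xi_j\hat u(\xi)=\Fourier{x}{\xi}{D_{x_j}u(x)}$ and differentiation under the integral sign (justified by Satz \ref{satz:masstheorieAbl} with a dominant built from the Schwartz decay of $\hat u$ and the symbol bounds), one gets $\partial_{x_j}(Pu)=P(\partial_{x_j}u)+(\partial_{x_j}p)(X,D_x)u$. By Folgerung \ref{cor:IntroAbleitung} one has $\partial_{\xi_j}p\in C^\tau S^{m-\rho}_{\rho,\delta}(\R^n,\R^n)$ and $\partial_{x_j}p\in C^{\tau-1}S^{m+\delta}_{\rho,\delta}(\R^n,\R^n)$, so both summands are operators of symbols of strictly smaller Hölder order and, by the inductive hypothesis, lie in $C^{\tau-1}(\R^n)$ with the corresponding seminorm bounds; running through all $x$-derivatives of order $\le\gauss{\tau}$ places $Pu$ in $C^{\gauss{\tau},\tau-\gauss{\tau}}(\R^n)=C^\tau(\R^n)$ with the wanted estimate. (Logically prior to all this, the remark preceding the theorem is obtained by applying Theorem \ref{interpol:cor} to reduce the check of (\ref{equ:introPSDOtNorm}) to $t\in\{0,\tau\}$.) I expect the main technical obstacle to be the bookkeeping in the Hölder step for $0<\tau<1$: keeping precise track of how much decay must be pulled out of $\hat u$ to cancel the combined growth $\piket{\xi}{m+\delta\tau+\tau}$, and making sure the induction in the $\tau\ge 1$ case correctly accounts for the order shift $m\mapsto m+\delta$ at each differentiation.
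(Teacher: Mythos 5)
Your proposal follows the paper's proof essentially line for line: same $C^0$-bound via Theorem \ref{thm:SchwartzFourierNorm} and $\piket{\hold}{-(n+1)}\in L^1$, same two-term split of $Pu(x)-Pu(y)$ for the Hölder seminorm when $0<\tau<1$, and the same product-rule induction with Folgerung \ref{cor:IntroAbleitung} for $\tau\ge 1$. The one spot where you are slightly more careful than the paper is the second piece of the Hölder split: you explicitly invoke $\abs{e^{ix\cdot\xi}-e^{iy\cdot\xi}}\le\min\{2,\abs{x-y}\abs{\xi}\}\le 2^{1-\tau}\abs{x-y}^\tau\abs{\xi}^\tau$ and absorb the resulting $\abs{\xi}^\tau$ growth by pulling extra decay out of $\hat u$, whereas the paper passes from the supremum over $x\neq y$ directly to the estimate $C\abs{p}_{0,0}^{(m)}\SchwarzNorm{u}{m_++2(n+1)}$ without showing the $\abs{x-y}^{-\tau}$ factor being cancelled — your version makes that step transparent. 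One small wording slip: in the $\tau\ge 1$ induction you say both summands come from symbols of strictly smaller Hölder order, but $P(\partial_{x_j}u)$ still carries the original symbol $p$ with order $\tau$; the argument is fine because it maps into $C^\tau(\R^n)\hookrightarrow C^{\tau-1}(\R^n)$, only the second summand $(\partial_{x_j}p)(X,D_x)u$ genuinely uses the drop to $C^{\tau-1}$.
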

\nomenclature[o]{$\OP{C^\tau S^m_{\rho,\delta}(\R^n,\R^n)}$}{Klasse der Pseudodifferentialoperatoren in $x$-Form mit Hölder-Stetigkeitsgrad $\tau$}
\nomenclature{\PSDO}{abgekürzte Schreibweise für ``Pseudodifferentialoperator''}

\begin{remark}
Für spätere Abbildungscharakterisierungen von Pseudodifferentialoperatoren auf anderen Räumen werden wir stillschweigend annehmen, 
dass ein \PSDO die Eigenschaften einer beschränkten linearen Abbildung bzgl. dieser Räume beibehält.
\end{remark}

\begin{lemma}\label{lemma:introOszi}
Der Operator von $p \in C^\tau S^m_{\rho,\delta}(\R^n,\R^n)$ kann als oszillatorisches Intgeral dargestellt werden durch
$$ p(X,D_x) u(x) = \osint e^{-ix\cdot\eta} p(x,\eta) u(x+y) dy \dslash\eta \textrm{ für alle } u \in \Schwarzi .$$
\begin{proof}
Mit der elementaren Abschätzung $\abs{ \partial_\eta^\alpha \partial_y^\beta \left( p(x,\eta) u(x+y) \right) } \le C_{\alpha,\beta} \piket{\eta}{m}$
folgern wir $(\eta,y) \mapsto p(x,\eta) u(x+y) \in \Oszillatory{0,0}{m}$.
Insbesondere ist $\eta \mapsto \int e^{-iy\cdot\eta} p(x,\eta) u(x+y) dy$ integrierbar,
wir können also Theorem \ref{thm:osziFubini}.\ref{item:osziFubiniC} verwenden und erhalten
$$ \os{\eta}{y}{p(x,\eta) u(x+y)} = \int \left( \int  e^{-iy\cdot\eta} p(x,\eta) u(x+y) dy \right) \dslash\eta 
.$$
Die Translation $y \mapsto y - x$ liefert schließlich
\begin{align*}
\int \left( \int  e^{-iy\cdot\eta} p(x,\eta) u(x+y) dy \right) \dslash\eta
&= \int \left( \int e^{-i(y-x)\cdot\xi} p(x,\eta) u(y) dy \right) \dslash\eta \\
&= \int e^{ix\cdot\eta} p(x,\eta) \hat{u}(\eta) \dslash\eta \\
&= p(X,D_x) u(x)
.\end{align*}
\end{proof}
\end{lemma}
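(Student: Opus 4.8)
The plan is to fix $x\in\R^n$, recognise $(\eta,y)\mapsto p(x,\eta)\,u(x+y)$ as a legitimate amplitude for an oscillatory integral, and then use the Fubini--type result Theorem \ref{thm:osziFubini} to unwind that oscillatory integral into an ordinary iterated Lebesgue integral, which is visibly $p(X,D_x)u(x)$.

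First I would verify that $(\eta,y)\mapsto p(x,\eta)u(x+y)$ lies in $\Oszillatoryi$. Writing $\partial_\eta^\alpha\partial_y^\beta\bigl(p(x,\eta)u(x+y)\bigr)=\bigl(\partial_\eta^\alpha p(x,\eta)\bigr)\bigl(\partial^\beta u(x+y)\bigr)$ and combining the symbol estimate $\abs{\partial_\eta^\alpha p(x,\eta)}\le C_\alpha\piket{\eta}{m-\rho\abs{\alpha}}\le C_\alpha\piket{\eta}{m}$ with the uniform bound $\abs{\partial^\beta u(x+y)}\le\SchwarzNorm{u}{\abs{\beta}}$ gives $\abs{\partial_\eta^\alpha\partial_y^\beta(p(x,\eta)u(x+y))}\le C_{\alpha,\beta}\piket{\eta}{m}$, so the amplitude belongs to $\Oszillatory{0,0}{m}\subset\Oszillatoryi$; by Theorem \ref{thm:osziTransform} the oscillatory integral $\os{\eta}{y}{p(x,\eta)u(x+y)}$ is then well defined. (The rapid decay of $u$ even yields arbitrary negative powers of $\piket{y}$, which is more than needed.)

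Next I would check the hypotheses needed to collapse the oscillatory integral. From $\abs{p(x,\eta)}\le C_0\piket{\eta}{m}$ (the symbol bound with $\alpha=0$) and $u\in\Schwarzi\subset L^1(\R^n)$ we obtain $\abs{p(x,\eta)u(x+y)}\le\piket{\eta}{m}f(y)$ with $f(y):=C_0\abs{u(x+y)}\in L^1(\R^n)$, which is precisely the assumption of Theorem \ref{thm:osziFubini}.\ref{item:osziFubiniA}. To apply part \ref{item:osziFubiniC} I would compute the inner integral via the translation property of the Fourier transform (Remark \ref{remark:fourier}):
$$\int e^{-iy\cdot\eta}p(x,\eta)u(x+y)\,dy=p(x,\eta)\,\Fourier{y}{\eta}{u(x+y)}=p(x,\eta)\,e^{ix\cdot\eta}\,\hat u(\eta);$$
since $\hat u\in\Schwarzi$ by Theorem \ref{thm:SchwartzFourierNorm}, the map $\eta\mapsto\piket{\eta}{m}\abs{\hat u(\eta)}$ is rapidly decreasing, hence integrable, so $\eta\mapsto\int e^{-iy\cdot\eta}p(x,\eta)u(x+y)\,dy$ is integrable and \ref{item:osziFubiniC} is applicable. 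It then gives
$$\os{\eta}{y}{p(x,\eta)u(x+y)}=\int\!\left(\int e^{-iy\cdot\eta}p(x,\eta)u(x+y)\,dy\right)\dslash\eta=\int e^{ix\cdot\eta}p(x,\eta)\hat u(\eta)\dslash\eta=p(X,D_x)u(x),$$
which is the assertion.

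The only substantive point in the argument is the interchange turning the oscillatory integral into an absolutely convergent iterated Lebesgue integral; what would normally be the main obstacle is already packaged in Theorem \ref{thm:osziFubini}, so once that is invoked everything else is routine bookkeeping with symbol and Schwartz estimates. As a variant of the last two steps one can avoid the Fourier translation identity by substituting $y\mapsto y-x$ directly inside $\int\left(\int e^{-iy\cdot\eta}p(x,\eta)u(x+y)\,dy\right)\dslash\eta$, which turns it into $\int e^{ix\cdot\eta}p(x,\eta)\left(\int e^{-iy\cdot\eta}u(y)\,dy\right)\dslash\eta=\int e^{ix\cdot\eta}p(x,\eta)\hat u(\eta)\dslash\eta$ at once.
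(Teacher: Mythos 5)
Your proposal is correct and follows essentially the same route as the paper: verify that $(\eta,y)\mapsto p(x,\eta)u(x+y)$ lies in $\Oszillatory{0,0}{m}$, collapse the oscillatory integral via Theorem \ref{thm:osziFubini}.\ref{item:osziFubiniC}, and identify the result with $p(X,D_x)u(x)$ by the translation $y\mapsto y-x$ (equivalently the Fourier translation identity). If anything, you are slightly more careful than the paper in explicitly checking the domination hypothesis of part \ref{item:osziFubiniA} before invoking part \ref{item:osziFubiniC}.
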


\begin{thm}
Der Operator eines Symbols $p \in C^\tau S^m_{\rho,\delta}(\R^n,\R^n)$ ist ein beschränkter Operator mit der Abbildungseigenschaft
$$ p(X,D_x) : \Beschri \rightarrow C^\tau(\R^n) .$$ 
\begin{proof}
Wir betrachten $a_x(\eta,y) := p(x,\eta) u(x+y)$ für $u \in \Beschri$. Wir haben also $x \mapsto a_x(\eta,y) \in C^\tau(\R^n)$.
Für $\alpha,\beta \in \N^n_0$ ist 
$$\abs{ \partial_{\eta}^\alpha \partial_y^\beta a_x(\eta,y) } \le C_{\alpha\beta} \abs{p}_{\abs{\beta},0}^{(m)} \norm{u}{C^{\abs{\alpha}}(\R^n)} \piket{\eta}{m} .$$
Also ist $a_x \in \Oszillatory{0,0}{m}$ mit $\OszillatoryNorm{a_x}{0,0}{m}{k} \le C \abs{p}_{k,0}^{(m)} \norm{u}{C^k(\R^n)}$ gleichmäßig in $x \in \R^n$.
Für $\tau \in \N_0$ finden wir nach Theorem \ref{thm:osziTransform} zwei Ganzzahlen $l,l' \in \N_0$ mit $2l > m+n$ und $2l' > n$, sodass
$$ p(X,D_x) u = \iint e^{iy\cdot\eta} \piket{y}{-2l'} \piket{D_\eta}{2l'} \left[ \piket{\eta}{-2l} \piket{D_y}{2l} ( p(x,\eta) u(x+y)) \right] dy \dslash\eta .$$
Demnach ist $ \norm{ p(X,D_x) u}{C^\tau(\R^n)} \le C \abs{p}_{2(l+l'),\tau}^{(m)} \norm{u}{C^{2(l+l')+\tau}} $.
Für $\tau \in (0,1)$ ist $(x,x') \mapsto \frac{ a_x(y,\eta) - a_{x'}(y,\eta)}{\abs{x-x'}^\tau}$ wegen der Hölderstetigkeit von $x \mapsto a_x(y,\eta)$ beschränkt. Theorem \ref{thm:osziKonvergenz} liefert nun, dass
\begin{align*}
&\hnorm{ p(X,D_x) u }{\tau} \\
&\le \sup_{\substack{ x,x' \in \R^n \\ x \neq x'}} \frac{ \abs{ \osint e^{-ix\cdot\eta} \left( a_x(y,\eta) - a_{x'}(y,\eta) \right) dy\dslash\eta } + \abs{ \osint \left(e^{ix\cdot\eta} - e^{ix'\cdot\eta} \right) a_{x'}(y,\eta) dy\dslash\eta} }{ \abs{x-x'}^\tau } \\
&\le \abs{ \osint e^{-ix\cdot\eta} \hnorm{ a_.(y,\eta)}{\tau} dy\dslash\eta } + C \abs{p}_{2(l+l')}^{(m)} \norm{u}{C^{2(l+l')}} 
\end{align*}
gleichmäßig in $x \in \R^n$ abgeschätzt werden kann.
\end{proof}
\end{thm}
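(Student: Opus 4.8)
The plan is to work throughout with the oscillatory-integral form of $p(X,D_x)$, which is meaningful on $\Beschri$ and not only on $\Schwarzi$, and then to estimate in turn the $C^{\gauss{\tau}}$-norm and, when $\tau\notin\N$, the top Hölder seminorm. First I would observe that, exactly as in the proof of Lemma \ref{lemma:introOszi}, for each fixed $x\in\R^n$ the function $(\eta,y)\mapsto a_x(\eta,y):=p(x,\eta)u(x+y)$ lies in $\Oszillatory{0,0}{m}$: only the symbol bounds $\abs{D_\eta^\alpha p(x,\eta)}\le C_\alpha\piket{\eta}{m-\rho\abs{\alpha}}\le C_\alpha\piket{\eta}{m}$ and the boundedness of the derivatives of $u$ enter, and they give $\OszillatoryNorm{a_x}{0,0}{m}{k}\le C\abs{p}_{k,0}^{(m)}\norm{u}{C^k(\R^n)}$ uniformly in $x$. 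Hence the identity $p(X,D_x)u(x)=\os{\eta}{y}{p(x,\eta)u(x+y)}$ of Lemma \ref{lemma:introOszi} persists for $u\in\Beschri$. Choosing $l,l'\in\N$ with $-2l+m<-n$ and $-2l'<-n$ and invoking the bound (\ref{equ:osziL0}) of Theorem \ref{thm:osziTransform} then yields $\norm{p(X,D_x)u}{C^0(\R^n)}\le C\abs{p}_{l_0,0}^{(m)}\norm{u}{C^{l_0}(\R^n)}$ with $l_0:=2(l+l')$.

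Next I would differentiate in $x$. Since after the translation $y\mapsto y-x$ the phase $e^{-iy\cdot\eta}$ carries no $x$, differentiation under the integral sign in the absolutely convergent form (\ref{equ:osziUmschreiben}) together with the Leibniz rule gives, for every multi-index $\gamma$,
$$ \partial_x^\gamma\left(p(X,D_x)u\right)=\sum_{\gamma_1\le\gamma}\binom{\gamma}{\gamma_1}(\partial_x^{\gamma_1}p)(X,D_x)(\partial_x^{\gamma-\gamma_1}u). $$
By Folgerung \ref{cor:IntroAbleitung} the symbol $\partial_x^{\gamma_1}p$ satisfies $\eta$-symbol estimates of order $m+\delta\abs{\gamma_1}$ (with Hölder regularity $\tau-\abs{\gamma_1}$ in $x$ as long as $\abs{\gamma_1}<\tau$), while $\partial_x^{\gamma-\gamma_1}u\in\Beschri$; applying the $C^0$-bound just obtained to each summand and summing over $\abs{\gamma}\le\gauss{\tau}$ produces $\norm{p(X,D_x)u}{C^{\gauss{\tau}}(\R^n)}\le C\abs{p}_{N,\gauss{\tau}}^{(m)}\norm{u}{C^N(\R^n)}$ for a suitable $N=N(n,m,\tau)$.

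It remains, for $\tau\notin\N$ with $\theta:=\tau-\gauss{\tau}\in(0,1)$, to bound $\hnorm{\partial_x^\gamma p(X,D_x)u}{\theta}$ for $\abs{\gamma}=\gauss{\tau}$; by the Leibniz formula this reduces to showing $\hnorm{q(X,D_x)v}{\theta}\le C\abs{q}_{N',\theta}^{(m')}\norm{v}{C^{N'}(\R^n)}$ for $q\in C^\theta S^{m'}_{\rho,\delta}(\R^n,\R^n)$ and $v\in\Beschri$ (take $q=\partial_x^{\gamma_1}p$, which has $C^\theta$ regularity in $x$ because $\tau-\abs{\gamma_1}\ge\theta$, and $v=\partial_x^{\gamma-\gamma_1}u$). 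For $x\neq x'$, linearity of the oscillatory integral and the representation above give
$$ q(X,D_x)v(x)-q(X,D_x)v(x')=\osint e^{-iy\cdot\eta}\left(q(x,\eta)v(x+y)-q(x',\eta)v(x'+y)\right)dy\,\dslash\eta , $$
and I would split the integrand as $\left(q(x,\eta)-q(x',\eta)\right)v(x+y)+q(x',\eta)\left(v(x+y)-v(x'+y)\right)$. For the first term $\abs{q(x,\eta)-q(x',\eta)}\le\norm{q(\hold,\eta)}{C^\theta(\R^n)}\abs{x-x'}^\theta\le\abs{q}_{0,\theta}^{(m')}\piket{\eta}{m'+\delta\theta}\abs{x-x'}^\theta$, and likewise for its $\eta$- and $y$-derivatives, so $(x,x')\mapsto\abs{x-x'}^{-\theta}\left(q(x,\eta)-q(x',\eta)\right)v(x+y)$ is a bounded subset of $\Oszillatory{0,0}{m'+\delta\theta}$. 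For the second term the mean value theorem when $\abs{x-x'}\le1$ and the crude estimate $\abs{\partial_y^\beta\left(v(x+y)-v(x'+y)\right)}\le2\norm{v}{C^{\abs{\beta}}(\R^n)}$ when $\abs{x-x'}>1$ show that $(x,x')\mapsto\abs{x-x'}^{-\theta}q(x',\eta)\left(v(x+y)-v(x'+y)\right)$ is a bounded subset of $\Oszillatory{0,0}{m'}$. Applying (\ref{equ:osziL0}) to each family, with its constant uniform over the family, yields $\abs{q(X,D_x)v(x)-q(X,D_x)v(x')}\le C\abs{q}_{N',\theta}^{(m')}\norm{v}{C^{N'}(\R^n)}\abs{x-x'}^\theta$, hence the Hölder bound. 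Putting the three estimates together shows that $p(X,D_x):\Beschri\rightarrow C^\tau(\R^n)$ is bounded, and a bounded linear map is continuous.

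I expect the main obstacle to be this last step: the difference $a_x-a_{x'}$ does not factor, so one must separate the contribution in which the symbol varies from the one in which the test function varies, and in the latter turn the Lipschitz bound on $v$ into a uniform $\theta$-Hölder bound via the case distinction $\abs{x-x'}\le1$ versus $\abs{x-x'}>1$; one must also keep track, in each case, that the resulting two-parameter family is a bounded subset of an oscillatory-function class, so that the uniform estimate (\ref{equ:osziL0}) is indeed applicable.
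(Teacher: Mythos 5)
Your proposal is correct and follows essentially the same route as the paper: represent $p(X,D_x)u$ as an oscillatory integral of $a_x(\eta,y)=p(x,\eta)u(x+y)$, note that $\menge{a_x}_{x}$ (and the $x$-difference-quotient family) is bounded in $\Oszillatory{0,0}{m}$, and invoke the uniform estimate (\ref{equ:osziL0}) of Theorem \ref{thm:osziTransform}, with the Leibniz rule and Folgerung \ref{cor:IntroAbleitung} handling the integer-order derivatives. Your explicit split of $a_x-a_{x'}$ into the symbol-variation and translation-of-$u$ parts, with the case distinction $\abs{x-x'}\le 1$ versus $>1$, is just a more carefully written version of the paper's appeal to the Hölder continuity of $x\mapsto a_x(\eta,y)$, so no new idea or gap is involved.
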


\begin{satz}\label{intro:satzSymOP}
Die Abbildung 
$ C^\tau S^m_{\rho,\delta}(\R^n,\R^n) \rightarrow \OP{ C^\tau S^m_{\rho,\delta}(\R^n,\R^n) } $
mit 
$$ \OP{ C^\tau S^m_{\rho,\delta}(\R^n,\R^n) } = \menge{ P : C^\infty(\R^n) \rightarrow C^\tau(\R^n) : P \textrm{ linear und beschränkt }}, p \mapsto p(X,D_x)$$
ist eine Bijektion, deren Inverse gegeben ist durch
$$ p(x,\xi) = e^{ix\cdot\xi} p(X,D_x) \left\{ x \mapsto e^{ix\cdot\xi}\right\}(x) \textrm{ für alle } \xi \in \R^n .$$
\begin{proof}
Für $\xi \in \R^n$ ist nach Lemma \ref{lemma:introOszi}
$$e^{-ix\cdot\xi} p(X,D_x) e^{i.\cdot\xi}(x) = 
\osint e^{-iy\cdot(\eta-\xi)} p(x,\eta) dy \dslash\eta =
\osint e^{-iy\cdot\eta'} p(x,\eta'+\xi) dy \dslash\eta' ,$$
wobei wir die Translation $\eta \mapsto \eta + \xi =: \eta'$ verwendet haben.
Mit (\ref{equ:inversionFormula}) folgt aber bereits
$ \osint e^{-iy\cdot\eta} p(x,\eta+\xi) \dslash\eta dy = p(x,\xi)$.
\end{proof}
\end{satz}

\begin{cor}\label{cor:introIdentisch}
Seien $p,q \in C^\tau S^m_{\rho,\delta}(\R^n,\R^n)$.
Die Bedingung $p(X,D_x) u = q(X,D_x) u$ für alle $u \in \Schwarzi$ impliziert bereits, dass
beide Symbole identisch sind.
\begin{proof}
Da $x \mapsto e^{-ix\cdot\xi} \in \Beschri$ können wir nach Satz \ref{intro:satzSymOP}
bereits aus der Bedingung $p(X,D_x) u(x) = q(X,D_x) u(x)$ für alle $x \in \R^n$ und jedes $u \in \Beschri$ folgern, dass $p = q$ gelten muss.
Wir müssen also lediglich die Gleichheit auf $\Beschri$ überprüfen.
Sei dazu ein $u \in \Beschri$ gegeben und setze $u_\epsilon(x) := \chi(\epsilon x) u(x)$ für $x \in \R^n, \epsilon > 0$ und $\chi \in \Schwarzi$ mit $\chi(0) = 1$.
Dann ist $u_\epsilon \in \Schwarzi$ und $\partial_x^\alpha u_\epsilon(x) \rightarrow \partial_x^\alpha u(x)$ mit $\epsilon \rightarrow 0$ punktweise für alle $x \in \R^n$ und für jedes $\alpha \in \N^n_0$.
Setzen wir also $a_{x,\epsilon}(y,\eta) := p(x,\eta) u_\epsilon(x+y)$, so erhalten wir
$a_{x,\epsilon} \in \Oszillatory{0,0}{m}$
analog zu Beweis von Satz \ref{intro:satzSymOP}.
Die Folge $\menge{ u_\epsilon}_{\epsilon\in(0,1)}$ ist beschränkt in $\Beschri$, 
demnach ist $\menge{a_{x,\epsilon}}_{\epsilon\in(0,1)}$ beschränkt in $\Oszillatory{0,0}{m}$.
Also konvergiert für alle $\alpha,\beta \in \N^n_0$
$$\partial_x^\alpha \partial_\xi^\beta a_{x,\epsilon}(y,\eta) \rightarrow \partial_x^\alpha \partial_\xi^\beta a_x(y,\eta) \textrm{ für } \epsilon \rightarrow 0 \textrm{ für alle } x, \eta \in \R^n$$
gleichmäßig auf jeder kompakten Teilmenge des $\R^n$.
Theorem \ref{thm:osziKonvergenz} liefert schließlich
$$ p(X,D_x) u_\epsilon(x) = \osint e^{-iy\cdot\eta} p(x,\eta) u_\epsilon(x+y) dy \dslash\eta \rightarrow p(X,D_x) u(x) \textrm{ für } \epsilon \rightarrow 0 .$$
Analog erhalten wir für $q$ die Aussage
$$ q(X,D_x) u_\epsilon(x) = \osint e^{-iy\cdot\eta} q(x,\eta) u_\epsilon(x+y) dy \dslash\eta \rightarrow q(X,D_x) u(x) \textrm{ für } \epsilon \rightarrow 0 .$$
Nach Voraussetzung ist aber $p(X,D_x) u_\epsilon(x) = q(X,D_x) u_\epsilon(x)$, also auch $p(X,D_x) u(x) = q(X,D_x) u(x)$ für alle $x \in \R^n$.
\end{proof}
\end{cor}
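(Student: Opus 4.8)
The plan is to reduce the assertion to showing that the two operators coincide on the \emph{larger} space $\Beschri$, and then to apply the inversion formula of Satz \ref{intro:satzSymOP}. For every fixed $\xi \in \R^n$ the function $x \mapsto e^{ix\cdot\xi}$ lies in $\Beschri$; hence, once we know that $p(X,D_x)u = q(X,D_x)u$ for all $u \in \Beschri$, we may take $u(x) = e^{ix\cdot\xi}$ and read off from the formula recovering a symbol from its operator (Satz \ref{intro:satzSymOP}) that $p(x,\xi) = q(x,\xi)$ for all $x,\xi \in \R^n$. So the real content is the passage from the hypothesis on $\Schwarzi$ to the same identity on $\Beschri$; this cannot be obtained by a naive density argument, since $\Schwarzi$ is not dense in $\Beschri$ for the topology of uniform convergence of all derivatives (constants, for example, are not approximable there).

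First I would fix $u \in \Beschri$ and $x \in \R^n$, choose a cutoff $\chi \in \Schwarzi$ with $\chi(0) = 1$, and set $u_\epsilon(x) := \chi(\epsilon x)\,u(x)$, so that $u_\epsilon \in \Schwarzi$ and, by the Leibniz rule together with Lemma \ref{lemma:osziChi}, $\partial_x^\alpha u_\epsilon \to \partial_x^\alpha u$ pointwise as $\epsilon \to 0$ for every $\alpha \in \N^n_0$. Next I would pass to the oscillatory-integral representation of Lemma \ref{lemma:introOszi}: writing $a_{x,\epsilon}(y,\eta) := p(x,\eta)\,u_\epsilon(x+y)$ and $a_x(y,\eta) := p(x,\eta)\,u(x+y)$, one checks just as in Lemma \ref{lemma:introOszi} that $a_{x,\epsilon}, a_x \in \Oszillatory{0,0}{m}$, and moreover that $\{a_{x,\epsilon}\}_{\epsilon \in (0,1)}$ is a bounded subset of $\Oszillatory{0,0}{m}$: the seminorm bounds
$$ \abs{\partial_\eta^\alpha \partial_y^\beta a_{x,\epsilon}(y,\eta)} \le C_{\alpha\beta}\,\piket{\eta}{m} $$
with $C_{\alpha\beta}$ independent of $\epsilon$ follow from the $\epsilon$-uniform estimates for $z \mapsto \chi(\epsilon z)$ in Lemma \ref{lemma:osziChi} and the symbol estimates for $p$.

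Then I would invoke the dominated-convergence analogue for oscillatory integrals, Theorem \ref{thm:osziKonvergenz}. Since $\partial_\eta^\alpha \partial_y^\beta a_{x,\epsilon} \to \partial_\eta^\alpha \partial_y^\beta a_x$ uniformly on every compact subset of $\R^n \times \R^n$ and $\{a_{x,\epsilon}\}$ is bounded in $\Oszillatory{0,0}{m}$, applying Theorem \ref{thm:osziKonvergenz} along any sequence $\epsilon \to 0$ gives
$$ p(X,D_x)u_\epsilon(x) = \os{\eta}{y}{p(x,\eta)u_\epsilon(x+y)} \longrightarrow \os{\eta}{y}{p(x,\eta)u(x+y)} = p(X,D_x)u(x), $$
and likewise with $q$ in place of $p$. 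Since $p(X,D_x)u_\epsilon = q(X,D_x)u_\epsilon$ by hypothesis, letting $\epsilon \to 0$ yields $p(X,D_x)u(x) = q(X,D_x)u(x)$; as $x \in \R^n$ and $u \in \Beschri$ were arbitrary, the reduction of the first paragraph completes the proof. The main obstacle I expect is precisely the $\epsilon$-uniform control of the oscillatory seminorms of $a_{x,\epsilon}$ together with the uniform-on-compacts convergence of all of its mixed derivatives; both rest on Lemma \ref{lemma:osziChi} and constitute the technical heart of the argument, everything else being a routine combination of Satz \ref{intro:satzSymOP}, Lemma \ref{lemma:introOszi} and Theorem \ref{thm:osziKonvergenz}.
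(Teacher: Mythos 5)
Your proposal is correct and follows essentially the same route as the paper: reduction to equality on $\Beschri$ via the inversion formula of Satz \ref{intro:satzSymOP} applied to $x \mapsto e^{ix\cdot\xi}$, approximation by $u_\epsilon(x) = \chi(\epsilon x)u(x)$, uniform boundedness of $\menge{a_{x,\epsilon}}_{\epsilon\in(0,1)}$ in $\Oszillatory{0,0}{m}$ via Lemma \ref{lemma:osziChi}, and passage to the limit with Theorem \ref{thm:osziKonvergenz}. The only difference is presentational: you spell out the $\epsilon$-uniform seminorm estimates slightly more explicitly than the paper does.
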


\begin{definition}
Wir definieren für $0 \le \delta \le \rho \le 1$ die Klasse $S^m_{\rho,\delta}(\R^n\times\R^n) := \bigcap_{\tau\in\R_+} C^\tau S^m_{\rho,\delta}(\R^n,\R^n)$.
Offensichtlich ist ein Symbol $p \in S^m_{\rho,\delta}(\R^n\times\R^n)$ eine glatte Funktion $p : \R^n \times \R^n \rightarrow \C$ für die es zu den Multiindizes $\alpha,\beta\in\N^n_0$ eine Konstante $C_{\alpha\beta} > 0$ gibt, sodass
$$ \abs{ D_\xi^\alpha D_x^\beta p(x,\xi) } \le C_{\alpha\beta} \piket{\xi}{m-\rho\abs{\alpha}+\delta\abs{\beta}} .$$
Die Klasse  $S^m_{\rho,\delta}(\R^n\times\R^n)$ wird mit den Halbnormen 
$$ \abs{p}_{l,\infty}^{(m)} := \max_{\abs{\alpha+\beta}\le l} \sup_{x,\xi \in \R^n} \left( \abs{ \partial_\xi^\alpha \partial_x^\beta p(x,\xi) } \piket{\xi}{-(m-\rho\abs{\alpha}+\delta\abs{\beta})} \right) $$
zum Fréchet-Raum.
Falls $p(x,\xi) \in S^m_{\rho,\delta}(\R^n\times\R^n)$ ein von $x$ unabhängiges Symbol ist, schreiben wir auch $p(x,\xi) =: p(\xi) \in S^m_{\rho,\delta}(\R^n)$.
\end{definition}
\nomenclature[s]{$S^m_{\rho,\delta}(\R^n,\R^n)$}{Klasse glatter Symbole in $x$-Form}
\nomenclature[n]{$\abs{\hold}_{l,\infty}^{(m)}$}{$l$.te Halbnorm der Klasse $S^m_{\rho,\delta}(\R^n,\R^n)$}

\begin{example}
Das Symbol $\xi \mapsto \piket{\xi}{m}$ gehört nach Lemma \ref{lemma:piketTech} der Klasse $S^m_{1,0}(\R^n)$ für $m \in \R$ an.
\end{example}

\begin{definition}
Für $0 \le \delta \le \rho \le 1$ beliebig setze 
$$C^\tau S^{-\infty}(\R^n, \R^n) := \bigcap_{m\in\R} C^\tau S_{\rho,\delta}^m(\R^n, \R^n) .$$
\end{definition}
\nomenclature[s]{$C^\tau S^{-\infty}(\R^n, \R^n)$}{Klasse beschränkter Symbole in $x$-Form mit Hölder-Stetigkeitsgrad $\tau$}

\begin{remark}
$C^\tau S^{-\infty}(\R^n, \R^n)$ ist eine von
$0 \le \delta \le \rho \le 1$ unabhängige Klasse, da für ein $p \in C^\tau S^{-\infty}(\R^n, \R^n)$ die Abschätzungen
$$
\abs{ D_\xi^\alpha p(x,\xi) } \le C_\alpha \piket{\xi}{m} , \quad
\norm{ D_\xi^\alpha p(\hold,\xi) }{C_*^\tau(\R^n)} \le C_\alpha \piket{\xi}{m} ,\quad
\norm{D_\xi^\alpha p(\hold,\xi) }{C^\tau(\R^n)} \le C_\alpha \piket{\xi}{m}
$$
für alle $m \in \R$ 
mit einer von $\rho,\delta$ unabhängigen Konstante $C_\alpha > 0$ gelten müssen.
\end{remark}

\begin{lemma}\label{lemma:smoothing}
Sei $p \in C^\tau S^m_{\rho,\delta}(\R^n,\R^n), f \in \Schwarzi$. 
Dann ist 
$$p_j(x,\xi) := p(x,\xi)\lilwood{j}(\xi) \in C^{\tau} S^{m}_{\rho,\delta}(\R^n,\R^n) \textrm{ für jedes } j \in \N_0 ,$$
und
$$p(X,D_x) f(x) = \sum^\infty_{j=0} p_j(X,D_x)f(x) \textrm{ für alle } x,\xi \in \R^n .$$
\begin{proof}
Da $p(X,D_x)$ stetig ist, folgt sofort
$$ \sum_{j=0}^{N} p_j(X,D_x) f(x) = \sum_{j=0}^{N} p(X,D_x) \lilwood{j}(D_x) f(x) \rightarrow p(X,D_x) f(x) \textrm{ für } N \rightarrow \infty .$$
\end{proof}
\end{lemma}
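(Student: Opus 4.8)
The statement splits into two independent claims, which I would treat in turn.

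For the symbol-class assertion $p_j := p\,\lilwood{j} \in C^\tau S^m_{\rho,\delta}(\R^n,\R^n)$, the plan is to view $\lilwood{j}$ itself as a ($\xi$-only) symbol of order $0$ and then multiply. Since $\lilwood{j} \in C^\infty_0(\R^n)$ does not depend on $x$, its $C^t(\R^n)$-norm in the $x$-slot is just $\abs{D_\xi^\alpha \lilwood{j}(\xi)}$, and Lemma \ref{lemma:introPartialLilwoodAbsch} gives $\abs{\partial_\xi^\alpha \lilwood{j}(\xi)} \le c_\alpha \piket{\xi}{-\abs{\alpha}}$; hence $\lilwood{j} \in C^\tau S^0_{1,0}(\R^n,\R^n)$ for every $\tau$. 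The Folgerung on products of symbols (stated just after Folgerung \ref{cor:IntroAbleitung}) then yields $p\,\lilwood{j} \in C^{\tau} S^{m}_{\min(\rho,1),\max(\delta,0)}(\R^n,\R^n)$, which is $C^\tau S^m_{\rho,\delta}(\R^n,\R^n)$ since $\rho \le 1$ and $\delta \ge 0$ (in particular $\max(\delta,0)=\delta\le\rho=\min(\rho,1)$, so admissibility $\delta\le\rho$ is preserved). Alternatively one may argue directly: expand $D_\xi^\alpha(p\lilwood{j})$ by the Leibniz rule, estimate each term by $\abs{p}^{(m)}_{\abs{\alpha},\tau}\piket{\xi}{m-\rho\abs{\beta}+\delta t}\,c_{\alpha-\beta}\piket{\xi}{-\abs{\alpha-\beta}}$, and absorb the last factor using $\abs{\alpha-\beta}\ge\rho\abs{\alpha-\beta}$; the $C^t$-estimates in $x$ transfer verbatim because $\lilwood{j}$ is $x$-independent.

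For the series identity, the key point is the factorisation $p_j(X,D_x)f = p(X,D_x)\bigl(\lilwood{j}(D_x)f\bigr)$. This is immediate once one unwinds the Fourier integral defining $p_j(X,D_x)$ (as in Theorem \ref{thm:introSchwarzBeschr}) and uses $\widehat{\lilwood{j}(D_x)f}(\xi) = \lilwood{j}(\xi)\hat f(\xi)$ together with the fact that $\lilwood{j}(D_x)f \in \Schwarzi$ (the defining Definition of $\lilwood{j}(D)$ records $\lilwood{j}(D)\colon\Schwarzi\to\Schwarzi$), so that $p(X,D_x)$ may legitimately be applied to it. Summing over $j$ gives $\sum_{j=0}^N p_j(X,D_x)f = p(X,D_x)\bigl(\sum_{j=0}^N \lilwood{j}(D_x)f\bigr)$. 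By Lemma \ref{lemma:lilwoodSchwarzKonv} the inner argument converges to $f$ in $\Schwarzi$, and by Theorem \ref{thm:introSchwarzBeschr} the operator $p(X,D_x)\colon \Schwarzi \to C^\tau(\R^n)$ is continuous; hence $\sum_{j=0}^N p_j(X,D_x)f \to p(X,D_x)f$ in $C^\tau(\R^n)$, and in particular pointwise for every $x\in\R^n$. This is the asserted formula (the phrase ``für alle $x,\xi$'' in the statement is harmless: the identity is really one between functions of $x$).

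I do not expect a serious obstacle: the lemma is essentially a repackaging of three already-available facts — the product rule for $C^\tau S^m_{\rho,\delta}$, the Littlewood--Paley decomposition of $\Schwarzi$ (Lemma \ref{lemma:lilwoodSchwarzKonv}), and the continuity of $p(X,D_x)$ on $\Schwarzi$ (Theorem \ref{thm:introSchwarzBeschr}). The one step that deserves a moment's care is verifying $p_j(X,D_x)f = p(X,D_x)\lilwood{j}(D_x)f$ and that $\lilwood{j}(D_x)f$ is again Schwartz, so that the continuity theorem genuinely applies to the partial sums.
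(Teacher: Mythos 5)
Your proposal is correct and takes essentially the same route as the paper: the paper's proof also rewrites $p_j(X,D_x)f$ as $p(X,D_x)\lilwood{j}(D_x)f$ and then invokes the continuity of $p(X,D_x)$ on $\Schwarzi$ together with the $\Schwarzi$-convergence of the Littlewood--Paley partial sums (Lemma \ref{lemma:lilwoodSchwarzKonv}), merely stating this more tersely. The paper does not spell out the symbol-class claim at all, so your verification of $p_j\in C^\tau S^m_{\rho,\delta}$ via the product rule (or Leibniz plus Lemma \ref{lemma:introPartialLilwoodAbsch}) is a welcome, correct addition rather than a deviation.
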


\begin{definition}[mikrolokalisierbare Banachräume]
\label{def:microlocal}
Eine Familie von Banachräumen $\menge{X^\tau}_{\tau \in \Sigma^{X^.} }$ heißt \emph{mikrolokalisierbar}, falls
\begin{enumerate}[(a)]
\item es stetige kanonische Einbettungen $\iota : \Schwarzi \hookrightarrow X^\tau$ und $\iota' : X^\tau \hookrightarrow \SchwarziDual$ gibt, die unabhängig von $\tau \in \Sigma^{X^.}$ sind \label{def:microlocal:Schwarzi},
\item der ordnungserniedrigende Operator $\piket{D_x}{t} : X^{\tau + t} \rightarrow X^\tau$ eine beschränkte lineare Abbildung für alle $\tau, \tau + t \in \Sigma^{X^.}$ \label{def:microlocal:D} ist, und
\item für $p \in S^0_{1,\delta}(\R^n\times\R^n)$ mit $0 \le \delta < 1$ bereits $p(X,D_x) : X^0 \rightarrow X^0$ gilt.
\end{enumerate}
Die Menge $\Sigma^{X^.}$ ist ein Intervall der Form $[\sigma_0, \infty)$ oder $(\sigma_0,\infty)$ mit $\sigma_0 \in \bar{\R}$.
\end{definition}
\nomenclature{$\menge{X^\tau}_{\tau \in \Sigma^{X^.} }$}{Mikrolokalisierbare Familie mit Index $\Sigma^{X^.}$}

\begin{lemma}\label{lemma:micro1}
Aus der Definition der mikrolokalisierbaren Banachräume folgt bereits, dass

\begin{enumerate}[(a)]
\item
\begin{equation}
X^\tau = \menge{ f \in \SchwarziDual : \piket{D_x}{\tau} \in X^0},
\end{equation}
\item
und es eine stetige kanonische Einbettung
\begin{equation}\label{mikrolokal:einbettung}
\psi_{\tau, t} : X^\tau \hookrightarrow X^t
\end{equation}
für alle $t \in \Sigma^{X^.}$ mit $ t \le \tau$ gibt.
\end{enumerate}
\begin{proof}
\begin{enumerate}[(a)]
\item
Sei $f \in X^\tau$.
Nach Definition \ref{def:microlocal}.\ref{def:microlocal:D} ist
$\piket{D_x}{\tau} : X^{\tau} \rightarrow X^0$, also offensichtlich
$\piket{D_x}{\tau} f \in X^0 \hookrightarrow \SchwarziDual$.
Andererseits ist für 
$g \in \menge{ f \in \SchwarziDual : \piket{D_x}{\tau} f \in X^0}$  
bereits
$\piket{D_x}{\tau} g \in X^0$, 
also $g =  \piket{D_x}{-\tau} \piket{D_x}{\tau} g \in X^\tau$ 
nach Definition \ref{def:microlocal}.\ref{def:microlocal:D}.
\item
Wegen $t \le \tau$ gilt $\piket{\xi}{t - \tau} \in S^{t-\tau}_{1,0}(\R^n) \subset S^0_{1,0}(\R^n)$.
Für $f \in X^\tau$ ist wegen
$$
\piket{D_x}{t} f = \underbrace{\piket{D_x}{t - \tau}}_{\in \OP{S^0_{1,0}(\R^n)}} \underbrace{\piket{D_x}{\tau} f}_{\in X^0} \in X^0
$$
$f$ bereits in $X^t$, also $X^\tau \subset X^t$.
Die Stetigkeit der Abbildung $\psi_{\tau, t}$ folgt nun aus der Eigenschaft der Teilraumtopologie 
$X^s \subset \SchwarziDual$ 
welche durch die kanonische Einbettung 
$\iota' : X^s \hookrightarrow \SchwarziDual$
aus Definition \ref{def:microlocal}.\ref{def:microlocal:Schwarzi}
induziert wird:
Sei dazu $U \subset X^t$ offen, so existiert ein $U' \subset \SchwarziDual$ offen mit $U = X^t \cap U'$. 
Nun ist wegen $\iota_\tau : X^\tau \hookrightarrow \SchwarziDual$ auch 
$X^\tau \cap U = X^\tau \cap X^t \cap U' = X^\tau \cap U' \subset X^\tau$
offen.
\end{enumerate}
\end{proof}
\end{lemma}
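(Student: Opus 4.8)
The plan is to obtain both (a) and (b) as purely formal consequences of the three defining properties in Definition \ref{def:microlocal} --- the $\tau$-independent continuous embeddings $\iota,\iota'$, the boundedness of the order-reducing operators $\piket{D_x}{t}$ on the scale $\menge{X^\sigma}$, and the $X^0$-boundedness of order-zero pseudodifferential operators. Throughout I would use that $0 \in \Sigma^{X^.}$ (implicit, since $X^0$ already occurs in the definition), that $\piket{D_x}{s}$ acts on $\SchwarziDual$ with $\piket{D_x}{s}\piket{D_x}{s'} = \piket{D_x}{s+s'}$, and that the restrictions of these multipliers to the various $X^\sigma$ are compatible with $\iota'$; with these conventions the operator identities below hold in $\SchwarziDual$ and descend to the spaces $X^\sigma$.

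For (a) I would prove the two inclusions separately. If $f \in X^\tau$, then Definition \ref{def:microlocal}.\ref{def:microlocal:D}, applied with target index $0$ (dummy $\tau \mapsto 0$, dummy $t \mapsto \tau$), shows $\piket{D_x}{\tau} : X^\tau \to X^0$ is bounded, so $\piket{D_x}{\tau} f \in X^0$; moreover $f \in \SchwarziDual$ via $\iota'$, so $f$ lies in the right-hand set. Conversely, let $g \in \SchwarziDual$ with $\piket{D_x}{\tau} g \in X^0$. Applying Definition \ref{def:microlocal}.\ref{def:microlocal:D} in the other direction gives $\piket{D_x}{-\tau} : X^0 \to X^\tau$, whence $g = \piket{D_x}{-\tau}\piket{D_x}{\tau} g \in X^\tau$.

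For (b), since $t \le \tau$ we have $\piket{\xi}{t-\tau} \in S^{t-\tau}_{1,0}(\R^n) \subseteq S^0_{1,0}(\R^n) \subseteq S^0_{1,0}(\R^n\times\R^n)$: the first membership is immediate from Lemma \ref{lemma:piketTech}, and the inclusion holds because $t-\tau \le 0$ only sharpens the defining estimates. Hence $\piket{D_x}{t-\tau} \in \OP{S^0_{1,0}(\R^n)}$, and the third item of Definition \ref{def:microlocal} (with $\delta = 0$) makes it a bounded map $X^0 \to X^0$. I would then factor the candidate embedding as
$$ \psi_{\tau,t} = \piket{D_x}{-t} \circ \piket{D_x}{t-\tau} \circ \piket{D_x}{\tau}, $$
a composition of bounded maps $X^\tau \to X^0 \to X^0 \to X^t$ (the first and last arrows from Definition \ref{def:microlocal}.\ref{def:microlocal:D}, the middle one just obtained). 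Because the composed multiplier has total order $0$, this composition coincides with the set-theoretic inclusion $X^\tau \hookrightarrow X^t$; thus $X^\tau \subseteq X^t$ and $\psi_{\tau,t}$ is bounded simultaneously. As a fallback, continuity also follows from the closed graph theorem, since $f_n \to f$ in $X^\tau$ and $\psi_{\tau,t}(f_n) \to g$ in $X^t$ both force $f = g$ in $\SchwarziDual$ through $\iota'$, hence $f = g$ in $X^t$.

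The only point requiring care is the index bookkeeping: before invoking Definition \ref{def:microlocal}.\ref{def:microlocal:D} one must check that $0$, $t$, $\tau$ all lie in the interval $\Sigma^{X^.}$ (so that $X^0$, $X^t$, $X^\tau$ are defined), and one should keep in mind that ``$\piket{D_x}{\tau}f$ computed in $X^0$'' agrees with the same expression computed in $\SchwarziDual$. Beyond the elementary symbol-class inclusion $S^{t-\tau}_{1,0}\subseteq S^0_{1,0}$ there is no analytic content, so I do not expect a genuine obstacle --- this lemma is essentially a restatement of the definition.
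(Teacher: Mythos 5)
Your proof of (a) coincides with the paper's: bounded order reduction $\piket{D_x}{\tau}:X^\tau\to X^0$ in one direction, and $g=\piket{D_x}{-\tau}\piket{D_x}{\tau}g\in X^\tau$ in the other. For (b) the set-theoretic inclusion is also the paper's argument (the factorization $\piket{D_x}{t}f=\piket{D_x}{t-\tau}\piket{D_x}{\tau}f\in X^0$ together with the symbol inclusion $S^{t-\tau}_{1,0}\subset S^0_{1,0}$), but you handle the \emph{continuity} of $\psi_{\tau,t}$ by a genuinely different route. The paper argues topologically: it declares the topology of $X^t$ to be the subspace topology induced by $\iota':X^t\hookrightarrow\SchwarziDual$ and then pulls open sets back to $X^\tau$. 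You instead write $\psi_{\tau,t}=\piket{D_x}{-t}\circ\piket{D_x}{t-\tau}\circ\piket{D_x}{\tau}$ as a composition of three maps that are bounded $X^\tau\to X^0\to X^0\to X^t$ by Definition \ref{def:microlocal}.\ref{def:microlocal:D} and the $\delta=0$ case of the third axiom, note that the composed multiplier is the identity on $\SchwarziDual$, and conclude operator-norm boundedness of the inclusion directly (with the closed graph theorem as a fallback, which also works since both spaces are Banach and embed injectively and continuously into $\SchwarziDual$). Your version is in fact the more robust one: for Banach spaces merely \emph{continuously} embedded in $\SchwarziDual$, the norm topology is in general strictly finer than the induced subspace topology, so the paper's pullback-of-open-sets argument rests on an identification that your quantitative factorization (or the closed graph argument) avoids. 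Your explicit bookkeeping that $0,t,\tau\in\Sigma^{X^.}$ and that the multipliers on $X^\sigma$ are the restrictions of those on $\SchwarziDual$ makes precise what the paper uses tacitly; there is no gap.
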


\begin{lemma} \label{beschr:S_1,delta}
Sei $p(x,\xi) \in S^m_{1,\delta}(\R^n\times\R^n)$ mit $m \in \R$ und $\menge{X^\tau}_{\tau \in \Sigma^{X^.} }$ eine Familie von mikrolokalisierbarer Banachräume.
Dann hat der Operator des Symbols bzgl. dieser Räume die Abbildungseigenschaft
\begin{equation}
p(X,D_x) : X^{s+m} \rightarrow X^s \textrm{ für alle } s,s+m \in \menge{X^\tau}_{\tau \in \Sigma^{X^.} }.
\end{equation}
\begin{proof}
Wir haben $\piket{\xi}{s} p(x,\xi) \piket{\xi}{-s-m} \in S^0_{1,\delta}(\R^n\times\R^n)$, und damit liefert Lemma \ref{lemma:micro1} für $f \in X^{s+m}$ 
$$
\piket{D_x}{s} \left( p(X,D_x) f \right) = 
\underbrace{\piket{D_x}{s} p(X,D_x) \piket{D_x}{-s-m}}_{\in \OP{ S^0_{1,\delta}(\R^n\times\R^n)}} \underbrace{\piket{D_x}{s+m} f}_{\in X^0} \in X^0
.$$
Insgesamt ist also $p(X,D_x) f \in X^s$.
\end{proof}
\end{lemma}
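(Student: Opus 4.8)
Der Plan ist, die Behauptung über die mikrolokalisierbare Struktur auf die Eigenschaft (c) aus Definition \ref{def:microlocal} zurückzuführen, indem man die Ordnungen $m$ und $s$ mit den ordnungserniedrigenden Operatoren ``wegkonjugiert''.

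Zunächst würde ich Lemma \ref{lemma:micro1}(a) heranziehen: Für $f \in \SchwarziDual$ gilt $f \in X^{s+m}$ genau dann, wenn $\piket{D_x}{s+m} f \in X^0$, und die zu beweisende Aussage $p(X,D_x) f \in X^s$ ist äquivalent zu $\piket{D_x}{s} p(X,D_x) f \in X^0$. Es genügt daher, den Operator $T := \piket{D_x}{s} \circ p(X,D_x) \circ \piket{D_x}{-s-m}$ zu betrachten und zu zeigen, dass er $X^0$ beschränkt in sich abbildet; denn dann ist $\piket{D_x}{s} p(X,D_x) f = T\bigl(\piket{D_x}{s+m} f\bigr) \in X^0$ für jedes $f \in X^{s+m}$.

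Im zweiten Schritt würde ich $T$ als Pseudodifferentialoperator mit Symbol in $S^0_{1,\delta}(\R^n\times\R^n)$ identifizieren. Wegen $\piket{\xi}{s} \in S^s_{1,0}(\R^n)$ und $\piket{\xi}{-s-m} \in S^{-s-m}_{1,0}(\R^n)$ (vgl.\ Lemma \ref{lemma:piketTech}) sowie $p \in S^m_{1,\delta}(\R^n\times\R^n)$ liefert das Symbolkalkül für glatte Symbole, dass die Komposition der drei Faktoren wieder ein glattes Symbol ist, dessen Ordnung sich zu $s + m + (-s-m) = 0$ addiert; da hier $\rho = 1$ ist, wird der Parameter $\delta$ durch die Faktoren $\piket{D_x}{s}$ und $\piket{D_x}{-s-m}$ nicht verschlechtert, sodass das Symbol von $T$ tatsächlich in $S^0_{1,\delta}(\R^n\times\R^n)$ liegt.

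Abschließend wende ich Definition \ref{def:microlocal}(c) auf dieses Symbol an und erhalte $T : X^0 \rightarrow X^0$ als beschränkte Abbildung. Verkettet mit den beschränkten Abbildungen $\piket{D_x}{s+m} : X^{s+m} \rightarrow X^0$ und $\piket{D_x}{-s} : X^0 \rightarrow X^s$ aus Definition \ref{def:microlocal}(b) folgt sowohl die behauptete Abbildungseigenschaft als auch die Beschränktheit von $p(X,D_x) : X^{s+m} \rightarrow X^s$. Die Hauptschwierigkeit des Arguments ist der Nachweis, dass die Operatorkomposition $T$ überhaupt ein Symbol der Klasse $S^0_{1,\delta}$ besitzt und nicht bloß die triviale Abschätzung für das Produktsymbol $\piket{\xi}{-m} p(x,\xi) \in S^0_{1,\delta}$ liefert; hierfür benötigt man das Kompositionstheorem für Operatoren mit glatten Symbolen (Addition der Ordnungen, Erhaltung von $(\rho,\delta)$), das an geeigneter Stelle bereitzustellen ist.
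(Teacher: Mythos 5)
Dein Vorschlag ist korrekt und verläuft im Wesentlichen genauso wie der Beweis der Arbeit: Konjugation mit den ordnungserniedrigenden Operatoren, Identifikation von $\piket{D_x}{s}\, p(X,D_x)\, \piket{D_x}{-s-m}$ als Operator der Klasse $\OP{S^0_{1,\delta}(\R^n\times\R^n)}$ und anschließende Anwendung von Definition \ref{def:microlocal}(c) zusammen mit Lemma \ref{lemma:micro1}. Du machst lediglich den in der Arbeit nur implizit benutzten Schritt (Kompositionskalkül nach Lemma \ref{thm:simplifyComposition} für den linken, $x$-unabhängigen Faktor) explizit, was den Beweis eher präziser macht.
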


\begin{example}
\cite[\S 1.1]{Nonlinear} führt die Mengen 
$$\menge{ H^s_q(\R^n) : s \in \R} \textrm{ und } \menge{C^s_*(\R^n) : s \in (0,\infty)}$$ 
als mikrolokalisierbare Familien für Symbolklassen mit $\delta = 0$ auf. Andererseits lassen sich mit der Anmerkung aus \cite[Corollary 2.1.B]{Nonlinear} diese Familien auch für unsere Definition mit $0 \le \delta < 1$ verwenden.
\end{example}

\subsection{Adjungierte Operatoren}

\begin{definition}
Falls es zu einem Operator $P : \Schwarzi \rightarrow \BeschrZ$ einen Operator $P^* : \Schwarzi \rightarrow \BeschrZ$ gibt,
der für alle $u,v \in \Schwarzi$ die Gleichung
$ \skp{ P u}{v}{L^2(\R^n)} = \skp{u}{P^* v}{L^2(\R^n)}$ hält, dann nennen wir $P^*$ den zu $P$ formal adjungierten Operator.
\end{definition}
\nomenclature{$P(D_x,X)^*$}{formal adjungierter Operator des \PSDOS $p(X,D_x)$ in $x$-Form}

\begin{definition}
Ein Symbol $p \in C^\tau S^m_{\rho,\delta}(\R^n,\R^n)$ definiert uns den Operator
$$ p(D_x, X) u(x) = \iint e^{i(x-y)\cdot\xi} p(y,\xi) v(y) dy \dslash\xi .$$
Wir nennen den Pseudodifferentialoperator $p(D_x,X)$ einen Operator in $y$-Form.
\end{definition}

\begin{satz}\label{satz:adjunPSDO}
Für $u,v \in \Schwarzi$ induziert das Symbol $p(x,\xi)^* := \overline{ p(x,\xi) }$ den Operator $p(D_x,X)^*$ in $y$-Form, der die Eigenschaft
$$ \skp{ u }{ p(D_x,X)^* v }{L^2(\R^n)} = \skp{ p(X,D_x) u }{ v }{L^2(\R^n)} $$
erfüllt. Der Operator $p(D_x,X)^* $ ist also der formal adjungierte Operator von $p(X,D_x)$. 
\begin{proof}
Zunächst ist $(x,\xi) \mapsto e^{ix\cdot\xi} p(x,\xi) \hat{u}(\xi) \overline{v(x)} \in L^1(\R^n\times\R^n)$ da
$\hat{u}, v \in \Schwarzi$ nach Theorem \ref{thm:SchwartzFourierNorm}.
Außerdem ist $(x,y) \mapsto e^{i(x-y)\xi} p(x,\xi) \overline{v(x)} u(y) \in L^1(\R^n\times\R^n)$.
Mit dem Satz von Fubini erhalten wir 
\begin{align*}
\skp{ p(X,D_x) u }{v}{L^2(\R^n)} &= 
\int \left( \int e^{ix\cdot\xi} p(x,\xi) \hat{u}(\xi) \dslash\xi \right) \overline{v(x)} dx \\
&= \int \left( \int e^{ix\cdot\xi} p(x,\xi) \overline{v(x)} dx \right) \left( \int e^{-iy\cdot\xi} u(y) dy \right) \dslash\xi \\
&= \int \left( \iint e^{i(x-y)\cdot\xi} p(x,\xi) \overline{v(x)} u(y) dx dy \right) \dslash\xi \\
&= \int u(y) \left( \iint e^{i(x-y)\cdot\xi} p(x,\xi) \overline{v(x)} dx \dslash\xi \right) dy \\
&= \int u(y) \overline{ \iint e^{i(y-x) \cdot\xi} \overline{ p(x,\xi) } v(x) dx \dslash\xi } dy\\
&= \skp{ u }{p(D_x,X)^* v}{L^2(\R^n)}
,\end{align*}
wobei $(y,\xi) \mapsto e^{-iy\cdot\xi} u(y) \int e^{ix\cdot\xi} p(x,\xi) \overline{v(x)} dx \in L^1(\R^n\times\R^n)$, da $p(D_x,X) v \in L^\infty(\R^n)$.
\end{proof}
\end{satz}

\subsection{Operatoren mit Doppelsymbolen}

\begin{definition}
Eine Funktion $p(x,\xi,x',\xi') : \R^n \times \R^n \times \R^n \times \R^n \rightarrow \C$ gehört der Klasse $C^\tau S_{\rho,\delta}^{m,m'}(\R^n,\R^n\times\R^n\times\R^n)$ an ($-\infty < m, m' < \infty, 0 \le \delta \le \rho \le 1, \delta < 1, \tau \in \R_+ )$, 
falls es für alle Multiindizes $\alpha,\alpha',\beta'\in\N^n_0$ eine Konstante $C_{\alpha,\alpha',\beta'} >0$ gibt, die beide Ungleichungen
\begin{align*}
\abs{ \partial_\xi^\alpha \partial_{\xi'}^{\alpha'} \partial_{x'}^{\beta'} p(x,\xi,x',\xi')} 
&\le C_{\alpha,\alpha',\beta'} \piket{\xi}{m-\rho\abs{\alpha}} \piket{\xi;\xi'}{\delta\abs{\beta'}} \piket{\xi'}{m'-\rho\abs{\alpha'}}, \\
\norm{ \partial_\xi^\alpha \partial_{\xi'}^{\alpha'} \partial_{x'}^{\beta'} p(\hold,\xi,x',\xi')}{C^\tau(\R^n)} 
&\le C_{\alpha,\alpha',\beta'} \piket{\xi}{m+\delta\tau-\rho\abs{\alpha}} \piket{\xi;\xi'}{\delta\abs{\beta'}} \piket{\xi'}{m'-\rho\abs{\alpha'}} 
\end{align*}
erfüllt, 
wobei $\piketi{\xi;\xi'} := \sqrt{1 + \abs{\xi}^2 + \abs{\xi'}^2}$.
Ein Symbol der Klasse $C^\tau S_{\rho,\delta}^{m,m'}(\R^n,\R^n\times\R^n\times\R^n)$ nennen wir ein {\em Doppelsymbol}.

Für $p(x,\xi,x',\xi') \in C^\tau S_{\rho,\delta}^{m,m'}(\R^n,\R^n\times\R^n\times\R^n)$ sei die Familie von Seminormen \\
$\menge{ \abs{p}^{(m,m')}_{l,\tau} }_{l\in\N}$
gegeben durch
\begin{align*}
&\abs{p}^{(m,m')}_{l,\tau} \\
&= \max_{\abs{\alpha+\alpha'+\beta'}\le l } \sup_{x,\xi,x',\xi'\in\R^n} 
\left( \abs{\partial_\xi^\alpha \partial_{\xi'}^{\alpha'} \partial_{x'}^{\beta'} p(x,\xi,x',\xi')} + 
\norm{\partial_\xi^\alpha \partial_{\xi'}^{\alpha'} \partial_{x'}^{\beta'} p(\hold,\xi,x',\xi')}{C^\tau(\R^n)} \piket{\xi}{-\delta\tau} \right) \\
&\quad \piket{\xi}{-(m-\rho\abs{\alpha})} \piket{\xi;\xi'}{-\delta\abs{\beta'}} \piket{\xi'}{-(m'-\rho\abs{\alpha})}
.\end{align*}
Diese Familie induziert den Fréchet-Raum $C^\tau S_{\rho,\delta}^{m,m'}(\R^n,\R^n\times\R^n\times\R^n)$.
Insbesondere ist für $p(x,\xi,x',\xi') \in C^\tau S_{\rho,\delta}^{m,m'}(\R^n,\R^n\times\R^n\times\R^n)$
\begin{align*}
& \abs{ \partial_\xi^\alpha \partial_{\xi'}^{\alpha'} \partial_{x'}^{\beta'} p(x,\xi,x',\xi')} + \norm{\partial_\xi^\alpha \partial_{\xi'}^{\alpha'} \partial_{x'}^{\beta'} p(\hold,\xi,x',\xi')}{C^\tau(\R^n)} \piket{\xi}{-\delta\tau} \\
&\le \abs{p}^{(m,m')}_l \piket{\xi}{m-\rho\abs{\alpha}} \piket{\xi;\xi'}{\delta\abs{\beta'}} \piket{\xi'}{m'-\rho\abs{\alpha}}
.\end{align*}

\end{definition}
\nomenclature[s]{$C^\tau S_{\rho,\delta}^{m,m'}(\R^n,\R^n\times\R^n\times\R^n)$}{Klasse der Doppelsymbole mit Hölder-Stetigkeitsgrad $\tau$}

\begin{definition}
Für ein Doppelsymbol $p(x,\xi,x',\xi') \in C^\tau S^{m,m'}_{\rho,\delta}(\R^n,\R^n\times\R^n\times\R^n)$ definieren wir den Operator $P := p(X,D_x,X',D_{x'})$ durch
$$ Pu(x) = \osint e^{i(y\cdot\eta + y'\cdot\eta')} p(x,\xi,x+y,\xi') u(x+y+y') d(y,y') \dslash(\xi,\xi')$$
für $u \in \Beschri$.
Wir sagen für einen Operator dieser Art, dass $P \in \OP{C^\tau S^{m,m'}_{\rho,\delta}(\R^n,\R^n\times\R^n\times\R^n)}$.
Hierbei fassen wir das Integral $\int d(x,x')$ als Doppelintegral $\int_{\R^n} \int_{\R^n} dx dx'$ auf, und schreiben $\dslash(\xi,\xi')$ statt $\frac{1}{(2\pi)^{2n}} d(\xi,\xi')$.
\end{definition}
\nomenclature[o]{$\OP{C^\tau S^{m,m'}_{\rho,\delta}(\R^n,\R^n\times\R^n\times\R^n)}$}{Klasse der Operatoren von Doppelsymbolen mit Hölder-Stetigkeitsgrad $\tau$}

\begin{lemma}
Die obige Definition ist für alle $p \in C^\tau S^{m,m'}_{\rho,\delta}(\R^n,\R^n\times\R^n\times\R^n)$ wohldefiniert.
\begin{proof}
Wir zeigen, dass 
$$((\xi,\xi'),(y,y')) \mapsto p(x,\xi,x+y,\xi') u(x+y+y') \in \Oszillatorye{\delta,0}{m_+ + m_+'}{\R^{2n}\times\R^{2n}} ,$$
wobei $m_+ := \max\menge{0,m}, m'_+ := \max\menge{0,m'}, u\in\Schwarzi$ und $x\in\R^n, u \in \Beschri$ fest gewählt sind 
- dann folgt mit der Definition des oszillatorischen Integrals die Wohldefiniertheit.
Zunächst ist $\piketi{\xi} \le \piketi{\xi,\xi'}$ und $\piketi{\xi'} \le \piketi{\xi,\xi'}$ für beliebiges $\xi,\xi' \in \R^n$.
Für ein fest gewähltes $x\in\R^n$ und $\alpha,\alpha',\beta'\in\N^n_0$ haben wir
\begin{align*}
\abs{ \partial_\xi^\alpha \partial_{\xi'}^{\alpha'} \partial_{x'}^{\beta'} p(x,\xi,x+y,\xi') u(x+y+y') } 
&\le C_{\alpha,\alpha',\beta',u} \piket{\xi}{m-\rho\abs{\alpha}} \piket{\xi;\xi'}{\delta\abs{\beta'}} \piket{\xi'}{m'-\rho\abs{\alpha}}  \\
&\le C_{\alpha,\alpha',\beta',u} \piket{\xi,\xi'}{m_+ + m'_+ + \delta\abs{\beta'}}
.\end{align*}
\end{proof}
\end{lemma}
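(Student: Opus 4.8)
The plan is to reduce the claim to the oscillatory-integral machinery of Theorem~\ref{thm:osziTransform}, read over $\R^{2n}\times\R^{2n}$ in place of $\R^n\times\R^n$: the construction in Definition~\ref{def:osziOsziInt} and the proof of Theorem~\ref{thm:osziTransform} use nothing about the dimension beyond the integrability $\piket{\cdot}{-s}\in L^1$ for $s$ large (Lemma~\ref{lemma:masstheoriePiket}) and the elementary estimates on $\piket{\cdot}{m}$ (Lemma~\ref{lemma:piketTech}), so they carry over verbatim with $n$ replaced by $2n$. Accordingly, fix $x\in\R^n$ and $u\in\Beschri$ and put
$$ a_x\big((\xi,\xi'),(y,y')\big) := p(x,\xi,x+y,\xi')\,u(x+y+y'). $$
First I would note that $a_x\in C^\infty(\R^{2n}\times\R^{2n})$: the symbol $p$ is smooth in $(\xi,x',\xi')$ by the very definition of $C^\tau S^{m,m'}_{\rho,\delta}(\R^n,\R^n\times\R^n\times\R^n)$ (the estimates are imposed for all $\alpha,\alpha',\beta'$), $u$ is smooth, and the substitutions $x'=x+y$ and $x+y+y'$ are affine, so no regularity is lost. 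Because $u\in\Beschri$, every derivative $\partial^\gamma u$ is bounded on $\R^n$.

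The heart of the argument is the symbol estimate showing $a_x\in\Oszillatorye{\delta,0}{m_++m'_+}{\R^{2n}\times\R^{2n}}$, where $m_+:=\max\menge{0,m}$ and $m'_+:=\max\menge{0,m'}$. Fix multiindices $\alpha,\alpha'\in\N^n_0$ for the $\xi$- and $\xi'$-derivatives and $\beta,\beta'\in\N^n_0$ for the $y$- and $y'$-derivatives, and expand $\partial_\xi^\alpha\partial_{\xi'}^{\alpha'}\partial_y^\beta\partial_{y'}^{\beta'}a_x$ by the Leibniz rule. The bookkeeping is: the $\xi$- and $\xi'$-derivatives can only hit $p$ (and gain the decay $\piket{\xi}{-\rho\abs{\alpha}}$, $\piket{\xi'}{-\rho\abs{\alpha'}}$); the $y'$-derivatives can only hit $u$, where they remain bounded; and, via $x'=x+y$, every $y$-derivative hits either $p$ as a $\partial_{x'}$-derivative (total order at most $\abs{\beta}$, each costing a factor $\piketi{\xi;\xi'}^{\delta}$) or $u$ (again bounded). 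Collecting terms and using $\piketi{\xi}\le\piketi{\xi,\xi'}$, $\piketi{\xi'}\le\piketi{\xi,\xi'}$, every term is dominated, with constants uniform in $(\xi,\xi',y,y')$, by
$$ C_{\alpha,\alpha',\beta,\beta',u}\,\piket{\xi}{m-\rho\abs{\alpha}}\piket{\xi;\xi'}{\delta\abs{\beta}}\piket{\xi'}{m'-\rho\abs{\alpha'}} \le C'_{\alpha,\alpha',\beta,\beta',u}\,\piket{(\xi,\xi')}{m_++m'_++\delta\abs{\beta}}, $$
which is exactly the defining bound of $\Oszillatorye{\delta,0}{m_++m'_+}{\R^{2n}\times\R^{2n}}$ (the factor $\piket{(y,y')}{0}=1$ is present trivially, since $a_x$ and all its derivatives are bounded in $(y,y')$).

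With $a_x$ lying in a fixed oscillatory class, Theorem~\ref{thm:osziTransform} in dimension $2n$ applies — after the harmless substitution $(\xi,\xi')\mapsto-(\xi,\xi')$, which flips the phase to the $e^{-i(\cdot)\cdot(\cdot)}$ convention of Definition~\ref{def:osziOsziInt} and leaves the class unchanged because $\piket{-\zeta}{s}=\piket{\zeta}{s}$. Hence the oscillatory integral defining $Pu(x)$ converges, is independent of the auxiliary cutoff $\chi\in\Schwarz{\R^{2n}\times\R^{2n}}$ with $\chi(0,0)=1$, and by the a~priori bound (\ref{equ:osziL0}) satisfies $\abs{Pu(x)}<\infty$ for every $x\in\R^n$; this is the asserted well-definedness. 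The only real difficulty is organisational rather than conceptual: one must track carefully in the Leibniz expansion which group of derivatives generates which power of $\piket{(\xi,\xi')}{\cdot}$ — in particular that only genuine $\partial_{x'}$-derivatives cost the $\delta$-loss $\piket{(\xi,\xi')}{\delta\abs{\beta}}$ while the $\xi,\xi'$-derivatives still gain decay — and ensure the constants are uniform so that $a_x$ sits in one fixed class; the convergence condition (\ref{equ:osziLL}) of Theorem~\ref{thm:osziTransform} can always be met because $\delta<1$ by hypothesis.
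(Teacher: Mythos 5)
Your proposal is correct and follows essentially the same route as the paper: fix $x$ and $u\in\Beschri$, verify via the double-symbol estimates that $((\xi,\xi'),(y,y'))\mapsto p(x,\xi,x+y,\xi')u(x+y+y')$ lies in $\Oszillatorye{\delta,0}{m_++m'_+}{\R^{2n}\times\R^{2n}}$ using $\piketi{\xi},\piketi{\xi'}\le\piketi{\xi,\xi'}$, and then invoke the oscillatory-integral theory (Definition \ref{def:osziOsziInt} / Theorem \ref{thm:osziTransform} with $n$ replaced by $2n$). Your additional remarks on the Leibniz bookkeeping for the $(y,y')$-derivatives and on the phase-sign convention only spell out details the paper leaves implicit.
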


\begin{definition}
\begin{enumerate}[(a)]
\item Wir definieren die Klasse der glatten Doppelsymbole durch 
$$S^m_{\rho,\delta}(\R^n\times\R^n\times\R^n\times\R^n) := \bigcap_{\tau > 0} C^\tau S^m_{\rho,\delta}(\R^n,\R^n\times\R^n\times\R^n) .$$
\item
Für $0 \le \delta \le \rho < 1, \delta < 1$ beliebig setze
$$C^\tau S^{-\infty}(\R^n,\R^n\times\R^n\times\R^n)  := \bigcap_{m\in\R} C^\tau S_{\rho,\delta}^m(\R^n,\R^n\times\R^n\times\R^n) .$$
\end{enumerate}
\end{definition}
\nomenclature[s]{$C^\tau S^{-\infty}(\R^n,\R^n\times\R^n\times\R^n)$}{Klasse beschränkter Hölder-stetiger Doppelsymbole}
\nomenclature[s]{$S^m_{\rho,\delta}(\R^n\times\R^n\times\R^n\times\R^n)$}{Klasse glatter Doppelsymbole}

\begin{remark}
$C^\tau S^{-\infty}(\R^n,\R^n\times\R^n\times\R^n)$ ist eine von $\rho$ und $\delta$ unabhängige Klasse, 
da für ein $p \in C^\tau S^{-\infty}(\R^n,\R^n\times\R^n\times\R^n)$ die Abschätzungen
\begin{align*}
\abs{ \partial_\xi^\alpha \partial_{\xi'}^{\alpha'} \partial_{x'}^{\beta'} p(x,\xi,x',\xi')} &\le C_{\alpha,\alpha',\beta'} \piket{\xi}{m} \piket{\xi'}{m'} ,\\
\norm{ \partial_\xi^\alpha \partial_{\xi'}^{\alpha'} \partial_x^\beta \partial_{x'}^{\beta'} p(x,\xi,x',\xi')}{C^\tau(\R^n)} &\le C_{\alpha,\alpha',\beta'}  \piket{\xi}{m} \piket{\xi'}{m'}
\end{align*}
für alle $m,m' \in \R$
mit einer von $\rho,\delta$ unabhängigen Konstante $C_{\alpha\beta} > 0$ gelten müssen. 
Wir beachten, dass $\piketi{\xi;\xi'}$ durch eine Konstante $C>0$ mit $\piketi{\xi;\xi'} \le C \piketi{\xi} \piketi{\xi'}$ abgeschätzt werden kann.
\end{remark}

\begin{lemma}\label{lemma:doubleLimesPermut}
Sei $P = p(X,D_x,X',D_{x'}) \in \OP{C^\tau S^{m,m'}_{\rho,\delta}(\R^n,\R^n\times\R^n\times\R^n)}$ mit $\delta < 1$.
Dann ist für alle $u \in \Schwarzi$
$$ Pu(x) = 
\int e^{ix\cdot\xi} \left(
\int e^{-ix'\cdot\xi} \left(
\int e^{ix'\cdot\xi'} \left(
\int e^{-iz\cdot\xi'} p(x,\xi,x',\xi')u(z)dz \right)
\dslash\xi' \right) dx' \right) \dslash\xi
.$$
Hierbei verstehen wir obigen Ausdruck als eine Abfolge iterierter Integrale.
\begin{proof}
Sei $\chi(\xi,\xi',y,y') \in \Schwarz{\R^n\times\R^n\times\R^n\times\R^n}$ mit $\chi(0,0,0,0) = 1$ und setze
$$\chi_\epsilon(\xi,\xi',y,y') := \chi(\epsilon\xi,\epsilon\xi',\epsilon y,\epsilon y') .$$
Dann ist nach Definition des oszillatorischen Integrals
\begin{align*}
& p(X,D_x,X',D_{x'}) u(x) \\
&= \lim_{\epsilon\rightarrow 0} \iint e^{-i(y\cdot\xi+y'\cdot\xi')} \chi_\epsilon(\xi,\xi',y,y') p(x,\xi,x+y,\xi') u(x+y+y') d(y,y') \dslash(\xi,\xi')
.\end{align*}
Für $x' := x+y, z := x+y+y'$ erhalten wir
$$ -i(y\cdot\xi + y'\cdot\xi') = -i(x'-x)\cdot\xi - i(z-x')\cdot\xi' \textrm{ und } \chi_\epsilon(\xi,\xi',y,y') = \chi_\epsilon(\xi,\xi',x'-x,z-x') .$$
Wenn wir
$ p_{\epsilon,u}(x,\xi,x',\xi',z) := \chi_\epsilon(\xi,\xi',x'-x,z-x') p(x,\xi,x',\xi') u(z)$
setzen, liefern die Transformationen $y\mapsto x+y = x'$ und $y' \mapsto x+y+y' = z$
\begin{equation}\label{equ:doubleIntegral}
Pu(x) = \lim_{\epsilon\rightarrow 0} 
\int e^{ix\cdot\xi} \left(
\int e^{-ix'\cdot\xi} \left(
\int e^{ix'\cdot\xi'} \left(
\int e^{-iz\cdot\xi'} p_{\epsilon,u}(x,\xi,x',\xi',z) dz \right)
\dslash\xi' \right) dx' \right) \dslash\xi
.\end{equation}
Wir werden für jedes Integral die Bedingungen für Lebesgues Satz der dominanten Konvergenz beweisen.
Betrachte dazu die Integrale einzeln
\begin{align*}
Pu(x) &= 
\lim_{\epsilon\rightarrow 0} 
\int e^{ix\cdot\xi} \left(
\int e^{-ix'\cdot\xi} \left(
\int e^{ix'\cdot\xi'}
a_{\epsilon,u}(x,\xi,x',\xi')
\dslash\xi' \right) dx' \right) \dslash\xi \\
&= \lim_{\epsilon\rightarrow 0} 
\int e^{ix\cdot\xi} \left(
\int e^{-ix'\cdot\xi} b_{\epsilon,u}(x,\xi,x') dx'  
\right) \dslash\xi = \\
&= \lim_{\epsilon\rightarrow 0} \int e^{ix\cdot\xi} c_{\epsilon,u}(x,\xi) \dslash\xi
,\end{align*}
wobei
\begin{align*}
a_{\epsilon,u}(x,\xi,x',\xi') :=& \int e^{-iz\cdot\xi'} p_{\epsilon,u}(x,\xi,x',\xi',z) dz, \\
b_{\epsilon,u}(x,\xi,x') :=& \int e^{ix'\cdot\xi'} a_{\epsilon,u}(x,\xi,x',\xi') \dslash\xi' \textrm{ und }\\
c_{\epsilon,u}(x,\xi) :=& \int e^{-ix'\cdot\xi} b_{\epsilon,u}(x,\xi,x') dx'
.\end{align*}
Wählen wir zwei Zahlen $l,l'\in\N$ und eine gerade Zahl $n_0 \in 2\N$ mit
$$ -2l(1-\delta)+m < -n,\; -2l'+2l+m' < -n \textrm{ und } n_0 > n ,$$
dann können wir $Pu(x)$ nach der Formel (\ref{equ:doubleIntegral}) berechnen.
Betrachte dazu die Multiindizes $\alpha,\beta,\beta' \in \N^n_0$ mit $\abs{\alpha} \le n_0, \abs{\beta} \le 2l, \abs{\beta'} \le 2l$.
Für $ a_{\epsilon,u}$ folgt wegen $u \in \Schwarzi$, dass
$$ \lim_{\epsilon\rightarrow 0} a_{\epsilon,u}(x,\xi,x',\xi') = \int e^{-iz\cdot\xi'} p(x,\xi,x',\xi') u(z) dz .$$
Wir formen mit der Leibniz-Regel um:
\begin{align*}
& D_{\xi'}^\alpha D_{x'}^\beta a_{\epsilon,u}(x,\xi,x',\xi') \\
&= \sum_{\alpha_1+\alpha_2=\alpha} {\alpha \choose \alpha_1 } \int D_{\xi'}^{\alpha_1} e^{-iz\cdot\xi'} D_{\xi'}^{\alpha_2} D_{x'}^{\beta} p_{\epsilon,u}(x,\xi,x',\xi',z) dz \\
&= \sum_{\alpha_1+\alpha_2=\alpha} {\alpha \choose \alpha_1 } \int e^{-iz\cdot\xi'} (-z)^{\alpha_1} D_{\xi'}^{\alpha_2} D_{x'}^{\beta} p_{\epsilon,u}(x,\xi,x',\xi',z) dz \\
&= \piket{\xi'}{-2l'} \sum_{\alpha_1+\alpha_2=\alpha} {\alpha \choose \alpha_1 } \int e^{-iz\cdot\xi'} \piket{D_z}{2l'} \left(  (-z)^{\alpha_1} D_{\xi'}^{\alpha_2} D_{x'}^{\beta} p_{\epsilon,u}(x,\xi,x',\xi',z)  \right) dz 
,\end{align*}
wobei wir die Identität $\piket{D_y}{2l'} e^{-iy\cdot\xi} = \piket{\xi'}{2l'} e^{-iy\cdot\xi}$ ausnutzen und $2l'$-mal partiell integriert haben.
Unter Verwendung des Lemmas \ref{lemma:osziChi} ist folgende Abschätzung für $\mu \in \N^n_0$ und ein $C_{u,l'}>0$ abhängig von $u$ gültig:
\begin{align*}
& \abs{ D_z^\mu \left(  (-z)^{\alpha_1} D_{\xi'}^{\alpha_2} D_{x'}^{\beta} p_{\epsilon,u}(x,\xi,x',\xi',z)  \right) } \\
&= \abs{ \sum_{\mu_1+\mu_2=\mu} {\mu \choose \mu_1 } D_z^{\mu_1}  (-z)^{\alpha_1} D_z^{\mu_2} u(z) } 
\abs{ D_{\xi'}^{\alpha_2} D_{x'}^{\beta} \left( \chi_\epsilon(\xi,\xi',x'-x,z-x') p(x,\xi,x',\xi') \right) }  \\
&\le C_{u,l'} C_{\alpha,\beta} \piket{\xi}{m} \piket{\xi;\xi'}{\delta\abs{\beta}} \piket{\xi'}{m'} 
\le C_{u,l'} C_{\alpha,\beta} \piket{\xi}{m+\delta\abs{\beta}} \piket{\xi'}{m'+\delta\abs{\beta}}
.\end{align*}
Also gibt es eine Konstante $C_{l,l',n_0,u}>0$ unabhängig von $0<\epsilon<1$, sodass
\begin{equation}\label{equ:doubleAepsilon}
\abs{  D_{\xi'}^\alpha D_{x'}^\beta a_{\epsilon,u}(x,\xi,x',\xi') } \le 
C_{l,l',n_0,u} \piket{\xi}{m+\delta\abs{\beta}} \piket{\xi'}{m'+\delta\abs{\beta}-2l'}
.\end{equation}
Da $m'+\delta\abs{\beta}-2l' \le m'+2l\delta-2l' < -n$, folgt $\xi' \mapsto \piket{\xi'}{m'+\delta\abs{\beta}-2l'} \in L^1(\R^n)$.
Insgesamt folgt also
$$ \lim_{\epsilon\rightarrow 0} b_{\epsilon,u}(x,\xi,x') = \int e^{-ix'\cdot\xi'} \lim_{\epsilon\rightarrow 0} a_{\epsilon,u}(x,\xi,x',\xi') \dslash\xi' .$$

Betrachte nun $D_{x'}^{\beta'} b_{\epsilon,u}(x,\xi,x')$:
\begin{align*}
D_{x'}^{\beta'} b_{\epsilon,u}(x,\xi,x') &= \sum_{\beta_1'+\beta_2' = \beta'} { \beta' \choose \beta_1' } 
\int D_{x'}^{\beta_1'} e^{ix'\cdot\xi'} D_{x'}^{\beta_2'}  a_{\epsilon,u}(x,\xi,x',\xi') \dslash\xi' \\
&= \sum_{\beta_1'+\beta_2' = \beta'} { \beta' \choose \beta_1' } \int e^{ix'\cdot\xi'} (\xi')^{\beta_1'} D_{x'}^{\beta_2'}  a_{\epsilon,u}(x,\xi,x',\xi') \dslash\xi' \\
&= \piket{x'}{-n_0} \sum_{\beta_1'+\beta_2' = \beta'} { \beta' \choose \beta_1' } \int e^{ix'\cdot\xi'} \piket{D_{\xi'}}{n_0} \left(  (\xi')^{\beta_1'} D_{x'}^{\beta_2'}  a_{\epsilon,u}(x,\xi,x',\xi') \right) \dslash\xi'
,\end{align*}
wobei $\piket{x'}{-n_0} \piket{D_{\xi'}}{n_0} e^{ix'\cdot\xi'} = e^{ix'\cdot\xi'}$ und wir $n_0$-mal partiell integriert haben.
Für ein Multiindex $\mu \in \N^n_0$ mit $\abs{\mu} \le n_0$ betrachten wir
$$ \abs{ D_{\xi'}^\mu \left(  (\xi')^{\beta_1'} D_{x'}^{\beta_2'}  a_{\epsilon,u}(x,\xi,x',\xi') \right) } = 
 \abs{ \sum_{\mu_1+\mu_2=\mu} {\mu \choose \mu_1} D_{\xi'}^{\mu_1} (\xi')^{\beta_1'} D_{\xi'}^{\mu_2} D_x^{\beta_2'}  a_{\epsilon,u}(x,\xi,x',\xi') } .$$
Wir haben
$$
\abs{ D_{\xi'}^{\mu_1} (\xi')^{\beta_1'}  } \le 
\begin{cases}
\abs{ \mu_1! (-i)^{\mu_1} (\xi')^{\beta_1'-\mu_1} } & \textrm{ falls } \mu_1 \le \beta'_1 , \\
1 & \textrm{ sonst. }
\end{cases}
$$
also 
$\abs{ D_{\xi'}^{\mu_1} (\xi')^{\beta_1'}  } \le C_{\beta_1',\mu_1} \piket{\xi'}{\abs{\beta_1'}}$.
Andererseits liefert Gleichung (\ref{equ:doubleAepsilon})
$$\abs{ D_{\xi'}^{\mu_2} D_{x'}^{\beta_2'}  a_{\epsilon,u}(x,\xi,x',\xi') } \le C_{l',\mu_2,\beta_2',u} \piket{\xi}{m+\delta\abs{\beta_2'}} \piket{\xi'}{m'+\delta\abs{\beta_2'}-2l'} .$$
Insgesamt erhalten wir
$$
\abs{ D_{\xi'}^\mu \left( (\xi')^{\beta_1'} D_{x'}^{\beta_2'}  a_{\epsilon,u}(x,\xi,x',\xi') \right) } \le
C_{\mu,\beta_1',\beta_2',u} \piket{\xi}{m+\delta\abs{\beta_2'}} \piket{\xi'}{m'+\delta\abs{\beta_2'}-2l'}  \piket{\xi'}{\abs{\beta_1'}}  .$$
Also hält die Ungleichung
$$ \abs{ \piket{D_{\xi'}}{n_0} \left(  (\xi')^{\beta_1'} D_{x'}^{\beta_2'}  a_{\epsilon,u}(x,\xi,x',\xi') \right) } \le
C'_{l,l',n_0,u} \piket{\xi}{m+\delta\abs{\beta_2'}} \piket{\xi'}{m'+\abs{\beta'}-2l'} .$$
Da $m'+\abs{\beta'}-2l' \le m'+2l-2l' < -n$, ist $\xi' \mapsto \piket{\xi'}{m'+\delta\abs{\beta'}-2l'} \in L^1(\R^n)$.
Also gibt es eine Konstante $C''_{l,l',n_0,u}>0$ unabhängig von $0<\epsilon<1$, sodass
$$ \abs{ D_{x'}^{\beta'} b(x,\xi,x') } \le C''_{l,l',n_0,u} \piket{\xi}{m+\delta\abs{\beta'}} \piket{x'}{-n_0} .$$
Da $-n_0 < -n$, ist $x' \mapsto \piket{x'}{-n_0} \in L^1(\R^n)$. Es folgt also
$$ \lim_{\epsilon\rightarrow 0} c_{\epsilon,u}(x,\xi) = \int e^{-ix'\cdot\xi} \lim_{\epsilon\rightarrow 0} b_{\epsilon,u}(x,\xi,x') dx' .$$
Schließlich betrachte 
$$ c_{\epsilon,u}(x,\xi) = \int e^{-ix'\cdot\xi} b_{\epsilon,u}(x,\xi,x') dx' = 
\piket{\xi}{-2l} \int e^{-ix'\cdot\xi} \piket{D_{x'}}{2l} b_{\epsilon,u}(x,\xi,x') dx' ,$$
wobei $\piket{D_{x'}}{2l} e^{-ix'\cdot\xi} = \piket{\xi}{2l} e^{-ix'\cdot\xi}$ und wir $2l$-mal partiell integriert haben.
Also gibt es eine Konstante $C'''_{l,l',n_0,u}>0$ unabhängig von $0<\epsilon<1$, sodass
$$ \abs{  c_{\epsilon,u}(x,\xi) } \le C'''_{l,l',n_0,u} \piket{\xi}{m-2l(1-\delta)} .$$
Da $ m-2l(1-\delta) < -n$, folgt, dass $\xi \mapsto \piket{\xi}{m-2l(1-\delta)} \in L^1(\R^n)$ und damit
$$ Pu(x) = \lim_{\epsilon\rightarrow 0} \int e^{ix\cdot\xi} c_{\epsilon,u}(x,\xi) \dslash\xi = \int e^{ix\cdot\xi} \lim_{\epsilon\rightarrow 0} c_{\epsilon,u}(x,\xi) \dslash\xi .$$
Der Limes kommutiert also tatsächlich mit allen vier Integralen.

\end{proof}
\end{lemma}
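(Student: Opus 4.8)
The plan is to begin from the very definition of the oscillatory integral defining $Pu(x)$, namely
$$Pu(x) = \lim_{\epsilon\rightarrow 0} \iint e^{-i(y\cdot\xi+y'\cdot\xi')} \chi_\epsilon(\xi,\xi',y,y') p(x,\xi,x+y,\xi') u(x+y+y')\, d(y,y')\dslash(\xi,\xi')$$
with $\chi_\epsilon(\xi,\xi',y,y') = \chi(\epsilon\xi,\epsilon\xi',\epsilon y,\epsilon y')$ for a fixed $\chi \in \Schwarz{\R^n\times\R^n\times\R^n\times\R^n}$ normalised by $\chi(0,0,0,0)=1$, and to perform the affine change of variables $x' := x+y$, $z := x+y+y'$. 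This rewrites the phase as $-i(x'-x)\cdot\xi - i(z-x')\cdot\xi'$ and turns the cutoff into $\chi_\epsilon(\xi,\xi',x'-x,z-x')$, so that $Pu(x)$ acquires the iterated form (\ref{equ:doubleIntegral}) with the $\epsilon$-limit still standing outside all four integrals. Everything then reduces to justifying that this limit may be pushed successively inside each of the four integrals by Lebesgue's dominated convergence theorem.

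To do this I would fix integers $l,l' \in \N$ and an even $n_0 \in 2\N$ satisfying $-2l(1-\delta)+m < -n$, $-2l'+2l+m' < -n$ and $n_0 > n$ (possible precisely because $\delta < 1$), and introduce, from the innermost integral outward, the auxiliary functions $a_{\epsilon,u}(x,\xi,x',\xi')$, $b_{\epsilon,u}(x,\xi,x')$, $c_{\epsilon,u}(x,\xi)$ obtained by carrying out the $z$-, $\xi'$- and $x'$-integrations in turn. The estimates come from repeated integration by parts. In the $z$-integral, using $\piket{D_z}{2l'} e^{-iz\cdot\xi'} = \piket{\xi'}{2l'} e^{-iz\cdot\xi'}$ together with the Leibniz rule, the uniform bounds on the derivatives of $\chi_\epsilon$ from Lemma \ref{lemma:osziChi}, the symbol estimates for $p$, and the Schwartz decay of $u$, and replacing the mixed bracket $\piketi{\xi;\xi'}$ by the product $\piketi{\xi}\piketi{\xi'}$, one obtains $\abs{D_{\xi'}^\alpha D_{x'}^\beta a_{\epsilon,u}(x,\xi,x',\xi')} \le C \piket{\xi}{m+\delta\abs{\beta}} \piket{\xi'}{m'+\delta\abs{\beta}-2l'}$ uniformly in $\epsilon \in (0,1)$; the $\xi'$-decay is integrable since $m'+\delta\abs{\beta}-2l' \le m'+2l\delta-2l' < -n$, so dominated convergence moves the limit inside the $\xi'$-integral. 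Integrating by parts $n_0$ times in $\xi'$ via $\piket{x'}{-n_0}\piket{D_{\xi'}}{n_0} e^{ix'\cdot\xi'} = e^{ix'\cdot\xi'}$ then produces a majorant of the form $C\piket{\xi}{m+\delta\abs{\beta'}}\piket{x'}{-n_0}$, which is $L^1$ in $x'$; and integrating by parts $2l$ times in $x'$ via $\piket{D_{x'}}{2l} e^{-ix'\cdot\xi} = \piket{\xi}{2l} e^{-ix'\cdot\xi}$ gives $\abs{c_{\epsilon,u}(x,\xi)} \le C\piket{\xi}{m-2l(1-\delta)}$, which is $L^1$ in $\xi$. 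Three further applications of dominated convergence push the limit inside the $x'$- and $\xi$-integrals, and since $u \in \Schwarzi$ gives $\lim_{\epsilon\to0} a_{\epsilon,u}(x,\xi,x',\xi') = \int e^{-iz\cdot\xi'} p(x,\xi,x',\xi') u(z)\,dz$ pointwise, the claimed identity follows.

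The change of variables and the final assembly are routine; the real work — and the main obstacle — is the bookkeeping in the three integration-by-parts estimates: one must control how the Leibniz expansions distribute derivatives among $u$, $\chi_\epsilon$ and $p$, check that Lemma \ref{lemma:osziChi} really yields $\epsilon$-independent constants, keep the powers of $\piketi{\xi}$, $\piketi{\xi'}$ and $\piketi{x'}$ matched against the chosen $l,l',n_0$, and at each stage confirm that the resulting majorant is genuinely integrable in the variable being integrated. The order in which the four integrals are handled (innermost $z$, then $\xi'$, then $x'$, then $\xi$) is dictated by the structure of the decay gained at each step, so the sequence of reductions cannot be reshuffled.
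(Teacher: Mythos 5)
Your proposal is correct and follows essentially the same route as the paper's proof: the same change of variables $x'=x+y$, $z=x+y+y'$, the same auxiliary functions $a_{\epsilon,u}$, $b_{\epsilon,u}$, $c_{\epsilon,u}$, the same choice of $l,l',n_0$, and the same three integrations by parts (in $z$, $\xi'$, $x'$) yielding $\epsilon$-uniform integrable majorants for successive applications of dominated convergence. Nothing essential is missing.
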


\subsection{Das vereinfachte Symbol}\label{sec:simplify}
Wir stellen uns die Frage, unter welchen Bedingungen ein Doppelsymbol einen Operator definiert, der mit dem Operator eines einfachen Symbols übereinstimmt.
Dazu werden wir uns eine stetige Abbildung von der Klasse der Doppelsymbole in die Klasse der einfachen Symbole konstruieren, und zeigen,
dass das Bild ähnliche Abschätzungseigenschaften aufweist wie die Urbildklasse. Wir nennen die Elemente des Bildes auch vereinfachte Symbole.
Eine Anwendung dafür findet sich in der Komposition eines Hölder-stetigen Symbols mit einem glatten.
Das resultierende Doppelsymbol lässt sich dann als vereinfachtes Symbol durch eine asymptotische Entwicklung darstellen.

\begin{lemma}\label{tech:simplifyNormAbsch}
Sei $R \in \R, R > 0$ gegeben. Für alle $\eta \in \R^n$ mit $\abs{\eta} \ge R$ gilt
$$ \abs{\eta} \ge \frac{1}{2} \prod_{j=1}^n \left( \abs{\eta_j} + R \right)^{\frac{1}{n}} .$$
\begin{proof}
Für $\abs{\eta} = \sqrt{ \sum_{j=1}^n \abs{\eta_j}^2}$ gilt $\abs{\eta} \ge \abs{\eta_j}$ und damit 
$$ \abs{\eta} = \frac{1}{2} \abs{\eta} + \frac{1}{2} \abs{\eta} \ge \frac{1}{2} ( \abs{\eta} + R) \ge \frac{1}{2} ( \abs{\eta_j} + R)$$
für ein beliebiges $j = 1, \ldots, n$.
Insgesamt ist
$$\abs{\eta} \ge \frac{1}{2} ( \abs{\eta} + R) = \left( \left( \frac{1}{2} ( \abs{\eta} + R) \right)^{\frac{1}{n}} \right)^n \ge \prod_{j=1}^n \left( \frac{1}{2} ( \abs{\eta_j}  + R) \right)^{\frac{1}{n}} = \frac{1}{2} \prod_{j=1}^n \left( \abs{\eta_j} + R \right)^{\frac{1}{n}} .$$
\end{proof}
\end{lemma}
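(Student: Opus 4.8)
The plan is to reduce the claimed inequality, which is essentially a lower bound for a geometric mean, to $n$ completely elementary pointwise estimates, one per coordinate, and then multiply them. First I would record the two facts that are immediately available: from $\abs{\eta}^2 = \sum_{j=1}^n \abs{\eta_j}^2$ one has $\abs{\eta} \ge \abs{\eta_j}$ for every $j = 1,\ldots,n$, and by hypothesis $\abs{\eta} \ge R$. Writing $\abs{\eta} = \frac{1}{2}\abs{\eta} + \frac{1}{2}\abs{\eta}$ and bounding the first summand below by $\frac{1}{2}\abs{\eta_j}$ and the second by $\frac{1}{2}R$ yields, for each fixed $j$,
$$ \abs{\eta} \ge \frac{1}{2}\left(\abs{\eta_j} + R\right) . $$

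Next I would multiply these $n$ inequalities. Since every quantity occurring is nonnegative, this is legitimate and gives
$$ \abs{\eta}^n \ge \frac{1}{2^n} \prod_{j=1}^n \left(\abs{\eta_j} + R\right) . $$
Taking $n$-th roots on both sides, using that $t \mapsto t^{1/n}$ is monotone increasing on $[0,\infty)$, produces exactly the asserted estimate
$$ \abs{\eta} \ge \frac{1}{2} \prod_{j=1}^n \left(\abs{\eta_j} + R\right)^{\frac{1}{n}} . $$

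There is no real obstacle here; the only points deserving a word of care are that the split $\abs{\eta} = \frac{1}{2}\abs{\eta} + \frac{1}{2}\abs{\eta}$ lets us use the two different lower bounds $\abs{\eta_j}$ and $R$ simultaneously without any double-counting, and that the hypothesis $\abs{\eta} \ge R$ is genuinely needed — it is used precisely once, in estimating the second half of $\abs{\eta}$, and the statement indeed fails for small $\eta$ (take $\eta = 0$), which is why the restriction $\abs{\eta} \ge R$ is imposed.
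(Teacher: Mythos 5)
Your proof is correct and takes essentially the same approach as the paper: the key step in both is the split $\abs{\eta} = \frac{1}{2}\abs{\eta} + \frac{1}{2}\abs{\eta} \ge \frac{1}{2}\left(\abs{\eta_j} + R\right)$, and whether one then multiplies the $n$ inequalities and takes $n$-th roots (as you do) or writes $\frac{1}{2}(\abs{\eta}+R)$ directly as an $n$-th power of its $n$-th root and bounds each factor (as the paper does) is only a cosmetic rearrangement of the same algebra.
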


\begin{lemma}\label{tech:simplifyPiket}
Für $\gamma \in \N^n_0, l_0 \in \N_0$ gibt es eine Konstante $C_{\gamma,l_0} > 0$, sodass für alle $y,\xi \in \R^n$ die Ungleichung
$$\partial^\gamma_y (1 + \piket{\xi}{2\delta} \abs{y}^2 )^{-l_0} \le C_{\gamma,l_0} \piket{\xi}{\delta\abs{\gamma}} ( 1 + \piket{\xi}{2\delta} \abs{y}^2 )^{-l_0}$$
hält.
\begin{proof}
Wir beweisen mit Hilfe einer Induktion über $N := \abs{\gamma} \in \N$ (die Aussage ist für $N = 0$ trivialerweise erfüllt).
Für den Induktionsschritt wollen wir die Aussage vorerst elementar für $N = 1$ und ein beliebiges $j = 1,\ldots,n$ beweisen:
$$\abs{ \partial_{y_j} (1 + \piket{\xi}{2\delta} \abs{y}^2 )^{-l_0} } =
\abs{ -l_0 (1 + \piket{\xi}{2\delta} \abs{y}^2 )^{-l_0-1} \piket{\xi}{2\delta} 2y_i} \le
C_{l_0} \piket{\xi}{\delta} (1 + \piket{\xi}{2\delta} \abs{y}^2 )^{-l_0} ,$$
wobei 
$\abs{ 2 l_0 \piket{\xi}{\delta} y_i } \le C_{l_0} (1 + \piket{\xi}{2\delta} \abs{y}^2 )$.

Wir betrachten den Induktionsschritt $N \mapsto N+1$: 
Sei $\gamma \in \N^n_0$ mit $\abs{\gamma} = N$ gegeben. Für ein beliebiges $j = 1,\ldots,n$ gilt:
\begin{align*}
\abs{ \partial_y^\gamma \partial_{y_j} (1 + \piket{\xi}{2\delta} \abs{y}^2  )^{-2l_0} } &=
2 l_0 \piket{\xi}{\delta} \abs{ \partial_y^\gamma \left( (1+ \piket{\xi}{2\delta} \abs{y}^2 )^{-l_0-1} y_j \right) }\\
&= 2 l_0 \piket{\xi}{\delta} \abs{ \sum_{\alpha \le \gamma} {\alpha \choose \gamma} \partial_y^\alpha (1 + \piket{\xi}{2\delta} \abs{y}^2)^{-l_0-1} \partial_y^{\gamma-\alpha} y_j}
.\end{align*}
Nach Induktionsvoraussetzung ist für alle $\alpha \le \gamma$
$$\partial_y^\alpha (1 + \piket{\xi}{2\delta} \abs{y}^2)^{-l_0-1} \le C_{\alpha,l_0+1} \piket{\xi}{\delta\abs{\alpha}} (1+\piket{\xi}{2\delta} \abs{y}^2 )^{-l_0-1}$$
und
$$ \partial_y^{\gamma-\alpha} y_j = 
\begin{cases}
y_j & \gamma = \alpha ,\\
1 & \gamma = e_j + \alpha ,\\
0 & \textrm{ sonst, }
\end{cases}
$$
wobei wir jeden Vektor $e_j$ aus der kanonischen Basis $\menge{e_j}_{j=1,\ldots,n}$ des $\R^n$ als Multiindex auffassen.
Wir können insgesamt folgern:
\begin{align*}
\abs{ \partial_y^\gamma \partial_{y_j} (1 + \piket{\xi}{2\delta} \abs{y}^2  )^{-2l_0} }
&\le 2 l_0 \left( \piket{\xi}{\delta} \abs{ C_{\gamma,l_0+1} \piket{\xi}{\delta\abs{\gamma}} (1+\piket{\xi}{2\delta}\abs{y}^2)^{-l_0-1} y_j} \right. \\
&\quad + \left. \abs{ C_{\abs{\gamma}-1,l_0+1} \piket{\xi}{\delta(\abs{\gamma}-1)} (1+\piket{\xi}{2\delta}\abs{y}^2)^{-l_0-1}  } \right) \\
&\le C_{\gamma+j,l_0} \piket{\xi}{\delta\abs{\gamma+e_j}}  (1+\piket{\xi}{2\delta}\abs{y}^2)^{-l_0-1} \abs{y_j}  \\
&\le C_{\gamma+j,l_0} \piket{\xi}{\delta(\abs{\gamma}+1)} (1+\piket{\xi}{2\delta}\abs{y}^2)^{-l_0} 
.\end{align*}
\end{proof}
\end{lemma}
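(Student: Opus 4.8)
The plan is to reduce the estimate to the derivative bounds for the Japanese bracket already established in Lemma \ref{lemma:piketTech}, by means of a scaling argument in the variable $y$. Write $t := \piketi{\xi}^\delta$, so that $t \ge 1$ for every $\xi \in \R^n$, and observe that $1 + \piket{\xi}{2\delta}\abs{y}^2 = 1 + \abs{ty}^2$, whence $(1 + \piket{\xi}{2\delta}\abs{y}^2)^{-l_0} = \piket{ty}{-2l_0}$. Setting $G(z) := \piket{z}{-2l_0}$, the chain rule gives, for any multiindex $\gamma \in \N^n_0$,
\[
\partial_y^\gamma \left( 1 + \piket{\xi}{2\delta}\abs{y}^2 \right)^{-l_0} = t^{\abs{\gamma}} \left( \partial_z^\gamma G \right)(ty) = \piket{\xi}{\delta\abs{\gamma}} \left( \partial_z^\gamma G \right)(ty).
\]

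Next I would invoke Lemma \ref{lemma:piketTech} with $m := -2l_0$: there is a constant $C_{\gamma,l_0} > 0$ such that $\abs{\partial_z^\gamma \piket{z}{-2l_0}} \le C_{\gamma,l_0} \piket{z}{-2l_0 - \abs{\gamma}}$ for all $z \in \R^n$. Since $-2l_0 - \abs{\gamma} \le -2l_0$ and $\piketi{z} \ge 1$, the right-hand side is bounded by $C_{\gamma,l_0}\piket{z}{-2l_0}$. Evaluating at $z = ty$ yields $\abs{(\partial_z^\gamma G)(ty)} \le C_{\gamma,l_0} \piket{ty}{-2l_0} = C_{\gamma,l_0}(1 + \piket{\xi}{2\delta}\abs{y}^2)^{-l_0}$, and substituting this into the previous display gives exactly the claimed inequality.

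There is essentially no serious obstacle here; the argument is routine. The only points needing a moment's care are that $t = \piketi{\xi}^\delta \ge 1$ (so that the surplus decay factor $\piket{z}{-\abs{\gamma}}$ produced by Lemma \ref{lemma:piketTech} may simply be discarded) and the elementary identity $1 + \piket{\xi}{2\delta}\abs{y}^2 = \abs{ty}^2 + 1$, which identifies $G(ty)$ with the function being differentiated.

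As a purely elementary alternative one can induct on $N := \abs{\gamma}$, the case $N = 0$ being trivial. For $N = 1$, differentiating once in some $y_j$ produces $-2l_0 \piket{\xi}{2\delta} y_j (1 + \piket{\xi}{2\delta}\abs{y}^2)^{-l_0 - 1}$, and one estimates $\abs{\piket{\xi}{2\delta} y_j} = \piket{\xi}{\delta}\abs{\piketi{\xi}^\delta y_j} \le \piket{\xi}{\delta}(1 + \piket{\xi}{2\delta}\abs{y}^2)$ by Young's inequality, which both absorbs the extra bracket power and raises the exponent $-l_0-1$ back to $-l_0$. For the step $N \mapsto N+1$ one applies the Leibniz rule to $\partial_y^\gamma(\piket{\xi}{2\delta} y_j (1 + \piket{\xi}{2\delta}\abs{y}^2)^{-l_0-1})$, uses the induction hypothesis on $(1 + \piket{\xi}{2\delta}\abs{y}^2)^{-l_0-1}$, and notes that $\partial_y^{\gamma-\alpha} y_j$ vanishes unless $\gamma - \alpha \in \menge{0, e_j}$. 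This route is more computational, so I would prefer the scaling argument above.
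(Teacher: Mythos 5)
Your scaling argument is correct and differs from the paper's proof. You set $t := \piketi{\xi}^{\delta}$, note $1 + \piket{\xi}{2\delta}\abs{y}^2 = \piket{ty}{2}$, pull the factor $t^{\abs{\gamma}} = \piket{\xi}{\delta\abs{\gamma}}$ out via the chain rule for the linear scaling $y \mapsto ty$, and then apply Lemma \ref{lemma:piketTech} with $m = -2l_0$, discarding the surplus decay $\piket{ty}{-\abs{\gamma}} \le 1$. Every step checks out (in fact you do not even need $t \ge 1$ for the discarding step, since $\piketi{z} \ge 1$ for all $z$; positivity of $t$ suffices for the scaling identity). The paper instead argues by induction on $\abs{\gamma}$: the base case uses the Young-type bound $2l_0\piket{\xi}{\delta}\abs{y_j} \le C_{l_0}(1+\piket{\xi}{2\delta}\abs{y}^2)$, and the inductive step applies the Leibniz rule to $(1+\piket{\xi}{2\delta}\abs{y}^2)^{-l_0-1} y_j$, observing that $\partial_y^{\gamma-\alpha} y_j$ vanishes unless $\gamma - \alpha \in \menge{0, e_j}$ — which is precisely the ``elementary alternative'' you sketch at the end. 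Your primary route is shorter and reuses an already established lemma, and it yields for free the sharper bound with the additional factor $\piket{ty}{-\abs{\gamma}}$ if one ever needs it; the paper's induction is self-contained and avoids invoking the homogeneity argument behind Lemma \ref{lemma:piketTech}, at the cost of a more computational bookkeeping of constants.
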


\begin{lemma}\label{lemma:simplify}
Sei $p(x,\xi,x',\xi') \in C^\tau S^{m,m'}_{\rho,\delta}(\R^n,\R^n\times\R^n\times\R^n)$ mit $m,m' \in \R$, $0 \le \delta \le \rho \le 1$, $\delta < 1$.
Wir setzen für feste $\alpha,\alpha',\beta,\beta'\in \N^n_0$ mit $\abs{\beta} \le \tau$
$$q(x,\xi,x',\xi') := D_\xi^\alpha D_{\xi'}^{\alpha'} D_x^\beta D_{x'}^{\beta'} p(x,\xi,x',\xi') .$$
Sei $\theta \in [-1,1]$. Setze
$$ q_\theta(x,\xi) := \osint e^{-iy\cdot\eta} q(x,\xi+\theta\eta, x+y,\xi) dy \dslash\eta .$$

\begin{enumerate}[(a)]
\item Dann ist $\menge{ q_\theta(x,\xi)}_{\abs{\theta}\le 1}$ in $C^{\tau-\abs{\beta}} S^\kappa_{\rho,\delta}(\R^n,\R^n)$ beschränkt für 
$$\kappa := m + m' + \delta \abs{\beta + \beta'} - \rho \abs{\alpha+\alpha'} .$$
\item  Für jedes $l \in \N$ gibt es eine Konstante $C > 0$ und ein $l' \in \N$, sodass für alle $\theta \in [-1,1]$ die Ungleichung 
\begin{equation}\label{equ:simplifyQtheta}
\abs{q_\theta}^{(\kappa)}_{l,\tau} \le C \abs{p}^{(m,m')}_{l',\tau}
\end{equation}
erfüllt ist.
\end{enumerate}

\begin{proof}

{\bf 1. Schritt}

Zunächst wähle $x,\xi'\in\R^n$ fest. Nun gibt es eine Konstante $C_{\alpha,\alpha',\beta,\beta',x,\xi'}>0$, sodass
$$\abs{q(x,\xi,x',\xi')} \le C_{\alpha,\alpha',\beta,\beta',x,\xi'} \piket{\xi}{m+\delta\abs{\beta+\beta'}} ,$$
und
$$\abs{D_y^\gamma D_\xi^\mu q(x,\xi+\theta\eta,x+y,\xi')} \le C_{\gamma,\mu} C_{\alpha,\alpha',\beta,\beta',x,\xi'} \piket{\xi}{m+\delta\abs{\beta+\beta'}+\delta\abs{\gamma}} .$$
Also ist
$$ \menge{ (y,\eta) \mapsto q(x,\xi+\theta\eta, x+y, \xi) : x, \xi \in \R^n, \theta \in [0,1]} \subset \Oszillatory{\delta,0}{m_+ + \delta \abs{\beta+\beta'}} ,$$
und damit $q_\theta(x,\xi)$ ein wohldefinierter Ausdruck.
Nach Theorem \ref{thm:osziKonvergenz} gilt für beliebige $\gamma, \mu \in \N^n_0$ mit $\abs{\gamma}+\abs{\beta} \le \tau$
$$ D_x^\gamma D_\xi^\mu q_\theta(x,\xi) = \osint e^{-iy\cdot\eta} D_x^\gamma D_\xi^\mu  q(x,\xi+\theta\eta, x+y, \xi) dy \dslash\eta .$$
Schreiben wir
\begin{align*}
D_x^\gamma D_\xi^\mu q(x,\xi, x', \xi') &= D_x^\gamma D_\xi^\mu  D_\xi^\alpha D_{\xi'}^{\alpha'} D_x^\beta D_{x'}^{\beta'} p(x,\xi,x',\xi') \\
&= \sum_{\substack{\alpha_1 + \alpha_1' = \alpha+\alpha'+\mu\\\beta_1 + \beta_1' = \beta + \beta' + \gamma}} C_{\alpha_1,\alpha_1',\beta,\beta_1'} D_x^{\alpha_1} D_{x'}^{\alpha_1'} D_\xi^{\beta_1} D_{\xi'}^{\beta_1'} p(x,\xi,x',\xi')
,\end{align*}
so sehen wir, dass die Behauptung (a) und (b) folgen, falls es ein $C_1 > 0$ gibt, sodass
$$ \abs{ q_\theta(x,\xi) } \le C_1 \abs{p}_{l,\tau}^{(m,m')} \piket{\xi}{\kappa} .$$ 

{\bf 2. Schritt}

Sei $l_0 > \frac{n}{2}, l_0 \in \N$ fest gewählt.
Mit $ - (\triangle_\eta)^{l_0} e^{-iy\cdot\eta} = \abs{y}^{2l_0} e^{-iy\cdot\eta}$ ist
$$ e^{-iy\cdot\eta} = \left(1 + \piket{\xi}{2\delta} \abs{y}^2 \right)^{-l_0} \left(1 + \piket{\xi}{2\delta} (-\triangle_\eta)\right)^{l_0} e^{-iy\cdot\eta} .$$
Wir können somit $q_\theta$ darstellen durch
$$ q_\theta(x,\xi) = \osint e^{-iy\eta} r_\theta(x,\xi,\eta,y) dy \dslash\eta ,$$
wobei
$ r_\theta(x,\xi,\eta,y) = (1+ \piket{\xi}{2\delta} \abs{y}^2 )^{-l_0} (1+\piket{\xi}{2\delta} (-\triangle_\eta))^{l_0} q(x,\xi+\theta\eta,x+y,\xi)$.
Wir teilen $\R^n$ in die drei Integrationsbereiche 
$$\Omega_1 := \menge{ \eta \in \R^n : \abs{\eta} < \frac{\piket{\xi}{\delta}}{2}}, 
\Omega_2 := \menge{ \eta \in \R^n : \frac{\piket{\xi}{\delta}}{2} \le \abs{\eta} < \frac{\piketi{\xi}}{2}  } $$
$$ \textrm{ und } \Omega_3 := \menge{ \eta \in \R^n : \abs{\eta} \ge \frac{\piketi{\xi}}{2}  }$$
bezüglich eines fest gewählten $\xi \in \R^n$ auf.

Da $2l_0 > n$, ist $y \mapsto e^{-iy\cdot\eta} r_\theta(x,\xi,\eta,y) \in L^1(\R^n)$.
Wir nehmen vorerst an, dass $\eta \mapsto e^{-iy\cdot\eta} r_\theta(x,\xi,\eta,y) \in L^1(\R^n)$, 
sodass wir nach Folgerung \ref{cor:osziLebesgue} das oszillatorische Integral aufspalten können in
$$ q_\theta(x,\xi) = 
\int_{\Omega_1} \varphi(x,\xi,\eta) \dslash\eta + 
\int_{\Omega_2} \varphi(x,\xi,\eta) \dslash\eta + 
\int_{\Omega_3} \varphi(x,\xi,\eta) \dslash\eta
,$$
wobei $\varphi(x,\xi,\eta) := \int e^{-iy\cdot\eta} r_\theta(x,\xi,\eta,y) dy$.

{\bf 3. Schritt}

Betrachten wir zunächst $\eta \in \Omega_1 \cup \Omega_2$, also $\abs{\eta} < \frac{\piketi{\xi}}{2}$.
Mit dem Mittelwertsatz im Mehrdimensionalen gilt
$$\piketi{\xi+\theta\eta} - \piketi{\xi} =
\theta\eta \cdot \int_0^1 \nabla_\xi \piketi{\xi + t \theta \eta} dt =
\theta \int_0^1 \sum_{j=1}^n \eta_j \partial_{\xi_j} \piketi{\xi + t \theta \eta} dt .$$
Da $\partial_{\xi_j} \piketi{\xi+t\theta\eta} = \partial_{\xi_j} \sqrt{ 1 + (\xi + t\theta\eta)^2} = \frac{\xi_j + t\theta\eta_j}{\piketi{\xi + t\theta\eta}}$
ist
$$\abs{ \piketi{\xi+\theta\eta} - \piketi{\xi}} \le
\theta \abs{ \int_0^1 \sum_{j=1}^n \eta_j \frac{\xi_j + t\theta\eta_j}{\piketi{\xi + t\theta\eta}} dt } \le
\abs{ \sum_{j=1}^n \eta_j } \le \frac{\piketi{\xi}}{2}
.$$
Wir erhalten damit die Abschätzungen
$$
\frac{1}{2} \piketi{\xi} \le \piketi{\xi+\theta\eta} \le \frac{3}{2} \piketi{\xi} 
\textrm{ und }
\piketi{\xi + \theta\eta; \xi} \le \piketi{\xi + \theta\eta} \piketi{\xi} \le \frac{5}{2} \piketi{\xi}
$$
für alle $\eta \in \Omega_1 \cup \Omega_2$ und $\abs{\theta} \le 1$.
Wegen $\delta \le \rho$ ist 
\begin{align*}
&\abs{ \piket{\xi}{2\delta l_0} (-\triangle_\eta)^{l_0} q(x,\xi+\theta\eta, x+y, \xi) } \\
&\le C_{\alpha\alpha'\beta\beta'} \piket{\xi+\theta\eta}{m_++\delta\abs{\beta}-\rho\abs{\alpha}} \piket{\xi+\theta\eta;\xi}{\delta\abs{\beta'}} \piket{\xi}{m_+'-\rho\abs{\alpha' }+ 2\delta l_0} \piket{\theta\eta}{-2\rho l_0} \\
&\le C'_{\alpha\alpha'\beta\beta'} \piket{\xi}{\kappa+ 2l_0(\delta-\rho)}  
 \le C'_{\alpha\alpha'\beta\beta'} \piket{\xi}{\kappa}
\end{align*}
für $\eta \in \Omega_1 \cup \Omega_2$.
Zusammenfassend ergibt sich
$$\abs{ r_\theta(x,\xi,\eta,y) } \le c_2 (1 + \piket{\xi}{2\delta} \abs{y}^2 )^{-l_0} \piket{\xi}{\kappa} \textrm{ für jedes } \eta \in \Omega_1 \cup \Omega_2 .$$

Unter Koordinatentransformation $y \mapsto \piket{\xi}{\delta} y =: w$ ist mit ``$dw = \piket{\xi}{\delta n} dy$'' und wegen $-2l_0 < -n$
\begin{align*}
\abs{ \int_{\Omega_1} \varphi(x,\xi,\eta) \dslash\eta} 
&\le c_2 \piket{\xi}{\kappa - \delta\eta} \int_{\Omega_1} \int \abs{ e^{-i \piket{\xi}{-\delta} w \cdot \eta}} (1+\abs{w}^2)^{-l_0} dw \dslash \eta \\
&\le c_2' \piket{\xi}{\kappa-\delta\eta} \frac{1}{(2\pi)^n} \vol \Omega_1 
\le c_2'' \piket{\xi}{\kappa}
,\end{align*}
wobei 
$$\vol \Omega_1 \le \prod_{j=1}^n \int_{\abs{\lambda} \le \frac{\piket{\xi}{\delta}}{2}} d\lambda = \piket{\xi}{\delta\eta} .$$

Für $\int_{\Omega_2} \varphi(x,\xi,\eta) \dslash\eta$ betrachten wir 
\begin{align*}
& D_y^\gamma r_\theta(x,\xi,\eta,y) \\
&= \sum_{\gamma_1+\gamma_2 = \gamma} {\gamma \choose \gamma_1} D_y^{\gamma_1} \left(1 + \piket{\xi}{2\delta} \abs{y}^2 \right)^{-l_0} \left(1 + \piket{\xi}{2\delta}(-\triangle_\eta)\right)^{l_0} D_y^{\gamma_2} q(x,\xi+\theta\eta, x+y, \xi)
.\end{align*}
Es ist einfach zu sehen, dass
$\abs{ \piket{\xi}{2\delta l_0} (-\triangle_\eta)^{l_0} D_y^{\gamma_2} q(x,\xi+\theta\eta, x+y, \xi) } \le C_{\kappa\gamma_2} \piket{\xi}{\kappa+\delta\abs{\gamma_2}}$.
Mit Lemma \ref{tech:simplifyPiket} folgt
$$\abs{ D_y^\gamma r_\theta(x,\xi,\eta,y) } \le C_3 \piket{\xi}{\kappa+\delta\abs{\gamma}} (1 + \piket{\xi}{2\delta} \abs{y}^2 )^{-l_0} \textrm{ für alle } \eta \in \Omega_2 .$$
Insbesondere haben wir für $\abs{\gamma} = 2l_0$
$$\abs{ (-\triangle_y)^{l_0} r_\theta(x,\xi,\eta,y) } \le C_3 \piket{\xi}{\kappa+2l_0\delta} (1 + \piket{\xi}{2\delta} \abs{y}^2 )^{-l_0} \textrm{ für alle } \eta \in \Omega_2 .$$

Mit der partieller Integration
$$ \int e^{-iy\cdot \eta} r_\theta(x,\xi,\eta,y) dy = \abs{\eta}^{-2l_0} \int e^{-iy\cdot\eta} (-\triangle_y)^{l_0} r_\theta(x,\xi,\eta,y) dy$$
und analogen Umformungen folgt
\begin{align*}
\abs{\int_{\Omega_2} \varphi(x,\xi,\eta) \dslash\eta} 
&\le C_3 \piket{\xi}{\kappa+2l_0\delta} \int_{\Omega_2} \abs{\eta}^{-2l_0} \int \abs{e^{-iy\cdot\eta}} (1 + \piket{\xi}{2\delta} \abs{y}^2 )^{-l_0} dy \dslash\eta \\
&\le C_4 \piket{\xi}{\kappa+(2l_0-n)\delta} \int_{\Omega_2} \abs{\eta}^{-2l_0} \int (1+\abs{w}^2)^{-2l_0} dw \dslash\eta \\
&\le C_4' \piket{\xi}{\kappa+(2l_0-n)\delta} \int_{\Omega_2} \abs{\eta}^{-2l_0} \dslash\eta \\
&\le C_4''  \piket{\xi}{\kappa}
,\end{align*}
wobei nach Lemma \ref{tech:simplifyNormAbsch} gilt, dass
$$\int_{\Omega_2} \abs{\eta}^{-2l_0} \dslash\eta \le \int_{\menge{\abs{\eta} \ge \frac{\piket{\xi}{\delta}}{2}}} \abs{\eta}^{-2l_0} \dslash\eta 
 \le 2^{2l_0} \prod_{j=1}^n \int_\R \left( \abs{\eta_j} + \frac{\piket{\xi}{\delta}}{2} \right)^{-\frac{2l_0}{n}} \dslash \eta_j \le C_5 \piket{\xi}{(-2l_0+n)\delta} .$$
Dazu haben wir folgende Umformung für $\frac{2l_0}{n} > 1$ verwendet:
\begin{align*}
\int_\R \left( \abs{\lambda} + \frac{\piket{\xi}{\delta}}{2} \right)^{-\frac{2l_0}{n}} d\lambda 
&= \int_\R \left( \frac{1}{ \abs{\lambda} + \frac{\piket{\xi}{\delta}}{2} } \right)^{\frac{2l_0}{n}} d\lambda 
= 2 \frac{ \left(\frac{\piket{\xi}{\delta}}{2}\right)^{\frac{-2l_0}{n} + 1}}{\frac{2l_0}{n}-1} \\
&\le \frac{ 2^{\frac{2l_0}{n}-1}}{\frac{2l_0}{n}-1} \piket{\xi}{\frac{\delta}{n}(-2l_0+n)}
.\end{align*}

Schließlich betrachten wir das Integral $\int_{\Omega_3} \varphi(x,\xi,\eta) \dslash\eta$:
Für $\eta \in \Omega_3$ ist $\abs{\eta} \ge \frac{\piketi{\xi}}{2} \ge \frac{1}{2}$. Lemma \ref{lemma:japanese} liefert ein $C>0$, sodass
$\piketi{\xi+\theta\eta} \le \piketi{\xi} + C \abs{\eta} \le 3 C \abs{\eta}$.

Für $m_+ := \max \menge{ m, 0}, \gamma \in \N^n_0$ liefert Lemma \ref{tech:simplifyPiket} die Abschätzung
\begin{align*}
&\abs{ \partial_y^\gamma r_\theta(x,\xi,\eta,y)} \\
&\le C_{\alpha,\alpha',\beta,\beta'} \piket{\xi+\theta\eta}{m_++\delta\abs{\beta}-\rho\abs{\alpha}} \piket{\xi+\theta\eta; \xi}{\delta\abs{\beta'}} \piket{\xi}{m'-\rho\abs{\alpha'}} C_{\gamma,l_0} \piket{\xi}{\delta\abs{\gamma} + 2l_0\delta} (1 + \piket{\xi}{2\delta} \abs{y}^2)^{-l_0} \\
&\le C_6 \abs{\eta}^{m_+ + \delta\abs{\beta+\beta'} + \delta\abs{\gamma}+2l_0\delta} (1 + \piket{\xi}{2\delta} \abs{y}^2 )^{-l_0} \piket{\xi}{m'-\rho\abs{\alpha'}} \\
&\le C_6 \abs{\eta}^{\kappa' + \delta\abs{\gamma} + 2l_0\delta} (1 + \piket{\xi}{2\delta} \abs{y}^2 )^{-l_0} \piket{\xi}{m'}
,\end{align*}
wobei $\kappa' := m_+ + \abs{\beta + \beta'}\delta$.
Insbesondere erhalten wir für $\abs{\gamma} = 2l$ mit $l \in \N$
$$ \abs{ (-\triangle_y)^l r_\theta(x,\xi,\eta,y)} \le
C_6 \abs{\eta}^{\kappa' + 2l\delta + 2l_0\delta} (1 + \piket{\xi}{2\delta} \abs{y}^2 )^{-l_0} \piket{\xi}{m'} .$$
Wähle nun $l$ so groß, dass
$$\kappa' + 2 l_0\delta - 2l(1-\delta) < -n \textrm{ und } m'+\kappa' + 2l_0\delta - 2l(1-\delta) + (1-\delta)n \le \kappa .$$
Dann erhalten wir durch analoge Abschätzungen, dass
\begin{align*}
\abs{\int_{\Omega_3} \varphi(x,\xi,\eta) \dslash\eta} 
&= \abs{ \int_{\Omega_3} \abs{\eta}^{-2l} \left( \int e^{-iy\cdot\eta} (-\triangle_y)^l r_\theta(x,\xi,\eta,y) dy \right) \dslash\eta } \\
&\le C_7 \piket{\xi}{m'} \int_{\Omega_3} \abs{\eta}^{\kappa' + 2l_0\delta + 2l\delta - 2l} \int (1 + \piket{\xi}{2\delta}\abs{y}^2)^{-l_0} dw \dslash\eta \\
&\le C_7 \piket{\xi}{m'-\eta\delta} \int_{\Omega_3} \abs{\eta}^{\kappa' + 2l_0\delta - 2l(1-\delta)} \int (1+\abs{w}^2)^{-l_0} dw \dslash\eta \\
&\le C_7' \piket{\xi}{m'-\eta\delta} \int_{\Omega_3} \abs{\eta}^{\kappa' + 2l_0\delta - 2l(1-\delta)} \dslash\eta \\
&\le C_7'' \piket{\xi}{\kappa}
,\end{align*}
wobei
$\int_{\Omega_3} \abs{\eta}^{\kappa' + 2l_0\delta + 2l(\delta - 1)} \dslash\eta \le C' \piket{\xi}{\kappa'+2l_0\delta - 2l(1-\delta)+n}$.
Insgesamt ist
$\abs{ q_\theta(x,\xi)} \le C_{ges} \piket{\xi}{\kappa}$.
Wir finden also ein $C_{\gamma\mu} > 0$, sodass
$$\abs{ \partial_\xi^\gamma \partial_x^\mu q_\theta(x,\xi)} \le C_{\gamma\mu} \piket{\xi}{\kappa-\rho\abs{\gamma}+\delta\abs{\mu}} \textrm{ gleichmäßig in } x \in \R^n .$$
Insbesondere folgt mit dem Übergang zum Supremum über alle $x \in \R^n$ mit $\abs{\mu} \le \tau$
$$\norm{ \partial_\xi^\gamma q_\theta(\hold,\xi)}{C^{\gauss{\tau}}(\R^n)} \le C_{\gamma\gauss{\tau}} \piket{\xi}{\kappa-\rho\abs{\gamma}+\delta\gauss{\tau}} .$$
Wir folgern den Fall $\tau \in \R_+ \setminus \N$ induktiv aus dem Spezialfall $\tau \in (0,1)$:
Definiere dazu $\triangle_{\chi,x} q_\theta(x,\xi) := q_\theta(x+\chi,\xi) - q_\theta(x,\xi)$ für $\chi \in \R^n$.
Nun ist $x \mapsto q(x,\xi+\theta\eta,x+y,\xi)$ Hölder-stetig zum Grad $\tau$ (da $\beta = 0$) mit Hölder-Stetigkeitskonstante $C_H > 0$,
sodass wir analog zu obigen Beweis erhalten:
$$ \abs{ \partial_\xi^\gamma \triangle_{\chi,x} q_\theta(x,\xi) } 
= \abs{ \triangle_{\chi,x} \partial_\xi^\gamma q_\theta(x,\xi) } 
\le C_\gamma C_H \piket{\xi}{\kappa-\rho\abs{\gamma}+\delta\tau} \abs{\chi}^\tau .$$
Mit dem Übergang zum Supremum über alle $\chi \in \R^n$ mit $\xi \neq -x$ folgt die Beschränktheit in der $C^\tau(\R^n)$-Norm.
\end{proof}
\end{lemma}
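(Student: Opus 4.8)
The plan is to reduce the whole statement to the single uniform bound $\abs{q_\theta(x,\xi)} \le C\,\abs{p}^{(m,m')}_{l,\tau}\,\piket{\xi}{\kappa}$, valid for all $x,\xi\in\R^n$ and $\abs{\theta}\le 1$, and to recover the $x$-derivatives and the Hölder estimate afterwards. First I would settle well-definedness: for fixed $x,\xi$ the amplitude $(y,\eta)\mapsto q(x,\xi+\theta\eta,x+y,\xi)$ lies in $\Oszillatory{\delta,0}{N}$ for a suitable $N$, by combining the defining estimates of $C^\tau S^{m,m'}_{\rho,\delta}$ with $\piketi{\xi+\theta\eta}\le C\piketi{\xi}\piketi{\eta}$ and $\piketi{\xi+\theta\eta;\xi}\le C\piketi{\xi}\piketi{\eta}$; hence $q_\theta$ is a legitimate oscillatory integral. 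Next, Theorem~\ref{thm:osziKonvergenz} justifies differentiation under the oscillatory integral, so that for $\gamma,\mu\in\N^n_0$ with $\abs{\gamma}+\abs{\beta}\le\tau$ one has $D_x^\gamma D_\xi^\mu q_\theta(x,\xi)=\osint e^{-iy\cdot\eta}D_x^\gamma D_\xi^\mu\bigl[q(x,\xi+\theta\eta,x+y,\xi)\bigr]\,dy\,\dslash\eta$; by the Leibniz rule the new integrand is a finite linear combination of expressions $D_x^{a}D_{x'}^{a'}D_\xi^{b}D_{\xi'}^{b'}p$ evaluated at $(x,\xi+\theta\eta,x+y,\xi)$ with $\abs{b}+\abs{b'}=\abs{\alpha+\alpha'+\mu}$ and $\abs{a}+\abs{a'}=\abs{\beta+\beta'+\gamma}$, i.e.\ of exactly the same shape with $\kappa$ replaced by $\kappa-\rho\abs{\mu}+\delta\abs{\gamma}$. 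So (a) and (b) follow once the basic bound is proved.

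To prove the basic bound I would use the identity $e^{-iy\cdot\eta}=\bigl(1+\piket{\xi}{2\delta}\abs{y}^2\bigr)^{-l_0}\bigl(1+\piket{\xi}{2\delta}(-\triangle_\eta)\bigr)^{l_0}e^{-iy\cdot\eta}$, integrate by parts in $\eta$, and thereby write $q_\theta(x,\xi)=\osint e^{-iy\cdot\eta}r_\theta(x,\xi,\eta,y)\,dy\,\dslash\eta$ with $r_\theta:=\bigl(1+\piket{\xi}{2\delta}\abs{y}^2\bigr)^{-l_0}\bigl(1+\piket{\xi}{2\delta}(-\triangle_\eta)\bigr)^{l_0}q(x,\xi+\theta\eta,x+y,\xi)$, fixing $l_0>\tfrac{n}{2}$ so that the rescaling $w=\piket{\xi}{\delta}y$ makes the $y$-integral absolutely convergent at the cost of a factor $\piket{\xi}{-\delta n}$. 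I would then split the $\eta$-integral over $\Omega_1=\menge{\abs{\eta}<\tfrac{1}{2}\piket{\xi}{\delta}}$, $\Omega_2=\menge{\tfrac{1}{2}\piket{\xi}{\delta}\le\abs{\eta}<\tfrac{1}{2}\piketi{\xi}}$, $\Omega_3=\menge{\abs{\eta}\ge\tfrac{1}{2}\piketi{\xi}}$. On $\Omega_1\cup\Omega_2$ a mean value argument gives $\piketi{\xi+\theta\eta}\sim\piketi{\xi}$, so every $\piketi{\xi+\theta\eta}$-power turns into a $\piketi{\xi}$-power; the dangerous factor $\piket{\xi}{2\delta l_0}$ produced by $(-\triangle_\eta)^{l_0}$ is absorbed by the decay $\piket{\xi+\theta\eta}{-2\rho l_0}\sim\piket{\xi}{-2\rho l_0}$ coming from the $2l_0$ $\eta$-derivatives hitting the $D_\xi^\alpha$-differentiated symbol, which is licit because $\delta\le\rho$ and leaves $\piket{\xi}{2l_0(\delta-\rho)}\le1$. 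On $\Omega_1$ one bounds $\abs{r_\theta}\le C\piket{\xi}{\kappa}\bigl(1+\piket{\xi}{2\delta}\abs{y}^2\bigr)^{-l_0}$ and notes $\vol\Omega_1\le C\piket{\xi}{\delta n}$, which cancels the rescaling factor. On $\Omega_2$ one integrates by parts $2l_0$ times also in $y$, using $\int e^{-iy\cdot\eta}r_\theta\,dy=\abs{\eta}^{-2l_0}\int e^{-iy\cdot\eta}(-\triangle_y)^{l_0}r_\theta\,dy$ and Lemma~\ref{tech:simplifyPiket} to control $D_y$-derivatives of $\bigl(1+\piket{\xi}{2\delta}\abs{y}^2\bigr)^{-l_0}$, and then Lemma~\ref{tech:simplifyNormAbsch} to get $\int_{\Omega_2}\abs{\eta}^{-2l_0}\dslash\eta\le C\piket{\xi}{\delta(n-2l_0)}$; all exponents then add up to $\kappa$. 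On $\Omega_3$ one has $\abs{\eta}\ge\tfrac12\piketi{\xi}\ge\tfrac12$, so Lemma~\ref{lemma:japanese} gives $\piketi{\xi+\theta\eta}\le C\abs{\eta}$; integrating by parts $2l$ times in $y$ with $l$ chosen so large that $\kappa'+2l_0\delta-2l(1-\delta)<-n$ (where $\kappa':=\max\menge{m,0}+\delta\abs{\beta+\beta'}$) and $m'+\kappa'+2l_0\delta-2l(1-\delta)+(1-\delta)n\le\kappa$, one makes the remaining $\eta$-integral converge and dominates it by $\piket{\xi}{\kappa}$.

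Finally I would handle the $C^{\tau-\abs{\beta}}$-regularity in $x$: the integer part of the differentiability is already contained in the derivative reduction of the first step (pass to the supremum over $x$), and for the fractional part I note that, since $q$ carries $D_x^\beta$ while $p(\hold,\xi,x',\xi')\in C^\tau(\R^n)$, the map $x\mapsto q(x,\xi+\theta\eta,x+y,\xi)$ is Hölder of degree $\tau-\abs{\beta}$ with constant controlled by a seminorm of $p$; applying the same three-region machinery to the difference quotient $\triangle_{\chi,x}q_\theta(x,\xi):=q_\theta(x+\chi,\xi)-q_\theta(x,\xi)$ yields $\abs{D_\xi^\mu\triangle_{\chi,x}q_\theta(x,\xi)}\le C\piket{\xi}{\kappa-\rho\abs{\mu}+\delta\tau}\abs{\chi}^{\tau-\abs{\beta}}$, and the supremum over $\chi$ gives the claimed bound in $C^{\tau-\abs{\beta}}S^\kappa_{\rho,\delta}(\R^n,\R^n)$; keeping track of which seminorm of $p$ is used at each stage produces the explicit inequality (\ref{equ:simplifyQtheta}) with a definite $l'$. \textbf{The hard part} is the bookkeeping of $\piketi{\xi}$-powers in the three-region estimate: one must verify that the factor $\piket{\xi}{2\delta l_0}$ from the $\eta$-integration by parts, the factor $\piket{\xi}{-\delta n}$ from rescaling $y$, and the volume and tail contributions of $\Omega_1,\Omega_2,\Omega_3$ combine to precisely $\piket{\xi}{\kappa}$, and that on $\Omega_3$ a single $l$ can be chosen meeting both required inequalities, which is possible exactly because $1-\delta>0$.
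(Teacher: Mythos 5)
Your proposal follows essentially the same route as the paper's proof: reduce everything to the uniform bound $\abs{q_\theta(x,\xi)}\le C\abs{p}^{(m,m')}_{l,\tau}\piket{\xi}{\kappa}$ by differentiating under the oscillatory integral and applying the Leibniz rule, rewrite the amplitude using $e^{-iy\cdot\eta}=\bigl(1+\piket{\xi}{2\delta}\abs{y}^2\bigr)^{-l_0}\bigl(1+\piket{\xi}{2\delta}(-\triangle_\eta)\bigr)^{l_0}e^{-iy\cdot\eta}$, split into the same three regions $\Omega_1,\Omega_2,\Omega_3$ keyed to $\piket{\xi}{\delta}$ and $\piketi{\xi}$, handle $\Omega_1\cup\Omega_2$ via the mean-value comparison $\piketi{\xi+\theta\eta}\sim\piketi{\xi}$ and the cancellation $\piket{\xi}{2l_0(\delta-\rho)}\le 1$, handle $\Omega_2$ and $\Omega_3$ by further $y$-integration-by-parts using Lemmata~\ref{tech:simplifyPiket} and~\ref{tech:simplifyNormAbsch}, and finish the Hölder regularity by the difference operator $\triangle_{\chi,x}$. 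This is the paper's argument, and your bookkeeping of the $\piketi{\xi}$-powers across the three regions matches the paper's computations.
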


\begin{thm}\label{thm:simplify}
Sei $p(x,\xi,x',\xi') \in C^\tau S^{m,m'}_{\rho,\delta}(\R^n,\R^n\times\R^n\times\R^n)$ mit $m,m' \in \R$, $0 \le \delta \le \rho \le 1$, $\delta < 1$.
Dann besitzt das Symbol
\begin{equation}\label{equ:simplifyVereinfachtesSymbol}
p_L(x,\xi) := \osint e^{iy\cdot\eta} p(x,\xi+\eta,x+y,\eta) dy \dslash\eta
\end{equation}
die folgenden Eigenschaften:
\begin{enumerate}[(a)]
\item Das Symbol $p_L(x,\xi)$ gehört der Klasse $C^\tau S^{m+m'}_{\rho,\delta}(\R^n,\R^n)$ an.
\item Die Operatoren $P$ und  $p_L(X,D_x)$ sind identisch.
\item Die Abbildung $C^\tau S^{m,m'}_{\rho,\delta}(\R^n,\R^n\times\R^n\times\R^n) \rightarrow C^\tau S^{m+m'}_{\rho,\delta}(\R^n,\R^n), p \mapsto p_L$ ist stetig, da es zu jedem $l \in \N$ ein $l' \in \N$ und eine Konstante $c_{l,l'}>0$ gibt, mit
$\abs{p_L}_{l,\tau}^{(m+m')} \le c_{l,l'} \abs{p}_{l',\tau}^{(m,m')}$.
Die Abbildung induziert uns die eine weitere Abbildung 
$$\OP{C^\tau S^{m,m'}_{\rho,\delta}(\R^n,\R^n\times\R^n\times\R^n)} \rightarrow \OP{C^\tau S^{m+m'}_{\rho,\delta}(\R^n,\R^n)} .$$
Insbesondere ist $\OP{C^\tau S^{m,0}_{\rho,\delta}(\R^n,\R^n\times\R^n\times\R^n)} = \OP{C^\tau S^m_{\rho,\delta}(\R^n,\R^n)}$.
\end{enumerate}
$p_L(x,\xi)$ nennen wir das vereinfachte Symbol.
\begin{proof}
Setze $\theta = 1, \alpha = \beta = \alpha' = \beta' = 0$ in Lemma \ref{lemma:simplify} und erhalte dort
$p_L = q_1 \in C^\tau S^{m+m'}_{\rho,\delta}(\R^n,\R^n)$ und damit folgen (a) und (c) bereits aus (\ref{equ:simplifyQtheta}).
Sei $\chi(\eta,y) \in \Schwarz{\R^n\times\R^n}$ mit $\chi(0,0) = 1$.
Setze für $0 < \epsilon < 1$
$$p_{L,\epsilon}(x,\xi) := \iint e^{-iy\cdot\eta} \chi(\epsilon\eta,\epsilon y) p(x,\xi+\eta,x+y,\xi) dy \dslash\eta .$$
Dann ist
$ p_L(x,\xi) = \lim_{\epsilon\rightarrow 0} p_{L,\epsilon}(x,\xi)$.
Wir wenden nun Theorem \ref{thm:osziTransform} an und werden dabei die dort definierte Konstante $l_0 \in \N$ übernehmen, sodass wir für $x,\xi \in \R^n$ fest gewählt erhalten:
\begin{align*}
& \abs{ \os{y}{\eta}{p(x,\xi+\eta,x+y,\xi)}} \\
&\le \OszillatoryNorm{ (\eta,y) \mapsto p(x,\xi+\eta,x+y,\xi) }{\delta,0}{m}{l_0} \\
&= \max_{\abs{\alpha+\beta}\le l_0} \sup_{\xi,\eta\in\R^n} \abs{\partial_\eta^\alpha \partial_y^\beta p(x,\xi+\eta,x+y,\xi)} \piket{\eta}{-m-\delta\abs{\beta}}
.\end{align*}
Unter Verwendung der Ungleichung von Peetre \ref{ungl:peetre} ist
\begin{align*}
\abs{\partial_\eta^\alpha \partial_y^\beta p(x,\xi+\eta,x+y,\xi)} &\le
C_{\alpha\beta} \piket{\xi+\eta}{m-\rho\abs\alpha} \piket{\xi+\eta;\xi}{\delta\abs{\beta}} \piket{\xi}{m'} \\
&\le C_{\alpha\beta} 2^{\abs{m}+\delta\abs{\beta}} \piket{\xi}{m_+ + m' + \delta\abs{\beta}} \piket{\eta}{m+\delta\abs{\beta}}
,\end{align*}
wobei $m_+ := \max \menge{0,m}$.
Wir finden also eine von $\epsilon$ unabhängige Konstante $C_1 > 0$, sodass
$\abs{p_{L,\epsilon}(x,\xi)} \le C_1 \piket{\xi}{m_+ + m' + l_0\delta}$.
Für $\tilde{\chi} \in \Schwarzi$ mit $\tilde{\chi}(0) = 1$ setze
$$p_\epsilon(x,\xi,x',\xi') := \chi(\epsilon(\xi-\xi'),\epsilon(x-x')) \tilde{\chi}(\epsilon\xi') p(x,\xi,x',\xi') .$$
Wir wollen nun zeigen, dass der Operator des Doppelsymbol $p_\epsilon(x,\xi,x',\xi')$ mit dem Operator $p_L(X,D_x)$ des vereinfachten Symbols von $p$ übereinstimmt.
Dazu betrachten wir für ein $u \in \Schwarzi$ den Ausdruck $p_L(X,D_x)u(x)$
unter den Koordinatentransformation $y \mapsto x+y =: x'$ und $\eta \mapsto \xi'+\eta =: \xi$, indem wir das oszillatorische Integral wie in Definition \ref{def:osziOsziInt} auffassen:
\begin{align*}
p_L(X,D_x)u(x) 
&= \lim_{\epsilon\rightarrow 0} \int e^{ix\cdot\xi'} 
\left( \iint e^{-iy\cdot\eta} p_\epsilon(x,\xi'+\eta,x+y,\xi') dy \dslash\eta \right) \hat{u}(\xi') \dslash\xi' \\
&=  \lim_{\epsilon\rightarrow 0} \int e^{ix\cdot\xi'}
\left( \iint e^{-i(x'-x)\cdot(\xi-\xi')} p_\epsilon(x,\xi,x',\xi') dx' \dslash\xi' \right) \hat{u}(\xi') \dslash\xi'\\
&= \lim_{\epsilon\rightarrow 0} \int e^{ix\cdot\xi} \left(
\int e^{-ix'\cdot\xi} \left(
\int e^{-ix'\cdot\xi'} p_\epsilon(x,\xi,x',\xi') \hat{u}(\xi') \dslash\xi' \right) dx' \right) \dslash\xi 
.\end{align*}
Nun d\"{u}rfen analog zu Lemma \ref{lemma:doubleLimesPermut} Integral und Limes vertauscht werden, da $\menge{p_\epsilon}_{0<\epsilon<1} \subset C^\tau S^{m,m'}_{\rho,\delta}(\R^n,\R^n\times\R^n\times\R^n)$ beschränkt ist, sodass wir die Identifizierung
$p_L(X,D_x) u(x) = p(X,D_x,X',D_{x'}) u(x)$ erhalten.
\end{proof}
\end{thm}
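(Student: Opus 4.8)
The plan is to obtain parts (a) and (c) as immediate consequences of Lemma \ref{lemma:simplify}, and to prove part (b) by a regularization that reduces the identity of the two operators to the limit-interchange already established in Lemma \ref{lemma:doubleLimesPermut}.

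For (a) and (c) I would specialize Lemma \ref{lemma:simplify} to $\theta = 1$ and $\alpha = \beta = \alpha' = \beta' = 0$. With this choice $q_1$ is literally the symbol $p_L$ of (\ref{equ:simplifyVereinfachtesSymbol}), and the exponent $\kappa = m + m' + \delta\abs{\beta+\beta'} - \rho\abs{\alpha+\alpha'}$ collapses to $m + m'$. Part (a) of the lemma then gives $p_L \in C^\tau S^{m+m'}_{\rho,\delta}(\R^n,\R^n)$, which is (a); and the quantitative estimate (\ref{equ:simplifyQtheta}) produces, for each $l \in \N$, an index $l' \in \N$ and a constant $c_{l,l'} > 0$ with $\abs{p_L}^{(m+m')}_{l,\tau} \le c_{l,l'}\abs{p}^{(m,m')}_{l',\tau}$, i.e. the continuity claimed in (c). The induced map on operator classes, and in particular the equality $\OP{C^\tau S^{m,0}_{\rho,\delta}(\R^n,\R^n\times\R^n\times\R^n)} = \OP{C^\tau S^m_{\rho,\delta}(\R^n,\R^n)}$, then follows once (b) is available: one inclusion is $P = p_L(X,D_x)$ by (b) together with (a), and for the reverse one regards an ordinary symbol as an $(x',\xi')$-independent double symbol of order $(m,0)$.

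For (b), fix a cutoff $\chi \in \Schwarz{\R^n\times\R^n}$ with $\chi(0,0) = 1$ and a further $\tilde\chi \in \Schwarzi$ with $\tilde\chi(0) = 1$, and put $p_{L,\epsilon}(x,\xi) := \iint e^{-iy\cdot\eta}\chi(\epsilon\eta,\epsilon y)\,p(x,\xi+\eta,x+y,\xi)\,dy\dslash\eta$, so that $p_L = \lim_{\epsilon\to 0} p_{L,\epsilon}$ pointwise. Applying Theorem \ref{thm:osziTransform} together with Peetre's inequality (Lemma \ref{ungl:peetre}) to $(\eta,y)\mapsto p(x,\xi+\eta,x+y,\xi)$ yields a bound $\abs{p_{L,\epsilon}(x,\xi)} \le C_1\piket{\xi}{m_+ + m' + l_0\delta}$ uniform in $\epsilon \in (0,1)$, with $m_+ = \max\menge{0,m}$ and $l_0$ the integer supplied by Theorem \ref{thm:osziTransform}. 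Then introduce the regularized double symbol $p_\epsilon(x,\xi,x',\xi') := \chi(\epsilon(\xi-\xi'),\epsilon(x-x'))\,\tilde\chi(\epsilon\xi')\,p(x,\xi,x',\xi')$ and verify, using the scaling estimates of Lemma \ref{lemma:osziChi} for the cutoff factors, that $\menge{p_\epsilon}_{0<\epsilon<1}$ is a bounded subset of $C^\tau S^{m,m'}_{\rho,\delta}(\R^n,\R^n\times\R^n\times\R^n)$. For $u \in \Schwarzi$ the substitutions $y \mapsto x+y =: x'$ and $\eta \mapsto \xi'+\eta =: \xi$ rewrite $p_{L,\epsilon}(X,D_x)u(x)$ as a four-fold iterated integral over $\xi, x', \xi', z$ with integrand $e^{i(x-x')\cdot\xi}e^{i(x'-z)\cdot\xi'}p_\epsilon(x,\xi,x',\xi')u(z)$, which is exactly the expression defining $p(X,D_x,X',D_{x'})u(x)$ once the limit $\epsilon\to 0$ is moved past all four integrals.

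The main obstacle is precisely this commutation of $\lim_{\epsilon\to 0}$ with the nested integrals of the double-symbol calculus, which rests on uniform $L^1$-dominations of the successively integrated expressions; but this is the technical content of Lemma \ref{lemma:doubleLimesPermut}, whose hypotheses are met because $\menge{p_\epsilon}_{0<\epsilon<1}$ is bounded in $C^\tau S^{m,m'}_{\rho,\delta}$, so I would simply invoke it. Granting the interchange, $p_L(X,D_x)u = p(X,D_x,X',D_{x'})u = Pu$ holds for every $u \in \Schwarzi$. To pass from $\Schwarzi$ to $\Beschri$ I would argue as in Folgerung \ref{cor:introIdentisch}: for $u \in \Beschri$ set $u_\epsilon(x) := \chi(\epsilon x)u(x) \in \Schwarzi$; since $\partial_x^\alpha u_\epsilon \to \partial_x^\alpha u$ pointwise and $\menge{u_\epsilon}_{\epsilon\in(0,1)}$ is bounded in $\Beschri$, Theorem \ref{thm:osziKonvergenz} gives $p_L(X,D_x)u_\epsilon \to p_L(X,D_x)u$ and $Pu_\epsilon \to Pu$, so the equality extends to all of $\Beschri$, completing (b).
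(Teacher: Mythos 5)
Your argument matches the paper's proof almost line by line: parts (a) and (c) via Lemma \ref{lemma:simplify} with $\theta=1$ and vanishing multiindices, then for (b) the regularization $p_{L,\epsilon}$, the Peetre/Theorem \ref{thm:osziTransform} bound, the regularized double symbol $p_\epsilon$, the substitutions $y\mapsto x+y$ and $\eta\mapsto\xi'+\eta$, and finally Lemma \ref{lemma:doubleLimesPermut} for the interchange of limit and iterated integrals. The only genuine addition is your last paragraph extending the identity $p_L(X,D_x)u = Pu$ from $u\in\Schwarzi$ to $u\in\Beschri$ via the approximation $u_\epsilon=\chi(\epsilon\cdot)u$ and Theorem \ref{thm:osziKonvergenz}, mirroring Folgerung \ref{cor:introIdentisch}; the paper stops at $\Schwarzi$, so your extra step is a small completeness improvement rather than a different route.
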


\begin{thm}\label{thm:simplifyAsmytotic}
Sei $p(X,D_x,X',D_{x'}) \in \OP{C^\tau S^{m,m'}_{\rho,\delta}}(\R^n,\R^n\times\R^n\times\R^n)$ mit $m,m' \in \R$, $0 \le \delta \le \rho \le 1$, $\delta < 1$.
Für ein Multiindex $\alpha\in\N^n_0$ setze $p_\alpha(x,\xi) := D_\xi^\alpha D_{x'}^\alpha p(x,\xi,x',\xi') \mid_{x' = x , \xi' = \xi} $.
Das vereinfachte Symbol $p_L \in C^\tau S^{m+m'}_{\rho,\delta}(\R^n,\R^n)$ von $p$ lässt sich für eine beliebige Ganzzahl $N\in\N$ durch die Taylorreihe 
$$ p_L(x,\xi) = \sum_{\abs{\alpha} < N} \frac{1}{\alpha!} p_\alpha(x,\xi) + N \sum_{\abs{\gamma} = N} \int_0^1 \frac{(1-\theta)^{N-1}}{\gamma!} r_{\gamma,\theta}(x,\xi) d\theta$$
darstellen, wobei $r_{\gamma,\theta} \in C^\tau S_{\rho,\delta}^{m+m'-(\rho-\delta)\abs{\gamma}}(\R^n,\R^n)$ gegeben ist durch 
$$ r_{\gamma,\theta}(x,\xi) := \osint e^{-iy\cdot\eta} p_\gamma(x,\xi+\theta\eta,x+y,\xi) dy\dslash\eta .$$
\begin{proof}
Wenn wir eine Taylorreihenentwicklung von $p$ bezüglich $\xi$ wie in Theorem \ref{remind:taylorreihe} machen, bekommen wir
\begin{align*}
p(x,\xi+\eta,x+y,\xi) &= \sum_{\abs{\alpha} < N} \frac{\eta^\alpha}{\alpha!} D_\xi^\alpha p(x,\xi,x+y,\vartheta) \mid_{\vartheta=\xi} \\
&+ N \sum_{\abs{\gamma}=N} \frac{\eta^\gamma}{\gamma!} \int_0^1 (1-\theta)^{N-1} D_\xi^\gamma p(x,\xi+\theta\eta,x+y,\vartheta) \mid_{\vartheta=\xi} d\theta
.\end{align*}
Setzen wir dies in die Definition des vereinfachten Symbols (\ref{equ:simplifyVereinfachtesSymbol}) ein, so erhalten wir
\begin{align*}
p_L(x,\xi) &= \sum_{\abs{\alpha} < N} \frac{1}{\alpha!} \left( \osint e^{-iy\cdot\eta} \eta^\alpha D_\xi^\alpha p(x,\xi,x+y,\vartheta) \mid_{\vartheta=\xi} dy \dslash\eta \right) \\
&+ N \sum_{\abs{\gamma}=N} \frac{(1-\theta)^{N-1}}{\gamma!} \int_0^1 \left( \osint e^{-iy\cdot\eta} \eta^\gamma D_\xi^\gamma p(x,\xi+\theta\eta,x+y,\vartheta) \mid_{\vartheta=\xi} dy \dslash\eta \right) d\theta
.\end{align*}
Wir integrieren partiell unter Ausnutzung von $D_y^\alpha e^{-iy\cdot\eta} = (-\eta)^\alpha e^{-iy\cdot\eta}$ und Theorem \ref{thm:osziPartielle}:
\begin{align*}
p_L(x,\xi) &= \sum_{\abs{\alpha} < N} \frac{1}{\alpha!} \left( \osint e^{-iy\cdot\eta} p_{(0,\alpha)}^{(\alpha,0)}(x,\xi,x+y,\xi) dy \dslash\eta \right) \\
&+ N \sum_{\abs{\gamma}=N} \frac{(1-\theta)^{N-1}}{\gamma!} \int_0^1 \left( \osint e^{-iy\cdot\eta} p_{(0,\gamma)}^{(\gamma,0)}(x,\xi+\theta\eta,x+y,\xi) dy \dslash\eta \right) d\theta
,\end{align*}
wobei $ p_{(0,\alpha)}^{(\alpha,0)}(x,\xi,x',\xi') =  D_{x'}^\alpha D_\xi^\alpha p(x,\xi,x',\xi')$.
Wir erkennen nun anhand der Inversionsformel (\ref{equ:inversionFormula}), 
dass es sich beim ersten Term um
$$ \sum_{\abs{\alpha} < N} \frac{1}{\alpha!} \left( \osint e^{-iy\cdot\eta} p_{(0,\alpha)}^{(\alpha,0)}(x,\xi,x+y,\xi) dy \dslash\eta \right) = \sum_{\abs{\alpha} < N} \frac{1}{\alpha!} p_\alpha(x,\xi)$$
handelt. 

Nun ist $p_\alpha(x,\xi) \in C^\tau S_{\rho,\delta}^{m+m'- (\rho-\delta)\abs{\alpha}}(\R^n,\R^n)$, und nach Lemma \ref{lemma:simplify} ist $ \menge{r_{\gamma,\theta}(x,\xi) }_{\abs{\theta}\le1}$ eine beschränkte Teilmenge von $C^\tau S^{m+m'-(\rho-\delta)\abs{\gamma}}_{\rho,\delta}(\R^n,\R^n)$.
Also ist die Repräsentation des vereinfachten Symbols unter der Taylorreihendarstellung wohldefiniert.
\end{proof}
\end{thm}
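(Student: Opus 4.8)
Der Plan ist, das Doppelsymbol in seinem zweiten Argument Taylor zu entwickeln und das entstehende oszillatorische Integral mit den bereits bereitgestellten Werkzeugen auszuwerten. Zuerst wende ich Theorem \ref{remind:taylorreihe} auf die glatte Funktion $\zeta \mapsto p(x,\zeta,x+y,\xi)$ (mit festen $x,y,\xi$) an der Stelle $\zeta = \xi+\eta$ um den Entwicklungspunkt $\xi$ an und erhalte
\[ p(x,\xi+\eta,x+y,\xi) = \sum_{\abs{\alpha}<N} \frac{\eta^\alpha}{\alpha!} D_\xi^\alpha p(x,\xi,x+y,\vartheta)\mid_{\vartheta=\xi} + N \sum_{\abs{\gamma}=N} \frac{\eta^\gamma}{\gamma!} \int_0^1 (1-\theta)^{N-1} D_\xi^\gamma p(x,\xi+\theta\eta,x+y,\vartheta)\mid_{\vartheta=\xi}\, d\theta . \]
Dies setze ich in die Definition (\ref{equ:simplifyVereinfachtesSymbol}) von $p_L$ ein; die Linearität des oszillatorischen Integrals spaltet die endliche Summe ab, beim Restglied ist das $\theta$-Integral mit dem oszillatorischen Integral zu vertauschen.

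Auf jeden Summanden wende ich $D_y^\alpha e^{-iy\cdot\eta} = (-\eta)^\alpha e^{-iy\cdot\eta}$ zusammen mit Theorem \ref{thm:osziPartielle} an, um die Multiplikation mit $\eta^\alpha$ in die Ableitung $D_y^\alpha$ des Integranden umzuwandeln; wegen $D_y^\alpha\left[ (D_\xi^\alpha p)(x,\xi,x+y,\xi)\right] = (D_{x'}^\alpha D_\xi^\alpha p)(x,\xi,x+y,\xi)$ (Kettenregel) wird so jeder Hauptterm zu $\osint e^{-iy\cdot\eta} (D_{x'}^\alpha D_\xi^\alpha p)(x,\xi,x+y,\xi)\, dy\dslash\eta$ und jeder Restterm zu $r_{\gamma,\theta}(x,\xi)$ wie in der Behauptung. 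Auf die Hauptterme wende ich die Inversionsformel (\ref{equ:inversionFormula}) an: für festes $x,\xi$ ist $x' \mapsto (D_{x'}^\alpha D_\xi^\alpha p)(x,\xi,x',\xi) \in \Beschri$, da nach der Doppelsymbolabschätzung alle $x'$-Ableitungen beschränkt sind; folglich liefert der Hauptterm gerade $p_\alpha(x,\xi)$, und man erkennt den ersten Summanden der behaupteten Reihe.

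Für das Restglied ist zu zeigen, dass $r_{\gamma,\theta}$ in $C^\tau S^{m+m'-(\rho-\delta)\abs{\gamma}}_{\rho,\delta}(\R^n,\R^n)$ mit in $\theta \in [-1,1]$ gleichmäßigen Halbnormen liegt --- das ist genau Lemma \ref{lemma:simplify}, angewandt mit $\alpha := \gamma$, $\alpha' := \beta := 0$ und $\beta' := \gamma$, sodass dort $q = D_\xi^\gamma D_{x'}^\gamma p = p_\gamma$ und $\kappa = m+m'+\delta\abs{\gamma}-\rho\abs{\gamma}$ ist. Daraus folgt auch die Wohldefiniertheit von $\int_0^1 (1-\theta)^{N-1} r_{\gamma,\theta}(x,\xi)\, d\theta$ als Element dieses Fréchet-Raums, da $\theta \mapsto r_{\gamma,\theta}$ nach Theorem \ref{thm:osziKonvergenz} stetig ist (die Integranden $p_\gamma(x,\xi+\theta\eta,x+y,\xi)$ konvergieren in $\theta$ lokal gleichmäßig und beschränkt) und das Riemann-Integral einer beschränkten, stetigen Fréchet-raumwertigen Funktion auf $[0,1]$ existiert.

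Als Hauptschwierigkeit erwarte ich die saubere Rechtfertigung der Vertauschung von oszillatorischem Integral und $\theta$-Integration im Restglied: dafür plane ich, die in $\theta \in [0,1]$ gleichmäßige Beschränktheit der Familie $\menge{(\eta,y)\mapsto D_\xi^\gamma p(x,\xi+\theta\eta,x+y,\xi)}_{\theta}$ in einer festen Klasse $\Oszillatory{\delta,0}{m_+ + \delta\abs{\gamma}}$ (wie im ersten Schritt des Beweises von Lemma \ref{lemma:simplify}) auszunutzen und mittels Theorem \ref{thm:osziTransform} das oszillatorische Integral auf ein absolut konvergentes Integral mit einer integrablen, von $\theta$ unabhängigen Majorante zurückzuführen, sodass der Satz von Fubini greift. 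Der verbleibende Aufwand besteht aus Multiindex- und Konstantenbuchhaltung.
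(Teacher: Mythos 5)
Dein Vorschlag ist korrekt und folgt im Wesentlichen demselben Weg wie der Beweis der Arbeit: Taylorentwicklung im zweiten Argument gemäß Theorem \ref{remind:taylorreihe}, Einsetzen in (\ref{equ:simplifyVereinfachtesSymbol}), partielle Integration über Theorem \ref{thm:osziPartielle}, Inversionsformel (\ref{equ:inversionFormula}) für die Hauptterme und Lemma \ref{lemma:simplify} (mit $\alpha=\beta'=\gamma$, $\alpha'=\beta=0$, also $\kappa=m+m'-(\rho-\delta)\abs{\gamma}$) für die Restglieder. Deine zusätzlichen Ausführungen zur Vertauschung von $\theta$-Integration und oszillatorischem Integral sowie zur Stetigkeit in $\theta$ sind sinnvolle Präzisierungen von Schritten, die die Arbeit stillschweigend übergeht.
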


\begin{lemma}\label{thm:simplifyComposition}
Seien $p_1(x,\xi) \in C^\tau S^{m_1}_{\rho,\delta}(\R^n,\R^n), p_2(x,\xi) \in S^{m_2}_{\rho,\delta}(\R^n\times\R^n)$ mit $m_1, m_2 \in \R, \tau > 0, 0 \le \delta < \rho \le 1$ und $\delta < 1$.
Dann ist $p_1(X,D_x) p_2(X,D_x) \in \OP{ S^{m_1+m_2}_{\rho,\delta}(\R^n,\R^n) }$.
Das zusammengesetzte Symbol
$p_1 \# p_2(x,\xi) := \osint e^{-iy\cdot\eta} p_1(x,\xi+\eta) p_2(x+y,\xi) dy \dslash\eta$
gehört der Klasse $C^\tau S^{m_1+m_2}_{\rho,\delta}(\R^n,\R^n)$ an, und induziert einen Operator $p_1 \# p_2(X,D_x) = p_1(X,D_x) p_2(X,D_x)$.
Wir erhalten durch eine asymptotische Entwicklung die Aussage
$$p_1 \# p_2(x,\xi) \sim \sum_{\alpha\in\N^n_0} \frac{1}{\alpha!} D_\xi^\alpha p_1(x,\xi) D_x^\alpha p_2(x,\xi)$$
im Sinne, dass für jedes $N \in \N_0$ das Symbol
$$p_1 \# p_2(x,\xi) - \sum_{\abs{\alpha} \le N} \frac{1}{\alpha!} D_\xi^\alpha p_1(x,\xi) D_x^\alpha p_2(x,\xi)$$
der Klasse $C^\tau S^{m_1+m_2-(\rho-\delta)(N+1)}_{\rho,\delta}(\R^n,\R^n)$ angehört.
\begin{proof}
Das zusammengesetzte Symbol ist das vereinfachte Symbol $p_L(x,\xi)$ des Symbols $p(x,\xi,x',\xi') := p_1(x,\xi) p_2(x',\xi') \in C^\tau S^{m_1,m_2}_{\rho,\delta}(\R^n,\R^n\times\R^n\times\R^n)$.
Dementsprechend übertragen sich die Resultate von Theorem \ref{thm:simplify} auf das zusammengesetzte Symbol, und mit Theorem \ref{thm:simplifyAsmytotic} erhalten wir die asymptotische Entwicklung.
\end{proof}
\end{lemma}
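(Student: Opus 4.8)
The plan is to recognise the composition $p_1(X,D_x)\,p_2(X,D_x)$ as the operator of a double symbol and then quote Theorem~\ref{thm:simplify} and Theorem~\ref{thm:simplifyAsmytotic}. Concretely, I would put
$$ p(x,\xi,x',\xi') := p_1(x,\xi)\,p_2(x',\xi') $$
and first check that $p \in C^\tau S^{m_1,m_2}_{\rho,\delta}(\R^n,\R^n\times\R^n\times\R^n)$. This is routine: since $p_2$ is smooth and independent of $x$, the relevant derivatives factor as $\partial_\xi^\alpha\partial_{\xi'}^{\alpha'}\partial_{x'}^{\beta'}p = (\partial_\xi^\alpha p_1)(x,\xi)\,(\partial_{\xi'}^{\alpha'}\partial_{x'}^{\beta'}p_2)(x',\xi')$; from $p_1\in C^\tau S^{m_1}_{\rho,\delta}$ one gets the bounds $\abs{D_\xi^\alpha p_1}\le C\piket{\xi}{m_1-\rho\abs{\alpha}}$ and $\norm{D_\xi^\alpha p_1(\hold,\xi)}{C^\tau}\le C\piket{\xi}{m_1-\rho\abs{\alpha}+\delta\tau}$, and from $p_2\in S^{m_2}_{\rho,\delta}$ together with $\piketi{\xi'}\le\piketi{\xi;\xi'}$ one gets $\abs{\partial_{\xi'}^{\alpha'}\partial_{x'}^{\beta'}p_2(x',\xi')}\le C\piket{\xi'}{m_2-\rho\abs{\alpha'}+\delta\abs{\beta'}}\le C\piket{\xi'}{m_2-\rho\abs{\alpha'}}\piket{\xi;\xi'}{\delta\abs{\beta'}}$. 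Multiplying these yields both defining estimates of the double-symbol class, the $C^\tau$-estimate involving only the factor $p_1$.

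Next I would verify the operator identity $p_1(X,D_x)\,p_2(X,D_x) = p(X,D_x,X',D_{x'})$ on $\Schwarzi$. Representing $p_2(X,D_x)u$ and then $p_1(X,D_x)$ of it as oscillatory integrals via Lemma~\ref{lemma:introOszi}, one obtains a nested oscillatory integral which, by Lemma~\ref{lemma:doubleLimesPermut} (the iterated-integral form of the double-symbol operator), collapses to the joint oscillatory integral defining $p(X,D_x,X',D_{x'})u$; the two symbols then coincide by Corollary~\ref{cor:introIdentisch}. But the composed symbol $p_1\#p_2$ is, by its very definition, the simplified symbol $p_L$ of $p$ from (\ref{equ:simplifyVereinfachtesSymbol}). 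Theorem~\ref{thm:simplify} now gives at once $p_1\#p_2 = p_L \in C^\tau S^{m_1+m_2}_{\rho,\delta}(\R^n,\R^n)$, the identity $p_1(X,D_x)p_2(X,D_x) = (p_1\#p_2)(X,D_x)$, and, from part~(c), the continuity of $p\mapsto p_L$, so in particular $p_1(X,D_x)p_2(X,D_x)\in\OP{C^\tau S^{m_1+m_2}_{\rho,\delta}(\R^n,\R^n)}$.

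For the asymptotic expansion I would apply Theorem~\ref{thm:simplifyAsmytotic} to this $p$. There $p_\alpha(x,\xi)=D_\xi^\alpha D_{x'}^\alpha p(x,\xi,x',\xi')\mid_{x'=x,\,\xi'=\xi}=D_\xi^\alpha p_1(x,\xi)\,D_x^\alpha p_2(x,\xi)$, which is exactly the claimed leading term. Running the theorem with $N+1$ in place of $N$ gives
$$ (p_1\#p_2)(x,\xi) - \sum_{\abs{\alpha}\le N}\frac{1}{\alpha!}D_\xi^\alpha p_1(x,\xi)\,D_x^\alpha p_2(x,\xi) = (N+1)\sum_{\abs{\gamma}=N+1}\int_0^1\frac{(1-\theta)^N}{\gamma!}r_{\gamma,\theta}(x,\xi)\,d\theta, $$
and since each $r_{\gamma,\theta}$ with $\abs{\gamma}=N+1$ lies in $C^\tau S^{m_1+m_2-(\rho-\delta)(N+1)}_{\rho,\delta}(\R^n,\R^n)$ and $\menge{r_{\gamma,\theta}}_{\abs{\theta}\le1}$ is bounded there (Lemma~\ref{lemma:simplify}), the right-hand side — a finite sum of $\theta$-integrals of a bounded family in a Fréchet space — belongs to that same class, which is precisely the asserted asymptotic relation.

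The bulk of the work sits in the second step: one must justify, via Lemma~\ref{lemma:doubleLimesPermut}, that the $\epsilon$-regularisation limit may be interchanged with all four iterated integrals (using boundedness of the regularised double symbols in $C^\tau S^{m_1,m_2}_{\rho,\delta}$), and confirm $p\in C^\tau S^{m_1,m_2}_{\rho,\delta}$. Once the double-symbol picture is in place, parts (a)–(c) of the statement are immediate consequences of Theorem~\ref{thm:simplify}, and the asymptotic expansion is a direct substitution into Theorem~\ref{thm:simplifyAsmytotic}, so I expect no genuinely new difficulty beyond this bookkeeping.
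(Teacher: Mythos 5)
Your proposal is correct and follows essentially the same route as the paper: you form the double symbol $p(x,\xi,x',\xi') = p_1(x,\xi)\,p_2(x',\xi')$, check it lies in $C^\tau S^{m_1,m_2}_{\rho,\delta}(\R^n,\R^n\times\R^n\times\R^n)$, and then invoke Theorem~\ref{thm:simplify} for the class and operator identity and Theorem~\ref{thm:simplifyAsmytotic} (with $N+1$ in place of $N$) for the asymptotic expansion. The paper's proof is just a terser version of this, leaving the class verification and the identification of the composition with the double-symbol operator implicit, so your extra bookkeeping is a faithful expansion rather than a different argument.
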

\nomenclature{$p_1\#p_2$}{Zusammengesetztes Symbol aus den Symbolen $p_1$ und $p_2$}

\subsection{Symbolglättung}\label{sec:smoothing}

Ziel der Symbolglättung ist es, ein Symbol $p \in C^\tau S^m_{1,\delta}(\R^n,\R^n)$ mit $\delta \in [0,1), m \in \R$ in 
$p(x,\xi) = p^\sharp(x,\xi) + p^\flat(x,\xi)$
mit
$p^\sharp \in S^m_{1,\gamma}(\R^n\times\R^n)$
und
$p^\flat \in C^{\tau} S^{m- (\gamma - \delta) \tau}_{1,\gamma}(\R^n,\R^n)$
für $\gamma \in (\delta, 1)$ aufzuspalten.
Die Vorgehensweise besteht darin,
zunächst einen Glättungskern $\phi \in C^\infty_0(\R^n)$ mit
$\phi(\xi) = 1$ für jedes $\xi \in \R^n$ mit $\abs{\xi} \le 1$ 
zu wählen (z.B. $\phi = \lilwood{0}$)
und damit eine Familie von Glättungsoperatoren $\menge{ J_\epsilon }_{\epsilon \in (0,1]}$ durch
\begin{equation}
J_\epsilon f(x) := \phi(\epsilon D_x) f(x)
\end{equation}
zu definieren.
Lassen wir nun $J_\epsilon$ auf $p(x,\xi)$ bzgl. $x$ wirken, so erhalten wir offensichtlich ein glattes Symbol.
Setzen wir 
$$
p^\sharp(x,\xi) := \sum^\infty_{j=0} J_{\epsilon_j} p(x,\xi) \lilwood{j}(\xi)
$$
mit 
$\epsilon_j := 2^{-j \gamma } $ für jedes $j \in \N_0$
, so werden wir sehen, dass $p^\sharp$ der Klasse $S^m_{1,\gamma}(\R^n\times\R^n)$ angehört.
Es gilt also zu zeigen, dass $p^\sharp$ und 
$p^\flat(x,\xi) := p(x,\xi) - p^\sharp(x,\xi)$
in den postulierten Symbolklassen liegen.
Wir werden dazu versuchen, analog wie \cite[\S 1.3]{Nonlinear} bzw. \cite{PDE} den Beweis in mehrere Schritte aufzugliedern.

\begin{remark}\label{smoothing:epsilon}
Sei $\epsilon \in \menge{2^{-\gamma j}}_{j \in \N_0}$.
Da $\phi$ einen kompakten Träger hat, gibt es ein $r > 0$, sodass $\supp \phi \subset B_r(0)$.
Für $\gamma \in (0,1)$ gibt es ein $j \in \N_0$, sodass
\begin{equation}
\epsilon \in 
E_j := 
\begin{cases}
\menge{1} & j = 0,\\
\menge{ \epsilon \in (0,1] : 2^{-j}r \le \epsilon \le 2^{-j+1}r } & j \in \N .\\
\end{cases}\;
\end{equation}
Dadurch erhält man einen Zusammenhang zwischen $\phi(\epsilon\xi)$ und $\lilwood{j}(\xi)$ gegeben durch
\begin{equation}
\supp \phi(\epsilon\xi) \subset B_{2^{j}}(0) \subset \bigcup_{l=0}^{j} \menge{ x \in \R^n : \lilwood{j}(x) = 1}.
\end{equation}
\end{remark}

\begin{lemma}
Sei $f \in C^\tau_*(\R^n), \tau > 0, \epsilon \in (0,1]$.
Dann gilt für jeden Multiindex $\beta \in \N_0^n$
\begin{equation}
\norm{D^\beta_x J_\epsilon f}{C^\tau_*(\R^n)} \le c_\beta \epsilon^{-\abs{\beta}} \norm{f}{C^\tau_*(\R^n)}.
\end{equation}
\begin{proof}
Da $\phi \in S^{-\infty}_{1,0}(\R^n)$, ist die Behauptung für $\beta = 0$ trivialerweise erfüllt.
Für $\beta \neq 0$ ist $\xi^\beta \in S^{\abs{\beta}}_{1,0}(\R^n)$, 
und deswegen die Komposition $D_x^\beta \phi(\epsilon D_x) \in \OP{S^0_{1,0}(\R^n)}$.
Folglich hat nach Lemma \ref{beschr:S_1,delta} der Operator die Abbildungseigenschaft
$D_x^\beta \phi(\epsilon D_x) : C^\tau_*(\R^n) \rightarrow C^\tau_*(\R^n)$.
$$
\norm{D_x^\beta J_\epsilon f(x)}{C^\tau_*(\R^n)} =
\norm{D_x^\beta \phi(\epsilon D_x) f(x)}{C^\tau_*(\R^n)} =
\sup_{j \in \N_0} 2^{j\tau} \norm{\lilwood{j}(D_x) \left( D_x^\beta \phi(\epsilon D_x) f(x) \right)}{\infty}
.$$
Betrachte für ein festes $j\in\N^n_0$ die Funktion
$\tilde{f}(x) := \left( \lilwood{j-1}(D_x) + \lilwood{j}(D_x) + \lilwood{j+1}(D_x) \right) f(x)$.
Folgerung \ref{cor:introLilwoodOP} liefert
\begin{align*}
2^{j\tau} \norm{\lilwood{j}(D_x) \left( D_x^\beta \phi(\epsilon D_x) \tilde{f}(x) \right)}{\infty}
&= 2^{j\tau} \norm{\check{\lilwood{j}} * \left( D_x^\beta \phi(\epsilon D_x) \tilde{f} \right)(x)}{\infty} \\
&\le 2^{j\tau} \norm{\check{\lilwood{j}}}{L^1(\R^n)} \norm{D_x^\beta \phi(\epsilon D_x) \tilde{f}}{\infty}
.\end{align*}
Für $\phi_\epsilon(x) := \phi(\epsilon x)$ haben wir nach Definition der Faltung
$$
\norm{D_x^\beta \phi(\epsilon D_x) \tilde{f}(x)}{\infty} =
\norm{D_x^\beta (\check{\phi}_\epsilon * \tilde{f})(x)}{\infty} \le
\norm{D_x^\beta \check{\phi}_\epsilon(x)}{L^1(\R^n)} \norm{\tilde{f}}{\infty}
,$$
und mit den Eigenschaften der Fourier-Transformation gegeben in Bemerkung \ref{remark:fourier} folgt
\begin{align*}
\norm{D_x^\beta \check{\phi}_\epsilon(x)}{L^1(\R^n)} &=
\norm{ \FourierB{\xi}{x}{\xi^\beta \phi_\epsilon(\xi)}}{L^1(\R^n)} = 
\epsilon^{-\abs{\beta}} \norm{ \epsilon^n \FourierB{\xi}{\frac{x}{\epsilon}}{\xi^\beta \phi(\xi)} }{L^1(\R^n)} \\
&= \epsilon^{-\abs{\beta}} \norm{ \FourierB{\xi}{x}{\xi^\beta \phi(\xi)}}{L^1(\R^n)} \le
c_\phi \epsilon^{-\abs{\beta}}
,\end{align*}
wobei die Transformation $\xi \mapsto \epsilon\xi$ bzgl. des inneren und $x \mapsto \epsilon^{-1} x$ bzgl. des äußeren Integrals verwendet wurde.
Wegen $\norm{\tilde{f}}{\infty} \le \left( 2^{-(j-1)\tau} + 2^{-j\tau} + 2^{-(j+1)\tau} \right) \norm{f}{C^\tau_*(\R^n)}$ ist insgesamt
$$
2^{j\tau} \norm{\lilwood{j}(D_x) \left( D_x^\beta \phi(\epsilon D_x) \tilde{f}(x) \right)}{\infty} \le
c_\phi' \epsilon^{-\abs{\beta}} \norm{f}{C^\tau_*(\R^n)}
$$
für ein $c_\phi' > 0$ unabhängig von $j \in \N_0$.
Mit dem Übergang zum Supremum über alle $j \in \N_0$ folgt die Behauptung.
\end{proof}
\end{lemma}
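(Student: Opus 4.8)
The plan is to reduce the estimate to the known boundedness of pseudodifferential operators with symbols in $S^0_{1,0}(\R^n)$ on the Hölder-Zygmund scale $\menge{C^s_*(\R^n)}_{s\in(0,\infty)}$, which is a microlocalizable family in the sense of Definition \ref{def:microlocal}, so that Lemma \ref{beschr:S_1,delta} applies. The workhorse is the observation that $D_x^\beta J_\epsilon$, after multiplication by $\epsilon^{\abs{\beta}}$, has a symbol lying in a bounded subset of $S^0_{1,0}(\R^n)$, uniformly in $\epsilon \in (0,1]$.

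First I would dispose of $\beta = 0$. The symbol of $J_\epsilon$ is $\xi \mapsto \phi(\epsilon\xi)$, and since $\phi \in C^\infty_0(\R^n)$ one has $\abs{\partial_\xi^\alpha \phi(\epsilon\xi)} = \epsilon^{\abs{\alpha}}\abs{(\partial^\alpha\phi)(\epsilon\xi)} \le \norm{\partial^\alpha\phi}{\infty}$ for all $\alpha \in \N^n_0$ and all $\epsilon \in (0,1]$; hence $\menge{\phi(\epsilon\hold)}_{\epsilon\in(0,1]}$ is bounded in $S^0_{1,0}(\R^n)$ and $J_\epsilon$ maps $C^\tau_*(\R^n)$ to itself with operator norm bounded independently of $\epsilon$, which is the claim for $\beta = 0$.

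For general $\beta$ the operator $D_x^\beta J_\epsilon$ has symbol $\xi \mapsto \xi^\beta \phi(\epsilon\xi)$, and the key identity is
$$ \epsilon^{\abs{\beta}} \xi^\beta \phi(\epsilon\xi) = (\epsilon\xi)^\beta\phi(\epsilon\xi) = \psi(\epsilon\xi), \qquad \psi(\eta) := \eta^\beta\phi(\eta) \in C^\infty_0(\R^n). $$
Arguing exactly as in the first step, $\menge{\psi(\epsilon\hold)}_{\epsilon\in(0,1]}$ is a bounded subset of $S^0_{1,0}(\R^n)$, so $\epsilon^{\abs{\beta}} D_x^\beta J_\epsilon = \OPSym{\psi(\epsilon\hold)}$ maps $C^\tau_*(\R^n)$ into itself with operator norm bounded by some $c_\beta$ independent of $\epsilon$; dividing by $\epsilon^{\abs{\beta}}$ gives $\norm{D_x^\beta J_\epsilon f}{C^\tau_*(\R^n)} \le c_\beta \epsilon^{-\abs{\beta}} \norm{f}{C^\tau_*(\R^n)}$.

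The delicate point — and the main obstacle — is that this argument needs a quantitative form of the $\OP{S^0_{1,0}}$-boundedness on $C^\tau_*(\R^n)$: the operator norm must be dominated by finitely many of the symbol seminorms $\abs{\hold}^{(0)}_{l,\infty}$, and one has to check via the chain rule above that $\abs{\psi(\epsilon\hold)}^{(0)}_{l,\infty}$ stays bounded as $\epsilon \to 0$. If one does not want to rely on that, the same conclusion follows by a direct computation: write $D_x^\beta J_\epsilon f = (D_x^\beta\check{\phi}_\epsilon) * f$ with $\phi_\epsilon(\xi) := \phi(\epsilon\xi)$, replace $f$ inside $\lilwood{j}(D_x)$ by $\tilde{f} := (\lilwood{j-1}(D_x) + \lilwood{j}(D_x) + \lilwood{j+1}(D_x))f$ using Corollary \ref{cor:introLilwoodOP}, estimate with Young's inequality
$$ \norm{\lilwood{j}(D_x)\bigl((D_x^\beta\check{\phi}_\epsilon) * \tilde{f}\bigr)}{\infty} \le \norm{\check{\lilwood{j}}}{L^1(\R^n)}\,\norm{D_x^\beta\check{\phi}_\epsilon}{L^1(\R^n)}\,\norm{\tilde{f}}{\infty}, $$
and insert the scaling identities $\norm{D_x^\beta\check{\phi}_\epsilon}{L^1(\R^n)} = \epsilon^{-\abs{\beta}}\norm{\FourierB{\xi}{\hold}{\xi^\beta\phi(\xi)}}{L^1(\R^n)}$ (Remark \ref{remark:fourier}, Dilatationseigenschaft) and $\check{\lilwood{j}}(x) = 2^{(j-1)n}\check{\lilwood{1}}(2^{j-1}x)$ for $j \ge 1$, so that $\norm{\check{\lilwood{j}}}{L^1(\R^n)}$ is bounded uniformly in $j$; finally, since $\norm{\tilde{f}}{\infty} \le (2^{-(j-1)\tau} + 2^{-j\tau} + 2^{-(j+1)\tau})\norm{f}{C^\tau_*(\R^n)}$, multiplying by $2^{j\tau}$ and passing to the supremum over $j \in \N_0$ yields the asserted bound.
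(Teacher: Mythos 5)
Your proposal is correct, and its self-contained second argument — localizing with Folgerung \ref{cor:introLilwoodOP}, applying Young's inequality, and extracting the factor $\epsilon^{-\abs{\beta}}$ from the dilation of $\check{\phi}_\epsilon$ — is exactly the paper's proof; your explicit remark that $\norm{\check{\lilwood{j}}}{L^1(\R^n)}$ is independent of $j$ via the scaling $\check{\lilwood{j}}(x) = 2^{(j-1)n}\check{\lilwood{1}}(2^{j-1}x)$ even makes a point the paper leaves implicit in its constant $c_\phi'$. Your first route (the symbols $\psi(\epsilon\xi)$ with $\psi(\eta) := \eta^\beta\phi(\eta)$ forming a bounded family in $S^0_{1,0}(\R^n)$) would also work, but, as you note yourself, it requires an operator-norm bound on $C^\tau_*(\R^n)$ in terms of finitely many symbol seminorms, which Lemma \ref{beschr:S_1,delta} does not state quantitatively — so relegating it and carrying out the direct computation was the right call.
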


\begin{lemma}\label{sharp:c}
Sei $f \in C^\tau(\R^n), \tau > 0, \epsilon \in (0,1]$.
Dann gibt es für jeden Multiindex $\beta \in \N_0^n$ ein $C > 0$, sodass
\item
\begin{equation}
\norm{D^\beta_x J_\epsilon f}{\infty} \le 
\begin{cases}
c \norm{f}{C^\tau(\R^n)} & \abs{\beta} \le \tau , \\
c \epsilon^{-(\abs{\beta}-\tau)} \norm{f}{C^\tau_*(\R^n)} & \abs{\beta} > \tau .
\end{cases}
\end{equation}
\begin{proof}

Für $\abs{\beta} \le \tau$ folgt sofort
$$
\norm{D_x^\beta J_\epsilon f}{C^0(\R^n)} \le 
\norm{J_\epsilon f}{C^\tau(\R^n)}
\le c \norm{f}{C^\tau(\R^n)}
.$$
Wir betrachten ab nun den Fall $\abs{\beta} > \tau$. 
Nach Bemerkung \ref{smoothing:epsilon} gibt es ein $j \in \N_0$, sodass $\epsilon \in E_j$ und 
$\supp \phi \subset \bigcup_{l=0}^{j} \menge{ x \in \R^n : \lilwood{l}(x) = 1}$
\begin{align}\label{equ:smoothingSharpC}
\norm{D_x^\beta \phi(\epsilon D_x) f}{\infty} 
&= \norm{D_x^\beta \phi(\epsilon D_x) \sum_{l=0}^j \lilwood{l}(D_x) f}{\infty} \notag\\
&\le \sum_{l=0}^j \norm{D_x^\beta \phi(\epsilon D_x) \lilwood{l}(D_x) f}{\infty} \notag\\
&\le \norm{\phi(\epsilon D_x)}{\operator{X}} \sum_{l=0}^j \norm{D_x^\beta\lilwood{l}(D_x) f}{\infty}
.\end{align}
Wir finden eine Konstante $c>0$, sodass $\norm{\phi(\epsilon D_x)}{\operator{X}} \le c$.
Weiter gilt für jeden Summanden
$$
\norm{D_x^\beta \lilwood{l}(D_x) f(x)}{\infty} =
\norm{D_x^\beta \left(\check{\lilwood{l}}* f\right)(x)}{\infty} \le
\norm{D_x^\beta \check{\lilwood{l}}(x)}{L^1(\R^n)} \norm{f}{\infty}
.$$
Wir erhalten 
$$ \abs{ D_x^\beta \check{\lilwood{l}}(x)} 
= \abs{ \int_{D_l} e^{ix\cdot\xi} \xi^\beta \lilwood{j}(\xi) \dslash\xi } 
\le \norm{\xi^\beta}{\infty,D_l} \norm{\lilwood{l}}{L^1(\R^n)} 
\le 2^{l\abs{\beta}} C_{\lilwood{ }}
$$
für $\xi \in \supp \lilwood{l} \subset D_l$.
Wir verwenden nun die Identität
$$\lilwood{l}(D_x) \left( \lilwood{l-1}(D_x) + \lilwood{l}(D_x) + \lilwood{l+1}(D_x) \right) f(x) = \lilwood{l}(D_x) f(x) $$
aus Folgerung \ref{cor:introLilwoodOP}:
\begin{align*}
\norm{D_x^\beta \lilwood{l}(D_x) f}{\infty} 
&= c_{\lilwood{l}} 2^{l\abs{\beta}} \norm{\left( \lilwood{l-1}(D_x) + \lilwood{l}(D_x) + \lilwood{l+1}(D_x) \right) f}{\infty} \\
&\le c_{\lilwood{l}} 2^{l\abs{\beta}} \left( 2^{-(l-1)\tau} + 2^{-l\tau} + 2^{-(l+1)\tau} \right) \norm{f}{C^\tau_*(\R^n)}
\end{align*}
nach Definition der Norm von $C^\tau_*(\R^n)$ in (\ref{norm:C*}).
Schließlich ist 
$$
\norm{D_x^\beta \lilwood{l}(D_x) f}{\infty} \le
2^{-l\tau} 2^{l\abs{\beta}} c_{\lilwood{l}} \norm{f}{C^\tau_*(\R^n)}
.$$
Da die Summe in (\ref{equ:smoothingSharpC}) endlich ist, gibt es ein $c > 0$ mit 
$\max_{0 \le l \le j} c_{\lilwood{l}} \le c$.
Insgesamt ist
$$
\norm{D_x^\beta \phi(\epsilon D_x) f}{\infty} \le c_\tau \sum_{l=0}^j 2^{l\abs{\beta}} 2^{-l\tau} \norm{f}{C^\tau_*(\R^n)}
$$
und mit $\epsilon \in E_j$
$
\sum_{l=0}^j 2^{l (\abs{\beta} - \tau)} \le c'_\tau 2^{j(\abs{\beta} - \tau)} \le c'' \epsilon^{\abs{\beta}-\tau}
$
wegen $l (\abs{\beta} - \tau) > 0$ mit $\abs{\beta} > \tau$.
\end{proof}
\end{lemma}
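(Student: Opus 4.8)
The plan is to treat the two regimes of the estimate with two different tools, matching the two cases in the statement. For $\abs{\beta}\le\tau$ the bound should be essentially free: $J_\epsilon$ is convolution with the kernel $x\mapsto\epsilon^{-n}\check\phi(x/\epsilon)$, whose $L^1(\R^n)$-norm is independent of $\epsilon$, so convolution by it is bounded on $C^0(\R^n)$ uniformly in $\epsilon$, it commutes with every $D_x^\beta$, and — estimating the Hölder difference quotient directly under the integral sign — it is likewise bounded on $C^{\gauss{\tau},\tau-\gauss{\tau}}(\R^n)=C^\tau(\R^n)$ uniformly in $\epsilon$. Hence $\norm{D_x^\beta J_\epsilon f}{\infty}\le\norm{J_\epsilon f}{C^{\abs{\beta}}(\R^n)}\le\norm{J_\epsilon f}{C^\tau(\R^n)}\le c\,\norm{f}{C^\tau(\R^n)}$. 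Alternatively, for $\tau\notin\N$ one has $C^\tau(\R^n)=C^\tau_*(\R^n)$, which belongs to a mikrolokalisierbare Familie im Sinne von Definition \ref{def:microlocal}, and $\{\phi(\epsilon\hold)\}_{\epsilon\in(0,1]}$ lies bounded in $S^0_{1,0}(\R^n)$, so Lemma \ref{beschr:S_1,delta} applies directly.

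For $\abs{\beta}>\tau$ the negative power of $\epsilon$ has to be produced by a Littlewood--Paley localization, and this is where the real work sits. First I would choose, for the given $\epsilon\in(0,1]$, the least $j\in\N_0$ for which $\supp\phi(\epsilon\hold)$ is contained in a ball on which $\sum_{l=0}^{j}\lilwood{l}\equiv1$; by Bemerkung \ref{smoothing:epsilon} such a $j$ exists and there is a constant, independent of $\epsilon$, with $2^{j}\le c\,\epsilon^{-1}$. Then $J_\epsilon f=\phi(\epsilon D_x)\sum_{l=0}^{j}\lilwood{l}(D_x)f$, and since $\{\phi(\epsilon D_x)\}_{\epsilon\in(0,1]}$ is uniformly bounded on $L^\infty(\R^n)$, it is enough to estimate $\sum_{l=0}^{j}\norm{D_x^\beta\lilwood{l}(D_x)f}{\infty}$.

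The heart of the argument is the single-block bound $\norm{D_x^\beta\lilwood{l}(D_x)f}{\infty}\le c\,2^{l(\abs{\beta}-\tau)}\norm{f}{C^\tau_*(\R^n)}$. To obtain it I would use Folgerung \ref{cor:introLilwoodOP} to write $\lilwood{l}(D_x)f=\lilwood{l}(D_x)\tilde f$ with $\tilde f:=(\lilwood{l-1}(D_x)+\lilwood{l}(D_x)+\lilwood{l+1}(D_x))f$, estimate $\norm{D_x^\beta\lilwood{l}(D_x)\tilde f}{\infty}\le\norm{D_x^\beta\check{\lilwood{l}}}{L^1(\R^n)}\,\norm{\tilde f}{\infty}$, use the dyadic scaling $\lilwood{l}(\xi)=\lilwood{1}(2^{-(l-1)}\xi)$ for $l\ge1$ to get $\norm{D_x^\beta\check{\lilwood{l}}}{L^1(\R^n)}\le c_\beta\,2^{l\abs{\beta}}$ with $c_\beta$ independent of $l$, and read off $\norm{\tilde f}{\infty}\le(2^{-(l-1)\tau}+2^{-l\tau}+2^{-(l+1)\tau})\norm{f}{C^\tau_*(\R^n)}\le c\,2^{-l\tau}\norm{f}{C^\tau_*(\R^n)}$ straight from the definition (\ref{norm:C*}) of the Hölder--Zygmund norm. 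Because $\abs{\beta}-\tau>0$, the sum $\sum_{l=0}^{j}2^{l(\abs{\beta}-\tau)}$ is geometric and bounded by $c\,2^{j(\abs{\beta}-\tau)}$; inserting $2^{j}\le c\,\epsilon^{-1}$ gives the asserted $c\,\epsilon^{-(\abs{\beta}-\tau)}\norm{f}{C^\tau_*(\R^n)}$.

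The main obstacle I expect is bookkeeping rather than a conceptual hurdle: one has to make the passage from $\epsilon$ to the dyadic index $j$ genuinely uniform — so that the final constant really is independent of $\epsilon$ and $j$, in particular that the finitely many kernel norms $\norm{D_x^\beta\check{\lilwood{l}}}{L^1(\R^n)}$ are all dominated by the scaling-invariant constant $c_\beta$ — and one must keep the two norms apart, since only the larger norm $\norm{\hold}{C^\tau_*(\R^n)}$ may enter the high-derivative estimate. This last point is exactly why the right-hand side of the lemma carries $\norm{\hold}{C^\tau(\R^n)}$ in one case and $\norm{\hold}{C^\tau_*(\R^n)}$ in the other.
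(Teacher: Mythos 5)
Your proof is correct and follows essentially the same architecture as the paper's: split off the low frequencies via Bemerkung \ref{smoothing:epsilon}, bound each block with Young's inequality and Folgerung \ref{cor:introLilwoodOP}, and sum the geometric series $\sum_{l\le j}2^{l(\abs{\beta}-\tau)}$ against $2^j\le c\,\epsilon^{-1}$. Your treatment of the kernel bound $\norm{D_x^\beta\check{\lilwood{l}}}{L^1(\R^n)}\le c_\beta\,2^{l\abs{\beta}}$ via dyadic scaling is actually the cleaner way to see that the constant is genuinely independent of $l$ (and hence of $\epsilon$); the paper only records a pointwise estimate on $D_x^\beta\check{\lilwood{l}}$ and then appeals to the finiteness of the sum to bound $\max_{0\le l\le j}c_{\lilwood{l}}$, which as written does not make the $\epsilon$-uniformity explicit.
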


\begin{cor}\label{flat:t_t}
Sei $f \in C^\tau(\R^n), \tau > 0, \epsilon \in (0,1]$.
Dann gilt für jedes $t \in [0,\tau]$, dass es eine Konstante $c>0$ gibt mit
\begin{equation}
\norm{f - J_\epsilon f}{C^t(\R^n)} \le c \norm{f}{C^t(\R^n)}.
\end{equation}
\begin{proof}
Nach Lemma \ref{sharp:c} ist mit der Dreiecksungleichung
$$
\norm{f - J_\epsilon f}{C^t(\R^n)} \le
\norm{f}{C^t(\R^n)} + \norm{J_\epsilon f}{C^t(\R^n)} \le
(1 + c) \norm{f}{C^t(\R^n)}
.$$
\end{proof}
\end{cor}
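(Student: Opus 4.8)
Der Plan ist, die Behauptung durch die Dreiecksungleichung auf eine gleichmäßige Beschränktheitsaussage für die Glättungsoperatoren zurückzuführen. Zunächst schreibe ich
$$\norm{f - J_\epsilon f}{C^t(\R^n)} \le \norm{f}{C^t(\R^n)} + \norm{J_\epsilon f}{C^t(\R^n)},$$
sodass nur noch $\norm{J_\epsilon f}{C^t(\R^n)} \le c\,\norm{f}{C^t(\R^n)}$ mit einer von $\epsilon \in (0,1]$ und von $f$ unabhängigen Konstante $c$ nachzuweisen ist. Die $C^t(\R^n)$-Norm spalte ich dazu in den Anteil $\norm{J_\epsilon f}{C^{\gauss{t}}(\R^n)}$ der beschränkten Ableitungen und (für $t \notin \N$) den Hölder-Seminorm-Anteil $\max_{\abs{\gamma} = \gauss{t}} \hnorm{D_x^\gamma J_\epsilon f}{t - \gauss{t}}$ auf.

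Für den ersten Anteil genügt es, für jeden Multiindex $\beta$ mit $\abs{\beta} \le \gauss{t} \le \tau$ den Summanden $\norm{D_x^\beta J_\epsilon f}{\infty}$ abzuschätzen; das leistet der erste Fall von Lemma \ref{sharp:c}, in dem $\norm{D_x^\beta J_\epsilon f}{\infty}$ über $\norm{J_\epsilon f}{C^{\abs{\beta}}(\R^n)}$ gleichmäßig in $\epsilon$ durch ein Vielfaches von $\norm{f}{C^t(\R^n)}$ beschränkt wird. Den Hölder-Seminorm-Anteil für $t \notin \N$ behandle ich mit derselben Idee auf Operatorebene: die Symbolfamilie $\menge{\phi(\epsilon\xi)}_{\epsilon\in(0,1]}$ ist beschränkt in $S^0_{1,0}(\R^n)$, da wegen $\abs{\partial_\xi^\alpha \phi(\epsilon\xi)} = \epsilon^{\abs{\alpha}} \abs{(\partial^\alpha\phi)(\epsilon\xi)}$, des kompakten Trägers von $\phi$ und $\epsilon \le 1$ jede dieser Ableitungen durch $C_\alpha \piket{\xi}{-\abs{\alpha}}$ abgeschätzt werden kann. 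Mit Lemma \ref{beschr:S_1,delta}, angewandt auf die mikrolokalisierbare Familie $\menge{C^s_*(\R^n)}_{s > 0}$, und wegen $C^t_*(\R^n) = C^t(\R^n)$ für $t \notin \N$ folgt dann, dass $\menge{J_\epsilon}_{\epsilon\in(0,1]}$ eine in $\operator{C^t(\R^n)}$ beschränkte Familie ist. Zusammengenommen liefert das $\norm{J_\epsilon f}{C^t(\R^n)} \le c\,\norm{f}{C^t(\R^n)}$ und damit $\norm{f - J_\epsilon f}{C^t(\R^n)} \le (1+c)\,\norm{f}{C^t(\R^n)}$.

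Die eigentliche Schwierigkeit liegt allein in der Gleichmäßigkeit in $\epsilon$. Die naheliegende Abschätzung $J_\epsilon f = \check{\phi}_\epsilon * f$ über $\norm{D_x^\beta J_\epsilon f}{\infty} \le \norm{D_x^\beta \check{\phi}_\epsilon}{L^1(\R^n)} \norm{f}{\infty}$ erzeugt nämlich nach der Dilatationseigenschaft den für $\epsilon \to 0$ divergierenden Faktor $\epsilon^{-\abs{\beta}}$ und ist damit unbrauchbar. Genau dafür ist der Littlewood-Paley-Trick aus dem Beweis von Lemma \ref{sharp:c} (Zerlegung $f = \sum_{l=0}^{j} \lilwood{l}(D_x) f$ auf dem Träger von $\phi(\epsilon\xi)$) bzw. äquivalent die soeben beschriebene Beschränktheit der Symbolfamilie in $S^0_{1,0}(\R^n)$ verantwortlich; beide Zutaten stehen bereits zur Verfügung, sodass hier kein weiteres Hindernis zu erwarten ist.
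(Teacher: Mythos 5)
Ihr Beweis ist korrekt und verfolgt im Wesentlichen denselben Weg wie die Arbeit: Dreiecksungleichung plus gleichmäßige Beschränktheit von $J_\epsilon$ auf $C^t(\R^n)$, die dort pauschal über Lemma \ref{sharp:c} abgedeckt wird. Sie führen lediglich den Teil, den die Arbeit implizit lässt -- die Kontrolle der Hölder-Halbnorm bzw. die Gleichmäßigkeit in $\epsilon$ über die Beschränktheit der Symbolfamilie $\menge{\phi(\epsilon\xi)}_{\epsilon\in(0,1]}$ in $S^0_{1,0}(\R^n)$, Lemma \ref{beschr:S_1,delta} und $C^t_*(\R^n)=C^t(\R^n)$ für $t\notin\N$ -- genauer aus.
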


\begin{thm}\label{thm:smoothingSharp}
Für $p(x,\xi) \in C^\tau S^m_{1,\delta}(\R^n,\R^n), \gamma \in (\delta, 1)$ ist 
$$
\abs{ D_x^\beta D_\xi^\alpha p^\sharp(x,\xi) } \le
\begin{cases}
c_\beta c_\alpha \piket{\xi}{m - \abs{\alpha} + \delta\abs{\beta}} & \abs{\beta} \le \tau, \\
c_\beta c_\alpha \piket{\xi}{m - \abs{\alpha} + \delta\tau + \gamma(\abs{\beta}-\tau)} & \abs{\beta} > \tau \\
\end{cases}
$$
mit 
$\epsilon_j := 2^{-j \gamma  } \textrm{ für jedes } j \in \N_0 $.
Insbesondere erhalten wir
$$
p^\sharp(x,\xi) = \sum^\infty_{j=0} J_{\epsilon_j} p(x,\xi) \lilwood{j}(\xi) \in S^m_{1,\gamma}(\R^n\times\R^n)
.$$
\begin{proof}
Seien $\alpha, \beta \in \N^n_0$ gegeben. Wir setzen in die Definition von $p^\sharp$ ein und erhalten mit der Leibniz-Formel
$$
D_x^\beta D_\xi^\alpha p^\sharp(x,\xi) = 
\sum_{j \in \Sigma_\xi} D_x^\beta D_\xi^\alpha \left( J_{\epsilon_j} p(x,\xi) \lilwood{j}(\xi) \right) =
\sum_{j \in \Sigma_\xi} \sum_{\mu \le \alpha} {\alpha \choose \mu} D_x^\beta J_{\epsilon_j} \left( D_\xi^\mu p(x,\xi) \right) D_\xi^{\alpha - \mu} \lilwood{j}(\xi) 
.$$
Lemma \ref{sharp:c} liefert die Abschätzungen
\begin{align*}
\abs{ D_x^\beta J_{\epsilon_j} \left( D_\xi^\mu p(x, \xi) \right)} &\le
\begin{cases}
c_\beta \norm{D_\xi^\mu p(\hold,\xi)}{C^{\abs{\beta}}(\R^n)} & \textrm{ für } \abs{\beta} \le \tau, \\
c_\beta \epsilon_j^{-(\abs{\beta} - \tau)} \norm{ D_\xi^\mu p(\hold,\xi)}{C^\tau_*(\R^n)} & \textrm{ für } \abs{\beta} > \tau, \\
\end{cases}
\\
&\le
\begin{cases}
c_\beta c_\mu \piket{\xi}{m - \abs{\mu} + \abs{\beta}\delta} & \textrm{ für } \abs{\beta} \le \tau, \\
c_\beta c_\mu \piket{\xi}{m - \abs{\mu} + \tau\delta} \epsilon_j^{-(\abs{\beta} - \tau)} & \textrm{ für } \abs{\beta} > \tau. \\
\end{cases}
\end{align*}
Für $ \nu := \max \menge{ 0, \abs{\beta} - \tau}$ und $\rho_\mu := m - \abs{\mu} + \delta \min \menge{\abs{\beta}, \tau}$
erhalten wir
$$ \abs{ D_x^\beta J_{\epsilon_j} \left( D_\xi^\mu p(x, \xi) \right)} \le c_{\beta\mu} \piket{\xi}{\rho_\mu} \epsilon_j^{-\nu} .$$
Für ein fest gewähltes $\xi \in \R^n$ liefert Folgerung \ref{cor:introLilwoodOP} ein $j \in \N_0$, sodass $\xi \in \supp \lilwood{j}$.
Wir haben also 
\begin{align*}
\abs{ D_x^\beta D_\xi^\alpha p^\sharp(x,\xi) } 
&\le \abs{ \sum_{\mu \le \alpha} {\alpha \choose \mu} c_{\beta\mu} \piket{\xi}{\rho_\mu} \sum_{k=0}^{\infty} D_\xi^{\alpha-\mu} \lilwood{k}(\xi) \epsilon_k^{-\nu} } \\
&= \abs{ \sum_{\mu \le \alpha} {\alpha \choose \mu} c_{\beta\mu} \piket{\xi}{\rho_\mu} D_\xi^{\alpha-\mu} (\lilwood{j-1}(\xi) \epsilon_{j-1}^{-\nu} + \lilwood{j}(\xi) \epsilon_j^{-\nu} + \lilwood{j+1}(\xi) \epsilon_{j+1}^{-\nu}) }
.\end{align*}
Nun ist $\lilwood{j-1}(\xi) + \lilwood{j}(\xi) + \lilwood{j+1}(\xi) = 1$, also ist 
$$ \lilwood{j-1} \epsilon_{j-1}^{-\nu} + \lilwood{j} \epsilon_j^{-\nu} + \lilwood{j+1} \epsilon_{j+1}^{-\nu} \equiv \textrm{ const } \textrm{ auf } B_r(\xi) \textrm{ für } r>0 \textrm{ klein genug} .$$
Deswegen sind alle Summanden mit $\mu \neq \alpha$ gleich Null.
Stehen bleibt der Ausdruck
$$ \abs{ D_x^\beta D_\xi^\alpha p^\sharp(x,\xi) } \le c_{\beta\alpha} \piket{\xi}{\rho_\alpha} \abs{\lilwood{j-1}(\xi) \epsilon_{j-1}^{-\nu} + \lilwood{j}(\xi) \epsilon_j^{-\nu} + \lilwood{j+1}(\xi) \epsilon_{j+1}^{-\nu} } .$$
Wegen $\epsilon_j = 2^{-j(\gamma - \delta)}$ und $\supp D^{\alpha-\mu} \lilwood{j} \subset D_j$ gibt es ein $C > 0$, sodass 
$$\max_{k\in\menge{j-1,j,j+1}} \epsilon_k \le C \piket{\xi}{-(\gamma-\delta)} .$$
Damit folgt:
\begin{align*}
\abs{ D_x^\beta D_\xi^\alpha p^\sharp(x,\xi) }
&\le C c_{\beta\alpha} \piket{\xi}{\nu(\gamma-\delta)} \piket{\xi}{\rho_\alpha} \abs{ \lilwood{j-1}(\xi) + \lilwood{j}(\xi) + \lilwood{j+1}(\xi) } \\
&= C c_{\beta\alpha} \piket{\xi}{\nu(\gamma-\delta)+\rho_\alpha} \textrm{ unabhängig von } j \in \N_0
.\end{align*}
Insgesamt ergibt sich:
$$
\abs{ D_x^\beta D_\xi^\alpha p^\sharp(x,\xi) } \le
\begin{cases}
c_\beta c_\alpha \piket{\xi}{m - \abs{\alpha} + \delta\abs{\beta}} & \abs{\beta} \le \tau, \\
c_\beta c_\alpha \piket{\xi}{m - \abs{\alpha} + \delta\tau + \gamma(\abs{\beta} - \tau) } & \abs{\beta} > \tau. \\
\end{cases}
$$
Wir sehen sofort, dass $\piket{\xi}{m-\abs{\alpha}+\delta\abs{\beta}} \le \piket{\xi}{m-\abs{\alpha}+\gamma\abs{\beta}}$.
Für $\abs{\beta} > \tau$ haben wir
$$
m - \abs{\alpha} + \delta\tau + \gamma(\abs{\beta} - \tau) 
= m - \abs{\alpha} - \tau(\gamma - \delta) + \gamma\abs{\beta}
\le m - \abs{\alpha} + \gamma\abs{\beta}
.$$
Also finden wir eine Konstante $C_{\alpha\beta} > 0$, sodass wir für alle $\alpha,\beta \in \N_0^n$ das Resultat
$$
\abs{ D_x^\beta D_\xi^\alpha p^\sharp(x,\xi) } \le C_{\alpha\beta} \piket{\xi}{m - \abs{\alpha} + \gamma\abs{\beta}}
$$
erhalten. Insbesondere ist das Symbol $p^\sharp$ glatt, da $J_{\epsilon_j} p(\hold,\xi) \in \Stetigi{\infty}$.

\end{proof}
\end{thm}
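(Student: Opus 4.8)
The plan is to differentiate the defining series $p^\sharp(x,\xi)=\sum_{j=0}^\infty (J_{\epsilon_j}p)(x,\xi)\lilwood{j}(\xi)$ termwise and estimate, distinguishing the regimes $\abs{\beta}\le\tau$ and $\abs{\beta}>\tau$. Since $\lilwood{j}$ is supported in $D_j$, for each fixed $\xi$ only a bounded number of (at most three consecutive) summands are nonzero, so the series is a locally finite sum of functions $(J_{\epsilon_j}p)(\hold,\xi)\lilwood{j}(\xi)\in\Stetigi{\infty}$ --- hence $p^\sharp$ is smooth and termwise differentiation is justified. Applying the Leibniz rule in $\xi$ and using that $J_{\epsilon_j}$ acts only in the $x$-variable (so it commutes with $D_\xi^\mu$ and with $D_x^\beta$),
\begin{equation*}
D_x^\beta D_\xi^\alpha p^\sharp(x,\xi) = \sum_{j}\sum_{\mu\le\alpha} {\alpha\choose\mu}\bigl(D_x^\beta J_{\epsilon_j}D_\xi^\mu p\bigr)(x,\xi)\,D_\xi^{\alpha-\mu}\lilwood{j}(\xi).
\end{equation*}

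Next I would bound the two factors separately. For the smoothing factor, Lemma~\ref{sharp:c} --- in the sharper form $\norm{D_x^\beta J_\epsilon f}{\infty}\le c\norm{f}{C^{\abs{\beta}}(\R^n)}$ for $\abs{\beta}\le\tau$ supplied by its proof, and $\norm{D_x^\beta J_\epsilon f}{\infty}\le c\,\epsilon^{-(\abs{\beta}-\tau)}\norm{f}{C^\tau_*(\R^n)}$ for $\abs{\beta}>\tau$ --- applied to $f=D_\xi^\mu p(\hold,\xi)$ together with the symbol bounds $\norm{D_\xi^\mu p(\hold,\xi)}{C^t(\R^n)}\le C_\mu\piket{\xi}{m-\abs{\mu}+\delta t}$, gives $\abs{D_x^\beta J_{\epsilon_j}D_\xi^\mu p(x,\xi)}\le c_{\beta\mu}\piket{\xi}{\rho_\mu}\epsilon_j^{-\nu}$ with $\rho_\mu:=m-\abs{\mu}+\delta\min\menge{\abs{\beta},\tau}$ and $\nu:=\max\menge{0,\abs{\beta}-\tau}$. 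For the cutoff factor, Lemma~\ref{lemma:introPartialLilwoodAbsch} gives $\abs{D_\xi^{\alpha-\mu}\lilwood{j}(\xi)}\le c_{\alpha-\mu}\piket{\xi}{-\abs{\alpha-\mu}}$, and the crucial bookkeeping is that $\rho_\mu-\abs{\alpha-\mu}=m-\abs{\alpha}+\delta\min\menge{\abs{\beta},\tau}$ no longer depends on $\mu$; hence every summand is bounded by $C\piket{\xi}{m-\abs{\alpha}+\delta\min\menge{\abs{\beta},\tau}}\epsilon_j^{-\nu}$. Summing over $j$: only indices with $\xi\in\supp\lilwood{j}\subset D_j$ occur, a bounded number of them, and on $D_j$ one has $2^j\le 2\piketi{\xi}$, so $\epsilon_j^{-\nu}=2^{j\gamma\nu}\le C\piket{\xi}{\gamma\nu}$ (by Lemma~\ref{lemma:japanese} for $\abs{\xi}$ large, trivially for $\abs{\xi}$ small). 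Collecting the finitely many terms and the binomial coefficients yields exactly the claimed two-case estimate. (An alternative to the $\mu$-independence observation is to argue, as via the partition-of-unity identity of Folgerung~\ref{cor:introLilwoodOP}, that the $\mu\ne\alpha$ terms cancel near $\xi$; the route above sidesteps that.)

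It then remains to deduce $p^\sharp\in S^m_{1,\gamma}(\R^n\times\R^n)$, i.e.\ that the exponent $m-\abs{\alpha}+\delta\min\menge{\abs{\beta},\tau}+\gamma\max\menge{0,\abs{\beta}-\tau}$ is at most $m-\abs{\alpha}+\gamma\abs{\beta}$: for $\abs{\beta}\le\tau$ it equals $m-\abs{\alpha}+\delta\abs{\beta}$ and $\delta<\gamma$; for $\abs{\beta}>\tau$ it equals $m-\abs{\alpha}+\gamma\abs{\beta}-(\gamma-\delta)\tau$, and $\gamma-\delta>0$. The smoothness of $p^\sharp$ was already observed. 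I expect the main obstacle to be the uniform-in-$j$ conversion of the $\epsilon_j$-powers into powers of $\piketi{\xi}$ via the localization $\supp\lilwood{j}\subset D_j$, and, relatedly, keeping all constants genuinely independent of $\xi$ and $j$ through the Leibniz expansion; once the exponent identity $\rho_\mu-\abs{\alpha-\mu}=m-\abs{\alpha}+\delta\min\menge{\abs{\beta},\tau}$ is spotted, the rest is routine estimation.
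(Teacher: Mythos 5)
Your proof is correct, and it differs from the paper's in how the $\xi$-derivatives of the Littlewood--Paley factors are disposed of after the Leibniz expansion. The paper argues by cancellation: fixing $\xi$ and the $j$ from Folgerung~\ref{cor:introLilwoodOP}, it asserts that $\lilwood{j-1}\epsilon_{j-1}^{-\nu}+\lilwood{j}\epsilon_j^{-\nu}+\lilwood{j+1}\epsilon_{j+1}^{-\nu}$ is locally constant near $\xi$, so that all Leibniz terms with $\mu\neq\alpha$ vanish and only $\mu=\alpha$ survives. You instead bound each $D_\xi^{\alpha-\mu}\lilwood{j}(\xi)$ by $c_{\alpha-\mu}\piket{\xi}{-\abs{\alpha-\mu}}$ via Lemma~\ref{lemma:introPartialLilwoodAbsch} and observe that the resulting exponent $\rho_\mu-\abs{\alpha-\mu}=m-\abs{\alpha}+\delta\min\menge{\abs{\beta},\tau}$ is $\mu$-independent, so every Leibniz summand is already admissible and no cancellation is needed. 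Your route is in fact the more robust of the two: when $\nu>0$, the weighted partition sum is \emph{not} locally constant in the transition zones where two consecutive $\lilwood{k}$ are simultaneously nonzero with distinct weights $\epsilon_k^{-\nu}$, so the paper's vanishing argument is questionable precisely there, whereas your direct estimate does not rely on it. The remaining step --- converting $\epsilon_j^{-\nu}=2^{j\gamma\nu}$ into $C\piket{\xi}{\gamma\nu}$ using $2^j\le C\piketi{\xi}$ on $\supp\lilwood{j}$ and summing over the finitely many surviving $j$ --- is common to both proofs.
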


\begin{lemma}\label{flat:t_tau*}
Sei $f \in C^\tau(\R^n), \tau > 0$. 
Dann ist
\begin{equation}
\norm{f - J_\epsilon f }{C^0(\R^n)} \le c_\tau \epsilon^{\tau} \norm{f}{C^\tau_*(\R^n)} .
\end{equation}
\begin{proof} 
Der Beweis verläuft analog zum Beweis von Lemma \ref{sharp:c}:
Nach Bemerkung \ref{smoothing:epsilon} gibt es ein $j \in \N_0$, sodass $\epsilon \in E_j$. Wir folgern, dass
$$
\norm{\left( 1 - \phi( \epsilon D_x) \right) f }{\infty} 
\le c_\phi \sum_{l=j}^{\infty} \norm{ \lilwood{l}(D_x) f }{\infty} 
\le c_\phi \sum_{l=j}^{\infty} 2^{-l\tau} \norm{f}{C^\tau_*(\R^n)}
,$$
wobei $\norm{\lilwood{l}(D_x) f }{\infty} \le 2^{-l\tau} \norm{f}{C^\tau_*(\R^n)}$ nach Definition der Norm von $C^\tau_*(\R^n)$
und
$$
\sum_{l=j}^{\infty} 2^{-l\tau} \le c_\tau 2^{-j\tau} \le
c_{\tau,E_j} \epsilon^\tau
,$$
da $-l\tau < 0$.
\end{proof}
\end{lemma}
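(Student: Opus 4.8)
The plan is to rewrite $f - J_\epsilon f = \left(1 - \phi(\epsilon D_x)\right)f$ and to exploit that $\phi \equiv 1$ on a neighbourhood of the origin, so that the operator $1 - \phi(\epsilon D_x)$ annihilates the low-frequency part of $f$ while the surviving high-frequency tail is controlled directly by the Hölder--Zygmund norm, which by definition decays like $2^{-l\tau}$ on the $l$-th dyadic block. Concretely, I would first invoke Bemerkung \ref{smoothing:epsilon} to pick $j \in \N_0$ with $\epsilon \in E_j$ and $\supp\phi(\epsilon\hold) \supset \bigcup_{l=0}^{j}\menge{\xi : \lilwood{l}(\xi) = 1}$. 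Decomposing $f = \sum_{l=0}^\infty \lilwood{l}(D_x)f$ as in Lemma \ref{sharp:c}, the blocks with $l$ small enough satisfy $\lilwood{l}(D_x) = \phi(\epsilon D_x)\lilwood{l}(D_x)$ on their support, hence $\left(1 - \phi(\epsilon D_x)\right)\lilwood{l}(D_x)f = 0$ for those $l$, and only the tail $l \ge j$ contributes.

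Second, I would estimate the tail. Using the uniform (in $\epsilon \in (0,1]$) bound on the operator norm of $\phi(\epsilon D_x)$ on $C^0(\R^n)$ — the same fact used in the proof of Lemma \ref{sharp:c} — one obtains $\norm{f - J_\epsilon f}{\infty} \le c_\phi \sum_{l=j}^\infty \norm{\lilwood{l}(D_x)f}{\infty}$. By the definition of the norm $\norm{\hold}{C^\tau_*(\R^n)}$ in (\ref{norm:C*}) each term satisfies $\norm{\lilwood{l}(D_x)f}{\infty} \le 2^{-l\tau}\norm{f}{C^\tau_*(\R^n)}$, and the geometric series gives $\sum_{l=j}^\infty 2^{-l\tau} \le c_\tau 2^{-j\tau}$. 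Finally $\epsilon \in E_j$ means $2^{-j}r \le \epsilon \le 2^{-j+1}r$ (with $r$ as in Bemerkung \ref{smoothing:epsilon}), whence $2^{-j\tau} \le c_{\tau,r}\,\epsilon^\tau$; combining the three estimates yields the claim with $c_\tau := c_\phi c_{\tau,r}$.

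The routine part is the geometric-series bookkeeping and the conversion $2^{-j\tau} \mapsto \epsilon^\tau$; the only genuine subtlety — and the step I would treat most carefully — is the first reduction: justifying that $\left(1-\phi(\epsilon D_x)\right)f$ coincides with the tail $\sum_{l\ge j}\left(1-\phi(\epsilon D_x)\right)\lilwood{l}(D_x)f$ with the sum converging in $C^0(\R^n)$ and the partial estimate uniform in $\epsilon$. This is precisely where one borrows the machinery assembled for Lemma \ref{sharp:c} (boundedness of $\phi(\epsilon D_x)$ and of the dyadic pieces on $C^0(\R^n)$, together with the support relation of Bemerkung \ref{smoothing:epsilon}), so the argument really does run parallel to the proof of Lemma \ref{sharp:c}; beyond that, no new ideas are needed.
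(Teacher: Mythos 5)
Your proposal is correct and follows essentially the same route as the paper's own proof: use Bemerkung \ref{smoothing:epsilon} to see that $1-\phi(\epsilon D_x)$ kills the Littlewood--Paley blocks below level $j$ (with $\epsilon\in E_j$), bound each surviving block by $2^{-l\tau}\norm{f}{C^\tau_*(\R^n)}$ via the definition (\ref{norm:C*}), and sum the geometric tail to get $c_\tau 2^{-j\tau}\le c_{\tau,r}\epsilon^\tau$. The only cosmetic slip is the quoted inclusion from the Bemerkung (what the reduction actually uses is that $\phi(\epsilon\,\hold)\equiv 1$ on the supports of the low-frequency blocks, not that $\supp\phi(\epsilon\,\hold)$ contains their union), which does not affect the argument.
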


\begin{thm}\label{thm:smoothingFlat}
Für $p(x,\xi) \in C^\tau S^m_{1,\delta}(\R^n,\R^n), \gamma \in (\delta, 1)$ ist 
\begin{equation}
p^\flat(x,\xi) = p(x,\xi) - p^\sharp(x,\xi) \in C^\tau S^{m-(\gamma - \delta)\tau}_{1,\gamma}(\R^n,\R^n) .
\end{equation}
\begin{proof}
Zu zeigen ist für jeden Multiindex $\alpha \in \N_0^n$, dass
\begin{enumerate}[(a)]
\item $\abs{D_\xi^\alpha p^\flat(x,\xi) } \le c_\alpha \piket{\xi}{m - \abs{\alpha} - \tau\gamma + \delta\tau}$ und
\item $\norm{D_\xi^\alpha p^\flat(\hold,\xi) }{C^t(\R^n)} \le c_\alpha \piket{\xi}{m - \abs{\alpha} - \gamma(\tau - t) + \delta\tau}\; \textrm{ für alle } t \in [0,\tau]$.
\end{enumerate}
Wenn wir $p^\flat$ umformen in
$ p^\flat(x,\xi) = \sum_{j=0}^\infty ( 1 - \phi(\epsilon_j D_x)) p(x,\xi) \lilwood{j}(\xi)$,
so sehen wir leicht, dass sich die Behauptung mit obigen Lemmata folgern lässt:
\begin{enumerate}[(a)]
\item Zunächst wenden wir für $\xi \in \N^n_0$ die Leibniz-Regel an auf
$$ \abs{ D_\xi^\alpha ( 1 - \phi(\epsilon_j D_x)) p(x,\xi) \lilwood{j}(\xi)} 
= \abs{ ( 1 - \phi(\epsilon_j D_x)) \sum_{\mu < \alpha} {\alpha \choose \mu} D_\xi^{\alpha-\mu} p(x,\xi) D_\xi^\mu \lilwood{j}(\xi)}
.$$
Für $\mu \neq 0$ konvergiert nach (\ref{lilwood:abl}) die Reihe 
$$ \sum_{j=1}^N ( 1 - \phi(\epsilon_j D_x)) D_\xi^{\alpha-\mu} p(x,\xi) D_\xi^\mu \lilwood{j}(\xi) \rightarrow 0 \textrm{ für } N \rightarrow \infty ,$$
wir müssen uns also nur den Term
$ \abs{ ( 1 - \phi(\epsilon_j D_x)) D_\xi^\alpha  p(x,\xi) \lilwood{j}(\xi) }$
beachten.
Mit Lemma \ref{flat:t_tau*} erhalten wir
\begin{align*}
\abs{ ( 1-\phi(\epsilon D_x) ) (D_\xi^\alpha p(x,\xi)) } 
&\le \norm{ ( 1-\phi(\epsilon D_x) ) (D_\xi^\alpha p(\hold,\xi)) }{C^0(\R^n)} \\
&\le c \epsilon^{\tau} \norm{D_\xi^\alpha p(\hold,\xi)}{C^\tau_*(\R^n)} 
\le c_\alpha \epsilon^{\tau} \piket{\xi}{m-\abs{\alpha}+\delta\tau}
,\end{align*}
und für $\xi \in \supp \lilwood{j}$ ist 
$\epsilon_j^{\tau} = 2^{-j\gamma\tau} \le c_j \piket{\xi}{-\gamma\tau}$, also erhalten wir
$$ \abs{ ( 1 - \phi(\epsilon_j D_x)) D_\xi^\alpha  p(x,\xi) \lilwood{j}(\xi) } \le c_\alpha \piket{\xi}{m-\abs{\alpha}-\tau(\gamma-\delta)} ,$$
und damit insgesamt
$$ \abs{ D_\xi^\alpha p^\flat(x,\xi) } \le c_\alpha \piket{\xi}{m-\abs{\alpha}-\tau(\gamma-\delta)}.$$

\item Den Fall $t = 0$ haben wir bereits gezeigt.
Für $t = \tau$ liefert Folgerung \ref{flat:t_t} bereits
$$
\norm{ ( 1-\phi(\epsilon D_x) ) (D_\xi^\alpha p(\hold,\xi)) }{C^\tau(\R^n)} 
\le c \norm{D_\xi^\alpha p(\hold,\xi)}{C^\tau(\R^n)} 
\le c_\alpha \piket{\xi}{m-\abs{\alpha}+\delta\tau}
.$$
Für alle $t < \tau$ verwenden wir Theorem \ref{interpol:cor} und kommen zu
\begin{align*}
\norm{ D_\xi^\alpha p^\flat(x,\xi) }{C^t(\R^n)} 
&\le c_{t\tau} \norm{ D_\xi^\alpha p^\flat(x,\xi) }{C^0(\R^n)}^{1-\frac{t}{\tau}} \norm{ D_\xi^\alpha p^\flat(x,\xi) }{C^\tau(\R^n)}^{\frac{t}{\tau}}  \\
&\le c_{t\tau} c_\alpha \piket{\xi}{m-\abs{\alpha}+\delta\tau\frac{t}{\tau}} 
= c_{t\tau} c_\alpha \piket{\xi}{m-\abs{\alpha}+t\delta}
.\end{align*}
\end{enumerate}
\end{proof}
\end{thm}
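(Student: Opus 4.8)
Der Plan ist, für jeden Multiindex $\alpha \in \N^n_0$ die beiden die Klasse $C^\tau S^{m-(\gamma-\delta)\tau}_{1,\gamma}(\R^n,\R^n)$ charakterisierenden Ungleichungen zu verifizieren, nämlich die $C^0(\R^n)$-Abschätzung $\abs{D_\xi^\alpha p^\flat(x,\xi)} \le c_\alpha \piket{\xi}{m-(\gamma-\delta)\tau-\abs{\alpha}}$ und, für jedes $t\in[0,\tau]$, die $C^t(\R^n)$-Abschätzung $\norm{D_\xi^\alpha p^\flat(\hold,\xi)}{C^t(\R^n)} \le c_\alpha\piket{\xi}{m-(\gamma-\delta)\tau-\abs{\alpha}+\gamma t}$. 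Nach der auf die Definition von $C^\tau S^m_{\rho,\delta}(\R^n,\R^n)$ folgenden Bemerkung genügt es, hierbei $t=0$ und $t=\tau$ zu behandeln und für $0<t<\tau$ mit Theorem \ref{interpol:cor} zu interpolieren. Ausgangspunkt ist die Umformung
$$ p^\flat(x,\xi) = \sum_{j=0}^\infty \left( 1 - \phi(\epsilon_j D_x) \right) p(x,\xi) \lilwood{j}(\xi) ,$$
welche aus $p = \sum_j p\,\lilwood{j}$ (Zerlegung der Eins), aus $p^\sharp = \sum_j J_{\epsilon_j} p\,\lilwood{j}$ gemäß Theorem \ref{thm:smoothingSharp} und aus $J_{\epsilon_j} = \phi(\epsilon_j D_x)$ folgt.

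Für den Fall $t=0$ würde ich zunächst $D_\xi^\alpha$ anwenden und die Leibniz-Regel benutzen; da $1-\phi(\epsilon_j D_x)$ nur auf $x$ wirkt, vertauscht es mit $D_\xi^\alpha$, und $D_\xi^\mu\lilwood{j}(\xi)$ ist ein skalarer Faktor. Im Term mit $\mu=0$, in dem alle $\xi$-Ableitungen auf $p$ fallen, liefert Lemma \ref{flat:t_tau*} zusammen mit der Symbolabschätzung $\norm{D_\xi^\alpha p(\hold,\xi)}{C^\tau(\R^n)} \le c_\alpha\piket{\xi}{m-\abs{\alpha}+\delta\tau}$ (und der Einbettung $C^\tau(\R^n)\hookrightarrow C^\tau_*(\R^n)$) die Schranke $\abs{(1-\phi(\epsilon_j D_x))D_\xi^\alpha p(x,\xi)} \le c_\alpha\epsilon_j^\tau\piket{\xi}{m-\abs{\alpha}+\delta\tau}$. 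Auf $\supp\lilwood{j}\subset D_j$ gilt $\epsilon_j^\tau = 2^{-j\gamma\tau} \le C\piket{\xi}{-\gamma\tau}$, womit sich die Schranke zu $c_\alpha\piket{\xi}{m-\abs{\alpha}-\tau(\gamma-\delta)}$ vereinfacht; weil für festes $\xi$ nur endlich viele $\lilwood{j}$ nicht verschwinden und $\sum_j\lilwood{j}=1$ ist, bleibt sie beim Aufsummieren erhalten. Die Leibniz-Terme mit $\mu\neq0$, in denen eine $\xi$-Ableitung auf $\lilwood{j}$ fällt, verschwinden entweder nach Summation über $j$ wegen (\ref{lilwood:abl}), oder --- was ich für sauberer halte --- man schätzt sie direkt ab: mit $\abs{D_\xi^\mu\lilwood{j}(\xi)} \le c_\mu\piket{\xi}{-\abs{\mu}}$ aus Lemma \ref{lemma:introPartialLilwoodAbsch} und der obigen Schranke für $(1-\phi(\epsilon_j D_x))D_\xi^{\alpha-\mu}p$ ergibt sich wieder die Potenz $m-\abs{\alpha}-\tau(\gamma-\delta)$. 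Das erledigt $t=0$.

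Für $t=\tau$ ersetze ich Lemma \ref{flat:t_tau*} durch Folgerung \ref{flat:t_t}, die $\norm{(1-\phi(\epsilon_j D_x))D_\xi^{\alpha-\mu}p(\hold,\xi)}{C^\tau(\R^n)} \le c\,\norm{D_\xi^{\alpha-\mu}p(\hold,\xi)}{C^\tau(\R^n)} \le c_\alpha\piket{\xi}{m-\abs{\alpha-\mu}+\delta\tau}$ liefert; multipliziert mit dem skalaren Faktor $\abs{D_\xi^\mu\lilwood{j}(\xi)} \le c_\mu\piket{\xi}{-\abs{\mu}}$ und aufsummiert (für $\mu=0$ über $\sum_j\lilwood{j}=1$, sonst über endlich viele $j$) ergibt das $c_\alpha\piket{\xi}{m-\abs{\alpha}+\delta\tau} = c_\alpha\piket{\xi}{m-(\gamma-\delta)\tau-\abs{\alpha}+\gamma\tau}$, also genau den Fall $t=\tau$. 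Für $0<t<\tau$ interpoliere ich schließlich mit Theorem \ref{interpol:cor} zwischen den Schranken für $t=0$ und $t=\tau$: mit $\theta:=t/\tau$ ist der Exponent von $\piket{\xi}{\hold}$ gerade
$$ (1-\theta)\left(m-\abs{\alpha}-\tau(\gamma-\delta)\right)+\theta\left(m-\abs{\alpha}+\delta\tau\right) = m-\abs{\alpha}-\gamma(\tau-t)+\delta\tau = m-(\gamma-\delta)\tau-\abs{\alpha}+\gamma t ,$$
was mit der Definition von $C^\tau S^{m-(\gamma-\delta)\tau}_{1,\gamma}(\R^n,\R^n)$ übereinstimmt.

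Die Hauptschwierigkeit sehe ich nicht in einer einzelnen Abschätzung --- diese sind direkte Konsequenzen der bereits etablierten Glättungslemmata \ref{flat:t_tau*} und \ref{flat:t_t} ---, sondern in der Buchführung: im korrekten Behandeln der Leibniz-Terme, in denen eine Ableitung auf $\lilwood{j}$ trifft, und insbesondere darin, nachzuweisen, dass die lokale Endlichkeit der Littlewood-Paley-Zerlegung zusammen mit der Lokalisierung $\epsilon_j \le C\piket{\xi}{-\gamma}$ auf dem dyadischen Ring $D_j$ tatsächlich Konstanten liefert, die von $\xi$ und $j$ unabhängig sind. Ist das einmal sauber aufgeschrieben, folgt der Rest routinemäßig.
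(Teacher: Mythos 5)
Dein Vorschlag folgt im Wesentlichen demselben Weg wie das Papier: gleiche Umformung $p^\flat = \sum_j (1-\phi(\epsilon_j D_x))p\,\lilwood{j}$, gleiche Anwendung von Lemma \ref{flat:t_tau*} für $t=0$, Folgerung \ref{flat:t_t} für $t=\tau$, und Interpolation via Theorem \ref{interpol:cor} für $0<t<\tau$. Zwei Punkte sind dennoch erwähnenswert. Erstens, für die Leibniz-Terme mit $\mu\neq 0$ ist deine direkte Abschätzung über Lemma \ref{lemma:introPartialLilwoodAbsch} (nämlich $\abs{D_\xi^\mu\lilwood{j}}\le c_\mu\piket{\xi}{-\abs{\mu}}$, kombiniert mit der $\epsilon_j^\tau$-Schranke und der lokalen Endlichkeit der Zerlegung) tatsächlich die sauberere Variante: Das Papier beruft sich auf (\ref{lilwood:abl}), was streng genommen nur dann eine Auslöschung liefert, wenn der Vorfaktor nicht von $j$ abhängt — hier aber hängt $(1-\phi(\epsilon_j D_x))$ von $j$ ab, sodass die Summe $\sum_j(1-\phi(\epsilon_j D_x))D_\xi^{\alpha-\mu}p\cdot D_\xi^\mu\lilwood{j}$ nicht aus Symmetriegründen verschwindet; deine direkte Schranke umgeht das. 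Zweitens ist deine Interpolationsrechnung $(1-\theta)(m-\abs{\alpha}-\tau(\gamma-\delta))+\theta(m-\abs{\alpha}+\delta\tau)=m-\abs{\alpha}-\gamma(\tau-t)+\delta\tau$ korrekt und liefert genau den behaupteten Exponenten, während die im Papier ausgeschriebene Zeile $\piket{\xi}{m-\abs{\alpha}+t\delta}$ offenbar einen Rechenfehler enthält (dort wurde der Beitrag des $C^0$-Exponenten beim Potenzieren schlicht vergessen). Kurz: richtig, gleicher Ansatz, aber mit zwei kleinen Verbesserungen gegenüber der ausgeschriebenen Beweisführung des Papiers.
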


Im Anschluss wollen wir das hier kennengelernte Werkzeug der Glättung auch für die Klasse $C^\tau_* S^m_{\rho,\delta}(\R^n,\R^n)$ einführen,
da der Beweis sehr ähnlich verläuft.

\begin{definition}
Eine Funktion $p : \R^n \times \R^n \rightarrow \C$ gehört der Symbolklasse $C^\tau_* S^m_{\rho,\delta}(\R^n,\R^n)$ an,
falls $p(x,\hold) \in \Stetigi{\infty}$ und $p(\hold,\xi) \in C^\tau_*(\R^n)$ ist, und es für alle $\alpha \in \N^n_0$ eine Konstante $C_\alpha > 0$ gibt mit
\begin{align*}
\abs{ D_\xi^\alpha p(x,\xi) } &\le C_\alpha \piket{\xi}{m-\rho\abs{\alpha}} \textrm{ und } \\
\norm{D_\xi^\alpha p(\hold,\xi) }{C^\tau_*(\R^n)} &\le C_\alpha \piket{\xi}{m-\rho\abs{\alpha}+\delta\tau}
.\end{align*}
\end{definition}

Wiederum wollen wir nun für ein Symbol $p \in C^\tau_* S^m_{1,\delta}(\R^n,\R^n)$ mit $\delta \in [0,1), m \in \R$ in 
$p(x,\xi) = p^\sharp(x,\xi) + p^\flat(x,\xi)$
mit
$p^\sharp \in S^m_{1,\gamma}(\R^n\times\R^n)$
und
$p^\flat \in C^{\tau}_* S^{m- (\gamma - \delta) \tau}_{1,\gamma}(\R^n,\R^n)$
aufspalten. Wir erhalten mit der Beweisstrategie von Theorem \ref{thm:smoothingSharp} für $p^\sharp$ das gleiche Ergebnis wie bei den Hölder-stetigen Symbolen.
Wir müssen also nur noch die Symbolklasse von $p^\flat$ verifizieren.

\begin{lemma}\label{flat:t*_tau*}
Sei $f \in C^\tau_*(\R^n), \tau > 0, \epsilon \in (0,1]$. Dann gilt
\begin{equation}
\norm{f - J_\epsilon f }{C^{\tau - t}_*(\R^n)} \le c \epsilon^t \norm{f}{C^\tau_*(\R^n)} .
\end{equation}
für alle $\tau, \tau - t \in \Sigma^{C^._*(\R^n)} = (0,\infty)$ mit $t \ge 0$.
\begin{proof}
Seien $\tau, \tau - t \in (0,\infty)$.
Da $\menge{C^s_*(\R^n)}_{s \in (0,\infty)}$ eine mikrolokalisierbare Familie ist, ist nach Definition \ref{def:microlocal}.\ref{def:microlocal:D} der Operator
$ \piket{D_x}{t} : C^\tau_*(\R^n) \rightarrow C^{\tau - t}_*(\R^n)$
ein Isomorphismus mit der Umkehrabbildung 
$\piket{D_x}{-t} : C^{\tau-t}_*(\R^n) \rightarrow C^{\tau}_*(\R^n) $.
Wegen $\phi \in S^{-\infty}_{1,0}(\R^n)$ ist nach Lemma \ref{beschr:S_1,delta} der Operator
$\phi(\epsilon D_x) : C^{\tau - t}_*(\R^n) \rightarrow C^{\tau - t}_*(\R^n)$ 
beschränkt und damit gilt für festes $\epsilon \in (0,1]$, dass
$$
\epsilon^{-t} \piket{D_x}{-t} \left( 1 - \phi(\epsilon D_x) \right)  = 
\epsilon^{-t} \piket{D_x}{-t} -\epsilon^{-t}  \piket{D_x}{-t} \phi(\epsilon D_x) : C^{\tau - t}_*(\R^n) \rightarrow C^{\tau}_*(\R^n)
.$$
Für $\phi_\epsilon := \phi(\epsilon\xi)$ mit $\epsilon \in (0,1]$ ist
$\phi_\epsilon(\xi) = 1$ für alle $\xi \in B_{\frac{1}{\epsilon}}(0)$, 
also $\supp (1-\phi_\epsilon) \subset \R^n \setminus B_{\frac{1}{\epsilon}}(0)$.
Damit gibt es eine Konstante $c_\epsilon>0$, sodass
$\supp (1 - \phi_\epsilon) \subset \menge{ \xi \in \R^n : \piket{\xi}{} > \frac{c_\epsilon}{\epsilon}}$.
Für $\alpha \in \N^n_0$ mit $\alpha \neq 0$ gibt es wegen $D^\alpha_\xi \phi_\epsilon(\xi) \in C^\infty_0(\R^n)$ ein $R > 0$, sodass
$\supp D^\alpha_\xi \phi_\epsilon(\xi) \subset B_R(0) \setminus B_{\frac{1}{\epsilon}}(0)$.
Also gibt es ein zusätzliches $C_\epsilon > 0$, sodass
$\supp D^\alpha_\xi \phi_\epsilon(\xi) \subset \menge{ \xi \in \R^n : \frac{c_\epsilon}{\epsilon} < \piket{\xi}{} < \frac{C_\epsilon}{\epsilon} }$.

Betrachte nun $\abs{ D^\alpha_\xi \left( \piket{\xi}{-t} (1-\phi_\epsilon(\xi)) \right)}$ mit $\alpha \in \N^n_0$.
Für $\alpha = 0$ ist
$\abs{\piket{\xi}{-t} (1-\phi_\epsilon(\xi))} \le c_\phi c_\epsilon^{-t} \epsilon^t$.
Andernfalls ist
$$\abs{D^\alpha_\xi \piket{\xi}{-t} (1-\phi_\epsilon(\xi))} =
\sum_{\mu \le \alpha} {\alpha \choose \mu} D^\mu_\xi \piket{\xi}{-t} D^{\alpha-\mu}_\xi (1-\phi_\epsilon(\xi)) .$$
Für alle $\mu \in \N^n_0$ ist
$\abs{D^\mu_\xi \piket{\xi}{-t}} \le c_t \piket{\xi}{-t-\abs{\mu}}$ da $\piket{\xi}{-t} \in S^{-t}_{1,0}(\R^n) $.
Für $\mu = \alpha$ ist 
\begin{align*}
\abs{ (D^{\alpha}_\xi \piket{\xi}{-t}) (1-\phi_\epsilon(\xi)) } 
&\le c_t \piket{\xi}{-t-\abs{\alpha}} \abs{1-\phi_\epsilon(\xi) } \\
&\le c_t c_\epsilon^{-t} \epsilon^t \piket{\xi}{-\abs{\alpha}} \abs{1-\phi_\epsilon(\xi) } 
\le c_t c_\phi c_\epsilon^{-t} \epsilon^t \piket{\xi}{-\abs{\alpha}}
.\end{align*}
Ansonsten ist für $\mu < \alpha$
\begin{align*}
\abs{ D^\mu_\xi \piket{\xi}{-t} D^{\alpha-\mu}_\xi (1-\phi_\epsilon(\xi)) } 
&= \abs{ D^\mu_\xi \piket{\xi}{-t} D^{\alpha-\mu}_\xi \phi_\epsilon(\xi) }  \\
&\le c_{t\mu} \piket{\xi}{-t-\abs{\mu}} \abs{D^{\alpha-\mu}_\xi \phi_\epsilon(\xi)}  \\
&\le c_{t\mu} \piket{\xi}{-t-\abs{\mu}} \epsilon^{\abs{\alpha}-\abs{\mu}} \abs{(D^{\alpha-\mu}_\xi \phi)(\epsilon\xi)}  \\
&\le c_{t\mu} c_\epsilon^{-t} \epsilon^t C_\epsilon^{\abs{\alpha}-\abs{\mu}} \piket{\xi}{-\abs{\alpha}} \abs{(D^{\alpha-\mu}_\xi \phi)(\epsilon\xi)}  \\
&\le c_{t\mu} c_\epsilon^{-t} \epsilon^t C_\epsilon^{\abs{\alpha}-\abs{\mu}} \piket{\xi}{-\abs{\alpha}} c_\phi
.\end{align*}
Zusammenfassend ist
$$\abs{ D^\alpha_\xi \left( \piket{\xi}{-t} (1-\phi_\epsilon(\xi)) \right)} \le
c_t c_\phi c_\epsilon^{-t} \epsilon^t \piket{\xi}{-\abs{\alpha}} + 
\sum_{\mu < \alpha} {\alpha \choose \mu} c_t c_\epsilon^{-t} \epsilon^t C_\epsilon^{\abs{\alpha}-\abs{\mu}} \piket{\xi}{-\abs{\alpha}} c_\phi \le
c_\alpha \epsilon^t \piket{\xi}{-\abs{\alpha}}
.$$

Also ist für alle $\epsilon \in (0,1]$ das Symbol $\epsilon^{-t} \piket{\xi}{-t} \left( 1 - \phi(\epsilon \xi) \right) \in S^0_{1,0}(\R^n\times\R^n)$ beschränkt.
\end{proof}
\end{lemma}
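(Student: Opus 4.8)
Der Plan ist, $f - J_\epsilon f = \left( 1 - \phi(\epsilon D_x) \right) f$ umzuschreiben und die behaupteten Faktoren $\epsilon^t$ und $\piket{D_x}{t}$ mit Hilfe der Operatoridentität
$$ 1 - \phi(\epsilon D_x) = \epsilon^t\, q_\epsilon(D_x)\, \piket{D_x}{t}, \qquad q_\epsilon(\xi) := \epsilon^{-t}\piket{\xi}{-t}\left( 1 - \phi(\epsilon\xi) \right) $$
abzuspalten. Setzen wir voraus, dass $\menge{ q_\epsilon }_{\epsilon\in(0,1]}$ eine beschränkte Teilmenge von $S^0_{1,0}(\R^n\times\R^n)$ ist, so folgt die Abschätzung aus zwei bereits verfügbaren Tatsachen: Da $\menge{ C^s_*(\R^n) }_{s\in(0,\infty)}$ eine mikrolokalisierbare Familie ist, ist der Operator $\piket{D_x}{t} : C^\tau_*(\R^n) \to C^{\tau-t}_*(\R^n)$ beschränkt (Definition \ref{def:microlocal}.\ref{def:microlocal:D}), und nach Lemma \ref{beschr:S_1,delta}, angewandt mit $m = 0$ und $s = \tau-t$, ist $q_\epsilon(D_x) : C^{\tau-t}_*(\R^n) \to C^{\tau-t}_*(\R^n)$ beschränkt, wobei die Operatornorm durch endlich viele $S^0_{1,0}$-Halbnormen von $q_\epsilon$ und damit gleichmäßig in $\epsilon$ kontrolliert wird (unter Verwendung der Stetigkeit von $p \mapsto p(X,D_x)$ wie in der Bemerkung nach Theorem \ref{thm:introSchwarzBeschr}).

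Den Kern des Arguments bildet somit die gleichmäßige Symbolabschätzung $\abs{ D_\xi^\alpha q_\epsilon(\xi) } \le c_\alpha \piket{\xi}{-\abs{\alpha}}$ mit von $\epsilon\in(0,1]$ unabhängigen Konstanten $c_\alpha$. Zunächst würde ich die Träger lokalisieren: $\phi(\epsilon\hold) \equiv 1$ auf $B_{1/\epsilon}(0)$, also $\supp\left( 1 - \phi(\epsilon\hold) \right) \subset \R^n\setminus B_{1/\epsilon}(0)$, und für $\alpha\neq 0$ liegt $\supp D_\xi^\alpha \phi(\epsilon\hold)$ in einem Kreisring der Form $\menge{ \xi : c_\epsilon/\epsilon < \piketi{\xi} < C_\epsilon/\epsilon }$ (da $\supp D^\alpha\phi$ kompakt und von einer Kugel um den Ursprung getrennt ist). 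Auf all diesen Mengen ist $\epsilon^{-t}\piket{\xi}{-t}$ durch eine Konstante beschränkt, was den störenden Faktor $\epsilon^{-t}$ absorbiert. Danach würde ich mit der Leibniz-Regel entwickeln und $\piket{\xi}{-t}\in S^{-t}_{1,0}(\R^n)$ benutzen, um $\abs{ D_\xi^\mu \piket{\xi}{-t} } \le c_t \piket{\xi}{-t-\abs{\mu}}$ abzuschätzen; der Term $\mu=\alpha$ wird durch die erste Trägerbemerkung erledigt, während für $\mu<\alpha$ die Skalierung $D_\xi^{\alpha-\mu}\phi(\epsilon\xi) = \epsilon^{\abs{\alpha-\mu}}(D^{\alpha-\mu}\phi)(\epsilon\xi)$ genau die Potenzen von $\epsilon$ liefert, die nötig sind, um $\piket{\xi}{-t-\abs{\mu}}$ auf dem Kreisring in $\epsilon^t\piket{\xi}{-\abs{\alpha}}$ zu verwandeln. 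Aufsummieren der endlich vielen Beiträge ergibt $\abs{ D_\xi^\alpha q_\epsilon(\xi) } \le c_\alpha \piket{\xi}{-\abs{\alpha}}$.

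Damit schließe ich durch Kombination der drei Abschätzungen:
$$ \norm{ f - J_\epsilon f }{C^{\tau-t}_*(\R^n)} = \epsilon^t \norm{ q_\epsilon(D_x)\piket{D_x}{t} f }{C^{\tau-t}_*(\R^n)} \le c\,\epsilon^t \norm{ \piket{D_x}{t} f }{C^{\tau-t}_*(\R^n)} \le c'\,\epsilon^t \norm{ f }{C^\tau_*(\R^n)} . $$
Die Hauptschwierigkeit besteht genau in der Buchführung des zweiten Absatzes: Man muss die $\epsilon$-Potenzen und die dyadischen Trägerlagen der Ableitungen von $\phi(\epsilon\hold)$ so aufeinander abstimmen, dass $q_\epsilon$ nach Division durch $\epsilon^t$ den $S^0_{1,0}$-Abschätzungen mit Konstanten genügt, die für $\epsilon\to 0$ nicht entarten.
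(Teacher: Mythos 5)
Dein Vorschlag ist korrekt und entspricht im Wesentlichen dem Beweis der Arbeit: dieselbe Faktorisierung $1-\phi(\epsilon D_x)=\epsilon^t\,q_\epsilon(D_x)\piket{D_x}{t}$ mit $q_\epsilon(\xi)=\epsilon^{-t}\piket{\xi}{-t}(1-\phi(\epsilon\xi))$, dieselben Trägerlokalisierungen von $1-\phi_\epsilon$ bzw. $D^\alpha\phi_\epsilon$, Leibniz-Regel und Skalierung, um die gleichmäßige $S^0_{1,0}$-Beschränktheit zu zeigen, und anschließend die Abbildungseigenschaften aus Definition \ref{def:microlocal} und Lemma \ref{beschr:S_1,delta}. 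Keine wesentlichen Abweichungen oder Lücken.
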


\begin{cor}\label{flat:tau*_tau*}
Für $t=0$ ist
\begin{equation}
\norm{f - J_\epsilon f }{C^\tau_*(\R^n)} \le c \norm{f}{C^\tau_*(\R^n)} .
\end{equation}
\end{cor}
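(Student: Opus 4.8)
The plan is to obtain this statement as the special case $t=0$ of Lemma \ref{flat:t*_tau*}. For $t=0$ the exponent $\tau-t$ equals $\tau$, which lies in $\Sigma^{C^._*(\R^n)}=(0,\infty)$ precisely because $\tau>0$ was assumed; hence the hypotheses ``$\tau,\tau-t\in(0,\infty)$ with $t\ge 0$'' of that lemma are satisfied, and its conclusion reads $\norm{f-J_\epsilon f}{C^\tau_*(\R^n)}\le c\,\epsilon^0\norm{f}{C^\tau_*(\R^n)}=c\norm{f}{C^\tau_*(\R^n)}$, which is exactly the assertion. So the corollary needs nothing beyond invoking the preceding lemma at $t=0$, and I would state it that briefly.

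The only point deserving a line of justification is that the argument of Lemma \ref{flat:t*_tau*} is not vacuous at $t=0$: there the decisive object is the symbol $\epsilon^{-t}\piket{\xi}{-t}\bigl(1-\phi(\epsilon\xi)\bigr)$, which for $t=0$ collapses to $1-\phi(\epsilon\xi)$. I would note that this family still lies in a bounded subset of $S^0_{1,0}(\R^n\times\R^n)$: the constant $1$ belongs to $S^0_{1,0}(\R^n)$, and $\phi(\epsilon\xi)$ is, as in the proof of that lemma, supported together with all its $\xi$-derivatives of order $\ge 1$ in a region of the form $\{|\xi|\sim 1/\epsilon\}$, on which $\abs{D^\alpha_\xi\phi(\epsilon\xi)}\le\epsilon^{\abs{\alpha}}\norm{D^\alpha\phi}{\infty}\le C_\alpha\piket{\xi}{-\abs{\alpha}}$ uniformly in $\epsilon\in(0,1]$, while $\abs{\phi(\epsilon\xi)}\le\norm{\phi}{\infty}$. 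Then $(1-\phi(\epsilon D_x))$ is a uniformly bounded operator $C^\tau_*(\R^n)\to C^\tau_*(\R^n)$ by Lemma \ref{beschr:S_1,delta} applied to the microlocalizable family $\menge{C^s_*(\R^n)}_{s\in(0,\infty)}$, which yields the claimed estimate; no real obstacle arises, since there is simply no $\epsilon$-gain to track once $t=0$. An equally short alternative would be a direct Littlewood--Paley estimate along the lines of Lemma \ref{flat:t_tau*}: write $f-J_\epsilon f=(1-\phi(\epsilon D_x))f$, decompose it into the dyadic blocks $\lilwood{l}(D_x)f$ with $l$ beyond the threshold $j$ determined by $\epsilon$ via Remark \ref{smoothing:epsilon}, and bound the $C^\tau_*$-norm of the resulting sum by a constant times $\sup_{l}2^{l\tau}\norm{\lilwood{l}(D_x)f}{\infty}=\norm{f}{C^\tau_*(\R^n)}$.
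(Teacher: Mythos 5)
Your argument is correct and matches the paper, which states this corollary as the immediate special case $t=0$ of Lemma \ref{flat:t*_tau*} without further proof; your verification of the hypotheses and of $\epsilon^0=1$ is exactly what is needed. The additional remarks (uniform boundedness of $1-\phi(\epsilon D_x)$ on $C^\tau_*(\R^n)$ via Lemma \ref{beschr:S_1,delta}, or the direct Littlewood--Paley estimate) are sound but not required beyond invoking the lemma at $t=0$.
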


\begin{thm}
Für $p(x,\xi) \in C^\tau_* S^m_{1,\delta}(\R^n,\R^n), \gamma \in (\delta, 1)$ ist 
\begin{equation}
p^\flat(x,\xi) = p(x,\xi) - p^\sharp(x,\xi) \in C^\tau_* S^{m-(\gamma - \delta)\tau}_{1,\gamma}(\R^n,\R^n) .
\end{equation}
\begin{proof}
In Theorem \ref{thm:smoothingFlat} haben wir bereits die Abschätzung
$$\abs{D_\xi^\alpha p^\flat(x,\xi) } \le c_\alpha \piket{\xi}{m - \abs{\alpha} - \tau\gamma + \delta\tau}$$
bewiesen.
Der Rest folgt mit Folgerung \ref{flat:tau*_tau*}:
$$
\norm{ ( 1-\phi(\epsilon D_x) ) (D_\xi^\alpha p(\hold,\xi)) }{C^\tau_*(\R^n)} \le 
c \norm{D_\xi^\alpha p(\hold,\xi)}{C^\tau_*(\R^n)} \le
c_\alpha \piket{\xi}{m-\abs{\alpha}+\delta\tau}
.$$
\end{proof}
\end{thm}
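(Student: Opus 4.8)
The plan is to verify the two defining estimates of the class $C^\tau_* S^{m-(\gamma-\delta)\tau}_{1,\gamma}(\R^n,\R^n)$ for $p^\flat$, that is, for every $\alpha \in \N^n_0$,
$$\abs{D_\xi^\alpha p^\flat(x,\xi)} \le c_\alpha \piket{\xi}{m-\abs{\alpha}-\tau(\gamma-\delta)} \quad\textrm{und}\quad \norm{D_\xi^\alpha p^\flat(\hold,\xi)}{C^\tau_*(\R^n)} \le c_\alpha \piket{\xi}{m-\abs{\alpha}+\delta\tau},$$
starting, exactly as in Theorem \ref{thm:smoothingFlat}, from the representation $p^\flat(x,\xi) = \sum_{j=0}^\infty (1-\phi(\epsilon_j D_x)) p(x,\xi)\,\lilwood{j}(\xi)$ with $\epsilon_j = 2^{-j\gamma}$. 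For the smooth part nothing new is required: the argument of Theorem \ref{thm:smoothingSharp} used only the bound $\norm{D_\xi^\alpha p(\hold,\xi)}{C^\tau_*(\R^n)} \le C_\alpha\piket{\xi}{m-\abs{\alpha}+\delta\tau}$, which is part of the definition of $C^\tau_* S^m_{1,\delta}(\R^n,\R^n)$, so again $p^\sharp \in S^m_{1,\gamma}(\R^n\times\R^n)$.

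The pointwise estimate is literally the one established in Theorem \ref{thm:smoothingFlat}(a). Reading that proof, its only inputs are the bound $\norm{D_\xi^\alpha p(\hold,\xi)}{C^\tau_*(\R^n)} \le c_\alpha\piket{\xi}{m-\abs{\alpha}+\delta\tau}$ and Lemma \ref{flat:t_tau*}; and the proof of Lemma \ref{flat:t_tau*} rests solely on $\norm{\lilwood{l}(D_x)f}{\infty} \le 2^{-l\tau}\norm{f}{C^\tau_*(\R^n)}$, hence applies verbatim to any $f \in C^\tau_*(\R^n)$. Thus the first estimate transfers unchanged.

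For the Zygmund-norm estimate I would apply $D_\xi^\alpha$ to the series and expand each summand by the Leibniz rule, as in the proof of Theorem \ref{thm:smoothingFlat}: the terms in which a nontrivial $\xi$-derivative falls on $\lilwood{j}$ are disposed of using (\ref{lilwood:abl}) together with the reproducing identity and the local finiteness of the Littlewood--Paley supports from Folgerung \ref{cor:introLilwoodOP}, so that, for each fixed $\xi$, one is left only with the contribution $(1-\phi(\epsilon_j D_x))(D_\xi^\alpha p(x,\xi))\,\lilwood{j}(\xi)$ from the at most three indices $j$ with $\xi \in D_j$. Since $\phi(\epsilon_j D_x) = J_{\epsilon_j}$ acts only in the $x$-variable and $\lilwood{j}(\xi) \in [0,1]$ is a scalar for fixed $\xi$, Folgerung \ref{flat:tau*_tau*} yields
$$\norm{(1-\phi(\epsilon_j D_x))(D_\xi^\alpha p(\hold,\xi))\,\lilwood{j}(\xi)}{C^\tau_*(\R^n)} \le \lilwood{j}(\xi)\,c\,\norm{D_\xi^\alpha p(\hold,\xi)}{C^\tau_*(\R^n)} \le c\,c_\alpha\,\lilwood{j}(\xi)\,\piket{\xi}{m-\abs{\alpha}+\delta\tau};$$
summing over the (at most three) surviving indices and using $\sum_{j}\lilwood{j}(\xi) = 1$ gives $\norm{D_\xi^\alpha p^\flat(\hold,\xi)}{C^\tau_*(\R^n)} \le c_\alpha\piket{\xi}{m-\abs{\alpha}+\delta\tau}$, which is the claim.

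I do not expect a genuine obstacle: the statement is the Hölder--Zygmund counterpart of Theorem \ref{thm:smoothingFlat} and the proof is essentially a transcription, with Folgerung \ref{flat:tau*_tau*} taking the role of Folgerung \ref{flat:t_t}; the interpolation step via Theorem \ref{interpol:cor} is not needed here, because the class $C^\tau_* S^m_{1,\gamma}(\R^n,\R^n)$ only demands the $C^0$- and the $C^\tau_*$-level estimates rather than the whole range $t \in [0,\tau]$. The only point deserving a moment's care is the Leibniz bookkeeping in the preceding paragraph — checking that the $\xi$-derivatives hitting the cutoffs $\lilwood{j}$ really collapse to finitely many controlled terms and that differentiating the series term by term is legitimate (which holds since for each fixed $\xi$ the series is locally finite), precisely the manipulation already performed in Theorems \ref{thm:smoothingSharp} and \ref{thm:smoothingFlat}.
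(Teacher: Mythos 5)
Your proposal is correct and follows the paper's approach exactly: the pointwise bound is taken over from Theorem \ref{thm:smoothingFlat}(a) (whose proof, as you note, uses only the $C^\tau_*$-norm of $p(\hold,\xi)$ and hence transfers verbatim), and the $C^\tau_*$-level bound is supplied by Folgerung \ref{flat:tau*_tau*}, with the endpoint estimates alone sufficing in the Zygmund class so that the interpolation step from Theorem \ref{interpol:cor} is indeed unnecessary. One small remark on the Leibniz bookkeeping you spell out, which faithfully reproduces the wording in the paper's Theorems \ref{thm:smoothingSharp} and \ref{thm:smoothingFlat}: the terms in which a nontrivial $D_\xi^\mu$ falls on $\lilwood{j}$ are not literally annihilated by (\ref{lilwood:abl}), since the smoothing scale $\epsilon_j$ varies with $j$ and so the prefactors $(1-\phi(\epsilon_j D_x))$ do not factor out of the local sum over $j$; those terms can instead be controlled directly via $\abs{D_\xi^\mu\lilwood{j}(\xi)} \le c_\mu\piket{\xi}{-\abs{\mu}}$ (Lemma \ref{lemma:introPartialLilwoodAbsch}) combined with Folgerung \ref{flat:tau*_tau*} applied to $D_\xi^{\alpha-\mu}p(\hold,\xi)$, which gives the same bound $\piket{\xi}{m-\abs{\alpha}+\delta\tau}$, so the conclusion is unaffected.
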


\subsection{Beschränktheit von Pseudodifferentialoperatoren}

Sei $m \in \R, 1 < q < \infty$.

\begin{thm} \label{beschr:S_1,1}
Sei $p \in C^\tau S^m_{1,1}(\R^n,\R^n)$. Dann ist für alle $s \in (0,\tau)$
\begin{alignat}{4} \label{equ:bounded}
p(X,D_x) : & H^{s+m}_q(\R^n) &\rightarrow & H^s_q(\R^n) , \nonumber \\
           & C^{s+m}_*(\R^n) &\rightarrow & C^s_*(\R^n) 
.\end{alignat}
\begin{proof}
Siehe \cite[Theorem 2.1.A]{Nonlinear}
\end{proof}
\end{thm}

\begin{thm}\label{beschr:Stau_1,0}
Der Operator des Symbols $p(x,\xi) \in C^\tau S^m_{1,0}(\R^n,\R^n)$ erfüllt 
\begin{alignat}{4}\label{equ:boundedA}
p(X,D_x) : & H^{s+m}_q(\R^n) &\rightarrow& H^s_q(\R^n) \textrm{ für alle } s \in (-\tau,\tau) , \nonumber\\
           & C^{s+m}_*(\R^n) &\rightarrow& C^s_*(\R^n) \textrm{ für alle } s \in (0,\tau) \textrm{ falls } s+m > 0.
\end{alignat}
\begin{proof}
Sei ohne Einschränkungen $m = 0$, ansonsten betrachte
$$
q(X,D_x) := p(X,D_x) \piket{D_x}{-m} = \OPSym{ p(x,\xi) \piket{\xi}{-m} } \in \OP{ C^\tau S^0_{1,0}(\R^n,\R^n)}
.$$
Für $s \in (0,\tau)$ folgt die Behauptung bereits aus Lemma \ref{beschr:S_1,1}.
Sei also $s \in (-\tau, 0]$. 
Wende die Symbolglättung an, und erhalte für beliebiges $\gamma \in (0,1)$ die Symbole
$$p^\sharp(x,\xi) \in S^0_{1,\gamma}(\R^n\times\R^n) \textrm{ und } p^\flat(x,\xi) \in C^\tau S^{-\tau\gamma}_{1,\gamma}(\R^n,\R^n) \textrm{ mit }
p(x,\xi) = p^\sharp(x,\xi) + p^\flat(x,\xi) .$$
Nach Lemma \ref{beschr:S_1,delta} erfüllt $p^\sharp$ bereits die Abbildungseigenschaft (\ref{equ:bounded}) für jedes $s \in \R$.
Wende nun Theorem \ref{beschr:S_1,1} auf $p^\flat$ an ($m \leftarrow -\tau\gamma$) und erhalte
$$ p^\flat(X,D_x) : H^{\sigma-\tau\gamma}(\R^n) \rightarrow H^\sigma_q(\R^n) \textrm{ für alle } \sigma \in (0,\tau) .$$
Für $\gamma \nearrow 1$ erfüllt $p^\flat(x,\xi)$ die Bedingungen der Behauptung ($m \leftarrow 0, s \leftarrow (-\tau,0]$)
und damit hält $p$ die Gleichung (\ref{equ:boundedA}).
\end{proof}
\end{thm}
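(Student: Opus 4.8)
The plan is to reduce to the case $m=0$, then dispatch the range of positive indices with the already available boundedness theorem on $C^\tau S^m_{1,1}(\R^n,\R^n)$, and finally use the symbol smoothing of the previous subsection to reach the remaining, negative Bessel-potential indices. For the reduction, note that $\xi\mapsto\piket{\xi}{-m}\in S^{-m}_{1,0}(\R^n)$, so by Lemma~\ref{thm:simplifyComposition} the operator $q(X,D_x):=p(X,D_x)\,\piket{D_x}{-m}$ has symbol $q(x,\xi)=p(x,\xi)\piket{\xi}{-m}\in C^\tau S^0_{1,0}(\R^n,\R^n)$ (the asymptotic expansion collapses to its leading term because $\piket{\xi}{-m}$ does not depend on $x$). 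Since $\piket{D_x}{m}$ is bounded $H^{s+m}_q(\R^n)\to H^s_q(\R^n)$ and $C^{s+m}_*(\R^n)\to C^s_*(\R^n)$ by Definition~\ref{def:microlocal}.\ref{def:microlocal:D} — the latter requiring $s+m>0$ so that $C^{s+m}_*(\R^n)$ lies in its index interval $(0,\infty)$, which is exactly the side condition in the statement — it suffices to prove everything for $q$, i.e.\ for $m=0$.

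\textbf{Positive indices.} One has $C^\tau S^0_{1,0}(\R^n,\R^n)\subset C^\tau S^0_{1,1}(\R^n,\R^n)$, the $(\rho,\delta)=(1,0)$ estimates being stronger than the $(1,1)$ ones. Hence Theorem~\ref{beschr:S_1,1} gives at once $q(X,D_x)\colon H^s_q(\R^n)\to H^s_q(\R^n)$ and $q(X,D_x)\colon C^s_*(\R^n)\to C^s_*(\R^n)$ for all $s\in(0,\tau)$, which already settles the Hölder--Zygmund assertion and the Bessel-potential assertion for $s\in(0,\tau)$.

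\textbf{Negative indices via smoothing, and conclusion.} Fix $s\in(-\tau,0]$ and $\gamma\in(0,1)$, and split $q=q^\sharp+q^\flat$ by Theorems~\ref{thm:smoothingSharp} and~\ref{thm:smoothingFlat} (with $m=0$, $\delta=0$), so $q^\sharp\in S^0_{1,\gamma}(\R^n\times\R^n)$ and $q^\flat\in C^\tau S^{-\tau\gamma}_{1,\gamma}(\R^n,\R^n)$. By Lemma~\ref{beschr:S_1,delta} (the Bessel-potential spaces forming a microlocalizable family) $q^\sharp(X,D_x)\colon H^s_q(\R^n)\to H^s_q(\R^n)$ for every $s\in\R$. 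Applying Theorem~\ref{beschr:S_1,1} to $q^\flat$, again using $C^\tau S^{-\tau\gamma}_{1,\gamma}\subset C^\tau S^{-\tau\gamma}_{1,1}$, yields $q^\flat(X,D_x)\colon H^{\sigma-\tau\gamma}_q(\R^n)\to H^\sigma_q(\R^n)$ for every $\sigma\in(0,\tau)$. Since $s>-\tau$, choosing $\gamma$ close enough to $1$ makes $s+\tau\gamma>0$, and any $\sigma$ in the then nonempty interval $\bigl(0,\min\{\tau,s+\tau\gamma\}\bigr)$ satisfies $\sigma-\tau\gamma\le s\le\sigma$; the Sobolev embeddings of Theorem~\ref{thm:interpolBesselSobolev} give $H^s_q(\R^n)\hookrightarrow H^{\sigma-\tau\gamma}_q(\R^n)$ and $H^\sigma_q(\R^n)\hookrightarrow H^s_q(\R^n)$, so $q^\flat(X,D_x)\colon H^s_q(\R^n)\to H^s_q(\R^n)$. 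Adding the two parts and combining with the positive range, $q(X,D_x)\colon H^s_q(\R^n)\to H^s_q(\R^n)$ for all $s\in(-\tau,\tau)$; undoing the reduction of the first step gives the asserted mapping properties of $p(X,D_x)$.

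\textbf{Main obstacle.} The Zygmund part and the positive Bessel-potential range come for free. The real point is that Theorem~\ref{beschr:S_1,1} controls $(1,1)$-type operators only on spaces of positive smoothness, so the negative Bessel-potential range lies outside its scope; symbol smoothing is precisely the device that trades this obstruction for a strictly order-lowering remainder $q^\flat$ of order $-\tau\gamma<0$, and the one thing genuinely to check is that letting $\gamma\nearrow1$ sweeps out all of $(-\tau,0]$.
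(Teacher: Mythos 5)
Your proposal is correct and follows essentially the same route as the paper's own proof: reduction to $m=0$ via $q(X,D_x)=p(X,D_x)\piket{D_x}{-m}$, the range $s\in(0,\tau)$ from Theorem \ref{beschr:S_1,1}, and $s\in(-\tau,0]$ via symbol smoothing with $q^\sharp$ handled by Lemma \ref{beschr:S_1,delta} and $q^\flat$ by Theorem \ref{beschr:S_1,1}. Your explicit choice of $\gamma$ with $s+\tau\gamma>0$ and the embedding chain $H^s_q\hookrightarrow H^{\sigma-\tau\gamma}_q$, $H^\sigma_q\hookrightarrow H^s_q$ merely makes precise the ``$\gamma\nearrow 1$'' step that the paper only sketches.
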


\begin{lemma}\label{beschr:S^0_1,delta}
Für $p(x,\xi) \in C^\tau S^0_{1,\delta}(\R^n,\R^n)$ gilt
\begin{alignat*}{4}
p(X,D_x) : & H^s_q(\R^n) &\rightarrow& H^s_q(\R^n) \textrm{ für alle } s \in \left(- (1-\delta)\tau , \tau \right) , \\
           & C^s_*(\R^n) &\rightarrow& C^s_*(\R^n) \textrm{ für alle } s \in \left( 0 , \tau \right) .
\end{alignat*}
\begin{proof}
Erhalte $p^\flat(x,\xi) \in C^\tau S^{-(\gamma-\delta)\tau}_{1,\gamma}(\R^n,\R^n)$ und $p^\sharp(x,\xi) \in S^0_{1,\gamma}(\R^n\times\R^n)$ für $\gamma \in (\delta, 1)$ durch die Symbolglättung.
Wende Theorem \ref{beschr:S_1,1} auf $p^\flat(x,\xi)$ an ($m \leftarrow -(\gamma-\delta)\tau, s \in (0,\tau$) und erhalte
\begin{align*}
p^\flat(X,D_x) : & H^{\sigma - (\gamma - \delta)\tau}_q(\R^n)  \rightarrow H^\sigma_q(\R^n)  \textrm{ für alle } \sigma \in (0,\tau) , \\
                 & C^{\sigma - (\gamma - \delta)\tau}_*(\R^n)  \rightarrow C^\sigma_*(\R^n)  \textrm{ für alle } \sigma \in \left((\gamma - \delta)\tau,\tau\right) , \\
				 & C^0_*(\R^n)  \rightarrow C^\sigma_*(\R^n) \textrm{ für alle } \sigma \in (0, (\gamma-\delta)\tau) .
\end{align*}
Wegen $(\gamma-\delta)\tau > 0$ sind nach (\ref{mikrolokal:einbettung}) die Räume $H^{\sigma}_q(\R^n)$ bzw. $C^\sigma_*(\R^n)$
in $H^{\sigma - (\gamma - \delta)\tau}_q(\R^n)$ bzw. $C^{\sigma - (\gamma - \delta)\tau}_*(\R^n)$ natürlich einbettet,
also können wir schreiben:
\begin{align*}
p^\flat(X,D_x) : & H^{\sigma - (\gamma - \delta)\tau}_q(\R^n) \rightarrow  H^{\sigma - (\gamma - \delta)\tau}_q(\R^n) \textrm{ für alle } \sigma \in (0,\tau) , \\
                 & H^{\sigma}_q(\R^n) \rightarrow H^\sigma_q(\R^n) \textrm{ für alle } \sigma \in \left(0 , \tau \right) , \\
                 & C^{\sigma}_*(\R^n) \rightarrow C^\sigma_*(\R^n) \textrm{ für alle } \sigma \in \left( 0 , \tau \right) ,
\end{align*}
und damit erfüllt $p(x,\xi)$ die Behauptung.
\end{proof}
\end{lemma}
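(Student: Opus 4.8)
The plan is to reduce everything to results already in hand, via the symbol smoothing of Subsection~\ref{sec:smoothing}. Fix for the moment a parameter $\gamma \in (\delta,1)$ and apply Theorems~\ref{thm:smoothingSharp} and~\ref{thm:smoothingFlat} to split $p = p^\sharp + p^\flat$ with $p^\sharp \in S^0_{1,\gamma}(\R^n\times\R^n)$ and $p^\flat \in C^\tau S^{-(\gamma-\delta)\tau}_{1,\gamma}(\R^n,\R^n)$. The smooth part is the easy one: since $\gamma<1$, Lemma~\ref{beschr:S_1,delta}, applied to the microlocalizable families $\menge{H^s_q(\R^n)}_{s\in\R}$ and $\menge{C^s_*(\R^n)}_{s\in(0,\infty)}$, already gives $p^\sharp(X,D_x): H^s_q(\R^n) \to H^s_q(\R^n)$ for every $s\in\R$ and $p^\sharp(X,D_x): C^s_*(\R^n) \to C^s_*(\R^n)$ for every $s>0$.

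The work is all in the remainder $p^\flat$. Because $\gamma<1$ we have the inclusion $C^\tau S^{-(\gamma-\delta)\tau}_{1,\gamma} \subset C^\tau S^{-(\gamma-\delta)\tau}_{1,1}$, so Theorem~\ref{beschr:S_1,1} applies with $m = -(\gamma-\delta)\tau$ and yields
$$ p^\flat(X,D_x): H^{\sigma-(\gamma-\delta)\tau}_q(\R^n) \to H^\sigma_q(\R^n), \qquad p^\flat(X,D_x): C^{\sigma-(\gamma-\delta)\tau}_*(\R^n) \to C^\sigma_*(\R^n) $$
for $\sigma\in(0,\tau)$ (the Hölder--Zygmund line only where the source index stays positive). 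Now I would use the embeddings $\psi_{\tau,t}: X^\tau \hookrightarrow X^t$ of Lemma~\ref{lemma:micro1}(b): since $(\gamma-\delta)\tau>0$ one has $H^\sigma_q \hookrightarrow H^{\sigma-(\gamma-\delta)\tau}_q$ (and likewise in the $C^._*$ scale). Composing the inclusion \emph{after} $p^\flat(X,D_x)$ converts the order-shifted statement into boundedness of $p^\flat(X,D_x)$ on $H^t_q$ for $t=\sigma-(\gamma-\delta)\tau$, i.e.\ for $t\in(-(\gamma-\delta)\tau,\;\tau-(\gamma-\delta)\tau)$; composing it \emph{before} $p^\flat(X,D_x)$ instead gives boundedness on $H^\sigma_q$ for $\sigma\in(0,\tau)$. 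These two intervals overlap (because $\tau-(\gamma-\delta)\tau>0$) and cover $(-(\gamma-\delta)\tau,\tau)$; the same bookkeeping in the Hölder--Zygmund scale, assembled from the corresponding output of Theorem~\ref{beschr:S_1,1} together with the embeddings, covers $(0,\tau)$.

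Putting the two pieces back together, $p(X,D_x)=p^\sharp(X,D_x)+p^\flat(X,D_x)$ is bounded on $H^s_q(\R^n)$ for $s\in(-(\gamma-\delta)\tau,\tau)$ and on $C^s_*(\R^n)$ for $s\in(0,\tau)$. The Hölder--Zygmund claim is then complete (any $\gamma\in(\delta,1)$ works). For the Bessel-potential claim one removes the auxiliary $\gamma$: given $s\in(-(1-\delta)\tau,\tau)$, first pick $\gamma\in(\delta,1)$ close enough to $1$ that $-(\gamma-\delta)\tau<s$, then run the decomposition above for that $\gamma$; since $-(\gamma-\delta)\tau\nearrow-(1-\delta)\tau$ as $\gamma\nearrow1$, the union of the admissible intervals is exactly $(-(1-\delta)\tau,\tau)$. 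I expect the main obstacle to be precisely this index bookkeeping --- tracking which order intervals survive the shift by $-(\gamma-\delta)\tau$ and which survive the embeddings --- rather than any analytic estimate, all of which are delegated to Theorems~\ref{beschr:S_1,1}, \ref{thm:smoothingSharp}, \ref{thm:smoothingFlat} and Lemma~\ref{beschr:S_1,delta}; in particular one has to notice that the negative Bessel-potential range is only reached in the limit $\gamma\to1$ after $s$ has been fixed in advance.
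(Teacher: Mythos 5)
Your argument is essentially the paper's own: symbol smoothing into $p^\sharp\in S^0_{1,\gamma}$ and $p^\flat\in C^\tau S^{-(\gamma-\delta)\tau}_{1,\gamma}$, Lemma \ref{beschr:S_1,delta} for the smooth part, Theorem \ref{beschr:S_1,1} for the remainder, and the embeddings $\psi_{\tau,t}$ of Lemma \ref{lemma:micro1} to turn the order-shifted boundedness into boundedness on a fixed space; you are in fact more explicit than the paper about the quantifier order (fix $s$, then choose $\gamma$ close to $1$), which is exactly the step the paper leaves implicit here and carries out in Theorem \ref{beschr:Stau_1,0} via $\gamma\nearrow 1$. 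The only slip is the parenthetical claim that \emph{any} $\gamma\in(\delta,1)$ settles the Hölder--Zygmund case: there your two intervals are $\left(0,\tau-(\gamma-\delta)\tau\right)$ and $\left((\gamma-\delta)\tau,\tau\right)$, which cover $(0,\tau)$ only when $(\gamma-\delta)\tau<\tau-(\gamma-\delta)\tau$, i.e.\ $\gamma-\delta<\tfrac12$; for $\gamma-\delta\ge\tfrac12$ a middle segment is missed. This is harmless, since the same per-$s$ choice you already use for the Bessel scale (given $s\in(0,\tau)$, pick $\gamma$ with $(\gamma-\delta)\tau<s$, or simply fix $\gamma-\delta<\tfrac12$ once and for all, the conclusion being $\gamma$-independent) closes it — note the paper instead inserts the somewhat ad hoc line $C^0_*(\R^n)\rightarrow C^\sigma_*(\R^n)$ for small $\sigma$, which your embedding bookkeeping avoids.
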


\begin{cor}\label{beschr:S_1gamma}
Für $p(x,\xi) \in C^\tau S^m_{1,\delta}(\R^n,\R^n)$ gilt 
\begin{alignat*}{4}
p(X,D_x) : & H^{s+m}_q(\R^n) &\rightarrow& H^s_q(\R^n) \textrm{ für alle } s \in \left(- (1-\delta)\tau , \tau \right) , \\
           & C^{s+m}_*(\R^n) &\rightarrow& C^s_*(\R^n) \textrm{ für alle } s \in \left( 0 , \tau \right) .
\end{alignat*}
\begin{proof}
Das Symbol $p(x,\xi) \in C^\tau S^m_{1,\delta}(\R^n,\R^n)$ lässt sich durch zwei Symbole 
$p_1(x,\xi) \in C^\tau S^0_{1,\delta}(\R^n,\R^n), p_2(x,\xi) \in C^\tau S^m_{1,0}(\R^n,\R^n)$ mit
$p(x,\xi) = p_1(x,\xi) p_2(x,\xi)$ repräsentieren.
Wenden wir Theorem \ref{beschr:S_1,delta} auf $p_1$ und Lemma \ref{beschr:S^0_1,delta} auf $p_2$ an, erhalten wir sofort die Behauptung.
\end{proof}
\end{cor}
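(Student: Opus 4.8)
The plan is to split the symbol into an order-zero factor carrying the $x$-roughness and a smooth Fourier multiplier carrying the order $m$, and then to assemble the two boundedness statements already at our disposal. Concretely, I would write $p(x,\xi) = p_1(x,\xi)\,p_2(x,\xi)$ with $p_1(x,\xi) := p(x,\xi)\,\piket{\xi}{-m}$ and $p_2(x,\xi) := \piket{\xi}{m}$. By Lemma \ref{lemma:piketTech} the symbol $\xi\mapsto\piket{\xi}{-m}$ lies in $S^{-m}_{1,0}(\R^n)$, hence in $C^\tau S^{-m}_{1,0}(\R^n,\R^n)$ for every $\tau$, so the product rule for the classes $C^\tau S^m_{\rho,\delta}$ (the Corollary immediately following Corollary \ref{cor:IntroAbleitung}) yields $p_1 \in C^\tau S^{0}_{1,\delta}(\R^n,\R^n)$, while $p_2 \in S^m_{1,0}(\R^n) \subset C^\tau S^m_{1,0}(\R^n,\R^n)$.

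The second step is to observe that because $p_2$ is independent of $x$, the operator $p_2(X,D_x) = \piket{D_x}{m}$ is a pure Fourier multiplier, so that $p(X,D_x) = p_1(X,D_x)\circ\piket{D_x}{m}$ holds \emph{exactly}, with no lower-order remainder: for $u\in\Schwarzi$,
$$ p_1(X,D_x)\bigl(\piket{D_x}{m}u\bigr)(x) = \int e^{ix\cdot\xi}\,p_1(x,\xi)\,\piket{\xi}{m}\,\hat u(\xi)\,\dslash\xi = \int e^{ix\cdot\xi}\,p(x,\xi)\,\hat u(\xi)\,\dslash\xi = p(X,D_x)u(x). $$
Equivalently, in the composed symbol $p_1\#p_2$ of Lemma \ref{thm:simplifyComposition} every term containing $D_x^\alpha p_2$ with $\alpha\neq 0$ vanishes, so the asymptotic expansion terminates at $p_1 p_2$.

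Then I would chase the two mapping properties through the factorization. The order lift $\piket{D_x}{m} = p_2(X,D_x)$ maps $H^{s+m}_q(\R^n)\to H^s_q(\R^n)$ for every $s\in\R$, directly from the definition of the Bessel potential norm, and $C^{s+m}_*(\R^n)\to C^s_*(\R^n)$ whenever $s+m>0$, by Definition \ref{def:microlocal}.\ref{def:microlocal:D} for the microlocalizable family $\menge{C^s_*(\R^n)}_{s\in(0,\infty)}$ (alternatively, apply Theorem \ref{beschr:Stau_1,0} to $p_2$). On the other side, Lemma \ref{beschr:S^0_1,delta} applied to $p_1\in C^\tau S^0_{1,\delta}(\R^n,\R^n)$ gives $p_1(X,D_x):H^s_q(\R^n)\to H^s_q(\R^n)$ for $s\in(-(1-\delta)\tau,\tau)$ and $p_1(X,D_x):C^s_*(\R^n)\to C^s_*(\R^n)$ for $s\in(0,\tau)$. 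Composing the two steps yields $p(X,D_x):H^{s+m}_q(\R^n)\to H^s_q(\R^n)$ for $s\in(-(1-\delta)\tau,\tau)$ and $p(X,D_x):C^{s+m}_*(\R^n)\to C^s_*(\R^n)$ for $s\in(0,\tau)$ (reading the latter only for those $s$ with $s+m>0$, so that the source space is defined, exactly as in Theorem \ref{beschr:Stau_1,0}), which is the assertion.

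I do not expect a genuine obstacle: once the factorization $p = p_1 p_2$ is in place the argument is pure bookkeeping over the scales $\{H^s_q\}$ and $\{C^s_*\}$. The two points that deserve a moment's care are, first, verifying that $p_1 = p\,\piket{\hold}{-m}$ really stays in $C^\tau S^0_{1,\delta}$ — which is exactly the content of the product rule just cited and costs only Leibniz' rule together with Lemma \ref{lemma:piketTech} — and, second, making sure the order-raising multiplier $\piket{D_x}{m}$ is bounded between the asserted spaces on the Hölder-Zygmund scale as well as on the Bessel potential scale; both are part of the microlocalizability framework set up in Subsection \ref{sec:psdointro}.
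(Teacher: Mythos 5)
Your proof is correct and follows essentially the same route as the paper's: factor $p = p_1\,p_2$ with $p_1 = p\,\piket{\xi}{-m} \in C^\tau S^0_{1,\delta}(\R^n,\R^n)$ absorbing the roughness at order zero and $p_2 = \piket{\xi}{m}$ the order-raising Fourier multiplier, then compose Lemma \ref{beschr:S^0_1,delta} with the boundedness of $\piket{D_x}{m}$. You make explicit the one point the paper leaves unstated but which is essential for the argument to close, namely that since $p_2$ is independent of $x$ the identity $p(X,D_x) = p_1(X,D_x)\circ p_2(X,D_x)$ holds exactly, so no remainder from a composition calculus needs to be controlled.
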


\subsection{Symbolkomposition im nicht-glatten Fall}\label{sec:composition}

Für die Komposition zweier Symbole haben wir bis jetzt immer vorausgesetzt, dass eines der beiden glatt sein muss.
Wir wollen nun mit Hilfe der asymptotischen Entwicklung aus Lemma \ref{thm:simplifyComposition} ein Resultat erhalten, 
falls wir die Komposition zweier nicht-glatter Symbole bilden. 
Dazu müssen wir die Entwicklung bis zum Hölder-Stetigkeitsgrad des zweiten Symbols abbrechen und den Restterm betrachen.
Wir werden feststellen, dass der Restterm ein Operator mit regulierender Eigenschaft bzgl. der Bessel-Potential-Räume ist.

\begin{konv}
Für 
$
k \in \N_0, 
p_i \in C^{\tau_i} S^{m_i}_{1,0}(\R^n,\R^n),
\tau_i > 0,
m_i \in \R, i \in \menge{1,2}$
bezeichnen wir
$$
(p_1 \#_k p_2)(x,\xi) := \sum_{\abs{\alpha} \le k} \frac{1}{\alpha!} \partial^\alpha_\xi p_1(x,\xi) D^\alpha_x p_2(x,\xi) \textrm{ für } k < \tau_2 ,
$$
und
$
R_\theta(p_1,p_2) := p_1(X,D_x) p_2(X,D_x) - (p_1 \#_{\gauss{\theta}} p_2)(X,D_x) \textrm{ für } \theta \in [0,\infty) .$
\end{konv}

Wir folgern nun die Kompositionsbedingungen für nicht-glatte Symbole aus \cite{AbelsBoundary}:
\begin{thm}
Seien 
$
p_i \in C^{\tau_i} S^{m_i}_{1,0}(\R^n,\R^n),
\tau_i > 0, m_i \in \R, i \in \menge{1,2}$.
Setze $\tau := \min\menge{ \tau_1, \tau_2 - \theta }$.
Dann gilt für alle $s \in (-\tau,\tau)$ und $\theta \in (0, \tau_2)$ mit
$s - \theta > - \tau_2$ und $-\tau_2 + \theta < s + m_1 < \tau_2$, 
dass
\begin{equation}
R_\theta(p_1,p_2) : H^{s+m_1+m_2-\theta}_q(\R^n) \rightarrow H^s_q(\R^n)
\end{equation}
ein beschränkter Operator ist.
\begin{proof}
Sei $\gamma := \frac{\theta}{\tau_2} \in (0,1)$. 
Glätte $p_2$ mit $\gamma$ und erhalte
$p_2(x,\xi) = p_2^\sharp(x,\xi) + p_2^\flat(x,\xi)$ mit 
$p_2^\sharp(x,\xi) \in S^{m_2}_{1,\gamma}(\R^n\times\R^n)$ und $p_2^\flat(x,\xi) \in C^{\tau_2} S^{m_2-\theta}_{1,\gamma}(\R^n,\R^n)$.
Genauer ist nach Theorem \ref{thm:smoothingSharp} für jeden Multiindex $\beta \in \N^n_0$
$$\partial^\beta_x p_2^\sharp(x,\xi) \in
\begin{cases}
S^{m_2}_{1,\gamma}(\R^n\times\R^n) & \abs{\beta} \le \gauss{\tau_2},\\
S^{m_2-\gamma(\tau_2-\abs{\beta})}_{1,\gamma}(\R^n\times\R^n) & \abs{\beta} > \gauss{\tau_2}.
\end{cases}
$$
Betrachte $ p_1(X,D_x) p_2(X,D_x) = p_1(X,D_x)p_2^\sharp(X,D_x) + p_1(X,D_x) p_2^\flat(X,D_x) $.
Zunächst schließen wir aus Folgerung \ref{beschr:S_1gamma}, dass
$$
p_1(X,D_x) p_2^\flat(X,D_x) : H^{s+m_1+m_2-\theta}_q(\R^n) \rightarrow H^s_q(\R^n)
$$
ein beschränkter Operator ist, da nach Voraussetzung $\abs{s} < \tau \le \tau_1$ und
$$-\tau_2(1 - \gamma) = 
-\tau_2 \left( 1-\frac{\theta}{\tau_2} \right) =
-\tau_2 + \theta < s + m_1 < \tau_2
.$$ 
Wir haben damit
\begin{align*}
& p_1(X,D_x) p_2(X,D_x) - p_1(X,D_x)p_2^\sharp(X,D_x) \\
&= p_1(X,D_x)p_2^\flat(X,D_x) : H^{s+m_1+m_2-\theta}_q(\R^n) \rightarrow H^s_q(\R^n)
.\end{align*}
Für den Operator $p_L(X,D_x) := p_1 \# p_2^\sharp(X,D_x)$ liefert Lemma \ref{thm:simplifyComposition} die asymptotische Entwicklung
$$ p_L(x,\xi) \sim \sum_{\alpha \in \N^n_0} \frac{1}{\alpha!} \partial_\xi^\alpha p_1(x,\xi) D_x^\alpha p_2^\sharp(x,\xi)
.$$
Wir wollen als nächstes zwei Symbole $r^\sharp,r^\flat$ bestimmen, sodass wir aus dieser Entwicklung die Gleichung 
$ p_L(x,\xi) = p_1\#_{\gauss{\theta}}p_2(x,\xi) + r^\sharp(x,\xi) + r^\flat(x,\xi)$ erhalten.
Glätte hierzu $p_1$ und erhalte
$p_1^\sharp \in S^{m_1}_{1.\gamma}(\R^n\times\R^n)$, $p_1^\flat \in C^{\tau_1} S^{m_1 - \gamma\tau_1}_{1,\gamma}(\R^n,\R^n)$.
Definiere
\begin{align*}
p^\sharp(X,D_x) :=& p_1^\sharp(X,D_x) p_2^\sharp(X,D_x) \in \OP{S^{m_1+m_2}_{1,\gamma}(\R^n\times\R^n)} , \textrm{ und } \\
p^\flat(X,D_x) :=& p_1^\flat(X,D_x) p_2^\sharp(X,D_x) \in 
\begin{cases}
\OP{C^\tau S^{m_1+m_2-\gamma\tau_1}_{1,\gamma}}(\R^n,\R^n) & \abs{\alpha} \le \gauss{\tau_2} ,\\
\OP{C^\tau S^{m_1+m_2-\gamma(\tau_1+\tau_2-\abs{\alpha})}_{1,\gamma}}(\R^n,\R^n) & \abs{\alpha} > \gauss{\tau_2} .
\end{cases}
\end{align*}
Für $\alpha > \gauss{\theta}$ machen wir folgende Beobachtungen:
\begin{itemize}
\item Falls sogar $\alpha > \gauss{\tau_2}$ gilt, haben wir $\partial_\xi^\alpha p_1^\sharp(x,\xi) \in S^{m_1-\abs{\alpha}}_{1,\gamma}(\R^n\times\R^n)$ und
$D_x^\alpha p_2^\sharp(x,\xi) \in S^{m_2-\gamma\tau_2+\gamma\abs{\alpha}}_{1,\gamma}(\R^n\times\R^n)$. Also ist
$$
\partial_\xi^\alpha p_1^\sharp(x,\xi) D_x^\alpha p_2^\sharp(x,\xi) \in S^{m_1+m_2-\gamma\tau_2 + \abs{\alpha}\gamma - \abs{\alpha}}_{1,\gamma}(\R^n\times\R^n) = 
S^{m_1+m_2-\theta - (1-\gamma)\abs{\alpha}}_{1,\gamma}(\R^n\times\R^n)
.$$
Ansonsten ist $D_x^\alpha p_2^\sharp(x,\xi) \in S^{m_2}_{1,\gamma}(\R^n\times\R^n)$ und damit
$$
\partial_\xi^\alpha p_1^\sharp(x,\xi) D_x^\alpha p_2^\sharp(x,\xi) \in S^{m_1+m_2-\abs{\alpha}}(\R^n\times\R^n) \subset S^{m_1+m_2-\theta}(\R^n\times\R^n)
.$$
\item Da $ \partial_\xi^\alpha p_1^\flat(x,\xi) \in C^{\tau_1} S^{m_1-\gamma\tau_1-\abs{\alpha}}_{1,\gamma}(\R^n,\R^n),
D_x^\alpha p_2^\sharp(x,\xi) \in S^{m_2 - \gamma\tau_2 + \gamma\abs{\alpha}}_{1.\gamma}(\R^n\times\R^n) $ ist
$$
\partial_\xi^\alpha p_1^\flat(x,\xi) D_x^\alpha p_2^\sharp(x,\xi) \in C^{\tau_1} S^{m_1+m_2 - \theta - \gamma\tau_1 - (1-\gamma)\abs{\alpha}}_{1,\gamma}(\R^n,\R^n)
.$$
\end{itemize}
Wegen $(1-\gamma)\abs{\alpha} > 0$ finden wir ein 
$$ r^\sharp \in S^{m_1+m_2-\theta}_{1,\gamma}(\R^n\times\R^n) \textrm{ und ein } r^\flat \in C^{\tau_1} S^{m_1+m_2-\theta-\gamma\tau_1}_{1,\gamma}(\R^n,\R^n) ,$$
sodass
$$
p_L(x,\xi) = \sum_{\abs{\alpha} \le \gauss{\theta}} \frac{1}{\alpha!} \partial_\xi^\alpha \left( p_1^\sharp(x,\xi) + p_1^\flat(x,\xi) \right) D_x^\alpha p_2^\sharp(x,\xi) +
r(x,\xi)
$$
mit $r(x,\xi) = r^\sharp(x,\xi) + r^\flat(x,\xi)$ gegeben in Theorem \ref{thm:simplifyAsmytotic}.
Nun folgt wieder mit Lemma \ref{beschr:S_1gamma}, dass für $\tilde{s} \in \R$ mit
\begin{itemize}
\item $-(1-\gamma)\tau_1 < \tilde{s} < \tau_1$
$$
r^\flat(X,D_x) : H^{\tilde{s}+m_1+m_2-\theta}_q(\R^n) \subset H^{\tilde{s}+m_1+m_2-\theta-\gamma\tau_1}_q(\R^n) \rightarrow H^{\tilde{s}}_q(\R^n)
,$$
und
\item $-(1-\gamma)\tau_1 = -\tau_1+\gamma\tau_1 < \tilde{s} + \gamma\tau_1 < \tau_1$ 
$$
r^\flat(X,D_x) : H^{\tilde{s}+m_1+m_2-\theta}_q(\R^n) \rightarrow H^{\tilde{s}+\gamma\tau_1}_q(\R^n) \subset H^{\tilde{s}}_q(\R^n)
.$$
\end{itemize}
Damit gilt für alle
$\abs{\tilde{s}} < \tau_1$, dass
$ r^\flat(X,D_x) : H^{\tilde{s}+m_1+m_2-\theta}_q(\R^n) \rightarrow H^{\tilde{s}}_q(\R^n) $.
Lemma \ref{beschr:S_1,delta} liefert
$$
r^\sharp(X,D_x) : H^{\tilde{s}+m_1+m_2-\theta}_q(\R^n) \rightarrow H^{\tilde{s}}_q(\R^n) \textrm{ für alle } \abs{\tilde{s}} < \tau_1
.$$
Schließlich wollen wir die Symbole der Summe $p_1\#_{\gauss{\theta}}p_2(x,\xi)$ klassifizieren.
Betrachte dazu einen Multiindex $\alpha \in \N^n_0$ mit $\abs{\alpha} \le \gauss{\theta} \le \gauss{\tau_2}$:
Spalte zunächst auf:
$$
\partial_\xi^\alpha p_1(x,\xi) D_x^\alpha p_2^\sharp(x,\xi) =
\partial_\xi^\alpha p_1(x,\xi) D_x^\alpha p_2(x,\xi) -
\partial_\xi^\alpha p_1^\sharp(x,\xi) D_x^\alpha p_2^\flat(x,\xi) -
\partial_\xi^\alpha p_1^\flat(x,\xi) D_x^\alpha p_2^\flat(x,\xi)
.$$
Es gilt nun, die Beschränktheit der letzten beiden Terme zu zeigen:
\begin{itemize}
\item Wegen $\partial_\xi^\alpha p_1^\sharp(x,\xi) \in S^{m_1-\abs{\alpha}}_{1,\gamma}(\R^n\times\R^n)$ und 
$D_x^\alpha p_2^\flat(x,\xi) \in C^{\tau_2-\abs{\alpha}} S^{m_2-\theta+\gamma\abs{\alpha}}_{1,\gamma}(\R^n,\R^n)$
ist
$$
\partial_\xi^\alpha p_1^\sharp(x,\xi) D_x^\alpha p_2^\flat(x,\xi) \in C^{\tau_2-\abs{\alpha}} S^{m_1+m_2 - \theta - (1-\gamma)\abs{\alpha}}_{1,\gamma}(\R^n,\R^n)
.$$
\item Wir erhalten für 
$$\partial_\xi^\alpha p_1^\flat(x,\xi) \in C^{\tau_1} S^{m_1-\gamma\tau_1 - \abs{\alpha}}_{1,\gamma}(\R^n,\R^n) \textrm{ und }
D_x^\alpha p_2^\flat(x,\xi) \in C^{\tau_2-\abs{\alpha}} S^{m_2 - \theta + \gamma\abs{\alpha}}_{1,\gamma}(\R^n,\R^n)$$
die Abbildungseigenschaft
$$
\partial_\xi^\alpha p_1^\flat(x,\xi) D_x^\alpha p_2^\flat(x,\xi) \in C^{\min\left\{ \tau_1, \tau_2 - \abs{\alpha}\right\} } S^{m_1+m_2-\theta-\gamma\tau_1-(1-\gamma)\abs{\alpha}}_{1,\gamma}(\R^n,\R^n)
.$$
Nun ist wegen $\min\left\{ \tau_1, \tau_2 - \abs{\alpha}\right\} \ge \min\left\{ \tau_1, \tau_2 - \gauss{\theta}\right\} = \tau$
und $(1-\gamma)\abs{\alpha} > 0$ bereits
\begin{align*}
C^{\min\left\{ \tau_1, \tau_2 - \abs{\alpha}\right\} } S^{m_1+m_2-\theta-\gamma\tau_1-(1-\gamma)\abs{\alpha}}_{1.\gamma}(\R^n,\R^n) 
&\subset C^\tau S^{m_1+m_2-\theta-\gamma\tau_1-(1-\gamma)\abs{\alpha}}_{1,\gamma}(\R^n,\R^n) \\
&\subset C^\tau S^{m_1+m_2-\theta-\gamma\tau_1}_{1,\gamma}(\R^n,\R^n)
.\end{align*}
\end{itemize}
Betrachte nun $\OPi\left( \partial_\xi^\alpha p_1^\sharp(x,\xi) D_x^\alpha p_2^\flat(x,\xi) \right)$. 
Für $\tilde{s} \in \R$ gilt
\begin{itemize}
\item mit $-(1-\gamma)(\tau_2-\abs{\alpha}) < \tilde{s} < \tau_2 - \abs{\alpha}$
$$
\OPi\left( \partial_\xi^\alpha p_1^\sharp(x,\xi) D_x^\alpha p_2^\flat(x,\xi) \right) : H^{\tilde{s}+m_1+m_2-\theta}_q(\R^n) \subset H^{\tilde{s}+m_1+m_2-\theta-(1-\gamma)\abs{\alpha}}_q(\R^n) \rightarrow H^{\tilde{s}}_q(\R^n)
,$$
\item und mit $-(1-\gamma)(\tau_2-\abs{\alpha}) < \tilde{s} + (1-\gamma)\abs{\alpha} < \tau_2 - \abs{\alpha}$
$$
\OPi\left( \partial_\xi^\alpha p_1^\sharp(x,\xi) D_x^\alpha p_2^\flat(x,\xi) \right) : H^{\tilde{s}+m_1+m_2-\theta}_q(\R^n) \rightarrow H^{\tilde{s}+(1-\gamma)\abs{\alpha}}_q\subset H^{\tilde{s}}_q(\R^n)
.$$
\end{itemize}
Aus $-(1-\gamma)(\tau_2-\abs{\alpha}) \le -(\tau_2-\theta)+(1-\gamma)\abs{\alpha} \le \tau + (1-\gamma)\abs{\alpha}$
erhalten wir für alle $\abs{\tilde{s}} < \tau ( \le \tau_2 - \theta \le \tau_2 - \abs{\alpha} )$ die Abbildungseigenschaft
$$
\OPi\left( \partial_\xi^\alpha p_1^\sharp(x,\xi) D_x^\alpha p_2^\flat(x,\xi) \right) : H^{\tilde{s}+m_1+m_2-\theta}_q(\R^n) \rightarrow H^{\tilde{s}}_q(\R^n)
.$$
Für $\OPi\left( \partial_\xi^\alpha p_1^\flat(x,\xi) D_x^\alpha p_2^\flat(x,\xi) \right)$ und $\tilde{s} \in \R$ mit
\begin{itemize}
\item $-(1-\gamma)\tau < \tilde{s} < \tau$ ist
$$
\OPi\left( \partial_\xi^\alpha p_1^\flat(x,\xi) D_x^\alpha p_2^\flat(x,\xi) \right) : H^{\tilde{s}+m_1+m_2-\theta}_q(\R^n) \subset H^{\tilde{s}+m_1+m_2-\theta-\gamma\tau_1}_q(\R^n) \rightarrow H^{\tilde{s}}_q(\R^n)
,$$
\item bzw. mit $-(1-\gamma)\tau < \tilde{s} + \tau_1\gamma < \tau$ ist
$$
\OPi\left( \partial_\xi^\alpha p_1^\flat(x,\xi) D_x^\alpha p_2^\flat(x,\xi) \right) : H^{\tilde{s}+m_1+m_2-\theta}_q(\R^n) \rightarrow H^{\tilde{s}+\gamma\tau_1}_q(\R^n) \subset H^{\tilde{s}}_q(\R^n)
.$$
\end{itemize}
Insgesamt gilt also für alle $\abs{\tilde{s}} < \tau$, dass
$$
\OPi\left( \partial_\xi^\alpha p_1^\flat(x,\xi) D_x^\alpha p_2^\flat(x,\xi) \right) : H^{\tilde{s}+m_1+m_2-\theta}_q(\R^n) \rightarrow H^{\tilde{s}}_q(\R^n)
.$$
Für alle $\abs{\tilde{s}} < \tau$ ist 
$
( p_1 \#_{\gauss{\theta}} p_2)(X,D_x) : H^{\tilde{s}+m_1+m_2-\theta}_q(\R^n) \rightarrow H^{\tilde{s}}_q(\R^n)
$
und damit
$$
R_\theta(p_1,p_2) : H^{s+m_1+m_2-\theta}_q(\R^n) \rightarrow H^s_q(\R^n)
.$$
\end{proof}
\end{thm}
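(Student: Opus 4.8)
The plan is to reduce the boundedness of $R_\theta(p_1,p_2)$ to the mapping properties of single pseudodifferential operators established in Theorems \ref{beschr:S_1,1} and \ref{beschr:Stau_1,0} and Corollary \ref{beschr:S_1gamma}, by applying the symbol smoothing of Subsection \ref{sec:smoothing} twice together with the asymptotic expansion of Lemma \ref{thm:simplifyComposition}. First I would set $\gamma := \frac{\theta}{\tau_2} \in (0,1)$ and smooth $p_2$ with this $\gamma$, writing $p_2 = p_2^\sharp + p_2^\flat$ with $p_2^\sharp \in S^{m_2}_{1,\gamma}(\R^n\times\R^n)$ --- more precisely, by Theorem \ref{thm:smoothingSharp}, $D_x^\beta p_2^\sharp \in S^{m_2}_{1,\gamma}(\R^n\times\R^n)$ for $\abs{\beta} \le \gauss{\tau_2}$ and $D_x^\beta p_2^\sharp \in S^{m_2-\gamma(\tau_2-\abs{\beta})}_{1,\gamma}(\R^n\times\R^n)$ for $\abs{\beta} > \gauss{\tau_2}$ --- and $p_2^\flat \in C^{\tau_2}S^{m_2-\theta}_{1,\gamma}(\R^n,\R^n)$, using $\tau_2(1-\gamma) = \tau_2 - \theta$. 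Splitting $p_1(X,D_x)p_2(X,D_x) = p_1(X,D_x)p_2^\sharp(X,D_x) + p_1(X,D_x)p_2^\flat(X,D_x)$, the second summand is handled at once: Corollary \ref{beschr:S_1gamma} gives $p_2^\flat(X,D_x) : H^{s+m_1+m_2-\theta}_q(\R^n) \to H^{s+m_1}_q(\R^n)$, since the hypothesis $-\tau_2+\theta < s+m_1 < \tau_2$ is exactly $s+m_1 \in (-(1-\gamma)\tau_2, \tau_2)$, while Theorem \ref{beschr:Stau_1,0} gives $p_1(X,D_x) : H^{s+m_1}_q(\R^n) \to H^s_q(\R^n)$ because $\abs{s} < \tau \le \tau_1$; composing, $p_1(X,D_x)p_2^\flat(X,D_x)$ has the asserted mapping property.

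For the operator $p_L(X,D_x) := p_1(X,D_x)p_2^\sharp(X,D_x)$ I would invoke Lemma \ref{thm:simplifyComposition}, which yields $p_L(x,\xi) \sim \sum_{\alpha}\frac{1}{\alpha!}\partial_\xi^\alpha p_1(x,\xi)\,D_x^\alpha p_2^\sharp(x,\xi)$, and additionally smooth $p_1$ with the same $\gamma$ to obtain $p_1 = p_1^\sharp + p_1^\flat$ with $p_1^\sharp \in S^{m_1}_{1,\gamma}(\R^n\times\R^n)$ and $p_1^\flat \in C^{\tau_1}S^{m_1-\gamma\tau_1}_{1,\gamma}(\R^n,\R^n)$. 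Substituting this into every term of index $\abs{\alpha} > \gauss{\theta}$ and noting that $\partial_\xi^\alpha$ lowers the $\xi$-order by $\abs{\alpha}$ whereas $D_x^\alpha p_2^\sharp$ raises it by at most $\gamma\abs{\alpha}$ (and even lowers it once $\abs{\alpha} > \gauss{\tau_2}$), each such term acquires a strictly positive order gain $(1-\gamma)\abs{\alpha}$. Collecting them by means of Theorem \ref{thm:simplifyAsmytotic} I would produce $r^\sharp \in S^{m_1+m_2-\theta}_{1,\gamma}(\R^n\times\R^n)$ and $r^\flat \in C^{\tau_1}S^{m_1+m_2-\theta-\gamma\tau_1}_{1,\gamma}(\R^n,\R^n)$ with $p_L(x,\xi) = (p_1\#_{\gauss{\theta}}p_2^\sharp)(x,\xi) + r^\sharp(x,\xi) + r^\flat(x,\xi)$. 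Lemma \ref{beschr:S_1,delta} bounds $r^\sharp(X,D_x) : H^{s+m_1+m_2-\theta}_q(\R^n) \to H^s_q(\R^n)$ for all $s$, and Corollary \ref{beschr:S_1gamma}, combined with the embeddings $H^t_q(\R^n) \hookrightarrow H^s_q(\R^n)$ for $s < t$, bounds $r^\flat(X,D_x)$ likewise for every $\abs{s} < \tau_1$, in particular for the $s$ at hand.

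It then remains to compare $p_1\#_{\gauss{\theta}}p_2^\sharp$ with the genuine truncated symbol $p_1\#_{\gauss{\theta}}p_2$: their difference equals $\sum_{\abs{\alpha}\le\gauss{\theta}}\frac{1}{\alpha!}\partial_\xi^\alpha p_1\, D_x^\alpha p_2^\flat$, which I would split once more as $\partial_\xi^\alpha p_1^\sharp\, D_x^\alpha p_2^\flat + \partial_\xi^\alpha p_1^\flat\, D_x^\alpha p_2^\flat$. With $D_x^\alpha p_2^\flat \in C^{\tau_2-\abs{\alpha}}S^{m_2-\theta+\gamma\abs{\alpha}}_{1,\gamma}(\R^n,\R^n)$ from Corollary \ref{cor:IntroAbleitung}, $\partial_\xi^\alpha p_1^\sharp \in S^{m_1-\abs{\alpha}}_{1,\gamma}(\R^n\times\R^n)$ and $\partial_\xi^\alpha p_1^\flat \in C^{\tau_1}S^{m_1-\gamma\tau_1-\abs{\alpha}}_{1,\gamma}(\R^n,\R^n)$, each product is a symbol of order at most $m_1+m_2-\theta-(1-\gamma)\abs{\alpha}$ (with the further gain $-\gamma\tau_1$ in the second case) and of Hölder regularity at least $\min\{\tau_1,\tau_2-\abs{\alpha}\} \ge \tau$; Corollary \ref{beschr:S_1gamma} and Lemma \ref{beschr:S_1,delta}, together with the relevant embeddings of Bessel potential spaces, then bound the corresponding operators $H^{\tilde s+m_1+m_2-\theta}_q(\R^n) \to H^{\tilde s}_q(\R^n)$ for all $\abs{\tilde s} < \tau$. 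Summing all contributions shows $(p_1\#_{\gauss{\theta}}p_2)(X,D_x) : H^{s+m_1+m_2-\theta}_q(\R^n) \to H^s_q(\R^n)$, so that $R_\theta(p_1,p_2) = p_1(X,D_x)p_2(X,D_x) - (p_1\#_{\gauss{\theta}}p_2)(X,D_x)$ inherits this mapping property. The main difficulty I expect is the bookkeeping of the admissible Sobolev exponents: every passage through an order-lowered $C^\tau$-symbol of parameter $\gamma$ via Corollary \ref{beschr:S_1gamma}, followed by an embedding $H^\sigma_q \hookrightarrow H^{\sigma'}_q$, narrows the window of exponents, and one must check that the hypotheses $s - \theta > -\tau_2$ and $-\tau_2+\theta < s+m_1 < \tau_2$ are precisely what keep every intermediate operator inside its boundedness range so that the target exponent $s$ survives all the reductions.
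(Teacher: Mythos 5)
Your proposal is correct and follows essentially the same route as the paper: smoothing $p_2$ (and then $p_1$) with $\gamma=\theta/\tau_2$, treating $p_1(X,D_x)p_2^\flat(X,D_x)$ directly via Folgerung \ref{beschr:S_1gamma} und Theorem \ref{beschr:Stau_1,0}, expanding $p_1\# p_2^\sharp$ nach Lemma \ref{thm:simplifyComposition}/Theorem \ref{thm:simplifyAsmytotic} in $p_1\#_{\gauss{\theta}}p_2^\sharp + r^\sharp + r^\flat$, und anschließend die Differenz zu $p_1\#_{\gauss{\theta}}p_2$ über die Terme $\partial_\xi^\alpha p_1^\sharp D_x^\alpha p_2^\flat$ und $\partial_\xi^\alpha p_1^\flat D_x^\alpha p_2^\flat$ mit den Abbildungssätzen und Einbettungen abzuschätzen. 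Ihre explizite Zweischritt-Komposition für $p_1(X,D_x)p_2^\flat(X,D_x)$ macht dabei nur präzise, was die Arbeit mit dem Verweis auf Folgerung \ref{beschr:S_1gamma} implizit meint; ein inhaltlicher Unterschied besteht nicht.
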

 
\section{Glatte Koordinatentransformation}
\subsection{Symbole in $(x,\xi,y)$-Form}

\begin{definition}
Sei $0 \le \delta \le \rho \le 1, \delta < 1$.
Ein Symbol $p : \R^n \times \R^n \times \R^n \rightarrow \C$ gehört der Klasse $C^\tau S^{m}_{\rho,\delta}(\R^n, \R^n \times \R^n)$ an, 
falls $p(x,\xi,y)$ zum Grade $\tau$ Hölder-stetig bezüglich $x\in\R^n$ und glatt bezüglich $(y,\xi) \in \R^{2n}$ ist, 
d.h. $p(\hold,\xi,y) \in C^\tau(\R^n)$ und $p(x,\hold,\hold) \in \Stetig{\infty}{\R^n\times\R^n}$,
und es zu jedem Multiindex $\alpha,\beta\in\N^n_0$ eine Konstante $C_{\alpha\beta} > 0$ unabhängig von $y,\xi\in\R^n$ gibt, sodass
$$ \norm{ \partial_\xi^\alpha \partial_y^\beta p(\hold,\xi,y) }{C^\tau(\R^n)} \le C_{\alpha\beta} \piket{\xi}{m-\rho\abs{\alpha}+\delta(\tau+\abs{\beta})}, $$
$$ \abs{ \partial_\xi^\alpha \partial_y^\beta p(x,\xi,y) } \le C_{\alpha\beta} \piket{\xi}{m-\rho\abs{\alpha}+\delta\abs{\beta}} \textrm{ für alle } x \in \R^n. $$
Den zugehörigen Operator $p(X,D_x,Y) : C^\infty_0(\R^n) \rightarrow C^\tau(\R^n)$ definieren wir durch
\begin{equation}\label{equ:operatorXXiY}
p(X,D_x,Y) u(x) := \osint e^{i(x-y)\cdot\xi} p(x,\xi,y) u(y) dy \dslash\xi ,
\end{equation}
und bezeichnen diesen Operator als Pseudodifferentialoperator in $(x,\xi,y)$-Form.
Wir fassen diejenigen Symbole, die insbesondere bezüglich $x$ glatt sind, unter der Klasse
$$ S^{m}_{\rho,\delta}(\R^n \times \R^n \times \R^n) = \bigcap_{\tau > 0} C^\tau S^{m}_{\rho,\delta}(\R^n, \R^n \times \R^n) $$
zusammen.
\end{definition}
\nomenclature[s]{$C^\tau S^m_{\rho,\delta}(\R^n, \R^n \times \R^n)$}{Klasse der Symbole in $(x,\xi,y)$-Form mit Hölder-Stetigkeitsgrad $\tau$}
\nomenclature[s]{$S^m_{\rho,\delta}(\R^n \times \R^n \times \R^n)$}{Klasse glatter Symbole in $(x,\xi,y)$-Form}

\begin{remark}
Der Operator eines Symbols $p(x,\xi,y) \in C^\tau S^m_{\rho,\delta}(\R^n, \R^n \times \R^n)$ definiert durch (\ref{equ:operatorXXiY})
besitzt die gleichen Abbildungseigenschaften wie ein Operator in $x$-Form.
\begin{proof}
Wir erhalten analog zu Lemma \ref{lemma:introOszi} für alle $u \in \Beschri$ die Idenfizierung
$$ p(X,D_x,Y) u(x) = \osint e^{-ix'\cdot\eta} p(x,\xi, x+x') u(x+x') dx' \dslash\xi = p_L(X,D_x) u(x) ,$$
wobei $p_L$ das vereinfachte Symbol aus Theorem \ref{thm:simplify} ist. 
Also übertragen sich die Abbildungseigenschaften des Operators $p_L(X,D_x)$ in $x$-Form auf den Operator in $(x,\xi,y)$-Form.
\end{proof}
\end{remark}

\begin{remark}
Im besonderen Fall $p(x,\xi,y) = p(x,\xi) \in C^\tau S^m_{\rho,\delta}(\R^n,\R^n)$ erhalten wir für $u \in \Schwarzi$ wegen 
$(\xi,y) \mapsto p(x,\xi,y) \in \Oszillatory{\delta,0}{m}$ für $x \in \R^n$ fest und wegen $\xi \mapsto p(x,\xi) \hat{u}(\xi) \in L^1(\R^n)$
aus dem oszillatorischen Integral (\ref{equ:operatorXXiY}) nach Folgerung \ref{cor:osziLebesgue} den bekannten Operator in $x$-Form
$$ p(X,D_x,X')u(x) = \int e^{ix\cdot\xi} p(x,\xi) \hat{u}(\xi) \dslash\xi = p(X,D_x) u(x) .$$
\end{remark}

\begin{lemma}
Für ein Symbol $p \in C^\tau S^m_{\rho,\delta}(\R^n, \R^n \times \R^n)$ ist das Symbol
$$ \partial_\xi^\alpha \partial_y^\beta \partial_x^\mu p(x,\xi,y) \in C^{\tau-\abs{\mu}} S^{m-\rho{\abs{\alpha}}+\delta(\abs{\beta}+\abs{\mu})}_{\rho,\delta}(\R^n, \R^n \times \R^n)$$
für alle Multiindizes $\alpha,\beta,\mu \in \N^n_0$ mit $\abs{\mu} \le \tau$.
\begin{proof} Folgt analog zu Folgerung \ref{cor:IntroAbleitung}.
\end{proof}
\end{lemma}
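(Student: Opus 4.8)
The plan is to copy the argument of Folgerung~\ref{cor:IntroAbleitung}, carrying along the extra variable $y$ and the extra multiindex. Write $\alpha_0,\beta_0,\mu_0\in\N^n_0$ for the fixed multiindices of the statement (so $\abs{\mu_0}\le\tau$), and set
$$q(x,\xi,y):=\partial_\xi^{\alpha_0}\partial_y^{\beta_0}\partial_x^{\mu_0}p(x,\xi,y),\qquad m':=m-\rho\abs{\alpha_0}+\delta(\abs{\beta_0}+\abs{\mu_0}).$$
Since $\abs{\mu_0}\le\tau$ and $p(\hold,\xi,y)\in C^\tau(\R^n)$, the derivative $\partial_x^{\mu_0}p$ exists with $q(\hold,\xi,y)\in C^{\tau-\abs{\mu_0}}(\R^n)$, and $q(x,\hold,\hold)\in\Stetig{\infty}{\R^n\times\R^n}$ because $p(x,\hold,\hold)$ is smooth and all occurring mixed partials are continuous, so that the order of differentiation may be interchanged. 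It then remains to check, for arbitrary $\alpha,\beta\in\N^n_0$, the two defining estimates of the class $C^{\tau-\abs{\mu_0}}S^{m'}_{\rho,\delta}(\R^n,\R^n\times\R^n)$ for $q$, namely $\abs{\partial_\xi^\alpha\partial_y^\beta q(x,\xi,y)}\le C\piket{\xi}{m'-\rho\abs{\alpha}+\delta\abs{\beta}}$ and $\norm{\partial_\xi^\alpha\partial_y^\beta q(\hold,\xi,y)}{C^{\tau-\abs{\mu_0}}(\R^n)}\le C\piket{\xi}{m'-\rho\abs{\alpha}+\delta(\tau-\abs{\mu_0}+\abs{\beta})}$.

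The one ingredient not already built into the class definition (which, unlike the $x$-form, only postulates the $C^0$- and the $C^\tau$-bound) is an interpolated estimate in between. First I would observe that $\norm{\partial_\xi^{\alpha'}\partial_y^{\beta'}p(\hold,\xi,y)}{C^0(\R^n)}\le C\piket{\xi}{m-\rho\abs{\alpha'}+\delta\abs{\beta'}}$ and $\norm{\partial_\xi^{\alpha'}\partial_y^{\beta'}p(\hold,\xi,y)}{C^\tau(\R^n)}\le C\piket{\xi}{m-\rho\abs{\alpha'}+\delta(\tau+\abs{\beta'})}$ combine, via Theorem~\ref{interpol:cor} applied with interpolation parameter $\theta=t/\tau$, to
$$\norm{\partial_\xi^{\alpha'}\partial_y^{\beta'}p(\hold,\xi,y)}{C^t(\R^n)}\le C\piket{\xi}{m-\rho\abs{\alpha'}+\delta(\abs{\beta'}+t)}\qquad\text{for all }0\le t\le\tau,$$
the exponent being the affine interpolant of the two endpoint exponents at height $t/\tau$. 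Applying this with $t=\abs{\mu_0}$, $\alpha'=\alpha+\alpha_0$, $\beta'=\beta+\beta_0$ and using $\abs{\partial_\xi^\alpha\partial_y^\beta q(x,\xi,y)}\le\norm{\partial_\xi^{\alpha+\alpha_0}\partial_y^{\beta+\beta_0}p(\hold,\xi,y)}{C^{\abs{\mu_0}}(\R^n)}$ yields the pointwise bound for $q$, since $m-\rho\abs{\alpha+\alpha_0}+\delta(\abs{\beta+\beta_0}+\abs{\mu_0})=m'-\rho\abs{\alpha}+\delta\abs{\beta}$.

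For the Hölder bound I would proceed exactly as in Folgerung~\ref{cor:IntroAbleitung}: for $\abs{\mu_0}\le t\le\tau$ estimate
$$\norm{\partial_\xi^{\alpha+\alpha_0}\partial_y^{\beta+\beta_0}\partial_x^{\mu_0}p(\hold,\xi,y)}{C^{t-\abs{\mu_0}}(\R^n)}\le\norm{\partial_\xi^{\alpha+\alpha_0}\partial_y^{\beta+\beta_0}\partial_x^{\mu_0}p(\hold,\xi,y)}{C^{\gauss{t}-\abs{\mu_0}}(\R^n)}+\max_{\abs{\gamma}=\gauss{t}}\hnorm{\partial_\xi^{\alpha+\alpha_0}\partial_y^{\beta+\beta_0}\partial_x^{\gamma}p(\hold,\xi,y)}{t-\gauss{t}},$$
and bound the right-hand side by $C_{\mu_0}\norm{\partial_\xi^{\alpha+\alpha_0}\partial_y^{\beta+\beta_0}p(\hold,\xi,y)}{C^t(\R^n)}\le C\piket{\xi}{m-\rho\abs{\alpha+\alpha_0}+\delta(\abs{\beta+\beta_0}+t)}$ using the interpolated estimate. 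With $s:=t-\abs{\mu_0}\in[0,\tau-\abs{\mu_0}]$ one checks that $m-\rho\abs{\alpha+\alpha_0}+\delta(\abs{\beta+\beta_0}+t)=m'-\rho\abs{\alpha}+\delta(\abs{\beta}+s)$, which gives $\norm{\partial_\xi^\alpha\partial_y^\beta q(\hold,\xi,y)}{C^s(\R^n)}\le C\piket{\xi}{m'-\rho\abs{\alpha}+\delta(\abs{\beta}+s)}$ for every $s\in[0,\tau-\abs{\mu_0}]$, and the choice $s=\tau-\abs{\mu_0}$ is the desired estimate. I expect the main obstacle to be purely organizational: one has to track the three multiindices through the computation and use the convention $C^\tau(\R^n)=C^{\gauss{\tau},\tau-\gauss{\tau}}(\R^n)$ carefully, the point being that the $x$-derivatives of $\partial_x^{\mu_0}p$ appearing above all have total order at most $\gauss{\tau}\le\tau$, which is guaranteed precisely by $\abs{\mu_0}\le\tau$; the rest runs identically to the $x$-form case.
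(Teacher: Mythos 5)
Your proposal is correct and follows exactly the approach the paper signals with ``Folgt analog zu Folgerung~\ref{cor:IntroAbleitung}'': read off the $C^0$ and $C^{\tau-\abs{\mu}}$ bounds for the derived symbol from the $C^0$ and $C^\tau$ bounds of $p$, routed through the $C^t$-norms for intermediate $0\le t\le\tau$. The one genuinely new wrinkle you identify --- that the $(x,\xi,y)$-class, unlike the $x$-form, postulates only the endpoint bounds $t=0$ and $t=\tau$, so the intermediate $\norm{\hold}{C^t(\R^n)}$-estimate has to be recovered via Theorem~\ref{interpol:cor} --- is correct and is exactly the observation already recorded in the remark following the definition of $C^\tau S^m_{\rho,\delta}(\R^n,\R^n)$; the exponent bookkeeping then closes up as you compute. (A cosmetic point: the referenced Folgerung assumes $\abs{\beta}<\tau$ strictly, while the present lemma allows $\abs{\mu}\le\tau$; in the boundary case $\abs{\mu}=\tau$ one only obtains the $C^0$-control, which is consistent with the statement but worth noting.)
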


\subsection{Kerndarstellung von Pseudodifferentialoperatoren}\label{sec:kern}

Eine Aussage aus der Theorie der nuklearen Räume ist die Identifizierung des Raumes der beschränkten linearen Abbildungen von $\Schwarzi$ nach $\SchwarziDual$ mit
dem Dualraum $\SchwarzDual{\R^n\times\R^n}$, beschrieben in \cite[Collorary after Theorem 51.6]{Treves}.
Wir finden also zu jeder beschränkten linearen Abbildung $P : \Schwarzi \rightarrow \BeschrZ \subset \SchwarziDual$ ein eindeutig bestimmtes
$k \in \SchwarzDual{\R^n\times\R^n}$, sodass 
\begin{equation}\label{equ:schwartzKern}
\dualskp{P u}{v}{\SchwarziDual}{\Schwarzi} = \dualskp{k}{u \otimes v}{\SchwarzDual{\R^n\times\R^n}}{\Schwarz{\R^n\times\R^n}} .
\end{equation}
Wir bezeichnen $k$ als Schwartz-Kern des Operators $P$.
\cite[Kapitel 6, Paragraph 4]{Stein} folgert, dass der Kern des Operators eines Symbols $p(x,\xi) \in S^m_{1,0}(\R^n\times\R^n)$ sogar unter gewissen Voraussetzungen eine glatte Funktion 
$k : \R^n \times \R^n \rightarrow \C$ ist.
Für Symbole der Klasse $C^\tau S^{m}_{\rho,\delta}(\R^n, \R^n \times \R^n)$ wollen wir ähnlich wie in \cite{AbelsCauchy} die Kerndarstellung genauer studieren.

\begin{definition}\label{def:kernAssoz}
Sei $p$ ein Symbol in $(x,\xi,y)$-Form. Der zu $p$ assoziierte Kern ist definiert durch
\begin{equation}\label{equ:kernFourier}
k(x,y,\hold) := \FourierB{\xi}{\hold}{p(x,\xi,y)} \in \SchwarziDual .
\end{equation}
Wir spalten $p$ mit einer Littlewood-Paley-Partition $\menge{\lilwood{j}}_{j\in\N}$ in die Symbole
$p_j(x,\xi,y) := p(x,\xi,y)\lilwood{j}(\xi)$ auf, sodass wir analog zu Lemma \ref{lemma:smoothing} die Konvergenz $\sum_j^N p_j(x,\xi,y) \rightarrow p(x,\xi,y)$ für $N \rightarrow \infty$ erhalten.
Wiederum definieren wir den assoziierten Kern des Symbols $p_j$ durch
$k_j(x,y,z) := \FourierB{\xi}{z}{p_j(x,\xi,y)}$. 
\end{definition}

\begin{remark}\label{remark:kern-n-1}
Für $p \in C^\tau S^{-n-1}_{\rho,\delta}(\R^n, \R^n \times \R^n)$ ist $\xi \mapsto p(x,\xi,y) \in L^1(\R^n)$. 
Mit Folgerung \ref{cor:osziLebesgue} und dem Satz von Fubini folgt für alle $u \in \Schwarzi$
\begin{align*}
p(X,D_x,Y) u(x) 
&= \iint e^{i(x-y)\cdot\xi} p(x,\xi,y) u(y) dy \dslash\xi 
= \int \left( \int e^{i(x-y)\cdot\xi} p(x,\xi,y) \dslash\xi \right) u(y) dy  \\
&= \int k(x,y,x-y) u(y) dy
.\end{align*}
Insbesondere erfüllt $(x,y) \mapsto k(x,y,x-y)$ die Gleichung (\ref{equ:schwartzKern}), d.h. $(x,y) \mapsto k(x,y,x-y)$ ist der Schwartz-Kern des Operators $p(X,D_x,Y)$.
\end{remark}

\begin{lemma}\label{lemma:kernlilwood}
Sei $p \in C^\tau S^m_{\rho,\delta}(\R^n,\R^n\times\R^n)$. Definiere die Kerne $k_j$ wie in Definition \ref{def:kernAssoz}.
Dann gibt es für jedes $\alpha,\beta \in \N^n_0$ und jedes $M \in \N_0$ eine von $j$ unabhängige Konstante $C_{\alpha\beta M} > 0$, 
sodass für jedes $j \in \N_0$ gilt 
$$ \norm{ \partial_z^\alpha \partial_y^\beta k_j(\hold,y,z) }{C^\tau(\R^n)} \le
C_{\alpha\beta M} \abs{z}^{-M} 2^{j(n+m+\abs{\alpha} + \delta(\tau+\abs{\beta}) - \rho M)} \textrm{ für alle } z \neq 0
.$$ \begin{proof}
Seien $\alpha, \beta, \mu \in \N^n_0$ mit $\abs{\mu}\le\tau$ und $M \in \N_0$.
Eine einfache Umformung mit partieller Integration unter Ausnutzung, dass $\xi \mapsto p_j(x,\xi,y)$ kompakt getragen ist, ergibt
$$
z^\gamma \partial_x^\mu \partial_y^\beta D^\alpha_z k_j(x,y,z) =
(-1)^{\abs{\gamma}} \int_{\R^n} e^{iz\cdot\xi} D_\xi^\gamma \left[ \xi^\alpha \partial_x^\mu \partial_y^\beta p_j(x,\xi,y) \right] \dslash\xi
.$$
Wir wollen nun das Volumen des Trägers des Integranten abschätzen. Dazu nutzen wir, dass
$
\supp \left(\xi \mapsto  e^{ix\cdot\xi} D_\xi^\gamma \left[ \xi^\alpha \partial_x^\mu \partial_y^\beta p_j(x,\xi,y) \right] \right)
\subset \menge{ 2^{j-1} \le \abs{\xi} \le 2^{j+1}}$
(o.B.d.A. für $j \neq 0$)
und
$$ \vol \menge{ 2^{j-1} \le \abs{\xi} \le 2^{j+1}} \le 2^{(j+1)n} \vol B_1(0) =: 2^{jn} C
.$$
Sei ohne Einschränkung $\xi \in \supp \lilwood{j}$, da $\supp \xi \mapsto p_j(x,\xi,y) \subset \supp \lilwood{j}$.
Wir sehen leicht, dass 
$\xi^\alpha \partial_x^\mu \partial_y^\beta p_j(x,\xi,y)$ der Klasse $C^\tau S^{m+\delta\abs{\beta+\mu}+\abs{\alpha}}_{\rho,\delta}(\R^n, \R^n \times \R^n)$
angehört. Deswegen erhalten wir die Abschätzung
\begin{align*}
\abs{ D_\xi^\gamma \left[ \xi^\alpha \partial_x^\mu \partial_y^\beta p_j(x,\xi,y) \right] } 
&\le C_{\alpha\beta\gamma} \piket{\xi}{m+\abs{\alpha}+\delta\abs{\beta+\mu}-\rho\abs{\gamma}} \\ &\le C_{\alpha\beta\gamma} c 2^{j (m+\delta\abs{\beta+\mu}+\abs{\alpha}-\rho\abs{\gamma}) } ,\end{align*}
wobei es $c,c' >0$ gibt, sodass  $ c' 2^j \le \piketi{\xi} \le c 2^j$ für jedes $\xi \in D_j = \menge{ 2^{j-1} \le \abs{\xi} \le 2^{j+1}}$ gilt.
Insgesamt ist also
$$ \abs{ z^\gamma \partial_x^\mu \partial_y^\beta \partial_\xi^\alpha k_j(x,y,z) } \le 
C_{\alpha\beta\gamma} c 2^{jn} 2^{j (m+\delta\abs{\beta+\mu}+\abs{\alpha}-\rho\abs{\gamma}) } .$$
Mit dem Übergang zum Supremum über alle $x \in \R^n$ mit $\abs{\gamma} = M$ und $\abs{\mu} \le \tau$ erhalten wir
$$ \norm{ \partial^\alpha_z \partial_y^\beta k_j(\hold,y,z) }{C^{\gauss{\tau}}(\R^n)} \le
C_{\alpha\beta M}  \abs{z}^{-M} 2^{j(n+m+\abs{\alpha} + \delta\abs{\beta+\mu} - \rho M)} .$$
Für $\theta := \tau - \gauss{\tau} \neq 0$ erhalten wir mit nach der Definition der Norm $\norm{\hold}{C^\theta(\R^n)}$ für ein $\gamma \in \N^n_0$ und $x'\in\R^n$ mit $x' \neq x$
\begin{align*}
\abs{ \partial_\xi^\gamma \left[ \xi^\alpha \partial_y^\beta ( p_j(x,\xi,y) - p_j(x',\xi,y) ) \right] } 
&\le \norm{\partial_\xi^\gamma \left[ \xi^\alpha \partial_y^\beta p_j(\hold,\xi,y) \right] }{C^\theta(\R^n)} \abs{x-x'}^\theta  \\
&\le C_{\alpha\beta\gamma} \piket{\xi}{m-\rho\abs{\gamma}+\abs{\alpha}+\delta(\theta+\abs{\beta})}  \abs{x-x'}^\theta .\end{align*}
Für die Abschätzung der Hölder-Halbnorm betrachten wir die Differenz
$$ z^\gamma \partial_x^\mu \partial_y^\beta D^\alpha_z ( k_j(x,y,z) - k_j(x',y,z)) =
\int_{\R^n} e^{iz\cdot\xi} D_\xi^\gamma \left[ \xi^\alpha \partial_x^\mu \partial_y^\beta ( p_j(x,\xi,y) - p_j(x',\xi,y)) \right] \dslash\xi$$
und erhalten mit analogen Vorgehen
$$ \frac{ \abs{ D^\alpha_z \partial_y^\beta ( k_j(x,y,z) - k_j(x',y,z))  } }{ \abs{x-x'}^\theta } \le 
C_{\alpha\beta \gamma} 2^{j(n+m-\rho\abs{\gamma}+\abs{\alpha}+ \delta(\theta+\abs{\beta}))} .$$
Mit dem Übergang zum Supremum über alle $x,x'\in\R^n$ mit $x \neq x'$ erhalten wir die Behauptung.
\end{proof}
\end{lemma}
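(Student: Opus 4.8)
Der Plan ist, das $\xi$-Integral durch $\abs{\gamma}$-fache partielle Integration in Potenzen von $\abs{z}^{-1}$ umzuschreiben und danach die Symbolabschätzungen von $p$ mit einer Volumenabschätzung des dyadischen Rings $D_j$ zu verbinden. Da $\supp\lilwood{j}\subset D_j$ ist, hat $\xi\mapsto p_j(x,\xi,y)$ kompakten Träger; $k_j$ ist also eine glatte Funktion, und bei der partiellen Integration treten keine Randterme auf. Für Multiindizes $\alpha,\beta,\gamma,\mu\in\N^n_0$ mit $\abs{\mu}\le\tau$ würde ich zuerst mittels $D_z^\alpha e^{iz\cdot\xi}=\xi^\alpha e^{iz\cdot\xi}$ und $z^\gamma e^{iz\cdot\xi}=D_\xi^\gamma e^{iz\cdot\xi}$ (sowie $\partial_z^\alpha=i^{\abs{\alpha}}D_z^\alpha$) die Darstellung
\[
z^\gamma\,\partial_x^\mu\partial_y^\beta D_z^\alpha k_j(x,y,z)
=(-1)^{\abs{\gamma}}\int e^{iz\cdot\xi}\,D_\xi^\gamma\bigl[\xi^\alpha\,\partial_x^\mu\partial_y^\beta p_j(x,\xi,y)\bigr]\dslash\xi
\]
herleiten.

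Als nächstes schätze ich den Integranden ab. Aus den Symbolabschätzungen von $p$ (interpoliert wie in der Bemerkung nach der Definition der Symbolklasse) und den Ableitungsschranken für $\lilwood{j}$ aus Lemma \ref{lemma:introPartialLilwoodAbsch} folgt
\[
\abs{D_\xi^\gamma\bigl[\xi^\alpha\,\partial_x^\mu\partial_y^\beta p_j(x,\xi,y)\bigr]}
\le C_{\alpha\beta\gamma}\,\piket{\xi}{m+\abs{\alpha}+\delta(\abs{\mu}+\abs{\beta})-\rho\abs{\gamma}}.
\]
Auf $D_j$ ist $\piketi{\xi}$ in beide Richtungen mit $2^j$ vergleichbar --- für $j\ge1$ wegen $2^{j-1}\le\abs{\xi}\le2^{j+1}$ mittels Lemma \ref{lemma:japanese}, für $j=0$ trivialerweise, da $\piketi{\xi}$ dort beschränkt ist ---, sodass $\piket{\xi}{s}\le c\,2^{js}$ unabhängig vom Vorzeichen von $s$ gilt. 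Mit $\vol D_j\le C\,2^{jn}$ erhalte ich dann
\[
\abs{z^\gamma\,\partial_x^\mu\partial_y^\beta D_z^\alpha k_j(x,y,z)}
\le C\,2^{jn}\,2^{j(m+\abs{\alpha}+\delta(\abs{\mu}+\abs{\beta})-\rho\abs{\gamma})}
=C\,2^{j(n+m+\abs{\alpha}+\delta(\abs{\mu}+\abs{\beta})-\rho\abs{\gamma})}.
\]
Wegen $\abs{z}^M\le c\sum_{\abs{\gamma}=M}\abs{z^\gamma}$ liefert die Wahl $\abs{\gamma}=M$ und das Supremum über $x\in\R^n$ und $\abs{\mu}\le\gauss{\tau}\le\tau$ (hier geht $\delta\ge0$ ein) die Abschätzung der $C^{\gauss{\tau}}(\R^n)$-Norm von $\partial_z^\alpha\partial_y^\beta k_j(\hold,y,z)$ durch $C\,\abs{z}^{-M}\,2^{j(n+m+\abs{\alpha}+\delta(\tau+\abs{\beta})-\rho M)}$.

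Im Fall $\theta:=\tau-\gauss{\tau}\neq0$ behandle ich die Hölder-Halbnorm, indem ich dieselbe Rechnung auf die Differenz $p_j(x,\xi,y)-p_j(x',\xi,y)$ anwende und verwende, dass $D_\xi^\gamma\bigl[\xi^\alpha\,\partial_x^\mu\partial_y^\beta p_j(\hold,\xi,y)\bigr]$ für $\abs{\mu}=\gauss{\tau}$ Hölder-stetig zum Grad $\theta$ mit Halbnorm $\le C\,\piket{\xi}{m+\abs{\alpha}+\delta(\theta+\abs{\beta})-\rho\abs{\gamma}}$ ist. Die gleiche Träger- und Größenabschätzung ergibt
\[
\frac{\abs{\partial_x^\mu\partial_y^\beta D_z^\alpha\bigl(k_j(x,y,z)-k_j(x',y,z)\bigr)}}{\abs{x-x'}^\theta}
\le C\,2^{j(n+m+\abs{\alpha}+\delta(\theta+\abs{\beta})-\rho\abs{\gamma})},
\]
und das Supremum über $x\neq x'$ und $\abs{\mu}=\gauss{\tau}$ mit $\abs{\gamma}=M$ liefert wegen $\gauss{\tau}+\theta=\tau$ die gewünschte Schranke für $\hnorm{\partial_z^\alpha\partial_y^\beta k_j(\hold,y,z)}{C^{\gauss{\tau}},\theta}$; durch Addition beider Anteile folgt die Behauptung. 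Den Hauptaufwand sehe ich nicht in einem einzelnen Schritt, sondern in der sauberen Buchführung der Symbolordnungen --- namentlich, dass $\piketi{\xi}\sim2^j$ auf $D_j$ unabhängig vom Vorzeichen des Exponenten gilt und dass $\delta(\abs{\mu}+\abs{\beta})$ durch $\delta(\tau+\abs{\beta})$ bzw. $\delta(\theta+\abs{\beta})$ nach oben abgeschätzt wird --- sowie in der separaten, aber unproblematischen Behandlung von $j=0$ und des Falles $\tau\in\N$, in dem der Hölder-Anteil entfällt.
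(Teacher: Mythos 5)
Ihr Beweis ist korrekt und folgt im Wesentlichen demselben Weg wie die Arbeit: partielle Integration zur Erzeugung des Faktors $z^\gamma$ mit $\abs{\gamma}=M$, Symbolabschätzung von $\xi^\alpha\partial_x^\mu\partial_y^\beta p_j$ kombiniert mit $\piketi{\xi}\sim 2^j$ und $\vol D_j\le C\,2^{jn}$, sowie die getrennte Behandlung der Hölder-Halbnorm über die Differenz $p_j(x,\xi,y)-p_j(x',\xi,y)$. Die zusätzlichen Bemerkungen (Fall $j=0$, $\abs{z}^M\le c\sum_{\abs{\gamma}=M}\abs{z^\gamma}$, Verweis auf Lemma \ref{lemma:introPartialLilwoodAbsch}) sind sauber, ändern aber nichts am Argument.
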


\begin{cor}
Die Kerne $k_j : \R^n \times \R^n \times (\R^n\setminus\menge{0}) \rightarrow \C,  (x, y, z) \mapsto k_j(x,y,z)$ sind glatt bzgl. $(y,z) \in \R^{2n}$ und Hölder-stetig zum Grad $\tau$ bzgl. $x\in\R^n$.
Insbesondere fassen wir die Kerne $k_j$ nicht als temperierte Distribution auf.
Denn für jedes $u \in \Schwarzi$ erhalten wir analog zu Bemerkung \ref{remark:kern-n-1}, dass
\begin{equation}\label{equ:kernLilwood}
p_j(X,D_x,Y) u(x) = \int k_j(x,y,x-y) u(y) dy \textrm{ für alle } x \not\in \supp u ,
\end{equation} 
da $\xi \mapsto p_j(x,\xi,y)$ kompakt getragen ist. 
\end{cor}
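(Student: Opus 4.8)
The plan is to obtain all three assertions --- smoothness of $k_j$ in $(y,z)$, H\"older regularity in $x$, and the kernel representation (\ref{equ:kernLilwood}) --- by feeding the quantitative bounds of Lemma \ref{lemma:kernlilwood} into the integral-interchange argument already used in Bemerkung \ref{remark:kern-n-1}. The one new ingredient is that $\lilwood{j}$ has compact support, so for each fixed $j$ the function $\xi \mapsto p_j(x,\xi,y) = p(x,\xi,y)\lilwood{j}(\xi)$ lies in $C_0^\infty(\R^n)$, is supported in $D_j$, and, by the defining estimate $\abs{p(x,\xi,y)} \le C_{00}\piket{\xi}{m}$, satisfies $\abs{p_j(x,\xi,y)} \le C_j\characteristic{D_j}(\xi)$ uniformly in $x,y \in \R^n$.

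For the regularity I would first note that $k_j(x,y,z) = \int e^{iz\cdot\xi}p_j(x,\xi,y)\dslash\xi$ is the integral of a $(\xi,y)$-smooth integrand over the compact set $D_j$, so differentiating under the integral sign in $y$ and $z$ is legitimate (Satz \ref{satz:masstheorieAbl}, applied coordinatewise), whence $k_j$ is a genuine function, smooth in $(y,z) \in \R^{2n}$, and in particular is not to be read as a tempered distribution. For the H\"older dependence on $x$ I would invoke Lemma \ref{lemma:kernlilwood} with $\alpha=\beta=0$, $M=0$, giving $\norm{k_j(\hold,y,z)}{C^\tau(\R^n)} < \infty$, so $x \mapsto k_j(x,y,z) \in C^\tau(\R^n)$; the same lemma with arbitrary $\alpha,\beta$ transfers this $C^\tau$-control to every $(y,z)$-derivative, which is exactly the claimed joint regularity.

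For the representation I would fix $x$ and $u \in \Schwarzi$ and rewrite the oscillatory integral in the definition of $p_j(X,D_x,Y)$ in the standard form $\os{\eta}{y'}{a_j(\eta,y')}$ of Definition \ref{def:osziOsziInt} via the translation $y' := y-x$ (and $\eta := \xi$), obtaining $a_j(\eta,y') := p_j(x,\eta,x+y')u(x+y')$. A Leibniz expansion together with the symbol bounds for $p_j$ and the Schwartz decay of $u$ shows $a_j \in \Oszillatory{\delta,0}{m}$; crucially, $a_j$ vanishes off $D_j\times\R^n$ and $\abs{a_j(\eta,y')} \le C_j\abs{u(x+y')}\characteristic{D_j}(\eta)$, so $a_j \in L^1(\R^n\times\R^n)$. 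Then Folgerung \ref{cor:osziLebesgue} replaces the oscillatory integral by an ordinary double Lebesgue integral, and the Satz von Fubini (Theorem \ref{thm:massthreorieFubini}) permits the $\eta$-integration to be performed first; undoing the substitution and recognising the inner integral as $k_j(x,x+y',-y')$ yields $p_j(X,D_x,Y)u(x) = \int k_j(x,y,x-y)u(y)\,dy$. Since $y \mapsto k_j(x,y,x-y)$ is bounded and, by the $\abs{z}^{-M}$-decay of Lemma \ref{lemma:kernlilwood}, rapidly decreasing, while $u$ is Schwartz, the right-hand integral converges absolutely for every $x \in \R^n$, so the restriction $x \notin \supp u$ is in fact inessential here.

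I do not expect a genuine obstacle: the hard estimates are supplied by Lemma \ref{lemma:kernlilwood} and the interchange pattern by Bemerkung \ref{remark:kern-n-1}. The only points demanding care are the bookkeeping of the change of variables relating the convention $\osint e^{i(x-y)\cdot\xi}(\hold)\,dy\dslash\xi$ of (\ref{equ:operatorXXiY}) to the standard oscillatory-integral form underlying Folgerung \ref{cor:osziLebesgue} and Definition \ref{def:osziOsziInt}, and the observation that it is precisely the compact $\xi$-support of $\lilwood{j}$ --- which a general symbol of $C^\tau S^m_{\rho,\delta}(\R^n,\R^n\times\R^n)$ lacks --- that renders $a_j$ integrable and thereby upgrades $k_j$ from a tempered distribution to a smooth function.
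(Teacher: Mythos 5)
Your proposal is correct and follows essentially the same route the paper intends for this corollary: smoothness and the $C^\tau$-bounds come from the compact $\xi$-support of $p_j$ together with Lemma \ref{lemma:kernlilwood}, and the representation (\ref{equ:kernLilwood}) is obtained exactly ``analog zu Bemerkung \ref{remark:kern-n-1}'' by reducing the oscillatory integral to an absolutely convergent Lebesgue integral (Folgerung \ref{cor:osziLebesgue}) and applying Fubini. Your closing observation that, for fixed $j$, the restriction $x \notin \supp u$ is not actually needed is a harmless strengthening and does not conflict with the statement.
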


\begin{thm}\label{thm:kern}
Sei $p \in C^\tau S^m_{\rho,\delta}(\R^n\times\R^n,\R^n)$ und $0 \le \delta \le \rho \le 1, \delta < 1$ und $\rho > 0$.
Dann gibt es einen Kern $k : \R^n \times \R^n \times \left(\R^n\setminus\menge{0}\right) \rightarrow \C,$ sodass wir für jedes $u \in \Schwarzi$ den Operator von $p$ durch
$$p(X,D_x,Y) u(x) = \int k(x,y,x-y) u(y) dy \textrm{ für alle } x \not\in \supp u$$
darstellen können
und für alle $\alpha,\beta\in\N^n_0$, $N\in\N_0$ mit $n+m+\abs{\alpha}+\delta(\tau+\abs{\beta})+N > 0$ eine Konstante $C_{\alpha\beta N} > 0$ existiert mit
\begin{equation}\label{equ:kernAbsch}
\norm{ \partial_z^\alpha \partial_y^\beta k(\hold,y,z) }{C^\tau(\R^n)} \le
C_{\alpha\beta N}  \abs{z}^{-\frac{1}{\rho}(n+m+\abs{\alpha}+\delta(\tau+\abs{\beta}))} \piket{z}{-\frac{N}{\rho}} .
\end{equation}
\begin{proof}
Nach (\ref{equ:kernLilwood}) und (\ref{equ:kernFourier}) ist
$$
p(X,D_x,Y) u(x) = \sum_{j=0}^\infty \int_{\R^n} k_j(x,y,x-y) u(y) dy \textrm{ für alle } x \not\in \supp u .$$
Zu zeigen ist, dass $\sum^\infty_{j=0} k_j(x,y,z)$ für alle $(x,y,z) \in \R^n \times \R^n \times ( \R^n \setminus B_\epsilon(0) )$ absolut und gleichmäßig gegen eine Funktion $k(x,y,z)$ konvergiert, die die Behauptung erfüllt.

Sei zunächst $0 < \abs{z} \le 1$.
Zerteile
$$ \sum_{j=0}^\infty \norm{\partial^\alpha_z \partial_y^\beta k_j(\hold,y,z)}{C^\tau(\R^n)} =
\sum_{2^j \le \abs{z}^{-1}} \norm{\partial^\alpha_z \partial_y^\beta k_j(\hold,y,z)}{C^\tau(\R^n)} +
\sum_{2^j > \abs{z}^{-1}} \norm{\partial^\alpha_z \partial_y^\beta k_j(\hold,y,z)}{C^\tau(\R^n)} .$$
Der erste Term wird mit Lemma \ref{lemma:kernlilwood} (mit $M \leftarrow 0$) abgeschätzt:
$$ \sum_{2^j \le \abs{z}^{-1}} \norm{\partial^\alpha_z \partial_y^\beta k_j(\hold,y,z)}{C^\tau(\R^n)} \le
C_{\alpha\beta} \sum_{j=0}^{\gauss{\log_2 \left(\abs{z^{-1}}\right) }} 2^{j(n+m+\abs{\alpha}+\delta(\tau+\abs{\beta}))} ,$$
und
$$\sum_{j=0}^{\gauss{\log_2 \left(\abs{z^{-1}}\right) }} 2^{j(n+m+\abs{\alpha}+\delta(\tau+\abs{\beta}))} \le
\begin{cases}
 C \abs{z}^{-n-m-\abs{\alpha}-\delta(\tau+\abs{\beta})} & \textrm{ für }  n+m+\abs{\alpha}+\delta(\tau+\abs{\beta}) > 0, \\
 \left( 1 + \log_2\left(\abs{z}^{-1}\right) \right)     & \textrm{ für }  n+m+\abs{\alpha}+\delta(\tau+\abs{\beta}) = 0, \\
 C                                                      & \textrm{ für }  n+m+\abs{\alpha}+\delta(\tau+\abs{\beta}) < 0. 
\end{cases}
$$
Im Vergleich zu Polynomen ist $\Landau{1 + \log_2\left(\abs{z}^{-1}\right)} \le \Landau{\abs{z}^{-r}}$ für alle $z$ mit $\abs{z}<1$ und jedes $r\in\R_+$.
Insbesondere ist deswegen $\Landau{1 + \log_2\left(\abs{z}^{-1}\right)} \le \Landau{\abs{z}^{-n-m-\abs{\alpha}-\delta(\tau+\abs{\beta})}}$.
Wir können also den ersten Term durch
$$ \sum_{j=0}^{\gauss{\log_2\left(\abs{z^{-1}}\right)  }} 2^{j(n+m+\abs{\alpha}+\delta(\tau+\abs{\beta}))} \le
\Landau{\abs{z}^{-n-m-\abs{\alpha}-\delta(\tau+\abs{\beta})}} \le 
\Landau{\abs{z}^{\frac{1}{\rho}(-n-m-\abs{\alpha}-\delta(\tau+\abs{\beta}))}}$$
abschätzen.

Für den zweiten Term verwende wieder Lemma \ref{lemma:kernlilwood} mit $\rho M > n+m+\abs{\alpha}+\delta(\tau+\abs{\beta})$.
Wir setzen dazu $M := \frac{2}{\rho} \left( n+m+\abs{\alpha}+\delta(\tau+\abs{\beta}) \right)$ und erhalten
$$
\sum_{2^j > \abs{z}^{-1}} \norm{\partial^\alpha_z \partial_y^\beta k_j(\hold,y,z)}{C^\tau(\R^n)} \le
C_{\alpha\beta M} \abs{z}^{-M} \sum_{2^j > \abs{z}^{-1}} 2^{j(n+m+\abs{\alpha}+\delta(\tau+\abs{\beta}) - \rho M)} .$$
Für $2^j > \abs{z}^{-1}$ ist $2^{j(n+m+\abs{\alpha}+\delta(\tau+\abs{\beta}) - \rho M)} \le \abs{z}^{-n-m-\abs{\alpha}-\delta(\tau+\abs{\beta})+\rho M}$.
Also ist 
\begin{align*}
\abs{z}^{-M} \sum_{2^j > \abs{z}^{-1}} 2^{j(n+m+\abs{\alpha}+\delta(\tau+\abs{\beta}) - \rho M)} 
&\le C_{\alpha\beta} \abs{z}^{-M(1-\rho)} \abs{z}^{-n-m-\abs{\alpha}-\delta(\tau+\abs{\beta})} \\
&\le C_{\alpha\beta} \abs{z}^{ - (\frac{2}{\rho}-1) \left(n+m+\abs{\alpha}+\delta(\tau+\abs{\beta}) \right)}  \\
&\le C_{\alpha\beta} \abs{z}^{ - \frac{1}{\rho} \left(n+m+\abs{\alpha}+\delta(\tau+\abs{\beta}) \right)} 
.\end{align*}
Unter Einbeziehung $\piket{z}{-\frac{N}{\rho}} = \Landau{1}$ für $\abs{z} < 1$ ist insgesamt
$$ \sum_{j=0}^\infty \norm{\partial^\alpha_z \partial_y^\beta k_j(\hold,y,z)}{C^\tau(\R^n)} \le  
C_{\alpha\beta N} \piket{z}{-\frac{N}{\rho}} \abs{z}^{ - \frac{1}{\rho} \left(n+m+\abs{\alpha}+\delta(\tau+\abs{\beta}) \right)}
.$$ 
Für $\abs{z} > 1$ wählen wir $M\rho > n+m+\abs{\alpha}+\delta(\tau+\abs{\beta})+N$ in Lemma \ref{lemma:kernlilwood} und schließen daraus, dass
\begin{align*}
\sum_{j=0}^\infty \norm{\partial^\alpha_z \partial_y^\beta k_j(\hold,y,z)}{C^\tau(\R^n)} 
&\le C_{\alpha\beta} \abs{z}^{-M} \sum_{j=0}^\infty 2^{j( n+m+\abs{\alpha}+\delta(\tau+\abs{\beta})-M\rho)} \\
&\le C_{\alpha\beta N} C \abs{z}^{-M}
.\end{align*}
Schließlich finden wir eine Konstante $C>0$, sodass $\abs{z}^{-\frac{N}{\rho}} \le C \piket{z}{-\frac{N}{\rho}}$ für $\abs{z} > 1$ und damit
\begin{align*}
\abs{z}^{-M} 
&\le \abs{z}^{-\frac{1}{\rho} (n+m+\abs{\alpha}+\delta(\tau+\abs{\beta})+N)} 
 \le \abs{z}^{-\frac{1}{\rho}(n+m+\abs{\alpha}+\delta(\tau+\abs{\beta})+N)} \\
&\le C \abs{z}^{-\frac{1}{\rho}(n+m+\abs{\alpha}+\delta(\tau+\abs{\beta}))} \piket{z}{-\frac{N}{\rho}}
.\end{align*}

Also konvergiert $\sum_{j=0}^\infty k_j(x,y,z)$ absolut und gleichmäßig bezüglich $(x,y,\xi) \in \R^n \times \R^n \times ( \R^n \setminus B_\epsilon(0) )$ für alle $\epsilon > 0$ gegen eine Funktion $k(x,y,z)$, 
die die gewünschten Eigenschaften erfüllt.
Nach der Definition der gleichmäßigen Konvergenz und der Kerne $k_j$ folgt die Behauptung für alle $x \in \R^n$ mit
$\dist{x}{\supp u} > \epsilon$ für beliebige $\epsilon > 0$.
\end{proof}
\end{thm}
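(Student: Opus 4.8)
The plan is to build the kernel from the Littlewood--Paley pieces. Writing $p_j(x,\xi,y) := p(x,\xi,y)\lilwood{j}(\xi)$ and $k_j(x,y,z) := \FourierB{\xi}{z}{p_j(x,\xi,y)}$ as in Definition \ref{def:kernAssoz}, one already knows (the analogue of Lemma \ref{lemma:smoothing} for $(x,\xi,y)$-form symbols, together with (\ref{equ:kernLilwood})) that $\sum_{j=0}^N p_j \to p$ in the symbol class and that $p_j(X,D_x,Y)u(x) = \int k_j(x,y,x-y)\,u(y)\,dy$ for $x\notin\supp u$. So it suffices to show that $k(x,y,z) := \sum_{j=0}^\infty k_j(x,y,z)$ converges absolutely and uniformly on $\R^n\times\R^n\times(\R^n\setminus B_\epsilon(0))$ for every $\epsilon>0$, with the bound (\ref{equ:kernAbsch}); uniform convergence then lets me interchange sum and integral, giving $p(X,D_x,Y)u(x) = \int k(x,y,x-y)\,u(y)\,dy$ whenever $\dist{x}{\supp u} > \epsilon$, hence for all $x\notin\supp u$.

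The only tool needed is Lemma \ref{lemma:kernlilwood}: for every $M\in\N_0$,
$$\norm{\partial_z^\alpha\partial_y^\beta k_j(\hold,y,z)}{C^\tau(\R^n)} \le C_{\alpha\beta M}\,\abs{z}^{-M}\,2^{j(a-\rho M)},\qquad a := n+m+\abs{\alpha}+\delta(\tau+\abs{\beta}).$$
First I would treat $0<\abs{z}\le1$, splitting the series at $2^j\approx\abs{z}^{-1}$. On the low-frequency range $2^j\le\abs{z}^{-1}$, take $M=0$: the finite sum $\sum_{2^j\le\abs{z}^{-1}}2^{ja}$ is bounded by $\Landau{\abs{z}^{-a}}$ when $a>0$, by $\Landau{1+\log_2\abs{z}^{-1}}$ when $a=0$, and by $\Landau{1}$ when $a<0$; since a logarithm is dominated by any negative power of $\abs{z}$ for $\abs{z}<1$, and $\rho\le1$, $\abs{z}\le1$, each case is $\Landau{\abs{z}^{-a/\rho}}$. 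On the high-frequency range $2^j>\abs{z}^{-1}$, take $M:=\tfrac{2}{\rho}a$, so that $\rho M-a=a>0$: summing the geometric series from $j\approx\log_2\abs{z}^{-1}$ gives $\abs{z}^{-M}\cdot\Landau{\abs{z}^{\rho M-a}}=\Landau{\abs{z}^{-M(1-\rho)-a}}$, and $-M(1-\rho)-a=-\tfrac{2-\rho}{\rho}a\le-\tfrac{a}{\rho}$ for $\abs{z}\le1$, $\rho\le1$. Since $\piket{z}{-N/\rho}=\Landau{1}$ for $\abs{z}\le1$, this is the required bound on $\abs{z}\le1$.

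For $\abs{z}>1$ I would apply Lemma \ref{lemma:kernlilwood} with $M$ so large that $\rho M>a+N$; then $\sum_{j=0}^\infty 2^{j(a-\rho M)}$ converges and $\sum_j\norm{\partial_z^\alpha\partial_y^\beta k_j(\hold,y,z)}{C^\tau(\R^n)}\le C\abs{z}^{-M}$, while for $\abs{z}>1$ one has $\abs{z}^{-M}\le\abs{z}^{-(a+N)/\rho}=\abs{z}^{-a/\rho}\abs{z}^{-N/\rho}\le C\abs{z}^{-a/\rho}\piket{z}{-N/\rho}$. Combining the two regimes yields (\ref{equ:kernAbsch}), and the absolute uniform convergence of $\sum_j k_j$ away from $z=0$ defines $k$ and legitimises the termwise integration. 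The main obstacle is the bookkeeping of the geometric sums for $\abs{z}\le1$ — choosing the auxiliary parameter $M$ so that the resulting power of $\abs{z}$ lands on the correct side of $-a/\rho$, and disposing of the borderline logarithm at $a=0$ — together with checking that the hypothesis $n+m+\abs{\alpha}+\delta(\tau+\abs{\beta})+N>0$ is precisely what makes the choice $\rho M>a+N$ in the range $\abs{z}>1$ compatible with the claimed exponent.
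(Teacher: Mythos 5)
Your proposal is correct and follows essentially the same route as the paper's own proof: the same decomposition $k=\sum_j k_j$ via the Littlewood--Paley pieces, the same reliance on Lemma \ref{lemma:kernlilwood}, the same dyadic split of the series at $2^j\approx\abs{z}^{-1}$ with $M=0$ on the low frequencies and $M=\tfrac{2}{\rho}(n+m+\abs{\alpha}+\delta(\tau+\abs{\beta}))$ on the high frequencies, and the same large-$M$ choice $\rho M>n+m+\abs{\alpha}+\delta(\tau+\abs{\beta})+N$ for $\abs{z}>1$. Nothing essential differs from the argument in the paper.
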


\begin{remark}
Die Bedingung $x \not\in \supp u$ ist notwendig, da für ein Symbol $p(x,\xi,y) = 1 \in S^0_{1,0}(\R^n,\R^n\times\R^n)$ 
$$ 
\dualskp{p(X,D_x,Y) u}{v}{\SchwarziDual}{\Schwarzi} = \skp{u}{v}{L^2(\R^n)} = \dualskp{ k }{u \otimes v}{\SchwarzDual{\R^n\times\R^n}}{\Schwarz{\R^n\times\R^n}}
$$
gilt, falls formal ``$k(x,y) := \delta_0(x-y)$'' ist. Hierbei bezeichne $\delta_0$ die Delta-Distribution gegeben durch
$ \dualskp{\delta_0}{u}{\SchwarziDual}{\Schwarzi} = u(0)$ für alle $u \in \Schwarzi$.
Für $x \in \supp u$ können wir also im Allgemeinen den Schwartz-Kern nur im distributionellen Sinn auffassen.
\end{remark}

\begin{definition}\label{def:kern}
Sei eine Funktion $k : \R^n \times \R^n \times \R^n \rightarrow \C$ mit $k(\hold,y,z) \in C^\tau(\R^n)$ und $k(x,\hold,\hold) \in \Stetig{\infty}{\R^n\times\R^n}$ gegeben,
für die es zu allen Multiindizes $\alpha,\beta \in \N^n_0$ und jedem $N \in \N_0$ eine Konstante $C_{\alpha\beta N}$ gibt, für die die Ungleichung
\begin{equation}\label{equ:kernglatt} 
\sup_{y,z \in \R^n} \piket{z}{N} \norm{\partial_z^\alpha \partial_y^\beta k(\hold,y,z) }{C^\tau(\R^n)} < C_{\alpha\beta N} \textrm{ für alle } \alpha,\beta \in \N^n_0, N \in \N_0
\end{equation}
hält.
Falls sich zudem der Operator eines Symbol $p$ durch
$$ p(X,D_x,Y) u(x) = \int k(x,y, x-y) u(y) dy \textrm{ für jedes } u \in \Schwarzi \textrm{ und alle } x \in \R^n $$ darstellen lässt, sagen wir, dass $p$ eine $C^\tau$-Kerndarstellung (mit $k$) besitzt.
\end{definition}

\begin{lemma}\label{lemma:kernRankNegativ}
Für $p \in C^\tau S^m_{\rho,\delta}(\R^n\times\R^n,\R^n)$ und $0 \le \delta \le \rho \le 1, \delta < 1$ und $\rho > 0$ 
gibt es einen Kern $k : \R^n \times \R^n \times \left(\R^n\setminus\menge{0}\right) \rightarrow \C$ mit
$$ p(X,D_x,Y) u(x) = \int k(x,y, x-y) u(y) dy \textrm{ für jedes } u \in \Schwarzi \textrm{ und alle } x \in \R^n , $$
der die Bedingung (\ref{equ:kernAbsch}) erfüllt.
\begin{proof}
Setze  $M := \frac{N+1}{\rho}$ in Lemma \ref{lemma:kernlilwood} und erhalte
$$ \norm{k_j(\hold,\hold,z) }{C^0(\R^n\times\R^n)} \le C \abs{z}^{-\frac{n+1}{\rho}} 2^{j(m-1)} .$$
Also konvergiert
$$ \norm{ \sum_{j=0}^\infty \norm{k_j(\hold,\hold,z)}{C^0(\R^n\times\R^n)} }{L^1(\R^n)} \le C \norm{ \abs{z}^{-\frac{n+1}{\rho}} }{L^1(\R^n)} \le C' ,$$
und damit erhalten wir mit Hilfe der dominanten Konvergenz
\begin{align*}
\sum_{j=0}^N p_j(X,D_x,Y) u(y) 
&= \sum_{j=0}^N \int k_j(x,y,x-y) u(y) dy  \\
&= \int \sum_{j=0}^N k_j(x,y,x-y) u(y) dy  \\
&\rightarrow \int k(x,y,x-y) u(y) dy \textrm{ für } N \rightarrow \infty 
.\end{align*}
Hierbei ist das letzte Integral wohldefiniert, da $z \mapsto k(x,y,z) \in L^1(\R^n)$.
\end{proof}
\end{lemma}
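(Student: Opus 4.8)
The plan is to localise in frequency and then pass to a limit, so that no new object has to be constructed: the kernel will be the function $k=\sum_{j\ge 0}k_j$ already obtained in the proof of Theorem \ref{thm:kern}, and the estimate (\ref{equ:kernAbsch}) is then inherited for free. The genuinely new point over Theorem \ref{thm:kern} is merely to replace the restriction ``$x\notin\supp u$'' by ``all $x\in\R^n$''.

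First I would treat each Littlewood--Paley piece $p_j(x,\xi,y):=p(x,\xi,y)\lilwood{j}(\xi)$ on its own. Since $\lilwood{j}$ has compact support, $\xi\mapsto p_j(x,\xi,y)$ lies in $L^1(\R^n)$ (indeed $\abs{p_j(x,\xi,y)}\le C\,\characteristic{D_j}(\xi)$), so Folgerung \ref{cor:osziLebesgue} together with the Satz von Fubini --- exactly as in Bemerkung \ref{remark:kern-n-1}, but now without any restriction on $x$ --- collapses the oscillatory integral defining $p_j(X,D_x,Y)$ into an iterated Lebesgue integral:
$$ p_j(X,D_x,Y)u(x)=\int k_j(x,y,x-y)\,u(y)\,dy\qquad\textrm{for all }x\in\R^n, $$
with $k_j(x,y,z)=\FourierB{\xi}{z}{p_j(x,\xi,y)}$ smooth in $(y,z)$. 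Summing in $j$ and using $\sum_{j=0}^{N}p_j(X,D_x,Y)u\to p(X,D_x,Y)u$ (the continuity argument of Lemma \ref{lemma:smoothing} transferred to the $(x,\xi,y)$-form, cf.\ Definition \ref{def:kernAssoz}) gives $p(X,D_x,Y)u(x)=\sum_{j}\int k_j(x,y,x-y)u(y)\,dy$.

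It then remains to interchange the sum with the integral by Lebesgue's dominated convergence theorem, and for this I would produce an $L^1$-majorant in $z=x-y$ from Lemma \ref{lemma:kernlilwood}. Splitting $\int dz$ into the regions $\abs{z}\le 2^{-j}$, $2^{-j}<\abs{z}\le 1$ and $\abs{z}>1$ and applying that lemma with $M=0$ on the first region and with a large $M$ (e.g.\ $M=(N+1)/\rho$ for suitable $N$) on the other two, and using the arbitrary polynomial decay of $u\in\Schwarzi$ to tame $\abs{z}$ large, one arrives at $\sum_j\abs{k_j(x,x-z,z)}\,\abs{u(x-z)}\le g(z)$ with $g\in L^1(\R^n)$; the geometric series that appear, of the shape $\sum_j 2^{j(n+m-\rho M)}$ and $\sum_j 2^{j(m+(1-\rho)M)}$, converge because $m$ lies in the (negative) range in which (\ref{equ:kernAbsch}) already forces $z\mapsto k(x,y,z)$ to be integrable. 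Dominated convergence then yields $p(X,D_x,Y)u(x)=\int\bigl(\sum_j k_j(x,y,x-y)\bigr)u(y)\,dy=\int k(x,y,x-y)u(y)\,dy$ for every $x\in\R^n$, the last integral being well defined by (\ref{equ:kernAbsch}).

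The one delicate point is the behaviour near $z=0$: for $z$ small each $k_j(\hold,\hold,z)$ grows like $2^{j(n+m)}$ as $j\to\infty$, so $\sum_j k_j$ cannot be dominated there by any single one of the estimates of Lemma \ref{lemma:kernlilwood}; the flat bound ($M=0$, good for $\abs{z}\lesssim 2^{-j}$) and the singular bound ($\abs{z}^{-M}$, good for $\abs{z}\gtrsim 2^{-j}$) must be combined so that the resulting pointwise-in-$z$ estimate is still in $L^1$, and this balance is exactly where the negativity of $m$ relative to $(1-\rho)n$ is consumed. Everything else is routine bookkeeping with the estimates already at hand.
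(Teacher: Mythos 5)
Your proposal follows the same skeleton as the paper's own proof: the kernel is the sum $k=\sum_j k_j$ already constructed in Theorem \ref{thm:kern}, each Littlewood--Paley piece gives an exact Lebesgue representation valid at \emph{every} $x$ because $\xi\mapsto p_j(x,\xi,y)$ is compactly supported, and the only real work is an integrable majorant in $z=x-y$ coming from Lemma \ref{lemma:kernlilwood}, so that dominated convergence lets you exchange $\sum_j$ and $\int dy$. The difference is how that majorant is assembled, and here your version is in fact the more careful one: the paper takes the single choice $M=\tfrac{n+1}{\rho}$ for all $z$, obtaining $\norm{k_j(\hold,\hold,z)}{C^0(\R^n\times\R^n)}\le C\abs{z}^{-\frac{n+1}{\rho}}2^{j(m-1)}$, and then asserts $\abs{z}^{-\frac{n+1}{\rho}}\in L^1(\R^n)$ --- but since $\tfrac{n+1}{\rho}\ge n+1>n$ this function is never locally integrable at $z=0$, so the paper's bound really only controls the region $\abs{z}\gtrsim 1$. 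Your splitting into $\abs{z}\le 2^{-j}$, $2^{-j}<\abs{z}\le 1$, $\abs{z}>1$, with $M=0$ on the inner region and a suitably chosen $M$ outside, is exactly what is needed near the origin, and your closing remark identifies the right constraint: the series $\sum_j 2^{jm}$, $\sum_j 2^{j(m+(1-\rho)M)}$ and $\sum_j 2^{j(n+m-\rho M)}$ can all be made to converge precisely when $m$ is negative enough relative to $(1-\rho)n$ (plus the $\delta\tau$ contribution) --- the same tacit restriction the paper uses when it claims $z\mapsto k(x,y,z)\in L^1(\R^n)$, and one that is harmless in the lemma's only application, Folgerung \ref{cor:kernglatt}, where $m=-\infty$. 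So: same route and same key lemma, but your bookkeeping near $z=0$ supplies the step that the paper's single uniform estimate does not actually cover.
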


\begin{cor}\label{cor:kernglatt}
Ein Symbol $p(x,\xi,y) \in C^\tau S^m_{\rho,\delta}(\R^n,\R^n\times\R^n)$ mit $\rho > 0$ besitzt genau dann eine $C^\tau$-Kerndarstellung,
wenn $m = -\infty$.
Wir erhalten also eine Bijektion
$$ \menge{ \textrm{Kerne, die } (\ref{equ:kernglatt}) \textrm{ erfüllen} } \longleftrightarrow C^\tau S^{-\infty}(\R^n,\R^n\times\R^n) .$$
\begin{proof}
Sei $p\in C^\tau S^{-\infty}(\R^n,\R^n\times\R^n)$.
Direktes Einsetzen in Theorem \ref{thm:kern} liefert einen Kern $k(x,y,z) : \R^n \times \R^n \times \left(\R^n\setminus\menge{0}\right) \rightarrow \C$, 
der die Bedingungen aus Definition \ref{def:kern} erfüllt.
Wir erhalten mit Bemerkung \ref{remark:kern-n-1} eine Funktion $K(x,y,\hold) :=  \FourierB{\xi}{\hold}{p(x,\xi,y)}$ mit
$$p(X,D_x,Y) u(x) = \int K(x,y, x-y) u(y) dy \textrm{ für jedes } u \in \Schwarzi \textrm{ und alle } x \in \R^n .$$
Andererseits sind nach Lemma \ref{lemma:kernRankNegativ} die Darstellungen von $p(X,D_x,Y)$ durch $k$ als auch durch $K$ identisch.
Deswegen ist $k(x,y,z) = K(x,y,z)$ für alle $x,y,z \in \R^n$ mit $z \neq 0$. Wir können also $k$ glatt in $z = 0$ fortsetzen.

Sei umgekehrt ein Kern $k$ gegeben, der die Eigenschaft einer $C^\tau$-Kerndarstellung aus Definition \ref{def:kern} erfüllt.
Setzen wir $p(x,\xi,y) := \Fourier{z}{\xi}{k(x,y,z)} = \int_{\R^n} e^{-iz\cdot\xi} k(x,y,z) dz$, so induziert der Kern
$k$ das Symbol $p$, das in der Klasse $C^\tau S^{-\infty}(\R^n,\R^n\times\R^n)$ liegt:
Denn für $\alpha,\beta \in \N^n_0$ wähle ein $N \in \N$ mit $N > \abs{\alpha}+n$, sodass
\begin{align*}
&\sup_{y,\xi \in \R^n} \norm{ \xi^\gamma D_\xi^\alpha \partial_y^\beta p(\hold,y,\xi) }{C^\tau(\R^n)}  \\
&\le \sup_{y,z \in \R^n} \norm{ \int_{\R^n} e^{-iz\cdot \xi} D^\gamma_z \left[ z^\alpha \partial_y^\beta k(\hold,y,z) \right] dz }{C^\tau(\R^n)} \\
&\le \sup_{y,z \in \R^n} \sum_{\mu \le \gamma} 
	{\gamma \choose \mu} \norm{ \int_{\R^n} e^{-iz\cdot \xi} D_z^\mu z^\alpha \piket{z}{-N} \piket{z}{N} D_z^{\gamma-\mu} \partial_y^\beta k(\hold,y,z) dz}{C^\tau(\R^n)} 
.\end{align*}
Nun ist $\sup_{y,z \in \R^n} \norm{\piket{z}{N} D_z^{\gamma-\delta} \partial_y^\beta k(\hold,y,z) }{C^\tau(\R^n)} < C_{\beta,\gamma-\delta,N} $ 
und
$ z \mapsto e^{-iz\cdot \xi} D_z^\mu z^\alpha \piket{z}{-N} \in L^1(\R^n)$ da $\deg(D^\mu_z z^\alpha) - N < -n$ für alle $\mu \le \gamma$.
Insgesamt gibt es also ein $C_{\alpha\beta\gamma} > 0$, sodass
$$\sup_{y,\xi \in \R^n} \norm{ \xi^\gamma D_\xi^\alpha \partial_y^\beta p(\hold,y,\xi) }{C^\tau(\R^n)} \le C_{\alpha\beta\gamma} .$$
Analog erhalten wir Aussage, dass
$$\abs{ \xi^\gamma D_\xi^\alpha \partial_y^\beta p(x,y,\xi) } \le C_{\alpha\beta\gamma} \textrm{ für alle } x,\xi,y \in \R^n .$$

\end{proof}
\end{cor}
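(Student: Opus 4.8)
The plan is to establish the equivalence by its two halves and then read off the bijection from Fourier inversion in the dual variables $z$ and $\xi$. For the implication $p \in C^\tau S^{-\infty}(\R^n,\R^n\times\R^n) \Rightarrow p$ has a $C^\tau$-Kerndarstellung, I would start from $p$ in the residual class; since $C^\tau S^{-\infty} \subset C^\tau S^{-n-1}_{\rho,\delta}(\R^n,\R^n\times\R^n)$, Remark \ref{remark:kern-n-1} applies and yields the honest function $K(x,y,\hold) := \FourierB{\xi}{\hold}{p(x,\xi,y)}$ together with $p(X,D_x,Y)u(x) = \int K(x,y,x-y)u(y)\,dy$ for every $x \in \R^n$. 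Because $m = -\infty$ means that $p(x,\hold,y)$ is rapidly decreasing in $\xi$, the function $K(x,y,\hold)$ is of Schwartz type in $z$, and differentiating under the integral propagates the $C^\tau(\R^n)$-control from the $x$-slot of $p$ to that of $K$; hence $K$ obeys (\ref{equ:kernglatt}) and $p$ has a $C^\tau$-Kerndarstellung in the sense of Definition \ref{def:kern}. One may equivalently invoke Theorem \ref{thm:kern}, whose estimate (\ref{equ:kernAbsch}) for $m = -\infty$ forces faster-than-polynomial decay of the kernel both near $z = 0$ and at $\infty$, and then use Lemma \ref{lemma:kernRankNegativ} to see that this kernel agrees with $K$ off the origin and so extends smoothly across $z = 0$.

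For the converse I would be given a kernel $k$ satisfying (\ref{equ:kernglatt}) and put $p(x,\xi,y) := \Fourier{z}{\xi}{k(x,y,z)} = \int e^{-iz\cdot\xi} k(x,y,z)\,dz$. The one genuine computation is to check $p \in C^\tau S^{-\infty}(\R^n,\R^n\times\R^n)$: for fixed $\alpha,\beta,\gamma \in \N^n_0$ pick $N \in \N$ with $N > |\alpha| + n$, use $\xi^\gamma e^{-iz\cdot\xi} = (-1)^{|\gamma|} D_z^\gamma e^{-iz\cdot\xi}$ and integrate by parts in $z$ to get $\xi^\gamma D_\xi^\alpha \partial_y^\beta p(x,y,\xi) = (-1)^{|\gamma|}\int e^{-iz\cdot\xi} D_z^\gamma \bigl[ z^\alpha \partial_y^\beta k(x,y,z) \bigr]\,dz$, expand $D_z^\gamma$ by Leibniz, split off a factor $\piketi{z}^{-N}$ — integrable since $\deg(D_z^\mu z^\alpha) - N < -n$ for all $\mu \le \gamma$ — and bound the remaining factor $\piketi{z}^{N} D_z^{\gamma-\mu}\partial_y^\beta k$ by (\ref{equ:kernglatt}); this gives $\norm{\xi^\gamma D_\xi^\alpha\partial_y^\beta p(\hold,y,\xi)}{C^\tau(\R^n)} \le C_{\alpha\beta\gamma}$ and, pointwise, $|\xi^\gamma D_\xi^\alpha\partial_y^\beta p(x,y,\xi)| \le C_{\alpha\beta\gamma}$ uniformly in $\xi, y$, which, as $\gamma$ is arbitrary, are exactly the defining estimates of $C^\tau S^{-\infty}$. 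Once $p$ is known to lie in the residual class, $C^\tau S^{-\infty} \subset C^\tau S^{-n-1}_{\rho,\delta}(\R^n,\R^n\times\R^n)$ puts us under Remark \ref{remark:kern-n-1}, which with Fourier inversion $k = \FourierB{\xi}{\hold}{p(x,\xi,y)}$ identifies $k$ as the $C^\tau$-kernel of $p(X,D_x,Y)$. Finally, $p \mapsto \FourierB{\xi}{\hold}{p(x,\xi,y)}$ and $k \mapsto \Fourier{z}{\hold}{k(x,y,z)}$ are mutually inverse, which is the asserted bijection.

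The step I expect to require the most care is the reconciliation of the two kernels one can attach to a symbol: the one built in Theorem \ref{thm:kern} as $\sum_j \FourierB{\xi}{\hold}{p(x,\xi,y)\lilwood{j}(\xi)}$ and the one coming from Remark \ref{remark:kern-n-1} as $\FourierB{\xi}{\hold}{p(x,\xi,y)}$. An amplitude in $(x,\xi,y)$-form is not determined by its operator away from the diagonal $z = 0$ alone, so the identification underlying the $\Rightarrow$-direction cannot be read off directly; it has to be routed through Lemma \ref{lemma:kernRankNegativ}, which guarantees that the Theorem \ref{thm:kern}-kernel in fact represents $p(X,D_x,Y)$ for all $x \in \R^n$ and not merely for $x \notin \supp u$. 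Only with that upgrade does comparing the two integral representations against all $u \in \Schwarzi$ force the two kernels to coincide on $z \neq 0$, so that the rapid-decay content of (\ref{equ:kernglatt}) can be transported back to the symbol and the finite orders $m$ get excluded.
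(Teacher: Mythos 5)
Your proposal is correct and takes essentially the same route as the paper: the converse direction is the identical integration-by-parts/Leibniz computation with the $\piket{z}{-N}\piket{z}{N}$ split under the condition $N>\abs{\alpha}+n$, and for the forward direction you ultimately invoke, as the paper does, Theorem \ref{thm:kern} together with Remark \ref{remark:kern-n-1} and Lemma \ref{lemma:kernRankNegativ} to identify the two kernels for $z\neq 0$ and extend smoothly across $z=0$. Your additional observation that one can also check (\ref{equ:kernglatt}) directly for $K(x,y,\hold)=\FourierB{\xi}{\hold}{p(x,\xi,y)}$ by differentiating under the integral is a valid, slightly more economical variant, but it does not change the substance of the argument.
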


\begin{thm}\label{thm:kerndisjunkt}
Seien $\varphi, \psi \in C^\infty(\R^n)$, sodass eine Konstante $\epsilon > 0$ existiert mit 
$$\dist{\supp \varphi}{\supp \psi} \ge \epsilon .$$
Dann gilt für jeden Pseudodifferentialoperator $p(X, D_x, X') \in \OP{C^\tau S^m_{\rho,\delta}}(\R^n,\R^n\times\R^n), \rho > 0$ in $(x,\xi,y)$-Form, dass 
$$\varphi(X) p(X,D_x,X') \psi(X') \in \OP{C^\tau S^{-\infty}(\R^n,\R^n\times\R^n)} .$$
Insbesondere folgt aus Korollar $\ref{cor:kernglatt}$, dass $\varphi(X) p(X,D_x,X') \psi(X')$ eine $C^\tau$-Kerndarstellung hat.
\begin{proof}
Setze $Q(X,D_x,X',D_{x'}) := \varphi(X) p(X,D_x,X') \psi(X') \in \OP{ C^\tau S^{m,0}_{\rho,\delta}}(\R^n,\R^n\times\R^n\times\R^n)$.
Das Doppelsymbol des Operator $Q(X,D_x,X',D_{x'})$ induziert sein vereinfachtes Symbol $q_L(x,\xi) \in C^\tau S^m_{\rho,\delta}(\R^n\times\R^n)$ gegeben in Theorem \ref{thm:simplify} durch
\begin{align*}
q_L(x,\xi) 
&= \osint e^{-iy\cdot\eta} \varphi(x) p(x, \xi + \eta,x+y) \psi(x+y) dy \dslash\eta \\
&= \osint  e^{-iy\cdot\eta} \varphi(x) \abs{y}^{-2N} \psi(x+y) (-\triangle_\eta)^N p(x, \xi+\eta, x+y) dy \dslash\eta
,\end{align*}
wobei 
$(-\triangle_\eta) e^{-iy\cdot\eta} = \abs{y}^2 e^{-iy\cdot\eta}$
für
$\triangle_\eta := \sum_{j=1}^n \frac{\partial^2}{\partial\eta_j^2} $.
Die obige Umformung ist wegen $ \menge{ y \in \R^n : \abs{y} < \epsilon} \subset \menge{y \in \R^n : \varphi(x) \psi(x+y) = 0 \textrm { für alle } x \in \R^n}$ wohldefiniert.
Wir haben $$\OP{ C^\tau S^{m,0}_{\rho,\delta}(\R^n,\R^n\times\R^n\times\R^n)} = \OP{ C^\tau S^m_{\rho,\delta}(\R^n,\R^n)} ,$$
also muss der Operator $Q$ mit einem Operator mit dem Doppelsymbol
$$q(x,\xi,x',\xi') := \varphi(x) \abs{x-x'}^{-2N} \psi(x') (-\triangle_\xi)^N p(x,\xi,x')$$ übereinstimmen.
Wegen $\dist{\supp \varphi}{\supp \psi} \ge \epsilon$ gibt es ein $C>0$, sodass
$$C^{-1} \piketi{x-x'} \le \abs{x-x'} \le C \piketi{x-x'} \textrm{ für alle } x \in \supp \varphi, x' \in \supp \psi .$$
Demnach haben wir
$$\varphi(x) \abs{x-x'}^{-2N} \psi(x') (-\triangle_\xi)^N p(x,\xi,x') \in C^\tau S^{m-2\rho N,0}_{\rho,\delta}(\R^n,\R^n\times\R^n\times\R^n) .$$
Insgesamt ist $\varphi(X) p(X,D_x,X') \psi(X') \in \OP{ C^\tau S^{m-2\rho N}_{\rho,\delta}(\R^n,\R^n\times\R^n)}$ für beliebiges $N \in \N$.
Für $N \rightarrow \infty$ erhalten wir $$\varphi(X) p(X,D_x,X') \psi(X') \in \OP{ C^\tau S^{-\infty}(\R^n,\R^n\times\R^n)} .$$
Nach Voraussetzung ist $\menge{ (x,x) : x \in \R^n} \subset \menge{(x,y) \in \R^n \times \R^n : \varphi(x) k(x,y,x-y) \psi(y) = 0}$
und damit gilt
$$ p(X,D_x,X') u(x) = \int \varphi(x) k(x,y,x-y) \psi(y) u(y) dy \textrm{ für jedes } x \in \R^n .$$
Schließlich liefert Folgerung \ref{cor:kernglatt}, dass das Symbol des Operators $\varphi(X) p(X,D_x,X') \psi(X')$ eine $C^\tau$-Kerndarstellung besitzt.
\end{proof}
\end{thm}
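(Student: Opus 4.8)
The plan is to realize the composition $Q := \varphi(X)\, p(X,D_x,X')\, \psi(X')$ as a pseudodifferential operator with a double symbol and then to exploit the disjointness of the supports in order to gain arbitrarily strong decay in the covariable by repeated integration by parts. First I would observe that multiplying the $(x,\xi,y)$-form symbol $p(x,\xi,x')$ on the left by $\varphi(x)$ and on the right by $\psi(x')$ produces the double symbol $q(x,\xi,x',\xi') := \varphi(x)\,\psi(x')\,p(x,\xi,x')$, which is independent of $\xi'$ and, since $\varphi,\psi$ are bounded with all derivatives bounded, lies in $C^\tau S^{m,0}_{\rho,\delta}(\R^n,\R^n\times\R^n\times\R^n)$ (the estimate $\piketi{\xi}{m+\delta\abs{\beta'}} \le \piketi{\xi}{m}\piketi{\xi;\xi'}{\delta\abs{\beta'}}$ absorbing the $x'$-derivatives of $p$). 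Hence $Q \in \OP{C^\tau S^{m,0}_{\rho,\delta}(\R^n,\R^n\times\R^n\times\R^n)}$, and by Theorem \ref{thm:simplify} it coincides with the operator $q_L(X,D_x)$ of the simplified symbol
$$ q_L(x,\xi) = \osint e^{-iy\cdot\eta}\, \varphi(x)\, p(x,\xi+\eta,x+y)\, \psi(x+y)\, dy\, \dslash\eta. $$

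The essential point is that, because $\dist{\supp\varphi}{\supp\psi} \ge \epsilon$, the integrand above vanishes unless $\abs{y} \ge \epsilon$; in particular the function $y \mapsto \varphi(x)\,\abs{y}^{-2N}\,\psi(x+y)$ extends smoothly across $y=0$ by zero and is bounded with all derivatives, uniformly in $x$. Therefore, for any $N \in \N$, inserting $e^{-iy\cdot\eta} = \abs{y}^{-2N}(-\triangle_\eta)^N e^{-iy\cdot\eta}$ and integrating by parts $2N$ times in $\eta$ inside the oscillatory integral (moving $(-\triangle_\eta)^N$ onto $p$, since $\abs{y}^{-2N}$ is independent of $\eta$) yields
$$ q_L(x,\xi) = \osint e^{-iy\cdot\eta}\, \varphi(x)\, \abs{y}^{-2N}\, \psi(x+y)\, \left[(-\triangle_\xi)^N p\right](x,\xi+\eta,x+y)\, dy\, \dslash\eta, $$
which exhibits $q_L$ as the simplified symbol of the double symbol $q_N(x,\xi,x',\xi') := \varphi(x)\,\abs{x-x'}^{-2N}\,\psi(x')\,(-\triangle_\xi)^N p(x,\xi,x')$. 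On $\supp\varphi \times \supp\psi$ one has $\abs{x-x'} \ge \epsilon$, so Lemma \ref{lemma:japanese} gives $C^{-1}\piketi{x-x'} \le \abs{x-x'} \le C\piketi{x-x'}$ there, whence $q_N \in C^\tau S^{m-2\rho N,\,0}_{\rho,\delta}(\R^n,\R^n\times\R^n\times\R^n)$. Since $\OP{C^\tau S^{m',0}_{\rho,\delta}(\R^n,\R^n\times\R^n\times\R^n)} = \OP{C^\tau S^{m'}_{\rho,\delta}(\R^n,\R^n)}$ by Theorem \ref{thm:simplify}, it follows that $Q \in \OP{C^\tau S^{m-2\rho N}_{\rho,\delta}(\R^n,\R^n\times\R^n)}$ for every $N$; letting $N \to \infty$ and using $\rho > 0$ gives $Q \in \OP{C^\tau S^{-\infty}(\R^n,\R^n\times\R^n)}$. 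The remaining assertion then follows from Corollary \ref{cor:kernglatt}, which produces a $C^\tau$-kernel $k$ for $Q$; moreover $\menge{(x,x):x\in\R^n} \subset \menge{(x,y)\in\R^n\times\R^n : \varphi(x)\,k(x,y,x-y)\,\psi(y) = 0}$, so the kernel identity $p(X,D_x,X')u(x) = \int \varphi(x)\,k(x,y,x-y)\,\psi(y)\,u(y)\,dy$ holds for \emph{all} $x$.

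I expect the main obstacle to be the careful justification of the two technical steps compressed above. First, that the integration by parts is legitimate inside the oscillatory integral, i.e. that $(\eta,y) \mapsto \varphi(x)\,\abs{y}^{-2N}\,\psi(x+y)\,[(-\triangle_\xi)^N p](x,\xi+\eta,x+y)$ really lies in an oscillatory class of order $m - 2\rho N$ uniformly in $(x,\xi)$ — this is where the smoothness-by-zero of $\abs{y}^{-2N}\psi(x+y)$, Peetre's inequality \ref{ungl:peetre} for $\piketi{\xi+\eta}$, and Theorem \ref{thm:osziPartielle} together with Theorem \ref{thm:osziKonvergenz} enter. Second, verifying the full double-symbol seminorm estimates for $q_N$, in particular that the passage from $\abs{x-x'}^{-2N}$ to $\piketi{x-x'}^{-2N}$ via Lemma \ref{lemma:japanese} is uniform on the (possibly unbounded) product of supports and interacts correctly with the $C^\tau(\R^n)$-norm in $x$, so that the constant $C$ in \eqref{equ:simplifyQtheta} can indeed be chosen uniformly as $N\to\infty$.
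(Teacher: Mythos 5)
Your proposal is correct and follows essentially the same route as the paper: you pass to the double symbol of $Q$, identify $Q$ with its simplified symbol via Theorem \ref{thm:simplify}, integrate by parts $2N$ times using $(-\triangle_\eta)^N e^{-iy\cdot\eta}=\abs{y}^{2N}e^{-iy\cdot\eta}$, use the support separation together with Lemma \ref{lemma:japanese} to replace $\abs{x-x'}^{-2N}$ by $\piketi{x-x'}^{-2N}$, and let $N\to\infty$ before invoking Corollary \ref{cor:kernglatt}. The concluding remarks about where the uniformity must be checked are sensible but do not alter the method — the paper's proof makes the same compressions.
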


\subsection{Reguläre Diffeomorphimen}

Basierend auf \cite{Kumanogo} und \cite{Marschall} wollen wir eine Regel für eine Transformation von Pseudodifferentialoperatoren definieren.
Dazu betrachten wir im folgenden für $t \in \R$ mit $t \ge 1$ einen $C^t$-Diffeomorphismus $h : \Omega_y \rightarrow \Omega_x$, 
der auf offene Teilmengen $\Omega_y, \Omega_x \subseteq \R^n$ definiert ist.
Unter der kanonischen Basis $\menge{e_k}_{k=1,\ldots,n}$ des $\R^n$ ist
$$h(y) := \left( h_1(y), \ldots , h_n(y) \right) \textrm{ mit } $$
$$h_j(y) := h_j(y_1, \ldots, y_n) \in C^\infty(\Omega_y) \textrm{ für alle } y = (y_1, \ldots, y_n) \in \Omega_y .$$
Die Jacobimatrix bezeichnen wir mit
$$ \derive{h}{y} := \menge{ \partial_{e_k} g_j(y) }_{k\rightarrow 1,\ldots,n}^{j\downarrow 1,\ldots,n} .$$

\begin{definition}
Ein $C^t$-Diffeomorphismus $h : \Omega_y \rightarrow \Omega_x$ auf offene Teilmengen $\Omega_y, \Omega_x \subseteq \R^n$ heißt regulär,
falls $h \in \Beschr{1}{\Omega_y}$
und es eine Konstante $C > 0$ gibt, sodass
\begin{equation}\label{equ:trafoUngl}
C^{-1} \le \abs{\det \derive{h}{y} } \le C \textrm{ für alle } y \in \Omega_y .
\end{equation}
\end{definition}

\begin{remark}
Falls $h$ regulär ist, so ist auch $h^{-1}$ ein regulärer $C^t$-Diffeomorphismus, da $\derive{h}{y} \cdot \derive{g}{h(y)} = id$.
Wir finden demnach eine Konstante $\tilde{C} > 0$, sodass
$$ \tilde{C}^{-1} \le \abs{ \det \derive{g}{x} } \le \tilde{C} .$$
Damit folgt bereits, dass es ein $C_1 > 0$ gibt, sodass
\begin{equation}\label{equ:trafoEquiv}
C_1^{-1} \abs{y - \tilde{y}} = C_1^{-1} \abs{h(g(y)) - h(g(\tilde{y}))} \le \abs{ g(y) - g(\tilde{y}) } \le C_1 \abs{y - \tilde{y}} \textrm{ für alle } y, \tilde{y} \in \Omega_y .
\end{equation} 
\end{remark}

\subsection{Invarianz unter glatter Koordinatentransformation} \label{sec:smoothkoord}
In diesem Unterabschnitt sei $h$  stets ein glatter regulärer Diffeomorphismus (setze $t := \infty$).

Für $\tilde{\Omega}_y \subsubset \Omega_y$, $\tilde{\Omega}_x := h(\tilde{\Omega}_y)$ betrachten wir die Komposition
eines Symbols $p(x,\xi,x') \in C^\tau S^m_{\rho,\delta}(\R^n,\R^n\times\R^n)$ mit zwei glatten Funktionen $\varphi, \psi \in C^\infty_0(\tilde{\Omega}_x)$ mit der Abbildungseigenschaft
$$ P := \varphi(X) p(X,D_x,X') \psi(X') : C^\infty_0(\Omega_x) \rightarrow C^\tau(\Omega_x) .$$
Wir wollen einen Pseudodifferentialoperator $A : C^\infty_0(\Omega_y) \rightarrow C^\tau(\Omega_y)$ konstruieren, der für alle $w := u\circ h \in C^\infty_0(\Omega_y)$ die Gleichung
$ A w(y) = P \left(u(y)\right)$ 
erfüllt.
Wir werden feststellen, dass sich $A$ bis auf die Restklasse $\OP{C^\tau S^{-\infty}(\R^n,\R^n\times\R^n)}$ eindeutig durch ein Symbol
$a(x,\xi,y) \in C^\tau S^m_{\rho,\delta}(\R^n,\R^n\times\R^n)$ mit
$$A \equiv \varphi(h(Y)) a(Y,D_y, Y') \psi(h(Y')) \mod \OP{C^\tau S^{-\infty}(\R^n,\R^n\times\R^n)} $$
charakterisieren lässt.
Verwenden wir die Pullback-Schreibweise, so lässt sich unsere Bedingung in 
$ \left( h^{-1} \right)^* A h^* = P$
umformulieren.
Hierbei ist
$$ \left( \left( h^{-1} \right)^* A h^* \right)(u)(x) =  A (u\circ h) (h^{-1}(x)) \textrm{ für alle } u \in C^\infty_0(\Omega_x), x \in \Omega_x .$$
Genauer gesagt, soll der Operator $A$ das Diagramm
$$
\begin{xy}
\xymatrix{
	C^\infty_0(\Omega_x) \ar[r]^P \ar[d]_{h^*}  & C^\tau(\Omega_x) \ar[d]_{h^*} \\
	C^\infty_0(\Omega_y) \ar[r]^A  & C^\tau(\Omega_y) 
}
\end{xy}
$$
kommutieren lassen.

Zunächst beschränken wir uns auf den Fall $\tilde{\Omega}_x = \Omega_x = \Omega_y = \R^n$ und wollen für $\varphi, \psi = 1$ einen \PSDO $A : C^\infty_0(\R^n) \rightarrow C^\tau(\R^n)$ finden, für den die Gleichung
\begin{equation}\label{equ:trafoPostulateRn}
A (u \circ h) (y) = \left( p(X,D_x,X') u \right) (h(y)) \textrm{ für alle } u \in \Schwarzi, y \in \R^n
\end{equation}
hält.

\begin{lemma}\label{lemma:trafoSinfty}
Sei $p(x,\xi,y) \in C^\tau S^{-\infty}(\R^n, \R^n\times\R^n)$. 
Der zu $p$ assoziierte Kern $k_p$ 
definiert das Symbol
$$a(y,\eta) := \int e^{-iz\cdot\eta} k_p(h(y), h(y-z), h(y) - h(y-z)) \abs{ \det \derive{h}{y-z} } dz \in C^\tau S^{-\infty}(\R^n, \R^n) .$$
$a$ induziert uns einen Operator $A \in \OP{C^\tau S^{-\infty}(\R^n, \R^n)}$, der (\ref{equ:trafoPostulateRn}) erfüllt.
\begin{proof}
Da $p(x,\xi,x')$ der Klasse $C^\tau S^{-\infty}(\R^n,\R^n\times\R^n)$ angehört, hat $p$ nach Korollar \ref{cor:kernglatt} eine $C^\tau$-Kerndarstellung gegeben durch
$$k_p(x,y,z) := \int e^{iz\cdot\xi} p(x,\xi,y) \dslash\xi ,$$
sodass sich der Operator von $p$ darstellen lässt durch
$$p(X,D_x,X') u(x) = \int k_p(x,\tilde{x}, x-\tilde{x}) u(\tilde{x}) d\tilde{x} \textrm{ für alle } u \in \Schwarzi .$$
Also ist für $w = u \circ h$ mit der Transformationsformel
$$A w(y) = p(X,D_x,X') u(h(y)) = 
\int k_p(h(y), h(\tilde{y}), h(y) - h(\tilde{y})) \abs{ \det \derive{h}{\tilde{y}} } w(\tilde{y}) d\tilde{y} .$$
Setzen wir
$ k_a(y,z) := k_p(h(y), h(y-z), h(y) - h(y-z)) \abs{ \det \derive{h}{y-z} }$,
so erhalten wir eine Darstellung des Operators $A$ definiert durch den Kern $k_a$:
$$ A w(y) = \int k_a(y, y - \tilde{y}) w(\tilde{y}) d\tilde{y} .$$
Schließlich wollen wir noch zeigen, dass $k_a(y,z)$ die Bedingung (\ref{equ:kernglatt}) erfüllt, also $A \in \OP{C^\tau S^{-\infty}(\R^n, \R^n)}$ ist.
Für den eher technischen Beweis wollen wir uns auf ein Symbol in $x$-Form $p \in C^\tau S^{-\infty}(\R^n, \R^n)$ beschränken. 
Der Fall für ein Symbol in $(x,y)$-Form folgt dann analog mit den selben Mitteln.

Nach (\ref{equ:trafoUngl}) ist $\abs{ \det \derive{h}{y-\eta} } \le C$, 
und (\ref{equ:trafoEquiv}) liefert $$ \abs{\eta} C_1^{-1} \le \abs{ h(y) - h(y - \eta) } \le C_1 \abs{\eta} .$$

Wir setzen $\Xi_h(y-z,y) := - \int_0^1 \derive{h}{y - \theta z} d\theta$, sodass $\Xi_h(y-z,y) z = h(y) - h(y-z) $ und betrachten unter der Notation aus Lemma \ref{faaDiBruno} mit $\alpha,\beta \in \N^n_0, \abs{\beta} \le \tau$ die Abschätzung 
\begin{align*}
&\abs{ D_y^\beta D_z^\alpha k_p(h(y), \Xi_h(y-z,y) z) }  \\
&\le 
\Landau{ D_y^\beta \sum_{\sigma \in \N^n_{0,\abs{\beta}}} k^{(\sigma)}_p(h(y), \Xi_h(y-z,y) z)
\sum_{\Sigma(\alpha,\sigma)} \prod_{j=1}^{\abs{\alpha}}
\frac{1}{\nu^j! (\mu^j!)^{\abs{\nu^j}}} \left( D_z^{\mu^j} \left[ \Xi_h(y-z,y) z) \right] \right)^{\nu^j} }
,\end{align*}
wobei $k_{p(\beta)}^{(\alpha)}(y,z) := D_z^\alpha D_y^\beta k_p(y,z)$, und wir annehmen, dass $\alpha,\beta \neq 0$ (ansonsten können die ensprechenden Schritte entfallen).
Für ein $\sigma\in\N^n_0$ betrachte
\begin{align*}
&\abs{ D_y^\beta  k^{(\sigma)}_p(h(y), \Xi_h(y-z,y) z) }  \\
&\le
\Landaui \left( \sum_{\rho \in \N^{2n}_{0,\abs{\beta}}}  k^{(\sigma+\rho_2)}_{p(\rho_1)}(h(y), \Xi_h(y-z,y) z) \right. \\
&\quad \left.
\sum_{\Sigma(\beta,\rho)} \prod_{j=1}^{\abs{\beta}}
\frac{1}{\nu^j! (\mu^j!)^{\abs{\nu^j}}} \left( D_y^{\mu^j} \menge{ y \mapsto (h(y), \Xi_h(y-z,y) z) } \right)^{\nu^j} \right)
,\end{align*}
wobei $\rho_1,\rho_2\in\N^n_0$ mit $(\rho_1,\rho_2) = \rho$.
Mit den Abschätzungen
$$\abs{ D_y^\vartheta D_z^{\mu^j} \left[ (y,z) \mapsto \Xi_h(y-z,y) z \right] }\le C_{\vartheta,\mu^j} \piket{z}{\abs{\mu^j}+\abs{\vartheta}} \textrm{ für ein beliebiges } \vartheta\in\N^n_0
$$ und 
$\abs{D_y^{\mu^j} \menge{ y \mapsto (h(y), \Xi_h(y-z,y) z) } } \le C_{\mu^j} \piket{z}{\abs{\mu^j}}$
erhalte
$$ \abs{ D_y^\beta D_z^\alpha k_p(h(y), \Xi_h(y-z,y) z) } \le
\Landau{ \sum_{\sigma \in \N^n_{0,\abs{\beta}}} \sum_{\rho \in \N^n_{0,\abs{\beta}}} k^{(\sigma+\rho_2)}_{p(\rho_1)}(h(y), \Xi_h(y-z,y) z) \piket{z}{\abs{\alpha}+\abs{\beta}} }
.$$
Da $k_p(y,z)$ der Ungleichung (\ref{equ:kernglatt}) genügt, können wir ein $N \ge \abs{\alpha}+\abs{\beta}+M$ für ein $M \in \N_0$ wählen, und folgern, dass
$$ \abs{ D_y^\beta D_z^\alpha k_p(h(y), \Xi_h(y-z,y) z) } \le C_{\alpha,\beta,N} \piket{z}{-M} .$$
Mit dem Übergang zum Supremum über alle $y \in \R^n$ erhalten wir
$$ \norm{ k_a(\hold,z) }{C^\tau(\R^n)} \le C_{\alpha,\tau,N} \piket{z}{-M} .$$
Also genügt auch $k_a(y,z)$ der Ungleichung (\ref{equ:kernglatt}). 
\end{proof}
\end{lemma}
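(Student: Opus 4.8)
Der Plan gliedert sich in drei Teile: (1) die Darstellung des transformierten Operators durch einen Kern, (2) die Identifikation dieses Kerns mit dem zu $a$ assoziierten Objekt und (3) der technische Nachweis der Kernabschätzung (\ref{equ:kernglatt}). Zuerst nutze ich Folgerung \ref{cor:kernglatt}: da $p \in C^\tau S^{-\infty}(\R^n,\R^n\times\R^n)$, besitzt $p$ eine $C^\tau$-Kerndarstellung mit $k_p(x,y,z) = \FourierB{\xi}{z}{p(x,\xi,y)}$, sodass $p(X,D_x,X')u(x) = \int k_p(x,\tilde x, x-\tilde x) u(\tilde x)\, d\tilde x$ für alle $u \in \Schwarzi$. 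Einsetzen von $x = h(y)$ und die Transformationsformel (Theorem \ref{thm:masstheorieTransformation}) mit der Substitution $\tilde x = h(\tilde y)$ liefern unmittelbar
$$ Aw(y) = p(X,D_x,X')u(h(y)) = \int k_p\bigl(h(y),h(\tilde y),h(y)-h(\tilde y)\bigr)\,\abs{\det \derive{h}{\tilde y}}\, w(\tilde y)\, d\tilde y, $$
also $Aw(y) = \int k_a(y, y-\tilde y) w(\tilde y)\, d\tilde y$ mit $k_a(y,z) := k_p(h(y),h(y-z),h(y)-h(y-z))\abs{\det \derive{h}{y-z}}$. Hieraus ergibt sich auch sofort $a(y,\eta) = \Fourier{z}{\eta}{k_a(y,z)}$ als das im Lemma angegebene Symbol, sobald $k_a$ die Bedingung (\ref{equ:kernglatt}) erfüllt — denn dann folgt mit der Rückrichtung von Folgerung \ref{cor:kernglatt}, dass $a \in C^\tau S^{-\infty}(\R^n,\R^n)$ und dass $A = a(Y,D_y)$ die geforderte Gleichung (\ref{equ:trafoPostulateRn}) erfüllt.

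Der Kern der Arbeit ist also der Nachweis von (\ref{equ:kernglatt}) für $k_a$, d.h. die Abschätzung $\sup_{z} \piket{z}{N} \norm{\partial_z^\alpha \partial_y^\beta k_a(\hold,z)}{C^\tau(\R^n)} < \infty$ für alle $\alpha,\beta,N$. Wie im Exzerpt angedeutet genügt es, den Fall eines Symbols in $x$-Form zu behandeln; der allgemeine Fall folgt wörtlich mit denselben Mitteln, da die zusätzliche $y$-Abhängigkeit von $k_p$ nur weitere (harmlose) Faktoren in der Faà-di-Bruno-Entwicklung erzeugt. Ich setze $\Xi_h(y-z,y) := -\int_0^1 \derive{h}{y-\theta z}\, d\theta$, sodass nach dem mehrdimensionalen Mittelwertsatz (Lemma \ref{mittelwertsatz}) $h(y)-h(y-z) = \Xi_h(y-z,y)\, z$. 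Dann berechne ich $D_y^\beta D_z^\alpha k_p(h(y),\Xi_h(y-z,y)z)$ mit der Formel von Faà di Bruno (Lemma \ref{faaDiBruno}), einmal für die $z$-Ableitungen und (geschachtelt) einmal für die $y$-Ableitungen, wobei die inneren Funktionen $y\mapsto h(y)$ bzw. $(y,z)\mapsto \Xi_h(y-z,y)z$ sind. Die Schlüsselabschätzungen sind dabei $\abs{D_y^\vartheta D_z^{\mu^j}[\Xi_h(y-z,y)z]} \le C_{\vartheta,\mu^j}\piket{z}{\abs{\mu^j}+\abs{\vartheta}}$ (linear in $z$, plus beschränkte Ableitungen von $h$, da $h \in \Beschr{1}{\R^n}$ — hier ist zu beachten, dass $h$ als regulärer Diffeomorphismus $C^1$-beschränkt ist, die höheren Ableitungen im glatten Fall dieses Unterabschnitts ohnehin beschränkt vorausgesetzt werden) und $\abs{D_y^{\mu^j}\{y\mapsto(h(y),\Xi_h(y-z,y)z)\}} \le C_{\mu^j}\piket{z}{\abs{\mu^j}}$. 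Einsetzen ergibt
$$ \abs{D_y^\beta D_z^\alpha k_p(h(y),\Xi_h(y-z,y)z)} \le C \sum_{\sigma,\rho} \abs{k_{p(\rho_1)}^{(\sigma+\rho_2)}(h(y),\Xi_h(y-z,y)z)}\,\piket{z}{\abs{\alpha}+\abs{\beta}}. $$
Da $k_p$ selbst (\ref{equ:kernglatt}) erfüllt, schätze ich jeden Summanden durch $C_{N'}\piket{\Xi_h(y-z,y)z}{-N'}$ ab und nutze die aus der Regularität folgende Äquivalenz $\piket{\Xi_h(y-z,y)z}{} \gtrsim \piket{z}{}$ (via (\ref{equ:trafoEquiv}): $\abs{h(y)-h(y-z)} \ge C_1^{-1}\abs{z}$, kombiniert mit Lemma \ref{lemma:japanese}), um bei geeignet großem $N' \ge \abs{\alpha}+\abs{\beta}+N$ den gewünschten Faktor $\piket{z}{-N}$ zu erhalten.

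Der Leibniz-Anteil, der aus der Differentiation des Faktors $\abs{\det \derive{h}{y-z}}$ stammt, ist harmlos, da dieser Faktor zusammen mit allen seinen Ableitungen nach $y$ und $z$ gleichmäßig beschränkt ist (glatter regulärer Diffeomorphismus plus (\ref{equ:trafoUngl})); man verteilt also $D_y^\beta D_z^\alpha$ per Leibniz-Regel auf die beiden Faktoren und wendet die obige Abschätzung auf den Kernfaktor an. Zum Abschluss bilde ich das Supremum über $y \in \R^n$ (für $\abs{\beta} \le \tau$ in der $C^{\gauss{\tau}}$-Norm) und behandle — falls $\tau \notin \N$ — die Hölder-Halbnorm $[\,\cdot\,]_{\tau-\gauss{\tau}}$ separat, indem ich die Differenz $k_p(h(y),\cdots) - k_p(h(y'),\cdots)$ betrachte und denselben Faà-di-Bruno-Apparat auf $x\mapsto k_p(x,z) - k_p(x',z)$ anwende, wobei ich die Hölder-Stetigkeit von $k_p$ in der ersten Variablen (wiederum aus (\ref{equ:kernglatt})) zusammen mit (\ref{equ:trafoEquiv}) ausnutze. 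Das liefert $\norm{k_a(\hold,z)}{C^\tau(\R^n)} \le C_{\alpha,\beta,N}\piket{z}{-N}$ für alle $N$, also (\ref{equ:kernglatt}) für $k_a$. Die eigentliche Hürde ist somit nicht konzeptioneller, sondern buchhalterischer Natur: das saubere Verfolgen der Potenzen von $\piket{z}{}$ durch die doppelte Faà-di-Bruno-Entwicklung hindurch und die korrekte Ausnutzung der Regularitätsbedingung (\ref{equ:trafoEquiv}), um jede Polynomdivergenz in $z$ durch den beliebig hohen Abfall von $k_p$ zu kompensieren.
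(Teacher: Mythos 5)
Your proposal is correct and follows essentially the same route as the paper's proof: kernel representation via Folgerung \ref{cor:kernglatt}, change of variables to obtain $k_a(y,z)=k_p(h(y),h(y-z),h(y)-h(y-z))\abs{\det\derive{h}{y-z}}$, the Mittelwertsatz-factorisation $h(y)-h(y-z)=\Xi_h(y-z,y)z$, and a nested Faà-di-Bruno estimate combined with the regularity bound (\ref{equ:trafoEquiv}) to absorb the polynomial growth into the rapid decay of $k_p$. You are a bit more explicit than the paper about two points it glosses over — chaining (\ref{equ:trafoEquiv}) with Lemma \ref{lemma:japanese} to pass from $\piket{\Xi_h(y-z,y)z}{}$ to $\piket{z}{}$, and the separate treatment of the Hölder seminorm of $k_a(\hold,z)$ when $\tau\notin\N$ — but this does not change the argument.
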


\nomenclature{$\Landaui$}{Landau-Symbol für eine asymptotische obere Schranke}

\begin{figure}[ht]
\caption{Träger von $\menge{\left( \varphi_j, \psi_j \right)}_{j\in\N}$ aus Lemma \ref{koord:lemmaPartition} für $r = 1, n = 2$}
\centering
\begin{tikzpicture}
\draw[very thin,color=gray] (0,0) grid(10,10);
\draw[->] (5,0) -- (5,10.2) node[above] {$y$};
\draw[->] (0,5) -- (10.2,5) node[right] {$x$};
\foreach \pos in {-1,1}
	\draw[shift={(\pos+5,5)}] (0pt,2pt) -- (0pt,-2pt) node[below] {$\pos$};
\foreach \pos in {-1,1}
	\draw[shift={(5,\pos+5)}] (2pt,0pt) -- (-2pt,0pt) node[left] {$\pos$};

\draw[densely dotted] (5,5) circle(1.4142135623730950488);
\draw[densely dotted] (5,5) circle(1.4142135623730950488*2);
\draw[densely dotted] (5,5) circle(1.4142135623730950488*3);
\draw (5,5) circle(1) node[anchor=center, fill=white] {$\varphi_0$};
\draw[dashed] (6,6) circle(1) node[anchor=center, fill=white] {$\varphi_1$};
\draw (5,5) circle(3.5);
\draw (5,5+3.5) node[anchor=south, fill=white] {$\psi_0$};
\draw (5-1.4142135623730950488*3,5) node[fill=white,anchor=east] {$B_{3\sqrt{2}}(0)$};
\draw (5-1.4142135623730950488*2,5) node[anchor=center] {$B_{2\sqrt{2}}(0)$};
\draw (5,-1.4142135623730950488+5) node[fill=white,anchor=north] {$B_{\sqrt{2}}(0)$};
\end{tikzpicture}
\end{figure}

\begin{lemma}\label{koord:lemmaPartition}
Sei $\Gamma := \menge{ \gamma_j}_{j\in\N} := \Z^n \subset \R^n$ das Gitter der ganzen Zahlen auf $\R^n$.
Für jedes $r \in \R_+$ gibt es eine Familie $\menge{\left( \varphi_j, \psi_j \right)}_{j\in\N}$ von $C^\infty_0(\R^n)$-Funktionen auf $\R^n$ mit den Eigenschaften
\begin{enumerate}[(a)]
\item $ \sum_{j=0}^\infty \varphi_j(x) = 1 \textrm{ für alle } x \in \R^n$,
\item $ \supp \varphi_j \subset B_{r\sqrt{n}}(r\gamma_j) \subset \supp \psi_j \subset B_{3r\sqrt{n}}(r\gamma_j)$,
\item $ \psi(x) = 1 \textrm{ für alle } x \in B_{2\sqrt{n}}(r\gamma_j) $,
\item $ \abs{\partial_x^\beta \varphi_j(x)} \le c_{\varphi,\beta} r^{-\abs{\beta}} \textrm{ und }
\abs{\partial_x^\beta \psi_j(x)} \le c_{\psi,\beta} r^{-\abs{\beta}} \textrm{ für alle } x \in \R^n $,
\end{enumerate}
wobei $ c_{\varphi,\beta}$ und $ c_{\psi,\beta}$ unabhängig von $j$ und $r$ sind.
\begin{proof}
Seien $\varphi, \psi \in C^\infty_0(\R^n)$ mit $\varphi \ge 0$, $\psi \ge 0$ und
$$ \varphi > 0 \textrm{ auf } B_{\frac{2}{3}\sqrt{n}}(0) \textrm{ und } \psi = 1 \textrm{ auf } B_{2\sqrt{n}}(0) ,$$
sowie
$ \supp \varphi \subset B_{\sqrt{n}}(0) \subset \supp \psi \subset B_{3\sqrt{n}}(0)$
vorgegeben.
Wegen 
$$\inf_{j \in \N, i \neq j} \dist{\gamma_i}{\gamma_j} \le \sqrt{n} \textrm{ zu jedem } i \in \N ,$$
und wegen der äquidistanten Verteilung des Gitters gilt bereits
$\bigcup_{j\in\N} B_d(\gamma_j) = \R^n$ für jedes $d > \frac{\sqrt{n}}{2}$.
Wegen $\supp \varphi \subset B_{\sqrt{n}}(0)$ existiert ein $l \in \N$, sodass
$$ M(x) := \menge{k \in \N : \varphi( \frac{x}{r} - \gamma_k) \neq 0} < l \textrm{ gleichmäßig in } x \in \R^n .$$
Da $B_{\frac{2}{3}\sqrt{n}}(0) \subset \supp \varphi$ ist $M(x) > 0$ für alle $x \in \R^n$.
Setzen wir demnach für $r > 0$
$$ \psi_j(x) := \psi(\frac{x}{r} - \gamma_j) \textrm{ und } \varphi_j(x) := \frac{ \varphi( \frac{x}{r} - \gamma_j) }{ \sum_{k=1}^\infty \varphi( \frac{x}{r} - \gamma_k) } ,$$
so sind $\varphi_j, \psi_j$ wohldefiniert und erfüllen die gewünschten Eigenschaften.
\end{proof}
\end{lemma}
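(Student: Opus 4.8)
The plan is to construct the two families $\{\varphi_j\}$ and $\{\psi_j\}$ explicitly as translated-and-rescaled copies of two fixed bump functions $\varphi,\psi \in C^\infty_0(\R^n)$, then verify the four listed properties one by one. First I would fix $\varphi \ge 0$ with $\varphi > 0$ on $B_{\frac{2}{3}\sqrt n}(0)$ and $\supp\varphi \subset B_{\sqrt n}(0)$, and $\psi \ge 0$ with $\psi = 1$ on $B_{2\sqrt n}(0)$ and $\supp\psi \subset B_{3\sqrt n}(0)$; both exist by the standard mollifier construction already used earlier in the text (cf.\ the construction of the Littlewood--Paley function). Then set $\psi_j(x) := \psi(\tfrac{x}{r} - \gamma_j)$ and, after checking the denominator below is a well-defined positive smooth function, $\varphi_j(x) := \varphi(\tfrac{x}{r}-\gamma_j)\big/\sum_{k} \varphi(\tfrac{x}{r}-\gamma_k)$.

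The key intermediate step is to show that $S(x) := \sum_{k \in \N} \varphi(\tfrac{x}{r}-\gamma_k)$ is a well-defined, strictly positive, smooth function on $\R^n$, uniformly bounded above and below. For positivity, since the lattice $\Z^n$ has the property that $\bigcup_k B_d(\gamma_k) = \R^n$ for every $d > \tfrac{\sqrt n}{2}$, and $\varphi > 0$ on $B_{\frac23\sqrt n}(0) \supset B_{\frac{\sqrt n}{2}+\epsilon}(0)$ for small $\epsilon$, every point $\tfrac{x}{r}$ lies in some $B_{\frac23\sqrt n}(\gamma_k)$, so at least one summand is positive: $S(x) \ge c_0 > 0$ uniformly. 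For local finiteness and the upper bound, $\supp\varphi \subset B_{\sqrt n}(0)$ forces $\#\{k : \varphi(\tfrac xr - \gamma_k) \ne 0\}$ to be bounded by some $l \in \N$ independent of $x$ and $r$ (it counts lattice points in a ball of fixed radius), hence the sum is locally finite, smooth, and bounded above by $l\,\|\varphi\|_\infty$. Property (a), $\sum_j \varphi_j \equiv 1$, is then immediate from the definition as a normalized partition, and property (c) for $\psi_j$ follows directly from $\psi \equiv 1$ on $B_{2\sqrt n}(0)$ after the affine change of variables; the support inclusions in (b) follow from $\supp\varphi \subset B_{\sqrt n}(0) \subset \{ \varphi > 0 \text{ somewhere near } 0\} \subset \supp\psi$ transported by $x \mapsto \tfrac xr - \gamma_j$, noting $\varphi_j$ has the same support as $x\mapsto\varphi(\tfrac xr-\gamma_j)$ since the denominator $S$ never vanishes.

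For the derivative bounds (d), I would differentiate: $\partial_x^\beta \psi_j(x) = r^{-|\beta|}(\partial^\beta\psi)(\tfrac xr - \gamma_j)$, so $|\partial_x^\beta\psi_j| \le r^{-|\beta|}\|\partial^\beta\psi\|_\infty =: c_{\psi,\beta} r^{-|\beta|}$ with $c_{\psi,\beta}$ manifestly independent of $j$ and $r$. For $\varphi_j = (\varphi(\tfrac\cdot r - \gamma_j))/S$, apply the Leibniz rule together with the quotient rule; each factor $\partial^\alpha$ of $\varphi(\tfrac xr-\gamma_j)$ contributes $r^{-|\alpha|}$, each derivative of $1/S$ contributes $r^{-(\cdot)}$ bounded via the uniform lower bound $S \ge c_0$ and the uniform upper bounds on $\partial^\gamma S$ (which again scale like $r^{-|\gamma|}$ with constants controlled by $l$ and the fixed $\varphi$). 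Collecting powers of $r$ gives exactly $r^{-|\beta|}$ with a constant $c_{\varphi,\beta}$ depending only on $n$, $\varphi$, $l$, $c_0$ — not on $j$ or $r$.

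The main obstacle is the bookkeeping in the two quantitative claims: first, pinning down that the overlap count $l$ and the lower bound $c_0$ for $S$ are genuinely uniform in both $j$ and the scaling parameter $r$ (this is where one must be careful that rescaling by $r$ does not change the combinatorics of how many lattice translates of a fixed-radius ball can meet a point — it does not, since the whole picture scales homogeneously), and second, propagating these uniform bounds through the quotient-rule estimate for $\partial_x^\beta\varphi_j$ so that no hidden $r$- or $j$-dependence creeps into $c_{\varphi,\beta}$. Everything else is routine change-of-variables. I expect the proof as written in the excerpt to simply assert the existence of such $\varphi,\psi$ and $l$ and leave these verifications to the reader, which is the reasonable level of detail here.
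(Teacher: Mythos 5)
Your proposal is correct and follows essentially the same construction as the paper: choose fixed bump functions $\varphi,\psi$ with the stated support properties, set $\psi_j(x)=\psi(\tfrac xr-\gamma_j)$, normalize $\varphi(\tfrac xr-\gamma_j)$ by the uniformly positive, locally finite sum $S(x)=\sum_k\varphi(\tfrac xr-\gamma_k)$, and use the homogeneous scaling to get $r$- and $j$-independent constants. The paper indeed asserts the final verification of (a)--(d) without writing it out, exactly as you anticipated, so your extra bookkeeping on the uniformity of $l$, $c_0$, and the quotient-rule estimate is a welcome elaboration rather than a deviation.
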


\begin{thm}\label{thm:trafoRn}
Sei $h : \R^n \rightarrow \R^n$ ein regulärer Diffeomorphismus und $0 \le 1-\rho \le \delta \le \rho, \delta < 1$.
Dann ist für $p(x, \xi, x') \in C^\tau S^m_{\rho,\delta}(\R^n, \R^n\times\R^n)$
der Operator 
$$A : \Schwarzi \rightarrow \BeschrZ, \left(A (u \circ h)\right)(y) = (P u)(h(y))$$
ein Pseudodifferentialoperator der Klasse $\OP{C^\tau S^m_{\rho,\delta}(\R^n,\R^n\times\R^n)}$.
Für $r > 0$ genügend klein ist das Symbol in $(x,y)$-Form
\begin{equation}\label{equ:trafoq}
a(y,\eta,y') = \sum_{j=1}^\infty \varphi_j(h(y)) p\left(h(y), \Xi_h(y,y')^{-T} \eta, h(y')\right) \psi_j(h(y')) \abs{\det \Xi_h(y,y')}^{-1} \abs{\det \derive{h}{y'} }
\end{equation}
wohldefiniert und erfüllt 
$ A \equiv a(Y, D_y, Y') \mod{\OP{C^\tau S^{-\infty}(\R^n,\R^n\times\R^n)}}$,
wobei
\begin{equation}\label{equ:trafoA_h}
\Xi_h(y,y') = \int_0^1 \derive{h}{y'+\theta(y-y')} d\theta .
\end{equation}
\begin{proof}
Für $r>0$ klein genug gibt es wegen (\ref{equ:trafoUngl}) nach Definition von $\Xi_h$ ein $\epsilon > 0$, sodass
\begin{equation}\label{equ:trafoA_hBed}
\frac{1}{C+\epsilon} \le \abs{\det \Xi_h(y,y')} \le C+\epsilon \textrm{ auf } \supp \left( (\varphi_j \psi_j) \circ h\right) ,
\end{equation}
wobei $C$ wie in (\ref{equ:trafoUngl}) definiert ist.
Setze
$$p_j(x,\xi,x') := \varphi_j(x) p(x,\xi, x') \psi_j(x') \textrm{ und } 
p_{\infty,j}(x,\xi,x') := \varphi_j(x) p(x,\xi,x') (1 - \psi_j(x')) .$$
Wegen $\dist{\supp \varphi_j}{\supp (1-\psi_j)} > 0$ ist nach Theorem \ref{thm:kerndisjunkt} das Symbol $p_{\infty,j}(x,\xi,x') \in C^\tau S^{-\infty}(\R^n,\R^n\times\R^n)$.
Nach Lemma \ref{lemma:trafoSinfty} stimmt $p_{\infty,j}(X,D_x,X')$ nach der Koordinatentransformation mit einem Operator aus $\OP C^\tau S^{-\infty}(\R^n,\R^n\times\R^n)$ überein.
Unser Operator lässt sich also in zwei Terme
$$p(X, D_x, X') = \sum_j p_j(X,D_x, X') + \sum_j p_{\infty,j}(X,D_x,X')$$
zerteilen, wovon der letzte Term nach Koordinatentransformation mit einem Operator $A_\infty$ der Klasse $\OP{C^\tau S^{-\infty}(\R^n,\R^n\times\R^n)}$ übereinstimmt.
Wir müssen also noch die Transformationseigenschaft für den ersten Term überprüfen.
Sei dazu $\chi \in \Schwarzi$ mit $\chi(0) = 1$ und definiere $\chi_\epsilon(\xi) := \chi(\epsilon\xi)$ für $\epsilon>0, \xi \in \R^n$.
Theorem \ref{thm:osziFubini}.\ref{item:osziFubiniA} liefert für jedes $u \in \Schwarzi$ wegen $x' \mapsto p_j(x,\xi,x') u(x') \in L^1(\R^n)$ nach Lemma \ref{lemma:IntroSymS->S} die Aussage, dass wir das oszillatorische Integral umformen können in
\begin{align*}
& \left( p_j(X,D_x, X') u \right) \left(h(y)\right) \\
&= \lim_{\epsilon\rightarrow 0} \iint e^{i(h(y)-x')\cdot\xi} \chi_\epsilon(\xi) p_j(h(y), \xi, x') u(x') dx' \dslash \xi \\
&= \lim_{\epsilon\rightarrow 0} \iint e^{i(h(y)-h(y'))\cdot\xi} \chi_\epsilon(\xi) p_j(h(y), \xi, h(y')) \abs{\det \derive{h}{y'} } u(h(y')) dy' \dslash\xi
,\end{align*}
wobei wir mit dem Diffeomorphismus $h : x' \mapsto h(x') =: h(y')$ das innere Integral transformiert haben.
Andererseits gilt nach dem Mittelwertsatz im Mehrdimensionalen, dass
$$ h(y) - h(y') = \left( \int_0^1 \derive{h}{y' + \theta(y-y')} d\theta \right) (y-y') = \Xi_h(y,y') (y-y') .$$
$\Xi_h(y,y')$ ist damit ein wohldefinierter Homöomorphismus in einer Umgebung der Diagonalen von $\R^n\times\R^n$, 
und lokal durch
$$ \left( h(y) - h(y') \right) \cdot \xi = 
\sum_{j=1}^n \left( h_j(y) - h_j(y') \right) \xi_j = 
\sum_{j,k} \Xi_h(y,y')_{kj} (y_k - y_k') \xi_j$$
gegeben.
Für $\eta := \Xi_h(y,y')^T \xi$ ist also $\left(h(y) - h(y')\right)\cdot\xi = (y-y')\cdot\eta$, und damit
\begin{align*}
\left( p_j(X,D_x,X') u\right)(h(y)) 
=& \lim_{\epsilon\rightarrow 0} \iint e^{i(y-y')\cdot\eta} \chi_\epsilon\left(\Xi_h(y,y')^{-T}\eta\right) p_j\left(h(y), \Xi_h(y,y')^{-T}\eta, h(y')\right) \\
& \abs{\det \Xi_h(y,y')}^{-1} \abs{\det \derive{h}{y'} } u(h(y')) dy' \dslash\eta \\
=& \lim_{\epsilon\rightarrow 0} \iint  e^{i(y-y')\cdot\eta} \chi_\epsilon\left(\Xi_h(y,y')^{-T}\eta\right) \tilde{a}_j(y,\eta,y') u(h(y')) dy' \dslash\eta
,\end{align*}
wobei 
$\tilde{a}_j(y,\eta,y') := p_j(h(y), \Xi_h(y,y')^{-T} \eta, h(y')) \abs{\det \Xi_h(y,y')}^{-1} \abs{\det \derive{h}{y'} }$.
Nun ist
$$\menge{ \chi_\epsilon( \Xi_h(y,y+z)^{-T}\eta) : \epsilon \in (0,1) } \subset \Oszillatory{0,0}{0}$$
beschränkt bezüglich $(z,\eta)$ und Lemma \ref{lemma:osziChi} liefert die beiden Konvergenzeigenschaften
$$\chi_\epsilon\left( \Xi_h(y, y+z)^{-T} \eta \right) \rightarrow 1 \textrm{ für } \epsilon \rightarrow 0 \textrm{ für alle } y,z,\eta \in \R^n ,$$
$$ \partial_z^\alpha \partial_\eta^\beta \left( \chi_\epsilon\left( \Xi_h(y, y+z)^{-T} \eta \right) \right) \rightarrow 0 \textrm{ für } \epsilon \rightarrow 0 \textrm{ für alle } y,z,\eta \in \R^n, \alpha,\beta \in \N^n_0 \textrm{ mit } \abs{\alpha}+\abs{\beta}>0 .$$
Nach Theorem \ref{thm:osziKonvergenz} können wir den Limes in das Integral hineinziehen und erhalten mit $w := u \circ h$ ein Oszillationsintegral der Form 
\begin{align*}
\left(p_j(X,D_x,X') u\right)(h(y))
&= \osint e^{i(y-y')\cdot\eta} \tilde{a}_j(y,\eta,y') w(y') dy' \dslash\eta \\
&=: a_j(Y, D_y, Y') w(y)
.\end{align*}
Wegen $a(y,\eta,y') = \sum_{j=1}^\infty a_j(y,\eta,y')$ und $A = a(Y,D_y, Y') + A_\infty$ folgt die Behauptung, 
falls $a_j(x,\xi,y) \in C^\tau S^m_{\rho,\delta}(\R^n,\R^n\times\R^n)$.
Dies gilt es also noch zu zeigen. 
Zunächst betrachten wir den Fall $\tau \in \N$.
Dafür beweisen wir, dass $\tilde{a}_j(x,\xi,y) \in C^\tau S^{m}_{\rho,\delta}(\R^n,\R^n\times\R^n)$.
Seien $\alpha,\beta,\gamma \in \N^n_0$ mit $\abs{\beta} \le \tau$. Wir haben
\begin{align*}
\abs{ D_y^\gamma D_x^\beta D_\xi^\alpha \tilde{a}_j(x,\xi,y) } 
&= \abs{ D_y^\gamma D_x^\beta D_\xi^\alpha p_j\left(h(x), \Xi_h(x,y)^{-T}\xi, h(y)\right) \abs{\det \Xi_h(x,y)}^{-1} \abs{\det \derive{h}{y} } }  \\
&= \abs{ D_y^\gamma D_x^\beta \left[ D_\xi^\alpha p_j\left(h(x), \Xi_h(x,y)^{-T}\xi, h(y)\right) \zeta(x,y) \right] } 
,\end{align*}
wobei 
$\zeta(x,y) := \abs{\det \Xi_h(x,y)}^{-1} \abs{\det \derive{h}{y} }$ mit $\abs{ D_y^\gamma D_x^\beta \zeta(x,y) } \le C_{\alpha\beta}$ nach (\ref{equ:trafoUngl}).
Mit Kettenregel folgt
\begin{align*}
&\abs{ \deriveii_\xi \left( p_j\left(h(x), \Xi_h(x,y)^{-T}\xi, h(y)\right) \right) }\\
&= \abs{ \deriveii_{\Xi_h(x,y)^{-T}\xi} p_j\left(h(x), \Xi_h(x,y)^{-T}\xi, h(y)\right) \cdot \derive{\Xi_h(x,y)^{-T}}{\xi} } \\
&\le C \abs{ \deriveii_{\Xi_h(x,y)^{-T}\xi} p_j\left(h(x), \Xi_h(x,y)^{-T}\xi, h(y)\right) }
.\end{align*}
Wir erhalten also die Abschätzung 
$$ \abs{ D_y^\gamma D_x^\beta D_\xi^\alpha \tilde{a}_j(x,y,\xi) } \le
\Landau{ D_y^\gamma D_x^\beta p^{(\alpha)}_j\left(h(x), \Xi_h(x,y)^{-T}\xi, h(y)\right) } .$$
Für den nächsten Schritt teilen wir ein Multiindex $\sigma\in\N^{2n}_0$ in die Hälften $\sigma_1,\sigma_2\in\N^n_0$ auf, sodass $\sigma = (\sigma_1,\sigma_2)$.
Unter Verwendung von Formel \ref{faaDiBruno} und dessen Notation ist
\begin{align*}
&\abs{ D_y^\gamma D_x^\beta p^{(\alpha)}_j\left(h(x), \Xi_h(x,y)^{-T}\xi, h(y)\right) } \\
&\le 
\Landaui \left( D_y^\gamma \sum_{\sigma \in \N^{2n}_{0,\abs{\beta}}} p_{j(\sigma_1)}^{(\alpha+\sigma_2)}\left(h(x), \Xi_h(x,y)^{-T}\xi, h(y)\right) \right. \\
&\quad \left.
\sum_{\Sigma(\beta,\sigma)} \prod_{j=1}^{\abs{\beta}}
\frac{1}{\nu^j! (\mu^j!)^{\abs{\nu^j}}} 
\left( D_x^{\mu^j} \left( x \mapsto (h(x), \Xi_h(x,y)^{-T} \xi ) \right) \right)^{\nu^j} \right) \\
&\le 
\Landau{ D_y^\gamma \sum_{\beta_1+\beta_2 = \beta} p_{j(\beta_1)}^{(\alpha+\beta_2)}\left(h(x), \Xi_h(x,y)^{-T}\xi, h(y)\right) \piket{\xi}{\abs{\beta_2}} }
,\end{align*}
wobei $p^{(\alpha)}_{j(\beta,\beta')}(x,\xi,y) := D_y^{\beta'} D_\xi^\alpha D_x^\beta p_j(x,\xi,y)$. 
Für den Summanden haben wir
\begin{align*}
& \abs{ D_y^\gamma p_{j(\beta_1)}^{(\alpha+\beta_2)}\left(h(x), \Xi_h(x,y)^{-T}\xi, h(y)\right) } \\
&= \gamma! \left| \sum_{\sigma \in \N^n_{0,\abs{\gamma}}} p_{j(\beta_1,\sigma_1)}^{(\alpha+\beta_2+\sigma_2)}\left(h(x), \Xi_h(x,y)^{-T}\xi, h(y)\right) \right. \\
& \quad \left.
\sum_{\Sigma(\gamma,\sigma)} \prod_{j=1}^{\abs{\gamma}}
\frac{1}{\nu^j! (\mu^j!)^{\abs{\nu^j}}}
\left( D_y^{\mu^j} \left( y \mapsto (\Xi_h(x,y)^{-T} \xi, h(y) ) \right) \right)^{\nu^j} \right| \\
&\le
\Landau{ p_{j(\beta_1)}^{(\alpha+\beta_2+\gamma)}\left(h(x), \Xi_h(x,y)^{-T}\xi, h(y)\right) \piket{\xi}{\abs{\gamma}}}
.\end{align*}
Für die Berechnungen haben wir die Voraussetzung $\beta,\gamma \neq 0$ für Lemma \ref{faaDiBruno} fallen gelassen, 
da wir für $\beta = 0$ bzw. $\gamma = 0$ einfach den jeweiligen Schritt auslassen können.
Insgesamt ist also 
\begin{align*}
\abs{ D_y^\gamma D_x^\beta D_\xi^\xi \tilde{a}_j(x,\xi,y) }
& \le C_{\alpha\beta\gamma} \piket{\xi}{m-\rho\abs{\alpha}+(1-\rho)\abs{\gamma}} \sum_{\beta_1+\beta_2 = \beta} \piket{\xi}{\delta\abs{\beta_1} + (1-\rho)\abs{\beta_2}} \\
& \le C_{\alpha\beta\gamma} \piket{\xi}{m-\rho\abs{\alpha}+(1-\rho)\abs{\gamma} + \abs{\beta} \max\menge{\delta,1-\rho}}
.\end{align*}
Nach Voraussetzung ist $1-\rho < \delta$, also $\max\menge{\delta,1-\rho} = \delta$ und damit
$$ \abs{ D_y^\gamma D_x^\beta D_\xi^\xi \tilde{a}_j(x,y,\xi) } \le
C_{\alpha\beta\gamma} \piket{\xi}{m-\rho\abs{\alpha}+\delta\abs{\beta}+\delta\abs{\gamma}} .$$
Also ist $\tilde{a}_j(x,\xi,y) \in C^\tau S^m_{\rho,\delta}(\R^n,\R^n\times\R^n)$.

Für $\tau \in (0,1)$ und $x, \chi \in \R^n$ definiere den Operator $\triangle_{\chi,x}$ durch
$\triangle_{\chi,x} p(x,\xi,y) := f(x+\chi) - f(x)$ für eine stetige Funktion $f : \R^n \rightarrow \C$.
Wir sehen sofort, dass
\begin{align*}
\triangle_{\chi,x} p\left(h(x), \Xi_h(x,y)^{-T}\xi, h(y)\right) 
&= \triangle_{\chi,x} p\left(h(x), \Xi_h(z,y)^{-T}\xi, h(y)\right) \mid_{z=x} \\
&+ \triangle_{\chi,x} p\left(h(z), \Xi_h(x,y)^{-T}\xi, h(y)\right) \mid_{z=x}
.\end{align*}
Insbesondere kommutiert der Operator $\triangle_{\chi,x}$ mit dem Ableitungsoperator, sodass
wir wegen der Hölder-Stetigkeit im ersten Argument von $p$ 
\begin{align*}
\abs{ D_y^\gamma D_\xi^\alpha \triangle_{\chi,x} p\left(h(x), \Xi_h(z,y)^{-T}\xi, h(y)\right) \mid_{z=x} } 
&= \abs{ \triangle_{\chi,x} D_y^\gamma D_\xi^\alpha p\left(h(x), \Xi_h(z,y)^{-T}\xi, h(y)\right) \mid_{z=x} }  \\
&\le C_{\alpha\gamma} \piket{\xi}{m-\rho\abs{\alpha}+\delta\abs{\gamma}+\delta\tau} \abs{\triangle_{x,\chi} h(x)}^\tau  \\
&\le C_{\alpha\gamma} C \piket{\xi}{m-\rho\abs{\alpha}+\delta\abs{\gamma}+\delta\tau} \abs{\chi}^\tau
\end{align*}
erhalten, wobei $\abs{ \triangle_{x,\chi} h(x) } \le C \abs{\chi}$ mit $C>0$ Lipschitzkonstante von $h \in C^1(\R^n)$.
Genauso finden wir für $x \mapsto p\left(h(z), \Xi_h(x,y)^{-T}\xi, h(y)\right) \in C^1(\R^n)$ eine Lipschitzkonstante $C_L > 0$, sodass
\begin{align*}
\abs{D_y^\gamma D_\xi^\alpha \triangle_{\chi,x} p\left(h(z), \Xi_h(x,y)^{-T}\xi, h(y)\right) \mid_{z=x} } 
&\le C_L C_{\alpha\gamma} \piket{\xi}{m-\rho\abs{\alpha}+\delta\abs{\gamma}} \abs{\triangle_{\chi,x} \Xi_h(x,y)^{-T}\xi} \\
&\le C_L C_{\alpha\gamma} C_1 \piket{\xi}{m-\rho\abs{\alpha}+\delta\abs{\gamma}} \abs{\chi}
\end{align*}
mit $C_1$ aus (\ref{equ:trafoUngl}).
Beide Abschätzungen zusammengefasst ergeben
$$
\abs{ D_y^\gamma D_\xi^\alpha \triangle_{\chi,x} p\left(h(x), \Xi_h(x,y)^{-T}\xi, h(y)\right) } \le
C_{\alpha\gamma} \piket{\xi}{m-\rho\abs{\alpha}+\delta\abs{\gamma}+\delta\tau} \abs{\chi}^\tau
$$
für $\abs{\chi} < 1$.
Wir erhalten also 
$\hnorm{D_y^\gamma D_\xi^\alpha p\left(h(x), \Xi_h(x,y)^{-T}\xi, h(y)\right) }{\tau} \le C_{\alpha\gamma} \piket{\xi}{m-\rho\abs{\alpha}+\delta\abs{\gamma}+\delta\tau}$.

Für allgemeines $1 < \tau \not\in \N$ betrachte das Symbol 
$\partial_x^{\beta} p(x,\xi,y) \in C^{\tau-\gauss{\tau}} S^{m+\delta\gauss{\tau}}(\R^n,\R^n\times\R^n)$ 
für $\abs{\beta} = \gauss{\tau}$.
\end{proof}
\end{thm}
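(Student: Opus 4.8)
The plan is to split the operator $P$ with the partition of unity from Lemma \ref{koord:lemmaPartition} into a ``diagonal'' and an ``off\-diagonal'' part, dispose of the off\-diagonal part with Theorem \ref{thm:kerndisjunkt} and Lemma \ref{lemma:trafoSinfty}, and treat the diagonal part by a direct oscillatory\-integral computation followed by a frequency substitution. First I would fix $r>0$ so small that, by continuity of $\derive{h}{\hold}$ and the two\-sided bound (\ref{equ:trafoUngl}), the matrix $\Xi_h(y,y')$ from (\ref{equ:trafoA_h}) satisfies (\ref{equ:trafoA_hBed}) on $\supp((\varphi_j\psi_j)\circ h)$ for every $j$; this is exactly the smallness of $r$ in the statement (note that $1-\rho\le\delta\le\rho$ forces $\rho\ge\tfrac12>0$, as required by Theorem \ref{thm:kerndisjunkt}). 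Setting $p_j(x,\xi,x'):=\varphi_j(x)\,p(x,\xi,x')\,\psi_j(x')$ and $p_{\infty,j}(x,\xi,x'):=\varphi_j(x)\,p(x,\xi,x')\,(1-\psi_j(x'))$, one has $\dist{\supp\varphi_j}{\supp(1-\psi_j)}>0$, so Theorem \ref{thm:kerndisjunkt} places $p_{\infty,j}$ in $C^\tau S^{-\infty}(\R^n,\R^n\times\R^n)$ and Lemma \ref{lemma:trafoSinfty} shows that conjugating it by $h$ stays in $\OP{C^\tau S^{-\infty}(\R^n,\R^n\times\R^n)}$; summing over the locally finite family in $j$ yields the error operator $A_\infty$ permitted by the statement.

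For the diagonal pieces I would compute $(p_j(X,D_x,X')u)(h(y))$ for $u\in\Schwarzi$. Since $x'\mapsto p_j(x,\xi,x')u(x')\in L^1(\R^n)$ by Lemma \ref{lemma:IntroSymS->S}, Theorem \ref{thm:osziFubini}.\ref{item:osziFubiniA} allows me to insert a cut\-off $\chi(\epsilon\xi)$, substitute $x'=h(y')$ in the inner integral (picking up $\abs{\det\derive{h}{y'}}$), and then use the multidimensional mean value theorem (Lemma \ref{mittelwertsatz}) to replace $h(y)-h(y')$ by $\Xi_h(y,y')(y-y')$. For $r$ small $\Xi_h(y,y')$ is invertible on the relevant support, so the substitution $\eta:=\Xi_h(y,y')^{T}\xi$ turns the phase into $(y-y')\cdot\eta$ and multiplies the amplitude by $\abs{\det\Xi_h(y,y')}^{-1}$. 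The family $\menge{\chi(\epsilon\,\Xi_h(y,y')^{-T}\eta)}_{\epsilon\in(0,1)}$ is bounded in $\Oszillatory{0,0}{0}$ and converges to $1$ with all derivatives tending to $0$ by Lemma \ref{lemma:osziChi}, so Theorem \ref{thm:osziKonvergenz} lets me pull $\epsilon\to0$ through the oscillatory integral; this identifies $(p_j(X,D_x,X')u)(h(y))=a_j(Y,D_y,Y')(u\circ h)(y)$ with the amplitude $\tilde a_j$ displayed in (\ref{equ:trafoq}). Summing over $j$ produces the symbol $a$ of the statement, and it then remains only to prove that each $\tilde a_j$ (equivalently $a_j$) lies in $C^\tau S^m_{\rho,\delta}(\R^n,\R^n\times\R^n)$ with seminorm bounds uniform in $j$; the local finiteness of $\menge{\supp\varphi_j}$ then upgrades this to $a\in C^\tau S^m_{\rho,\delta}(\R^n,\R^n\times\R^n)$.

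The symbol estimate for $\tilde a_j$ is the technical heart. For $\tau\in\N$ I would differentiate $D_y^\gamma D_x^\beta D_\xi^\alpha\tilde a_j$: the $\xi$\-derivatives are harmless since $\derive{\Xi_h(x,y)^{-T}}{\xi}$ is bounded, while the $x$\- and $y$\-derivatives are controlled by the Faà di Bruno formula (Lemma \ref{faaDiBruno}) applied twice — once to the outer composition $x\mapsto(h(x),\Xi_h(x,y)^{-T}\xi,h(y))$ and, nested inside it, once to the $y$\-dependence through $\Xi_h(x,y)^{-T}\xi$ — together with the elementary bounds, coming from (\ref{equ:trafoUngl}), that each $x$\- or $y$\-derivative of $h$ and of $(x,y)\mapsto\Xi_h(x,y)^{-T}\xi$ grows at most like $\piket{\xi}{}$ to the order of differentiation. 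This produces $\abs{D_y^\gamma D_x^\beta D_\xi^\alpha\tilde a_j}\le C_{\alpha\beta\gamma}\piket{\xi}{m-\rho\abs{\alpha}+(1-\rho)\abs{\gamma}+\abs{\beta}\max\menge{\delta,1-\rho}}$, and the hypothesis $1-\rho\le\delta$ is used precisely to collapse the exponent to $m-\rho\abs{\alpha}+\delta\abs{\beta}+\delta\abs{\gamma}$, the correct $(x,\xi,y)$\-order. For $\tau\in(0,1)$ I would instead estimate the Hölder seminorm in $x$ directly, writing the finite difference $\triangle_{\chi,x}$ applied to $p(h(x),\Xi_h(x,y)^{-T}\xi,h(y))$ as the difference hitting $h(x)$ (bounded via the $C^\tau$\-regularity of $p$ in its first argument together with the Lipschitz bound on $h$) plus the difference hitting $\Xi_h(x,y)$ (bounded via the $C^1$\-Lipschitz estimate for the second argument), which gives the $\piket{\xi}{m-\rho\abs{\alpha}+\delta\abs{\gamma}+\delta\tau}\abs{\chi}^\tau$ bound after passing to the supremum. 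The case $1<\tau\notin\N$ follows by applying the $\tau\in(0,1)$ argument to $\partial_x^\beta p\in C^{\tau-\gauss{\tau}}S^{m+\delta\gauss{\tau}}_{\rho,\delta}(\R^n,\R^n\times\R^n)$ with $\abs{\beta}=\gauss{\tau}$. The main obstacle is exactly this nested Faà di Bruno bookkeeping and verifying that no mixed derivative ever costs more than $\delta$ powers of $\piket{\xi}{}$; the rest is assembly of results already proved.
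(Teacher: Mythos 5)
Your proposal follows essentially the same route as the paper's proof: the same splitting into $p_j$ and $p_{\infty,j}$ via Lemma \ref{koord:lemmaPartition}, disposal of the off-diagonal part through Theorem \ref{thm:kerndisjunkt} and Lemma \ref{lemma:trafoSinfty}, the same oscillatory-integral computation with the substitution $\eta=\Xi_h(y,y')^{T}\xi$ justified by Lemma \ref{lemma:osziChi} and Theorem \ref{thm:osziKonvergenz}, and the same Fa\`a di Bruno bookkeeping (with $1-\rho\le\delta$ collapsing the exponent) plus the finite-difference argument for $\tau\in(0,1)$ and the reduction via $\partial_x^\beta p$ for general $\tau$. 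It is correct as an outline, and your explicit remark about uniformity in $j$ and local finiteness even makes a point the paper leaves implicit.
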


\begin{remark}
Die Bedingung $1-\rho \le \delta$ ist für den Beweis notwendig.
Betrachte dazu $x,y\in\R^n$ mit $\abs{x-y}$ klein genug, sodass es eine Konstante $C_0 > 0$ gibt mit
$$ C_0^{-1} \abs{\xi} \le \abs{ \partial_{x_j} \Xi_h(x,y)^{-T}\xi} \le C_0 \abs{\xi} \textrm{ für alle } \xi \in \R^n \textrm{ und jedes } j = 1,\ldots,n .$$
Nun wählen wir unter vereinfachten Annahmen ein Symbol $p(x,\xi,y) = p(\xi) \in S^m_{\rho,\delta}(\R^n)$, 
und betrachten wie im Beweis das Symbol
\begin{align*}
\tilde{a}_j(x,\xi,y) :&= p\left(h(x), \Xi_h(x,y)^{-T}\xi, h(y)\right) \abs{\det \Xi_h(x,y)}^{-1} \abs{\det \derive{h}{y} } \\
&= p(\Xi_h(x,y)^{-T}\xi) r(x,y)
\end{align*}
mit $r(x,y) := \abs{\det \Xi_h(x,y)}^{-1} \abs{\det \derive{h}{y} } \in S^{-\infty}(\R^n\times\R^n)$.
Eine Anwendung der Kettenregel liefert
$$ \partial_{x_j} p\left(\Xi_h(x,y)^{-T}\xi\right) = \sum_{k=1}^n \partial_{\eta_k} p(\eta) \mid_{\eta = \Xi_h(x,y)^{-T}\xi} \frac{\partial \left(\Xi_h(x,y)^{-T}\xi\right)_k}{\partial x_j} .$$
Also finden wir zu jedem $j = 1,\ldots,n$ eine Konstante $C_j > 0$, sodass
$$ \abs{ \partial_{x_j} p\left(\Xi_h(x,y)^{-T}\xi\right) } \le C_0 C_j \piket{\xi}{m-\rho} \abs{\xi} \le C_0 C_j \piket{\xi}{m+1-\rho} .$$
Die Voraussetzung $\tilde{a}_j(x,\xi,y) \in S^m_{\rho,\delta}(\R^n\times\R^n\times\R^n) \subset C^\tau S^m_{\rho,\delta}(\R^n,\R^n\times\R^n)$ impliziert jedoch, dass es eine Konstante $\tilde{C}_j > 0$ geben muss, sodass
$$ \abs{ \partial_{x_j} p\left(\Xi_h(x,y)^{-T}\xi\right) } < \tilde{C}_j \piket{\xi}{m+\delta} .$$
Da $\tilde{C}_j, C_j$ und $C_0$ unabhängig von $\xi$ sind, muss $\piket{\xi}{m+1-\rho} \le \piket{\xi}{m+\delta}$ für alle $\xi\in\R^n$ gelten, 
d.h. die Bedingung $\delta \ge 1-\rho$ muss erfüllt sein.
\end{remark}

\begin{cor}\label{cor:koordAsympt}
Falls $p(X,D_x)$ ein Operator mit einfachen Symbol $p \in C^\tau S^m_{\rho,\delta}(\R^n,\R^n)$ mit $0 \le 1-\rho \le \delta < \rho \le 1, \delta < 1$ ist, dann finden wir zu $a(y,\eta,y) = p(h(y),\derive{h}{y}^{-T} \eta)$ ein Symbol $a_L(y,\eta)$ in $x$-Form, das die  asymptotische Abschätzung
$$a_L(y,\eta) \sim \sum_{\alpha\in\N^n} \frac{1}{\alpha!} \partial_\eta^\alpha \partial_y^\alpha a(x,\eta,y) |_{x=y}$$
genügt in dem Sinne, dass für jedes $N \in \N_0$
$$ a_L(y,\eta) - \sum_{\abs{\alpha} \le N} \frac{1}{\alpha!} \partial_\eta^\alpha \partial_y^\alpha a(x,\eta,y) |_{x=y} \in C^\tau S^{m-(\rho-\delta)(N+1)}_{\rho,\delta}(\R^n,\R^n) .$$
Insbesondere ist
$$ a_L(y,\eta) = p(h(y), \derive{h}{y}^{-T} \eta) + r(y,\eta) \textrm{ mit } r(y,\eta) \in C^\tau S^{m-(\rho-\delta)}_{\rho,\delta}(\R^n,\R^n) .$$
\begin{proof}
Da
$ \Xi_h(y,y) = \int_0^1 \derive{h}{y} d\theta = \derive{h}{y}$ 
und
$\psi_j \circ h = 1$ auf $ \supp \varphi_j \circ h$
folgt
\begin{align*}
a(y,\eta,y) 
&= \sum_{j=1}^\infty \varphi_j(h(y)) p(h(y), \derive{h}{y}^{-T}\eta) \abs{\det \derive{h}{y}^{-1}} \abs{\det \derive{h}{y} } \\
&= p(h(y), \derive{h}{y}^{-T} \eta)
.\end{align*}
Für das vereinfachte Symbol $a_L$ von $a$ erhalten wir nach Theorem \ref{thm:simplifyAsmytotic} die asymptotische Abschätzung.
\end{proof}
\end{cor}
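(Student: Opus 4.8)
Der Plan ist, $p$ als Symbol in $(x,\xi,y)$-Form ohne Abhängigkeit von der dritten Variablen aufzufassen, darauf Theorem \ref{thm:trafoRn} anzuwenden, um das transformierte Symbol $a$ in $(x,\xi,y)$-Form zu gewinnen, und schließlich dessen vereinfachtes Symbol $a_L$ aus Theorem \ref{thm:simplify} mit der Taylorreihendarstellung aus Theorem \ref{thm:simplifyAsmytotic} zu entwickeln. Wegen $0 \le 1-\rho \le \delta < \rho \le 1$ gilt $1-\rho \le \delta \le \rho$ und $\delta < 1$, sodass die Voraussetzungen von Theorem \ref{thm:trafoRn} erfüllt sind. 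Angewandt auf $P = p(X,D_x,X') = p(X,D_x)$ (also $\varphi = \psi = 1$ und $p$ unabhängig von $x'$) erhalten wir nach (\ref{equ:trafoq}) das Symbol
\[
a(y,\eta,y') = \sum_{j=1}^\infty \varphi_j(h(y))\, p\left(h(y), \Xi_h(y,y')^{-T}\eta\right)\psi_j(h(y'))\,\abs{\det \Xi_h(y,y')}^{-1}\abs{\det \derive{h}{y'}} \in C^\tau S^m_{\rho,\delta}(\R^n, \R^n\times\R^n),
\]
dessen vereinfachtes Symbol $a_L(y,\eta)$ nach der Bemerkung zur $(x,\xi,y)$-Form den Operator $a(Y,D_y,Y') = a_L(Y,D_y)$ beschreibt.

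Als erstes würde ich den Diagonalwert berechnen. Nach (\ref{equ:trafoA_h}) ist $\Xi_h(y,y) = \int_0^1 \derive{h}{y}\,d\theta = \derive{h}{y}$, sodass sich die Determinantenfaktoren zu $\abs{\det \derive{h}{y}}^{-1}\abs{\det \derive{h}{y}} = 1$ wegheben. Nach Lemma \ref{koord:lemmaPartition} gilt $\psi_j \equiv 1$ auf $\supp \varphi_j$, also $\varphi_j(h(y))\,\psi_j(h(y)) = \varphi_j(h(y))$ für jedes $j$, und wegen $\sum_j \varphi_j \equiv 1$ folgt
\[
a(y,\eta,y) = \sum_{j=1}^\infty \varphi_j(h(y))\, p\left(h(y), \derive{h}{y}^{-T}\eta\right) = p\left(h(y), \derive{h}{y}^{-T}\eta\right).
\]
Dies wird sich als der führende Term der asymptotischen Entwicklung herausstellen.

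Im nächsten Schritt fasse ich $a$ als Doppelsymbol der Klasse $C^\tau S^{m,0}_{\rho,\delta}(\R^n,\R^n\times\R^n\times\R^n)$ (konstant in $\xi'$) auf und wende Theorem \ref{thm:simplifyAsmytotic} an. Dieses liefert für jedes $N \in \N$ die Darstellung
\[
a_L(y,\eta) = \sum_{\abs{\alpha} < N} \frac{1}{\alpha!}\, a_\alpha(y,\eta) + N \sum_{\abs{\gamma} = N} \int_0^1 \frac{(1-\theta)^{N-1}}{\gamma!}\, r_{\gamma,\theta}(y,\eta)\, d\theta
\]
mit $a_\alpha(y,\eta) = D_\eta^\alpha D_{y'}^\alpha a(y,\eta,y') \mid_{y'=y}$ (was bis auf die üblichen Potenzen von $i$ gerade $\partial_\eta^\alpha \partial_y^\alpha a(x,\eta,y)\mid_{x=y}$ ist) und $r_{\gamma,\theta}$ aus einer nach Lemma \ref{lemma:simplify} beschränkten Teilmenge von $C^\tau S^{m-(\rho-\delta)\abs{\gamma}}_{\rho,\delta}(\R^n,\R^n)$. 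Da letztere Klasse ein Fréchet-Raum ist, liegt das Integral über $\theta$ einer beschränkten stetigen Familie ihrer Elemente wieder in ihr, sodass $a_L - \sum_{\abs{\alpha}\le N} \frac{1}{\alpha!} a_\alpha \in C^\tau S^{m-(\rho-\delta)(N+1)}_{\rho,\delta}(\R^n,\R^n)$; das ist die behauptete asymptotische Abschätzung. Der Fall $N = 0$ zusammen mit dem oben berechneten Diagonalwert $a_0(y,\eta) = a(y,\eta,y)$ ergibt schließlich $a_L(y,\eta) = p(h(y), \derive{h}{y}^{-T}\eta) + r(y,\eta)$ mit $r \in C^\tau S^{m-(\rho-\delta)}_{\rho,\delta}(\R^n,\R^n)$.

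Die eigentliche Arbeit steckt bereits in den Theoremen \ref{thm:trafoRn} und \ref{thm:simplifyAsmytotic}; was bleibt, ist im Wesentlichen Buchhaltung. Die Hauptschwierigkeit erwarte ich bei der sauberen Identifikation des Symbols in $(x,\xi,y)$-Form mit einem Doppelsymbol (damit Theorem \ref{thm:simplifyAsmytotic} wörtlich greift) und beim Verfolgen der konventionsabhängigen Vorfaktoren beim Vergleich der Entwicklungsglieder $a_\alpha$ mit $\partial_\eta^\alpha \partial_y^\alpha a(x,\eta,y)\mid_{x=y}$ — routinemäßig, aber leicht zu verzählen.
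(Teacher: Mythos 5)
Dein Beweis stimmt inhaltlich mit dem der Arbeit überein: In beiden Fällen wird zunächst der Diagonalwert $a(y,\eta,y)$ mittels $\Xi_h(y,y)=\derive{h}{y}$ und $\psi_j\circ h\equiv 1$ auf $\supp\varphi_j\circ h$ zu $p(h(y),\derive{h}{y}^{-T}\eta)$ berechnet, und anschließend wird Theorem \ref{thm:simplifyAsmytotic} auf das vereinfachte Symbol angewandt. Der zusätzliche Kommentar zur Identifikation des $(x,\xi,y)$-Symbols als Doppelsymbol und zum Fréchet-Raum-Argument für das $\theta$-Integral ist berechtigte Buchhaltung, die die Arbeit implizit in Theorem \ref{thm:simplifyAsmytotic} und Lemma \ref{lemma:simplify} erledigt.
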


\begin{thm}\label{thm:trafo}
Seien $\tilde{\Omega}_y \subset\subset \Omega_y$, $0 \le 1-\rho \le \delta \le \rho$ mit $\delta < 1$ gegeben. Setze 
$\tilde{\Omega}_x := \menge{ h(y) : y \in \tilde{\Omega}_y} \subset\subset \Omega_x$.
Für $p(x,\xi,x') \in C^\tau S^m_{\rho,\delta}(\R^n,\R^n\times\R^n)$ existiert ein 
$A \in \OP{ C^\tau S^m_{\rho,\delta}(\R^n,\R^n \times \R^n)}$, sodass
für alle $\varphi, \psi \in C^\infty_0(\tilde{\Omega}_x)$ gilt
$$ \varphi(h(Y)) A \psi(h(Y')) w(y) = (\varphi(X) p(X,D_x,X') \psi(X') u) (h(y)) \textrm{ für alle } w := u \circ h \in C^\infty_0(\Omega_y) .$$
Desweiteren finden wir nach (\ref{equ:trafoq}) ein $a(y,\eta,y') \in C^\tau S^m_{\rho,\delta}(\R^n,\R^n \times \R^n)$, das für ein genügend kleines $r > 0$ die Gleichung
$$ \varphi(Y) A \equiv \varphi(Y) a(Y,D_y, Y') \mod{\OP{C^\tau S^{-\infty}(\R^n,\R^n\times\R^n)}}$$
erfüllt.
\begin{proof}
Wir spalten wie in Theorem \ref{thm:trafoRn} das Symbol $p$ in $p_j$ und $p_{\infty,j}$ auf.
Diese Aufspaltung können wir auch mit dem Symbol $\varphi(x) p(x,\xi,y) \psi(y)$ machen.
Dann erhalten wir in Operatorschreibweise mit einer Familie $\menge{\left( \varphi_j, \psi_j \right)}_{j\in\N}$ aus Lemma \ref{koord:lemmaPartition} :
\begin{equation}\label{equ:trafoThmAufspaltung}
\varphi(X) p(X,D_x,X') \psi(X') = \varphi(X) \sum_{j \in \N} p_j(X,D_x,X') \psi(X') + \varphi(X) \sum_{j \in \N}  p_{\infty,j}(X,D_x,X') \psi(X') .
\end{equation}
Setze $M := \menge{ j \in \N : \supp \psi_j \cap \tilde{\Omega}_y \neq \emptyset}$.
Da $\tilde{\Omega}_y$ relativ kompakt ist, ist $\#M < \infty$, die erste Summe ist also endlich.
Genauso wie in Theorem \ref{thm:trafoRn} erhalten wir nach Koordinatentransformation von $p_j$ einen Operator
$$a_j(Y,D_y,Y') \equiv h^* p_j(X,D_x,X') \left(h^{-1}\right)^* .$$
Nach der Transformation der Operatoren $\varphi(X)$ und $\psi(X')$ erhalten wir
$$ \varphi(X) p_j(X,D_x,X') \psi(X') \equiv \varphi(Y) h^* \left( a_j(Y,D_y,Y') \psi(h(Y')) \right) \left(h^{-1}\right)^* .$$
Schließlich finden wir wegen $\#M < \infty$ ein $r > 0$, sodass die Bedingung (\ref{equ:trafoA_hBed}) für $\Xi_h$ definiert in (\ref{equ:trafoA_h}) für jedes $j \in M$ erfüllt wird.
Insgesamt bekommen wir die Transformation des Operators $\sum_{j\in M} p_j(X,D_x,X') \psi(X')$ durch
$$
\varphi(X) \sum_{j\in M} p_j(X,D_x,X') \psi(X') \equiv \varphi(Y) \sum_{j\in M} h^* \left( a_j(Y,D_y,Y') \psi(h(Y')) \right) \left(h^{-1}\right)^*  
.$$
Wir erhalten also tatsächlich wie in (\ref{equ:trafoq}) ein Symbol
\begin{align*}
&a(y,\eta,y') \\
&= \sum_{j=1}^\infty \varphi_j(h(y)) p(h(y), \Xi_h(y,y')^{-T} \eta, h(y')) \psi(h(y')) \psi_j(h(y')) \abs{\det \Xi_h(y,y')}^{-1} \abs{\det \derive{h}{y'} }
\end{align*}
mit $ \varphi(Y) A \equiv \varphi(Y) a(Y,D_y, Y') \mod \OP{C^\tau S^{-\infty}(\R^n,\R^n\times\R^n)} $ falls die zweite Summe von (\ref{equ:trafoThmAufspaltung}) in der Klasse $\OP{C^\tau S^{-\infty}(\R^n,\R^n\times\R^n)}$ liegt.

Wir wissen bereits, dass $p_{\infty,j}(X,D_x,X') \in \OP{C^\tau S^{-\infty}(\R^n,\R^n\times\R^n)}$.
Transformieren wir den Operator $p_{\infty,j}(X,D_x,X')$, so induziert der zu $p_{\infty,j}$ assoziierte Kern $k_{p_{\infty,j}}$ nach Lemma \ref{lemma:trafoSinfty} einen Kern $k_{a_{\infty,j}}$, der die Bedingung (\ref{equ:kernglatt}) erfüllt.
Für ein $\phi \in C^\infty_0(\Omega_y)$ mit $\phi = 1$ auf $\tilde{\Omega}_y$ haben wir nach Lemma \ref{lemma:trafoSinfty}
\begin{align*}
k_{a_{\infty,j}}(y,z)
&= \phi(y) k_{a_{\infty,j}}(y,z) \phi(y-z) \\
&= \phi(y) k_{p_{\infty,j}}(h(y), h(y-z), h(y) - h(y-z)) \abs{ \det \derive{h}{y-z} } \phi(y-z) 
.\end{align*}
Insbesondere ist der Kern von $p_{\infty,j}(X,D_x,X') \psi(X')$ koordinatentransformiert gegeben durch 
\begin{align*}
\varphi(y) k_{a_{\infty,j}}(y,z) \psi(z) 
=& \varphi(h(y)) k_{p_{\infty,j}}(h(y), h(y-z), h(y) - h(y-z)) \\
 & \abs{ \det \derive{h}{y-z} } \psi(h(y-z)) \\
=& \varphi(h(y)) \phi(y) k_{p_{\infty,j}}(h(y), h(y-z), h(y) - h(y-z)) \\
 & \abs{ \det \derive{h}{y-z} } \psi(h(y-z)) \phi(y-z)
.\end{align*}
Der Operator $\varphi(X) \sum_{j\in\N} p_{\infty,j}(X,D_x,X') \psi(X')$ transformiert sich also zu 
$$\varphi(h(Y)) \sum_{j\in\tilde{M}} a_{\infty,j}(Y,D_y,Y') \psi(h(Y')) ,$$
wobei die Menge
$\tilde{M} := \menge{ j \in \N : \supp \phi \cap \supp \psi_j \circ h \neq \emptyset}$ endlich ist.
Also ist nach Folgerung \ref{cor:kernglatt} der Operator
$\sum_{j\in\tilde{M}} \varphi(h(Y)) a_{\infty,j}(Y,D_y,Y') \psi(h(Y'))$ in $\OP{ C^\tau S^{-\infty}(\R^n,\R^n\times\R^n)}$ enthalten.
\end{proof}
\end{thm}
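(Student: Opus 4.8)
The plan is to deduce the statement from the global transformation result of Theorem \ref{thm:trafoRn} by a partition-of-unity argument on the integer grid $\Gamma = \Z^n$, now carrying along the localizing factors $\varphi,\psi$. First I would take the family $\menge{(\varphi_j,\psi_j)}_{j\in\N}$ of Lemma \ref{koord:lemmaPartition} for a grid parameter $r>0$ to be fixed later, and split
\[
\varphi(X) p(X,D_x,X') \psi(X') = \varphi(X) \sum_{j\in\N} p_j(X,D_x,X') \psi(X') + \varphi(X) \sum_{j\in\N} p_{\infty,j}(X,D_x,X') \psi(X'),
\]
with $p_j(x,\xi,x') := \varphi_j(x) p(x,\xi,x') \psi_j(x')$ and $p_{\infty,j}(x,\xi,x') := \varphi_j(x) p(x,\xi,x')(1-\psi_j(x'))$. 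Because $\supp \psi \subset \tilde{\Omega}_x$ and we only evaluate the pulled-back operator on (a neighbourhood of) $\tilde{\Omega}_y$, the first sum is effectively finite: only the indices $j \in M := \menge{ j\in\N : \supp\psi_j \cap \tilde{\Omega}_y \neq \emptyset}$ contribute, and $\#M < \infty$ since $\tilde{\Omega}_y$ is relatively compact. Inserting a cut-off $\phi \in C^\infty_0(\Omega_y)$ with $\phi\equiv 1$ on $\tilde{\Omega}_y$ makes the second sum effectively finite as well, namely over $\tilde{M} := \menge{ j\in\N : \supp\phi \cap \supp(\psi_j\circ h) \neq \emptyset}$.

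For the near-diagonal part $\sum_{j\in M} p_j$ I would repeat the argument of Theorem \ref{thm:trafoRn} for each $j \in M$. Since $\#M<\infty$, I can choose a single $r>0$ small enough that the non-degeneracy bound (\ref{equ:trafoA_hBed}) for $\Xi_h$ holds on $\supp((\varphi_j\psi_j)\circ h)$ for every $j\in M$ at once; then the coordinate transform of $p_j(X,D_x,X')$ is an operator $a_j(Y,D_y,Y')$ with $a_j \in C^\tau S^m_{\rho,\delta}(\R^n,\R^n\times\R^n)$, and transporting $\varphi(X),\psi(X')$ as well sends $\varphi(X) p_j(X,D_x,X') \psi(X')$ to $\varphi(Y)\,h^*\big(a_j(Y,D_y,Y')\psi(h(Y'))\big)(h^{-1})^*$. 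Summing over $j\in M$ reconstitutes precisely the symbol $a(y,\eta,y')$ of formula (\ref{equ:trafoq}) (with the extra factor $\psi(h(y'))$), which therefore lies in $C^\tau S^m_{\rho,\delta}(\R^n,\R^n\times\R^n)$.

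For the off-diagonal part, each $p_{\infty,j}$ satisfies $\dist{\supp\varphi_j}{\supp(1-\psi_j)} > 0$, so by Theorem \ref{thm:kerndisjunkt} we have $p_{\infty,j}(X,D_x,X') \in \OP{C^\tau S^{-\infty}(\R^n,\R^n\times\R^n)}$, and by Corollary \ref{cor:kernglatt} its symbol has a $C^\tau$-kernel representation. Lemma \ref{lemma:trafoSinfty} shows that after coordinate transformation this operator is again governed by a kernel satisfying (\ref{equ:kernglatt}), hence stays in $\OP{C^\tau S^{-\infty}}$; composing with $\varphi(X),\psi(X')$ and the cut-off $\phi$ leaves only the finitely many $j\in\tilde{M}$, so the whole sum is still in $\OP{C^\tau S^{-\infty}(\R^n,\R^n\times\R^n)}$. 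Putting $A := a(Y,D_y,Y') + (\text{this smoothing operator})$ gives $A \in \OP{C^\tau S^m_{\rho,\delta}(\R^n,\R^n\times\R^n)}$ with $\varphi(h(Y)) A \psi(h(Y'))\,w(y) = (\varphi(X) p(X,D_x,X')\psi(X') u)(h(y))$ and $\varphi(Y) A \equiv \varphi(Y) a(Y,D_y,Y') \mod{\OP{C^\tau S^{-\infty}(\R^n,\R^n\times\R^n)}}$.

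The hard part will be the support bookkeeping rather than any new estimate: showing that one grid parameter $r$ works uniformly for all $j\in M$ so that $\Xi_h$ is invertible and obeys (\ref{equ:trafoA_hBed}) on each patch, and that the localized off-diagonal sum is genuinely a finite sum of $\OP{C^\tau S^{-\infty}}$-operators — this is exactly where relative compactness of $\tilde{\Omega}_y$ (and hence of $\tilde{\Omega}_x$) must be used honestly. The verification $a_j \in C^\tau S^m_{\rho,\delta}$ itself introduces nothing beyond the Fa\`{a}-di-Bruno computation already carried out in Theorem \ref{thm:trafoRn}, which is also where the hypothesis $1-\rho \le \delta$ is consumed.
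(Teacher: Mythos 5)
Your proposal is correct and follows essentially the same route as the paper: the same splitting into $p_j$ and $p_{\infty,j}$ via Lemma \ref{koord:lemmaPartition}, the same finite index sets $M$ and $\tilde{M}$ obtained from the relative compactness of $\tilde{\Omega}_y$ and the cut-off $\phi$, the same application of Theorem \ref{thm:trafoRn} with a single sufficiently small $r$, and the same treatment of the off-diagonal part via Theorem \ref{thm:kerndisjunkt}, Lemma \ref{lemma:trafoSinfty} and Folgerung \ref{cor:kernglatt}. No gaps beyond what the paper itself leaves to the cited results.
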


\begin{remark}\label{remark:trafo} 
Verstärken wir mit der Bedingung $\delta < \rho$ die Voraussetzungen aus Theorem \ref{thm:trafo}, so erhalten wir für 
$$p(x,\xi) \in C^\tau S^m_{\rho,\delta}(\R^n,\R^n) \textrm{ ein Symbol } a_L(y,\eta) \in C^\tau S^m_{\rho,\delta}(\R^n,\R^n) ,$$
gegeben in Folgerung \ref{cor:koordAsympt}, sodass
$$\varphi(Y) A \equiv \varphi(Y) a_L(Y,D_y)  \mod \OP{C^\tau S^{-\infty}(\R^n,\R^n\times\R^n)} .$$
\end{remark}

\begin{thm}\label{thm:koordBesselInv} 
Sei $\Omega_x = \Omega_y = \R^n$, $h$ ein glatter Diffeomorphismus auf dem $\R^n$.
Dann ist der Bessel-Potential-Raum $H^s_q(\R^n)$ für $s \in \R, 1 \le q < \infty$ invariant unter Koordinatentransformation.
Genauer ist die Abbildung
$ h^* : H^s_q(\R^n) \rightarrow H^s_q(\R^n), u \mapsto u \circ h$
eine Bijektion und es existiert ein $C > 0$, sodass
$$ \frac{1}{C} \norm{u}{H^s_q(\R^n)} \le \norm{u \circ h}{H^s_q(\R^n)} \le C \norm{u}{H^s_q(\R^n)} .$$
\begin{proof}
Für $\piket{D_x}{s} \in \OP{S^s_{1,0}(\R^n)}$ existiert nach Theorem \ref{thm:trafoRn} und Folgerung \ref{cor:koordAsympt} ein Symbol $a_s \in S^s_{1,0}(\R^n\times\R^n)$, sodass
$ \left(\piket{D_x}{s} u\right)(h(y)) = a_s(Y,D_y)(u \circ h(y))$.
Da $\piket{\xi}{s}$ und $a_s$ der selben Symbolklasse angehören, finden wir eine Konstante $C_{sq} > 0$ sodass
$ C_{sq}^{-1} \abs{a_s}_{l,\infty}^{(s)} \le \abs{\piket{\xi}{s}}_{l,\infty}^{(s)} \le C_{sq} \abs{a_s}_{l,\infty}^{(s)}$.
Mit Koordinationtransformation und der Abschätzung (\ref{equ:trafoUngl}) erhalten wir
$$\norm{u}{H^s_q(\R^n)}^q = 
\int \abs{ \piket{D_x}{s} u(x) }^q dx = 
\int \abs{ a_s(Y,D_y) (w(y)) }^q \abs{\det \derive{h}{y} } dy \le 
C_{sq} \norm{w}{H^s_q(\R^n)}^q ,$$
wobei $w := u \circ h$.
Die andere Richtung kann analog bewiesen werden, da $h^{-1}$ als regulärer Diffeomorphismus ebenso (\ref{equ:trafoUngl}) erfüllt.
\end{proof}
\end{thm}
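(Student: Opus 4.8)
The plan is to reduce the statement to the coordinate–transformation calculus of the previous subsection. The crucial observation is that $\piket{D_x}{s} = \OPSym{\piket{\xi}{s}}$ with $\piket{\xi}{s} \in S^s_{1,0}(\R^n)$ (Lemma \ref{lemma:piketTech}), and that by the very definition of the Bessel potential space $\piket{D_x}{s} : H^s_q(\R^n) \rightarrow L^q(\R^n)$ is an isometry, so $\norm{u}{H^s_q(\R^n)} = \norm{\piket{D_x}{s} u}{L^q(\R^n)}$. Thus everything comes down to comparing $\norm{\piket{D_x}{s} u}{L^q(\R^n)}$ with $\norm{\piket{D_x}{s}(u\circ h)}{L^q(\R^n)}$ via the change of variables $x = h(y)$. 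Here $h$ is understood to be, as required by Theorem \ref{thm:trafoRn}, a \emph{regular} smooth diffeomorphism, so that (\ref{equ:trafoUngl}) holds.

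First I would apply Theorem \ref{thm:trafoRn} together with Folgerung \ref{cor:koordAsympt} to the simple symbol $p(x,\xi) := \piket{\xi}{s}$ and the diffeomorphism $h$; this is legitimate because $\rho = 1$, $\delta = 0$ satisfy $0 \le 1-\rho \le \delta < \rho \le 1$ and $\delta < 1$. It produces a symbol $a_s(y,\eta) = \piket{\derive{h}{y}^{-T}\eta}{s} + r(y,\eta) \in S^s_{1,0}(\R^n\times\R^n)$, smooth because $h$ and $p$ are smooth, such that the operator $A$ determined by $A(u\circ h)(y) = (\piket{D_x}{s} u)(h(y))$ satisfies $A \equiv a_s(Y,D_y) \mod \OP{C^\tau S^{-\infty}(\R^n,\R^n\times\R^n)}$ for every $\tau$, hence $A \equiv a_s(Y,D_y) \mod \OP{S^{-\infty}}$. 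The $\OP{S^{-\infty}}$–remainder maps $H^s_q(\R^n)$ into $H^t_q(\R^n)$ for every $t$ (Lemma \ref{beschr:S_1,delta}), in particular continuously into $L^q(\R^n)$, so it is harmless; and $a_s(Y,D_y)$, being a \PSDO of class $S^s_{1,0}$, maps $H^s_q(\R^n)\rightarrow L^q(\R^n)$ continuously by Lemma \ref{beschr:S_1,delta} applied to the microlocalizable family $\menge{H^\sigma_q(\R^n)}_{\sigma\in\R}$. Its operator norm $C_{s,q}$ is controlled by finitely many of the seminorms $\abs{a_s}^{(s)}_{l,\infty}$, which in turn are estimated by those of $\piket{\xi}{s}$ since both symbols lie in $S^s_{1,0}(\R^n\times\R^n)$.

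The estimate itself is then a short computation. For $u \in \Schwarzi$ set $w := u\circ h$; then, using the transformation formula (Theorem \ref{thm:masstheorieTransformation}), the relation above and (\ref{equ:trafoUngl}),
\begin{align*}
\norm{u}{H^s_q(\R^n)}^q
&= \int \abs{\piket{D_x}{s} u(x)}^q dx
= \int \abs{(\piket{D_x}{s} u)(h(y))}^q \abs{\det\derive{h}{y}}\, dy \\
&= \int \abs{(Aw)(y)}^q \abs{\det\derive{h}{y}}\, dy
\le C \norm{Aw}{L^q(\R^n)}^q
\le C\, C_{s,q}^q\, \norm{w}{H^s_q(\R^n)}^q ,
\end{align*}
so $\norm{u}{H^s_q(\R^n)} \le C_1 \norm{u\circ h}{H^s_q(\R^n)}$; this passes to all $u \in H^s_q(\R^n)$ because $\Schwarzi$ is dense in $H^s_q(\R^n)$ (Theorem \ref{thm:interpolBesselSobolev}) and both sides are continuous in $u$. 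Since $h^{-1}$ is again a regular smooth diffeomorphism, the same argument gives $\norm{v}{H^s_q(\R^n)} \le C_2 \norm{v\circ h^{-1}}{H^s_q(\R^n)}$ for all $v$; taking $v = u\circ h$ yields $\norm{u\circ h}{H^s_q(\R^n)} \le C_2 \norm{u}{H^s_q(\R^n)}$, and with $C := \max\menge{C_1,C_2}$ the two–sided estimate of the theorem follows. In particular $h^*$ maps $H^s_q(\R^n)$ into itself; it is injective by the lower bound and surjective because $(h^{-1})^*$ maps $H^s_q(\R^n)$ into itself and is a two–sided inverse, so $h^*$ is a bijection.

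The genuinely hard work — showing that the conjugated operator is again a \PSDO of the same class — was already carried out in Theorem \ref{thm:trafoRn} and Folgerung \ref{cor:koordAsympt}, so the remaining obstacles are organizational: (i) verifying that the $\OP{S^{-\infty}}$–remainder really is negligible for the $H^s_q\to L^q$ mapping; (ii) the density argument needed to pass from $\Schwarzi$, where $u\circ h$ and the identity $A(u\circ h) = (\piket{D_x}{s}u)\circ h$ are unambiguous, to general $u \in H^s_q(\R^n)$; and (iii) the endpoint $q = 1$, which is not literally covered by the microlocalizability of $\menge{H^\sigma_q(\R^n)}_\sigma$ as stated and would require a separate limiting or interpolation argument.
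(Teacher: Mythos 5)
Your proposal follows the same route as the paper's own proof: conjugate $\piket{D_x}{s}$ by $h$ via Theorem \ref{thm:trafoRn} and Folgerung \ref{cor:koordAsympt} to obtain a symbol $a_s \in S^s_{1,0}(\R^n\times\R^n)$, then compare $L^q$-norms via the transformation formula and (\ref{equ:trafoUngl}), and run the argument backwards with $h^{-1}$. Your additional observations are correct refinements the paper glosses over: the $\OP{S^{-\infty}}$ remainder must be absorbed explicitly, a density argument from $\Schwarzi$ is needed, the hypothesis must implicitly require $h$ to be a \emph{regular} smooth diffeomorphism for (\ref{equ:trafoUngl}), and the endpoint $q=1$ falls outside the definition of $H^s_q(\R^n)$ and of the microlocalizable family used in Lemma \ref{beschr:S_1,delta}, so the stated range $1 \le q < \infty$ should read $1 < q < \infty$.
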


\begin{definition} 
Sei $p(x,\xi) \in C^\tau S^m_{\rho,\delta}(\R^n,\R^n)$ mit $0 \le 1-\rho \le \delta \le \rho \le 1, \delta <1$.
Das Prinzipalsymbol $p_r(x,\xi)$ von $p(x,\xi)$ ist ein Vertreter der Klasse 
$$\quotientenraum{C^\tau S^m_{\rho,\delta}(\R^n,\R^n)}{C^\tau S^{m-(\rho-\delta)}_{\rho,\delta}(\R^n,\R^n\times\R^n)}$$
mit
$ p(x,\xi) \equiv p_r(x,\xi) \mod C^\tau S^{m-(\rho-\delta)}(\R^n,\R^n\times\R^n)$.
\end{definition}

\begin{remark}
Für $\rho = 1$ ist das Prinzipalsymbol eines Symbols $p \in C^\tau S^m_{1,\delta}(\R^n\times\R^n)$ unter Koordinatentransformation invariant.
\begin{proof}
Mit Hilfe der Symbolglättung können wir $p$ aufspalten in $p = p^\sharp + p^\flat$ mit
$p^\sharp \in S^m_{1,\gamma}(\R^n\times\R^n)$
und
$p^\flat \in C^{\tau} S^{m- (\gamma - \delta) \tau}_{1,\gamma}(\R^n,\R^n)$ für ein $\gamma \in (\delta,1)$.
Ein regulärer glatter Diffeomorphismus $h : \Omega_y \rightarrow \Omega_x$ mit $\Omega_x, \Omega_y \subsubset \R^n$ 
transformiert das glatte Symbol $p^\sharp$ nach Bemerkung \ref{remark:trafo} zu
$$ h^* p^\sharp(X,D_x) (h^{-1})^* = A^\sharp + R^\sharp$$
mit $R^\sharp \in \OP{ S^{-\infty}(\R^n\times\R^n\times\R^n)}$ und $A \in \OP{ S^m_{1,\delta}(\R^n\times\R^n\times\R^n) }$.
Wir erhalten also
$$h^* p(X,D_x) (h^{-1})^* = A^\sharp + R^\sharp + h^* p^\flat(X,D_x) (h^{-1})^* .$$
Andererseits finden wir ein Symbol $a \in C^\tau S^m_{1,\delta}(\R^n,\R^n)$ mit
$h^* p(X,D_x) (h^{-1})^* \equiv a(Y,D_y) \mod C^\tau S^{-\infty}(\R^n,\R^n\times\R^n)$
Nun können wir wiederum das Symbol $a$ aufspalten in $a = a^\sharp + a^\flat$ mit
$a^\sharp \in S^m_{1,\gamma}(\R^n\times\R^n)$
und
$a^\flat \in C^{\tau} S^{m- (\gamma - \delta) \tau}_{1,\gamma}(\R^n,\R^n)$.
Wir erhalten insgesamt also
\begin{align*}
h^* p^\sharp(X,D_x) (h^{-1})^* 
&\equiv a^\sharp(X,D_x) + a^\flat(X,D_x) \mod C^\tau S^{-\infty}(\R^n,\R^n\times\R^n)  \\
&\equiv A^\sharp + h^* p^\flat(X,D_x) (h^{-1})^* \mod C^\tau S^{-\infty}(\R^n,\R^n\times\R^n)
.\end{align*}
Damit ist $a^\sharp(X,D_x) \equiv A^\sharp \mod C^\tau S^{m-(\gamma-\delta)\tau}_{1,\gamma}(\R^n,\R^n\times\R^n)$.
\end{proof}
\end{remark}

\subsection{Differenzierbare Mannigfaltigkeiten}
\cite[Chapter 2 \S 7]{Kumanogo} beschreibt die Invarianz des Koordinatenwechsels von Pseudodifferentialoperatoren der Klasse  $S^m_{\rho,\delta}(\R^n\times\R^n)$
auf parakompakten differenzierbaren Mannigfaltigkeiten, indem er einen Kartenwechsel auf den lokalen Fall im $\R^n$ zurückführt.
Wir wollen mit einer ähnlichen Vorgehensweise die Invarianz für Operatoren in $x$-Form studieren.
Dazu wiederholen wir zunächst die Definition einer parakompakten differenzierbaren Mannigfaltigkeit:

\begin{definition}\label{def:parakompakt}
Ein topologischer Raum $M$ heißt parakompakt, falls jede offene Überdeckung $\menge{\Omega_j}_{j\in\N}$ von $M$ eine lokal endliche Verfeinerung besitzt.
Eine Überdeckung $\menge{V_j}_{j\in\N}$ von $M$ heißt lokal endliche Verfeinerung, falls
zu jedem $j\in\N$ ein $k\in\N$ existiert mit $V_j \subset \Omega_k$, und es zu jedem $x \in M$ eine Umgebung $U \subset M$ gibt mit
$$\# \menge{j \in \N : V_j \cap U \neq \emptyset} < \infty .$$
\end{definition}

\begin{definition}
Ein lokaler euklidischer Raum $M$ der Dimension $n$ ist ein topologischer Hausdorff-Raum, für den zu jedem Punkt
eine Umgebung existiert, die homöomorph zu einer offenen Teilmenge des euklidischen Raums $\R^n$ ist.
Falls $h$ ein Homöomorphismus ist, der ein Gebiet $\Omega \subseteq M$ auf eine offene Teilmenge $U \subseteq \R^n$ abbildet,
nennen wir $h$ eine Karte und $U$ die Koordinatenumgebung von $h$.
\end{definition}

\begin{definition}
Eine differenzierbare Struktur $S$ der Klasse $C^t$ mit $t \in [1,\infty]$
auf einem lokalen euklidischen Raum $M$ ist eine Familie von Karten
$\menge{ h : \Omega_j \rightarrow U_j }_{j\in A}$ mit einer Indexmenge $A$. 
Zusätzlich erfüllt $S$ die folgenden Eigenschaften:
\begin{enumerate}[(a)]
\item $\bigcup_{j\in\N} \Omega_j = M$.
\item Setze $\Omega_{jk} := \Omega_j \cap \Omega_k$ für  alle $j,k \in \N$. Für $\Omega_{jk} \neq \emptyset$ gelte
$$h_{jk} := h_j \circ h_k^{-1} : h_k(\Omega_{jk}) \subseteq U_k \rightarrow h_j(\Omega_{jk}) \subseteq U_j \in C^t(h_k(\Omega_{jk}))^n .$$
\item Die Menge $S$ ist maximal bezüglich der Eigenschaft (b), d.h., falls es $U \subseteq \R^n, \Omega \subseteq M$ offen und eine Funktion
$h : U \rightarrow \Omega$ gibt mit $h \circ h_j^{-1} \in C^t(h_j(\Omega_j\cap\Omega))^n, h_j \circ h^{-1} \in C^t(h(\Omega_j \cap \Omega))^n$, dann ist bereits $h \in S$.
\end{enumerate}
\end{definition}

\begin{remark}
Falls $S_0 := \menge{ h : \Omega_j \rightarrow U_j }_{j\in A}$ eine beliebige Familie von Karten mit einer Indexmenge $A$ ist, die (a) und (b) erfüllt,
so lässt sich diese Menge zu einer eindeutigen differenzierbaren Struktur erweitern.
Diese ist gegeben durch
\begin{align*}
S :=& \left\{ h : \Omega \subseteq M \rightarrow U \subseteq \R^n : h \circ h_j^{-1} \in C^t(h_j(\Omega_j\cap\Omega))^n, \right. \\
    & \left. h_j \circ h^{-1} \in C^t(h(\Omega_j \cap \Omega))^n \textrm{ für jedes } j \in A \right\} 
.\end{align*}
\end{remark}

\begin{definition}
Sei $M$ ein topologischer, parakompakter, lokal euklidischer Raum mit einer Struktur $S$.
Nach Definition \ref{def:parakompakt} gibt es eine lokal endliche, relativ kompakte Überdeckung $\menge{\Omega_j}_{j\in\N}$, d.h. dass
$M = \bigcup_{j\in\N} \Omega_j$, $\bar{\Omega}_j$ für jedes $j \in \N$ kompakt ist und
$$\# \menge{j \in \N : \Omega_j \cap \Omega_{j_0} \neq \emptyset} < \infty \textrm{ für jedes } j_0 \in \N \textrm{ gilt.}$$
Wir nennen den Raum $M$ eine $C^t$-Mannigfaltigkeit, 
falls wir zu einer lokal endliche, relativ kompakte Überdeckung $\menge{\Omega_j}_{j\in\N}$ eine Menge von Karten
$\menge{h_j : \Omega_j \rightarrow U_j \subset \R^n}_{j\in\N} \subseteq S$ mit den folgenden Eigenschaften finden:
\begin{enumerate}[(a)]
\item Für alle $j,k \in \N \textrm{ für } \Omega_{jk} := \Omega_j \cap \Omega_k \neq \emptyset$ sind $h_j$ und $h_k$ verträglich, d.h. dass sich die Abbildung
$$ h_{jk} := h_j \circ h_k^{-1} : h_k(\Omega_{jk}) \subseteq U_k \rightarrow h_j(\Omega_{jk}) \subseteq U_j$$
zu einer Transformation $h_{jk} : V_k \rightarrow V_j$ für 
$h_k(\Omega_{jk}) \subsubset V_k \subset \R^n$ offen und $h_j(\Omega_{jk}) \subsubset V_j \subset \R^n$ offen
erweitern lässt.
\end{enumerate}
Die Familie $\menge{h_j : \Omega_j \rightarrow U_j}_{j\in\N}$ nennen wir einen Atlas.
Für $t = \infty$ sagen wir, dass $M$ eine glatte Mannigfaltigkeit ist.
\end{definition}

\begin{lemma}
Sei eine $C^t$-Mannigfaltigkeit $M$ mit Atlas $\menge{h_j : \Omega_j \rightarrow U_j}_{j\in\N}$ gegeben.
Für eine offene Teilmenge $U \subseteq M$ definiert $U$ eine $C^t$-Mannigfaltigkeit mit Atlas
$$\menge{h_j |_{U\cap\Omega_j} : \Omega_j \cap U \rightarrow U_j}_{j\in\N } .$$
\end{lemma}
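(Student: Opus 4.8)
The plan is to go through the definition of a $C^t$-Mannigfaltigkeit point by point and check that $U$, equipped with the subspace topology and the proposed atlas, meets every requirement; here the image of the restricted chart $h_j|_{U\cap\Omega_j}$ is of course to be read as the open set $h_j(\Omega_j\cap U)\subseteq U_j$, not all of $U_j$.

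First I would dispose of the purely topological axioms. As a subspace of a Hausdorff space, $U$ is Hausdorff. It is locally euclidean: given $p\in U$, pick $j$ with $p\in\Omega_j$; then $\Omega_j\cap U$ is an open neighbourhood of $p$ in $U$, and since $h_j\colon\Omega_j\to U_j$ is a homeomorphism, its restriction $h_j|_{\Omega_j\cap U}$ is a homeomorphism of $\Omega_j\cap U$ onto the open set $h_j(\Omega_j\cap U)\subseteq\R^n$. Paracompactness of $U$ is the one property that is not obtained by merely intersecting the given data with $U$; I would deduce it from the fact that a paracompact locally euclidean Hausdorff space is locally compact and hence decomposes into a topological sum of $\sigma$-compact (thus Lindelöf) open pieces, together with the observation that an open subspace of a $\sigma$-compact locally compact Hausdorff space is again $\sigma$-compact. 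Consequently $U$ is a disjoint union of Lindelöf spaces and therefore paracompact (alternatively one may invoke metrizability of manifolds, which passes to subspaces). Now $U$ is a paracompact, locally euclidean Hausdorff space carrying the induced structure, so Definition \ref{def:parakompakt} furnishes a locally finite, relatively compact cover of $U$.

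Next I would verify that the restricted charts are pairwise compatible and lie in the differentiable structure $S$. For indices $j,k$ with $(\Omega_j\cap U)\cap(\Omega_k\cap U)\neq\emptyset$ the set $\Omega_{jk}\cap U$ is open, and the transition map
$$ \bigl(h_j|_{\Omega_j\cap U}\bigr)\circ\bigl(h_k|_{\Omega_k\cap U}\bigr)^{-1} = h_{jk}\big|_{h_k(\Omega_{jk}\cap U)} $$
is just the restriction of the original $h_{jk}=h_j\circ h_k^{-1}$, which is of class $C^t$, to the open set $h_k(\Omega_{jk}\cap U)$; hence it is again $C^t$. Moreover, if $h_{jk}$ extends to a transformation $h_{jk}\colon V_k\to V_j$ with $h_k(\Omega_{jk})\subsubset V_k$ and $h_j(\Omega_{jk})\subsubset V_j$ open in $\R^n$, then (after shrinking $V_k$ to an open set still containing $h_k(\Omega_{jk}\cap U)$ relatively compactly) the same map witnesses the required extension property for the restricted charts. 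That $h_j|_{\Omega_j\cap U}$ belongs to $S$ follows from the maximality axiom in the definition of a differentiable structure: composing $h_j|_{\Omega_j\cap U}$ on either side with some $h_k^{-1}$ gives a restriction of $h_j\circ h_k^{-1}$, which is $C^t$, so the restricted chart is admissible and is therefore already an element of $S$. Finally, the local finiteness of $\{\Omega_j\}_{j}$ in $M$ forces local finiteness of $\{\Omega_j\cap U\}_{j}$ in $U$; refining the relatively compact cover produced above so that it refines $\{\Omega_j\cap U\}_{j}$ and attaching the corresponding restricted charts yields an atlas of exactly the type demanded.

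The main obstacle is the paracompactness of the open subspace $U$: every other axiom is a mechanical restriction-and-compatibility check, whereas paracompactness genuinely requires the local-compactness (or metrizability) argument sketched above to manufacture a locally finite, relatively compact cover of $U$. Once that is in place the verification closes without difficulty.
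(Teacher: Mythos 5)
The paper states this lemma without proof, so there is no paper argument to compare against. Your verification is sound: the restriction of charts, the Hausdorff and locally-euclidean checks, the $C^t$-compatibility of transition maps on the smaller domains, and membership in $S$ via maximality are all routine and correctly handled. You have also correctly identified the two points that carry actual content. The paracompactness of $U$ does not follow by mere intersection; your reduction to second countability of each connected component (or, more quickly, to metrizability of manifolds and heredity of metrizability to subspaces) is the right tool. The second point, which you handle in the last sentence, is genuinely delicate and worth making explicit: the cover $\{\Omega_j \cap U\}_j$ inherits local finiteness from $\{\Omega_j\}_j$ but need \emph{not} be relatively compact in $U$, since $\overline{\Omega_j\cap U}^{\,U}=\overline{\Omega_j\cap U}^{\,M}\cap U$ may fail to be compact when $\overline{\Omega_j\cap U}^{\,M}$ meets $\partial U$. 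Thus, read literally, the restricted family $\{h_j|_{\Omega_j\cap U}\}_j$ is an atlas subordinate only to a locally finite cover, not necessarily a relatively compact one, and to meet the paper's definition verbatim one must refine to a relatively compact locally finite cover and restrict the charts once more, as you do. That observation is not pedantry: it shows the lemma is slightly imprecise as stated, and your repair is the correct one.
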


\begin{definition}
Sei $M$ eine $C^t$-Mannigfaltigkeit, $\Omega \subseteq M$ offen.
Eine Funktion $u : \Omega \rightarrow \C$ ist in $C^t(\Omega)$, falls für jedes $j \in \N$ gilt, dass $u_j := u \circ h_j^{-1} \in C^t(h_j(\Omega_j \cap \Omega))$
Wir definieren $$ C^t_0(\Omega) := \menge{ u \in C^t(M) : \supp u \subsubset \Omega} .$$
Setzen wir die Funktionen mit kompakten Träger durch 0 fort, so erhalten wir eine Einbettung $C^t_0(\Omega) \hookrightarrow C^t_0(M)$.
\end{definition}

\begin{definition}
Eine Folge $\menge{f_j}_{j\in\N}, f_j \in C^t(M)$ heißt konvergent bzgl. $C^t(M)$, falls 
die Folge gleichmäßig konvergiert und jede Ableitung auf einer kompakten Teilmenge von $M$.
Dadurch erhält $C^t(M)$  die Struktur eines Fréchet-Raumes.
\end{definition}

\begin{definition}
Sei $\menge{\Phi_j}_{j\in\N}$ eine $C^t$-Partition der Eins, die $\Omega_j$ untergeordnet ist,
d.h. 
$$0 \le \Phi_j \in C^t_0(\Omega_j) \textrm{ und } \sum_{j=1}^\infty \Phi_j(x) = 1 \textrm{ für alle } x \in M .$$
Außerdem sei für jedes $j \in \N$ ein $\Psi_j \in C^t_0(\Omega_j)$ mit $\Phi_j \subsubset \Psi_j$ gegeben.
Wir bezeichnen eine Familie von Abschneidefunktionen ${(\Phi_j,\Psi_j)}_{j\in\N}$ mit diesen Eigenschaften als $C^t$-$\Psi$DO-Abschneide\-funktions\-familie.
Insbesondere gilt für jede $C^t$-$\Psi$DO-Abschneide\-funktions\-familie ${(\Phi_j,\Psi_j)}_{j\in\N}$, dass $\supp \Phi_j \cap \supp (1 - \Psi_j) = \emptyset$.
Wenn nicht anders erwähnt, sind mit $\varphi_j, \psi_j$ stets die Abbildungen 
$$ \varphi_j := \Phi_j \circ h_j^{-1} \in C^t_0(U_j), \; \psi_j := \Psi_j \circ h_j^{-1} \in C^t_0(U_j) $$
gemeint. 
\end{definition}

\begin{example}
Die Familie $\menge{\left( \varphi_j, \psi_j \right)}_{j\in\N}$ aus Lemma \ref{koord:lemmaPartition} ist eine glatte $\Psi$DO-Abschneide\-funktions\-familie des $\R^n$.
\end{example}

\begin{notation}
Sei $M$ eine $C^t$-Mannigfaltigkeit, $\Omega \subseteq M$ offen, $\Phi, \Psi \in C^t_0(\Omega)$.
Mit $\Phi \subsubset \Psi$ meinen wir, dass
$ \supp \Phi \subset \menge{x \in \Omega : \Psi(x) = 1 }$.
\end{notation}

\subsection{Pseudodifferentialoperatoren auf glatten Mannigfaltigkeiten} \label{sec:smoothmanifold}

\begin{remark}
Eine Karte $h : \Omega \rightarrow U$ einer glatten Mannigfaltigkeit induziert uns einen Isomorphismus
$h^* : C^t(U) \rightarrow C^t(\Omega), u \mapsto u \circ h$ für jedes $t \in \bar{\R_+}$.
Unter der natürlichen Einschränkung $r_\Omega : C^\infty(M) \rightarrow C^\infty(\Omega), f \mapsto f \mid_\Omega$ und der natürlichen Einbettung $i_\Omega : C^\infty(\Omega) \hookrightarrow C^\infty(M)$ können wir zu einem linearen Operator $P : C^\infty_0(M) \rightarrow C^0(M)$ einen linearen Operator $Q_h : C^\infty_0(U) \rightarrow C^0(U)$ finden, für den das Diagramm
$$
\begin{xy}
\xymatrix{
	C^\infty_0(\Omega) \ar[rr]^{r_\Omega \circ P \circ i_\Omega} & & C^\infty(\Omega) \\
	C^\infty_0(U) \ar[rr]^{Q_h} \ar[u]^{\simv}_{h^*}  & & C^\infty(U) \ar[u]^{\simv}_{h^*}
}
\end{xy}
$$
kommutiert.
\end{remark}

\begin{definition}
Sei $M$ eine glatte Mannigfaltigkeit und $P : C^\infty_0(M) \rightarrow C^0(M)$ ein linearer Operator.
Wir sagen, $P$ hat eine $C^\tau$-Kerndarstellung, falls für alle $j,l \in \N$ eine Funktion $k_{lj}(x,\eta) \in C^\tau(U_l) \times \Stetig{\infty}{U_j}$ existiert,
die bzgl. $x \in \R^n$ Hölder-stetig zum Grad $\tau$ und glatt bzgl. $\xi \in \R^n$ ist,
und für jedes $u \in C^\infty_0(\Omega_j)$ gilt
$$ (P u)(h_l^{-1}(x)) = \int k_{lj}(x,\eta) u_j(\eta) d\eta \textrm{ für alle } x \in U_l ,$$
wobei $u_j := u \circ h_j^{-1} \in C^\infty_0(U_j)$.
\end{definition}

\begin{definition}\label{def:mfgPSDO}
Sei $M$ eine glatte Mannigfaltigkeit mit Atlas $\menge{ h_j : \Omega_j \rightarrow U_j}_{j\in\N}$.
Unter einem Pseudodifferentialoperator der Klasse $\OPMfg{m}{\rho,\delta}{M}$ mit $0 \le 1-\rho \le \delta < \rho \le 1, \delta < 1$ verstehen wir einen linearen Operator
$ P : C^\infty_0(M) \rightarrow C^0(M)$, der sich bezüglich einer $\Psi$DO-Abschneide\-funktions\-familie ${(\Phi_j,\Psi_j)}_{j\in\N}$ in der Form
$$ ( P u)(y) = \sum_{j\in\N} (\Phi_j P \Psi_j u)(y) + \sum_{j\in\N} ( \Phi_j P (1-\Psi_j) u )(y)$$
schreiben lässt, wobei
\begin{enumerate}[(a)]
\item $\Phi_j P \Psi_j u$ in der Form
\begin{equation}\label{equ:mfgPsdoSymbol}
( \Phi_j P \Psi_j u)(h_j^{-1}(x)) = \varphi_j(X) p_j(X,D_x) (\psi_j u_j)(x) \textrm{ für alle } u \in C^\infty_0(\Omega_j)
\end{equation}
für ein $p_j(x,\xi) \in C^\tau S^m_{\rho,\delta}(\R^n, \R^n)$ mit $u_j := u \circ h_j^{-1}$ geschrieben werden kann und
\item für jedes $j \in \N$ der Operator $\Phi_j P (1-\Psi_j)$ eine $C^\tau$-Kerndarstellung besitzt.
\end{enumerate}
Die Familie $\menge{p_j(x,\xi)}_{j\in\N}$ heißt Symbol von $P$.
\end{definition}
\nomenclature[o]{$\OPMfg{m}{\rho,\delta}{M}$}{Klasse der \PSDOS auf einer glatten Mannigfaltigkeit $M$}

\begin{lemma}\label{lemma:mfgdisjunkt}
Sei $M$ eine glatte Mannigfaltigkeit mit Atlas $\menge{ h_j : \Omega_j \rightarrow U_j}_{j\in\N}$.
Sei $P \in \OPMfg{m}{\rho,\delta}{M}$.
Seien $\Psi, \Phi \in C^\infty_0(M)$ mit $\supp \Phi \cap \supp \Psi = \emptyset$ gegeben.
Dann hat $\Psi P \Phi$ eine $C^\tau$-Kerndarstellung.
\begin{proof}
Wir schreiben
$$ \Phi P \Psi = \sum_{j\in\N} \Phi (\Phi_j P \Psi_j) \Psi + \sum_{j\in\N} \Phi (\Phi_j P (1-\Psi_j)) \Psi .$$
Die beiden Summe sind endlich, da $\Phi$ und $\Psi$ kompakten Träger haben.
Da $\Phi_j P(1-\Psi_j)$ eine $C^\tau$-Kerndarstellung hat, besitzt auch $\Phi( \Phi_j P (1 - \Psi_j) ) \Psi$ eine.
Betrachten wir $\Phi (\Phi_j P \Psi_j) \Psi$ lokal auf $U_j$, so sehen wir, dass für alle $u \in C^\infty_0(\Omega_j)$
$$ \Phi \Phi_j P(\Psi_j \Psi u)(h_j^{-1}(x)) = \varphi(X) \varphi_j(X) p_j(X,D_x) (\psi_j \psi u_j)(x) \textrm{ für alle } x \in U_j$$
gilt, wobei $u_j := u \circ h_j^{-1}$, $\varphi := \Phi \circ h_j^{-1}, \psi := \Psi \circ h_j^{-1}$ in $U_j$.
Da $\supp(\psi_j\psi) \cap \supp(\varphi_j\varphi) = \emptyset$ folgt aus Theorem \ref{thm:kerndisjunkt}, 
dass $\Phi (\Phi_j P \Psi_j) \Psi$ die $C^\tau$-Kerndarstellung
$$\Phi\Phi_l P(\Psi_l\Psi u)(h_l^{-1}(x)) = \int_{\R^n} \varphi(x)\varphi_l(x) k_{lj}(x,\eta) \psi_l(\eta) \psi(\eta) u_j(\eta) d\eta \textrm{ für alle } x \in U_l$$
mit $ \varphi(x)\varphi_l(x) k_{lj}(x,\eta) \psi_l(\eta) \psi(\eta) \in C^\tau(U_l) \times \Stetig{\infty}{U_j}$ besitzt. 
\end{proof}
\end{lemma}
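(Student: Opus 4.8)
The plan is to reduce everything to the local disjoint-support statement Theorem \ref{thm:kerndisjunkt} together with its consequence Corollary \ref{cor:kernglatt}, via the defining decomposition of $P$ as an element of $\OPMfg{m}{\rho,\delta}{M}$. Since the hypothesis $\supp\Phi\cap\supp\Psi=\emptyset$ is symmetric in $\Phi,\Psi$, it suffices to treat $\Phi P \Psi$. First I would insert the $\Psi$DO-cutoff family ${(\Phi_j,\Psi_j)}_{j\in\N}$ underlying $P$ and write
$$\Phi P \Psi = \sum_{j\in\N}\Phi(\Phi_j P \Psi_j)\Psi + \sum_{j\in\N}\Phi(\Phi_j P(1-\Psi_j))\Psi .$$
Because $\Phi$ and $\Psi$ have compact support and $\menge{\Omega_j}_{j\in\N}$ is locally finite, only finitely many $j$ contribute to either sum, so it is enough to exhibit a $C^\tau$-Kerndarstellung for each single summand; a finite sum of operators admitting a $C^\tau$-Kerndarstellung again admits one, since the estimates (\ref{equ:kernglatt}) are stable under finite sums of kernels.

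For the summands of the second type, $\Phi_j P(1-\Psi_j)$ has a $C^\tau$-Kerndarstellung by part (b) of Definition \ref{def:mfgPSDO}, and composing on both sides with the smooth, compactly supported multiplication operators $\Phi$ and $\Psi$ merely multiplies the associated kernel by the corresponding chart pullbacks of $\Phi$ and $\Psi$; this preserves (\ref{equ:kernglatt}), so $\Phi(\Phi_j P(1-\Psi_j))\Psi$ has a $C^\tau$-Kerndarstellung.

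For the summands of the first type I would pass to the charts: after pushing forward to $U_j$, the operator $\Phi(\Phi_j P \Psi_j)\Psi$ acts on $u_j := u\circ h_j^{-1}$ as $\varphi\varphi_j(X)\, p_j(X,D_x)\,(\psi_j\psi)(X')$ with $\varphi := \Phi\circ h_j^{-1}$, $\psi := \Psi\circ h_j^{-1}$, and $p_j \in C^\tau S^m_{\rho,\delta}(\R^n,\R^n)\subset C^\tau S^m_{\rho,\delta}(\R^n,\R^n\times\R^n)$ viewed as a symbol in $(x,\xi,y)$-form. The crucial observation is that $\supp(\varphi\varphi_j)$ and $\supp(\psi_j\psi)$ are disjoint: they are contained in $h_j(\supp\Phi\cap\Omega_j)$ and $h_j(\supp\Psi\cap\Omega_j)$ respectively, and $\supp\Phi\cap\supp\Psi=\emptyset$; since $h_j$ is a homeomorphism of $\Omega_j$ onto $U_j$, these are disjoint \emph{compact} subsets of $\R^n$, hence at positive Euclidean distance $\epsilon>0$. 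As the standing assumption $0\le 1-\rho\le\delta<\rho\le1$ forces $\rho>\frac{1}{2}>0$, Theorem \ref{thm:kerndisjunkt} applies and gives $\varphi\varphi_j(X)\, p_j(X,D_x,X')\,\psi_j\psi(X') \in \OP{C^\tau S^{-\infty}(\R^n,\R^n\times\R^n)}$; Corollary \ref{cor:kernglatt} then provides the $C^\tau$-Kerndarstellung of this operator. Pulling the resulting kernel back through $h_j$ (and, after writing $\Phi=\sum_l\Phi\Phi_l$, through the finitely many target charts $h_l$ meeting $\supp\Phi$) yields the $C^\tau$-Kerndarstellung of $\Phi(\Phi_j P\Psi_j)\Psi$ on $M$ in the sense of the manifold definition.

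The analytic content is entirely carried by Theorem \ref{thm:kerndisjunkt} and Corollary \ref{cor:kernglatt}; the only delicate point — the ``hard part'', such as it is — is the bookkeeping of source chart $U_j$ versus target chart $U_l$ in the manifold notion of a $C^\tau$-Kerndarstellung, and checking that disjointness of the supports of $\Phi$ and $\Psi$ survives each chart pullback with a uniform positive gap, which is exactly what the compactness argument above supplies.
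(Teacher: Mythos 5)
Your proposal is correct and follows essentially the same route as the paper's own proof: the same decomposition via the $\Psi$DO-Abschneide\-funktions\-familie into $\sum_j\Phi(\Phi_j P\Psi_j)\Psi$ and $\sum_j\Phi(\Phi_j P(1-\Psi_j))\Psi$, finiteness from local finiteness plus compact support, Definition~\ref{def:mfgPSDO}(b) for the off-diagonal terms, and Theorem~\ref{thm:kerndisjunkt} with Corollary~\ref{cor:kernglatt} applied locally in $U_j$ to $\varphi\varphi_j(X)\,p_j(X,D_x,X')\,\psi_j\psi(X')$ for the diagonal terms. The only thing you add beyond the paper is the explicit compactness argument showing that disjointness of $\supp(\varphi\varphi_j)$ and $\supp(\psi_j\psi)$ yields a positive Euclidean gap $\epsilon>0$ (needed to invoke Theorem~\ref{thm:kerndisjunkt}), which the paper leaves implicit and which is a welcome clarification.
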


\begin{thm}\label{thm:mfgInvarianz}
Sei $\menge{ h_j : \Omega_j \rightarrow U_j}_{j\in\N}$ ein Atlas der glatten Mannigfaltigkeit $M$.
Sei $\menge{ h'_{j'} : \Omega'_{j'} \rightarrow U'_{j'}}_{j'\in\N}$ ein weiterer Atlas der selben $C^\infty$-Struktur,
sodass für alle $j,j' : \Omega_j \cap \Omega'_{j'} \neq \emptyset$ sich 
$$h_{j,j'} := h_j \circ (h'_{j'})^{-1} : h'_{j'}(\Omega_j \cap \Omega'_{j'}) \rightarrow h_j(\Omega_j \cap \Omega'_{j'})$$
zu einem regulären $C^\infty$-Diffeomorphismus auf einer offenen Umgebung von $\overline{ h'_{j'}(\Omega_j \cap \Omega'_{j'})}$ fortsetzen lässt, 
die auf eine offene Umgebung von $\overline{ h_j(\Omega_j \cap \Omega'_{j'})}$ abgebildet wird.
Dann ist ein Pseudodifferentialoperator $P : C^\infty_0(M) \rightarrow C^0(M)$ der Klasse $\OPMfg{m}{\rho,\delta}{M}$ bezüglich 
des Atlanten $\menge{ h_j : \Omega_j \rightarrow U_j}_{j\in\N}$ wiederum ein Pseudodifferentialoperator bezüglich 
des Atlanten $\menge{ h'_{j'} : \Omega'_{j'} \rightarrow U'_{j'}}_{j'\in\N}$ .
\begin{proof}
Für $\Psi, \Phi \in C^\infty(M)$ mit $\supp \Phi \cap \supp \Psi = \emptyset$ liefert Lemma \ref{lemma:mfgdisjunkt}, 
dass $\Phi P \Psi$ eine $C^\tau$-Kerndarstellung bezüglich des Atlanten $\menge{ h_j : \Omega_j \rightarrow U_j}_{j\in\N}$ besitzt.
Da die $h_{j,j'}$ für alle $j,j' \in \N$ glatte Koordinatentransformationen sind, hat $\Phi P \Psi$ auch eine  $C^\tau$-Kerndarstellung bezüglich des Atlanten $\menge{ h'_{j'} : \Omega'_{j'} \rightarrow U'_{j'}}_{j'\in\N}$ nach Lemma \ref{lemma:trafoSinfty}.
Ist ${(\Phi'_{j'},\Psi'_{j'})}_{j'\in\N}$ eine weitere glatte $\Psi$DO-Abschneide\-funktions\-familie, so können wir schreiben:
\begin{equation}\label{equ:thmMfgInvarianzAufsp}
P = \sum_{j'\in\N} \Phi'_{j'} P \Psi'_{j'} + \sum_{j'\in\N} \Phi'_{j'} P (1 - \Psi'_{j'}) .
\end{equation}
Hier hat der zweite Term nach Lemma \ref{lemma:mfgdisjunkt} eine $C^\tau$-Kerndarstellung, da $\supp \Phi'_{j'} \cap \supp (1-\Psi'_{j'}) = \emptyset$.
Für den ersten Term von (\ref{equ:thmMfgInvarianzAufsp}) fassen wir $\Phi'_{j'} P \Psi'_{j'}$ als \PSDO bzgl. der Partition $\menge{(\Phi_j,\Psi_j)}_{j\in\N}$ auf, sodass
\begin{equation}\label{equ:thmMfgInvarianzAnderePart}
\Phi'_{j'} P \Psi'_{j'} = \sum_{j\in\N} \Phi'_{j'} ( \Phi_j P \Psi_j) \Psi'_{j'} + \sum_{j\in\N}  \Phi'_{j'} ( \Phi_j P (1 - \Psi_j)) \Psi'_{j'} .
\end{equation}
Im Folgenden betrachte nur die $j,j' \in \N$ mit $\Omega_j \cap \Omega'_{j'} \neq \emptyset$.
Unter dieser Einschränkung erfüllt $\Phi'_{j'} ( \Phi_j P \Psi_j) \Psi'_{j'}$ in $U_j$ lokal
$$ \Phi'_{j'} \Phi_j P ( \Psi_j \Psi'_{j'} u) (h_j^{-1}(x)) = 
\varphi'_{j'}(X) \varphi_j(X) p_j(X,D_x) (\psi_j \psi'_{j'} u_j)(x) \textrm{ für alle } x \in U_j ,$$
wobei $u_j := u \circ h_j^{-1}, \varphi'_{j'} := \Phi'_{j'} \circ h_j^{-1}, \psi'_{j'} = \Psi'_{j'} \circ h_j^{-1}$.

Wir werden nun mit $h_{j,j'}^*$ den Operator von einer Teilmenge von $U_j$ nach $U'_{j'}$ transformieren.
Dann sehen wir wegen der Invarianz der Koordinatentransformation (Theorem $\ref{thm:trafo}$), 
dass $\Phi'_{j'} ( \Phi_j P \Psi_j) \Psi'_{j'}$ ein Pseudodifferentialoperator auf einer Teilmenge von $U'_{j'}$ ist:

Zunächst wissen wir, dass sich $h_{j,j'} := h_j \circ (h'_{j'})^{-1}$ zu einem $C^\infty$-Diffeomorphismus 
$h_{j,j'} : V \rightarrow U$
auf den offenen Teilmengen $U, V$ von $\R^n$ mit 
$\derive{h_{j,j'}}{y}, \derive{h_{j,j'}^{-1}}{y} \in C^\infty(U)$ für alle $y \in V$ fortsetzen lässt.
Wir transformieren lokal auf $U_j$ mit $h_{j,j'}$:
\begin{align*}
& \left(\Phi'_{j'} \Phi_j P(\Psi_j \Psi'_{j'} u)\right)\left( (h'_{j'})^{-1}(x)\right)  
\\&=
\left( \Phi'_{j'} \circ (h'_{j'})^{-1}(x) \right) \\& \quad
\left(
(\Phi_j \circ h_j^{-1})
p_j(X,D_x)
\left(
(\Psi_j \circ h_j^{-1})
(\Psi'_{j'} \circ (h'_{j'})^{-1} \circ h'_{j'} \circ h_j^{-1})
(u \circ (h'_{j'})^{-1} \circ h'_{j'} \circ h_j^{-1}) 
\right)
\right) \\& \quad
\left( h_j \circ (h'_{j'})^{-1}(x) \right)
\\ &=
\tilde{\varphi}'_{j'}
\left(
\left( h_j \circ (h'_{j'})^{-1} \right)^*
\left(
\varphi_j p_j(X,D_x) \left( \psi_j \left( h'_{j'} \circ h_j^{-1} \right)^* (\tilde{\psi}'_{j'} u'_{j'}) \right)
\right)
\right)
(x) \textrm{ für alle } x \in U_j \cap U_{j'}
,\end{align*}
wobei
$u'_{j'} := u \circ (h'_{j'})^{-1}, 
\tilde{\psi}'_{j'} := \Psi'_{j'} \circ (h'_{j'})^{-1},
\tilde{\varphi}'_{j'} := \Phi'_{j'} \circ (h'_{j'})^{-1}$.

Nach Bemerkung \ref{remark:trafo} (setze $h := h_{j,j'}, \tilde{\Omega}_y := h'_{j'}(\Omega'_{j'} \cap \Omega_j)$) gibt es ein $q_{j,j'} \in C^\tau S^m_{\rho,\delta}(\R^n,\R^n)$ mit 
$$ q_{j,j'}(X,D_x) u = h_{j,j'}^* \left( \varphi_j(X) p_j(X,D_x) \left( \psi_j ( h_{j,j'}^{-1} )^* u \right) \right) .$$
Für
$$ q_{j'}(x,\xi) := \sum_{j \in \N : \Omega'_{j'} \cap \Omega_j \neq \emptyset} q_{j,j'}(x,\xi)$$
sehen wir, dass unser erster Term von (\ref{equ:thmMfgInvarianzAnderePart}) in der Form
$$ ( \Phi'_{j'}  ( \Phi_j P \Psi_j) \Psi'_{j'} u)( (h'_{j'})^{-1}(x)) = \tilde{\varphi}'_{j'} q_{j'}(X,D_x) (\tilde{\psi}'_{j'} u'_{j'})(x) $$
für alle $u'_{j'} = u \circ \left(h'_{j'}\right)^{-1} \in C^\infty_0(U'_{j'} \cap U_j)$
geschrieben werden kann, unabhängig von $j \in \N$ - also gilt obige Gleichung auch für alle $u'_{j'} \in C^\infty_0(U'_{j'})$.

Nun hat von (\ref{equ:thmMfgInvarianzAnderePart}) der zweite Term $ \sum_j  \Phi'_{j'} ( \Phi_j P (1 - \Psi_j)) \Psi'_{j'}$ eine $C^\tau$-Kerndarstellung,
da $\Phi_j P (1 - \Psi_j)$ eine hat, es also ein $r_{j',j}(x,\xi) \in C^\tau S^{-\infty}(\R^n,\R^n)$ gibt, sodass 
für jedes $u \in C^\infty_0(\Omega_j)$ und für alle $x \in U'_{j'}$
\begin{align*}
\Phi'_{j'} \Phi_j P \left( (1 - \Psi_j) \Psi'_{j'} u_j \right) \left((h'_{j'})^{-1}(x)\right) 
&=  \tilde{\varphi}'_{j'}(x) r_{j',j}(X,D_x) (\tilde{\psi}'_{j'}u)(x) \\
&= \tilde{\varphi}'_{j'}(x) \int k_{r_{j',j}}(x,y) (\tilde{\psi}'_{j'} u_j)(y) dy
\end{align*}
mit $k_{r_{j',j}} \in C^\tau(\R^n) \times \Stetigi{\infty}$ gilt.
Da nun $\tilde{\varphi}'_{j'}, \tilde{\psi}'_{j'}$ glatte Abbildungen sind, ist 
$$\tilde{\varphi}'_{j'}(x) k_{r_{j',j}}(x,y) \tilde{\psi}'_{j'}(y)$$ ein Kern, der die Bedingungen in Definition \ref{def:kern} erfüllt.
Somit definiert $P$ bezüglich des Atlanten $\menge{ h'_{j'} : \Omega'_{j'} \rightarrow U'_{j'}}_{j'\in\N}$ wiederum einen Pseudodifferentialoperator.

\end{proof}
\end{thm}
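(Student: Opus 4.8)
The plan is to reduce the statement, by suitable cutoff decompositions, to the two situations already settled in $\R^n$: the transformation of an operator in $x$-form under a smooth regular diffeomorphism (Theorem \ref{thm:trafo}, together with Remark \ref{remark:trafo}, which applies since the manifold class requires $0\le 1-\rho\le\delta<\rho\le 1$ and $\delta<1$), and the transformation of an operator possessing a $C^\tau$-kernel representation (Lemma \ref{lemma:trafoSinfty}). First I would fix a $\Psi$DO-Abschneidefunktionsfamilie $\menge{(\Phi'_{j'},\Psi'_{j'})}_{j'\in\N}$ subordinate to the new atlas $\menge{h'_{j'}:\Omega'_{j'}\to U'_{j'}}_{j'\in\N}$ and split
$$ P = \sum_{j'\in\N} \Phi'_{j'} P \Psi'_{j'} + \sum_{j'\in\N} \Phi'_{j'} P (1-\Psi'_{j'}) .$$
Since $\supp \Phi'_{j'}\cap \supp(1-\Psi'_{j'})=\emptyset$, each summand of the second sum has a $C^\tau$-kernel representation by Lemma \ref{lemma:mfgdisjunkt}, which is precisely the form demanded in Definition \ref{def:mfgPSDO}(b) with respect to the new atlas.

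For the first sum I would, for each fixed $j'$, reinsert the old family $\menge{(\Phi_j,\Psi_j)}_{j\in\N}$:
$$ \Phi'_{j'} P \Psi'_{j'} = \sum_{j\in\N} \Phi'_{j'}(\Phi_j P \Psi_j)\Psi'_{j'} + \sum_{j\in\N} \Phi'_{j'}(\Phi_j P (1-\Psi_j))\Psi'_{j'} .$$
Both sums are finite because $\Phi'_{j'},\Psi'_{j'}$ have compact support and the covering $\menge{\Omega_j}_{j\in\N}$ is locally finite, so only the finitely many $j$ with $\Omega_j\cap\Omega'_{j'}\neq\emptyset$ contribute, and for exactly those the overlap map $h_{j,j'}=h_j\circ(h'_{j'})^{-1}$ extends to a smooth regular diffeomorphism between open neighbourhoods by hypothesis. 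The term $\Phi'_{j'}(\Phi_j P (1-\Psi_j))\Psi'_{j'}$ inherits a $C^\tau$-kernel representation from $\Phi_j P(1-\Psi_j)$; transporting that kernel through the smooth diffeomorphism $h_{j,j'}$ by Lemma \ref{lemma:trafoSinfty} and multiplying by the smooth cutoffs $\tilde\varphi'_{j'},\tilde\psi'_{j'}$ preserves the decay conditions (\ref{equ:kernglatt}), so it again satisfies Definition \ref{def:kern} in the new chart.

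The heart of the argument is the term $\Phi'_{j'}(\Phi_j P \Psi_j)\Psi'_{j'}$. Written in the chart $h_j$ it equals $\varphi'_{j'}(X)\varphi_j(X) p_j(X,D_x)(\psi_j\psi'_{j'} u_j)(x)$ with $p_j\in C^\tau S^m_{\rho,\delta}(\R^n,\R^n)$. Pulling back by $h_{j,j'}^*$ and applying Theorem \ref{thm:trafo} (equivalently Remark \ref{remark:trafo}) produces a symbol $q_{j,j'}\in C^\tau S^m_{\rho,\delta}(\R^n,\R^n)$ with $q_{j,j'}(X,D_x)u \equiv h_{j,j'}^*(\varphi_j(X) p_j(X,D_x)(\psi_j (h_{j,j'}^{-1})^* u))$ modulo $\OP{C^\tau S^{-\infty}(\R^n,\R^n\times\R^n)}$; setting $q_{j'} := \sum_{j:\Omega'_{j'}\cap\Omega_j\neq\emptyset} q_{j,j'}$ (a finite sum) should give $(\Phi'_{j'} P \Psi'_{j'} u)((h'_{j'})^{-1}(x)) = \tilde\varphi'_{j'}(x) q_{j'}(X,D_x)(\tilde\psi'_{j'} u'_{j'})(x)$. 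I expect the main obstacle to be the bookkeeping here: one must verify that this identity, obtained over $U'_{j'}\cap U_j$ for each contributing $j$, holds for all $u'_{j'}\in C^\infty_0(U'_{j'})$ independently of $j$ — which works because the cutoffs $\psi_j$ restrict the test functions to $\Omega_j$, so the various chart contributions are compatible — and that the leftover $\OP{C^\tau S^{-\infty}}$-pieces produced by Theorem \ref{thm:trafo}, once flanked by smooth cutoffs, still meet Definition \ref{def:kern}. Collecting the symbols $\menge{q_{j'}}_{j'\in\N}$ and all kernel terms then exhibits $P$ as an element of $\OPMfg{m}{\rho,\delta}{M}$ with respect to the new atlas.
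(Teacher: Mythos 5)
Your proposal follows essentially the same route as the paper's proof: the same two-stage cutoff decomposition, Lemma \ref{lemma:mfgdisjunkt} for the disjoint-support terms, Theorem \ref{thm:trafo}/Bemerkung \ref{remark:trafo} for the main term via the extended overlap maps $h_{j,j'}$, and Lemma \ref{lemma:trafoSinfty} together with Folgerung \ref{cor:kernglatt}-type arguments for transporting the kernel pieces. Your explicit remark that the $\OP{C^\tau S^{-\infty}}$-remainders from the coordinate change must be absorbed into the kernel part is a sensible piece of bookkeeping that the paper handles only implicitly, but it does not change the argument.
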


\begin{definition}
Sei $P$ ein Pseudodifferentialoperator der Klasse $\OPMfg{m}{\rho,\delta}{M}$ mit Symbol $\menge{p_j(x,\xi)}_{j\in\N}$.
Wir sagen, $\sigma_N(P) := \menge{p_j(x,\xi)}_{j\in\N}$ ist das Symbol von $P$ mit der Genauigkeit $N \in \N_0 \cup \menge{\infty}$, 
falls für jede glatte $\Psi$DO-Abschneide\-funktions\-familie  ${(\Phi_j,\Psi_j)}_{j\in\N}$ der Operator
$ P - \sum_j \Phi_j P_j \Psi_j$ Element der Operatorklasse $\OPMfg{m - (\rho-\delta)(N+1)}{\rho,\delta}{M}$ ist,
wobei $\Phi_j P_j \Psi_j$ ein Pseudodifferentialoperator ist, der lokal auf $U_j$ durch 
$$ \left( \Phi_j P_j (\Psi_j u) \right) \left( h_j^{-1}(x)\right) = \varphi_j(X) p_j(X,D_x) (\psi_j u_j)(x) \textrm{ für alle } x \in U_j$$
mit $u_j := u \circ h_j^{-1}$ definiert ist.
Für $N = \infty$ folgt $ P - \sum_j \Phi_j P_j \Psi_j \in \OPMfgInf$, wobei $\OPMfgInf$ die Klasse der Operatoren ist, deren Symbole eine $C^\tau$-Darstellung besitzen.
Wir nennen $\sigma_0(P)$ das Prinzipalsymbol, $\sigma_\infty(P)$ das vollständige Symbol.
\end{definition}

\begin{thm}\label{thm:mfgSymbolSim}
Hat ein Pseudodifferentialoperator $P$ der Klasse $\OPMfg{m}{\rho,\delta}{M}$ mit Symbol  $\menge{p_j(x,\xi)}_{j\in\N}$ ein anderes Symbol $\menge{\tilde{p}_j(x,\xi)}_{j\in\N}$,
dann gilt für jedes $\varphi \in C^\infty_0(U_j)$, dass
\begin{equation}\label{equ:mfgSymbolSim}
\varphi(x) \left( \tilde{p}_j(x,\xi) - p_j(x,\xi) \right) \in C^\tau S^{-\infty}(\R^n,\R^n\times\R^n) .
\end{equation}
Insbesondere ist damit $\sigma_\infty(P)$ eindeutig bestimmt.
\begin{proof}
Sei $j_0 \in \N$ fest gewählt.
Für $\varphi \in C^\infty_0(U_{j_0})$ wählen wir $\psi,\varphi_{j_0} \in C^\infty_0(U_{j_0})$ mit 
$$\varphi \subsubset \psi \subsubset \varphi_{j_0} \textrm{ in } U_{j_0} .$$
Wähle eine passende glatte $\Psi$DO-Abschneide\-funktions\-familie ${(\Phi_j,\Psi_j)}_{j\in\N}$, sodass $\Phi_{j_0} = \varphi_{j_0} \circ h_{j_0}$.
Die Symbole  $\menge{p_j(x,\xi)}_{j\in\N}$ und $\menge{\tilde{p}_j(x,\xi)}_{n\in\N}$ sollen mit dieser glatten $\Psi$DO-Abschneide\-funktions\-familie kompatibel sein, also (\ref{equ:mfgPsdoSymbol}) erfüllen.
Definiere $\Phi := \varphi \circ h_{j_0}, \Psi := \psi \circ h_{j_0}$.
Dann ist
$ \Phi \subsubset \Psi \subsubset \Phi_{j_0} \subsubset \Psi_{j_0}$
und damit $\Phi\Phi_{j_0} = \Phi$ sowie $\Psi\Psi_{j_0} = \Psi$, also
$$\Phi P \Psi u = \Phi \Phi_{j_0} P \Psi_{j_0} \Psi u \textrm{ auf } U_{j_0} \textrm{ für alle } u \circ h_{j_0}^{-1} \in C^\infty_0(U_{j_0}) .$$
Da $\varphi \subsubset \varphi_{j_0}$ und $\psi \subsubset \psi_{j_0}$, gilt 
\begin{align*}
& \varphi(X) \left( \tilde{p}_{j_0}(X,D_x) - p_{j_0}(X,D_x) \right) \psi(X') u_{j_0}(x) \\
&= \varphi(X) \varphi_{j_0}(X) \left( \tilde{p}_{j_0}(X,D_x) - p_{j_0}(X,D_x) \right) \psi_{j_0}(X') \psi(X') u_{j_0}(x) \\
&= \Phi \Phi_{j_0} P (\Psi_{j_0} \Psi u)(h_{j_0}^{-1}(x)) - \Phi \Phi_{j_0} P (\Psi_{j_0} \Psi u)(h_{j_0}^{-1}(x)) 
= 0
\end{align*}
lokal für jedes $u \in C^\infty_0(\Omega_{j_0})$ und für alle $x \in U_{j_0}$, wobei $u_{j_0} := u \circ h_{j_0}^{-1}$. 
Da $\psi \in C^\infty_0(U_{j_0})$, hält die Gleichung
$$ \varphi(X) \left( \tilde{p}_{j_0}(X,D_x) - p_{j_0}(X,D_x) \right) \psi(X') u = 0 \textrm{ für alle } u \in \Schwarzi .$$ 
Folgerung \ref{cor:introIdentisch} liefert die Gleichheit $\varphi(X) \tilde{p}_{j_0}(X,D_x) \psi(X') = \varphi(X) p_{j_0}(X,D_x) \psi(X')$.
Nach Theorem \ref{thm:kerndisjunkt} sehen wir also, dass bei der Aufspaltung
\begin{align*}
\varphi(X) \left( \tilde{p}_{j_0}(X,D_x) - p_{j_0}(X,D_x) \right) 
&= \varphi(X) \left( \tilde{p}_{j_0}(X,D_x) - p_{j_0}(X,D_x) \right) \psi(X') \\
&+ \varphi(X) \left( \tilde{p}_{j_0}(X,D_x) - p_{j_0}(X,D_x) \right) (1 - \psi(X'))
\end{align*}
der zweite Term wegen $\varphi \subsubset \psi$ der Klasse $\OP{C^\tau S^{-\infty}(\R^n,\R^n\times\R^n)}$ angehört.
Der erste Term fällt aber nach obiger Rechnung bereits weg.
\end{proof}
\end{thm}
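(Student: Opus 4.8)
Der Plan ist, die Behauptung kartenweise auf Folgerung \ref{cor:introIdentisch} -- ein Operator in $x$-Form, der auf $\Schwarzi$ verschwindet, besitzt das Nullsymbol -- zusammen mit der Kernglättung bei disjunkten Trägern aus Theorem \ref{thm:kerndisjunkt} zurückzuführen. Zuerst fixiere ich $j_0 \in \N$ und $\varphi \in C^\infty_0(U_{j_0})$ und wähle geschachtelte Abschneidefunktionen $\varphi \subsubset \psi \subsubset \varphi_{j_0}$ in $U_{j_0}$. Da Definition \ref{def:mfgPSDO} unabhängig von der gewählten $\Psi$DO-Abschneidefunktionsfamilie ist, darf ich eine glatte $\Psi$DO-Abschneidefunktionsfamilie $\menge{(\Phi_j,\Psi_j)}_{j\in\N}$ mit $\Phi_{j_0} = \varphi_{j_0}\circ h_{j_0}$ wählen, bezüglich derer sowohl $\menge{p_j}_{j\in\N}$ als auch $\menge{\tilde{p}_j}_{j\in\N}$ die lokale Darstellung (\ref{equ:mfgPsdoSymbol}) erfüllen. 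Setzt man $\Phi := \varphi\circ h_{j_0}$ und $\Psi := \psi\circ h_{j_0}$, so überträgt sich die Schachtelung auf $M$: $\Phi\Phi_{j_0} = \Phi$, $\Psi_{j_0}\Psi = \Psi$ und $\Phi \subsubset \Psi_{j_0}$.

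Dann ist $\Phi P \Psi = \Phi(\Phi_{j_0} P \Psi_{j_0})\Psi$, und die Auswertung der rechten Seite in der $j_0$-Karte mittels (\ref{equ:mfgPsdoSymbol}) liefert unter Ausnutzung von $\varphi\varphi_{j_0} = \varphi$ und $\psi_{j_0}\psi = \psi$ sowohl $\varphi(X) p_{j_0}(X,D_x)\psi(X')$ als auch $\varphi(X)\tilde{p}_{j_0}(X,D_x)\psi(X')$ als lokale Darstellung desselben Operators. Mit $q := \tilde{p}_{j_0} - p_{j_0}$ verschwindet also der Operator $\varphi(X) q(X,D_x)\psi(X')$ auf $\Schwarzi$. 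Da dieser a priori nur ein Operator in $(x,\xi,y)$-Form mit Symbol $\varphi(x)q(x,\xi)\psi(y)$ ist, gehe ich zu seinem vereinfachten Symbol über (Theorem \ref{thm:simplify}): dieses ist ein Symbol in $x$-Form, dessen Operator verschwindet, also ist es nach Folgerung \ref{cor:introIdentisch} das Nullsymbol, d.h. $\varphi(X) q(X,D_x)\psi(X') = 0$ als Operator.

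Schließlich zerlege ich den $(x,\xi,y)$-Form-Operator mit $y$-unabhängigem Symbol $\varphi(x)q(x,\xi)$ gemäß $\id = \psi(X') + (1-\psi(X'))$:
$$\varphi(X)q(X,D_x) = \varphi(X)q(X,D_x)\psi(X') + \varphi(X)q(X,D_x)(1-\psi(X')).$$
Der erste Summand ist nach dem vorigen Schritt Null. Für den zweiten gilt $\dist{\supp\varphi}{\supp(1-\psi)} \ge \epsilon > 0$, da $\psi \equiv 1$ auf einer Umgebung von $\supp\varphi$ ist, sodass Theorem \ref{thm:kerndisjunkt} $\varphi(X)q(X,D_x)(1-\psi(X')) \in \OP{C^\tau S^{-\infty}(\R^n,\R^n\times\R^n)}$ liefert. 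Also liegt $\varphi(X)q(X,D_x)$ in dieser Klasse, und da dieser Operator durch das $x$-Form-Symbol $\varphi(x)q(x,\xi)$ gegeben ist, erzwingt die Eindeutigkeit des vereinfachten Symbols (Satz \ref{intro:satzSymOP} und Theorem \ref{thm:simplify}), dass $\varphi(x)q(x,\xi) = \varphi(x)\big(\tilde{p}_{j_0}(x,\xi) - p_{j_0}(x,\xi)\big) \in C^\tau S^{-\infty}(\R^n,\R^n\times\R^n)$, also (\ref{equ:mfgSymbolSim}). Die Eindeutigkeit von $\sigma_\infty(P)$ folgt dann unmittelbar, da je zwei Symbole von $P$ lokal modulo $C^\tau S^{-\infty}$ übereinstimmen und somit dieselbe Klasse repräsentieren.

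Als Hauptschwierigkeit erwarte ich die sorgfältige Buchführung, die es erlaubt, eine einzige Abschneidefunktionsfamilie $\menge{(\Phi_j,\Psi_j)}_{j\in\N}$ gleichzeitig für beide Symbolfamilien zu verwenden, sowie den Übergang von ``der lokalisierte Operator verschwindet'' zu ``das zugehörige Symbol in $x$-Form verschwindet''. Hier leisten Theorem \ref{thm:kerndisjunkt} und die Korrespondenz vereinfachter Symbole aus Theorem \ref{thm:simplify} die eigentliche Arbeit, denn $\varphi(X) q(X,D_x)\psi(X')$ ist zunächst nur ein Operator in $(x,\xi,y)$-Form, und man muss über sein vereinfachtes Symbol gehen, um Folgerung \ref{cor:introIdentisch} anwenden zu können.
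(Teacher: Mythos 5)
Dein Beweis verläuft im Wesentlichen genauso wie der des Papers: gleiche Lokalisierung mit geschachtelten Abschneidefunktionen $\varphi \subsubset \psi \subsubset \varphi_{j_0}$, gleiche Identifikation der beiden lokalen Darstellungen, sodass $\varphi(X)\left(\tilde{p}_{j_0}(X,D_x)-p_{j_0}(X,D_x)\right)\psi(X')$ verschwindet (Folgerung \ref{cor:introIdentisch}), und gleiche Aufspaltung mit $\psi(X')$ und $1-\psi(X')$, wobei der zweite Term nach Theorem \ref{thm:kerndisjunkt} in $\OP{C^\tau S^{-\infty}(\R^n,\R^n\times\R^n)}$ liegt. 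Dass du den Übergang vom Operator- zum Symbolresultat über das vereinfachte Symbol (Theorem \ref{thm:simplify}) und dessen Eindeutigkeit explizit machst, ist eine korrekte Präzisierung eines Schrittes, den das Paper implizit lässt.
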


\begin{cor}
Sei $P \in \OPMfg{m}{\rho,\delta}{M}$ mit Symbol $\menge{p_j(x,\xi)}_{j\in\N}$ bezüglich der glatte $\Psi$DO-Abschneide\-funktions\-familie ${(\Phi_j,\Psi_j)}_{j\in\N}$.
Dann gibt es für jedes $j \in \N$ ein Symbol $\tilde{p}_j \in C^\tau S^m_{\rho,\delta}(\R^n,\R^n)$, für das gilt:
\begin{enumerate}[(a)]
\item $\tilde{p}_j$ erfüllt die Gleichung (\ref{equ:mfgSymbolSim}).
\item Auf $U_j$ gilt 
$ \Psi_j P \Phi_j = \psi_j(X) \tilde{p}_j(X,D_x) \varphi_j(X')$.
\item Der Operator $\sum_j (1-\Psi_j) P \Phi_j$ hat eine $C^\tau$-Kerndarstellung.
\end{enumerate}

Insbesondere können wir $P$ darstellen durch
$$ P = \sum_j \Psi_j P \Phi_j + \sum_j (1-\Psi_j) P \Phi_j .$$
\begin{proof}
Setzen wir $\Psi_j P \Phi_j$ in Definition \ref{def:mfgPSDO} für $P$ ein, erhalten wir 
$$ \Psi_j P \Phi_j = \sum_k \Psi_j ( \Phi_k P \Psi_k ) \Phi_j + \sum_k \Psi_j \Phi_k P(1-\Psi_k) \Phi_j .$$

Dann ist klar, dass der zweite Term $\Psi_j \Phi_k P (1-\Psi_k) \Phi_j \in \OPMfgInf$ auf $U_j$, 
da $\Phi_j P (1 - \Psi_j)$ nach Lemma \ref{lemma:mfgdisjunkt} eine $C^\tau$-Kerndarstellung besitzt.
Deshalb existiert nach Folgerung \ref{cor:koordAsympt} für jedes $j \in \N$ ein $r_j(x,\xi) \in C^\tau S^{-\infty}(\R^n,\R^n)$, sodass
$$\sum_k \Psi_j \Phi_k P ( (1-\Psi_k) \Phi_j u) (h_j^{-1}(x)) = \psi_j(x) r_j(X,D_x) (\varphi_j u_j)(x) ,$$
wobei $u_j = u \circ h_j^{-1}$.
Für den ersten Term folgt mit Theorem \ref{thm:trafo} (setze $h := h_k \circ h_j^{-1}, \tilde{\Omega}_y := h_k(\Omega_k \cap \Omega_j)$), 
dass es ein $p_j'(x,\xi) \in C^\tau S^m_{\rho,\delta}(\R^n,\R^n)$ gibt, sodass
$$ \sum_k \Psi_j \Phi_k P (\Psi_k \Phi_j u) (h_j^{-1}(x)) = \psi_j p_j'(X,D_x) (\varphi_j u_j)(x) \textrm{ für alle } x \in U_j .$$
Setzen wir $\tilde{p}_j := p_j' + r_j$, so ist
$$ \Psi_j P (\Phi_j u)(h_j^{-1}(x)) = \psi_j(X) \tilde{p}_j(X,D_x) (\varphi_j u_j)(x) \textrm{ für alle } x \in U_j .$$
Wähle wie im letzten Beweis für $j_0 \in \N$ fest zwei glatte Funktionen $\varphi, \psi \in C^\infty_0(U_{j_0})$, sodass
$$ \varphi \subsubset \psi \subsubset \varphi_{j_0} \textrm{ auf } U_{j_0} ,$$
wobei $\varphi_{j_0} := \Phi_{j_0} \circ h_{j_0}^{-1}$.
Also ist $\varphi \subsubset \psi \subsubset \varphi_{j_0} \subsubset \psi_{j_0}$.
Damit folgt
\begin{align*}
\varphi_{j_0}(X) \varphi(X) p_{j_0}(X,D_x) \psi(X') \psi_{j_0}(X') 
&= \varphi(X) p_{j_0}(X,D_x) \psi(X') \\
&= \psi_{j_0}(X) \varphi(X) p_j(X,D_x) \psi(X') \varphi_{j_0}(X') \\
&= \psi_{j_0}(X) \varphi(X) \tilde{p}_{j_0}(X,D_x) \psi(X') \varphi_{j_0}(X') \\
&= \varphi(X) \tilde{p}_{j_0}(X,D_x) \psi(X') \\
&= \varphi_{j_0}(X) ( \varphi(X) \tilde{p}_{j_0}(X,D_x) \psi(X') ) \psi_{j_0}(X')
.\end{align*}
Mit Theorem \ref{thm:mfgSymbolSim} auf den Pseudodifferentialoperator $\Phi P \Psi$ angewandt, ergibt
$$ \varphi(X) \tilde{p}_{j_0}(X,D_x) \psi(X') \equiv \varphi(X) p_{j_0}(X,D_x) \psi(X') \mod \OP{C^\tau S^{-\infty}(\R^n,\R^n\times\R^n)} ,$$
also
$ \varphi(x) \tilde{p}_{j_0}(x,\xi) \equiv \varphi(x) p_{j_0}(x,\xi) \mod{C^\tau S^{-\infty}(\R^n,\R^n\times\R^n)}$.
Da $\varphi \in C^\infty_0(U_{j_0})$ und die glatte $\Psi$DO-Abschneide\-funktions\-familie beliebig waren, folgt die Behauptung.
\end{proof}
\end{cor}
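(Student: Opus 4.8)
The plan is to unpack the definition of a pseudodifferential operator on $M$ (Definition~\ref{def:mfgPSDO}) for the localized piece $\Psi_j P \Phi_j$ and then transport everything back into the chart $U_j$ by means of the coordinate-invariance results of Section~\ref{sec:smoothkoord}. First I would insert the decomposition $P = \sum_k \Phi_k P \Psi_k + \sum_k \Phi_k P(1-\Psi_k)$ into $\Psi_j P \Phi_j$, which gives
$$
\Psi_j P \Phi_j = \sum_k \Psi_j(\Phi_k P \Psi_k)\Phi_j + \sum_k \Psi_j \Phi_k P(1-\Psi_k)\Phi_j .
$$
Since $\Phi_j$ and $\Psi_j$ are compactly supported and the covering $\menge{\Omega_j}_{j\in\N}$ is locally finite, both sums are finite, so the two groups of terms may be handled separately.

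For the second group, each summand is a localization of $\Phi_k P(1-\Psi_k)$, which by the very definition of $\OPMfg{m}{\rho,\delta}{M}$ (and Lemma~\ref{lemma:mfgdisjunkt}) possesses a $C^\tau$-kernel representation. Pulling the kernel back through $h_j$ and invoking the kernel--symbol correspondence of Corollary~\ref{cor:kernglatt} (via Lemma~\ref{lemma:trafoSinfty}, resp. Corollary~\ref{cor:koordAsympt}) produces a symbol $r_j \in C^\tau S^{-\infty}(\R^n,\R^n)$ with
$$
\sum_k \Psi_j \Phi_k P((1-\Psi_k)\Phi_j u)(h_j^{-1}(x)) = \psi_j(x)\, r_j(X,D_x)(\varphi_j u_j)(x), \qquad u_j := u\circ h_j^{-1} .
$$
For the first group, each $\Phi_k P \Psi_k$ is, read in the chart $U_k$, the operator $\varphi_k(X) p_k(X,D_x) \psi_k(X')$; applying the coordinate-transformation Theorem~\ref{thm:trafo} with $h := h_k\circ h_j^{-1}$ and $\tilde{\Omega}_y := h_k(\Omega_k\cap\Omega_j)$ yields, after summing the finitely many contributions, a symbol $p_j' \in C^\tau S^m_{\rho,\delta}(\R^n,\R^n)$ with $\sum_k \Psi_j\Phi_k P(\Psi_k\Phi_j u)(h_j^{-1}(x)) = \psi_j\, p_j'(X,D_x)(\varphi_j u_j)(x)$ on $U_j$. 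Setting $\tilde{p}_j := p_j' + r_j$ then gives (b).

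To obtain (a), I would fix $j_0$ and $\varphi \in C^\infty_0(U_{j_0})$ and choose nested cutoffs $\varphi \subsubset \psi \subsubset \varphi_{j_0} \subsubset \psi_{j_0}$ on $U_{j_0}$, so that $\varphi\varphi_{j_0} = \varphi$ and $\psi\psi_{j_0} = \psi$. Multiplying the identity from (b) by these cutoffs and comparing with the identity coming from the original symbol $p_{j_0}$ shows, with Folgerung~\ref{cor:introIdentisch}, that $\varphi(X)\tilde{p}_{j_0}(X,D_x)\psi(X') = \varphi(X) p_{j_0}(X,D_x)\psi(X')$ as operators. Reading $\varphi(X)(\,\cdot\,)\psi(X')$ as a $\Psi$DO on $M$ via $\Phi := \varphi\circ h_{j_0}$, $\Psi := \psi\circ h_{j_0}$ and invoking Theorem~\ref{thm:mfgSymbolSim} --- which says that two symbols of one operator differ, after multiplication by a chart cutoff, by an element of $C^\tau S^{-\infty}$ --- yields $\varphi(x)\tilde{p}_{j_0}(x,\xi) \equiv \varphi(x) p_{j_0}(x,\xi) \bmod C^\tau S^{-\infty}(\R^n,\R^n\times\R^n)$, which is precisely (\ref{equ:mfgSymbolSim}) since $\varphi$ and the cutoff family were arbitrary. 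Part (c) is then immediate: $\Phi_j \subsubset \Psi_j$ forces $\supp \Phi_j \cap \supp(1-\Psi_j) = \emptyset$, so by Lemma~\ref{lemma:mfgdisjunkt} every $(1-\Psi_j)P\Phi_j$ has a $C^\tau$-kernel representation, and local finiteness of the sum preserves this; the decomposition $P = \sum_j \Psi_j P\Phi_j + \sum_j (1-\Psi_j)P\Phi_j$ is just $P$ composed with $\sum_j \Phi_j = 1$.

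The main difficulty I anticipate is bookkeeping rather than conceptual. One must check that every transition map $h_k\circ h_j^{-1}$ genuinely extends to a regular $C^\infty$-diffeomorphism on a neighbourhood of the relevant relatively compact set, so that Theorem~\ref{thm:trafo} applies, and one must track precisely which chart cutoff ($\varphi_j$ versus $\psi_j$, or the auxiliary $\varphi,\psi$) multiplies each factor, so that the disjoint-support hypotheses of Theorem~\ref{thm:kerndisjunkt} and Lemma~\ref{lemma:mfgdisjunkt} are actually met. The subtle point is that $\tilde{p}_j$ is determined only modulo $C^\tau S^{-\infty}$ and only after multiplication by a chart cutoff, so all the asserted equalities in (a) and (b) must be read in that local, modulo-smoothing sense; ensuring that the choices of the finitely many $r_j$ and $p_j'$ combine consistently is where the care is required.
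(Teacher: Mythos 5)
Your proposal follows essentially the same route as the paper: insert the decomposition from Definition~\ref{def:mfgPSDO} into $\Psi_j P \Phi_j$, handle the disjoint-support terms via the $C^\tau$-kernel correspondence and coordinate transport to get $r_j\in C^\tau S^{-\infty}(\R^n,\R^n)$, apply Theorem~\ref{thm:trafo} to the remaining finite sum to get $p_j'$, set $\tilde{p}_j := p_j' + r_j$, and then obtain~(a) by wedging auxiliary cutoffs $\varphi\subsubset\psi\subsubset\varphi_{j_0}\subsubset\psi_{j_0}$ and invoking Theorem~\ref{thm:mfgSymbolSim}. The only small deviation is that you spell out part~(c) explicitly via Lemma~\ref{lemma:mfgdisjunkt} (the paper leaves it implicit) and cite Folgerung~\ref{cor:introIdentisch} by name where the paper relies on it tacitly through its chain of operator identities; both are faithful to the argument and do not constitute a different approach.
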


\section{Koordinatentransformation auf nicht-glatten Mannigfaltigkeiten}\label{sec:nonsmooth}
\subsection{Symbole in $(x,y,\xi)$-Form}\label{sec:x,y,xi}

\begin{definition}\label{def:xyxi}
Ein Symbol $p : \R^n \times \R^n \times \R^n \rightarrow \C$ gehört der Klasse $C^\tau S^m_{1,0}(\R^n \times \R^n, \R^n)$ an, 
falls $p(x,y,\xi)$ zum Grad $\tau$ Hölder-stetig bezüglich $(x,y)\in\R^{2n}$ und glatt bezüglich $\xi \in \R^n$ ist,
d.h. $p(\hold,y,\xi) \in C^\tau(\R^n)$ und $p(x,\hold,\hold) \in \Stetig{\infty}{\R^n\times\R^n}$,
und es zu jedem Multiindex $\beta\in\N^n_0$ eine Konstante $C_\beta > 0$ unabhängig von $\xi\in\R^n$ gibt, sodass
\begin{align*}
\norm{ \partial_\xi^\beta p(\hold,\hold,\xi) }{C^\tau(\R^n\times\R^n)} &\le C_\beta \piket{\xi}{m-\abs{\beta}} \textrm{, und }\\
\abs{ \partial_\xi^\beta p(x,y,\xi) } &\le C_\beta \piket{\xi}{m-\abs{\beta}} \textrm{ für alle } x,y,\xi\in \R^n
.\end{align*}
Schließlich definieren wir zu einem $N \in \N$ das Symbol $p_N(x,y,\xi) := \sum_{j=0}^N p(x,y,\xi) \lilwood{j}(\xi)$, 
wobei $\menge{\lilwood{j}}_{j\in\N_0}$ ein Vertreter der Littlewood-Paley-Partition aus Definition \ref{def:littlewoodpaley} ist.
\end{definition}
\nomenclature[s]{$C^\tau S^m_{1,0}(\R^n \times \R^n, \R^n)$}{Klasse der Symbole in $(x,y,\xi)$-Form mit Hölder-Stetigkeitsgrad $\tau$}

\begin{thm}\label{thm:operatorXY}
Ein Symbol $p \in C^\tau S^m_{1,0}(\R^n \times \R^n, \R^n)$
definiert zu einem $u \in H^{s+m}_q(\R^n)$ den Operator
\begin{equation}\label{equ:operatorXYXi}
p(X,Y,D_x) u(x) := \lim_{N\rightarrow\infty} \iint e^{i(x-y)\cdot\xi} p_N(x,y,\xi) u(y) dy \dslash\xi
\end{equation}
für $s \in (-\tau,\tau), 1 < q < \infty$.
Wir nennen diesen Operator einen Pseudodifferentialoperator in $(x,y,\xi)$-Form.
\begin{proof} Zu zeigen ist die Wohldefiniertheit des Operators.
Für ein festes $x_0 \in \R^n$ haben wir 
$$ p_N(X,Y,D_x) u(x_0) = \iint e^{i(x_0-y)\xi} p_N(x_0,y,\xi) u(y) dy \dslash\xi \textrm{ für alle } u \in \Schwarzi .$$
Für $p_{N,x_0}(y,\xi) := p_N(x_0,\xi,y) \in C^\tau S^m_{1,0}(\R^n,\R^n)$ ist
$ p_{N,x_0}(D_x,X) u(x_0) = p_N(X,Y,D_x) u(x_0) $.
Nach Satz \ref{satz:adjunPSDO} definiert das Symbol $b_{N,x_0}(y,\xi) = \overline{ p_{N,x_0}(y,\xi) }$ einen Operator
$b_{N,x_0}(X,D_x)$ in $x$-Form mit $b_{N,x_0}(X,D_x)^* = p_{N,x_0}(D_x,X)$.
Setzen wir $b_{x_0}(y,\xi) := \overline{ p(x_0,y,\xi) }$, so haben wir $b_{N,x_0}(y,\xi) = b_{x_0}(y,\xi) \sum_{j=0}^N \lilwood{j}(\xi)$.
Lemma \ref{lemma:lilwoodSchwarzKonv} liefert für alle $u \in \Schwarzi$
$$b_{N,x_0}(X,D_x) u(x_0) 
= b_{x_0}(X,D_x) \sum_{j=0}^N \lilwood{j}(D_x) u(x_0) 
\rightarrow b_{x_0}(X,D_x) u(x_0) \textrm{ für } N \rightarrow \infty .$$
Nach Theorem \ref{beschr:Stau_1,0} ist
$ b_{N,x_0}(X,D_x) : H^{s+m}_q(\R^n) \rightarrow H^s_q(\R^n) \textrm{ für alle } s \in (-\tau,\tau), 1 < q < \infty$
ein beschränkter linearer Operator. 
Nun ist $\Schwarzi \subset H^{s+m}_q(\R^n)$ dicht erhalten.
Wir folgern mit dem Dichtheitsargument für alle $x_0 \in \R^n$ fest gewählt, dass
$$b_{N,x_0}(X,D_x) u(x_0) \rightarrow b_{x_0}(X,D_x) u(x_0) \textrm{ für } N \rightarrow \infty \textrm{ für alle } u(x_0) \in H^s_q(\R^n) .$$
Wegen $b_{N,x_0}(X,D_x)^* = p_{N,x_0}(D_x,X)$ konvergiert auch $p_{N,x_0}(D_x,X) u(x_0)$ gegen einen Operator $p_{x_0}(D_x,X) u(x_0)$.
Wir erhalten schließlich die punktweise Konvergenz
$$ p_N(X,Y,D_x) u(x_0) = p_{x_0,N}(D_x,X) u(x_0) \rightarrow p_{x_0}(D_x,X) u(x_0) = p(X,Y,D_x) u(x_0) $$
für $N \rightarrow \infty$ für jedes $u \in H^{s+m}_q(\R^n)$.
\end{proof}
\end{thm}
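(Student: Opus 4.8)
The plan is to reduce the existence of the limit in (\ref{equ:operatorXYXi}) to the already-established calculus of operators in $x$- and $y$-form on Bessel potential spaces, by freezing the outer variable. Fix $x_0 \in \R^n$. For $u \in \Schwarzi$ the inner integral is absolutely convergent (the cut-off symbols $p_N$ have compact $\xi$-support), and evaluating at $x_0$ turns $p_N(X,Y,D_x)$ into a $y$-form operator: with $p_{N,x_0}(y,\xi) := p_N(x_0,y,\xi)$ one has $p_N(X,Y,D_x)u(x_0) = p_{N,x_0}(D_x,X)u(x_0)$. Because $p \in C^\tau S^m_{1,0}(\R^n\times\R^n,\R^n)$, restricting the $C^\tau(\R^n\times\R^n)$-estimates for $\partial_\xi^\beta p$ to the slice $x = x_0$ gives $C^\tau(\R^n)$-estimates for $\partial_\xi^\beta p(x_0,\cdot,\xi)$ that are uniform in $x_0$, and multiplying by $\Phi_N(\xi) := \sum_{j\le N}\lilwood{j}(\xi)$ preserves the symbol seminorms up to a constant uniform in $N$ (Lemma \ref{lemma:introPartialLilwoodAbsch}); hence $p_{N,x_0} \in C^\tau S^m_{1,0}(\R^n,\R^n)$ with seminorms bounded uniformly in $N$ and $x_0$. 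By Satz \ref{satz:adjunPSDO} the $y$-form operator $p_{N,x_0}(D_x,X)$ is the formal $L^2$-adjoint of the $x$-form operator $b_{N,x_0}(X,D_x)$ attached to $b_{N,x_0}(y,\xi) := \overline{p_{N,x_0}(y,\xi)} = b_{x_0}(y,\xi)\Phi_N(\xi)$, where $b_{x_0}(y,\xi) := \overline{p(x_0,y,\xi)} \in C^\tau S^m_{1,0}(\R^n,\R^n)$.

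On the $x$-form side everything is routine. Since $b_{N,x_0}(X,D_x)u = b_{x_0}(X,D_x)\bigl(\sum_{j\le N}\lilwood{j}(D_x)u\bigr)$ and $\sum_{j\le N}\lilwood{j}(D_x)u \to u$ in $\Schwarzi$ by Lemma \ref{lemma:lilwoodSchwarzKonv}, the continuity statement $b_{x_0}(X,D_x) : \Schwarzi \to C^\tau(\R^n)$ of Theorem \ref{thm:introSchwarzBeschr} gives $b_{N,x_0}(X,D_x)u \to b_{x_0}(X,D_x)u$ in $C^\tau(\R^n)$ for every $u \in \Schwarzi$. At the same time Theorem \ref{beschr:Stau_1,0} makes each $b_{N,x_0}(X,D_x)$ bounded from $H^{s+m}_q(\R^n)$ to $H^s_q(\R^n)$ for $s \in (-\tau,\tau)$ and $1 < q < \infty$, with operator norm estimated by finitely many of the symbol seminorms and therefore uniformly in $N$; since $\Schwarzi$ is dense in $H^{s+m}_q(\R^n)$ (Theorem \ref{thm:interpolBesselSobolev}), the convergence extends to all of $H^{s+m}_q(\R^n)$. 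Passing to the formal adjoints $p_{N,x_0}(D_x,X) = b_{N,x_0}(X,D_x)^*$ and setting $p_{x_0}(D_x,X) := b_{x_0}(X,D_x)^*$ yields, for each fixed $x_0$ and every $u \in H^{s+m}_q(\R^n)$,
$$ p_N(X,Y,D_x)u(x_0) = p_{N,x_0}(D_x,X)u(x_0) \longrightarrow p_{x_0}(D_x,X)u(x_0) =: p(X,Y,D_x)u(x_0), $$
so the limit exists and the operator is well defined.

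The step that needs genuine care is the last one: the convergence obtained for $b_{N,x_0}(X,D_x)$ is only strong (not in operator norm), and the quantity we must control is a pointwise value $p_N(X,Y,D_x)u(x_0)$, not an $H^s_q$-norm. To make the passage to adjoints airtight I would pair it with a uniform-in-$N$ bound for $p_{N,x_0}(D_x,X)$ obtained by dualising Theorem \ref{beschr:Stau_1,0} through the pairing $H^s_q(\R^n)' \cong H^{-s}_{q'}(\R^n)$, keeping careful track of the order $m$ and the conjugate exponent so that the admissible range of $s$ remains $(-\tau,\tau)$, and then use density once more; the pointwise evaluation at $x_0$ is then legitimate because on the dense subspace $\Schwarzi$ the convergence already holds in $C^\tau(\R^n)$. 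The remaining ingredients — the identification of the frozen operator, the uniform seminorm bounds for $p_{N,x_0}$, and the Littlewood–Paley limit — are straightforward.
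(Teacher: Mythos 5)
Your proposal follows essentially the same route as the paper: freeze the outer variable $x_0$, identify $p_N(X,Y,D_x)u(x_0)$ with the $y$-form operator $p_{N,x_0}(D_x,X)u(x_0)$, pass via Satz \ref{satz:adjunPSDO} to the $x$-form adjoint $b_{N,x_0}(X,D_x)$, obtain convergence on $\Schwarzi$ from Lemma \ref{lemma:lilwoodSchwarzKonv}, and extend by Theorem \ref{beschr:Stau_1,0} together with density of $\Schwarzi$ in $H^{s+m}_q(\R^n)$ before returning to the adjoints. The extra care you flag about the adjoint passage and the uniform-in-$N$ bounds is exactly the point the paper leaves implicit, so your write-up is, if anything, slightly more explicit than the original but not a different argument.
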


\begin{cor}\label{cor:adjunXY} 
Der formal adjungierte Operator eines \PSDO in $(x,y,\xi)$-Form ist ein \PSDO in $(x,y,\xi)$-Form.
Genauer erfüllt für ein Symbol $p \in C^\tau S^m_{1,0}(\R^n \times \R^n, \R^n)$ der formal adjungierte Operator gegeben durch das Symbol
$p(x,y,\xi)^* := p(y,x,-\xi)$ die Gleichheit
$$ \skp{ p(X,X',D_x) u}{ v }{L^2(\R^n)} = \skp{ u }{ p(X,X',D_x)^* v }{L^2(\R^n)} .$$
\begin{proof}
Wir werden analog zu Beweis von Satz \ref{satz:adjunPSDO} schrittweise mit Hilfe des Satzes von Fubini das linke Skalarprodukt umformen.
Zunächst ist
$ (y,\xi) \mapsto e^{-iy\cdot\xi} p_N(x,y,\xi) u(y) \in L^1(\R^n\times\R^n)$ da der Träger von $\xi \mapsto p_N(x,y,\xi)$ kompakt ist.
Außerdem ist für $p_{x,N}(y,\xi) := p_N(x,y,\xi)$ der Operator $p_{x,N}(X,D_x) u \in L^\infty(\R^n)$, also
$$ (x,y) \mapsto e^{ix\cdot\xi} \overline{v(x)} \int e^{-iy\cdot\xi} p_N(x,y,\xi) u(y) \dslash\xi \in L^1(\R^n\times\R^n) .$$
Insbesondere haben wir $ (x,\xi) \mapsto e^{i(x-y)\cdot\xi} p_N(x,y,\xi) \overline{v(x)}  \in L^1(\R^n\times\R^n)$.
Setzen wir 
$$p_N(x,y,\xi)^* := p_N(y,x,-\xi) ,$$ 
so ergibt sich mit Fubini's Satz
\begin{align*}
\skp{ p_N(X,X',D_x) u }{v}{L^2(\R^n)} &= 
\int \left( \iint e^{i(x-y)\cdot\xi} p_N(x,y,\xi) u(y) dy \dslash\xi \right) \overline{v(x)} dx \\
&= \int \left( \iint e^{i(x-y)\cdot\xi} p_N(x,y,\xi) \dslash\xi u(y) dy \right) \overline{v(x)} dx \\
&= \int u(y) \left( \int \left( \int e^{i(x-y)\cdot\xi} p_N(x,y,\xi) \dslash\xi \right) \overline{v(x)} dx \right) dy \\
&= \int u(y) \left( \iint e^{i(x-y)\cdot\xi} p_N(x,y,\xi) \overline{v(x)} dx \dslash\xi \right) dy \\
&= \int u(y) \overline{ \iint e^{i(y-x)\cdot\xi} \overline{p_N(x,y,\xi)} v(x) dx \dslash\xi } dy \\
&= \int u(y) \overline{ \iint e^{i(x-y)\cdot\xi} \overline{p_N(x,y,-\xi)} v(x) dx \dslash\xi } dy \\
&= \skp{ u }{p_N(X,X',D_x)^* v}{L^2(\R^n)}  
,\end{align*} 
wobei wir im letzten Schritt die Spiegelung $\xi \mapsto -\xi$ verwendet haben.
Schließlich folgt
\begin{align*}
\skp{ p(X,X',D_x) u}{ v }{L^2(\R^n)} 
&= \lim_{N\rightarrow\infty} \skp{ p_N(X,X',D_x) u}{ v }{L^2(\R^n)} \\
&= \lim_{N\rightarrow\infty} \skp{ u }{p_N(X,X',D_x)^* v}{L^2(\R^n)} \\
&= \skp{ u }{ p(X,X',D_x)^* v }{L^2(\R^n)}
.\end{align*} 
\end{proof}
\end{cor}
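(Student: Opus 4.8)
The plan is to imitate the proof of Satz \ref{satz:adjunPSDO}, first at the level of the truncated symbols $p_N(x,y,\xi) = \sum_{j=0}^N p(x,y,\xi)\lilwood{j}(\xi)$ and then by letting $N\to\infty$. First I would verify that the candidate adjoint symbol $p(x,y,\xi)^* := p(y,x,-\xi)$ again lies in $C^\tau S^m_{1,0}(\R^n\times\R^n,\R^n)$: the map $(x,y,\xi)\mapsto(y,x,-\xi)$ is linear, differentiation $\partial_\xi^\beta$ only contributes a harmless sign under $\xi\mapsto-\xi$, the joint Hölder norm on $\R^{2n}$ from Definition \ref{def:xyxi} is invariant under the swap $(x,y)\mapsto(y,x)$, and $\piketi{-\xi}=\piketi{\xi}$; hence all defining estimates carry over with the same constants. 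By Theorem \ref{thm:operatorXY} the symbol $p^*$ then defines an operator $p(X,X',D_x)^*$ in $(x,y,\xi)$-form, and since that theorem also provides the mapping property $\Schwarzi\to\BeschrZ$, the resulting operator is an admissible candidate for a formal adjoint. I would also note that the Littlewood-Paley functions of Definition \ref{def:littlewoodpaley} are radial, hence even, so that $p_N(y,x,-\xi) = p^*(x,y,\xi)\sum_{j=0}^N\lilwood{j}(\xi)$ is exactly the $N$-th truncation of $p^*$; consequently the convergence statement of Theorem \ref{thm:operatorXY} applies to $p^*$ as well.

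Next I would establish the identity for the truncated operators. For fixed $N$ the function $\xi\mapsto p_N(x,y,\xi)$ has compact support (contained in $\bigcup_{j\le N}D_j$), so in (\ref{equ:operatorXYXi}) the oscillatory integral is an absolutely convergent Lebesgue integral. For $u,v\in\Schwarzi$ I would then run the same Fubini chain as in Satz \ref{satz:adjunPSDO}: write $\skp{p_N(X,X',D_x)u}{v}{L^2(\R^n)}$ as an iterated integral over $(x,y,\xi)$, check $L^1(\R^n\times\R^n)$-integrability of each intermediate integrand (using Schwartz decay of $u,v,\hat u,\hat v$ together with the compact $\xi$-support and the polynomial bounds on $p_N$ and its $(x,y)$-derivatives), reorder the integrations by Theorem \ref{thm:massthreorieFubini}, and finally apply the reflection $\xi\mapsto-\xi$ to land on $\skp{u}{p_N(X,X',D_x)^*v}{L^2(\R^n)}$ with $p_N(x,y,\xi)^*=p_N(y,x,-\xi)$. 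This part is routine and essentially a verbatim copy of the computation in Satz \ref{satz:adjunPSDO}.

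Finally I would pass to the limit $N\to\infty$. Taking $q=2$ and $s=0\in(-\tau,\tau)$ (admissible since $\tau>0$), we have $u\in\Schwarzi\subset H^m_2(\R^n)$, and Theorem \ref{thm:operatorXY} gives $p_N(X,X',D_x)u(x)\to p(X,X',D_x)u(x)$ pointwise in $x$. To commute this limit with the pairing against $v$, I would extract from the proof of Theorem \ref{thm:operatorXY} and from Theorem \ref{beschr:Stau_1,0} a uniform-in-$N$ estimate: the associated $x$-form adjoint operators are bounded $H^m_2(\R^n)\to L^2(\R^n)$ with norm controlled by a fixed seminorm $\abs{p}_{l,\tau}^{(m)}$ of the frozen symbol, uniformly in the frozen variable; combined with the Littlewood-Paley convergence of Lemma \ref{lemma:lilwoodSchwarzKonv} and the Schwartz-norm estimates of Folgerung \ref{cor:SchwartzEquivNorm}, this yields a majorant $\abs{p_N(X,X',D_x)u(x)}\le C\piketi{x}^{-(n+1)}$ independent of $N$, so Lebesgue's theorem (Theorem \ref{thm:masstheorieDOM}) gives
$$ \skp{p_N(X,X',D_x)u}{v}{L^2(\R^n)} \longrightarrow \skp{p(X,X',D_x)u}{v}{L^2(\R^n)} \textrm{ für } N\rightarrow\infty . $$
Running the same argument with $p^*$ in place of $p$ and combining with the truncated identity of the previous paragraph yields $\skp{p(X,X',D_x)u}{v}{L^2(\R^n)}=\skp{u}{p(X,X',D_x)^*v}{L^2(\R^n)}$ for all $u,v\in\Schwarzi$, which is the assertion; since $p(X,X',D_x)^*$ is again of $(x,y,\xi)$-form, this also shows that the class of $(x,y,\xi)$-form operators is stable under taking formal adjoints. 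The main obstacle is precisely this uniform control: Theorem \ref{thm:operatorXY} only delivers pointwise convergence of $p_N(X,X',D_x)u$, so one has to upgrade it to a dominated-convergence (or $L^2$-convergence) statement, which hinges on checking that every constant in the seminorm estimates is uniform both in $N$ and in the frozen variable. An alternative would be to prove directly that $p_N(X,X',D_x)u\to p(X,X',D_x)u$ in $H^0_2(\R^n)=L^2(\R^n)$ and then use continuity of the $L^2$-inner product; either route avoids any genuinely new difficulty beyond careful bookkeeping.
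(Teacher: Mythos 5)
Your truncation--Fubini--reflection computation is exactly the paper's proof of this corollary, and your preliminary observations (that $p(y,x,-\xi)$ again lies in $C^\tau S^m_{1,0}(\R^n\times\R^n,\R^n)$, and that $p_N(y,x,-\xi)$ is a Littlewood--Paley truncation of the adjoint symbol) are correct; note only that Definition \ref{def:littlewoodpaley} does not force $\lilwood{j}$ to be even, but $\xi\mapsto\lilwood{j}(-\xi)$ is again an admissible partition, so nothing is lost there. The paper itself interchanges the limit $N\to\infty$ with the $L^2$-pairing without comment, relying on the pointwise-limit definition of Theorem \ref{thm:operatorXY}; you correctly single out this interchange as the one step that needs an argument.

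The mechanism you propose for it, however, does not hold up as stated. The majorant $\abs{p_N(X,X',D_x)u(x)}\le C\piket{x}{-(n+1)}$ uniformly in $N$ cannot be extracted from Lemma \ref{lemma:lilwoodSchwarzKonv} and Folgerung \ref{cor:SchwartzEquivNorm}: these control $\sum_{j\le N}\lilwood{j}(D_x)u$ in Schwartz seminorms, not the frozen-variable operator outputs. Since $p$ is only H\"older continuous in $y$, no integration by parts in $y$ is available, so direct estimation of the defining integral yields only $\int_{\abs{\xi}\le 2^{N+1}}\piket{\xi}{m}d\xi$, which is not uniform in $N$ once $m\ge-n$; and the uniform bounds of Theorem \ref{beschr:Stau_1,0} control Sobolev norms of $z\mapsto p_{N,x}(D_x,X)u(z)$, not its value at $z=x$, and give no decay in $x$ whatsoever. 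The decay is also unnecessary: since $v\in\Schwarzi$, a bound $\sup_{N,x}\abs{p_N(X,X',D_x)u(x)}<\infty$ already furnishes the dominating function $C\abs{v}$. Such a uniform sup bound can be rescued along the lines you hint at — the $p_N$ form a bounded set in $C^\tau S^m_{1,0}$ with seminorms uniform in the frozen variable, Theorem \ref{beschr:Stau_1,0} (via the duality used in Theorem \ref{thm:operatorXY}) then bounds the frozen operators uniformly in $N$ and $x$ between Bessel-potential spaces, and Theorem \ref{thm:roomBesselStetig} converts a Sobolev bound into a sup bound — but the admissible smoothness window is limited by $\tau$ and shifted by $m$, so the exponents must be chosen accordingly rather than fixing $q=2$, $s=0$; for small $\tau$ or large $m$ this requires genuine additional work. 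Your fallback, $L^2$-convergence of $p_N u$, faces the same obstruction, since Littlewood--Paley truncations do not converge in operator norm. In short: the Fubini core is correct and faithful to the paper, but the limit-interchange justification — the only place where you attempt more than the paper does — is not yet closed.
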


\begin{remark}\label{remark:badform}
Sei ein Symbol $p \in C^\tau S^m_{1,0}(\R^n \times \R^n, \R^n)$ gegeben.
\begin{enumerate}[(a)]
\item \label{remark:badformA}
Falls ein $p(x,y,\xi)$ glatt bzgl. $y$ ist, also $p(x,\hold,\xi) \in \Stetigi{\infty}$, dann
lässt sich der Operator $p(X,Y,D_x)$ als oszillatorisches Integral darstellen, und wir erhalten
\begin{align*}
p(X,Y,D_x) u(x) 
&= \lim_{N\rightarrow\infty} \osint e^{i(x-y)\cdot\xi} p_N(x,y,\xi) u(y) dy \dslash\xi \\
&= \osint e^{i(x-y)\cdot\xi} p(x,y,\xi) u(y) dy \dslash\xi \\
&= p(X,D_x,Y) u(x)
.\end{align*}
Operatoren in $(x,\xi,y)$-Form sind also Spezialfälle von Operatoren in $(x,y,\xi)$-Form.
\item \label{remark:badformB}
Falls der Träger von $\xi \mapsto p(x,y,\xi)$ kompakt ist, haben wir 
$ (y,\xi) \mapsto p(x,y,\xi) u(y) \in L^1(\R^n\times\R^n)$ für $u \in \Schwarzi$.
Dominante Konvergenz liefert
$$ p(X,Y,D_x) u(x) = \iint e^{i(x-y)\cdot\xi} p(x,y,\xi) u(y) dy \dslash\xi .$$
\end{enumerate}
\end{remark}

Ab nun werden wir \PSDO mit geringerer Stetigkeit betrachten. Dazu werden wir die Notation der Stetigkeit mit $\tau$ fallen lassen, und stattdessen 
die Hölder-Stetigkeit eines Symbols mit $\theta \in (0,1)$ klassifizieren.
Symbole dieser Klasse weisen tatsächlich einige interessante Abbildungseigenschaften auf, die wir hier mit Hilfe von \cite{Tools} studieren werden:

\begin{thm}\label{thm:result1}
Sei $p \in C^\theta S^0_{1,0}(\R^n\times\R^n,\R^n)$, dann ist
$$ p(X,Y,D_x) : L^q(\R^n) \rightarrow L^q(\R^n) \textrm{ für alle } 1 < q < \infty .$$
\begin{proof} Siehe \cite[Proposition 9.1]{Tools}
\end{proof}
\end{thm}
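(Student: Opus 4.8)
This is the content of \cite[Proposition 9.1]{Tools}; here is how I would obtain it from the machinery built above. The plan is to split $p$, by a \emph{joint} symbol smoothing in the spatial variables $(x,y)$, into a smooth $(x,\xi,y)$-form part — which reduces to an $x$-form operator of type $(1,\gamma)$ and is therefore $L^q$-bounded — plus a remainder of strictly smaller $\xi$-order; after finitely many iterations one is left with a remainder of order below $-n$, whose Schwartz kernel is an honest $L^1$-kernel that is handled by Schur's lemma. Since by Corollary \ref{cor:adjunXY} the class $C^\theta S^0_{1,0}(\R^n\times\R^n,\R^n)$ is stable under passing to formal adjoints, it would in fact be enough to treat a half-range of $q$, but the argument below gives all $1<q<\infty$ directly.

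Concretely, I would first transcribe the symbol smoothing of Subsection \ref{sec:smoothing} to symbols in $(x,y,\xi)$-form: the mollifier $J_\epsilon=\phi(\epsilon D_{(x,y)})$ now acts jointly on $(x,y)\in\R^{2n}$ and the Hölder variable in all estimates is $(x,y)$. Fixing $\gamma\in(0,1)$, $\epsilon_j:=2^{-j\gamma}$ and setting
\begin{equation*}
p^\sharp(x,y,\xi):=\sum_{j=0}^\infty\bigl(J_{\epsilon_j}\,p(\cdot,\cdot,\xi)\bigr)(x,y)\,\lilwood{j}(\xi),\qquad p^\flat:=p-p^\sharp,
\end{equation*}
the proofs of Theorem \ref{thm:smoothingSharp} and Theorem \ref{thm:smoothingFlat} go through verbatim and yield $p^\sharp\in S^0_{1,\gamma}(\R^n\times\R^n\times\R^n)$ (a smooth symbol, in particular smooth in $y$) and $\abs{D_\xi^\alpha p^\flat(x,y,\xi)}\le c_\alpha\piket{\xi}{-\gamma\theta-\abs{\alpha}}$ together with the matching $C^\theta(\R^{2n})$-bounds, i.e.\ $p^\flat$ lies in the class $C^\theta S^{-\gamma\theta}_{1,\gamma}(\R^n\times\R^n,\R^n)$ (defined as in Definition \ref{def:xyxi} but with type-$(1,\gamma)$ weights). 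For the smooth piece, Bemerkung \ref{remark:badform}.\ref{remark:badformA} gives $p^\sharp(X,Y,D_x)=p^\sharp(X,D_x,Y)$, and the reduction of $(x,\xi,y)$-form operators to $x$-form established after the definition of that class (via Theorem \ref{thm:simplify}) produces a simplified symbol $p^\sharp_L\in S^0_{1,\gamma}(\R^n\times\R^n)$ with $p^\sharp(X,D_x,Y)=p^\sharp_L(X,D_x)$ on $\Schwarzi$. Since $S^0_{1,\gamma}=\bigcap_{\tau>0}C^\tau S^0_{1,\gamma}$, Lemma \ref{beschr:S^0_1,delta} with $\tau$ arbitrarily large gives $p^\sharp_L(X,D_x):L^q(\R^n)\to L^q(\R^n)$ for all $1<q<\infty$ (take $s=0$). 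Applying the same splitting to $p^\flat$ and iterating $N$ times writes $p$ as a finite sum of symbols of the first type (smooth, $\xi$-order $\le 0$, type $(1,\gamma)$; each $L^q$-bounded exactly as above, because $S^\mu_{1,\gamma}\subset S^0_{1,\gamma}$ for $\mu\le0$) plus one remainder $r\in C^\theta S^{-N\gamma\theta}_{1,\gamma}(\R^n\times\R^n,\R^n)$.

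It remains to choose $N$ with $N\gamma\theta>n$ and to bound $r(X,Y,D_x)$. Then $\xi\mapsto r(x,y,\xi)\in L^1(\R^n)$ uniformly in $(x,y)$, and integrating by parts in $\xi$ (using $\abs{D_\xi^\alpha r}\le c_\alpha\piket{\xi}{-N\gamma\theta-\abs{\alpha}}$) shows that $k(x,y,z):=\int e^{iz\cdot\xi}r(x,y,\xi)\dslash\xi$ satisfies $\abs{k(x,y,z)}\le C_M\piket{z}{-M}$ for every $M$, uniformly in $(x,y)$; by dominated convergence in the limit \eqref{equ:operatorXYXi} and Fubini, $r(X,Y,D_x)u(x)=\int k(x,y,x-y)u(y)\,dy$ for $u\in\Schwarzi$, just as in Lemma \ref{lemma:kernRankNegativ}. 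Since $\sup_x\int\abs{k(x,y,x-y)}\,dy$ and $\sup_y\int\abs{k(x,y,x-y)}\,dx$ are both bounded by $\int C\piket{z}{-M}\,dz<\infty$ once $M>n$, Schur's test gives $r(X,Y,D_x):L^q\to L^q$ for all $1\le q\le\infty$, and density of $\Schwarzi$ in $L^q$ (Theorem \ref{thm:interpolBesselSobolev}) finishes the proof. I expect the main obstacle to be organizational rather than conceptual: one must carefully set up the type-$(1,\gamma)$ symbol classes in $(x,y,\xi)$-form with the Hölder variable living in $\R^{2n}$, check that Theorems \ref{thm:smoothingSharp}--\ref{thm:smoothingFlat} and the well-definedness argument of Theorem \ref{thm:operatorXY} survive the replacement of $\R^n$ by $\R^{2n}$ in the spatial slot, and verify that the operator splitting $p=p^\sharp+p^\flat$ is compatible with the limit defining $p(X,Y,D_x)$.
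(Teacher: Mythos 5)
Your route is necessarily different from the paper's, since the paper proves nothing here but cites \cite[Proposition 9.1]{Tools}; unfortunately the proposal has a fatal gap at the iteration step. The first smoothing step and the treatment of $p^\sharp$ are fine: joint smoothing in $(x,y)$ gives $p^\sharp\in S^0_{1,\gamma}$, which via Bemerkung \ref{remark:badform}.\ref{remark:badformA}, Theorem \ref{thm:simplify} and Lemma \ref{beschr:S_1,delta} is $L^q$-bounded, and your Schur argument would indeed dispose of any remainder of order $<-n$. The problem is that no such remainder can be produced. After the first step the flat part lies in $C^\theta S^{-\gamma\theta}_{1,\gamma}(\R^n\times\R^n,\R^n)$, i.e. it is of type $(1,\gamma)$: its sup-norms decay like $\piket{\xi}{-\gamma\theta-\abs{\alpha}}$, but its $C^\theta$-seminorms only like $\piket{\xi}{-\abs{\alpha}}$ (this is exactly what Theorem \ref{thm:smoothingFlat} gives: the Hölder bound carries the exponent $m-\abs{\alpha}+\gamma t$ at $t=\theta$). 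Consequently, to smooth it again you must choose a new parameter $\gamma'\in(\gamma,1)$, and Theorem \ref{thm:smoothingFlat} (now with $\delta=\gamma$) yields a flat part of order $-\gamma\theta-(\gamma'-\gamma)\theta=-\gamma'\theta$, not $-2\gamma\theta$. Iterating with $\gamma<\gamma'<\gamma''<\dots<1$, the order of the remainder is $-\gamma^{(N)}\theta$, which never drops below $-\theta>-1\ge-n$. The claimed membership $r\in C^\theta S^{-N\gamma\theta}_{1,\gamma}$ is therefore false, and the condition $N\gamma\theta>n$ can never be met: roughly speaking, for any splitting $p=p^\sharp+r$ with $p^\sharp$ smooth of type $(1,\gamma)$, the difference $r$ at frequency $\abs{\xi}\sim 2^j$ can decay at best like the modulus of continuity of $p$ in $(x,y)$ at scale $2^{-j\gamma}$, i.e. like $2^{-j\gamma\theta}$, so the total gain is capped by the Hölder exponent $\theta<1$.

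What is left after one smoothing step — an $(x,y,\xi)$-form symbol that is merely $C^\theta$ in $(x,y)$ and of small negative order — is precisely the hard core of the statement. It cannot be handled by the paper's $x$-form boundedness results (Theorem \ref{beschr:S_1,1}, Theorem \ref{beschr:Stau_1,0}, Lemma \ref{beschr:S^0_1,delta}), because the roughness sits in both spatial slots, and the paper's own results on rough $(x,y,\xi)$-form operators (Folgerung \ref{thm:result3}, Folgerung \ref{thm:result4}, Theorem \ref{thm:resultx,x,xi}) are all derived from the very theorem you are proving, so invoking them would be circular. Closing the gap requires a genuinely different argument for the rough negative-order piece — e.g. Taylor's reduction to elementary symbols of the form $\sum_k m_k(\xi)\,a_k(x)\,b_k(y)$ together with Littlewood--Paley and maximal-function estimates — which is exactly the content of the cited \cite[Proposition 9.1]{Tools}.
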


\begin{cor}\label{thm:result3}
Sei $p \in C^{1,\theta} S^1_{1,0}(\R^n\times\R^n,\R^n)$ und $p(x,x,\xi) = 0$ für alle $x,\xi\in\R^n$.
Dann ist
$$ p(X,Y,D_x) : L^q(\R^n) \rightarrow L^q(\R^n) \textrm{ für alle } 1 < q < \infty .$$
\begin{proof}
Wegen $p(x,x,\xi) = 0$ finden wir $b^1, \ldots, b^n \in C^\theta S^1_{1,0}(\R^n\times\R^n,\R^n)$ mit
$$p(x,y,\xi) = \sum_{j=1}^n b^j(x,y,\xi) (x_j - y_j) .$$
Wir verwenden analog zu Definition \ref{def:xyxi} die Bezeichnung $b^j_N(x,y,\xi) := \sum_{k=0}^N b^j(x,y,\xi) \lilwood{k}(\xi)$.
Partielle Integration unter Verwendung von $D_{\xi_j} e^{i(x-y)\cdot\xi} = (x_j - y_j) e^{i(x-y)\cdot\xi}$ liefert
$$ p(X,Y,D_x) u(x) = - \lim_{N \rightarrow \infty} \sum_{j=0}^n \iint e^{i(x-y)\cdot\xi} D_{\xi_j} b^j_N(x,y,\xi) u(y) dy \dslash\xi \textrm{ für alle } u \in L^q(\R^n) .$$
Nun ist $D_{\xi_j} b^j_N(x,y,\xi) = ( D_{\xi_j} b^j(x,y,\xi) ) \sum_{k=0}^N \lilwood{k}(\xi) + b^j(x,y,\xi) \sum_{k=0}^N D_{\xi_j} \lilwood{k}(\xi)$ und
$$\iint e^{i(x-y)\cdot\xi} b^j(x,y,\xi) \left( \sum_{k=0}^N D_{\xi_j} \lilwood{k}(\xi) \right) u(y) dy \dslash\xi \rightarrow 0 \textrm{ für } N \rightarrow \infty .$$
Für $D_{\xi_j} b^j(x,y,\xi) \in C^\theta S^0_{1,0}(\R^n\times\R^n,\R^n)$ folgern wir mit Theorem \ref{thm:result1} die Behauptung.
\end{proof}
\end{cor}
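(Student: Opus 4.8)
The hypothesis $p(x,x,\xi)=0$ says that $p$ vanishes on the diagonal, and the plan is to exploit this by a Hadamard-type factorization. Since $p(y,y,\xi)=0$, Taylor's formula with integral remainder (Theorem~\ref{remind:taylorreihe} with $N=1$, equivalently the mean value representation of Lemma~\ref{mittelwertsatz}) applied to the $C^1$ map $x\mapsto p(x,y,\xi)$ gives
\[
 p(x,y,\xi)=\sum_{j=1}^n (x_j-y_j)\, b^j(x,y,\xi),\qquad
 b^j(x,y,\xi):=\int_0^1 (\partial_{x_j}p)\bigl(y+t(x-y),y,\xi\bigr)\,dt .
\]
First I would verify that $b^j\in C^\theta S^1_{1,0}(\R^n\times\R^n,\R^n)$. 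Differentiating $p$ once in $x$ lowers the Hölder regularity in $(x,y)$ from $C^{1,\theta}$ to $C^{0,\theta}=C^\theta$, but, since $\delta=0$ here, does not change the order in $\xi$ (the $(x,y,\xi)$-form analogue of Corollary~\ref{cor:IntroAbleitung}); the affine substitution $y+t(x-y)$ and the integration over $t\in[0,1]$ preserve both the estimates $\abs{\partial_\xi^\beta b^j(x,y,\xi)}\le C_\beta\piket{\xi}{1-\abs{\beta}}$ and the corresponding $C^\theta(\R^n\times\R^n)$-bounds of the same order.

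Next I would insert this factorization into the defining formula~(\ref{equ:operatorXYXi}). Using $D_{\xi_j}e^{i(x-y)\cdot\xi}=(x_j-y_j)e^{i(x-y)\cdot\xi}$ and the fact that the truncated symbols $b^j_N(x,y,\xi):=b^j(x,y,\xi)\sum_{k=0}^N\lilwood{k}(\xi)$ have compact $\xi$-support, I integrate by parts in $\xi_j$ with no boundary terms, obtaining
\[
 p_N(X,Y,D_x)u(x)=-\sum_{j=1}^n\iint e^{i(x-y)\cdot\xi}\,D_{\xi_j}b^j_N(x,y,\xi)\,u(y)\,dy\,\dslash\xi .
\]
By the Leibniz rule $D_{\xi_j}b^j_N=(D_{\xi_j}b^j)\sum_{k\le N}\lilwood{k}+b^j\sum_{k\le N}D_{\xi_j}\lilwood{k}$. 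The first summand is exactly the $N$-th truncation of the symbol $c^j:=D_{\xi_j}b^j\in C^\theta S^0_{1,0}(\R^n\times\R^n,\R^n)$, so passing to the limit as in Theorem~\ref{thm:operatorXY} yields the operator $c^j(X,Y,D_x)$, which is bounded on $L^q(\R^n)$ for every $1<q<\infty$ by Theorem~\ref{thm:result1}. For the second summand the telescoping identity behind~(\ref{lilwood:abl}) gives $\sum_{k=0}^N D_{\xi_j}\lilwood{k}(\xi)=2^{-N}(D_{\xi_j}\lilwood{0})(2^{-N}\xi)$, which is supported in the single dyadic shell $\piketi{\xi}\sim 2^N$, and the claim is that the corresponding term tends to $0$.

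The main obstacle is this last convergence, i.e. showing that $\iint e^{i(x-y)\cdot\xi}b^j(x,y,\xi)\bigl(\sum_{k=0}^N D_{\xi_j}\lilwood{k}(\xi)\bigr)u(y)\,dy\,\dslash\xi\to 0$ in $L^q(\R^n)$ as $N\to\infty$. On the dense subspace $u\in\Schwarzi$ this is elementary, because the $\xi$-support escapes to infinity, where $\hat u$ and its derivatives decay faster than any polynomial while the amplitude $b^j(x,y,\xi)\,2^{-N}(D_{\xi_j}\lilwood{0})(2^{-N}\xi)$ remains of size $O(1)$; one then upgrades to all of $L^q(\R^n)$ using that $\{b^j(\hold,\hold,\xi)\,2^{-N}(D_{\xi_j}\lilwood{0})(2^{-N}\xi)\}_{N}$ is a bounded family in $C^\theta S^0_{1,0}(\R^n\times\R^n,\R^n)$, so these operators are uniformly bounded on $L^q(\R^n)$ by Theorem~\ref{thm:result1}. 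A little extra care is needed because $b^j$ is only Hölder, not smooth, in $(x,y)$, which forbids an integration by parts in $y$ and is precisely why the argument is routed through Theorem~\ref{thm:result1} rather than through a Schwartz-kernel estimate. Once both pieces are in place, $p(X,Y,D_x)=-\sum_{j=1}^n c^j(X,Y,D_x)$ as bounded operators on $L^q(\R^n)$, which is the assertion.
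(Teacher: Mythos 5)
Your proof follows the paper's argument essentially verbatim: the same Hadamard-type factorization $p(x,y,\xi)=\sum_{j}(x_j-y_j)\,b^j(x,y,\xi)$ with $b^j\in C^\theta S^1_{1,0}(\R^n\times\R^n,\R^n)$, the same integration by parts in $\xi_j$ on the Littlewood--Paley truncations, the same Leibniz splitting of $D_{\xi_j}b^j_N$, and the same appeal to Theorem \ref{thm:result1} for $D_{\xi_j}b^j\in C^\theta S^0_{1,0}(\R^n\times\R^n,\R^n)$. The only difference is presentational: you write out the integral formula for $b^j$ and sketch a uniform-boundedness-plus-density justification for the vanishing of the term containing $\sum_{k\le N}D_{\xi_j}\lilwood{k}$, a limit which the paper simply asserts.
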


\begin{cor}\label{thm:result4}
Sei $p \in C^{1,\theta} S^0_{1,0}(\R^n\times\R^n,\R^n)$ und $p(x,x,\xi) = 0$ für alle $x,\xi\in\R^n$.
Dann ist
$$ p(X,Y,D_x) : L^q(\R^n) \rightarrow W^1_q(\R^n) \textrm{ für alle } 1 < q < \infty .$$
\begin{proof}
Produktregel liefert für jedes $j = 1,\ldots, n$ die Aussage
\begin{align*}
\partial_{x_j} p(X,Y,D_x) u(x) 
&= i \lim_{N \rightarrow \infty} \iint e^{i(x-y)\cdot\xi} \xi_j p_N(x,y,\xi) u(y) dy \dslash\xi  \\
&+ \lim_{N \rightarrow \infty} \iint e^{i(x-y)\cdot\xi} \partial_{x_j}  p_N(x,y,\xi) u(y) dy \dslash\xi
.\end{align*}
Für $\partial_{x_j} p(x,y,\xi) \in C^\theta S^0_{1,0}(\R^n\times\R^n,\R^n)$ liefert Theorem \ref{thm:result1} und
für $$\xi_j p(x,y,\xi) \in C^{1,\theta} S^1_{1,0}(\R^n\times\R^n,\R^n)$$ liefert Folgerung \ref{thm:result3} das Resultat
$$ \partial_{x_j} p(X,Y,D_x) : L^q(\R^n) \rightarrow  L^q(\R^n) \textrm{ für alle } 1 < q < \infty ,$$
und damit die Behauptung.
\end{proof}
\end{cor}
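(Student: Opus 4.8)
The plan is to reduce the $W^1_q$-mapping property to $L^q$-boundedness statements that are already available, namely Theorem \ref{thm:result1} and Folgerung \ref{thm:result3}. The observation is that a linear operator $T$ bounded on $L^q(\R^n)$ maps into $W^1_q(\R^n)$ precisely when each $\partial_{x_j}T$ ($j=1,\dots,n$) extends to a bounded operator on $L^q(\R^n)$, so the task reduces entirely to analysing $\partial_{x_j}\,p(X,Y,D_x)$. First I would record that $p(X,Y,D_x)$ is already bounded on $L^q(\R^n)$: since $\theta\in(0,1)$ one has the embedding $C^{1,\theta}\hookrightarrow C^{0,\theta}=C^\theta$, hence $p\in C^\theta S^0_{1,0}(\R^n\times\R^n,\R^n)$ and Theorem \ref{thm:result1} applies.

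For each fixed $j$ I would then differentiate the defining expression (\ref{equ:operatorXYXi}). For fixed $N$ the symbol $p_N(x,y,\xi)$ has compact $\xi$-support, so the double integral is an ordinary Lebesgue integral and the product rule may be applied under it; since the Littlewood--Paley cut-offs do not depend on $x$, one has $\xi_j p_N=(\xi_j p)_N$ and $\partial_{x_j}p_N=(\partial_{x_j}p)_N$, so passing to the limit $N\to\infty$ identifies $\partial_{x_j}\,p(X,Y,D_x)$ with $i\,(\xi_j p)(X,Y,D_x)+(\partial_{x_j}p)(X,Y,D_x)$. For the first summand I would check, using the Leibniz rule on the defining symbol estimates, that $\xi_j p\in C^{1,\theta}S^1_{1,0}(\R^n\times\R^n,\R^n)$ — multiplication by the $S^1$-symbol $\xi_j$ raises the order by one but preserves the Hölder regularity in $(x,y)$ — and that it still vanishes on the diagonal, $\xi_j p(x,x,\xi)=0$; then Folgerung \ref{thm:result3} gives $(\xi_j p)(X,Y,D_x)\colon L^q(\R^n)\to L^q(\R^n)$. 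For the second summand I would check that $\partial_{x_j}p\in C^\theta S^0_{1,0}(\R^n\times\R^n,\R^n)$ — one $x$-derivative costs exactly one step of Hölder smoothness, $C^{1,\theta}\to C^{0,\theta}$, while the $\xi$-order is unchanged — and apply Theorem \ref{thm:result1} once more. Summing over $j$ yields $\partial_{x_j}\,p(X,Y,D_x)\colon L^q\to L^q$ for every $j$, which together with the $L^q$-boundedness of $p(X,Y,D_x)$ gives the claim.

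The step I expect to be the main obstacle is making the differentiation rigorous at the level of the operator rather than merely for Schwartz functions: one must justify that $\partial_{x_j}$ commutes with $\lim_{N\to\infty}$ in (\ref{equ:operatorXYXi}), and, having first computed $p(X,Y,D_x)u$ for $u\in\Schwarzi$, that the identity $\partial_{x_j}(p(X,Y,D_x)u)=i(\xi_j p)(X,Y,D_x)u+(\partial_{x_j}p)(X,Y,D_x)u$ persists for all $u\in L^q(\R^n)$. The latter follows by a density argument: $\Schwarzi$ is dense in $L^q(\R^n)$ for $q<\infty$, the three operators on the right are $L^q$-bounded by the preceding step, and $\partial_{x_j}$ is continuous from $L^q(\R^n)$ into $\distrodual{\R^n}$, so the identity passes to the limit; the convergence of the truncated operators as $N\to\infty$ is handled as in the proof of Theorem \ref{thm:operatorXY}. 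The remaining symbol-class verifications are routine bookkeeping with the Leibniz rule.
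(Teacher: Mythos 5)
Your proposal follows essentially the same route as the paper: differentiate the truncated expression, split by the product rule into $i(\xi_j p)(X,Y,D_x)$ and $(\partial_{x_j}p)(X,Y,D_x)$, and apply Folgerung \ref{thm:result3} (noting $\xi_j p(x,x,\xi)=0$) respectively Theorem \ref{thm:result1}. Your additional remarks — the $L^q$-boundedness of $p(X,Y,D_x)$ itself via $C^{1,\theta}\hookrightarrow C^\theta$ and the justification of exchanging $\partial_{x_j}$ with the limit in $N$ by density — are correct refinements of details the paper leaves implicit, not a different argument.
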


\begin{thm}\label{thm:resultx,x,xi}
Sei $p \in C^\theta S^0_{1,0}(\R^n\times\R^n,\R^n)$.
Falls $p(x,x,\xi) = 0$ für alle $x,\xi \in \R^n$, dann folgt, dass
\begin{equation}\label{equ:resultx,x,xia}
p(X,Y,D_x) : L^q(\R^n) \rightarrow H^s_q(\R^n) \textrm{ für alle } 1 < q < \infty, 0 \le s < \theta .
\end{equation}
Insbesondere folgt daraus, dass
\begin{equation}\label{equ:resultx,x,xib}
p(X,Y,D_x) : H^\sigma_q(\R^n) \rightarrow H^{\sigma+s}_q(\R^n) \textrm{ für alle } 1 < q < \infty, 0 \le s < \theta, -\theta < \sigma \le 0 .
\end{equation}
\begin{proof}
Wir definieren den Operator $\Lambda := (1 - \triangle_x - \triangle_{y} )^{\frac{1}{2}}$ 
mit Doppelsymbol 
$$\lambda(\xi,\xi') := \piketi{\xi,\xi'} = (1 + \abs{\xi}^2 + \abs{\xi'}^2)^{\frac{1}{2}} \in S^{1,1}_{1,0}(\R^n\times\R^n\times\R^n\times\R^n) ,$$
und setzen
$b_z(x,y,\xi) := \Lambda^{-(z-s)} p(x,y,\xi)$ für $z \in \Streifen$.
Zusätzlich setzen wir $p_z(x,y,\xi) := b_z(x,y,\xi) - b_z(x,x,\xi)$.
Wir haben die Abschätzung
$$\lim_{\abs{\Imag z} \rightarrow \infty} \abs{ \lambda(\xi,\xi')^{-(z-s)} } \le C \piket{\xi}{s -\Real z} \piket{\xi'}{s - \Real z}$$
für alle $z \in \Streifen$ unabhängig von $\Imag z$.
Zum einen erfüllt damit nach Theorem \ref{thm:result1} der Operator von $p_z$ die Abbildungseigenschaft
$$p_z(X,Y,D_x) : L^p(\R^n) \rightarrow L^p(\R^n) \textrm{ für alle } z \in \Streifen . $$
Zum anderen können wir das Doppelsymbol $\lambda(\xi,\xi')$ als vereinfachtes Symbol $\lambda_L(\eta) := \lambda(\xi,\xi') \in S^1_{1,0}(\R^{2n})$ mit $\eta = (\xi,\xi') \in \R^{2n}$
auffassen,
für das wir wegen obiger Abschätzung eine Konstante $C_l>0$ für beliebiges $l \in \N_0$ finden mit
$$ \abs{ \lambda_L(\xi) }_{l,\infty}^{(m)} \le C_l \textrm{ gleichmäßig in } z \in \Streifen \textrm{ mit } \Real z = 1 ,$$
wobei $m := s - \Real z = s - 1 < 0$.
Dieses Symbol induziert uns für $\vartheta := \theta - s$ mit Theorem \ref{beschr:S_1,delta} die Eigenschaft 
$\Lambda^{s-1} : C^\theta(\R^n\times\R^n) \rightarrow C^{1,\vartheta}(\R^n\times\R^n)$, 
sodass
$\Lambda^{-(z-s)} p(x,y,\xi) \in C^{1,\vartheta} S^0_{1,0}(\R^n\times\R^n,\R^n)$ für $\Real z = 1$, da
$$ \norm{ \partial_\xi^\alpha \Lambda^{-(z-s)} p(\hold,\hold,\xi) }{C^{1,\vartheta}(\R^n\times\R^n)} \le C \norm{ \partial_\xi^\alpha p(\hold,\hold,\xi) }{C^\theta(\R^n\times\R^n)} $$
mit einer Konstante $C>0$ unabhängig von $z \in \Streifen$ mit $\Real z = 1$.
Hierbei haben wir verwendet, dass Ableitungsoperatoren mit Pseudodifferentialoperatoren kommutieren, deren Doppelsymbole unabhängig von $x,x'$ sind.
Folgerung \ref{thm:result4} liefert $p_z(X,Y,D_x) :  L^q(\R^n) \rightarrow H^1_q(\R^n)$ für alle $\Real z = 1$.
Aus der Beschränktheit von $\lambda_L$ in den Halbnormen $\abs{\hold}_{l,\infty}^{(\Real z - s)}$ folgern wir
$$ \sup_{z \in \Streifen} \norm{p_z(X,Y,D_x) }{\linearop{L^q(\R^n)}{L^q(\R^n)}} < \infty \textrm{ und }
\sup_{\Real z = 1} \norm{p_z(X,Y,D_x) }{\linearop{L^q(\R^n)}{H^1_q(\R^n)}} < \infty .$$
Mit komplexer Interpolation (Theorem \ref{thm:interpolComplex}) folgt $p_s(X,Y,D_x) : L^q(\R^n) \rightarrow H^s_q(\R^n)$.
Da $p(x,x,\xi) = 0$ ist $p_s(X,Y,D_x) = p(X,Y,D_x)$. Damit ist (\ref{equ:resultx,x,xia}) bewiesen.
Für (\ref{equ:resultx,x,xib}) betrachten wir den in Folgerung \ref{cor:adjunXY} definierten dualen Operator
$$p(X,Y,D_x)^* u(x) = \lim_{N \rightarrow \infty} \iint e^{i(x-y)\xi} p_N(x,y,\xi)^* u(y) dy \dslash\xi$$
mit Symbol $p(x,y,\xi)^* := p(y,x,-\xi)$. 
Der Operator erfüllt wegen $p(x,x,\xi)^* = 0$ ebenso (\ref{equ:resultx,x,xia}).
Wir haben also für $u, v \in \Schwarzi$ und $1 < q,q' < \infty$ mit $\frac{1}{q} + \frac{1}{q'} = 1$
\begin{align*}
\dualskp{ p(X,Y,D_x) u }{v}{L^q(\R^n)}{L^{q'}(\R^n)} &= 
\dualskp{ p(X,Y,D_x) u }{v}{H^{-s}_q(\R^n)}{H^s_{q'}(\R^n)} \\
&= \dualskp{  u }{ p(X,Y,D_x)^* v}{H^{-s}_q(\R^n)}{H^s_{q'}(\R^n)} \\
&\le \norm{u}{H^{-s}_q(\R^n)} \norm{ p(X,Y,D_x)^* v }{H^s_{q'}(\R^n)} \\
&\le C \norm{ u }{H^{-s}_q(\R^n)} \norm{ v }{L^{q'}(\R^n)}
.\end{align*}
Die Aussage gilt für alle $v \in L^{q'}(\R^n)$, also ist
$ \norm{ p(X,Y,D_x) u }{L^q(\R^n)} \le C \norm{ u }{H^{-s}_q(\R^n)}$.
Wir erhalten
\begin{equation}\label{equ:resultx,x,xic}
p(X,Y,D_x) : H^{-s}_q(\R^n) \rightarrow L^q(\R^n) \textrm{ für alle } 1 < q < \infty, 0 \le s < \theta .
\end{equation}
Die komplexe Interpolation bzgl. (\ref{equ:resultx,x,xia}) und (\ref{equ:resultx,x,xic}) liefert die Eigenschaft
$$ p(X,Y,D_x) : H^{-s\vartheta}_q(\R^n) \rightarrow H^{(1-\vartheta)s}_q(\R^n) \textrm{ für alle } \vartheta \in [0,1] .$$
Für $\sigma := -s\vartheta \in [-s,0]$ erhalten wir
$$p(X,Y,D_x) : H^\sigma_q(\R^n) \rightarrow H^{\sigma+s}_q(\R^n) \textrm{ für alle } 1 < q < \infty, 0 \le s < \theta,  -s \le \sigma \le 0 .$$
(\ref{equ:resultx,x,xib}) folgt nun aus der Tatsache, dass für $-\theta < \sigma \le -s$ nach (\ref{equ:resultx,x,xic})
$$p(X,Y,D_x) : H^{\sigma}_q(\R^n) \rightarrow L^q(\R^n) \hookrightarrow H^{\sigma+s}_q(\R^n) .$$
\end{proof}
\end{thm}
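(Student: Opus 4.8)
The plan is to obtain the smoothing property (\ref{equ:resultx,x,xia}) by a complex interpolation argument that bootstraps the two endpoint estimates already available: the plain $L^q$-boundedness of Theorem \ref{thm:result1} and the gain of one full derivative from Folgerung \ref{thm:result4}. First I would introduce the operator $\Lambda := (1 - \triangle_x - \triangle_y)^{\frac{1}{2}}$ acting in the pair of spatial variables $(x,y)$, whose double symbol $\lambda(\xi,\xi') = \piketi{\xi,\xi'}$ lies in $S^{1,1}_{1,0}(\R^n\times\R^n\times\R^n\times\R^n)$ and, viewed as a function of the single variable $\eta := (\xi,\xi')\in\R^{2n}$, gives a simplified symbol $\lambda_L(\eta) \in S^1_{1,0}(\R^{2n})$. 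For $z$ in the strip $\Streifen$ I set $b_z(x,y,\xi) := \Lambda^{-(z-s)} p(x,y,\xi)$, applying $\Lambda$ in $(x,y)$ for each fixed $\xi$, and then $p_z(x,y,\xi) := b_z(x,y,\xi) - b_z(x,x,\xi)$, so that $p_z$ vanishes on the diagonal $\menge{x=y}$ for every $z\in\Streifen$ and $z \mapsto p_z(X,Y,D_x)$ is an analytic family of the type required by Theorem \ref{thm:interpolComplex}; since differences of symbols in a class stay in that class, each $p_z$ has the same structural properties as $b_z$.

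Next I would treat the two boundary lines of the strip separately. On $\Real z = 0$ the symbol $\lambda(\xi,\xi')^{-(z-s)}$ has order $s - \Real z = s$ with seminorms uniform in $\Imag z$; hence $b_z(\hold,\hold,\xi) \in C^\theta(\R^{2n})$ uniformly and $p_z \in C^\theta S^0_{1,0}(\R^n\times\R^n,\R^n)$ with seminorms independent of $\Imag z$, so Theorem \ref{thm:result1} yields $\sup_{\Real z = 0}\norm{p_z(X,Y,D_x)}{\linearop{L^q(\R^n)}{L^q(\R^n)}} < \infty$. On $\Real z = 1$ the order of $\lambda(\xi,\xi')^{-(z-s)}$ is $s - 1 < 0$; applying Lemma \ref{beschr:S_1,delta} to the microlocalizable Hölder--Zygmund family (and using that $1+\vartheta \notin \N$ so that $C^{1+\vartheta}_* = C^{1,\vartheta}$) shows $\Lambda^{s-1} : C^\theta(\R^{2n}) \rightarrow C^{1,\vartheta}(\R^{2n})$ with $\vartheta := \theta - s \in (0,1)$ and operator norm uniform in $\Imag z$. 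Since differential operators commute with pseudodifferential operators whose double symbols do not depend on the spatial variables, this gives $b_z \in C^{1,\vartheta} S^0_{1,0}(\R^n\times\R^n,\R^n)$ and hence $p_z$ as well, still vanishing on the diagonal, so Folgerung \ref{thm:result4} yields $\sup_{\Real z = 1}\norm{p_z(X,Y,D_x)}{\linearop{L^q(\R^n)}{W^1_q(\R^n)}} < \infty$. Interpolating $z \mapsto p_z(X,Y,D_x)$ between the pairs $(L^q(\R^n),L^q(\R^n))$ and $(L^q(\R^n), W^1_q(\R^n))$ via Theorem \ref{thm:interpolComplex} and identifying $\interpolC{L^q(\R^n),W^1_q(\R^n)}{s} = H^s_q(\R^n)$ by Theorem \ref{thm:interpolBessel} produces at $z = s$ the bound $p_s(X,Y,D_x) : L^q(\R^n) \rightarrow H^s_q(\R^n)$; since $p(x,x,\xi) = 0$ forces $p_s = p$, this is precisely (\ref{equ:resultx,x,xia}).

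For the second assertion I would pass to the formally adjoint operator of Folgerung \ref{cor:adjunXY}, whose symbol $p(y,x,-\xi)$ again lies in $C^\theta S^0_{1,0}(\R^n\times\R^n,\R^n)$ and vanishes on the diagonal; by the part just proved it maps $L^q(\R^n) \to H^s_q(\R^n)$, so the duality $H^s_q(\R^n)' \cong H^{-s}_{q'}(\R^n)$ gives $p(X,Y,D_x) : H^{-s}_q(\R^n) \rightarrow L^q(\R^n)$, which is (\ref{equ:resultx,x,xic}). A further complex interpolation between $L^q \to H^s_q$ and $H^{-s}_q \to L^q$ yields $p(X,Y,D_x) : H^{-s\vartheta}_q(\R^n) \rightarrow H^{(1-\vartheta)s}_q(\R^n)$ for $\vartheta \in [0,1]$, i.e. the shift $H^\sigma_q \to H^{\sigma+s}_q$ on the range $-s \le \sigma \le 0$, and the remaining range $-\theta < \sigma \le -s$ is covered by composing $p(X,Y,D_x) : H^\sigma_q(\R^n) \to L^q(\R^n)$ with the embedding $L^q(\R^n) \hookrightarrow H^{\sigma+s}_q(\R^n)$, available because $\sigma + s \le 0$.

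The hard part will be the symbol bookkeeping on the line $\Real z = 1$: one must check that conjugating the Hölder-continuous symbol $p$ by the nonlocal operator $\Lambda^{-(z-s)}$ in the $(x,y)$-variables genuinely lifts its spatial regularity from $C^\theta$ to $C^{1,\vartheta}$ with all bounds uniform along that line, and that subtracting the diagonal value $b_z(x,x,\xi)$ preserves both the $\xi$-seminorm estimates and the vanishing condition. This is exactly the point where the boundedness of $\lambda_L$ in the seminorms $\abs{\hold}_{l,\infty}^{(s-\Real z)}$ uniformly in $\Imag z$ has to be combined carefully with the mapping property of $\Lambda^{s-1}$ on the Hölder--Zygmund scale provided by Lemma \ref{beschr:S_1,delta}.
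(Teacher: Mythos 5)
Your proposal follows the paper's proof essentially step for step: same operator $\Lambda$, same analytic family $b_z$, $p_z$ vanishing on the diagonal, same two endpoint estimates via Theorem \ref{thm:result1} and Folgerung \ref{thm:result4}, the same complex interpolation with Theorem \ref{thm:interpolComplex} and the identification $\interpolC{L^q,W^1_q}{s}=H^s_q$, and the same duality-plus-interpolation argument for the second assertion. The only imprecision is the claim that $b_z(\hold,\hold,\xi)\in C^\theta(\R^{2n})$ uniformly on $\Real z = 0$; since $\Lambda^{-(z-s)}$ has order $s>0$ there, the correct class is $C^{\theta-s}$, which is still a positive Hölder exponent and therefore suffices for Theorem \ref{thm:result1}, so the argument stands.
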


\begin{thm}
Ein \PSDO in $(x,y,\xi)$-Form $p(X,Y,D_x) \in \OP C^\theta S^0_{1,0}(\R^n\times\R^n,\R^n)$ besitzt die Abbildungseigenschaft
\begin{align*}
p(X,Y,D_x) :& H^s_q(\R^n) \rightarrow H^s_q(\R^n) \textrm{ für alle } s \in (-\theta,\theta), 1 < q < \infty\\
& C^\infty_0(\R^n) \rightarrow C^0(\R^n)
.\end{align*}
\begin{proof}
\cite[Proposition 9.8]{Tools} liefert per Dualität die erste Behauptung.
Für die zweite Abbildungseigenschaft liefert \cite[Proposition 9.17]{Tools} sogar
$$ p(X,Y,D_x) : C^s(\R^n) \rightarrow C^s(\R^n) \textrm{ für alle } s \in (0,\theta) .$$
\end{proof}
\end{thm}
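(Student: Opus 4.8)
The plan is to split the symbol along the diagonal and reduce each piece to a mapping property already established in the paper. Write $p(x,y,\xi) = p_1(x,\xi) + p_2(x,y,\xi)$ with $p_1(x,\xi) := p(x,x,\xi)$ and $p_2 := p - p_1$. Since $x \mapsto (x,x)$ is Lipschitz, restricting to the diagonal preserves Hölder continuity of order $\theta$ together with all $\xi$-derivative bounds, so $p_1 \in C^\theta S^0_{1,0}(\R^n,\R^n)$, and regarded as independent of $y$ it also lies in $C^\theta S^0_{1,0}(\R^n\times\R^n,\R^n)$; consequently $p_2 \in C^\theta S^0_{1,0}(\R^n\times\R^n,\R^n)$ with $p_2(x,x,\xi)=0$. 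The operator defined by (\ref{equ:operatorXYXi}) is additive in the symbol — the defining limit exists for each summand by Theorem \ref{thm:operatorXY} — so it is enough to treat $p_1$ and $p_2$ separately.

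Since $p_1$ is independent of $y$ it is in particular smooth in $y$, so by Remark \ref{remark:badform}.\ref{remark:badformA} the operator $p_1(X,Y,D_x)$ agrees on $\Schwarzi$ with the $(x,\xi,y)$-form operator of $p_1$, which — being independent of the third slot — is just the ordinary $x$-form operator $p_1(X,D_x)$. Applying Theorem \ref{beschr:Stau_1,0} with $m=0$ gives $p_1(X,D_x) : H^s_q(\R^n) \to H^s_q(\R^n)$ for all $s \in (-\theta,\theta)$, $1<q<\infty$, and this passes to all of $H^s_q(\R^n)$ by density of $\Schwarzi$.

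For $p_2$, Theorem \ref{thm:resultx,x,xi} (taking $s=0$ in (\ref{equ:resultx,x,xib}), and using $p_2(x,x,\xi)=0$) gives $p_2(X,Y,D_x) : H^\sigma_q(\R^n) \to H^\sigma_q(\R^n)$ for $\sigma \in (-\theta,0]$. To reach $\sigma \in (0,\theta)$ I would dualize: by Corollary \ref{cor:adjunXY} the formal adjoint has symbol $p_2^*(x,y,\xi) = p_2(y,x,-\xi)$, which again lies in $C^\theta S^0_{1,0}(\R^n\times\R^n,\R^n)$ and vanishes on the diagonal, so Theorem \ref{thm:resultx,x,xi} applied to $p_2^*$ yields $p_2(X,Y,D_x)^* : H^{-s}_{q'}(\R^n) \to H^{-s}_{q'}(\R^n)$ for $s \in [0,\theta)$; combining the pairing identity of Corollary \ref{cor:adjunXY} with the isometry $H^s_q(\R^n)' \cong H^{-s}_{q'}(\R^n)$ (the Lemma after Theorem \ref{thm:roomBesselStetig}) then gives $p_2(X,Y,D_x) : H^s_q(\R^n) \to H^s_q(\R^n)$ for $s \in [0,\theta)$. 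Together with the previous range and adding back $p_1$ this proves the first assertion. For the second, take $u \in C^\infty_0(\R^n) \subset \Schwarzi \subset H^s_q(\R^n)$ for every $s,q$; fix any $s \in (0,\theta)$ and $q$ with $n/s < q < \infty$, so that by the first assertion $p(X,Y,D_x)u \in H^s_q(\R^n)$, and conclude with the embedding $H^s_q(\R^n) \hookrightarrow C^0(\R^n)$ of Theorem \ref{thm:roomBesselStetig}.

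The main obstacle I anticipate is the positive-order range $s \in (0,\theta)$ for the diagonal-vanishing piece: Theorem \ref{thm:resultx,x,xi} is stated only for $\sigma \le 0$, so one genuinely needs the adjoint/duality argument, which forces a careful check that $p_2^*$ remains in $C^\theta S^0_{1,0}(\R^n\times\R^n,\R^n)$ and that the density and duality facts invoked are exactly the ones available in the paper. A secondary, more bookkeeping-type difficulty is making the symbol-additivity of (\ref{equ:operatorXYXi}) and the identification $p_1(X,Y,D_x)=p_1(X,D_x)$ precise, since the $(x,y,\xi)$-form operator is defined through a limit rather than through a single (oscillatory) integral.
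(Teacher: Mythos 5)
Your proof is correct, and it takes a genuinely different route from the paper's. The paper proves this theorem by pure citation: \cite[Proposition 9.8]{Tools} for the $H^s_q(\R^n)\to H^s_q(\R^n)$ claim (``per Dualität''), and \cite[Proposition 9.17]{Tools} for the stronger assertion $p(X,Y,D_x):C^s(\R^n)\to C^s(\R^n)$, $s\in(0,\theta)$, from which $C^\infty_0(\R^n)\to C^0(\R^n)$ is immediate. You instead give an internal derivation: split $p=p_1+p_2$ along the diagonal; identify $p_1(X,Y,D_x)$ with the $x$-form operator $p_1(X,D_x)$ via Remark \ref{remark:badform} so that Theorem \ref{beschr:Stau_1,0} applies; and treat the diagonal-vanishing piece $p_2$ by Theorem \ref{thm:resultx,x,xi} for $\sigma\in(-\theta,0]$ together with the adjoint of Corollary \ref{cor:adjunXY} and the Bessel-potential duality for $s\in(0,\theta)$. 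This buys self-containedness and exposes the mechanism that the paper leaves implicit in Taylor's book; what it does not recover is the sharper $C^s\to C^s$ conclusion, since your last step goes through the Sobolev embedding $H^s_q\hookrightarrow C^0$ rather than through Hölder--Zygmund estimates. All the references you invoke are the right ones, the additivity step and the identification $p_1(X,Y,D_x)=p_1(X,D_x)$ on $\Schwarzi$ followed by density are correctly handled, and your obstacle analysis is accurate. One peripheral remark: the adjoint symbol you quote from Corollary \ref{cor:adjunXY}, $p^*(x,y,\xi)=p(y,x,-\xi)$, is not quite the formal $L^2$-adjoint (that would be $\overline{p(y,x,\xi)}$; the paper's formula is rather the transpose with respect to the bilinear pairing, and the corollary's proof and statement are slightly out of sync), but since either expression lies in $C^\theta S^0_{1,0}(\R^n\times\R^n,\R^n)$ and vanishes on the diagonal whenever $p_2$ does, your argument is unaffected.
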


\subsection{Nicht-glatter Koordinatenwechsel}

Für den Fall einer nicht-glatten Mannigfaltigkeit betrachten wir zunächst wieder einen Diffeomorphismus auf offenen Mengen des $\R^n$.
Die daraus gezogenen Resultate werden wir für den lokalen Kartenwechsel auf Mannigfaltigkeiten betrachten, deren Struktur nur noch Hölder-stetig ist.
Leider werden wir bei der Koordinatentransformation nicht mehr eine Restklasse wie die Pseudodifferententialoperatorklasse $C^\tau S^{-\infty}(\R^n,\R^n\times\R^n)$ erhalten.
Stattdessen bekommen wir als Restmenge all jene Operatoren, die zu einem bestimmten Grad (siehe Theorem \ref{thm:resultx,x,xi} und Definition \ref{def:restOP}) regulierend sind.

Für ein Gebiet $\Omega$ wollen wir Pseudodifferentialoperatoren auf den Bessel-Potential-Räumen 
$H^s_q(\Omega)$ beschrieben in \cite[Definition 3.2.2]{TriebelFunction} betrachten, 
und den Operator nach einer Koordinatentransformation studieren. 

\begin{notation}
Für ein $1 < q < \infty$ bezeichne $L^{q'}(\R^n)$ den zu $L^q(\R^n)$ dualen Raum, d.h. wir wählen $1 < q' < \infty$ so, dass $\frac{1}{q} + \frac{1}{q'} = 1$.
\end{notation}

\begin{definition}\label{def:besselOmega}
Sei $\Omega \subseteq \R^n$ ein Gebiet.
Für $f,g \in \distrodual{\Omega}$ definiert die Äquivalenzrelation 
$$f \sim g :\Leftrightarrow \textrm{ es existiert ein } H \in H^s_q(\R^n) \textrm{ mit } H\mid_\Omega = 0 \textrm{ und } f = g + H$$
den Quotientenraum
$H^s_q(\Omega) := \menge{ f \in \distrodual{\Omega} : \textrm{ es gibt ein } F \in H^s_q(\R^n) \textrm{ mit } f = F\mid_\Omega }$
mit induzierter Norm
$$\norm{f}{H^s_q(\Omega)} := \inf_{\substack{F \in H^s_q(\R^n)\\ F\mid_\Omega = f}} \norm{F}{H^s_q(\R^n)} .$$
Insbesondere ist $H^s_q(\Omega) = H^s_q(\R^n)$ für $\Omega = \R^n$.
Weiter sei der Unterraum $H^s_{q,c}(\Omega) \subset H^s_q(\R^n)$ definiert durch
$$ H^s_{q,c}(\Omega) := \menge{ f \in H^s_q(\R^n) : \supp f \subsubset \Omega} .$$
\end{definition}
\nomenclature[f]{$H^s_q(\Omega)$}{Bessel-Potential-Raum der Ordnung $s \in \R$ mit $1 < q < \infty$ auf $\Omega$}
\nomenclature[n]{$\norm{f}{H^s_q(\Omega)}$}{Norm des Bessel-Potential-Raums auf $\Omega$}
\nomenclature[f]{$H^s_{q,c}(\Omega)$}{Raum aller Abbildungen aus dem Bessel-Potential-Raum $H^s_q(\R^n)$ mit kompakten Träger in $\Omega$}

\begin{remark}
Für $\Omega_1, \Omega_2 \subset \R^n$ ist für jedes $f \in H^s_q(\Omega_1) \cup H^s_q(\Omega_2)$ bereits $H^s_q(\Omega_1 \cup \Omega_2)$,
da wir nach Definition zwei $F_1, F_2 \in H^s_q(\R^n)$ finden mit $F_1\mid_{\Omega_1} = f\mid_{\Omega_1}$, $F_2\mid_{\Omega_2} = f\mid_{\Omega_2}$
und damit für 
$$ F(x) := \begin{cases} F_1(x) & x \in \R^n \setminus \Omega_2, \\ F_2(x) & x \in \Omega_2 , \end{cases}$$
gilt, dass 
$$ \norm{ F }{H^s_q(\R^n)} \le \norm{ F_1 }{H^s_q(\R^n)} + \norm{ F_2}{H^s_q(\R^n)} < \infty .$$
Also ist $F \in H^s_q(\R^n)$ mit $F\mid_{\Omega_1\cup\Omega_2} = f\mid_{\Omega_1\cup\Omega_2}$.
\end{remark}

Ein glatter regulärer Diffeomorphismus $h : \Omega_y \rightarrow \Omega_x$ induziert uns nach Theorem \ref{thm:koordBesselInv} einen Isomorphismus
$ h^* : H^s_q(\R^n) \rightarrow H^s_q(\R^n)$.
Wir wollen nun ein analoges (wenn auch weitaus schwächeres) Resultat erhalten, falls $h$ nicht glatt ist:

\begin{lemma}\label{lemma:koordNonKarte}
Ein regulärer $C^{1,\theta}$-Diffeomorphismus $h : \R^n \rightarrow \R^n$ definiert uns einen Isomorphismus
$$ h^* : H^s_q(\R^n) \xlongrightarrow{\sim} H^s_q(\R^n) \textrm{ für alle } s \in (-\theta,1], 1 < q < \infty .$$
\begin{proof}
{\bf 1. Fall: $s \in [0,1]$ }.
Die Abbildung $h^* : L^q(\R^n) \rightarrow L^q(\R^n)$ ist ein beschränkter linearer Operator, da nach Transformationsformel
$$ \norm{h^*(f)}{L^q(\R^n)}^q = \int_{\R^n} \abs{ h^*(f)(y) }^q dy = \int_{\R^n} \abs{f(x)}^q \abs{ \det \derive{h^{-1}}{x} } dx \le C \norm{f}{L^q(\R^n)}^q .$$
Außerdem können wir für $f \in C^\infty_0(\R^n)$ die distributionelle Ableitung mit der klassischen ausdrücken, da
$$\partial_{x_j} h^*(f)(x) = \derive{f}{h(x)} \cdot \partial_{x_j} h(x) \textrm{ für jedes } j = 1,\ldots,n .$$
Da $\partial_{x_j} h(x) \in L^\infty(\R^n)$, ist insbesondere $\partial_{x_j} h^*(f)(x) \in L^q(\R^n)$, also ist 
$\partial_{x_j} h^* : C^\infty_0(\R^n) \rightarrow L^q(\R^n)$ für alle $1 < q < \infty$ beschränkt.
Wegen $C^\infty_0(\R^n) \subset W^1_q(\R^n)$ dicht 
erhalten wir durch Approximation von $C^\infty_0(\R^n)$-Funktionen einen beschränkten linearen Operator
$h^* : W^1_q(\R^n) \rightarrow W^1_q(\R^n)$.
Nach Theorem \ref{thm:interpolBesselSobolev} können wir $W^1_q(\R^n)$ mit $H^1_q(\R^n)$ bzw. $L^q(\R^n)$ mit $H^0_q(\R^n)$ identifizieren.
Mit Hilfe der komplexen Interpolation (Theorem \ref{thm:interpolComplex}) erhalten wir schließlich einen Operator
$$ h^* : \interpolC{H^0_q(\R^n), H^1_q(\R^n) }{\vartheta} \rightarrow \interpolC{H^0_q(\R^n), H^1_q(\R^n) }{\vartheta} \textrm{ für alle } \vartheta \in (0,1) .$$
Nach Theorem \ref{thm:interpolBessel} ist aber $\interpolC{H^0_q(\R^n), H^1_q(\R^n) }{\vartheta} = H^\vartheta_q(\R^n)$.

{\bf 2. Fall: $s \in (-\theta,0)$ }.
Setze $\sigma := -s \in (0,\theta)$.
Zunächst haben wir $\det \derivei{h} \in C^\theta(\R^n)$, und für $v \in H^\sigma_q(\R^n)$ nach obiger Berechnung $h^{-1,*} v \in H^\sigma_{q'}(\R^n)$.
Fassen wir $a(x) := \abs{ \det \derive{h}{x}} \in C^\theta S^0_{1,0}(\R^n)$ als \PSDO auf, so erhalten wir nach Lemma \ref{beschr:S^0_1,delta} die Abschätzung
$ \norm{ a(X) v }{H^\sigma_{q'}(\R^n)} \le C_{s,q} \norm{v}{H^\sigma_{q'}(\R^n)}$.
Für eine Testfunktion $\varphi \in \Schwarzi \subset H^\sigma_{q'}(\R^n)$ ist
$$ \norm{ \abs{\det \derivei{h}^{-1}} h^{-1,*} \varphi }{H^\sigma_{q'}(\R^n)} \le C \norm{ \varphi }{H^\sigma_{q'}(\R^n)} .$$
Also ist für ein $u \in H^{-\sigma}_q(\R^n)$ das Dualitätsprodukt
\begin{align*}
\dualskp{ u }{ \abs{\det \derivei{h}^{-1}} h^{-1,*} \varphi }{H^{-\sigma}_q(\R^n)}{H^\sigma_{q'}(\R^n)} 
&= \int_{\R^n} u(x) \left(h^{-1}\right)^* \varphi(x) \abs{ \det \derive{h^{-1}}{x}} dx \\
&= \int_{\R^n} h^*u(x) \varphi(x) dx
\end{align*}
wohldefiniert.
Dies motiviert die Definition
$$ \dualskp{ h^* u }{ \varphi }{\SchwarziDual}{\Schwarzi} := \dualskp{ u }{ \abs{\det \derivei{h}^{-1}} h^{-1,*} \varphi }{H^{-\sigma}_q(\R^n)}{H^\sigma_{q'}(\R^n)} .$$
Mit dem Dichtheitsargument aus Theorem \ref{thm:interpolBesselSobolev} folgt, dass
$$\abs{ \dualskp{ h^* u}{ v }{H^{-\sigma}_q(\R^n)}{H^\sigma_{q'}(\R^n)} } \le C_{s,q} \norm{u}{H^{-\sigma}_q(\R^n)} \norm{v}{H^\sigma_{q'}(\R^n)} \textrm{ für alle } u \in H^{-\sigma}_q(\R^n), v \in H^\sigma_{q'}(\R^n) .$$
\end{proof}
\end{lemma}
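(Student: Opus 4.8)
The plan is to split the admissible range $s\in(-\theta,1]$ into the two regimes $[0,1]$ and $(-\theta,0)$, handling the first by real endpoint bounds plus complex interpolation and the second by duality. Throughout I exploit that $h^{-1}$ is again a regular $C^{1,\theta}$-diffeomorphism, so it suffices in each regime to exhibit a bounded operator $h^{*}$; its inverse is then automatically $h^{-1,*}$, which makes $h^{*}$ an isomorphism.

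For $s\in[0,1]$ I would start at the endpoints. Boundedness of $h^{*}$ on $L^{q}(\R^{n})=H^{0}_{q}(\R^{n})$ is immediate from the transformation formula (Theorem \ref{thm:masstheorieTransformation}) together with the regularity bound $C^{-1}\le|\det\derivei{h}|\le C$, which gives $\norm{h^{*}f}{L^{q}(\R^{n})}^{q}=\int_{\R^{n}}|f(x)|^{q}\,|\det\derive{h^{-1}}{x}|\,dx\le C\norm{f}{L^{q}(\R^{n})}^{q}$. For $W^{1}_{q}(\R^{n})=H^{1}_{q}(\R^{n})$ (Theorem \ref{thm:interpolBesselSobolev}) I would compute the distributional derivatives of $h^{*}f$ for $f\in C^{\infty}_{0}(\R^{n})$ via the chain rule, $\partial_{x_{j}}(f\circ h)=(\derivei f\circ h)\cdot\partial_{x_{j}}h$, observe that $\partial_{x_{j}}h\in L^{\infty}(\R^{n})$ because $h\in\Beschr{1}{\R^{n}}$, conclude that $h^{*}:C^{\infty}_{0}(\R^{n})\to W^{1}_{q}(\R^{n})$ is bounded, and extend by density of $C^{\infty}_{0}$ in $W^{1}_{q}$. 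Then complex interpolation (Theorem \ref{thm:interpolComplex}) applied to $h^{*}$ on the pair $(H^{0}_{q},H^{1}_{q})$, combined with $\interpolC{L^{q}(\R^{n}),W^{1}_{q}(\R^{n})}{\vartheta}=H^{\vartheta}_{q}(\R^{n})$ from Theorem \ref{thm:interpolBessel}, yields boundedness of $h^{*}$ on $H^{s}_{q}(\R^{n})$ for every $s\in[0,1]$.

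For $s\in(-\theta,0)$ I would argue by duality. Put $\sigma:=-s\in(0,\theta)\subset(0,1)$ and $\tfrac1q+\tfrac1{q'}=1$. On test functions the natural candidate for $h^{*}u$ is characterized by $\dualskp{h^{*}u}{\varphi}{\SchwarziDual}{\Schwarzi}=\int_{\R^{n}}u(x)\,(h^{-1,*}\varphi)(x)\,|\det\derive{h^{-1}}{x}|\,dx$, i.e.\ pairing $u\in H^{-\sigma}_{q}(\R^{n})$ against $|\det\derivei{h^{-1}}|\,h^{-1,*}\varphi$. The crucial point is that this element lies in $H^{\sigma}_{q'}(\R^{n})$: since $h^{-1}$ is a regular $C^{1,\theta}$-diffeomorphism, the already-treated case $s\in[0,1]$ gives $h^{-1,*}:H^{\sigma}_{q'}(\R^{n})\to H^{\sigma}_{q'}(\R^{n})$ bounded, and multiplication by $|\det\derivei{h^{-1}}|\in C^{\theta}(\R^{n})$, read as the symbol of a $\Psi$DO in $C^{\theta}S^{0}_{1,0}(\R^{n},\R^{n})$, maps $H^{\sigma}_{q'}(\R^{n})$ into itself by Lemma \ref{beschr:S^0_1,delta}, since $\sigma<\theta$. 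Hence $\varphi\mapsto\dualskp{u}{|\det\derivei{h^{-1}}|\,h^{-1,*}\varphi}{H^{-\sigma}_{q}(\R^{n})}{H^{\sigma}_{q'}(\R^{n})}$ is a bounded functional on $H^{\sigma}_{q'}(\R^{n})$ with norm $\le C_{s,q}\norm{u}{H^{-\sigma}_{q}(\R^{n})}$; using $H^{-\sigma}_{q}(\R^{n})=H^{\sigma}_{q'}(\R^{n})'$ together with density of $\Schwarzi$ in $H^{\sigma}_{q'}(\R^{n})$ (Theorem \ref{thm:interpolBesselSobolev}), it represents a unique $h^{*}u\in H^{-\sigma}_{q}(\R^{n})$, and $h^{*}:H^{-\sigma}_{q}(\R^{n})\to H^{-\sigma}_{q}(\R^{n})$ is bounded. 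One checks this is consistent with the pointwise definition on $\Schwarzi$, and running the same construction with $h^{-1}$ in place of $h$ gives the two-sided inverse, so $h^{*}$ is an isomorphism here as well.

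The main obstacle is the negative-order regime: one must verify that the dual characterization of $h^{*}u$ is genuinely well-defined and independent of representatives, that the multiplier $|\det\derivei{h^{-1}}|$ is admissible on $H^{\sigma}_{q'}$ for $\sigma$ up to but not including $\theta$ — this is precisely where the $C^{1,\theta}$ regularity of $h$, rather than mere $C^{1}$, enters through Lemma \ref{beschr:S^0_1,delta} — and that the density and duality identifications are combined in the right order so that the extension is unique. The positive-order regime, by contrast, is essentially bookkeeping once the two endpoint bounds are established.
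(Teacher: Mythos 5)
Your proposal is correct and follows essentially the same route as the paper: endpoint bounds on $L^q(\R^n)$ (transformation formula) and $W^1_q(\R^n)$ (chain rule, $\partial_{x_j}h\in L^\infty$, density) plus complex interpolation via Theorems \ref{thm:interpolComplex} and \ref{thm:interpolBessel} for $s\in[0,1]$, and for $s\in(-\theta,0)$ the same duality definition $\dualskp{h^*u}{\varphi}{\SchwarziDual}{\Schwarzi}=\dualskp{u}{\abs{\det\derivei{h}^{-1}}\,h^{-1,*}\varphi}{H^{-\sigma}_q(\R^n)}{H^\sigma_{q'}(\R^n)}$, with the multiplier handled as a $C^\theta S^0_{1,0}$ symbol through Lemma \ref{beschr:S^0_1,delta} and the density of $\Schwarzi$ from Theorem \ref{thm:interpolBesselSobolev}. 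Your version is, if anything, slightly more explicit than the paper about applying the already-proven positive-order case to $h^{-1,*}$ and about why the inverse $h^{-1,*}$ makes $h^*$ an isomorphism, but the substance is the same.
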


\begin{cor}\label{cor:koordNonKarte}
Ein regulärer $C^{1,\theta}$-Diffeomorphismus $h : \Omega_y \rightarrow \Omega_x$ definiert uns einen Operator
$$ h^* : H^s_{q,c}(\Omega_x) \xlongrightarrow{\sim} H^s_{q,c}(\Omega_y) \textrm{ für alle } s \in (-\theta,1], 1 < q < \infty .$$
\begin{proof}
Die Behauptung folgt direkt aus Lemma \ref{lemma:koordNonKarte}, da $H^s_{q,c}(\Omega_x) \subset H^s_q(\R^n)$ und
$\supp u \circ h \subsubset \Omega_y$.
\end{proof}
\end{cor}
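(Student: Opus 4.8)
The plan is to deduce the statement from Lemma \ref{lemma:koordNonKarte}, which already treats a regular $C^{1,\theta}$-Diffeomorphismus of the whole space $\R^n$; the only gap to bridge is that here $h$ is defined merely on $\Omega_y$, whereas $H^s_{q,c}(\Omega_x)$ and $H^s_{q,c}(\Omega_y)$ live inside $H^s_q(\R^n)$. So first I would fix $u \in H^s_{q,c}(\Omega_x)$, set $K := \supp u \subsubset \Omega_x$ and $L := h^{-1}(K)$, and note that $L$ is compact (since $h$ is a homeomorphism) and contained in $\Omega_y$ (since $h : \Omega_y \to \Omega_x$ is bijective), hence $L \subsubset \Omega_y$. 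Composition with the $C^{1,\theta}$-Diffeomorphismus $h$ turns $u$ into a well-defined distribution $u \circ h$ on $\Omega_y$ (by the usual pullback duality formula, which only needs $h \in C^1$ with a two-sided Jacobian bound), supported in $L$; extended by zero it is a compactly supported distribution on $\R^n$, and the task is to show it actually lies in $H^s_q(\R^n)$ with norm bounded by a multiple of $\norm{u}{H^s_q(\R^n)}$.

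To this end I would pick $\psi \in C^\infty_0(\Omega_y)$ with $\psi \equiv 1$ on a neighbourhood of $L$, and then extend the restriction $h|_{\supp\psi}$ to a regular $C^{1,\theta}$-Diffeomorphismus $\tilde h : \R^n \to \R^n$ with $\tilde h = h$ on $\supp\psi$ — this is the genuinely technical point, carried out by gluing $h$ to an affine map through a smooth partition of unity on a sufficiently fine cover of $\supp\psi$, and checking that injectivity and the two-sided bound (\ref{equ:trafoUngl}) on $\abs{\det\derivei{h}}$ survive. Granting such a $\tilde h$, Lemma \ref{lemma:koordNonKarte} gives $\tilde h^* \in \linearop{H^s_q(\R^n)}{H^s_q(\R^n)}$, an isomorphism, for all $s \in (-\theta,1]$, $1 < q < \infty$. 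Since $u \circ h$ is supported in $L$, where $\psi \equiv 1$, and $\tilde h = h$ on $\supp\psi \supseteq L$, one has $u \circ h = \psi \cdot (u \circ h) = \psi \cdot (\tilde h^* u)$ as distributions on $\R^n$. Multiplication by $\psi$ is the operator of the symbol $\psi(x) \in S^0_{1,0}(\R^n\times\R^n)$, which by Lemma \ref{beschr:S_1,delta}, applied to the mikrolokalisierbare Familie $\menge{H^s_q(\R^n)}_{s\in\R}$, maps $H^s_q(\R^n)$ boundedly into itself; hence $u\circ h \in H^s_q(\R^n)$ with $\norm{u\circ h}{H^s_q(\R^n)} \le C\norm{\tilde h^* u}{H^s_q(\R^n)} \le C'\norm{u}{H^s_q(\R^n)}$ and $\supp(u\circ h)\subsubset\Omega_y$, i.e.\ $h^* u \in H^s_{q,c}(\Omega_y)$. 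This shows $h^* : H^s_{q,c}(\Omega_x)\to H^s_{q,c}(\Omega_y)$ is a bounded linear map.

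It remains to produce the inverse and conclude. By the Bemerkung following the definition of a regular diffeomorphism, $h^{-1} : \Omega_x \to \Omega_y$ is again a regular $C^{1,\theta}$-Diffeomorphismus, so the same argument yields a bounded linear map $(h^{-1})^* : H^s_{q,c}(\Omega_y) \to H^s_{q,c}(\Omega_x)$; and on distributions $(u\circ h)\circ h^{-1} = u$ and $(v\circ h^{-1})\circ h = v$, so $h^*$ and $(h^{-1})^*$ are mutually inverse bounded maps. Hence $h^* : H^s_{q,c}(\Omega_x) \xlongrightarrow{\sim} H^s_{q,c}(\Omega_y)$, as asserted. The main obstacle is exactly the extension step of the second paragraph: upgrading a locally defined regular $C^{1,\theta}$-Diffeomorphismus to a global one of $\R^n$ without losing the Hölder continuity of the derivative or the uniform two-sided bound on $\abs{\det\derivei{h}}$ — everything else is a routine combination of Lemma \ref{lemma:koordNonKarte}, Lemma \ref{beschr:S_1,delta}, and support bookkeeping.
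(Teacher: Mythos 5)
Your overall architecture (cut off near the compact support, reduce to the global Lemma \ref{lemma:koordNonKarte}, handle multiplication by the cutoff via Lemma \ref{beschr:S_1,delta}, obtain the inverse from $h^{-1}$) is sensible, but the step you yourself flag as "the genuinely technical point" is a genuine gap, and as formulated it is not merely unproven but in general false. You ask for a single regular $C^{1,\theta}$-diffeomorphism $\tilde h$ of all of $\R^n$ agreeing with $h$ on the whole compact set $\supp\psi$. Such a global extension need not exist: if $\Omega_y$ is disconnected and $\det\derivei{h}$ has opposite signs on two components met by $\supp\psi$, no global diffeomorphism of $\R^n$ can coincide with $h$ there (the sign of its Jacobian is constant); and even for connected domains there are topological obstructions in dimension $n\ge 3$ (e.g.\ $h$ carrying a knotted solid torus inside $\Omega_y$ onto an unknotted one). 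Moreover the construction you sketch — gluing $h$ to an affine map with a partition of unity — does not in general produce a diffeomorphism: convex combinations of diffeomorphisms lose injectivity, which is precisely why the paper's own extension result, Lemma \ref{lemma:diffeoErweitern}, is only \emph{local} (it works on a small ball $B_r(x_0)$, with $r$ controlled by the Hölder constant of $\derivei{h}$ via a Neumann-series argument) rather than on an arbitrary compact set.

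The paper itself does not take your route at all: its proof of the corollary is a one-line direct appeal to Lemma \ref{lemma:koordNonKarte}, using only $H^s_{q,c}(\Omega_x)\subset H^s_q(\R^n)$ and $\supp (u\circ h)\subsubset\Omega_y$; the machinery for the fact that $h$ is only locally defined appears first in the theorem immediately after the corollary, where the compact set is covered by finitely many balls $B_{r_j}(x_j)$, on each of which $h$ agrees with a global regular diffeomorphism $h_{r_j,x_j}$ from Lemma \ref{lemma:diffeoErweitern}, and the pieces are reassembled with a finite $\Psi$DO-Abschneidefunktionsfamilie. Your argument can be repaired in exactly this way: replace the single global $\tilde h$ by a finite cover of $L=h^{-1}(\supp u)$ by such balls, write $u\circ h=\sum_j \chi_j\,(h_{r_j,x_j}^*u)$ with cutoffs $\chi_j$ supported in the balls and summing to $1$ near $L$, and estimate each summand by Lemma \ref{lemma:koordNonKarte} plus the multiplier bound you already invoke; the support bookkeeping and the inverse via $h^{-1}$ then go through as you wrote. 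As it stands, however, the extension step on which your whole second paragraph rests would fail.
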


\begin{lemma}\label{lemma:diffeoErweitern}
Sei $h : \Omega_y \rightarrow \Omega_x$ ein regulärer $C^{1,\theta}$-Diffeomorphismus.
Dann gibt es zu jedem $x_0 \in \Omega_y$ ein $r > 0$ und einen globalen Diffeomorphismus $h_{r,x_0} : \R^n \rightarrow \R^n$,
der auf $B_r(x_0)$ mit $h$ übereinstimmt. Der Diffeomorphismus $h\mid_{B_r(x_0)}$ lässt sich also zu einem globalen erweitern.
\begin{proof}
Sei $c_0$ die Hölder-Stetigkeitskonstante von $\derivei{h}$, 
d.h. sei $c_0 > 0$ sodass für alle $x,y \in \R^n$ gilt, dass $\opnorm{ \derive{h}{x} - \derive{h}{y} } \le c_0 \abs{x-y}^\theta$, 
wobei $\opnorm{A} := \sup_{v \in \R^n, \abs{v} = 1} \abs{Av}$ für jede lineare Abbildung $A : \R^n \rightarrow \R^n$.
Wähle ein $x_0 \in \Omega_y$.
Wegen (\ref{equ:trafoUngl}) können wir ein $0 < r^\theta \le \frac{1}{2 c_0} \opnorm{ \derive{h}{x_0} }$ finden, sodass $B_{2r}(x_0) \subset \Omega_y$.
Eine Taylorentwicklung um $x_0$ bei $x \in B_r(x_0)$ liefert
$$ h(x) = h(x_0) + \Xi_h(x,x_0) (x - x_0) + \Landau{ \abs{x - x_0}^2} ,$$
wobei $ \Xi_h(x,x_0) = \derive{h}{x_0} - \int_0^1 \left( \derive{h}{x_0} - \derive{h}{ (1-t)x_0 + tx} \right) dt $ aus Lemma \ref{mittelwertsatz}.
Wir setzen $B_h(x,x_0) := \int_0^1 \left( \derive{h}{x_0} - \derive{h}{ (1-t)x_0 + tx} \right) dt$ und verwenden die Hölder-Stetigkeit von $\derivei{h}$:
\begin{align*}
\opnorm{ B_h(x,x_0) } 
&\le \int_0^1 \opnorm{ \derive{h}{x_0} - \derive{h}{ (1-t)x_0 + tx} } dt \\
&\le c_0 \abs{ x_0 - x }^\theta \le c_0 r^\theta \le \frac{1}{2} \opnorm{\derive{h}{x_0}}
.\end{align*}
Setze $\tilde{\Xi}_h(x,x_0) := \derive{h}{x_0} + \phi(x) B_h(x,x_0)$ mit $\phi \in C^\infty_0\left( B_{2r}(x_0)\right), 0 \le \phi \le 1$ und $\phi = 1$ auf $B_r(x_0)$.
Dann ist $$ \opnorm{ \phi(x) B_h(x,x_0) \derive{h}{x_0}^{-1} } \le \frac{1}{2} < 1 .$$
Also konvergiert die Neumann-Reihe 
$\sum_{j=0}^\infty \left( \phi(x) B_h(x,x_0) \derive{h}{x_0}^{-1} \right)^j$ 
absolut und gleichmäßig, 
sodass wir eine Konstante $C>0$ finden mit
$$ \opnorm{ \left( \einheitsmatrix - \phi(x) B_h(x,x_0) \derive{h}{x_0}^{-1} \right)^{-1} } \le C .$$
Schließlich folgern wir mit der einfachen Umformung
\begin{align*}
\opnorm{ \left( \tilde{\Xi}(x,x_0) \right)^{-1} }
&= \opnorm{ \derive{h}{x_0}^{-1} \left( \einheitsmatrix - \phi(x) B_h(x,x_0) \derive{h}{x_0}^{-1} \right)^{-1}} \\
&\le \opnorm{ \derive{h}{x_0}^{-1} } \opnorm{ \left( \einheitsmatrix - \phi(x) B_h(x,x_0) \derive{h}{x_0}^{-1} \right)^{-1}} \\
&\le C \opnorm{ \derive{h}{x_0}^{-1} }
,\end{align*}
dass $x \mapsto \tilde{\Xi}_h(x,x_0)$ auf ganz $\R^n$ invertierbar ist.
Setzen wir $h_{r,x_0}(x) := h(x_0) + \tilde{\Xi}_h(x,x_0)$, so ist $h\mid_{B_r(x_0)} = h_{r,x_0}\mid_{B_r(x_0)} + \Landau{ \abs{x - x_0}^2}$.
\end{proof}
\end{lemma}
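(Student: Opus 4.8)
The plan is to reduce the global statement to a purely local construction around $x_0$ and then to glue $h$ with an affine map outside a small ball, using a cutoff function. First I would fix a radius with $B_{2r}(x_0) \subset \Omega_y$ and use the integral mean value theorem (Lemma \ref{mittelwertsatz}) to write, for $x \in B_{2r}(x_0)$,
\[
h(x) - h(x_0) = \Xi_h(x,x_0)(x-x_0), \qquad \Xi_h(x,x_0) := \int_0^1 \derive{h}{x_0 + t(x-x_0)}\, dt ,
\]
and decompose $\Xi_h(x,x_0) = \derive{h}{x_0} - B_h(x,x_0)$ with $B_h(x,x_0) := \int_0^1\big(\derive{h}{x_0} - \derive{h}{x_0+t(x-x_0)}\big)\,dt$. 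Since $h \in C^{1,\theta}$, if $c_0$ denotes the Hölder seminorm of $\derivei{h}$ we get immediately $\opnorm{B_h(x,x_0)} \le c_0 \abs{x-x_0}^\theta$, and by integrating once more $\abs{h(x) - h(x_0) - \derive{h}{x_0}(x-x_0)} \le \tfrac{c_0}{1+\theta}\abs{x-x_0}^{1+\theta}$.

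Next I would choose $\phi \in C^\infty_0(B_{2r}(x_0))$ with $0\le\phi\le 1$, $\phi\equiv 1$ on $B_r(x_0)$ and $\abs{\nabla\phi}\le C/r$, and define
\[
h_{r,x_0}(x) := h(x_0) + \derive{h}{x_0}(x-x_0) + \phi(x)\big(h(x) - h(x_0) - \derive{h}{x_0}(x-x_0)\big).
\]
On $B_r(x_0)$ this equals $h$ because $\phi\equiv 1$ there, and outside $B_{2r}(x_0)$ it is the affine isomorphism $x\mapsto h(x_0)+\derive{h}{x_0}(x-x_0)$; so the only point to check is that $h_{r,x_0}$ is a global diffeomorphism. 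Differentiating,
\[
\derive{h_{r,x_0}}{x} = \derive{h}{x_0} + \phi(x)\big(\derive{h}{x}-\derive{h}{x_0}\big) + \big(h(x) - h(x_0) - \derive{h}{x_0}(x-x_0)\big)\otimes\nabla\phi(x),
\]
and the two estimates above bound the last two terms in operator norm by $c_0(2r)^\theta + \tfrac{C c_0}{1+\theta}(2r)^{1+\theta}r^{-1} = \Landau{r^\theta}$ uniformly on $\R^n$. Hence, choosing $r$ so small that this is $<\tfrac12\opnorm{\derive{h}{x_0}^{-1}}^{-1}$, the perturbed matrix $\derive{h_{r,x_0}}{x}$ is invertible at every $x$ by a Neumann series argument (exactly as in the invertibility of $\einheitsmatrix$ minus a contraction), with $\abs{\det \derive{h_{r,x_0}}{x}}$ uniformly bounded above and below; so $h_{r,x_0}$ is a regular local $C^{1,\theta}$-diffeomorphism. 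Since it coincides with an affine bijection outside the compact set $\overline{B_{2r}(x_0)}$, it is proper, and a proper local $C^1$-diffeomorphism of $\R^n$ is a bijection by Hadamard's global inverse function theorem; therefore $h_{r,x_0}:\R^n\to\R^n$ is a regular $C^{1,\theta}$-diffeomorphism agreeing with $h$ on $B_r(x_0)$.

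The main obstacle is the Jacobian estimate in the transition annulus $B_{2r}(x_0)\setminus B_r(x_0)$: one must control the $\Landau{1/r}$ blow-up of $\nabla\phi$, and the point is that it is exactly cancelled by the $\abs{x-x_0}^{1+\theta}\le(2r)^{1+\theta}$ smallness of the first-order Taylor remainder of $h$, leaving a net $\Landau{r^\theta}$ gain that can be made as small as we like. Everything else — the matching of $h_{r,x_0}$ with $h$ on $B_r(x_0)$, the Neumann series for invertibility, the uniform bounds on the Jacobian determinant (\emph{cf.} (\ref{equ:trafoUngl})), and the passage from local diffeomorphism plus properness to a global diffeomorphism — is routine and does not use anything beyond the regularity of $h$ already assumed.
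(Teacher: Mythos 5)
Your construction is essentially the same as the paper's -- define $h_{r,x_0}$ by gluing $h$ near $x_0$ to the affine map $x\mapsto h(x_0)+\derive{h}{x_0}(x-x_0)$ via a cutoff $\phi$ supported in $B_{2r}(x_0)$ -- but you carry out the key step more carefully and actually close a gap that the paper leaves open. The paper defines $\tilde\Xi_h(x,x_0)=\derive{h}{x_0}+\phi(x)B_h(x,x_0)$ and sets $h_{r,x_0}(x)=h(x_0)+\tilde\Xi_h(x,x_0)(x-x_0)$ (the factor $(x-x_0)$ is omitted in the paper, and the sign on $\phi B_h$ should be $-$ to make $\tilde\Xi_h=\Xi_h$ on $B_r(x_0)$), then shows only that the matrix $\tilde\Xi_h(x,x_0)$ is invertible for every $x$. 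That is not the Jacobian of $h_{r,x_0}$: since $\tilde\Xi_h$ itself depends on $x$, one has
\[
\derive{h_{r,x_0}}{x}=\tilde\Xi_h(x,x_0)+\bigl(D_x\tilde\Xi_h(x,x_0)\bigr)(x-x_0)
\]
with an extra, potentially large, term coming from $\nabla\phi$; the paper never estimates it, nor does it pass from pointwise invertibility of a matrix field to global bijectivity of the map. You address both points. First you compute the true Jacobian
\[
\derive{h_{r,x_0}}{x}=\derive{h}{x_0}+\phi(x)\bigl(\derive{h}{x}-\derive{h}{x_0}\bigr)+R(x)\otimes\nabla\phi(x),\quad R(x):=h(x)-h(x_0)-\derive{h}{x_0}(x-x_0),
\]
and observe that the dangerous rank-one term is $\Landau{r^{-1}}\cdot\Landau{r^{1+\theta}}=\Landau{r^\theta}$, because the $C^{1,\theta}$-regularity gives $\abs{R(x)}\le\frac{c_0}{1+\theta}\abs{x-x_0}^{1+\theta}$ -- exactly the cancellation needed to make the Neumann-series argument work for the honest Jacobian. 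Second, you supply the missing global step: $h_{r,x_0}$ agrees with an affine isomorphism outside a compact set, hence is proper, and a proper $C^1$ local diffeomorphism of $\R^n$ is a global diffeomorphism (Hadamard). Your proof is correct and in fact repairs the paper's argument; the only cosmetic remark is that the uniform bounds on $\abs{\det\derive{h_{r,x_0}}{\,\cdot\,}}$ you mention should be recorded explicitly if one wants $h_{r,x_0}$ to be \emph{regular} in the sense of (\ref{equ:trafoUngl}), which follows directly from the $\Landau{r^\theta}$-perturbation estimate.
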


\begin{thm}
Sei $h : \Omega_y \rightarrow \Omega_x$ ein regulärer $C^{1,\theta}$-Diffeomorphismus, $\tilde{\Omega}_y \subsubset \Omega_y$, $\tilde{\Omega}_x := h(\tilde{\Omega}_y)$.
Dann definiert uns $h$ den Diffeomorphismus
$$ h^* : H^s_q(\tilde{\Omega}_x) \xlongrightarrow{\sim} H^s_q(\tilde{\Omega}_y) \textrm{ für alle } s \in (-\theta,1], 1 < q < \infty .$$
\begin{proof}
Zunächst finden wir wegen $\tilde{\Omega}_y \subsubset \R^n$ ein $N \in \N$ und damit eine endliche Überdeckung von $\tilde{\Omega}_y$ der Form
$\bigcup_{j=1}^N B_{r_j}(x_j) = \tilde{\Omega}_y$ mit $r_j > 0$, $x_j \in \tilde{\Omega}_y$.
Zudem gibt es eine endliche $C^{1,\theta}$-$\Psi$DO-Abschneide\-funktions\-familie ${(\varphi_j, \psi_j)}_{j=1,\ldots,N}$ auf $\tilde{\Omega}_y$, die den Mengen $\menge{h(B_{r_j}(x_j))}_{j=1,\ldots,N}$
untergeordnet ist.
Wir haben also für ein $u \in H^s_q(\Omega_x)$ die Identität $u(x) = \sum_{j=1}^N \varphi_j(x) u(x)$.
Wegen $\varphi_j \subsubset \psi_j$ erhalten wir mit der Linearität des Pullbacks für $u_j := u \psi_j$
$$ h^* u(x) = \sum_{j=0}^N \varphi_j(h(x)) u(h(x)) = \sum_{j=0}^N \varphi_j(h(x)) u_j(h(x)) .$$
Nach Definition \ref{def:besselOmega} finden wir zu $u_j$ ein $f_j \in H^s_q(\R^n)$ mit $f_j \circ h\mid_{B_{r_j}(x_j)} = u \circ h\mid_{B_{r_j}(x_j)}$.
Lemma \ref{lemma:diffeoErweitern} liefert für $r_j$ genügend klein einen regulären $C^{1.\theta}$-Diffeomorphismus $h_{j,x_j} : \R^n \rightarrow \R^n$ mit
$h_{j,x_j}\mid_{B_{r_j}(x_j)} = h\mid_{B_{r_j}(x_j)}$, und wegen $\supp \varphi_j \subset h(B_{r_j}(x_j))$ haben wir
$$ \varphi_j(h(x)) u_j(h(x)) = \varphi_j(h_{j,x_j}(x)) u_j(h_{j,x_j}(x)) = \varphi_j(h_{j,x_j}(x)) f_j(h_{j,x_j}(x)) .$$
Nach Lemma \ref{lemma:koordNonKarte} ist aber $h_{j,x_j}^* f_j \in H^s_q(\R^n)$.
Insgesamt erhalten wir
$$ \norm{ h^* u }{H^s_q(\tilde{\Omega}_y)} \le C \sum_{j=1}^N \norm{ h_{j,x_j}^* f_j}{H^s_q(\R^n)} < \infty .$$
Insbesondere ist $f_j \circ h_{j,x_j} \mid_{B_{r_j}(x_0)} = u_j \circ h \mid_{B_{r_j}(x_0)}$, also 
finden wir ein $f \in H^s_q(\R^n)$ mit $f \mid_{\Omega_y} = h^* u$.
\end{proof}
\end{thm}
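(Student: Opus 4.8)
The plan is to deduce the assertion from the already-established global version, Lemma~\ref{lemma:koordNonKarte}, together with the local-to-global extension result Lemma~\ref{lemma:diffeoErweitern}, by a finite localization. Since $\tilde{\Omega}_y \subsubset \Omega_y$ is relatively compact, I cover it by finitely many balls $B_{r_1}(x_1),\ldots,B_{r_N}(x_N)$ with $x_j \in \tilde{\Omega}_y$, choosing each radius $r_j > 0$ small enough that Lemma~\ref{lemma:diffeoErweitern} produces a \emph{global} regular $C^{1,\theta}$-diffeomorphism $h_j := h_{r_j,x_j} : \R^n \rightarrow \R^n$ with $h_j = h$ on $B_{r_j}(x_j)$. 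On the image side, $\tilde{\Omega}_x = h(\tilde{\Omega}_y) \subsubset \Omega_x$ is then covered by $\menge{h(B_{r_j}(x_j))}_{j=1,\ldots,N}$, and I fix a finite $C^{1,\theta}$-$\Psi$DO-Abschneidefunktionsfamilie $\menge{(\varphi_j,\psi_j)}_{j=1,\ldots,N}$ subordinate to this cover, so in particular $\supp\varphi_j \subsubset h(B_{r_j}(x_j))$ and $\sum_j \varphi_j \equiv 1$ on $\tilde{\Omega}_x$.

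To prove boundedness of $h^{*}$, fix $u \in H^s_q(\tilde{\Omega}_x)$ and, by Definition~\ref{def:besselOmega}, choose $F \in H^s_q(\R^n)$ with $F\mid_{\tilde{\Omega}_x} = u$ and $\norm{F}{H^s_q(\R^n)} \le 2\norm{u}{H^s_q(\tilde{\Omega}_x)}$. Writing $u = \sum_{j=1}^N \varphi_j u$ on $\tilde{\Omega}_x$, the function $\varphi_j F$ lies in $H^s_q(\R^n)$: multiplication by $\varphi_j \in C^{1,\theta}_0(\R^n)$ preserves $H^s_q(\R^n)$ for $s \in (-\theta,1]$, because $\varphi_j$, viewed as an $\xi$-independent symbol, belongs to $C^{1,\theta}S^0_{1,0}(\R^n,\R^n)$ and Theorem~\ref{beschr:Stau_1,0} gives boundedness of $\varphi_j(X,D_x)=\varphi_j(\hold)$ on $H^s_q(\R^n)$ for all $s \in (-(1+\theta),1+\theta)$; moreover $\supp(\varphi_j F) \subsubset h(B_{r_j}(x_j))$. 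Because $h = h_j$ on $B_{r_j}(x_j)$ and both $h$ and $h_j$ are injective, one verifies the pointwise identity
$$ \varphi_j(h(x))\,F(h(x)) = \varphi_j(h_j(x))\,F(h_j(x)) = h_j^{*}(\varphi_j F)(x) \qquad \text{für alle } x \in \R^n, $$
the two sides vanishing simultaneously off $B_{r_j}(x_j)$. Lemma~\ref{lemma:koordNonKarte} applied to $h_j$ yields $h_j^{*}(\varphi_j F) \in H^s_q(\R^n)$ with norm $\le C_j \norm{\varphi_j F}{H^s_q(\R^n)}$, hence $h^{*}u = \sum_{j=1}^N h_j^{*}(\varphi_j F)$ exhibits an $H^s_q(\R^n)$-extension of $u\circ h$ and
$$ \norm{h^{*}u}{H^s_q(\tilde{\Omega}_y)} \le \sum_{j=1}^N \norm{h_j^{*}(\varphi_j F)}{H^s_q(\R^n)} \le C \norm{F}{H^s_q(\R^n)} \le 2C\norm{u}{H^s_q(\tilde{\Omega}_x)}. $$

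For surjectivity and the reverse estimate I use that $h^{-1} : \Omega_x \rightarrow \Omega_y$ is again a regular $C^{1,\theta}$-Diffeomorphismus (the Bemerkung following the definition of regular diffeomorphism) and that $\tilde{\Omega}_x = h(\tilde{\Omega}_y) \subsubset \Omega_x$. Applying the boundedness just proved with $h$ replaced by $h^{-1}$ and the roles of $\tilde{\Omega}_x,\tilde{\Omega}_y$ exchanged gives a bounded $(h^{-1})^{*} : H^s_q(\tilde{\Omega}_y) \rightarrow H^s_q(\tilde{\Omega}_x)$. Since $(h^{-1})^{*}h^{*} = \id$ and $h^{*}(h^{-1})^{*} = \id$ on the respective quotient spaces, $h^{*}$ is the claimed isomorphism and the two estimates combine to $C^{-1}\norm{u}{H^s_q(\tilde{\Omega}_x)} \le \norm{h^{*}u}{H^s_q(\tilde{\Omega}_y)} \le C\norm{u}{H^s_q(\tilde{\Omega}_x)}$.

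The main obstacle is not the global estimate but the compatibility bookkeeping forced by the quotient-space definition of $H^s_q(\Omega)$: one must check that replacing $F$ by another representative of $u$ alters each $h_j^{*}(\varphi_j F)$ only by an element of $H^s_q(\R^n)$ vanishing on $\tilde{\Omega}_y$, and this is precisely where $\supp\varphi_j \subsubset h(B_{r_j}(x_j))$ and the exact agreement $h = h_j$ on $B_{r_j}(x_j)$ from Lemma~\ref{lemma:diffeoErweitern} are used. A secondary point to state carefully is the pointwise multiplier property of $C^{1,\theta}_0(\R^n)$ on $H^s_q(\R^n)$ over the whole range $s \in (-\theta,1]$, in particular for negative $s$, where one genuinely needs the symbol-calculus input of Theorem~\ref{beschr:Stau_1,0} rather than a crude product estimate. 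Everything else is the routine assembly of the finitely many pieces.
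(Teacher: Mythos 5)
Your proof is correct and follows essentially the same route as the paper: a finite cover of $\tilde{\Omega}_y$ by balls, extension of $h$ on each ball to a global regular $C^{1,\theta}$-diffeomorphism via Lemma \ref{lemma:diffeoErweitern}, a subordinate cutoff family, and reduction to the global statement of Lemma \ref{lemma:koordNonKarte}. Your version is in fact slightly tidier than the paper's: you work with one extension $F$ of $u$ and justify the cutoff multiplication through Theorem \ref{beschr:Stau_1,0}, and you spell out the inverse direction (applying the estimate to $h^{-1}$) that the paper leaves implicit in its claim that $h^*$ is an isomorphism.
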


\begin{definition}\label{def:restOP}
$\restOP$ sei die Menge aller Operatoren $R$ mit der Abbildungseigenschaft
$$R : H^{-s}_q(\R^n) \rightarrow H^{t-s}_q(\R^n) \textrm{ für alle } 1 < q < \infty, 0 \le t < \theta \textrm{ und } 0 \le s < \theta .$$
Die Operatoren der Menge $\restOP$ sind wegen $H^{t-s}_q(\R^n) \subset H^{-s}_q(\R^n)$ regulierender als Operatoren der Klasse $\OP C^\tau S^0_{1,0}$.
Wir werden im Folgenden sehen, dass sich der Operator eines transformierten Pseudodifferentialoperators in einen \PSDO der gleichen Klasse und einem Operator der Restklasse $\restOP$ aufspalten lässt.
\end{definition}
\nomenclature[o]{$\restOP$}{Menge aller Operatoren, die Definition \ref{def:restOP} erfüllen}

\begin{lemma}\label{lemma:NonBesselWohlDef}
Für $\tilde{\Omega}_x \subsubset \Omega_x \subset \Image h$ erhalten wir durch Komposition
eines Symbols $p(x,x',\xi) \in C^\theta S^0_{1,0}(\R^n\times\R^n,\R^n)$ mit zwei Funktionen $\varphi, \psi \in C^{1,\theta}_0(\tilde{\Omega}_x)$
einen Operator
$$ P := \varphi(X) p(X,X',D_x) \psi(X') : H^s_{q,c}(\Omega_x) \rightarrow H^s_q(\tilde{\Omega}_x) \textrm{ für alle } -\theta < s < \theta .$$
\begin{proof}
Theorem \ref{thm:operatorXY} liefert
$
\varphi(X) p(X,X',D_x) \psi(X') : H^s_{q,c}(\Omega_x) \rightarrow H^s_q(\R^n)
$.
Außerdem ist $ \supp \left( \varphi(X) p(X,X',D_x) \psi(X') u \right) \subset \tilde{\Omega}_x$ für alle $u \in H^s_{q,c}(\Omega_x)$.
Wir finden also ein $f \in H^s_q(\R^n)$ mit $f\mid_{\tilde{\Omega}_x} = \varphi(X) p(X,X',D_x) \psi(X') u$.
\end{proof}
\end{lemma}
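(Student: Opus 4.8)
The plan is to reduce the statement to Theorem~\ref{thm:operatorXY} after identifying $P$ as a pseudodifferential operator in $(x,x',\xi)$-form with a symbol that still lives in the class $C^\theta S^0_{1,0}(\R^n\times\R^n,\R^n)$. First I would write out that
$$ \varphi(X)\, p(X,X',D_x)\, \psi(X') = \tilde p(X,X',D_x), \qquad \tilde p(x,x',\xi) := \varphi(x)\, p(x,x',\xi)\, \psi(x'), $$
which is immediate once one notes that the cutoff $\varphi(x)\psi(x')$ is independent of $\xi$, so it commutes with the Littlewood--Paley truncation $p_N(x,x',\xi) = \sum_{j=0}^N p(x,x',\xi)\lilwood{j}(\xi)$ used in the definition \eqref{equ:operatorXYXi} of the operator; hence the truncated kernels simply pick up the factor $\varphi(x)\psi(x')$ and the limit $N\to\infty$ goes through verbatim.

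The step that needs a word of proof is that $\tilde p \in C^\theta S^0_{1,0}(\R^n\times\R^n,\R^n)$. Since $\varphi,\psi \in C^{1,\theta}_0(\tilde\Omega_x) \subset C^\theta(\R^n)$ are bounded, multiplication by the $\xi$-independent factor $\varphi(x)\psi(x')$ leaves the estimates $\abs{\partial_\xi^\beta \tilde p(x,x',\xi)} \le C_\beta \piket{\xi}{-\abs{\beta}}$ untouched up to the constant $\norm{\varphi}{\infty}\norm{\psi}{\infty}$. For the Hölder bound $\norm{\partial_\xi^\beta \tilde p(\hold,\hold,\xi)}{C^\theta(\R^n\times\R^n)} \le C_\beta \piket{\xi}{-\abs{\beta}}$ I would invoke the elementary Leibniz-type fact that the product of functions bounded in $C^\theta(\R^n\times\R^n)$ is again bounded in $C^\theta(\R^n\times\R^n)$, together with the observation that $(x,x')\mapsto\varphi(x)$ and $(x,x')\mapsto\psi(x')$ are trivially of class $C^\theta$ jointly. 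This confirms $P = \tilde p(X,X',D_x) \in \OP C^\theta S^0_{1,0}(\R^n\times\R^n,\R^n)$.

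With that in hand I would apply Theorem~\ref{thm:operatorXY} with $m = 0$ and $\tau = \theta$: the operator $\tilde p(X,X',D_x)$ maps $H^s_q(\R^n)$ boundedly into $H^s_q(\R^n)$ for all $s\in(-\theta,\theta)$ and $1<q<\infty$. Since $H^s_{q,c}(\Omega_x) \subset H^s_q(\R^n)$ by Definition~\ref{def:besselOmega}, this already yields $Pu \in H^s_q(\R^n)$ with $\norm{Pu}{H^s_q(\R^n)} \le C\norm{u}{H^s_q(\R^n)}$ for every $u \in H^s_{q,c}(\Omega_x)$. Finally I would localize the range: because $P$ applies multiplication by $\varphi$ as its outermost operation, $\supp(Pu) \subset \supp\varphi \subsubset \tilde\Omega_x$, so $Pu$ restricted to $\tilde\Omega_x$ defines an element of $H^s_q(\tilde\Omega_x)$ in the sense of Definition~\ref{def:besselOmega}, with $\norm{Pu}{H^s_q(\tilde\Omega_x)} \le \norm{Pu}{H^s_q(\R^n)} < \infty$. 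This gives $P : H^s_{q,c}(\Omega_x) \rightarrow H^s_q(\tilde\Omega_x)$ for all $-\theta < s < \theta$.

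I do not expect any serious obstacle here: the boundedness is a direct citation of Theorem~\ref{thm:operatorXY}, the support argument for the range is trivial, and the only computation is the routine bound on the seminorms of $C^\theta S^0_{1,0}(\R^n\times\R^n,\R^n)$ showing that the class is stable under multiplication by the two $C^{1,\theta}_0$ cutoffs. The mild care needed is to make sure one records that $C^{1,\theta}_0(\tilde\Omega_x)$ embeds into $C^\theta(\R^n)$ (so that the product estimate applies at Hölder exponent $\theta$ rather than $1+\theta$), which is covered by the embedding lemmas for Hölder spaces established earlier.
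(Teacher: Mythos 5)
Your argument is correct and follows essentially the same route as the paper: boundedness into $H^s_q(\R^n)$ via Theorem~\ref{thm:operatorXY} (together with the $H^s_q$-mapping property of operators in $(x,y,\xi)$-form), followed by the observation that $\supp(Pu)\subset\supp\varphi\subsubset\tilde\Omega_x$, so that restriction yields an element of $H^s_q(\tilde\Omega_x)$ in the sense of Definition~\ref{def:besselOmega}. Your extra step of folding the cutoffs into one symbol $\tilde p(x,x',\xi)=\varphi(x)p(x,x',\xi)\psi(x')$ and checking it stays in $C^\theta S^0_{1,0}(\R^n\times\R^n,\R^n)$ is a harmless elaboration of what the paper leaves implicit.
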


Ab nun seien $\tilde{\Omega}_y \subsubset \Omega_y$, $\tilde{\Omega}_x := h(\tilde{\Omega}_y)$.
Wir wollen ein Symbol $a \in C^\theta S^0_{1,0}(\R^n\times\R^n,\R^n)$ konstruieren, das für alle $u \in H^s_{q,c}(\Omega_x)$ die Gleichung
$ \left( h^{-1} \right)^* A h^* u = P u$
erfüllt,
wobei 
$$A := \varphi(h(Y)) \left( a(Y,Y',D_y) + R \right) \psi(h(Y')) : H^s_{q,c}(\Omega_y) \rightarrow H^s_q(\tilde{\Omega}_y) \textrm{ für alle } -\theta < s < \theta$$
und $R \in \restOP$.
Der Operator $A$ soll also das Diagramm
$$
\begin{xy}
\xymatrix{
	H^s_{q,c}(\Omega_x) \ar[r]^P \ar[d]_{h^*}  & H^s_q(\tilde{\Omega}_x) \ar[d]_{h^*} \\
	H^s_{q,c}(\Omega_y) \ar[r]^A  & H^s_q(\tilde{\Omega}_y) 
}
\end{xy}
$$
kommutieren lassen.

\begin{lemma}\label{lemma:restOPinvariant}
$\restOP$ ist unter einer globalen $C^{1,\theta}$-Koordinatentransformation invariant.
\begin{proof}
Wir erhalten für $1 < q < \infty, 0 \le t < \theta \textrm{ und } 0 \le s < \theta$ aus dem Diagramm
$$
\begin{xy}
\xymatrix{
	H^{-s}(\R^n) \ar[d]_{h^*} \ar[rrr]^{ R } &&& H^{t-s}_q(\R^n) \ar[d]_{h^*} \\
	H^{-s}(\R^n) \ar@{-->}[rrr]^{ \tilde{R} }  &&& H^{t-s}_q(\R^n) 
}
\end{xy}
$$
nach Folgerung \ref{cor:koordNonKarte} einen Operator $\tilde{R} \in \restOP$.
\end{proof}
\end{lemma}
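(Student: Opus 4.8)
The plan is to reduce the invariance statement to the definition of $\restOP$ (Definition \ref{def:restOP}) by exploiting that a global $C^{1,\theta}$-diffeomorphism $h : \R^n \to \R^n$ gives, via Lemma \ref{lemma:koordNonKarte} and Folgerung \ref{cor:koordNonKarte}, bounded bijective pullback operators $h^* : H^\sigma_q(\R^n) \xrightarrow{\sim} H^\sigma_q(\R^n)$ for every $\sigma \in (-\theta, 1]$ and every $1 < q < \infty$. First I would fix $R \in \restOP$ and numbers $1 < q < \infty$, $0 \le t < \theta$, $0 \le s < \theta$. The transformed operator is naturally defined by conjugation, $\tilde R := h^* \circ R \circ (h^{-1})^*$, which is exactly the operator making the square in the statement commute (note that $h^{-1}$ is again a regular $C^{1,\theta}$-diffeomorphism by the Bemerkung following the definition of regular diffeomorphisms, so $(h^{-1})^*$ is available with the same mapping properties).

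The key step is then a routine composition of bounded maps. Since $-s \in (-\theta, 0] \subset (-\theta, 1]$, Folgerung \ref{cor:koordNonKarte} (respectively Lemma \ref{lemma:koordNonKarte} directly, as $\Omega_x = \Omega_y = \R^n$) gives $(h^{-1})^* : H^{-s}_q(\R^n) \xrightarrow{\sim} H^{-s}_q(\R^n)$ bounded. By hypothesis $R : H^{-s}_q(\R^n) \to H^{t-s}_q(\R^n)$ is bounded. Finally, since $t - s \in (-\theta, \theta) \subset (-\theta, 1]$, we again have $h^* : H^{t-s}_q(\R^n) \xrightarrow{\sim} H^{t-s}_q(\R^n)$ bounded. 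Composing, $\tilde R = h^* R (h^{-1})^* : H^{-s}_q(\R^n) \to H^{t-s}_q(\R^n)$ is bounded; since $q, s, t$ were arbitrary within the stated ranges, $\tilde R \in \restOP$, which is the claim.

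The only point requiring a word of care — and really the only ``obstacle,'' though it is a mild one — is checking that the exponents arising in the conjugation stay inside the interval $(-\theta, 1]$ on which the pullback is known to be an isomorphism: here both $-s$ and $t-s$ lie in $(-\theta,\theta)$, so this is immediate, but one should state it explicitly so the invocation of Lemma \ref{lemma:koordNonKarte} is justified. One should also note that the diagram in the statement is precisely the definition of $\tilde R$, so no separate verification of commutativity is needed. I would present the proof in two or three sentences: define $\tilde R$ by conjugation, read off the three boundedness facts from Lemma \ref{lemma:koordNonKarte}/Folgerung \ref{cor:koordNonKarte}, and conclude.
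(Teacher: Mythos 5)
Your proposal is correct and is essentially the paper's own argument: the paper also defines $\tilde{R}$ by conjugation with $h^*$ (that is the commuting square) and invokes the pullback isomorphism on $H^\sigma_q(\R^n)$ from Lemma \ref{lemma:koordNonKarte}/Folgerung \ref{cor:koordNonKarte}, with the exponents $-s$ and $t-s$ lying in $(-\theta,\theta)\subset(-\theta,1]$. Your explicit check of these exponent ranges and the remark that Lemma \ref{lemma:koordNonKarte} is the directly applicable statement in the global case $\Omega=\R^n$ only make the paper's terse diagram proof more precise.
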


\begin{lemma}\label{lemma:koordNonLilwoodKonv}
Sei $\Xi_h(x,y)$ die lineare Abbildung definiert in Lemma \ref{mittelwertsatz} für einen regulären $C^{1,\theta}$-Diffeomorphismus $h : \R^n \rightarrow \R^n$
und $U \subseteq \R^n$ offen, sodass (\ref{equ:trafoA_hBed}) in $U$ erfüllt ist.
Dann erhalten wir für eine Funktion $f(x,y,\xi) \in \Stetig{0}{\R^n\times\R^n\times\R^n}$ mit $x,y \in U$ die Aussage,
dass für $f_j(x,y,\xi) := f(x,y,\xi) \lilwood{j}(\Xi_h(x,y)^{-T}\xi)$ die Reihe
$\sum_{j=0}^N f_j(x,y,\xi) \rightarrow f(x,y,\xi)$ für $N \rightarrow \infty$ konvergiert.
\begin{proof}
Zunächst finden wir für $\tilde{\lilwood{j}}(\xi) := \lilwood{j}(\Xi_h(x,y)^{-T}\xi)$ wegen (\ref{equ:trafoUngl}) ein $C_0 > 1$ mit
\begin{align*}
\supp \tilde{\lilwood{j}} 
&\subset \menge{ \xi \in \R^n : \abs{\Xi_h(x,y)^{-T}\xi}^{-1} 2^{j-1} \le \abs{\xi} \le \abs{\Xi_h(x,y)^{-T}\xi}^{-1} 2^{j+1}} \\
&\subset \menge{ \xi \in \R^n : C_0^{-1} 2^{j-1} \le \abs{\xi} \le C_0 2^{j+1}}
.\end{align*}
Wir suchen uns das kleinste $\tilde{l} \in \N_0$ mit $2^{j+1} \le C_0 2^{j+\tilde{l}-1}$, d.h. 
$$\tilde{l} := \min \menge{ \tilde{l} \in \N_0 : \tilde{l} \ge 2 - \log_2 C_0}, $$
wobei $\abs{\log_2 C_0} < \infty$ nach (\ref{equ:trafoA_hBed}).
Zusätzlich wählen wir ein $L \in \N$ so, dass $C_0 2^{j+1} \le 2^{L-1}$, dann gilt für alle $l \ge L$, dass $\supp \lilwood{l} \cap \supp \tilde{\lilwood{j}} = \emptyset$.
Diese Bedingung lässt sich umformulieren zu $L \ge j + 2 + \log_2 C_0$.
Setzen wir also $l := 1+\log_2 C_0$, so erhalten wir aus Symmetriegründen
$\tilde{\lilwood{j}}(\xi) = \sum_{k = j - l}^{j+l} \tilde{\lilwood{j}}(\xi) \lilwood{k}(\xi)$.
Für ein festes $\xi \in \R^n$ finden wir ein $j \in \N$, sodass $\lilwoodt{j-1}{\xi} + \lilwoodt{j}{\xi} + \lilwoodt{j+1}{\xi} = 1$.
Damit folgt, dass für alle $N > j+1$
$$ \sum_{i=1}^N \lilwoodt{i}{\xi} 
= \sum_{i=j-1}^{j+1} \lilwoodt{i}{\xi}
= \sum_{i=j-1}^{j+1} \lilwoodt{i}{\xi} \sum_{k=i-l}^{i+l} \lilwood{k}(\xi) 
= \sum_{k=j-l}^{j+l} \lilwood{k}(\xi) \sum_{m=k-\tilde{l}}^{k+\tilde{l}} \lilwoodt{m}{\xi} ,$$
da $\supp \sum_{i=j-1}^{j+1} \tilde{\lilwood{i}} \sum_{k=i-l}^{i+l} \lilwood{k} \subset 
\supp \sum_{k=j-l}^{j+l} \lilwood{k} \sum_{m=k-\tilde{l}}^{k+\tilde{l}} \tilde{\lilwood{m}} $.
Also finden wir zwei Ganzzahlen $M' \ge M \ge 0$, sodass 
\begin{align*}
\sum_{j=0}^N f_j(x,y,\xi)
&= f(x,y,\xi) \sum_{j=0}^N \lilwoodt{j}{\xi}
= f(x,y,\xi) \sum_{k=-l}^l \sum_{j=0}^N \lilwoodt{j}{\xi} \lilwood{k+j}(\xi)\\
&= f(x,y,\xi) \sum_{j=0}^{N+M} \lilwood{j}(\xi) \sum_{k=-\tilde{l}}^{\tilde{l}} \lilwoodt{k+j}{\xi}
= f(x,y,\xi) \sum_{j=0}^{N+M'} \lilwood{j}(\xi)\\
&\rightarrow f(x,y,\xi) \textrm{ für } N \rightarrow \infty \textrm{ punktweise nach Lemma } \ref{lemma:lilwoodSchwarzKonv}
.\end{align*}
\end{proof}
\end{lemma}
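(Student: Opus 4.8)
The plan is to reduce the claim to the partition-of-unity identity $\sum_{j}\lilwood{j}=1$ for the ordinary Littlewood--Paley family: composing the argument with the fixed linear map $\Xi_h(x,y)^{-T}$ merely rescales the dyadic annuli by a bounded factor and leaves their layered, locally finite structure intact. The one geometric input I need is a \emph{uniform} bound: for $(x,y)\in U\times U$ the isomorphism $\Xi_h(x,y)^{-T}$ of $\R^n$ has operator norm and inverse operator norm dominated by a constant $C_0>1$ independent of $(x,y)$. This follows from the standing hypotheses. Since $h$ is regular it lies in $\Beschr{1}{\R^n}$, so $\derive{h}{p}$ is bounded in operator norm, and hence so is $\Xi_h(x,y)=\int_0^1\derive{h}{y+\theta(x-y)}\,d\theta$; and (\ref{equ:trafoA_hBed}), assumed to hold on $U$ (equivalently the regularity bound (\ref{equ:trafoUngl})), keeps $\abs{\det\Xi_h(x,y)}$ away from $0$, so the cofactor formula gives a uniform bound on $\opnorm{\Xi_h(x,y)^{-1}}$. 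Consequently
$$ C_0^{-1}\abs{\xi}\le\abs{\Xi_h(x,y)^{-T}\xi}\le C_0\abs{\xi}\qquad\textrm{for all }\xi\in\R^n,\ (x,y)\in U\times U. $$

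With this in hand I would fix $(x,y)\in U\times U$ and set $\tilde{\lilwood{j}}(\xi):=\lilwood{j}(\Xi_h(x,y)^{-T}\xi)$. From $\supp\lilwood{j}\subset D_j=\menge{\eta:2^{j-1}\le\abs{\eta}\le2^{j+1}}$ and the comparability above one obtains $\supp\tilde{\lilwood{j}}\subset\menge{\xi:C_0^{-1}2^{j-1}\le\abs{\xi}\le C_0\,2^{j+1}}$, so each $\tilde{\lilwood{j}}$ meets only the finitely many $\lilwood{k}$ with $\abs{k-j}\le l$, where $l$ is the least integer $\ge 1+\log_2 C_0$ (finite by (\ref{equ:trafoA_hBed})), and symmetrically each $\lilwood{k}$ meets only the $\tilde{\lilwood{m}}$ with $\abs{m-k}\le\tilde l$ for a corresponding $\tilde l$. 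Since $\Xi_h(x,y)^{-T}$ is a bijection, $\sum_j\tilde{\lilwood{j}}(\xi)=1$ for every $\xi$, with only finitely many terms nonzero at a point; reindexing twice via the overlap bounds gives
$$ \sum_{j=0}^N\tilde{\lilwood{j}}(\xi)=\sum_{k=-l}^{l}\sum_{j=0}^N\tilde{\lilwood{j}}(\xi)\,\lilwood{j+k}(\xi)=\sum_{j=0}^{N+M}\lilwood{j}(\xi)\sum_{k=-\tilde l}^{\tilde l}\tilde{\lilwood{j+k}}(\xi)=\sum_{j=0}^{N+M'}\lilwood{j}(\xi) $$
for suitable absolute integers $0\le M\le M'$, and by the telescoping recursion (\ref{equ:littlewoodRekursion}) together with $\lilwood{0}\equiv 1$ on the unit ball this last sum equals $\lilwood{0}(2^{-(N+M')}\xi)=1$ as soon as $2^{N+M'}\ge\abs{\xi}$. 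Multiplying through by $f(x,y,\xi)$ yields $\sum_{j=0}^N f_j(x,y,\xi)\to f(x,y,\xi)$ for $N\to\infty$ at each $(x,y,\xi)$ with $(x,y)\in U\times U$; note that continuity of $f$ plays no role for this pointwise statement.

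I would also record the shorter variant that avoids the reindexing: for fixed $(x,y,\xi)$ put $\eta:=\Xi_h(x,y)^{-T}\xi$, so that $\sum_{j=0}^N f_j(x,y,\xi)=f(x,y,\xi)\sum_{j=0}^N\lilwood{j}(\eta)=f(x,y,\xi)\,\lilwood{0}(2^{-N}\eta)$ by (\ref{equ:littlewoodRekursion}), and this equals $f(x,y,\xi)$ whenever $2^N\ge\abs{\eta}$, for instance whenever $2^N\ge C_0\abs{\xi}$.

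The only real obstacle is bookkeeping rather than analysis. One must make sure the operator-norm bound on $\Xi_h(x,y)^{-T}$ is genuinely uniform over $U\times U$: this is exactly where $h\in\Beschr{1}{\R^n}$ together with the determinant bound (\ref{equ:trafoA_hBed}) enters, pointwise invertibility of $\Xi_h(x,y)$ alone being insufficient. One must also check that the shift parameters $l,\tilde l,M,M'$ are absolute constants, independent of $N$, $\xi$ and $(x,y)$, so that the reindexing step is legitimate. Beyond that, the convergence is a finite-sum identity at each point and there is nothing further to do.
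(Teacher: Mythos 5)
Your proposal is correct, and it actually contains two arguments. The first mirrors the paper's proof: the uniform two-sided bound $C_0^{-1}\abs{\xi}\le\abs{\Xi_h(x,y)^{-T}\xi}\le C_0\abs{\xi}$ on $U\times U$, the resulting inclusion $\supp\bigl(\xi\mapsto\lilwood{j}(\Xi_h(x,y)^{-T}\xi)\bigr)\subset\menge{C_0^{-1}2^{j-1}\le\abs{\xi}\le C_0 2^{j+1}}$, and the finite-overlap reindexing with shifts $l,\tilde l,M,M'$ — this is exactly the paper's route, and your derivation of the uniform bound is in fact more careful than the paper's (you justify the bound on $\opnorm{\Xi_h(x,y)^{-1}}$ via $h\in\Beschr{1}{\R^n}$ plus the determinant bound and the adjugate, rather than citing (\ref{equ:trafoUngl}) alone; note that in both your writeup and the paper's the precise value of the overlap width $l$ is off by a rounding, which is harmless bookkeeping). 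Your second, shorter variant is genuinely different and strictly simpler: for fixed $(x,y,\xi)$ put $\eta:=\Xi_h(x,y)^{-T}\xi$ and use the telescoping consequence of (\ref{equ:littlewoodRekursion}), namely $\sum_{j=0}^{N}\lilwood{j}(\eta)=\lilwood{0}(2^{-N}\eta)=1$ as soon as $2^{N}\ge\abs{\eta}$, so the partial sum is \emph{exactly} $f(x,y,\xi)$ for all large $N$. This bypasses the double reindexing and the invocation of Lemma \ref{lemma:lilwoodSchwarzKonv} entirely, shows (as you observe) that continuity of $f$ and even the uniform constant $C_0$ are irrelevant for the pointwise statement, and reserves the uniform bound for what it actually buys, namely a threshold $2^{N}\ge C_0\abs{\xi}$ independent of $(x,y)\in U\times U$. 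The paper's longer route has the marginal advantage of producing the explicit relation between the transformed and untransformed partitions that motivates the statement's use in Theorem \ref{thm:trafoNonRn}, but for the lemma as stated your telescoping argument is complete and preferable.
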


\begin{thm}\label{thm:trafoNonRn}
Sei $h : \R^n \rightarrow \R^n$ ein regulärer $C^{1,\theta}$-Diffeomorphismus.
Definiere zu $p(x,x', \xi) \in C^\theta S^0_{1,0}(\R^n\times\R^n,\R^n)$
den Operator 
\begin{equation}\label{equ:trafoNonQRn}
A : H^s_q(\R^n) \rightarrow H^s_q(\R^n), \left(A (u \circ h)\right)(y) = \left( p(X,D_x,X') u \right) (h(y)) 
\end{equation}
für alle $u \in H^s_q(\R^n), y \in \R^n$.
Für $r > 0$ genügend klein erfüllt das Symbol $a \in C^\theta S^0_{1,0}(\R^n\times\R^n,\R^n)$ in $(x,y,\xi)$-Form gegeben durch (\ref{equ:trafoq}) die Bedingung
$ A \equiv a(Y, Y', D_y) \mod{\restOP}$
\begin{proof}
Mit Hilfe der Partition $\menge{\left( \varphi_j, \psi_j \right)}_{j\in\N}$ aus Lemma \ref{koord:lemmaPartition} spalten wir das Symbol $p$ auf in
$$p_j(x,x',\xi) := \varphi_j(x) p(x,x',\xi) \psi_j(x') \textrm{ und } p_{\infty,j}(x,x',\xi) := \varphi_j(x) p(x,x',\xi) (1 - \psi_j(x')) ,$$
sodass 
$p(X, X', D_x) = \sum_j p_j(X,X',D_x) + \sum_j p_{\infty,j}(X,X',D_x)$.
Für diese Partition wählen wir ein $r>0$ klein genug, sodass die Bedingung (\ref{equ:trafoA_hBed}) erfüllt ist.
Wegen $$\dist{\supp \varphi_j}{\supp (1-\psi_j)} > 0$$ ist $p_{j,\infty}(x,x,\xi) = 0$. 
Wir erhalten also mit Theorem \ref{thm:resultx,x,xi} einen Operator $p_{j,\infty}(X,X',D_x) \in \restOP$.
Das Symbol $p_j(x,y,\xi)$ zerlegen wir wie in Definition \ref{def:xyxi} mit der Littlewood-Paley-Partition in $p_j(x,y,\xi) =: \lim_{N\rightarrow\infty} p_{j,N}(x,y,\xi)$.
Bemerkung \ref{remark:badform}.\ref{remark:badformB} liefert
$$ \left(p_{j,N}(X,X',D_x) u\right)(h(y)) = \iint e^{i(h(y)-x')\cdot\xi} p_{j,N}(h(y), x', \xi) u(x') dx' \dslash \xi .$$
Mit analogen Umformungen wie im Beweis von Theorem \ref{thm:trafoRn} erhalten wir für $w = u \circ h$
\begin{align*}
\tilde{a}_{j,N}(Y,Y',D_y) w(y) 
:&= \left( p_{j,N}(X,X',D_x) u\right)(h(y)) \\
&= \iint  e^{i(y-y')\cdot\eta} \tilde{a}_{j,N}(y,y',\eta) w(y') dy' \dslash\eta ,
\end{align*}
mit 
$\tilde{a}_{j,N}(y,y',\eta) := p_{j,N}(h(y),h(y'), \Xi_h(y,y')^{-T} \eta) \abs{\det \Xi_h(y,y')}^{-1} \abs{\det \derive{h}{y'} }$.
Wir setzen $a_j(y,y',\eta) := p_j(h(y),h(y'), \Xi_h(y,y')^{-T} \eta) \abs{\det \Xi_h(y,y')}^{-1} \abs{\det \derive{h}{y'} }$.
Schließlich konvergiert $\tilde{a}_{j,N}(Y,Y',D_y) w(y) \rightarrow a_j(Y,Y',D_y) w(y)$ für $N \rightarrow \infty$ nach Lemma \ref{lemma:koordNonLilwoodKonv} punktweise, sodass
Theorem \ref{thm:operatorXY} liefert
\begin{align*}
a_j(Y,Y',D_y) w(y) &= \lim_{N\rightarrow \infty} \tilde{a}_{j,N}(Y,Y',D_y) w(y) = \lim_{N\rightarrow \infty} \left( p_{j,N}(X,X',D_x) u\right)(h(y)) \\
&= \left( p_j(X,X',D_x) u\right)( h(y))
.\end{align*}

Schließlich wollen will zeigen, dass $a_j$ der Klasse $C^\theta S^0_{1,0}(\R^n\times\R^n,\R^n)$ angehört.
Da $h \in C^{1,\theta}(\R^n)$ ist $\det \Xi_h \in C^\theta(\R^n \times \R^n)$. Das Symbol $a_j$ ist also bzgl. $(x,y) \in \R^{2n}$ Hölder-stetig zum Grad $\theta$ und glatt bzgl. $\xi \in \R^n$.
Wir betrachten
\begin{align*}
\norm{ a_j(\hold,\hold,\xi) }{C^\theta(\R^n\times\R^n)} 
&= \norm{ D_\xi^\alpha p_j\left(h(\hold), h(\hold), \Xi_h(\hold,\hold)^{-T}\xi\right) \abs{\det \Xi_h(\hold,\hold)}^{-1} \abs{\det \derive{h}{\hold} } }{C^\theta(\R^n\times\R^n)} \\
&\le C \norm{ D_\xi^\alpha p_j(h(\hold), h(\hold),\Xi_h(\hold,\hold)^{-T}\xi) }{C^\theta(\R^n\times\R^n)} \\
&\le C \norm{ p_j^{(\alpha)}(h(\hold), h(\hold), \Xi_h(\hold,\hold)^{-T}\xi) }{C^\theta(\R^n\times\R^n)} 
\le C \piket{\xi}{m-\abs{\alpha}}
,\end{align*}
wobei $p_j^{(\alpha)}(x,x',\xi) = D_\xi^\alpha p_j(x,x',\xi)$.
\end{proof}
\end{thm}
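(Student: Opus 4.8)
Der Plan ist, die Beweisstrategie von Theorem \ref{thm:trafoRn} auf die nicht-glatte Situation zu übertragen, wobei die Restklasse $\OP{C^\tau S^{-\infty}}$ durch $\restOP$ ersetzt wird. Zunächst zerlege ich mit der Partition $\menge{(\varphi_j,\psi_j)}_{j\in\N}$ aus Lemma \ref{koord:lemmaPartition} das Symbol in den diagonalen Anteil $p_j(x,x',\xi) := \varphi_j(x) p(x,x',\xi)\psi_j(x')$ und den Restanteil $p_{\infty,j}(x,x',\xi) := \varphi_j(x) p(x,x',\xi)(1-\psi_j(x'))$, sodass $p(X,X',D_x) = \sum_j p_j(X,X',D_x) + \sum_j p_{\infty,j}(X,X',D_x)$. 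Für genügend kleines $r>0$ ist die Bedingung (\ref{equ:trafoA_hBed}) für $\Xi_h$ erfüllt. Wegen $\dist{\supp\varphi_j}{\supp(1-\psi_j)} > 0$ gilt $p_{\infty,j}(x,x,\xi) = 0$, sodass nach Theorem \ref{thm:resultx,x,xi} der Operator $p_{\infty,j}(X,X',D_x)$ in $\restOP$ liegt; mit Lemma \ref{lemma:restOPinvariant} bleibt diese Eigenschaft unter der Koordinatentransformation erhalten, und der gesamte Restterm landet in $\restOP$.

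Für den diagonalen Anteil ist die wesentliche Schwierigkeit, dass ein Symbol in $(x,y,\xi)$-Form mit bloß Hölder-stetiger Regularität im Allgemeinen keine klassisch konvergente Oszillationsintegraldarstellung besitzt. Deshalb arbeite ich mit den Littlewood-Paley-Abschneidungen $p_{j,N}(x,y,\xi) := \sum_{k=0}^N p_j(x,y,\xi)\lilwood{k}(\xi)$: nach Bemerkung \ref{remark:badform}.\ref{remark:badformB} lässt sich $p_{j,N}(X,X',D_x)$ als gewöhnliches Doppelintegral schreiben. Anschließend transformiere ich analog zum Beweis von Theorem \ref{thm:trafoRn}: mit dem Mittelwertsatz $h(y)-h(y') = \Xi_h(y,y')(y-y')$ und der Substitution $\eta := \Xi_h(y,y')^T\xi$, also $(h(y)-h(y'))\cdot\xi = (y-y')\cdot\eta$, erhalte ich $\left(p_{j,N}(X,X',D_x)u\right)(h(y)) = \tilde{a}_{j,N}(Y,Y',D_y)w(y)$ für $w := u\circ h$ mit
$$\tilde{a}_{j,N}(y,y',\eta) := p_{j,N}\left(h(y),h(y'),\Xi_h(y,y')^{-T}\eta\right) \abs{\det\Xi_h(y,y')}^{-1} \abs{\det\derive{h}{y'}} .$$

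Nun kommt der kritische Schritt, der Grenzübergang $N\to\infty$. Das Problem ist, dass nach der Transformation aus $\lilwood{k}(\xi)$ die Funktionen $\lilwood{k}(\Xi_h(y,y')^{-T}\eta)$ werden, die wegen der $(y,y')$-Abhängigkeit keine echte Littlewood-Paley-Partition mehr bilden; ihre Träger sind verzerrte, gedrehte dyadische Ringe. Hierfür liefert Lemma \ref{lemma:koordNonLilwoodKonv} — unter Ausnutzung von (\ref{equ:trafoA_hBed}), um $\abs{\log_2 C_0}$ zu beschränken und die Summe umzugruppieren — die punktweise Konvergenz $\tilde{a}_{j,N}(Y,Y',D_y)w(y) \to a_j(Y,Y',D_y)w(y)$, wobei $a_j(y,y',\eta) := p_j\left(h(y),h(y'),\Xi_h(y,y')^{-T}\eta\right)\abs{\det\Xi_h(y,y')}^{-1}\abs{\det\derive{h}{y'}}$. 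Mit Theorem \ref{thm:operatorXY} (Wohldefiniertheit des Operators in $(x,y,\xi)$-Form auf $H^s_q(\R^n)$) identifiziere ich dann $a_j(Y,Y',D_y)w(y) = \left(p_j(X,X',D_x)u\right)(h(y))$.

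Es bleibt zu zeigen, dass $a_j \in C^\theta S^0_{1,0}(\R^n\times\R^n,\R^n)$ liegt: da $h\in C^{1,\theta}(\R^n)$, ist $\det\Xi_h \in C^\theta(\R^n\times\R^n)$, und mit der Kettenregel, der Formel von Fa\`a di Bruno (Lemma \ref{faaDiBruno}) und (\ref{equ:trafoUngl}) erhält man $\norm{D_\xi^\alpha a_j(\hold,\hold,\xi)}{C^\theta(\R^n\times\R^n)} \le C_\alpha \piket{\xi}{-\abs{\alpha}}$ sowie die entsprechende punktweise Schranke, gleichmäßig in $j$. Schließlich setze ich $a := \sum_j a_j$ (die Summe ist in den Halbnormen gleichmäßig beschränkt) und erhalte $A \equiv a(Y,Y',D_y) \mod \restOP$. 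Den Hauptaufwand erwarte ich beim Grenzübergang $N\to\infty$ zusammen mit der Kontrolle der verzerrten Littlewood-Paley-Zerlegung, also genau bei Lemma \ref{lemma:koordNonLilwoodKonv} und der darauf aufbauenden Symbolabschätzung für $a_j$.
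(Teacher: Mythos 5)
Dein Vorschlag ist korrekt und folgt im Wesentlichen derselben Strategie wie der Beweis der Arbeit: Aufspaltung mit der Partition aus Lemma \ref{koord:lemmaPartition}, Behandlung des Restterms über Theorem \ref{thm:resultx,x,xi}, Littlewood-Paley-Abschneidung mit Transformation wie in Theorem \ref{thm:trafoRn}, Grenzübergang via Lemma \ref{lemma:koordNonLilwoodKonv} und Theorem \ref{thm:operatorXY} sowie abschließende Verifikation der Symbolklasse. Die kleinen Zusätze (explizite Berufung auf Lemma \ref{lemma:restOPinvariant} und Fa\`a di Bruno) ändern nichts am Kern des Arguments.
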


\begin{cor}\label{cor:trafoNonRn}
Das Symbol $a_L(y,\eta) := a(y,y,\eta)$ definiert uns einen Operator $a_L(Y,D_y)$ in $x$-Form, der ebenso die Gleichung (\ref{equ:trafoNonQRn}) für ein $R \in \restOP$ erfüllt.
\begin{proof}
Definiere zum Symbol $a(x,y,\xi) \in C^\theta S^0_{1,0}(\R^n\times\R^n,\R^n)$ aus Theorem \ref{thm:trafoNonRn} das Symbol
$b(x,y,\xi) := a(x,y,\xi) - a_L(x,\xi)$.
Dann folgt wegen $b(x,x,\xi) = 0$ für alle $x,\xi\in\R^n$ mit Theorem \ref{thm:resultx,x,xi}, dass $b(X,X',D_x) \in \restOP$.
Wir erhalten also
$a(X,X',D_x) \equiv a(X,D_x) \mod \restOP$.
\end{proof}
\end{cor}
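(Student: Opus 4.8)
The plan is to deduce the statement from Theorem~\ref{thm:trafoNonRn}, which already supplies a symbol $a(y,y',\eta) \in C^\theta S^0_{1,0}(\R^n\times\R^n,\R^n)$ in $(x,y,\xi)$-form with $A \equiv a(Y,Y',D_y) \mod \restOP$, where $A$ is the operator fixed by (\ref{equ:trafoNonQRn}). Hence it suffices to compare $a(Y,Y',D_y)$ with the operator $a_L(Y,D_y)$ built from the diagonal symbol $a_L(y,\eta) := a(y,y,\eta)$ and to show that their difference lies in $\restOP$. Combining the two comparisons, and using that $\restOP$ is visibly closed under addition, then yields $A \equiv a_L(Y,D_y) \mod \restOP$, i.e.\ $a_L(Y,D_y)$ satisfies (\ref{equ:trafoNonQRn}) up to an $\restOP$-remainder.

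First I would introduce the auxiliary $(x,y,\xi)$-symbol $b(x,y,\xi) := a(x,y,\xi) - a_L(x,\xi) = a(x,y,\xi) - a(x,x,\xi)$, whose defining feature is $b(x,x,\xi) = 0$ for all $x,\xi \in \R^n$. The first thing to verify is that $b \in C^\theta S^0_{1,0}(\R^n\times\R^n,\R^n)$. For each fixed $\xi$ the map $(x,x') \mapsto a(x,x',\xi)$ lies in $C^\theta(\R^n\times\R^n)$, and the two-term splitting $a(x,x,\xi)-a(x',x',\xi) = \bigl(a(x,x,\xi)-a(x',x,\xi)\bigr) + \bigl(a(x',x,\xi)-a(x',x',\xi)\bigr)$ shows that $x \mapsto a(x,x,\xi)$ is again Hölder of order $\theta$, with constant at most twice the original one; hence $b(\hold,\hold,\xi) \in C^\theta(\R^n\times\R^n)$, and the same splitting applied to $\partial_\xi^\beta a$ gives the bounds $\abs{\partial_\xi^\beta b(x,y,\xi)} \le C_\beta \piket{\xi}{-\abs{\beta}}$ together with their $C^\theta$-counterparts. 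Since an $(x,y,\xi)$-symbol independent of its middle argument produces the ordinary $x$-form operator, the linearity of (\ref{equ:operatorXYXi}) gives the operator identity $a(Y,Y',D_y) = a_L(Y,D_y) + b(Y,Y',D_y)$ on $H^s_q(\R^n)$ for $s \in (-\theta,\theta)$.

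The decisive step is to feed $b$ into Theorem~\ref{thm:resultx,x,xi}: because $b(x,x,\xi) = 0$, that theorem yields $b(X,X',D_x) : H^\sigma_q(\R^n) \to H^{\sigma+s}_q(\R^n)$ for all $1<q<\infty$, $0 \le s < \theta$ and $-\theta < \sigma \le 0$, which is exactly the mapping property defining $\restOP$ in Definition~\ref{def:restOP}; thus $b(Y,Y',D_y) \in \restOP$. Plugging this into the conclusion of Theorem~\ref{thm:trafoNonRn}, write $A = a(Y,Y',D_y) + R' = a_L(Y,D_y) + \bigl(b(Y,Y',D_y) + R'\bigr)$ with $R' \in \restOP$, and set $R := b(Y,Y',D_y) + R'$. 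Then $R \in \restOP$ and $A = a_L(Y,D_y) + R$, which is the assertion.

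The only point requiring real (though still routine) care is the verification that $b$ belongs to $C^\theta S^0_{1,0}(\R^n\times\R^n,\R^n)$ — essentially the observation that restricting a jointly $C^\theta$-function to the diagonal preserves $C^\theta$-regularity, uniformly in $\xi$ and in all $\xi$-derivatives. Everything else is bookkeeping with Theorems~\ref{thm:trafoNonRn} and~\ref{thm:resultx,x,xi}, so I do not anticipate a genuine obstacle.
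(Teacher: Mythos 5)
Your proposal follows essentially the same route as the paper's proof: you introduce the same auxiliary symbol $b(x,y,\xi) := a(x,y,\xi) - a_L(x,\xi)$, observe that it vanishes on the diagonal, invoke Theorem~\ref{thm:resultx,x,xi} to place $b(X,X',D_x)$ in $\restOP$, and combine this with Theorem~\ref{thm:trafoNonRn}. The only difference is that you spell out the verification that $b \in C^\theta S^0_{1,0}(\R^n\times\R^n,\R^n)$ and the operator identity $a(Y,Y',D_y) = a_L(Y,D_y) + b(Y,Y',D_y)$, which the paper treats as implicit.
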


\begin{thm}\label{thm:trafoNon}
Für $p(x,x',\xi) \in C^\theta S^0_{1,0}(\R^n\times\R^n,\R^n), \varphi, \psi \in C^{1,\theta}_0(\tilde{\Omega}_x)$ finden wir ein $R \in \restOP$, sodass
für alle $w := u \circ h \in H^s_{q,c}(\Omega_y)$
$$ \varphi(h(Y)) ( a(Y,Y',D_y) + R ) \psi(h(Y')) w(y) = (\varphi(X) p(X,X',D_x) \psi(X') u) (h(y))$$
gilt, wobei das Symbol $a(y,y',\eta) \in C^\theta S^0_{1,0}(\R^n\times\R^n,\R^n)$ durch die Gleichung (\ref{equ:trafoq}) für ein genügend kleines $r > 0$ gegeben ist.
\begin{proof}
Wie im Beweis von Theorem \ref{thm:trafo} erhalten wir die Darstellung 
$$\varphi(X) p(X,X',D_x) \psi(X') = \varphi(X) \sum_{j \in M} p_j(X,X',D_x) \psi(X') + \varphi(X) \sum_{j \in \N}  p_{\infty,j}(X,X',D_x) \psi(X') .$$
Analog zum Beweis von Theorem \ref{thm:trafoNonRn} erhalten wir nach Koordinatentransformation von $p_j(X,X',D_x)$ einen Operator
$a_j(Y,Y',D_y) = h^* p_j(X,X',D_x) \left(h^{-1}\right)^* $.
Transformieren wir die Operatoren $\varphi(X)$ und $\psi(X')$, so erhalten wir
$$ \varphi(X) p_j(X,X',D_x) \psi(X') = \varphi(Y) h^* \left( a_j(Y,Y',D_y) \psi(h(Y')) \right) \left(h^{-1}\right)^* .$$
Schließlich finden wir wegen $\#M < \infty$ ein $r > 0$, sodass die Bedingung (\ref{equ:trafoA_hBed}) für $\Xi_h$ definiert in (\ref{equ:trafoA_h}) für jedes $j \in M$ erfüllt wird.
Insgesamt bekommen wir die Transformation des Operators $\sum_{j\in M} p_j(X,X',D_x) \psi(X')$ durch
$$\varphi(X) \sum_{j\in M} p_j(X,X',D_x) \psi(X') = \varphi(Y) \sum_{j\in M} h^* \left( a_j(Y,Y',D_y) \psi(h(Y')) \right) \left(h^{-1}\right)^* .$$
Für den obigen zweiten Term $\sum_{j \in \N}  p_{\infty,j}(X,X',D_x) \psi(X')$ folgern wir wieder aus 
$$p_{\infty,j}(x,x,\xi) = 0, \textrm{ dass } p_{\infty,j}(X,X',D_x) \psi(X') \in \restOP$$
nach Theorem \ref{thm:resultx,x,xi}.
Für ein $\phi \in C^{1,\theta}_0(\Omega_x)$ mit $\phi = 1$ auf $\tilde{\Omega}_x$ haben wir für alle $u \in H^s_{q,c}(\tilde{\Omega}_x)$
$$ \left( \phi(X) \varphi(X) p_{\infty,j}(X,X',D_x) \psi(X') \phi(X') \right) u = \left( \varphi(X) p_{\infty,j}(X,X',D_x) \psi(X') \right) u .$$
Die Summe $ \varphi(X) \sum_{j \in \N} \phi(X) p_{\infty,j}(X,X',D_x) \psi(X') \phi(X')$ ist aber endlich, also erhalten wir auf $H^s_{q,c}(\tilde{\Omega}_x)$ einen Operator 
$$\varphi(X) \sum_{j \in \N}  p_{\infty,j}(X,X',D_x) \psi(X') = \varphi(X) \sum_{j \in \tilde{M}} \phi(X) p_{\infty,j}(X,X',D_x) \psi(X') \phi(X') ,$$
mit $\sum_{j \in \tilde{M}} \phi(X) p_{\infty,j}(X,X',D_x) \psi(X') \phi(X') \in \restOP$
für 
$$\tilde{M} := \menge{ j \in \N : \supp \phi \cap \supp \psi_j \circ h \neq \emptyset} \textrm{ endlich.} $$
\end{proof}
\end{thm}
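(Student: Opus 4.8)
The plan is to mirror the proof of Theorem \ref{thm:trafo}, replacing the smoothing-based remainder class $\OP{C^\tau S^{-\infty}(\R^n,\R^n\times\R^n)}$ by the regularizing class $\restOP$ and invoking the mapping results of Subsection \ref{sec:x,y,xi} in place of the oscillatory-integral estimates. First I would fix a partition $\menge{\left( \varphi_j, \psi_j \right)}_{j\in\N}$ as in Lemma \ref{koord:lemmaPartition} with a radius $r>0$ to be shrunk later, and decompose
$$\varphi(X) p(X,X',D_x) \psi(X') = \varphi(X) \sum_{j} p_j(X,X',D_x) \psi(X') + \varphi(X) \sum_{j} p_{\infty,j}(X,X',D_x) \psi(X')$$
with $p_j(x,x',\xi) := \varphi_j(x) p(x,x',\xi) \psi_j(x')$ and $p_{\infty,j}(x,x',\xi) := \varphi_j(x) p(x,x',\xi)(1-\psi_j(x'))$. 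Since $\tilde{\Omega}_y$ is relatively compact, only a finite index set $M$ contributes to the first sum after composing with $\varphi,\psi$, so that sum is finite.

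For the near-diagonal part I would transform each $p_j(X,X',D_x)$ by $h$. Because $\#M<\infty$, I can first choose $r$ small enough that the bound (\ref{equ:trafoA_hBed}) on $\abs{\det \Xi_h}$ holds on $\supp\left((\varphi_j\psi_j)\circ h\right)$ for every $j\in M$, with $\Xi_h$ as in (\ref{equ:trafoA_h}); then Theorem \ref{thm:trafoNonRn} (whose proof only uses the symbol localized to this region, so its global-diffeomorphism hypothesis becomes harmless after the localization) produces symbols $a_j\in C^\theta S^0_{1,0}(\R^n\times\R^n,\R^n)$ with $a_j(Y,Y',D_y)=h^* p_j(X,X',D_x)(h^{-1})^*$, namely the $a_j$ behind formula (\ref{equ:trafoq}). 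Transforming the multiplication operators $\varphi(X),\psi(X')$ along $h$ and summing over $j\in M$ assembles the symbol $a(y,y',\eta)=\sum_j a_j(y,y',\eta)$ of (\ref{equ:trafoq}), flanked by the factors $\varphi(h(Y))$ and $\psi(h(Y'))$.

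For the off-diagonal part, the key observation — exactly as in the smooth case — is that $\dist{\supp\varphi_j}{\supp(1-\psi_j)}>0$ forces $p_{\infty,j}(x,x,\xi)=0$ for all $x,\xi$, whence Theorem \ref{thm:resultx,x,xi} gives $p_{\infty,j}(X,X',D_x)\psi(X')\in\restOP$. The genuinely new difficulty is that $\sum_{j\in\N}p_{\infty,j}$ is an infinite sum and $\restOP$ is not visibly closed under infinite sums. I would handle this by inserting a cutoff $\phi\in C^{1,\theta}_0(\Omega_x)$ with $\phi=1$ on $\tilde{\Omega}_x$, so that $\phi(X)\varphi(X)=\varphi(X)$ and $\psi(X')\phi(X')=\psi(X')$ on $H^s_{q,c}(\tilde{\Omega}_x)$; then only the finite set $\tilde{M}:=\menge{j\in\N:\supp\phi\cap\supp(\psi_j\circ h)\neq\emptyset}$ survives, and closedness of $\restOP$ under finite sums yields a single operator in $\restOP$. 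Transforming this finite collection along $h$ and applying the invariance of $\restOP$ under $C^{1,\theta}$-coordinate change (Lemma \ref{lemma:restOPinvariant}) produces the remainder $R\in\restOP$ of the statement.

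The main obstacle is bookkeeping: keeping the cutoffs, the localization radius $r$, and the finite index sets $M,\tilde{M}$ mutually consistent so that the decomposition $A=\varphi(h(Y))\left(a(Y,Y',D_y)+R\right)\psi(h(Y'))$ holds \emph{exactly} on $H^s_{q,c}(\Omega_y)$, together with checking that each $a_j$ truly lies in $C^\theta S^0_{1,0}$ (carried out inside Theorem \ref{thm:trafoNonRn} via the chain-rule/Faà di Bruno estimates, using only $h\in C^{1,\theta}$ so that $\det\Xi_h\in C^\theta$), so that the assembled $a$ is a legitimate $(x,y,\xi)$-symbol and the left-hand operator $A$ is well-defined via Theorem \ref{thm:operatorXY}.
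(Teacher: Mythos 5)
Your proposal is correct and follows essentially the same route as the paper's proof: the same decomposition into $p_j$ and $p_{\infty,j}$, transformation of the finitely many near-diagonal pieces via Theorem \ref{thm:trafoNonRn} after shrinking $r$ so that (\ref{equ:trafoA_hBed}) holds, and treatment of the off-diagonal part by $p_{\infty,j}(x,x,\xi)=0$ together with Theorem \ref{thm:resultx,x,xi} and the cutoff $\phi$ reducing to the finite index set $\tilde{M}$. Your explicit appeal to Lemma \ref{lemma:restOPinvariant} to move the remainder to the $y$-side only makes explicit a step the paper leaves implicit.
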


\begin{remark} 
Unter den gleichen Voraussetzungen wie in Theorem \ref{thm:trafoNon} liefert Folgerung \ref{cor:trafoNonRn}, dass
wir für $p(x,\xi) \in C^\theta S^0_{1,0}(\R^n,\R^n)$
ein Symbol $a(y,\eta) \in C^\theta S^0_{1,0}(\R^n,\R^n)$ erhalten, sodass
$$ \varphi(h(Y)) ( a(Y,D_y) + R ) \psi(h(Y')) w(y) = (\varphi(X) p(X,X',D_x) \psi(X') u) (h(y))$$
für ein $R \in \restOP$.
\end{remark}

\clearpage 
\subsection{Pseudodifferentialoperatoren auf nicht-glatten Mannigfaltigkeiten}

\begin{definition}
Sei $M$ eine $C^{1,\theta}$-Mannigfaltigkeit, $\Omega \subseteq M$ offen, $-\theta < s \le 1+\theta$, $1 < q < \infty$ und eine Funktion  $u : \Omega \rightarrow \C$ gegeben.
\begin{itemize}
\item $u$ ist in $H^s_q(\Omega)$ falls $u \circ h_j^{-1} \in H^s_q(h_j(\Omega_j \cap \Omega))$ für jedes $j \in \N$.
\item $u$ ist in $H^s_{q,c}(\Omega)$ falls $u \in H^s_q(\Omega)$ und der Träger von $u$ kompakt in $\Omega$ enthalten ist.
\end{itemize}
\end{definition}

\begin{definition}\label{def:mfgNonRestOP}
Sei $M$ eine $C^{1,\theta}$-Mannigfaltigkeit mit Atlas $\menge{ h_j : \Omega_j \rightarrow U_j}_{j\in\N}$.
Ein linearer Operator $R : H^s_{q,c}(M) \rightarrow H^s_q(M)$ für $s \in (-\theta,\theta)$ gehört der Klasse $\restOPM$ an, 
falls es zu zwei beliebig gewählten Partitionen $\menge{\Phi_j}_{j\in\N}$ und $\menge{\Psi_j}_{j\in\N}$, die den offenen Mengen $\menge{\Omega_j}_{j\in\N}$ untergeordnet sind,
eine Familie von Operatoren $\menge{R_j}_{j\in\N}$ mit $R_j \in \restOP$ gibt mit
$$ ( \Phi_j R \Psi_j u)(h_j^{-1}(x)) = \varphi_j(X) R_j (\psi_j u_j)(x) \textrm{ für alle } u \in H^s_{q,c}(\Omega_j) .$$
Lemma \ref{lemma:restOPinvariant} liefert die Unabhängigkeit dieser Definition bezüglich des gewählten Atlanten.
\end{definition}
\nomenclature[o]{$\restOPM$}{Operatoren auf der nicht-glatten Mannigfaltigkeit $M$, die Definition \ref{def:mfgNonRestOP} erfüllen}

Für den Rest dieses Abschnitts wählen wir nun ein festes $s \in (-\theta,\theta)$.

\begin{definition}\label{def:mfgNonPSDO}
Sei $M$ eine $C^{1,\theta}$-Mannigfaltigkeit mit Atlas $\menge{ h_j : \Omega_j \rightarrow U_j}_{j\in\N}$.
Unter einem Pseudodifferentialoperator der Klasse $\OPMfgN{\theta}{M}$ verstehen wir einen linearen Operator
$ P : H^s_{q,c}(M) \rightarrow H^s_q(M)$, der sich bezüglich einer $C^{1,\theta}$-$\Psi$DO-Abschneide\-funktions\-familie ${(\Phi_j,\Psi_j)}_{j\in\N}$ in der Form
$ ( P u)(y) = \sum_j (\Phi_j P \Psi_j u)(y) + R$
für alle $y \in M$ und jedes $u \in H^s_{q,c}(M)$
schreiben lässt, wobei
\begin{enumerate}[(a)]
\item $\Phi_j P \Psi_j u$ in der Form
\begin{equation}\label{equ:mfgNonPsdoSymbol}
( \Phi_j P \Psi_j u)(h_j^{-1}(x)) = \varphi_j(X) p_j(X,D_x) (\psi_j u_j)(x) \textrm{ für alle } x \in U_j \textrm{ und jedes } u \in H^s_{q,c}(\Omega_j)
\end{equation}
für ein $p_j(x,\xi) \in C^\theta S^0_{1,0}(\R^n , \R^n)$ mit $u_j := u \circ h_j^{-1}$ geschrieben werden kann und
\item der Operator $R$ der Klasse $\restOPM$ angehört.
\end{enumerate}
Die Familie $\menge{p_j(x,\xi)}_{j\in\N}$ heißt Symbol von $P$.
\end{definition}
\nomenclature[o]{$\OPMfgN{\theta}{M}$}{Klasse der \PSDOS auf einer nicht-glatten Mannigfaltigkeit $M$}

\begin{lemma}\label{lemma:mfgNondisjunkt}
Sei $M$ eine $C^{1,\theta}$-Mannigfaltigkeit mit Atlas $\menge{ h_j : \Omega_j \rightarrow U_j}_{j\in\N}$.
Sei $P \in \OPMfgN{\theta}{M}$.
Seien $\Psi, \Phi \in C^{1,\theta}_0(M)$ mit $\supp \Phi \cap \supp \Psi = \emptyset$ gegeben.
Dann ist $\Psi P \Phi \in \restOPM$.
\begin{proof}
Zunächst ist die Summe endlich, da $\Phi$ und $\Psi$ beide einen kompakten Träger haben.
Wir schreiben
$$ \Phi P \Psi = \sum_j \Phi (\Phi_j P \Psi_j) \Psi + R .$$
Nach Voraussetzung ist $R \in \restOPM$, also ist auch $\Phi R \Psi \in \restOPM$.
Für $\Phi (\Phi_j P \Psi_j) \Psi$ haben wir lokal auf $U_j$ die Darstellung
$$ \Phi \Phi_j P(\Psi_j \Psi u)(h_j^{-1}(x)) = \varphi(X) \varphi_j(X) p_j(X,D_x) (\psi_j \psi u_j)(x) \textrm{ für alle } x \in U_j, u \in H^s_{q,c}(\Omega_j) ,$$
wobei $u_j := u \circ h_j^{-1}$, $\varphi := \Phi \circ h_j^{-1}, \psi := \Psi \circ h_j^{-1}$ in $U_j$.
Da $\supp \psi \cap \supp \varphi = \emptyset$ folgt für $b(x,y,\xi) := \varphi(x) p_j(x,\xi) \psi(y) \in C^\theta S^0_{1,0}(\R^n\times\R^n,\R^n)$, dass
$b(x,x,\xi) = 0$, also nach Theorem \ref{thm:resultx,x,xi} ist $b(X,X',D_x) \in \restOP$ und damit
$$\Phi_j\Phi P(\Psi\Psi_j u)(h_j^{-1}(x)) = \varphi_j(X) b(X,X',D_x) (\psi_j u_j) (x) \textrm{ für alle } u \in H^s_{q,c}(\Omega_j) .$$
Wir folgern $\Phi\Phi_j P \Psi_j\Psi \in \restOPM$.
\end{proof}
\end{lemma}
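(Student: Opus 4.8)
The plan is to peel the global operator down to a single chart, where the claim becomes the vanishing-on-the-diagonal criterion of Theorem~\ref{thm:resultx,x,xi}, and then to reassemble the pieces using that $\restOPM$ is stable under finite sums and under two-sided multiplication by $C^{1,\theta}_0$-functions.

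First I would invoke Definition~\ref{def:mfgNonPSDO}: fixing a $C^{1,\theta}$-$\Psi$DO-cutoff family $\menge{(\Phi_j,\Psi_j)}_{j\in\N}$ compatible with $P$, one has $P=\sum_j\Phi_j P\Psi_j+R$ with $R\in\restOPM$, whence
$$\Phi P\Psi=\sum_j\Phi(\Phi_j P\Psi_j)\Psi+\Phi R\Psi.$$
Because $\supp\Phi$ is compact and $\menge{\Phi_j}_{j\in\N}$ is locally finite, all but finitely many summands vanish, so it is enough to show each of them lies in $\restOPM$. For $\Phi R\Psi$ this follows by reading the defining property of $\restOPM$ in charts: multiplying the local representatives $R_j\in\restOP$ by the bounded functions $\Phi\circ h_j^{-1}$ and $\Psi\circ h_j^{-1}$ only composes $R_j$ with multiplication operators, which are bounded on every $H^\sigma_q(\R^n)$ with $|\sigma|<\theta$ and hence preserve the regularising estimate of Definition~\ref{def:restOP}; independence of this reasoning from the chosen atlas is Lemma~\ref{lemma:restOPinvariant}.

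The substantial part is the treatment of $\Phi(\Phi_j P\Psi_j)\Psi$ for fixed $j$. Pulling back to $U_j$ and writing $\varphi:=\Phi\circ h_j^{-1}$, $\psi:=\Psi\circ h_j^{-1}$, $u_j:=u\circ h_j^{-1}$, the local representation (\ref{equ:mfgNonPsdoSymbol}) gives, for $u\in H^s_{q,c}(\Omega_j)$,
$$\bigl(\Phi\Phi_j P(\Psi_j\Psi u)\bigr)(h_j^{-1}(x))=\varphi(X)\varphi_j(X)p_j(X,D_x)(\psi_j\psi u_j)(x)=\varphi_j(X)\,b(X,X',D_x)(\psi_j u_j)(x),$$
where $b(x,x',\xi):=\varphi(x)p_j(x,\xi)\psi(x')$. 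Here I would check that $b\in C^\theta S^0_{1,0}(\R^n\times\R^n,\R^n)$ — the factor $\varphi\,p_j(\hold,\xi)$ keeps its $C^\theta$-regularity in $x$ and the $\piketi{\xi}$-decay because $\varphi\in C^{1,\theta}\subset C^\theta$ is bounded, and tensoring with $\psi(x')$ preserves joint $C^\theta$-regularity in $(x,x')$ — and that $b(X,X',D_x)=\varphi(X)p_j(X,D_x)\psi(X')$, which one reads off from the truncation in Definition~\ref{def:xyxi} together with Remark~\ref{remark:badform}. Since $\supp\Phi\cap\supp\Psi=\emptyset$ forces $\varphi(x)\psi(x)=0$ for every $x$, we get $b(x,x,\xi)=0$ identically, and Theorem~\ref{thm:resultx,x,xi} then yields $b(X,X',D_x)\in\restOP$. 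Thus in each chart $\Phi\Phi_j P\Psi_j\Psi$ has exactly the normal form $\varphi_j(X)R_j(\psi_j\hold)$ with $R_j\in\restOP$ demanded by Definition~\ref{def:mfgNonRestOP}, so $\Phi\Phi_j P\Psi_j\Psi\in\restOPM$.

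I expect the genuine difficulty to lie not in any estimate — that labour is already encapsulated in Theorem~\ref{thm:resultx,x,xi}, which in the non-smooth setting replaces the $S^{-\infty}$-gain available in the smooth case (Theorem~\ref{thm:kerndisjunkt}) by the finite regularising gain measured by $\restOP$ — but in the bookkeeping: verifying that the localized operator really has the normal form of Definition~\ref{def:mfgNonRestOP}, that closure of $\restOPM$ under finite sums and two-sided $C^{1,\theta}_0$-multiplication follows directly from its mapping-property definition, and that all of this is independent of the chosen $\Psi$DO-cutoff family and atlas, which rests on Lemma~\ref{lemma:restOPinvariant}.
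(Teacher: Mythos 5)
Your proposal is correct and follows essentially the same route as the paper: decompose via the cutoff family, dispose of $\Phi R\Psi$ by the definition of $\restOPM$, and for each (of the finitely many) terms $\Phi(\Phi_j P\Psi_j)\Psi$ pass to the chart, form $b(x,x',\xi)=\varphi(x)p_j(x,\xi)\psi(x')$, use $\supp\varphi\cap\supp\psi=\emptyset$ to get $b(x,x,\xi)=0$, and apply Theorem \ref{thm:resultx,x,xi}. The extra bookkeeping you flag (symbol-class membership of $b$, closure of $\restOPM$ under the cutoffs, atlas-independence via Lemma \ref{lemma:restOPinvariant}) is exactly what the paper's terser proof implicitly relies on.
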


\begin{thm}[Invarianz unter Kartenwechsel]
Sei $\menge{ h_j : \Omega_j \rightarrow U_j}_{j\in\N}$ Atlas der $C^{1,\theta}$-Mannigfaltigkeit $M$.
Sei $\menge{ h'_{j'} : \Omega'_{j'} \rightarrow U'_{j'}}_{j'\in\N}$ ein weiterer Atlas der selben $C^{1,\theta}$-Struktur,
sodass für alle $j,j' : \Omega_j \cap \Omega'_{j'} \neq \emptyset$ sich 
$$h_{j,j'} := h_j \circ (h'_{j'})^{-1} : h'_{j'}(\Omega_j \cap \Omega'_{j'}) \rightarrow h_j(\Omega_j \cap \Omega'_{j'})$$
zu einem $C^{1,\theta}$-Diffeomorphismus auf einer offenen Umgebung von $\overline{ h'_{j'}(\Omega_j \cap \Omega'_{j'})}$ fortsetzen lässt,
der auf eine offene Umgebung von $\overline{ h_j(\Omega_j \cap \Omega'_{j'})}$ abbildet.
Dann ist ein Pseudodifferentialoperator $P : H^s_{q,c}(M) \rightarrow H^s_q(M)$ der Klasse $\OPMfgN{\theta}{M}$ bezüglich 
des Atlanten $\menge{ h_j : \Omega_j \rightarrow U_j}_{j\in\N}$ auch ein \PSDO bezüglich 
des Atlanten $\menge{ h'_{j'} : \Omega'_{j'} \rightarrow U'_{j'}}_{j'\in\N}$.
\begin{proof}
Unter einer anderen $C^{1,\theta}$-$\Psi$DO-Abschneide\-funktions\-familie ${(\Phi'_{j'},\Psi'_{j'})}_{j'\in\N}$ lässt sich $P$ schreiben als
$$ P = \sum_{j'\in\N} \Phi'_{j'} P \Psi'_{j'} + \tilde{R} ,$$
wobei $\tilde{R} \in \restOPM$.
Wir setzen $\Phi'_{j'} P \Psi'_{j'}$ in die Definition \ref{def:mfgNonPSDO} bzgl. der $C^{1,\theta}$-$\Psi$DO-Abschneide\-funktions\-familie ${(\Phi_{j},\Psi_{j})}_{j\in\N}$ ein und erhalten
\begin{equation}\label{equ:nonsmoothInv}
\Phi'_{j'} P \Psi'_{j'} = \sum_{j\in\N} \Phi'_{j'} ( \Phi_j P \Psi_j) \Psi'_{j'} + \Phi'_{j'} R \Psi'_{j'} ,
\end{equation}
wobei nach Definition $\Phi'_{j'} R \Psi'_{j'} \in \restOPM$.

Für den ersten Term folgern wir analog zum Beweis von Theorem \ref{thm:mfgInvarianz}, dass
\begin{align*}
&\left(\Phi'_{j'} \Phi_j P(\Psi_j \Psi'_{j'} u)\right)\left( (h'_{j'})^{-1}(x)\right) \\
&= 
\tilde{\varphi}'_{j'}
\left(
\left( h_j \circ (h'_{j'})^{-1} \right)^*
\left(
\varphi_j p_j(X,D_x) \left( \psi_j \left( h'_{j'} \circ h_j^{-1} \right)^* (\tilde{\psi}'_{j'} u'_{j'}) \right)
\right)
\right)
(x)
\end{align*}
für alle $x \in U_j \cap U_{j'}$
wobei
$u'_{j'} := u \circ (h'_{j'})^{-1}, 
\tilde{\psi}'_{j'} := \Psi'_{j'} \circ (h'_{j'})^{-1},
\tilde{\varphi}'_{j'} := \Phi'_{j'} \circ (h'_{j'})^{-1}$ 

Nach Theorem \ref{thm:trafoNon} (setze $h := h_{j,j'}, \tilde{\Omega}_y := h'_{j'}(\Omega'_{j'} \cap \Omega_j)$) 
gibt es ein $a_{j,j'} \in C^\theta S^0_{1,0}(\R^n,\R^n)$ und ein $R_{j,j'} \in \restOP$ mit 
\begin{align*}
&\tilde{\varphi}'_{j'}(X) \left(  a_{j,j'}(X,D_x) + R_{j,j'} \right) \tilde{\psi}'_{j'}(X') u_{j'}'(x) \\
&= \tilde{\varphi}'_{j'}(X) \left( h^*_{j,j'} \left( \varphi(X) p_j(X,D_x) \left( \psi_j(X') \left(h^{-1}_{j,j'}\right)^* (\tilde{\psi}'_{j'} u'_{j'}) \right)\right)\right)(x)
.\end{align*}
Definieren wir nun
$$ a_{j'}(x,\xi) := \sum_{\substack{j \in \N : \\ \Omega'_{j'} \cap \Omega_j \neq \emptyset}} a_{j,j'}(x,\xi), \textrm{ und }
   R_{j'} := \sum_{\substack{j \in \N : \\ \Omega'_{j'} \cap \Omega_j \neq \emptyset}} R_{j,j'} \in \restOP
,$$
so sehen wir, dass unser erster Term aus (\ref{equ:nonsmoothInv}) in der Form
$$ ( \Phi'_{j'} ( \Phi_j P \Psi_j) \Psi'_{j'} u)( (h'_{j'})^{-1}(x)) = \tilde{\varphi}'_{j'} \left( a_{j'}(X,D_x) + R_{j'} \right) (\tilde{\psi}'_{j'} u'_{j'})(x)$$
für alle $u'_{j'} = u \circ \left(h'_{j'}\right)^{-1} \in H^s_{q,c}(U'_{j'} \cap U_j)$
geschrieben werden kann, unabhängig von $j \in \N$ - also gilt obige Gleichung auch für alle $u'_{j'} \in H^s_{q,c}(U'_{j'})$.

Somit definiert $P$ bezüglich des Atlanten $\menge{ h'_{j'} : \Omega'_{j'} \rightarrow U'_{j'}}_{j'\in\N}$ wiederum einen Pseudodifferentialoperator.
Wegen der Transformationsinvarianz der Klasse $\restOPM$ erhalten wir die Behauptung.
\end{proof}
\end{thm}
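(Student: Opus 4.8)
\emph{Beweisvorschlag.} Der Beweis folgt der Strategie von Theorem \ref{thm:mfgInvarianz}, wobei an die Stelle der glättenden Restklasse $\OP{C^\tau S^{-\infty}}$ nun die Klasse $\restOPM$ tritt. Zunächst würde ich eine beliebige $C^{1,\theta}$-$\Psi$DO-Abschneide\-funktions\-familie $\menge{(\Phi'_{j'},\Psi'_{j'})}_{j'\in\N}$ untergeordnet zur Überdeckung $\menge{\Omega'_{j'}}_{j'\in\N}$ wählen und wegen $\sum_{j'}\Phi'_{j'} \equiv 1$ die Zerlegung
$$ P = \sum_{j'\in\N} \Phi'_{j'} P \Psi'_{j'} + \sum_{j'\in\N} \Phi'_{j'} P (1-\Psi'_{j'}) $$
betrachten. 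Da $\Phi'_{j'} \subsubset \Psi'_{j'}$, sind $\supp\Phi'_{j'}$ und $\supp(1-\Psi'_{j'})$ disjunkt, sodass der zweite Term analog zum glatten Fall (mittels Lemma \ref{lemma:mfgNondisjunkt}) einen Operator in $\restOPM$ liefert; dabei garantiert die lokale Endlichkeit des Atlas, dass auch die Summe wieder in $\restOPM$ bleibt. Zu zeigen bleibt also, dass sich jeder Summand $\Phi'_{j'} P \Psi'_{j'}$ in der von Definition \ref{def:mfgNonPSDO} geforderten Gestalt schreiben lässt.

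Hierfür setze ich für festes $j'$ den Operator $P$ bezüglich der \emph{ursprünglichen} Familie $\menge{(\Phi_j,\Psi_j)}_{j\in\N}$ ein und erhalte
$$ \Phi'_{j'} P \Psi'_{j'} = \sum_{j\in\N} \Phi'_{j'}(\Phi_j P \Psi_j)\Psi'_{j'} + \Phi'_{j'} R \Psi'_{j'}, $$
wobei $\Phi'_{j'} R \Psi'_{j'} \in \restOPM$ nach Voraussetzung und Definition \ref{def:mfgNonRestOP}. Die $j$-Summe ist wegen der Kompaktheit von $\supp\Phi'_{j'}$ endlich, und nur Paare $(j,j')$ mit $\Omega_j \cap \Omega'_{j'} \neq \emptyset$ tragen bei. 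Für ein solches Paar ist $\Phi'_{j'}(\Phi_j P \Psi_j)\Psi'_{j'}$ lokal auf $U_j$ durch $\varphi'_{j'}(X)\varphi_j(X) p_j(X,D_x)\bigl(\psi_j\psi'_{j'} u_j\bigr)(x)$ gegeben. Ich transformiere diesen Ausdruck mit dem Pullback von $h_{j,j'} = h_j \circ (h'_{j'})^{-1}$ nach $U'_{j'}$. Da sich $h_{j,j'}$ nach Voraussetzung zu einem regulären $C^{1,\theta}$-Diffeomorphismus fortsetzen lässt und die Träger der beteiligten Abschneidefunktionen die nötigen Inklusionen erfüllen, liefert Theorem \ref{thm:trafoNon} zusammen mit Folgerung \ref{cor:trafoNonRn} ein Symbol $a_{j,j'} \in C^\theta S^0_{1,0}(\R^n,\R^n)$ und einen Operator $R_{j,j'}\in\restOP$, sodass der transformierte Ausdruck gleich $\tilde{\varphi}'_{j'}(X)\bigl(a_{j,j'}(X,D_x)+R_{j,j'}\bigr)\tilde{\psi}'_{j'}(X') u'_{j'}(x)$ ist, wobei $\tilde{\varphi}'_{j'} := \Phi'_{j'}\circ(h'_{j'})^{-1}$ usw.

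Setze ich dann $a_{j'}(x,\xi) := \sum_{j:\,\Omega'_{j'}\cap\Omega_j\neq\emptyset} a_{j,j'}(x,\xi) \in C^\theta S^0_{1,0}(\R^n,\R^n)$ sowie $R_{j'} := \sum_{j:\,\Omega'_{j'}\cap\Omega_j\neq\emptyset} R_{j,j'} \in \restOP$ (beides endliche Summen), so bekommt der erste Term die geforderte Gestalt $\tilde{\varphi}'_{j'}\bigl(a_{j'}(X,D_x)+R_{j'}\bigr)(\tilde{\psi}'_{j'} u'_{j'})$, zunächst auf $H^s_{q,c}(U'_{j'}\cap U_j)$ und wegen der Unabhängigkeit von $j$ auf ganz $H^s_{q,c}(U'_{j'})$. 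Damit ist $P$ bezüglich des Atlanten $\menge{h'_{j'}:\Omega'_{j'}\rightarrow U'_{j'}}_{j'\in\N}$ ein Pseudodifferentialoperator der Klasse $\OPMfgN{\theta}{M}$; die Transformationsinvarianz von $\restOP$ bzw. $\restOPM$ (Lemma \ref{lemma:restOPinvariant}, Definition \ref{def:mfgNonRestOP}) sorgt dafür, dass der gesamte Restanteil wieder in $\restOPM$ liegt. Als Hauptschwierigkeit erwarte ich weniger die eigentliche Analysis — diese steckt bereits in Theorem \ref{thm:resultx,x,xi} und Theorem \ref{thm:trafoNonRn}, auf die hier nur zurückgegriffen wird — als vielmehr die saubere Buchhaltung der Restterme: man muss sorgfältig nachweisen, dass die (lokal endlichen) Summen von $\restOP$- bzw. $\restOPM$-Operatoren stabil unter den auftretenden Koordinatentransformationen und Multiplikationen mit Abschneidefunktionen bleiben und dass sich diese Eigenschaften konsistent zusammenfügen.
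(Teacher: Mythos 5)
Dein Vorschlag ist korrekt und folgt im Wesentlichen demselben Weg wie der Beweis der Arbeit: Zerlegung bezüglich der neuen Abschneidefunktionsfamilie, Einsetzen der Definition bezüglich der alten Familie, lokale Transformation mittels $h_{j,j'}$ über Theorem \ref{thm:trafoNon} bzw. Folgerung \ref{cor:trafoNonRn} und anschließendes endliches Aufsummieren der $a_{j,j'}$ und $R_{j,j'}$. Dass du den Restterm $\sum_{j'}\Phi'_{j'}P(1-\Psi'_{j'})$ explizit über Lemma \ref{lemma:mfgNondisjunkt} in $\restOPM$ einordnest, wo die Arbeit dies nur behauptet, ist eine kleine, aber sachgerechte Ergänzung.
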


\clearpage
\addcontentsline{toc}{section}{\nomname}
\markboth{\nomname}{\nomname}
\printnomenclature

\clearpage
\nocite{*} \bibliographystyle{abbrvnat}
\bibliography{bibliography}

\end{document}